\newcommand{\N}{{\mathbb N}}
\newcommand{\Z}{{\mathbb Z}}
\newcommand{\R}{{\mathbb R}}
\newcommand{\C}{{\mathbb C}}
\newcommand{\eps}{{\varepsilon}}
\newcommand{\bmu}{\boldsymbol{\mu}}
\newcommand{\bnu}{\boldsymbol{\nu}}
\newcommand{\bpsi}{\boldsymbol{\psi}}
\newcommand{\btheta}{\boldsymbol{\theta}}
\newcommand{\avbmu}{\langle \boldsymbol{\mu} \rangle}
\newcommand{\bA}{\mathbb{A}}
\newcommand{\bC}{\mathbb{C}}
\newcommand{\bF}{\mathbb{F}}
\newcommand{\bG}{\mathbb{G}}
\newcommand{\bH}{\mathbb{H}}
\newcommand{\bI}{\mathbb{I}}
\newcommand{\bN}{\mathbb{N}}
\newcommand{\bR}{\mathbb{R}}
\newcommand{\bT}{\mathbb{T}}
\newcommand{\bU}{\mathbb{U}}
\newcommand{\bX}{\mathbb{X}}
\newcommand{\bZ}{\mathbb{Z}}
\newcommand{\cA}{\mathcal{A}}
\newcommand{\cB}{\mathcal{B}}
\newcommand{\cC}{\mathcal{C}}
\newcommand{\cD}{\mathcal{D}}
\newcommand{\cE}{\mathcal{E}}
\newcommand{\cF}{\mathcal{F}}
\newcommand{\cH}{\mathcal{H}}
\newcommand{\cI}{\mathcal{I}}
\newcommand{\cJ}{\mathcal{J}}
\newcommand{\cL}{\mathcal{L}}
\newcommand{\cM}{\mathcal{M}}
\newcommand{\cP}{\mathcal{P}}
\newcommand{\cQ}{\mathcal{Q}}
\newcommand{\cR}{\mathcal{R}}
\newcommand{\cT}{\mathcal{T}}
\newcommand{\cU}{\mathcal{U}}
\newcommand{\cX}{\mathcal{X}}
\newcommand{\hatU}{\widehat{U}}
\newcommand{\hatu}{\widehat{u}}
\newcommand{\hatH}{\widehat{H}}
\newcommand{\hatq}{\widehat{q}}
\newcommand{\hatF}{\widehat{F}}
\newcommand{\hatG}{\widehat{G}}
\newcommand{\hatpsi}{\widehat{\psi}}
\newcommand{\uu}{\underline{u}}
\newcommand{\uH}{\underline{H}}
\newcommand{\uq}{\underline{q}}
\newcommand{\uU}{\underline{U}}
\newcommand{\uS}{\underline{S}}
\newcommand{\uF}{\underline{F}}
\newcommand{\uG}{\underline{G}}
\newcommand{\ub}{\underline{b}}
\newcommand{\uJ}{\underline{J}}
\newcommand{\ds}{\displaystyle}
\newcommand{\sg}{\cdot \nabla}
\newcommand{\dv}{\nabla \cdot}
\newcommand{\p}{\partial}
\newcommand{\sbt}{\ \begin{picture}(-1,1)(-1,-3)\circle*{2}\end{picture}\ \, }
\newtheorem{theorem}{Theorem}[chapter]
\newtheorem{proposition}[theorem]{Proposition}
\newtheorem{corollary}[theorem]{Corollary}
\newtheorem{lemma}[theorem]{Lemma}
\newtheorem{definition}[theorem]{Definition}
\begin{document}

\title{Weakly nonlinear surface waves\\ in magnetohydrodynamics}

\author{Olivier {\sc Pierre}\thanks{Laboratoire de math\'ematiques Jean Leray - UMR CNRS 6629, 
Universit\'e de Nantes, 2 rue de la Houssini\`ere, BP 92208, 44322 Nantes Cedex 3, France. 
Email: {\tt olivier.pierre@univ-nantes.fr}}
\& Jean-Fran\c{c}ois {\sc Coulombel}\thanks{Institut de Math\'ematiques de Toulouse - UMR 5219, 
Universit\'e de Toulouse ; CNRS, Universit\'e Paul Sabatier, 118 route de Narbonne, 31062 Toulouse Cedex 9 , France. 
Research of J.-F. C. was supported by ANR project BoND, ANR-13-BS01-0009, and ANR project Nabuco, ANR-17-CE40-0025. 
Email: {\tt jean-francois.coulombel@math.univ-toulouse.fr}}}

\maketitle

\begin{abstract}
This work is devoted to the construction of weakly nonlinear, highly oscillating, current vortex sheet solutions 
to the incompressible magnetohydrodynamics equations. Current vortex sheets are piecewise smooth solutions 
to the incompressible magnetohydrodynamics equations that satisfy suitable jump conditions for the velocity and 
magnetic field on the (free) discontinuity surface. In this work, we complete an earlier work by Al\`i and Hunter 
[Quart. Appl. Math. 61(3), 451-474, 2003] and construct approximate solutions at any arbitrarily large order 
of accuracy to the free boundary problem in three space dimensions when the initial discontinuity displays high 
frequency oscillations. As evidenced in earlier works, high frequency oscillations of the current vortex sheet give 
rise to `surface waves' on either side of the sheet. Such waves decay exponentially in the normal direction to the 
current vortex sheet and, in the weakly nonlinear regime that we consider here, their leading amplitude is governed 
by a nonlocal Hamilton-Jacobi type equation known as the `HIZ equation' (standing for Hamilton-Il'insky-Zabolotskaya 
[J. Acoust. Soc. Am. 97(2), 891-897, 1995]) in the context of Rayleigh waves in elastodynamics.
\bigskip

The main achievement of our work is to develop a systematic approach for constructing arbitrarily many correctors to 
the leading amplitude. Based on a suitable duality formula, we exhibit necessary and sufficient solvability conditions 
for the corrector equations that need to be solved iteratively. The verification of these solvability conditions is based 
on a combination of mere algebra and arguments of combinatorial analysis. The construction of arbitrarily many 
correctors enables us to produce infinitely accurate approximate solutions to the free boundary problem. Eventually, 
we show that the rectification phenomenon exhibited by Marcou in the context of Rayleigh waves [C. R. Math. Acad. 
Sci. Paris 349(23-24), 1239-1244, 2011] does not arise in the same way for the current vortex sheet problem.
\end{abstract}

\tableofcontents
\newpage

\chapter*{Notations}

\noindent {\bf The variables}

$t \quad$ the time variable

$x_1,x_2,x_3 \quad$ the original space variables

$y_1,y_2,y_3 \quad$ the straightened space variables

$x' \, = \, (x_1,x_2) \quad$ the original tangential space variable

$y' \, = \, (y_1,y_2) \quad$ the straightened tangential space variable

$\theta \quad$ the fast tangential variable

$Y_3 \quad$ the fast normal variable
\bigskip

\noindent {\bf The frequencies}

$\eps \quad$ the small wavelength for the oscillating problem

$\tau \quad$ the time frequency

$\xi_1,\xi_2,\xi_3 \quad$ the space frequencies

$k \quad$ the fast tangential frequency (associated with $\theta$)
\bigskip

\noindent {\bf The indices}

$\alpha \quad$ an index in $\{ 1,2,3 \}$

$j \quad$ a tangential index in $\{ 1,2 \}$

$j' \quad$ a tangential index in $\{ 1,2 \}$

$m,\mu,\ell \quad$ nonnegative integers for the WKB cascade
\bigskip

\noindent {\bf The domains}

$[0,T] \quad$ the time interval

$\Omega_\eps^\pm(t) \quad$ the original spatial domains

$\Gamma_\eps(t) \quad$ the oscillating free discontinuity

$\Omega_0^\pm \quad$ the straightened spatial domains

$\Gamma_0 \quad$ the straightened discontinuity

$\Gamma^\pm \quad$ the top and bottom boundaries

$I^+$, resp. $I^- \quad$ the interval $(0,1)$, resp. $(-1,0)$

$\bT \quad$ the torus $\bR /(2\, \pi \, \bZ)$

$\bT^2 \quad$ the two-dimensional torus $(\bR /(2\, \pi \, \bZ))^2$
\bigskip

\noindent {\bf The unknowns}

$u \quad$ the velocity field (a three dimensional vector)

$H \quad$ the magnetic field (a three dimensional vector)

$p \quad$ the pressure (a scalar quantity)

$q \quad$ the total pressure (a scalar quantity)

$U \quad$ the vector of unknowns $(u,H,q)^T \in \bR^7$

$U^\pm_\eps \quad$ the exact solution to the oscillatory problem (on either side of the current vortex sheet)

$\psi \quad$ the front (a scalar quantity)

$\psi_\eps \quad$ the exact oscillating front
\bigskip

\noindent {\bf The profiles}

$U^{\, m,\pm} \quad$ the $m$-th profile in the WKB expansion of the exact solution $U^\pm_\eps$

$\psi^m \quad$ the $m$-th profile in the WKB expansion of the exact front $\psi_\eps$

$\chi^{[\ell]},\dot{\chi}^{[\ell]} \quad$ profiles arising when straightening the original spatial domains
\bigskip

\noindent {\bf The operators}

$\times \quad $ the cross product in $\bR^3$

$\nabla \quad$ the gradient (with respect to $x \, = \, (x_1,x_2,x_3)$ or $y \, = \, (y_1,y_2,y_3)$, unless otherwise specified)

$\nabla \cdot \quad$ the divergence (with respect to $x \, = \, (x_1,x_2,x_3)$ or $y \, = \, (y_1,y_2,y_3)$)

$\nabla \times \quad$ the curl (with respect to $x \, = \, (x_1,x_2,x_3)$ or $y \, = \, (y_1,y_2,y_3)$)

$\cL_f^\pm(\p) \quad$ the fast operators

$L_s^\pm(\p) \quad$ the slow operators
\bigskip

\noindent {\bf Miscellanea}

$X^T \quad$ the transpose of a matrix (or vector) $X$
 
$\bmu,\bnu \quad$ sequences of nonnegative integers (with finitely many possible nonzero entries)

$|\bmu| \quad$ the length of the sequence $\bmu$

$\avbmu \quad$ the weight of the sequence $\bmu$

$\cdot \quad$ the Euclidean product between two real vectors

$\sbt \quad$ the Hermitian product between two complex vectors

$\cM_{n_1,n_2}({\mathbb K}) \quad$ the set of $n_1 \times n_2$ matrices with entries in the field ${\mathbb K}$

$f_\alpha \quad$ fluxes in the conservative form of the MHD equations

$A_\alpha^\pm \quad$ Jacobian matrices of the fluxes

$\bA_\alpha \quad$ Hessian matrices of the fluxes

$\widehat{U}(k) \quad$ the $k$-th Fourier mode of the profile $U$ with respect to the fast variable $\theta$

${\bf c}_0 \quad$ the zero Fourier mode with respect to the fast variable $\theta$
\newpage

\chapter{Introduction and main result}
\label{chapter1}

\section{Motivation}

This work is devoted to the asymptotic analysis of a free boundary problem arising in magnetohydrodynamics 
(MHD), namely the current vortex sheet problem. We consider a homogeneous, perfectly conducting, inviscid 
and incompressible plasma. The model consists of the so-called ideal incompressible MHD system, which reads 
in nondimensional form:
\begin{equation}
\label{int-equations_MHD}
\left\{
\begin{array}{l}
\p_t u + \dv (u\otimes u - H\otimes H) +\nabla q = 0 \, ,\\
\p_t H - \nabla \times (u \times H) = 0 \, ,\\
\dv u = \dv H = 0\, .
\end{array}
\right.
\end{equation}
In \eqref{int-equations_MHD}, $u \in \R^3$ and $H \in \R^3$ stand for the velocity and the magnetic field of 
the plasma respectively, $\times$ denotes the cross product in $\R^3$ and $\dv$, resp. $\nabla$, denotes the 
divergence, resp. gradient, operator with respect to the three-dimensional space variable $x=(x_1,x_2,x_3)$. 
The scalar unknown $q:=p +|H|^2/2$ is the `total' pressure, $p$ being the `physical' pressure.

We are interested here in a special class of (weak) solutions to \eqref{int-equations_MHD}: we want $(u,H,q)$ 
to be smooth, for each time $t$, on either side of a hypersurface $\Gamma(t) \subset \R^3$, and to give rise to a 
\emph{tangential} discontinuity across $\Gamma(t)$. The appropriate jump conditions on $\Gamma (t)$ are described 
below. For simplicity, we shall assume that the hypersurface $\Gamma(t)$ is a graph that can be parametrized by 
$\Gamma(t) = \{ x \in \R^3 \, / \, x_3 = \psi(t,x') \}$ for some smooth function $\psi$ of $(t,x')$ to be determined, 
with $x':=(x_1,x_2)$ the tangential space variable which we shall consider to be lying in the two-dimensional 
torus $\bT^2 :=(\R/(2\, \pi \, \Z))^2$. The unknown $\psi$ that parametrizes $\Gamma$ will be called the `front' 
of the discontinuity later on. We shall thus consider the incompressible MHD system \eqref{int-equations_MHD} 
in the time-dependent domain:
\begin{equation*}
\Omega(t) \, := \, \Omega^+(t) \sqcup \Omega^-(t), \quad \text{ where } \Omega^\pm(t) \, := \, \{ x_3 \gtrless \psi(t,x') \} \, ,
\end{equation*}
with the following jump conditions on $\Gamma(t)$:
\begin{equation}
\label{int-cond_bord}
\p_t \psi \, = \, u^+\cdot N \, = \, u^- \cdot N \, ,\quad H^+ \cdot N \, = \, H^- \cdot N \, = \, 0 \, ,\quad [ \, q \, ] = 0 \, .
\end{equation}
The notation $[ \, q \, ]$ in \eqref{int-cond_bord} stands for the jump of the total pressure $q$ across $\Gamma(t)$:
\begin{equation*}
[ \, q \, ] \, := \, \left.q^+\right|_{\Gamma(t)} - \left.q^-\right|_{\Gamma(t)} \, ,
\end{equation*}
and the notation $N$ in \eqref{int-cond_bord} stands for the normal vector to $\Gamma(t)$ chosen as follows:
\begin{align*}
N \, := \, (-\p_{x_1} \psi, -\p_{x_2} \psi, 1)^T \, .
\end{align*}

The boundary conditions \eqref{int-cond_bord} correspond to a \emph{tangential} discontinuity. The velocity $\p_t \psi$ 
of the front is given by the normal component of the fluid velocity on either side of the free discontinuity, meaning 
that the fluid does not flow through the interface $\Gamma(t)$. The normal magnetic field $H\cdot N$ is zero (hence 
continuous) on either side of the discontinuity, and the total pressure $q$ should also be continuous across $\Gamma 
(t)$. Such boundary conditions account for the evolution of a plasma which gives rise to a current vortex sheet (see, 
\emph{e.g.}, Figure \ref{s3-fig_nappe} below). Both $\nabla \times u$ and $\nabla \times H$ have a singular component 
on $\Gamma (t)$. We refer to \cite{Chandra,BT} for other types of discontinuities in compressible or incompressible MHD.

\begin{figure}[!h]
\begin{center}
\includegraphics[scale=0.4]{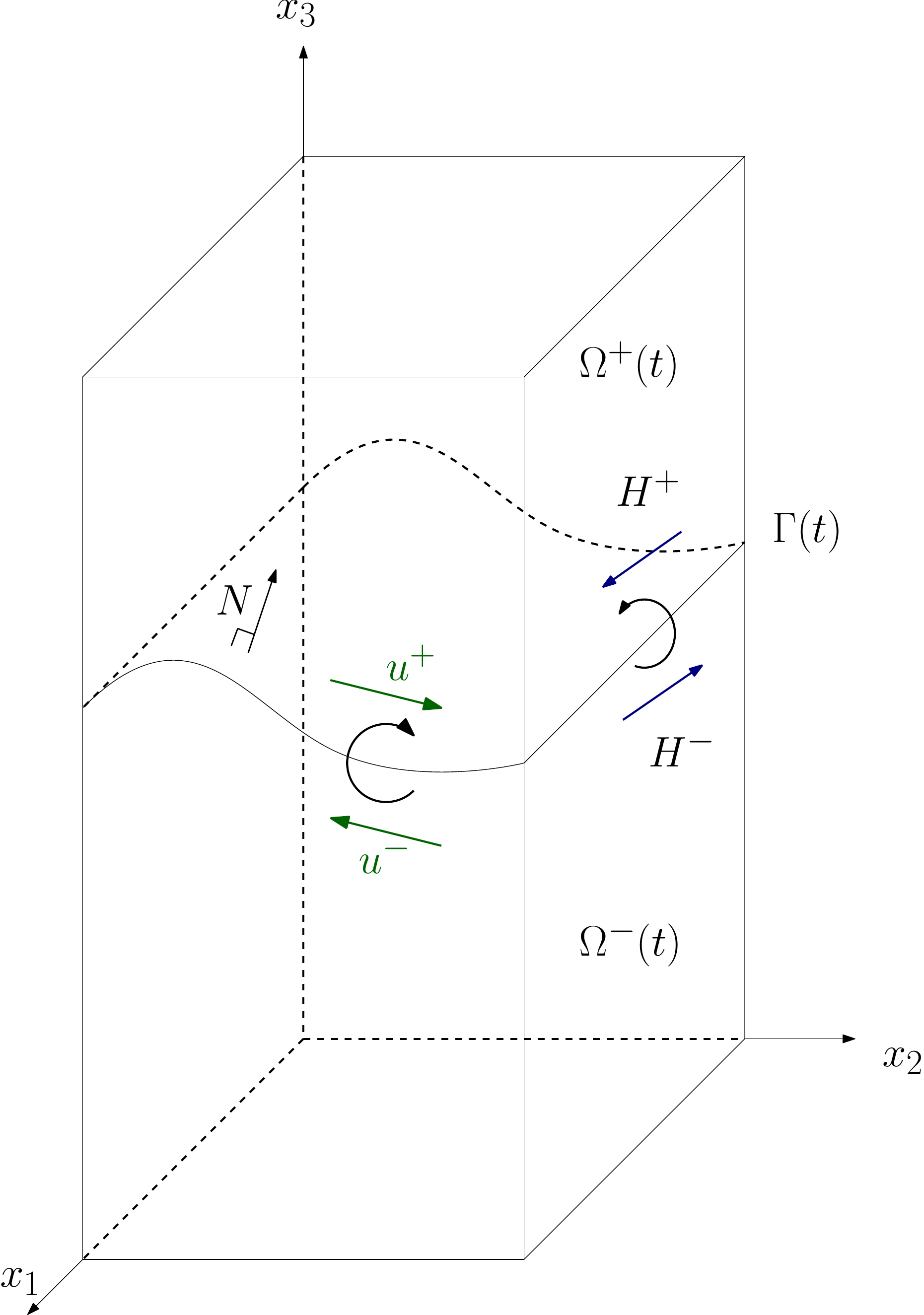}
\caption{Schematic picture of a current vortex sheet}
\label{s3-fig_nappe}
\end{center}
\end{figure}

To be consistent with several earlier works on current vortex sheets \cite{CMST,SWZ,Pierre}, we shall assume that 
the plasma is confined in the strip $x_3 \in (-1,1)$. In particular, the front $\psi$ should satisfy $-1<\psi(t,x')<1$ for all 
$(t,x')$ so that the current vortex sheet itself is located within the strip. We then impose the standard boundary conditions 
on the fixed `top' and `bottom' boundaries:
\begin{equation*}
\Gamma^\pm \, := \, \big\{ (x', \pm 1) \, , \, x' \in \bT^2 \big\} \, .
\end{equation*}
On $\Gamma^\pm$, the plasma should have zero normal velocity and zero normal magnetic field. In its quasilinear form, the 
system of current vortex sheets eventually reads as follows:
\begin{equation}
\label{s3-equations_nappes_MHD}
\left\{
\begin{array}{r l}
\p_t u^\pm \, +(u^\pm\sg) \, u^\pm \, -(H^\pm\sg) \, H^\pm \, +\nabla q^\pm \, = \, 0 \, , & 
\text{ in } \Omega^\pm(t) \, ,\quad t\in [0,T] \, ,\\[0.5ex]
\p_t H^\pm \, +(u^\pm\sg) \, H^\pm \, -(H^\pm\sg) \, u^\pm \, = \, 0 \, , & \text{ in } \Omega^\pm(t) \, ,\quad t\in [0,T] \, ,\\[0.5ex]
\dv u^\pm(t) \, =\dv H^\pm(t) \, = \, 0 \, , & \text{ in } \Omega^\pm(t) \, , \quad t\in [0,T] \, ,\\[0.5ex]
\p_t \psi \, = \, u^\pm \cdot N \, ,\quad H^\pm\cdot N \, = \, 0 \, ,\quad [ \, q \, ] \, = \, 0 \, , & 
\text{ on } \Gamma(t) \, ,\quad t\in [0,T] \, ,\\[0.5ex]
u_3^\pm \, = \, H_3^\pm \, = \, 0 \, , & \text{ on } [0,T] \times \Gamma^\pm \, .
\end{array}
\right.
\end{equation}
The superscript $\pm$ in \eqref{s3-equations_nappes_MHD} refers to the unknowns $u$, $H$ and $q$ restricted to 
the subdomains $\Omega^\pm(t)$. Of course, \eqref{s3-equations_nappes_MHD} should be supplemented with initial 
conditions for $\psi,u^\pm,H^\pm$ that satisfy suitable compatibility requirements (\emph{e.g.}, the divergence free 
constraints in \eqref{s3-equations_nappes_MHD} and the boundary conditions on $\Gamma^\pm$).

The local in time solvability of \eqref{s3-equations_nappes_MHD} in Sobolev spaces has been recently proved by 
Sun, Wang and Zhang \cite{SWZ} by a clever reduction to the free boundary in the spirit of water wave theory. This 
reduction yields a second order scalar hyperbolic equation, a simplified version of which will play a crucial role in the 
analysis below. The well-posedness result of \cite{SWZ} relies on a \emph{stability} condition that dates back, at least, 
to \cite{Syro,Axford1} and that also plays a crucial role in the present analysis. An alternative approach to \cite{SWZ}, 
which does not rely on any stability condition but that is restricted to analytic data, has been recently proposed by the 
first author \cite{Pierre} with the aim of using it also in the compressible case. Within this article, we are interested in 
the \emph{qualitative behavior} of exact solutions to \eqref{s3-equations_nappes_MHD} for \emph{highly oscillating} initial 
data. This problem has been first addressed by Al\`i and Hunter \cite{AliHunter} who have considered the two-dimensional 
problem and who have shown that for some specific oscillation phase, and in the \emph{weakly nonlinear} regime, the 
leading amplitude of the solution on either side of the current vortex sheet displays a \emph{surface wave} structure: it 
oscillates with the same phase as the front and it is localized near the free surface with exponential decay in the normal 
direction to the free surface. Such a phenomenon is entirely analogous to the description of Rayleigh waves in elastodynamics, 
see, \emph{e.g.}, \cite{Lardner1,Lardner2,Parker,ParkerTalbot,HIZ} and further references therein.

That surface waves occur in the current vortex sheet problem can be explained by performing a so-called normal mode 
analysis. Given a reference piecewise constant solution to the current vortex sheet system 
\eqref{s3-equations_nappes_MHD}, we seek for plane waves of the form $\exp\big( i \, (\tau \, t +\xi_1 \, x_1 +\xi_2 \, x_2 
+\xi_3 \, x_3) \big)$, with $\tau\in\bC$, $(\xi_1,\xi_2)\in\bR^2$ and $\xi_3\in\bC$, which can be solutions to the linearization 
of \eqref{s3-equations_nappes_MHD} at the given piecewise constant solution. Here, the normal coordinate to the (flat) sheet 
is denoted by $x_3$. Due to the divergence-free constraints on the velocity and the magnetic field, the resulting system is 
not a `standard' hyperbolic system. However, the method we use is analogous to the case of free boundary hyperbolic 
problems \cite{BS}: the goal is to verify whether the \emph{weak} and/or the \emph{uniform} Kreiss-Lopatinskii condition \cite{K} 
(ULC for short hereafter) is satisfied in order to obtain a linear stability criterion for planar current vortex sheets. The analysis 
of the linearized problem performed in \cite{Syro,Chandra,Axford1}, and more recently in \cite{MTT}, leads to a necessary 
stability criterion by eliminating the case $\text{Im} \, \tau < 0$ for which the normal modes blow up (the problem would then 
typically be \emph{strongly ill-posed} unless the data are analytic). The limit `neutral' case we are interested in corresponds 
to $\text{Im} \, \tau = 0$; we shall say that the problem is \emph{weakly well-posed}. Weak well-posedness is associated 
with frequencies $(\tau,\xi_1,\xi_2) \in\bR^3$ for which the so-called Lopatinskii determinant vanishes. For the current 
vortex sheet problem \eqref{s3-equations_nappes_MHD}, only the \emph{weak} Lopatinskii condition is fulfilled at best, 
meaning that there does not exist any planar current vortex sheet for which the ULC is satisfied. We refer for instance 
to \cite{MTT} and to Chapter \ref{chapter3} below for more details. The main (striking) result of \cite{SWZ} shows that the 
linear stability criterion that precludes violent instabilities is actually a \emph{sufficient} condition for \emph{nonlinear} 
stability in the Sobolev regularity scale.

In our problem, the roots $(\tau, \xi_1, \xi_2) \in \R^3$ of the Lopatinskii determinant can be parametrized by $(\xi_1,\xi_2)$. 
Namely, under the linear stability condition which we shall recall below, for any fixed tangential frequency $(\xi_1,\xi_2) \in \R^2 
\setminus \{ 0 \}$, there exist two \emph{simple} roots $\tau_\pm(\xi_1,\xi_2)$ of the Lopatinskii determinant, and these roots 
belong to the set of so-called \emph{elliptic frequencies} because the corresponding normal frequency $\xi_3$ is not real (it 
is even a purely imaginary number). Those frequencies are responsible for the creation of surface waves, which corresponds 
to the case $\text{Im} \, \xi_3 \gtrless 0$ (depending on the sign of $x_3$). The solution associated with such frequencies decays 
exponentially with respect to $x_3$. In other problems related to hydrodynamics, such as detonation waves or compressible 
vortex sheets \cite{MR,AM}, the normal frequency $\xi_3$ associated with the roots of the Lopatinskii determinant is real, which 
gives rise to \emph{bulk waves} that radiate into the whole domain, see \cite{BRSZ} for a general description of this class of 
problems. The latter case does not arise when studying current vortex sheets in incompressible MHD. At the opposite, the MHD 
problem we consider here is closer to the one studied by Sabl\'e-Tougeron \cite{S-T} whose prototype example is the system of 
elastodynamics with zero normal stress on the boundary (which gives rise to the so-called Rayleigh waves). Another occurrence 
of surface waves in MHD is the so-called plasma-vacuum interface problem \cite{SecchiTrak,Secchi}.

The main question we address here follows a long line of research, whose rigorous mathematical formulation dates back to Hunter 
\cite{Hunter1989}, see also \cite{AliHunterParker,AustriaHunter,BenzoniRosini,Marcou,CW,WW}, and is concerned with the evolution 
of \emph{weakly nonlinear} surface waves. Up to a time rescaling, we shall thus be concerned with the `slow' modulation of high 
frequency, small amplitude surface wave solutions to \eqref{s3-equations_nappes_MHD}. We follow the seminal work of Al\`i and 
Hunter \cite{AliHunter} with two main extensions; not only do we consider the three dimensional case to the price of some more 
algebra (the analysis in \cite{AliHunter} is performed in two space dimensions), but what is more significant is that we give a complete 
construction of infinitely accurate solutions to \eqref{s3-equations_nappes_MHD} (the analysis in \cite{AliHunter} is restricted more or 
less to the construction of the leading order amplitude). This is done by enlightening several algebraic properties in the analysis of the 
WKB cascade, some of which might be useful in other contexts. The construction of arbitrarily many correctors is not a mere technical 
issue. It is a crucial step towards the rigorous justification that \emph{exact} solutions to \eqref{s3-equations_nappes_MHD} with highly 
oscillating data are actually close to the WKB expansion we shall construct here, see, \emph{e.g.}, \cite{Gues,JMR1,Marcou,Rauch,WW}. 
However, we do not address this \emph{stability} issue here, because of intricate nonlocality issues \cite{SWZ}, and rather focus on the 
construction of a solution to the WKB cascade.

In the following Sections of this introduction, we state our main result by first stating the assumptions on the reference 
planar current vortex sheet and on the frequencies we shall work with. We then introduce the functional framework in 
which we shall solve the WKB cascade that will be made explicit in Chapter \ref{chapter2}. Eventually we state our main 
result and give the plan for its (slightly long) proof.

\section{Choice of parameters and initial data for the front}

Our goal is to construct highly oscillating solutions to \eqref{s3-equations_nappes_MHD} that are small perturbations 
of a reference \emph{piecewise constant} solution to \eqref{s3-equations_nappes_MHD}. The starting point is to fix 
the reference current vortex sheet. By imposing a suitable stability condition, inequality \eqref{s3-hyp_stab_nappe_plane} 
below, this will enable us to fix the planar phase of the oscillations for the front. The goal will then be to describe the 
behavior of the solution to \eqref{s3-equations_nappes_MHD} on either side of the oscillating front by choosing (and 
hopefully one day justifying) a suitable WKB ansatz. As a long term goal, this will justify the asymptotic behavior of the 
exact solution $(U_\eps^\pm,\psi_\eps)$ to \eqref{s3-equations_nappes_MHD} when we impose highly oscillating initial 
data, that is, displaying oscillations at frequencies $\sim \eps^{-1}$, $\eps \ll 1$. (Anticipating a little the notation described 
below, we have collected here all unknowns $(u^\pm,H^\pm,q^\pm)^T$ for \eqref{s3-equations_nappes_MHD} into a 
single vector $U^\pm$ with seven components.) In particular, part of our work aims at justifying that for suitably chosen 
oscillating initial data, the exact solution $(U_\eps^\pm,\psi_\eps)$ to \eqref{s3-equations_nappes_MHD} exists on a 
time interval $[0,T]$ that does not depend on the small wavelength $\eps>0$.

\subsubsection{The reference current vortex sheet}

To be consistent with the notation below for the WKB ansatz, we consider a (steady) piecewise constant solution to 
\eqref{s3-equations_nappes_MHD} of the form
\begin{equation}
\label{reference-current vortex}
U^\pm(t,x) \, = \, \begin{cases}
U^{0,+}\, ,& \text{\rm if } x_3 \in (0,1) \, ,\\
U^{0,-}\, ,& \text{\rm if } x_3 \in (-1,0) \, ,
\end{cases}
\end{equation}
where the two constant states $U^{0,\pm}$, and the corresponding fixed reference front\footnote{We use two functions $\psi^0$ and 
$\psi^1$ since the leading front should be rather thought of as $\psi^0+\eps \, \psi^1$. One possible extension of our work would be 
to study high frequency oscillations on a \emph{curved} current vortex sheet, and in that case, both $\psi^0$ and $\psi^1$ would be 
nontrivial.} $\psi^0,\psi^1$, are given by:
\begin{equation}
\label{s3-def_U^0,pm}
U^{0,\pm} \, := \, (u_1^{0,\pm},u_2^{0,\pm},0,H_1^{0,\pm},H_2^{0,\pm},0,0)^T \, ,\quad \quad \psi^0,\psi^1 \, := \, 0 \, .
\end{equation}
Let us recall that here and from on, the notation $U$ stands for a (column) vector in $\R^7$ whose 
coordinates are labeled $u_1,u_2,u_3,H_1,H_2,H_3,q$. The normalization of the total pressure $q^{0,\pm}=0$ in 
\eqref{s3-def_U^0,pm} is consistent with the choice that is made below for the solution to \eqref{s3-equations_nappes_MHD}, 
namely\footnote{Recall the jump condition $[ \, q \, ]=0$ in \eqref{s3-equations_nappes_MHD} across the interface $\Gamma(t)$, 
so the total pressure in $\Omega^+(t) \cup \Omega^-(t)$ is defined up to a function of time only.}:
$$
\int_{\Omega^+(t)} q^+(t,x) \, {\rm d}x  \, + \, \int_{\Omega^-(t)} q^-(t,x) \, {\rm d}x \, = \, 0 \, .
$$
For later use, we assume that the reference current vortex sheet \eqref{s3-def_U^0,pm} fulfills the following stability 
criterion:
\begin{equation}\tag{H1}
\label{s3-hyp_stab_nappe_plane}
\left| H^{0,+} \times [u^0] \right|^2 \, + \, \left| H^{0,-} \times [u^0] \right|^2 \, < \, 2 \, \left| H^{0,+} \times H^{0,-} \right|^2 \, ,
\end{equation}
where $[u^0] := u^{0,+} - u^{0,-}$ stands for the jump of the velocity $u^0$ across the (flat) sheet $\{x_3 \, = \, \psi^0 (t,x') 
+\eps \, \psi^1(t,x') \, = \, 0 \}$, and $\times$ denotes the cross product in $\R^3$. (Note however that the third coordinate 
of all three vectors $H^{0,\pm}$ and $[u^0]$ is zero hence \eqref{s3-hyp_stab_nappe_plane} involves two dimensional 
vectors only.) The stability condition \eqref{s3-hyp_stab_nappe_plane} has been highlighted in \cite{Syro,Chandra,Axford1} 
and more recently in \cite{MTT,SWZ} and is further discussed in Chapter \ref{chapter3} below. (A restricted version of 
\eqref{s3-hyp_stab_nappe_plane} is used in \cite{Trakhinin,CMST}.)

\subsubsection{The frequencies}

Let us begin with a few notations. The $2 \, \pi-$periodic torus $\R/(2\, \pi \, \Z)$ is denoted by $\bT$ and the tangential 
spatial variable is $x'=(x_1,x_2) \in \bT^2$. We consider a given tangential frequency vector $\xi' =(\xi_1,\xi_2) \in \R^2$ 
which we normalize by assuming $|\xi'| :=\sqrt{\xi_1^2 + \xi_2^2} =1$. We also choose a (real) time frequency $\tau$ 
which will be assumed to meet several requirements below, but let us right away define the planar phase $\tau \, t +\xi' 
\cdot x'$, the notation $\, \cdot \,$ here referring to the inner product of $\bR^2$. With the reference planar current 
vortex sheet defined by \eqref{s3-def_U^0,pm}, we define the following parameters:
\begin{equation}
\label{s3-def_a,b,c}
a^\pm \, := \, \xi_1 \, u_1^{0,\pm} +\xi_2 \, u_2^{0,\pm} \, ,\quad b^\pm \, := \, \xi_1 \, H_1^{0,\pm} +\xi_2 \, H_2^{0,\pm} 
\, ,\quad c^\pm \, := \, \tau + a^\pm \, .
\end{equation}

Given a $2 \, \pi-$periodic function $v=v(x',\theta)$ with respect to each of its arguments $(x',\theta) \in \bT^3$, we shall 
require below that functions of the form:
$$
v_\eps(x') \, := \, v \left( x',\dfrac{\xi'\cdot x'}{\eps} \right) \, ,
$$
be $2 \, \pi-$periodic with respect to $x'$. To do so, we need to impose some additional conditions on the frequency 
vector $\xi'$. We choose the frequency $\xi'$ of the form:
\begin{equation}\tag{H2}
\label{s3-hyp_xi'_rationnelle}
\xi' \, = \, \dfrac{1}{\sqrt{p^2 + q^2}} \, (p,q)^T \quad \text{ with } (p,q)^T \in \Z^2 \backslash \{ 0 \} \, .
\end{equation}
Then, considering the sequence $(\eps_\ell)_{\ell \ge 1}$ defined by:
\begin{equation}
\label{s3-def_eps_j}
\eps_\ell \, := \, \dfrac{1}{\ell \, \sqrt{p^2+q^2}} \, , \quad \ell \ge 1 \, ,
\end{equation}
which tends to $0$ as $\ell$ goes to $+\infty$, we will indeed have $\xi'/\eps_\ell \in \Z^2$ and therefore the above 
function $v_{\eps_\ell}$ will be $2 \, \pi-$periodic with respect to $x'$ for any integer $\ell \ge 1$. In the following, 
the frequency vector $\xi'$ is chosen of the form \eqref{s3-hyp_xi'_rationnelle} and the (small) parameter $\eps$ 
stands for one element of the sequence $(\eps_\ell)_{\ell \ge 1}$ in \eqref{s3-def_eps_j}. When we write $\eps \to 0$, 
we mean that we consider $\eps_\ell$ with $\ell \to +\infty$.

We add another assumption on the frequency $\xi'$ in order to fulfill the technical condition $(c^\pm)^2 \neq (b^\pm)^2$ 
used in Appendix \ref{appendixA}, see in particular the proof of Theorem \ref{theorem_fast_problem} hereafter in Chapter 
\ref{chapter3}:
\begin{equation}\tag{H3}
\label{s3-hyp_delta_neq_0}
\left\{
\begin{array}{l}
\big| a^+ - a^- \big| \, \neq \, \big| b^+ - b^- \big| \, , \\[0.5ex]
\big| a^+ - a^- \big| \, \neq \, \big| b^+ + b^- \big| \, . \\
\end{array}
\right.
\end{equation}
In other words, if we define the following three vectors in $\R^2$:
\begin{equation*}
\mathfrak{u} \, := \, \big( u_1^{0,+} - u_1^{0,-} , u_2^{0,+} - u_2^{0,-} \big) \, ,\quad 
\mathfrak{h}_\pm \, := \, \big( H_1^{0,+} \pm H_1^{0,-} , H_2^{0,+} \pm H_2^{0,-} \big) \, ,
\end{equation*}
then we ask the frequency $\xi'$ not to be orthogonal to the four vectors $\mathfrak{u}-\mathfrak{h}_+$, $\mathfrak{u}-\mathfrak{h}_-$, 
$\mathfrak{u} +\mathfrak{h}_+$, $\mathfrak{u} +\mathfrak{h}_-$. None of these four vectors is zero because of Assumption 
\eqref{s3-hyp_stab_nappe_plane}. Indeed, if we have for instance $\mathfrak{u}=\mathfrak{h}_-$, then it would lead to the 
identity $[u^0] =[H^0]$, and plugging this equality into \eqref{s3-hyp_stab_nappe_plane}, we would obtain:
$$
2 \, \big| H^{0,+} \times H^{0,-} \big|^2 \, < \, 2 \, \big| H^{0,+} \times H^{0,-} \big|^2 \, ,
$$
which is a contradiction. The same argument applies for the three remaining cases. Choosing $\xi'$ of the form 
\eqref{s3-hyp_xi'_rationnelle} and satisfying \eqref{s3-hyp_delta_neq_0} is possible because satisfying 
\eqref{s3-hyp_delta_neq_0} amounts to excluding at most four directions on the unit circle $\mathbb{S}^1$ and 
unit vectors of the form \eqref{s3-hyp_xi'_rationnelle} are dense in $\mathbb{S}^1$.

Given $\xi'\in\bR^2$ satisfying \eqref{s3-hyp_xi'_rationnelle} and \eqref{s3-hyp_delta_neq_0}, it remains to make 
the restrictions on the time frequency $\tau$ explicit. In all what follows, we choose the time frequency $\tau \in \bR$ 
as one (among the two) root(s) of the so-called Lopatinskii determinant defined by equation \eqref{s3-def_det_lop} 
herafter. Anticipating on the computation of the latter quantity in Chapter \ref{chapter3}, we choose $\tau$ as a root 
to the following polynomial equation of degree $2$ (recall the definition \eqref{s3-def_a,b,c}):
\begin{equation}\tag{H4}
\label{s3-hyp_tau_racine_det_lop}
(c^+)^2 +(c^-)^2 \, = \, (b^+)^2 + (b^-)^2 \, .
\end{equation}
That Assumption \eqref{s3-hyp_stab_nappe_plane} on the reference planar current vortex sheet implies that 
\eqref{s3-hyp_tau_racine_det_lop} has two \emph{real} roots follows from elementary algebraic considerations 
which we shall recall in Chapter \ref{chapter3} for the sake of completeness.

The last requirement on the time frequency is the assumption $\tau \neq 0$. This property is also used in Appendix 
\ref{appendixA} to parametrize some eigenspaces. The condition $\tau \neq 0$ automatically follows from 
\eqref{s3-hyp_tau_racine_det_lop} if $u_0^++u_0^-=0$ (in that case $a^+=-a^-$), which can always be achieved by 
using the Galilean invariance of system \eqref{s3-equations_nappes_MHD}.

Let us focus on the fact that assumptions \eqref{s3-hyp_xi'_rationnelle}, \eqref{s3-hyp_delta_neq_0}, 
\eqref{s3-hyp_tau_racine_det_lop} and $\tau \neq 0$ allow to ensure the condition $(c^\pm)^2 \neq (b^\pm)^2$, 
see Appendix \ref{appendixA}, which will turn out to be crucial in the analysis of the WKB cascade.
\bigskip

From now on, the reference planar current vortex sheet and the frequencies $(\tau,\xi_1,\xi_2) \in \R^3$ satisfying 
Assumptions \eqref{s3-hyp_stab_nappe_plane}, \eqref{s3-hyp_xi'_rationnelle}, \eqref{s3-hyp_delta_neq_0}, 
\eqref{s3-hyp_tau_racine_det_lop}, together with $\tau\neq 0$, are {\bf fixed}. We now describe the oscillating data 
that we consider for \eqref{s3-equations_nappes_MHD}.

\subsubsection{Initial data for the front and WKB ansatz}

We consider small, highly oscillating perturbations of the reference constant state $(U^{0,\pm},\psi^0 +\eps \, \psi^1)$. 
To be specific, we shall consider initial data for the front of the form:
\begin{equation}
\label{s3-def_cond_init_oscil_psi}
\psi_\eps(t=0,x') \, := \, \eps^2 \, \psi_0^2 \left( x',\dfrac{\xi'\cdot x'}{\eps} \right) \, , 
\end{equation}
where the initial \emph{profile} $\psi_0^2 \in \cC^\infty (\bT^3)$ is assumed to have zero mean with respect to its last 
argument $\theta\in\bT$. Let us recall that the small parameter $\eps \in (0,1]$ actually stands for any $\eps_\ell$ defined 
by \eqref{s3-def_eps_j} so that $\psi_\eps$ in \eqref{s3-def_cond_init_oscil_psi} is indeed $2\, \pi$-periodic with respect 
to $x'$. We could consider a sequence of profiles $\psi_0^2, \psi_0^3, \psi_0^4, \dots$ and the corresponding initial 
datum:
\begin{equation*}
\psi_\eps(t=0,x') \, = \, \eps^2 \, \psi_0^2 \left( x',\dfrac{\xi'\cdot x'}{\eps} \right) 
+\eps^3 \, \psi_0^3 \left( x',\dfrac{\xi'\cdot x'}{\eps} \right) +\cdots \, ,
\end{equation*}
the series in $\eps$ being either convergent or understood as an asymptotic expansion in $\eps$, but this would not add any 
new phenomenon nor any analytical difficulty; we therefore restrict to the initial datum \eqref{s3-def_cond_init_oscil_psi} for 
notational convenience. Choosing the initial \emph{profile} $\psi_0^2$ to have zero mean with respect to $\theta$ is also 
done for the sake of convenience (see Chapter \ref{chapter5}). In any case, the mean of $\psi_0^2$ with respect to $\theta$ 
does not affect the leading amplitude of the solution on either side of the current vortex sheet.

The initial front $\psi_\eps|_{t=0}$ will take its values in $(-1,1)$ up to restricting $\eps$ if necessary. 
Since $\eps$ is meant to be small, we shall not go back to this issue any longer.

Continuing the analysis of \cite{AliHunter}, we seek an asymptotic expansion of the exact solution $(U_\eps^\pm,\psi_\eps)$ 
to \eqref{s3-equations_nappes_MHD} as a small, highly oscillating perturbation of the reference planar current vortex sheet 
\eqref{s3-def_U^0,pm}. Some attention needs to be paid when formulating the WKB ansatz for $(U_\eps^\pm,\psi_\eps)$. 
The front $\psi_\eps$ is meant to oscillate with the planar phase $\tau \, t +\xi' \cdot x'$ with a slow modulation in the variables 
$(t,x')$. The interior solution $U_\eps^\pm$ will display oscillations with the same planar phase $\tau \, t +\xi'\cdot x'$, and 
exponential decay with respect to the fast normal variable $(x_3-\psi_\varepsilon(t,x'))/\eps$. However, describing the slow 
modulation of $U_\eps^\pm$ requires taking into account the slow normal variable $x_3-\psi_\varepsilon(t,x')$ and the fixed 
top and bottom boundaries $\Gamma^\pm$ too. We thus introduce once and for all a fixed cut-off function $\chi \in 
\cC^\infty(\R)$ such that $\chi \equiv 1$ on $[-1/3,1/3]$ and $\chi$ vanishes outside of  $[-2/3,2/3]$. We aim at constructing, 
and possibly justifying, an asymptotic expansion for $(U_\eps^\pm,\psi_\eps)$ of the following form:
\begin{subequations}
\label{s3-def_dvlpt}
\begin{align}
U_\eps^\pm (t,x) \, & \, \sim \, U^{0,\pm} +\sum_{m \ge 1} \eps^m \, U^{\, m,\pm}\left( t,x',x_3-\chi(x_3) \, \psi_\eps(t,x'), 
\dfrac{x_3 -\psi_\eps(t,x')}{\eps},\dfrac{\tau \, t +\xi'\cdot x'}{\eps} \right) \, ,\label{s3-def_dvlpt_BKW} \\
\psi_\eps (t,x') \, & \, \sim \, \psi^0 +\eps \, \psi^1 +\sum_{m \ge 2} \eps^m \, \psi^m \left( t,x',\dfrac{\tau \, t +\xi'\cdot x'}{\eps} \right) \, .
\label{s3-def_dvlpt_BKW_psi}
\end{align}
\end{subequations}
Of course, the two first terms $\psi^0,\psi^1$ on the right hand side of \eqref{s3-def_dvlpt_BKW_psi} are harmless but are 
placed here to highlight the consistency of our notation. By $\sim$, we mean in \eqref{s3-def_dvlpt} that the series should 
be understood in the sense of asymptotic expansions in $\eps$, see, \emph{e.g.}, \cite{Rauch}. We require the front $\psi_\eps$ 
to match with the function \eqref{s3-def_cond_init_oscil_psi} at $t=0$:
$$
\forall \, (x',\theta) \in \bT^3 \, ,\quad \psi^2(0,x',\theta) \, = \, \psi_0^2 (x',\theta) \, ,\quad \text{ and } \quad 
\forall \, m \ge 3 \, , \quad \psi^m(0,x',\theta) \, = \, 0 \, .
$$

In \eqref{s3-def_dvlpt_BKW}, the profiles $U^{\, m,\pm}$ are functions of 6 variables which we denote $(t,y',y_3,Y_3,\theta)$ 
from now on ($y'$ is two-dimensional). The \emph{slow} variables are $(t,y',y_3)$; $t \in [0,T]$ is the time variable, $y'=x' \in 
\bT^2$ is the tangential spatial variable, and $y_3 =x_3 -\chi (x_3) \, \psi_\varepsilon(t,x') \in (-1,1)$ is the normal variable 
which allows both to lift the free surface in \eqref{s3-equations_nappes_MHD} and to match with the top and bottom 
boundaries. The oscillating current vortex sheet $\Gamma_\eps(t) \, := \, \{ x_3 =\psi_\eps(t,x') \}$ in the original space 
variables corresponds to the fixed interface $\{ y_3=0 \}$ in the straightened variables, while the top and bottom boundaries 
$\Gamma^\pm$ correspond to $\{ y_3 =\pm 1\}$ (at least for any sufficiently small $\eps$). The \emph{fast} variables 
are $(Y_3,\theta)$: $Y_3 =(x_3-\psi_\eps(t,x'))/\eps \in \bR^\pm$ is the fast normal variable which will describe the 
exponential decay of the surface wave, and $\theta =(\tau \, t +\xi'\cdot x')/\eps \in \bT$ is the fast tangential variable 
which describes the oscillations. Let us observe that we do not incorporate the cut-off function $\chi$ in the fast normal 
variable $Y_3$ since exponential decay will yield $O(\eps^\infty)$ -hence negligible- terms outside of $\{ |x_3| \le \delta \}$ 
for any fixed constant $\delta>0$.

One of the main issues here will consist in constructing the profiles $(U^{\, m,\pm},\psi^{\, m+1})_{m \ge 1}$ in the WKB expansions 
\eqref{s3-def_dvlpt_BKW}, \eqref{s3-def_dvlpt_BKW_psi}. To do so, we shall need both the divergence-free constraints 
on the velocity $u_\eps^\pm$ and the magnetic field $H_\eps^\pm$. Although the condition $\dv H_\eps^\pm = 0$ is known to 
be propagated by the solutions of system \eqref{s3-equations_nappes_MHD}, see, \emph{e.g.}, \cite{Trakhinin,Trakhinin-comp,SWZ}, 
it is not clear that the associated constraints on the \emph{profiles} $(H^{\, m,\pm})_{m \ge 1}$ are propagated in time \emph{one 
by one} as well. This is one major algebraic obstacle that we have to tackle here, and it explains why we choose to keep the 
divergence-free constraint \eqref{s3-contrainte_div_H} on $H_\eps^\pm$ separate from the other equations in system 
\eqref{s3-MHD_conservative} below.

It is important to notice that the initial datum associated with $U_\eps^\pm$ \emph{is not free}, as is well known in geometric optics 
because of \emph{polarization}, see \cite{Rauch}. Actually, it turns out that part of the profiles $U^{\, m,\pm}$ will be determined for any 
time $t\in [0,T]$ by solving algebraic equations. In particular, (part of) the initial data for $U_\eps^\pm$ will be computed alongside the 
whole approximate solution. This restricts the choice of initial data for $U_\eps^\pm$; nevertheless the choice of the initial profile 
$\psi_0^2$ for the front is \emph{free}. There are even more degrees of freedom that are clarified later on. However, for simplicity, 
we focus here mainly on the choice of the initial condition $\psi_\eps|_{t=0}$.

The scaling \eqref{s3-def_dvlpt_BKW}, \eqref{s3-def_dvlpt_BKW_psi} we choose here is analogous to the scaling of weakly 
nonlinear geometric optics for the Cauchy problem that can be found in \cite{Gues,JMR1,JMR2}, \cite{Hunter1989,Marcou} 
(for surface waves in a fixed half-space) or \cite{Lescarret}. Discarding the two first zero terms $\psi^0,\psi^1$, the expansion 
\eqref{s3-def_dvlpt_BKW_psi} of $\psi_\eps$ starts with an $O(\eps^2)$ amplitude, since the \emph{gradient} of $\psi_\eps$ 
in \eqref{s3-equations_nappes_MHD} has the same regularity as the trace of $U_\eps^\pm$ on $\Gamma(t)$, see 
\cite{CMST,SWZ}. We can also notice the similarity with uniformly stable shocks studied by Williams \cite{Williams99}, where 
the profile $\psi^1$ (which is zero in our case) does not depend on the fast variables. Because of the difference of one power 
of $\eps$ in \eqref{s3-def_dvlpt_BKW_psi} with respect to \eqref{s3-def_dvlpt_BKW}, the functions $U_\eps^\pm-U^{0,\pm}$ 
and $\nabla_{t,x'} \psi_\eps$ have amplitude $\eps$ in $L^\infty$ and oscillate with frequency $\sim \eps^{-1}$. In what follows, 
we usually study the profiles $U^{\, m,\pm}$ jointly with $\psi^{\, m+1}$. In particular, we shall refer to $(U^{\, 1,\pm},\psi^2)$ as the 
leading amplitude in the WKB expansion \eqref{s3-def_dvlpt}.

\section{The functional framework}

The functional framework we are going to define is inspired from Marcou \cite{Marcou} and Lescarret \cite{Lescarret} 
but we incorporate here some new ingredients. The final time $T >0$ below will be fixed once and for all by Theorem 
\ref{s3-thm_hunter} hereafter and it will only depend on a \emph{fixed} Sobolev norm of the initial profile $\psi_0^2$ in 
\eqref{s3-def_cond_init_oscil_psi} (to be precise, the $H^4$ norm does the job). The spaces of profiles for the WKB ansatz 
\eqref{s3-def_dvlpt_BKW} are defined as follows.

\begin{definition}[Spaces of profiles]
\label{s3-def_espaces_fonctionnels}
The space $\uS^\pm$ denotes the set of functions in $H^\infty \big( [0,T] \times \bT^2 \times I^\pm \times \bT \big)$, 
where $I^+$ (resp. $I^-$) stands for the interval $(0,1)$ (resp. $(-1,0)$). Functions in $\uS^\pm$ depend on the slow 
variables $(t,y)$ and on the fast tangential variable $\theta$.
\bigskip

The space $S_\star^\pm$ denotes the set of functions in $H^\infty \big( [0,T] \times \bT^2 \times I^\pm \times \bR^\pm 
\times \bT \big)$ that decay exponentially as $Y_3 \to \pm \infty$ as well as all their derivatives uniformly with respect 
to all other arguments:
\begin{equation*}
\exists \, \delta >0 \, ,\quad \forall \, \alpha \in \bN^6 \, ,\quad \exists \, C_\alpha > 0 \, ,\quad 
\forall \, Y_3 \gtrless 0, \quad \left\| \, \p^\alpha u_\star^\pm (\cdot,\cdot,Y_3,\cdot) \, \right\|_{L_{t,y,\theta}^\infty} \, 
\le \, C_\alpha \, \mathrm{e}^{\mp \, \delta \, Y_3} \, .
\end{equation*}
Functions in $S_\star^\pm$ depend on both the slow variables $(t,y)$ and the fast variables $(Y_3,\theta)$.
\bigskip

The space of profiles is $S^\pm :=\uS^\pm \oplus S_\star^\pm$ (the sum is direct because functions in $S_\star^\pm$ 
decay exponentially with respect to $Y_3$ and functions in $\uS^\pm$ do not depend on $Y_3$).
\end{definition}

The profiles $U^{\, m,\pm}$, for $m \ge 1$, will be sought in the functional space $S^\pm$. The profiles $\psi^m$, $m \ge 2$, 
in \eqref{s3-def_dvlpt_BKW_psi} will be sought in the functional space $H^\infty ([0,T] \times \bT^3)$. The component on 
$\uS^\pm$ of some $U^\pm \in S^\pm$ is called the \emph{residual} component while the component on $S_\star^\pm$ 
is called the \emph{surface wave} component. Though we are mainly interested in the component on $S_\star^\pm$ of the 
leading amplitude in \eqref{s3-def_dvlpt}, determining the residual components of the correctors is one major obstacle in the 
analysis below. It seems likely that the WKB cascade below can not be solved with profiles $U^{\, m,\pm} \in S_\star^\pm$ 
for all $m \ge 1$. Namely, though the leading profile $U^{\, 1,\pm}$ will belong to $S_\star^\pm$, it is likely that one corrector 
$U^{\, m,\pm}$ will have a nontrivial residual component, which corresponds to a \emph{rectification} phenomenon. Such 
a phenomenon has been rigorously justified by Marcou \cite{MarcouCRAS} for a two-dimensional model of elasticity. In 
\cite{MarcouCRAS}, it is shown that the first corrector has a nontrivial residual component. This will not be the case here 
because the leading profile exhibits interesting orthogonality properties which will imply that the first corrector $U^{\, 2,\pm}$ 
will also belong to $S_\star^\pm$. We have not been able to push further the calculations, but it is likely though that the 
second corrector $U^{\, 3,\pm}$ has a nontrivial residual component. This will be explained in Chapter \ref{chapter6}.
\bigskip

Let us observe that in \cite{Marcou}, functions in $\uS^\pm$ are chosen not to depend on the fast tangential variable $\theta$ 
(the same in \cite{WW}). It does not seem possible to use this framework here due to the form of the source terms in the WKB 
cascade below. Our source terms differ from those in \cite{Marcou,WW} because we deal here with a free boundary problem 
and we consider additional fixed top and bottom boundaries. We believe that our extension of the functional framework might 
be useful in other contexts which also give rise to surface waves on free discontinuities.

Both $\uS^\pm$ and $S^\pm$ are algebras and are stable under differentiation with respect to any of the arguments.

\subsubsection{Notation for profiles}

We shall expand profiles $U^\pm \in S^\pm$ into Fourier series in the fast tangential variable $\theta$. Given $k \in \Z$, the $k$-th 
Fourier coefficient with respect to $\theta$ is denoted $\widehat{U}^\pm (k)$, that is we use the decomposition:
\begin{equation*}
U^\pm (t,y,Y_3,\theta) \, = \, \sum_{k \in \bZ} \, \widehat{U}^\pm (t,y,Y_3,k) \, \mathrm{e}^{i\, k\, \theta} \, .
\end{equation*}
Taking the definition of $S^\pm$ into account, we can also split $U^\pm$ as follows:
\begin{equation}
\label{s3-def_decomp_S^pm}
U^\pm (t,y,Y_3,\theta) \, = \, \underbrace{\uU^\pm(t,y,\theta)}_{\textstyle \in \uS^\pm} 
\, + \, \underbrace{U_\star^\pm (t,y,Y_3,\theta)}_{\textstyle \in S_\star^\pm} \, .
\end{equation}
The zero Fourier mode plays a special role in the analysis of the WKB cascade, as opposed to the nonzero Fourier modes. 
Consistently with \eqref{s3-def_decomp_S^pm}, we split:
\begin{equation*}
\widehat{U}^\pm (0) \, = \, \widehat{\uU}^\pm (0) \, + \, \widehat{U}_\star^\pm (0) \, ,
\end{equation*}
the first term $\widehat{\uU}^\pm (0)$ being referred to as the \emph{slow mean}, and the second term $\widehat{U}_\star^\pm 
(0)$ being referred to as the \emph{fast mean}. Later on, we shall need to further split the fast mean $\widehat{U}_\star^\pm (0)$ 
as follows:
\begin{equation*}
\widehat{U}_\star^\pm (0) \, =\Pi \,\widehat{U}_\star^\pm (0) \, +(I-\Pi) \, \widehat{U}_\star^\pm (0) \, ,
\end{equation*}
where $\Pi := \text{diag}(1,1,0,1,1,0,0) \in \cM_7(\bR)$ is a projector onto the kernel of the Jacobian matrix $A_3^\pm$ defined in 
\eqref{s3-def_A_j_cA} below (the projector does not depend on the state $\pm$ so we omit the superscript here). In other words, 
for $U^\pm =(u^\pm,H^\pm,q^\pm)^T$, the vector:
\begin{equation*}
\Pi \, U^\pm \, =(u_1^\pm,u_2^\pm,0,H_1^\pm,H_2^\pm,0,0)^T \, ,
\end{equation*}
consists in the \emph{tangential} components associated with the velocity $u^\pm$ and the magnetic field $H^\pm$. The vector 
$(I-\Pi) \, U^\pm$ gathers the \emph{noncharacteristic} components of $U^\pm$, which are the normal velocity, the normal 
magnetic field and the total pressure.

We shall see in Chapters \ref{chapter4} and \ref{chapter5} that we own several degrees of freedom for the initial data of the 
mean of the profiles $U^{\, m,\pm}$; for the sake of simplicity, we shall choose to impose zero initial conditions for the fast means 
$\Pi \, \widehat{U}_\star^{\, m,\pm}(0)$. We shall also impose zero initial conditions on the mean of the residual component of 
the leading amplitude $\widehat{\uU}^{\, 1,\pm}(0)$. This choice will allow us to simplify part of the construction of the profiles 
$(U^{\, m,\pm},\psi^{\, m+1})_{m \ge 1}$ and to focus on the `surface wave' component of the leading amplitude, \emph{i.e.} 
the component $U_\star^{\, 1,\pm} \in S_\star^\pm$.

\section{The main result}

The aim of this work is to show the existence of a sequence of profiles $(U^{\, m,\pm},\psi^{\, m+1})_{m \ge 1}$ such that in the sense 
of formal series, \eqref{s3-def_dvlpt_BKW} and \eqref{s3-def_dvlpt_BKW_psi} satisfy \eqref{s3-equations_nappes_MHD} with 
accuracy $O(\eps^\infty)$. A precise statement is the following Theorem.

\begin{theorem}
\label{thm_principal}
Let the reference current vortex sheet defined by \eqref{reference-current vortex}, \eqref{s3-def_U^0,pm} satisfy Assumption 
\eqref{s3-hyp_stab_nappe_plane} and let the frequencies $\tau,\xi'$ satisfy Assumptions \eqref{s3-hyp_xi'_rationnelle}, 
\eqref{s3-hyp_delta_neq_0}, \eqref{s3-hyp_tau_racine_det_lop} together with $\tau \neq 0$. Let also $\psi^2_0 \in H^\infty 
(\bT^2 \times \bT)$ have zero mean with respect to its last argument $\theta$. Then there exists a time $T>0$, that only depends 
on the norm $\| \psi^2_0 \|_{H^4 (\bT^2 \times \bT)}$ such that, with the spaces $S^\pm =\uS^\pm \oplus S_\star^\pm$ of Definition 
\ref{s3-def_espaces_fonctionnels} associated with this given time $T$, there exists a sequence of profiles $(U^{\, m,\pm},\psi^{\, m+1} 
)_{m \ge 1}$ in $S^\pm \times H^\infty ([0,T] \times \bT^2 \times \bT)$ verifying the following properties:
\begin{itemize}
 \item for any $m \ge 2$, $\psi^m|_{t=0} =\delta_{m \, 2} \, \psi^2_0$ (with $\delta$ the Kronecker symbol) and 
 $\p_t \widehat{\psi}^m(0)|_{t=0}=0$,
 \item $\uU^{\, 1,\pm}=0$, $\widehat{U}_\star^{\, 1,\pm}(0)=0$, $\uU^{\, 2,\pm}=0$,
 \item for any $m \ge 2$, $\Pi \, \widehat{U}_\star^{\, m,\pm}(0)|_{t=0} =0$,
 \item for all integer $M \ge 1$, the functions:
\begin{align*}
\psi_\eps^{{\rm app},M} (t,x') \, &:= \, \psi^0 +\eps \, \psi^1 +\sum_{m=2}^{\, m+1} \eps^m \, \psi^m \left( t,x',\dfrac{\tau \, t +\xi'\cdot x'}{\eps} \right) \, ,\\
U_\eps^{{\rm app},M,\pm}(t,x) \, &:= \, U^{0,\pm} +\sum_{m=1}^M \eps^m \, U^{\, m,\pm} \left( t,x',x_3-\chi(x_3) \, \psi_\eps^{{\rm app},M}, 
\dfrac{x_3 -\psi_\eps^{{\rm app},M}}{\eps},\dfrac{\tau \, t +\xi'\cdot x'}{\eps} \right) \, ,
\end{align*}
satisfy (with $N_\eps^{{\rm app},M} :=(-\p_{x_1} \psi_\eps^{{\rm app},M},-\p_{x_2} \psi_\eps^{{\rm app},M},1)^T$):
\begin{align*}
& \, \p_t u_\eps^{{\rm app},M,\pm} +(u_\eps^{{\rm app},M,\pm} \sg) u_\eps^{{\rm app},M,\pm} 
-(H_\eps^{{\rm app},M,\pm} \sg) H_\eps^{{\rm app},M,\pm} +\nabla q_\eps^{{\rm app},M,\pm} \, = \, R_\eps^{\, 1,\pm} \, ,\\
& \, \p_t H_\eps^{{\rm app},M,\pm} +(u_\eps^{{\rm app},M,\pm} \sg) H_\eps^{{\rm app},M,\pm} 
-(H_\eps^{{\rm app},M,\pm} \sg) u_\eps^{{\rm app},M,\pm} \, = \, R_\eps^{\, 2,\pm} \, ,\\
& \, \dv u_\eps^{{\rm app},M,\pm} \, = \, R_\eps^{\, 3,\pm} \, ,\quad \dv H_\eps^{{\rm app},M,\pm} \, = \, R_\eps^{4,\pm} \, ,\\
& \, \p_t \psi_\eps^{{\rm app},M} -u_\eps^{{\rm app},M,\pm}|_{\Gamma_\eps^{{\rm app},M}(t)} \cdot N_\eps^{{\rm app},M} \, = \, R_{b,\eps}^{\, 1,\pm} \, ,\\
& \, H_\eps^{{\rm app},M,\pm}|_{\Gamma_\eps^{{\rm app},M}(t)} \cdot N_\eps^{{\rm app},M} \, = \, R_{b,\eps}^{\, 2,\pm} \, ,\quad 
[ \, q_\eps^{{\rm app},M} \, ] \, =0 \, ,\\
& \, u_{\eps,3}^{{\rm app},M,\pm}|_{\Gamma^\pm} \, = \, R_{b,\eps}^{\, 3,\pm} \, ,\quad 
H_{\eps,3}^{{\rm app},M,\pm}|_{\Gamma^\pm} \, = \, R_{b,\eps}^{4,\pm} \, ,
\end{align*}
where the error terms satisfy the following bounds:
\begin{align*}
& \, \sup_{t \in [0,T] \, , \, x \in \Omega_\eps^{{\rm app},M,\pm}(t)} \, 
\big( |R_\eps^{\, 1,\pm}| +|R_\eps^{\, 2,\pm}| +|R_\eps^{\, 3,\pm}| +|R_\eps^{4,\pm}| \big) \, = \, O(\eps^M) \, , \\
& \, \sup_{t \in [0,T] \, , \, x \in \Gamma_\eps^{{\rm app},M}(t)} \, \big( |R_{b,\eps}^{\, 1,\pm}| +|R_{b,\eps}^{\, 2,\pm}| \big) \, = \, O(\eps^{\, m+1}) \, , \\
& \, \sup_{t \in [0,T] \, , \, x \in \Gamma^\pm} \, \big( |R_{b,\eps}^{\, 3,\pm}| +|R_{b,\eps}^{4,\pm}| \big) \, = \, O(\eps^\infty) \, ,
\end{align*}
where we have used the notation $\Omega_\eps^{{\rm app},M,\pm}(t) := \{ x \in \bT^2 \times (-1,1) \, / \, \pm (x_3 -\psi_\eps^{{\rm app},M}(t,x'))>0 \}$, 
and $\Gamma_\eps^{{\rm app},M}(t) := \{ x \, / \, x_3 = \psi_\eps^{{\rm app},M}(t,x') \}$.
\end{itemize}
\end{theorem}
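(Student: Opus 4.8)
The plan is to construct the profiles $(U^{\, m,\pm},\psi^{\, m+1})_{m \ge 1}$ by induction on $m$, solving the \emph{WKB cascade} obtained by plugging the ansatz \eqref{s3-def_dvlpt_BKW}, \eqref{s3-def_dvlpt_BKW_psi} into the straightened version of \eqref{s3-equations_nappes_MHD} and identifying powers of $\eps$. First I would carry out the change of variables $y_3 = x_3 - \chi(x_3)\, \psi_\eps(t,x')$, $Y_3 = (x_3 - \psi_\eps(t,x'))/\eps$, $\theta = (\tau\, t + \xi'\cdot x')/\eps$, expand all the coefficients (which depend on $\psi_\eps$ and hence on the lower-order profiles $\psi^k$) in powers of $\eps$, and thereby split the interior operator into a \emph{fast operator} $\cL_f^\pm(\p)$ acting on the fast variables $(Y_3,\theta)$ and slower corrections $L_s^\pm(\p)$; the boundary conditions on $\{y_3=0\}$ and on $\Gamma^\pm=\{y_3=\pm1\}$ split similarly. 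At order $\eps^m$ this produces, for each fixed nonzero Fourier mode $k$ in $\theta$, a linear ODE system in $Y_3$ for $\widehat{U}_\star^{\, m,\pm}(k)$ coupled to the front profile $\psi^{\, m+1}$ through the interface conditions, while the zero mode and the residual part $\uU^{\, m,\pm}$ solve a transport-type system in the slow variables; the source terms are universal polynomial expressions in the previously constructed profiles and their derivatives.

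The heart of the argument is the \emph{solvability analysis} of the order-$\eps^m$ system. Because only the weak Kreiss–Lopatinskii condition holds (the roots $\tau_\pm(\xi')$ of the Lopatinskii determinant being simple but elliptic), the homogeneous fast problem has a one-dimensional kernel (for each sign of $Y_3$ and each Fourier mode), so the inhomogeneous fast problem is solvable if and only if the source term is orthogonal to the corresponding cokernel element. The plan is to use the \emph{duality formula} alluded to in the abstract: one pairs the source of the order-$\eps^m$ equation against the dual surface-wave profile and extracts a scalar solvability condition, which turns out to be precisely the equation that determines the \emph{next} front profile $\psi^{\, m+1}$ (at leading order, the HIZ equation for $\psi^2$, driven by $U_\star^{\, 1,\pm}$; at higher order, a linearized HIZ equation for $\psi^{\, m+1}$ with source depending on $(U^{\, j,\pm},\psi^{\, j+1})_{j<m}$). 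Solving that linearized nonlocal Hamilton–Jacobi equation on $[0,T]$ — with $T$ depending only on $\|\psi_0^2\|_{H^4}$ via Theorem \ref{s3-thm_hunter} — gives $\psi^{\, m+1}$; feeding it back makes the fast source solvable, yielding $\widehat{U}_\star^{\, m,\pm}(k)$ for $k\neq 0$ by integrating the ODE in $Y_3$ (exponential decay being automatic from the elliptic structure), and the zero mode plus residual component $\uU^{\, m,\pm}$ by solving their slow transport system with the prescribed zero initial data $\Pi\,\widehat{U}_\star^{\, m,\pm}(0)|_{t=0}=0$, $\widehat{\uU}^{\, 1,\pm}(0)=0$. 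The base case $m=1$ recovers $\uU^{\, 1,\pm}=0$, $U_\star^{\, 1,\pm}\in S_\star^\pm$ polarized along the surface-wave kernel, with amplitude governed by $\psi^2$; the orthogonality properties of this leading profile then force $\uU^{\, 2,\pm}=0$, as stated.

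I expect two steps to be the main obstacles. The first, and genuinely hard, point is verifying that the divergence constraints $\dv u_\eps^\pm = \dv H_\eps^\pm = 0$ are consistent with the cascade \emph{profile by profile}: even though $\dv H_\eps^\pm=0$ is propagated by the full system, it is not obvious that each $\dv H^{\, m,\pm}$-type constraint is preserved in time, and this is why $\dv H_\eps^\pm=0$ is kept separate. The plan here is to show, by an algebraic manipulation of the equation for $H^{\, m,\pm}$ combined with the interface conditions $H^\pm\cdot N=0$, that the quantity measuring the defect in the divergence constraint satisfies a homogeneous transport equation with zero initial data, hence vanishes — this is one of the ``algebraic properties of the WKB cascade'' the introduction promises. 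The second obstacle is checking that the scalar solvability conditions produced by the duality formula are actually \emph{satisfied} (not merely necessary) at every order, i.e.\ that the linearized HIZ equation one writes down is genuinely solvable and that no hidden obstruction appears in the zero Fourier mode or in matching the top/bottom boundary conditions on $\Gamma^\pm$; this is where the ``combination of mere algebra and arguments of combinatorial analysis'' enters, keeping track of the multi-index bookkeeping (the sequences $\bmu$, their length $|\bmu|$ and weight $\avbmu$) that organizes the nonlinear source terms. Once both are in hand, the estimates on the residuals $R_\eps^{\, i,\pm}$, $R_{b,\eps}^{\, i,\pm}$ follow by truncating the formal series at order $M$ and Taylor-expanding the (smooth, exponentially decaying) profiles, the $O(\eps^\infty)$ bound on $\Gamma^\pm$ coming from the exponential decay in $Y_3$ evaluated at $y_3=\pm1$.
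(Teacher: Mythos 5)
Your sketch has the right macro-architecture (straighten, expand, solve the fast problem mode by mode, use duality for the nonzero Fourier modes, feed the orthogonality condition back as an evolution equation for the next front profile, estimate residuals by truncation), but there is a genuine gap in how the front profile $\psi^{\,m+1}$ is determined, and your plan for the divergence constraint is precisely the one the paper rules out. The duality pairing against the dual surface-wave profile $\cL^\pm(k)$ only produces a solvability condition for each $k\neq 0$, hence the (linearized) nonlocal Hamilton--Jacobi equation only pins down the oscillating part $\psi^{\,m+1}_\sharp$. The mean $\widehat{\psi}^{\,m+1}(0)$ never appears in that orthogonality relation. What fixes it is an entirely separate piece of the machinery: the slow mean $\widehat{\uU}^{\,m+1,\pm}(0)$ must solve the linearized incompressible MHD system in $\Omega_0^\pm$ with jump conditions on $\Gamma_0$ and Neumann conditions on $\Gamma^\pm$, and the slow mean of the total pressure satisfies an over-determined coupled Laplace problem whose solvability across $\Gamma_0$ (a Fredholm/compatibility condition for the $y'$-Fourier modes, and a mean-value constraint for the $y'$-average) forces $\widehat{\psi}^{\,m+1}(0)$ to solve a second-order hyperbolic equation on $\Gamma_0$ whose symbol is the Lopatinskii determinant. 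Your description ``a transport-type system in the slow variables'' skips the whole Laplace-problem mechanism, yet without it you cannot even write down $\psi^{\,m+1}$ beyond its $\theta$-oscillating part, and you cannot verify the zero-mean-in-$y'$ constraint on $\widehat{\psi}^{\,m+1}(0)$ needed to make the divergence problems for the initial data solvable.

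On the divergence constraint for $H$: your plan to show that ``the defect satisfies a homogeneous transport equation with zero initial data, hence vanishes'' is exactly the propagation-in-time argument the paper explicitly identifies as \emph{not} obviously applicable at the level of individual profiles (as opposed to the exact solution). The paper's resolution is algebraic, not dynamical: the constraint $\dv H^{\,m,\pm}=0$ in the fast variables reduces to verifying that the source terms of the fast problem satisfy a pointwise compatibility identity $\p_{Y_3}F_6^{\,m,\pm}+\xi_j\p_\theta F_{3+j}^{\,m,\pm}-\tau\,\p_\theta F_8^{\,m,\pm}=0$ (together with a boundary compatibility on $\Gamma_0$), and these are checked directly from the structure of the cascade using the symmetry formulas relating $\chi^{[\ell]}$, $\dot\chi^{[\ell]}$ and $\psi^\mu$ (Lemma \ref{lem_symmetry_1}, Lemma \ref{lem_symmetry_2}, Corollary \ref{corB2}, Proposition \ref{propB1}), proved by combinatorics on the Fa\`a di Bruno/Lagrange inversion expansions. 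A transport argument in time is used only for the slow mean inside $\Omega_0^\pm$ once the compatible initial data have been constructed; it does not substitute for those algebraic verifications. You flag this as the hard point, which is right, but the specific strategy you propose for it would not close.
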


In other words, we can produce approximate solutions to the original free boundary value problem \eqref{s3-equations_nappes_MHD} 
at any desired order of accuracy. By using a Borel summation procedure, one can also achieve infinitely accurate approximate solutions 
(meaning with all error terms being $O(\eps^\infty)$ in the appropriate $L^\infty$ norms).

It is likely that the methods we develop here may prove useful in other related problems of magnetohydrodynamics or other (free) 
boundary value problems for systems of partial differential equations that exhibit surface waves at the linearized level. One such 
example is the plasma vacuum interface problem studied in \cite{Secchi} that presents many similarities with the current vortex 
sheet problem we study here (in particular the evolution equation that governs the leading amplitude evolution has been proved 
in \cite{Secchi} to be the same as the one exhibited in \cite{AliHunter} for current vortex sheets).

The plan of this work is the following. In Chapter \ref{chapter2}, we exhibit the so-called WKB cascade that must be satisfied 
by the profiles $(U^{\, m,\pm},\psi^{\, m+1})$ in \eqref{s3-def_dvlpt} in order to get high order approximate solutions to the original 
equations \eqref{s3-equations_nappes_MHD}. As will be made clear in Chapter \ref{chapter2}, one central problem in the 
iterative construction of the profiles $(U^{\, m,\pm},\psi^{\, m+1})$ is the resolution of the so-called \emph{fast problem} (system 
\eqref{fast_problem} below). Therefore Chapter \ref{chapter3} is devoted to solving \eqref{fast_problem} and in particular 
to making clear the solvability conditions for the source terms in \eqref{fast_problem}. The construction of the profiles 
$(U^{\, m,\pm},\psi^{\, m+1})$ is done in Chapters \ref{chapter4} (for $m=1$, that is for the leading profile) and \ref{chapter5} 
(for $m \ge 2$, that is for the correctors). Chapter \ref{chapter4} is a kind of warm-up for the systematic construction of 
correctors in Chapter \ref{chapter5}. In the weakly nonlinear regime that we consider here, only the leading amplitude will 
satisfy nonlinear evolution equations, hence a separate treatment. The correctors will satisfy linearized versions of the nonlinear 
equations satisfied by the leading amplitude but with nonzero source terms. This is a standard feature of weakly nonlinear geometric 
expansions \cite{Rauch}. Chapter \ref{chapter6} is devoted to the analysis of the so-called rectification phenomenon. Opposite 
to the case of elastodynamics, we show here that the first corrector $U^{\, 2,\pm}$ has no residual component, provided of course 
that the initial data (that can be imposed) are suitably tuned to zero. As explained in Chapter \ref{chapter6}, it seems likely that 
the second corrector $U^{\, 3,\pm}$ always has (or at least generically has) a nontrivial residual component, though a complete 
verification of this fact has been left aside because the algebra involved was too heavy. At last, Appendix \ref{appendixA} gathers 
the expressions of several matrices, eigenvectors and bilinear operators that are involved in the calculations of Chapters 
\ref{chapter3}, \ref{chapter4} and \ref{chapter5}. Appendix \ref{appendixB} gathers what is probably the most original part 
of this work, which is the proof of several algebraic relations between the profiles involved in the WKB cascade. The results 
of Appendix \ref{appendixB} have been proved in a slightly more general framework than the one we consider here in order 
to be applicable to geometric optics problem on a curved background (as opposed to the constant reference solution 
\eqref{reference-current vortex} we consider here).

\chapter{The WKB cascade}
\label{chapter2}

Starting from here, we use the notation $\alpha \in \{ 1,2,3 \}$ to refer to a spatial coordinate and we use the notation $j \in \{ 1,2 \}$ to 
refer to a tangential spatial coordinate $y_j$. When several tangential coordinates are involved, we use both $j$ and $j'$. We also use 
Einstein summation convention on repeated indices.

For practical reasons, we rewrite both evolution equations of \eqref{s3-equations_nappes_MHD} for the velocity and magnetic field together 
with the divergence-free constraint on the velocity\footnote{We remind that this constraint allows to define the total pressure $q^\pm$ through 
the resolution of a suitable Laplace problem, as for the incompressible Euler equations (see, \emph{e.g.}, \cite{Chemin}).} into the following 
\emph{conservative} form:
\begin{equation}
\label{s3-MHD_conservative}
A_0 \, \p_t U^\pm +\p_{x_\alpha} f_\alpha (U^\pm) \, = \, 0 \, , \quad x \in \Omega^\pm(t) \, , \quad t \in [0,T] \, ,
\end{equation}
where we recall that $U$ stands for the vector $(u_1,u_2,u_3,H_1,H_2,H_3,q)^T \in \R^7$, and the matrix $A_0$ is defined by:
\begin{equation}
\label{s3-def_A0}
A_0 \, := \, \begin{bmatrix}
\textbf{I}_6 & 0 \\
0 & 0 \end{bmatrix} \in \cM_7(\bR) \, .
\end{equation}
Let us observe that $A_0$ is not invertible, the last equation in \eqref{s3-MHD_conservative} corresponding to the divergence-free constraint 
on the velocity, which does not include any time derivative. The fluxes $(f_\alpha)_{\alpha=1,2,3}$ in \eqref{s3-MHD_conservative} are explicit 
polynomial expressions of degree at most 2:
\begin{equation}
\label{s3-def_f_j}
f_1(U) :=\begin{bmatrix}
u_1^2 - H_1^2 + q \\
u_1 u_2 - H_1 H_2 \\
u_1 u_3 - H_1 H_3 \\
0 \\
u_1 H_2 - H_1 u_2 \\
u_1 H_3 - H_1 u_3 \\
u_1\end{bmatrix} \, ,\quad f_2(U) :=\begin{bmatrix}
u_2 u_1 - H_2 H_1 \\
u_2^2 - H_2^2 + q \\
u_2 u_3 - H_2 H_3 \\
u_2 H_1 - H_2 u_1 \\
0 \\
u_2 H_3 - H_2 u_3 \\
u_2 \end{bmatrix} \, ,\quad f_3(U) :=\begin{bmatrix}
u_3 u_1 - H_3 H_1 \\
u_3 u_2 - H_3 H_2 \\
u_3^2 - H_3^2 + q \\
u_3 H_1 - H_3 u_1 \\
u_3 H_2 - H_3 u_2 \\
0 \\
u_3 \end{bmatrix} \, .
\end{equation}
For later use, we introduce the Jacobian matrices:
\begin{equation}
\label{s3-def_A_j_cA}
\forall \, \alpha \, = \, 1,2,3 \, ,\quad A_\alpha^\pm \, := \, {\rm d} f_\alpha(U^{0,\pm}) \, ,\quad 
\cA^\pm \, := \, \tau \, A_0 +\xi_1 \, A_1^\pm +\xi_2 \, A_2^\pm \, ,
\end{equation}
and the symmetric bilinear mappings
\begin{equation}
\label{s3-def_bA_j^pm}
\forall \, \alpha \, = \, 1,2,3 \, ,\quad \bA_\alpha(\cdot,\cdot) \, := \, {\rm d}^2 f_\alpha(U^{0,\pm})(\cdot,\cdot) \, .
\end{equation}
Observe that since $f_\alpha$ is polynomial of degree at most $2$, $\bA_\alpha$ does not depend on the state $U^{0,\pm}$ which is 
the reason why we have omitted the $\pm$ superscript. The Jacobian and Hessian matrices of $f_\alpha$ in \eqref{s3-def_A_j_cA} 
and \eqref{s3-def_bA_j^pm}, which appear in the WKB cascade below, are given explicitly in Appendix \ref{appendixA}.

We have decided not to include in \eqref{s3-MHD_conservative} the divergence-free constraint on the magnetic field, but we shall rather 
keep this constraint separate from the remaining partial differential equations:
\begin{equation}
\label{s3-contrainte_div_H}
\dv H^\pm(t,x) \, =0 \, ,\quad x \in \Omega^\pm(t) \, ,\quad t \in [0,T] \, .
\end{equation}
For \emph{exact} solutions to \eqref{s3-equations_nappes_MHD} (supplemented with suitable initial data), it is known that the divergence-free 
constraint \eqref{s3-contrainte_div_H} is only a restriction on the initial data, see \cite{Trakhinin,SWZ}, so it could be `omitted' from system 
\eqref{s3-equations_nappes_MHD}. Nevertheless, since we shall not prescribe arbitrary initial data for $(u^\pm,H^\pm,q^\pm)$, we will need 
to keep the constraint \eqref{s3-contrainte_div_H} to make sure that it is satisfied (at least asymptotically in $\eps$). We shall come back to 
this point later on. Our goal now is to derive the profile equations that are sufficient for \eqref{s3-def_dvlpt_BKW}, \eqref{s3-def_dvlpt_BKW_psi} 
to be an approximate solution to \eqref{s3-equations_nappes_MHD} asymptotically in $\eps$.

\section{The evolution equations and divergence constraints}

We plug the asymptotic expansions \eqref{s3-def_dvlpt_BKW}, \eqref{s3-def_dvlpt_BKW_psi} in \eqref{s3-equations_nappes_MHD} and collect 
the various terms in powers of the small parameter $\eps$. The point is to collect all terms under the form
$$
\sum_{m \ge 0} \eps^m \, \cF^{\, m,\pm} (t,y,Y_3,\theta) 
\Big|_{y'=x',y_3=x_3-\chi(x_3) \, \psi_\eps (t,x'),Y_3=(x_3-\psi_\eps(t,x'))/\eps,\theta=(\tau\, t+\xi' \cdot x')/\eps} \, . 
$$
The only real new additional difficulty compared with previous works, see for instance \cite{AliHunter,Marcou,CW,WW}, is that when we differentiate 
$U_\eps^\pm$ with respect to the variables $t,x_1,x_2$, there are terms of the form:
$$
\p_{y_3} U^{\, m,\pm} (t,y,Y_3,\theta) \, \chi(x_3) \, \p_{t,x_1,x_2} \psi_\eps (t,y',\theta) \, ,
$$
and when we differentiate $U_\eps^\pm$ with respect to $x_3$, there are terms of the form:
$$
\p_{y_3} U^{\, m,\pm} (t,y,Y_3,\theta) \, \chi'(x_3) \, \psi_\eps (t,y',\theta) \, .
$$
In either situation, $\chi(x_3)$ or $\chi'(x_3)$ does not directly read as a function of $(t,y,Y_3,\theta)$ since we easily have $y_3$ in terms of $x_3$ 
but not the other way round. We tus need to invert the relation $y_3=x_3 -\chi(x_3) \, \psi_\eps(t,x')$ in order first to get $x_3$, and then to compose 
with either $\chi$ of $\chi'$ in order to get $\chi(x_3)$ and $\chi'(x_3)$ as asymptotic expansions in $\eps$. Let us observe that the fast tangential 
variable $\theta$, as well as the slow variables $t,y'$, play the role of parameters here. We thus write, in the sense of formal series in $\eps$:
\begin{equation}
\label{defchi}
y_3 \, = \, x_3 -\chi(x_3) \, \sum_{m \ge 2} \eps^m \, \psi^m(t,y',\theta) \, ,
\end{equation}
and invert the latter relation to get $x_3$ in terms of $(t,y,\theta)$. This cumbersome process is done, for instance, by plugging inductively the latter 
expression of $x_3$:
$$
x_3  \, = \, y_3 +\chi \left( y_3 +\chi(x_3) \, \sum_{m \ge 2} \eps^m \, \psi^m(t,y',\theta) \right) \, \sum_{m \ge 2} \eps^m \, \psi^m(t,y',\theta) \, ,
$$
and by performing Taylor expansions in $\eps$. The first terms of this expansion read:
\begin{align*}
x_3  \, = \, y_3 \, & \, +\eps^2 \, \chi (y_3) \, \psi^2 (t,y',\theta) \\
& \, +\eps^3 \, \chi (y_3) \, \psi^3 (t,y',\theta) \\
& \, +\eps^4 \, \left( \chi (y_3) \, \psi^4 (t,y',\theta) +\chi (y_3) \, \chi' (y_3) \, (\psi^2 (t,y',\theta))^2 \right) +\cdots \, .
\end{align*}
For later use, we write the asymptotic expansion of $\chi(x_3)$ and $\chi'(x_3)$ under the following `abstract' form (still in the sense of formal series 
in $\eps$):
\begin{equation}
\label{defchichipointl}
\chi(x_3) \, \sim \, \sum_{m \ge 0} \eps^m \, \chi^{[m]}(t,y,\theta) \, ,\quad 
\chi'(x_3) \, \sim \, \sum_{m \ge 0} \eps^m \, \dot{\chi}^{[m]}(t,y,\theta) \, .
\end{equation}
For instance, some straightforward calculations yield the first terms:
\begin{align*}
\chi^{[0]}(t,y,\theta)&  \, = \, \chi(y_3) \, ,\quad \chi^{[1]}(t,y,\theta)  \, = \, 0 \, ,\quad \chi^{[2]}(t,y,\theta)  \, = \, \chi(y_3) \, \chi'(y_3) \, \psi^2 (t,y',\theta)\, ,\\
\dot{\chi}^{[0]}(t,y,\theta)&  \, = \, \chi'(y_3) \, ,\quad \dot{\chi}^{[1]}(t,y,\theta)  \, = \, 0 \, ,\quad 
\dot{\chi}^{[2]}(t,y,\theta)  \, = \, \chi(y_3) \, \chi''(y_3) \, \psi^2 (t,y',\theta) \, .
\end{align*}
More algebraic properties and relations between the functions $ \chi^{[m]}$, $\dot{\chi}^{[m]}$ and the profiles $\psi^\mu$ will play a crucial role in the 
analysis of Chapter \ref{chapter5}. The details can be found in Appendix \ref{appendixB}. (Actually, we shall obtain in Appendix \ref{appendixB} an 
explicit formula for each function $\chi^{[m]}, \dot{\chi}^{[m]}$ in terms of the $\psi^2,\dots,\psi^m$.)

Once we have written the quantities $\chi(x_3)$ and $\chi'(x_3)$ under the form \eqref{defchichipointl}, it is a long but mere calculus exercise to write down 
the asymptotic expansion of the quantity
$$
A_0 \, \p_t U_\eps^\pm +\p_{x_\alpha} f_\alpha (U_\eps^\pm) \, ,
$$
in terms of $\eps$, with $U_\eps^\pm,\psi_\eps$ as in \eqref{s3-def_dvlpt}. Here we use the crucial fact that the fluxes $f_\alpha$ are quadratic polynomials 
so all derivatives of $f_\alpha$ higher than $3$ vanish. After some calculations, we eventually get:
\begin{multline}
\label{dev_BKW_int0}
A_0 \, \p_t U_\eps^\pm +\p_{x_\alpha} f_\alpha (U_\eps^\pm) \\
\, \sim \, \sum_{m \ge 0} \eps^m \, \Big( \cL_f^\pm(\partial) \, U^{\, m+1,\pm} -F^{\, m,\pm} \Big) 
|_{y'=x',y_3=x_3-\chi(x_3) \, \psi_\eps (t,x'),Y_3=(x_3-\psi_\eps(t,x'))/\eps,\theta=(\tau\, t+\xi' \cdot x')/\eps} \, ,
\end{multline}
where the \emph{fast} operators $\cL_f^\pm(\partial)$ in \eqref{dev_BKW_int0} are defined by:
\begin{equation}
\label{s3-def_L(d)^pm}
\cL_f^\pm(\p) := A_3^\pm \, \p_{Y_3} +\cA^\pm \, \p_\theta \, ,
\end{equation}
and the source term $F^{\, m,\pm}$ in \eqref{dev_BKW_int0} is given for any integer $m \in \N$ by:
\begin{align}
F^{\, m,\pm} \, := \, & \, -L_s^\pm(\p) U^{\, m,\pm} \, 
+\sum_{\ell_1+\ell_2=m+2} \p_\theta \psi^{\ell_1} \, \cA^\pm \, \p_{Y_3} U^{\ell_2,\pm} 
\, +\sum_{\ell_1+\ell_2=m+1} \big( \p_t \psi^{\ell_1} \, A_0 +\p_{y_j} \psi^{\ell_1} \, A_j^\pm \big) \, \p_{Y_3} U^{\ell_2,\pm} \notag \\
& \, +\sum_{\ell_1+\ell_2+\ell_3=m+1} \chi^{[\ell_1]} \, \p_\theta \psi^{\ell_2} \, \cA^\pm \, \p_{y_3} U^{\ell_3,\pm} 
\, +\sum_{\ell_1+\ell_2+\ell_3=m} \chi^{[\ell_1]} \, \big( \p_t \psi^{\ell_1} \, A_0 +\p_{y_j} \psi^{\ell_1} \, A_j^\pm \big) \, \p_{y_3} U^{\ell_3,\pm} \notag \\
& \, +\sum_{\ell_1+\ell_2+\ell_3=m} \dot{\chi}^{[\ell_1]} \, \psi^{\ell_2} \, A_3^\pm \, \p_{y_3} U^{\ell_3,\pm} 
-\sum_{\substack{\ell_1+\ell_2=m \\ \ell_1,\ell_2 \ge 1}} \bA_\alpha (U^{\ell_1,\pm},\p_{y_\alpha} U^{\ell_2,\pm}) \notag \\
& \, -\sum_{\substack{\ell_1+\ell_2=m+1 \\ \ell_1,\ell_2 \ge 1}} \xi_j \, \bA_j (U^{\ell_1,\pm},\p_\theta U^{\ell_2,\pm}) 
-\sum_{\substack{\ell_1+\ell_2=m+1 \\ \ell_1,\ell_2 \ge 1}} \bA_3(U^{\ell_1,\pm} ,\p_{Y_3} U^{\ell_2,\pm}) \label{s3-def_terme_source_F^m,pm} \\
& \, +\sum_{\substack{\ell_1+\ell_2+\ell_3=m+2 \\ \ell_2,\ell_3 \ge 1}} \p_\theta \psi^{\ell_1} \, 
\xi_j \, \bA_j (U^{\ell_2,\pm},\p_{Y_3} U^{\ell_3,\pm}) 
+\sum_{\substack{\ell_1+\ell_2+\ell_3=m+1 \\ \ell_2,\ell_3 \ge 1}} \p_{y_j} \psi^{\ell_1} \, 
\bA_j (U^{\ell_2,\pm},\p_{Y_3} U^{\ell_3,\pm}) \notag \\
& \, +\sum_{\substack{\ell_1+\cdots+\ell_4=m+1 \\ \ell_3,\ell_4 \ge 1}} \chi^{[\ell_1]} \, \p_\theta \psi^{\ell_2} \, 
\xi_j \, \bA_j (U^{\ell_3,\pm},\p_{y_3} U^{\ell_4,\pm}) 
+\sum_{\substack{\ell_1+\cdots+\ell_4=m \\ \ell_3,\ell_4 \ge 1}} \chi^{[\ell_1]} \, \p_{y_j} \psi^{\ell_2} \, 
\bA_j (U^{\ell_3,\pm},\p_{y_3} U^{\ell_4,\pm}) \notag \\
& \, +\sum_{\substack{\ell_1+\cdots+\ell_4=m \\ \ell_3,\ell_4 \ge 1}} 
\dot{\chi}^{[\ell_1]} \, \psi^{\ell_2} \, \bA_3 (U^{\ell_3,\pm} ,\p_{y_3} U^{\ell_4,\pm}) \, .\notag 
\end{align}
Just a few words on the expression \eqref{s3-def_terme_source_F^m,pm}. First, the \emph{slow} operator $L_s^\pm(\p)$ appearing on the first 
line of \eqref{s3-def_terme_source_F^m,pm} is defined by:
\begin{equation}
\label{s3-def_op_diff_cT^pm}
L_s^\pm(\p) \, := \, A_0 \, \p_t \, + \, A_\alpha^\pm \, \p_{y_\alpha} \, ,
\end{equation}
which corresponds to the linearization of the MHD equations around the constant states given in \eqref{s3-def_U^0,pm}. In all expressions such 
as \eqref{s3-def_terme_source_F^m,pm} (and many to come later on), we keep the convention $\psi^0,\psi^1 \equiv 0$, see \eqref{s3-def_U^0,pm}, 
so when a sum involves the functions $\psi^{\ell}$, the terms corresponding to $\ell=0$ and $\ell=1$ can be discarded (though we most often omit to 
mention it). Similarly, all terms involving a partial derivative of the profile $U^{0,\pm}$ can be discarded since these profiles are constant, see 
\eqref{s3-def_U^0,pm}.

The precise -though lengthy- expression \eqref{s3-def_terme_source_F^m,pm} of the source term $F^{\, m,\pm}$ will be absolutely crucial to verify 
several compatibility conditions in Chapter \ref{chapter5} when we construct the profiles $(U^{\, m,\pm},\psi^{\, m+1})_{m \ge 1}$. As is customary in 
geometric optics \cite{Rauch}, the source term $F^{\, m,\pm}$ is entirely defined by the profiles $U^{\, 1,\pm},\dots,U^{\, m,\pm},\psi^2,\dots,\psi^{\, m+1}$. 
More precisely, the very last front profile $\psi^{\, m+1}$ only enters $F^{\, m,\pm}$ through the fast partial derivative $\p_\theta \psi^{\, m+1}$. In other 
words, we do not need to know the mean $\widehat{\psi}^{\, m+1}(0)$ of $\psi^{\, m+1}$ with respect to $\theta$ to compute $F^{\, m,\pm}$. Computing 
the limit of \eqref{s3-def_terme_source_F^m,pm} as $Y_3$ tends to infinity, we get the expression:
\begin{align}
\uF^{\, m,\pm} := & \, -L_s^\pm(\p) \uU^{\, m,\pm} \, 
+\sum_{\ell_1+\ell_2+\ell_3=m+1} \chi^{[\ell_1]} \, \p_\theta \psi^{\ell_2} \, \cA^\pm \, \p_{y_3} \uU^{\ell_3,\pm} \notag \\
& \, +\sum_{\ell_1+\ell_2+\ell_3=m} \chi^{[\ell_1]} \, \big( \p_t \psi^{\ell_1} \, A_0 +\p_{y_j} \psi^{\ell_1} \, A_j^\pm \big) \, \p_{y_3} \uU^{\ell_3,\pm} 
+\sum_{\ell_1+\ell_2+\ell_3=m} \dot{\chi}^{[\ell_1]} \, \psi^{\ell_2} \, A_3^\pm \, \p_{y_3} \uU^{\ell_3,\pm} \notag \\
& \, -\sum_{\substack{\ell_1+\ell_2=m+1 \\ \ell_1,\ell_2 \ge 1}} 
\xi_j \, \bA_j (\uU^{\ell_1,\pm},\p_\theta \uU^{\ell_2,\pm}) 
-\sum_{\substack{\ell_1+\ell_2=m \\ \ell_1,\ell_2 \ge 1}} \bA_\alpha (\uU^{\ell_1,\pm},\p_{y_\alpha} \uU^{\ell_2,\pm}) 
\label{terme_source_F^m,pmbarre} \\
& \, +\sum_{\substack{\ell_1+\cdots+\ell_4=m+1 \\ \ell_3,\ell_4 \ge 1}} 
\chi^{[\ell_1]} \, \p_\theta \psi^{\ell_2} \, \xi_j \, \bA_j (\uU^{\ell_3,\pm},\p_{y_3} \uU^{\ell_4,\pm}) 
+\sum_{\substack{\ell_1+\cdots+\ell_4=m \\ \ell_3,\ell_4 \ge 1}} 
\chi^{[\ell_1]} \, \p_{y_j} \psi^{\ell_2} \, \bA_j (\uU^{\ell_3,\pm},\p_{y_3} \uU^{\ell_4,\pm}) \notag \\
& \, +\sum_{\substack{\ell_1+\cdots+\ell_4=m \\ \ell_3,\ell_4 \ge 1}} 
\dot{\chi}^{[\ell_1]} \, \psi^{\ell_2} \, \bA_3 (\uU^{\ell_3,\pm},\p_{y_3} \uU^{\ell_4,\pm}) \, .\notag
\end{align}

By using the symmetry of the $\bA_\alpha$'s, we get from \eqref{s3-def_terme_source_F^m,pm} the expressions:
\begin{subequations}
\label{s3-def_F^1,pm}
\begin{align}
F^{0,\pm} & \, =0 \, ,\label{s3-def_terme_source_F^0,pm} \\
F^{\, 1,\pm} & \, =-L_s^\pm(\p)U^{\, 1,\pm} \, + \, \p_\theta\psi^2 \, \cA^\pm \, \p_{Y_3}U^{\, 1,\pm} \, 
- \, \dfrac{1}{2} \, \big( \xi_j \, \p_\theta \bA_j(U^{\, 1,\pm},U^{\, 1,\pm}) + \p_{Y_3}\bA_3(U^{\, 1,\pm},U^{\, 1,\pm}) 
\big) \, .\label{s3-def_terme_source_F^1,pm}
\end{align}
\end{subequations}
The symmetry of the $\bA_\alpha$'s will be useful in Chapter \ref{chapter4} when constructing the leading amplitude of the WKB ansatz, and 
also later in Chapter \ref{chapter5} when constructing the correctors. This is the reason why we have chosen to write the MHD system in its 
conservative form \eqref{s3-MHD_conservative} rather than in its nonconservative form.

Since we wish to solve \eqref{s3-MHD_conservative} asymptotically at any order in $\eps$, a sufficient condition for doing so is to require that 
each term in the asymptotic expansion \eqref{dev_BKW_int0} vanishes. We are then led to the so-called WKB cascade which the profiles 
$(U^{\, m,\pm},\psi^{\, m+1})_{m \ge 1}$ should satisfy:
\begin{equation}
\label{dev_BKW_int}
\forall \, m \ge 0 \, ,\quad \cL_f^\pm(\partial) \, U^{\, m+1,\pm} \, = \, F^{\, m,\pm} \, ,\quad 
(t,y',y_3,Y_3,\theta) \in [0,T] \times \bT^2 \times I^\pm \times \R^\pm \times \bT \, ,
\end{equation}
with the source term $F^{\, m,\pm}$ defined by \eqref{s3-def_terme_source_F^m,pm}. The slow variables $(t,y)$ play the role of parameters in 
\eqref{dev_BKW_int} since the operators $\cL_f^\pm(\partial)$ only act on the fast variables $(Y_3,\theta)$.

Let us be a little more specific on the seventh equation in \eqref{dev_BKW_int}, which corresponds to the divergence-free constraint on the 
velocity field. Using that the seventh line of the $\bA_\alpha$'s is zero (the divergence constraint is linear !), see Appendix \ref{appendixA}, 
we find that the seventh line of \eqref{dev_BKW_int} reads:
\begin{subequations}\label{s3-cascade_div_u^m+1,pm_div_H^m+1,pm}
\begin{equation}
\label{s3-cascade_div_u^m+1,pm}
\begin{aligned}
\p_{Y_3} u_3^{\, m+1,\pm} \, +\xi_j \, \p_\theta u_j^{\, m+1,\pm} \, =& \, -\nabla \cdot u^{\, m,\pm} \, 
+\sum_{\ell_1+\ell_2=m+2} \p_\theta \psi^{\ell_1} \, \xi_j \, \p_{Y_3} u_j^{\ell_2,\pm} 
+\sum_{\ell_1+\ell_2=m+1} \p_{y_j} \psi^{\ell_1} \, \p_{Y_3} u_j^{\ell_2,\pm} \\[0.5ex]
& \, +\sum_{\ell_1+\ell_2+\ell_3=m+1} \chi^{[\ell_1]} \, \p_\theta \psi^{\ell_2} \xi_j \, \p_{y_3} u_j^{\ell_3,\pm} 
+\sum_{\ell_1+\ell_2+\ell_3=m} \chi^{[\ell_1]} \, \p_{y_j} \psi^{\ell_2} \, \p_{y_3} u_j^{\ell_3,\pm} \\[0.5ex]
& \, +\sum_{\ell_1+\ell_2+\ell_3=m} \dot{\chi}^{[\ell_1]} \, \psi^{\ell_2} \, \p_{y_3} u_3^{\ell_3,\pm} \, .
\end{aligned}
\end{equation}
Similarly, plugging the WKB ansatz for the magnetic field in the constraint \eqref{s3-contrainte_div_H}, we find that the profiles 
$(H^{\, m,\pm})_{m \ge 1}$ must satisfy the following constraints:
\begin{equation}
\label{s3-cascade_div_H^m+1,pm}
\begin{aligned}
\p_{Y_3} H_3^{\, m+1,\pm} \, +\xi_j \, \p_\theta H_j^{\, m+1,\pm} \, =& \, -\nabla \cdot H^{\, m,\pm} \, 
+\sum_{\ell_1+\ell_2=m+2} \p_\theta \psi^{\ell_1} \, \xi_j \, \p_{Y_3} H_j^{\ell_2,\pm} 
+\sum_{\ell_1+\ell_2=m+1} \p_{y_j} \psi^{\ell_1} \, \p_{Y_3} H_j^{\ell_2,\pm} \\[0.5ex]
& \, +\sum_{\ell_1+\ell_2+\ell_3=m+1} \chi^{[\ell_1]} \, \p_\theta \psi^{\ell_2} \xi_j \, \p_{y_3} H_j^{\ell_3,\pm} 
+\sum_{\ell_1+\ell_2+\ell_3=m} \chi^{[\ell_1]} \, \p_{y_j} \psi^{\ell_2} \, \p_{y_3} H_j^{\ell_3,\pm} \\[0.5ex]
& \, +\sum_{\ell_1+\ell_2+\ell_3=m} \dot{\chi}^{[\ell_1]} \, \psi^{\ell_2} \, \p_{y_3} H_3^{\ell_3,\pm} \, .
\end{aligned}
\end{equation}
\end{subequations}
For future use, we rewrite \eqref{s3-cascade_div_H^m+1,pm} under the form:
\begin{equation}
\label{s3-cascade_div_H^m+1,pm'}
\p_{Y_3} H_3^{\, m+1,\pm} \, +\xi_j \, \p_\theta H_j^{\, m+1,\pm} \, = \, F_8^{\, m,\pm} \, ,
\end{equation}
where $F_8^{\, m,\pm}$ is a short notation for the right hand side of \eqref{s3-cascade_div_H^m+1,pm}. Observe again that $\psi^{\, m+1}$ 
enters the right hand side of \eqref{s3-cascade_div_H^m+1,pm} only through its fast derivative $\p_\theta \psi^{\, m+1}$. Equation 
\eqref{s3-cascade_div_H^m+1,pm'} is the analogue of \eqref{s3-cascade_div_u^m+1,pm}, which reads:
$$
\p_{Y_3} u_3^{\, m+1,\pm} \, +\xi_j \, \p_\theta u_j^{\, m+1,\pm} \, = \, F_7^{\, m,\pm} \, .
$$
Equations \eqref{s3-cascade_div_u^m+1,pm_div_H^m+1,pm} will be later referred to as the \emph{fast divergence} constraints.

In Chapters \ref{chapter4} and \ref{chapter5}, we shall construct a sequence of profiles that satisfy \eqref{dev_BKW_int} and 
\eqref{s3-cascade_div_H^m+1,pm'} together with several jump and boundary conditions which we are going to derive now.

\section{The jump conditions}

The jump conditions for the WKB cascade are obtained by plugging the expressions \eqref{s3-def_dvlpt_BKW}, \eqref{s3-def_dvlpt_BKW_psi} 
in the jump conditions \eqref{int-cond_bord} which appear in the current vortex sheet system \eqref{s3-equations_nappes_MHD}. Let us recall 
that on the free surface $\{ x_3=\psi_\eps(t,x') \}$, there holds $y_3=Y_3=0$ (recall $\chi \equiv 1$ near $0$ so $\chi(\psi_\eps) \sim 1$ at any 
order in $\eps$), which is the reason why a double trace appears in the jump conditions for the WKB cascade below. Collecting the powers of 
$\eps$, we derive the following set of equations, where $m \ge 0$:
\begin{subequations}\label{s3-cascade_BKW_bord_simplifiee_m_geq_0}
\begin{equation}
\label{s3-cascade_BKW_bord_simplifiee_m_geq_2}
\left\{
\begin{array}{r c l}
\ds u_3^{\, m+1,\pm} \, -c^\pm \, \p_\theta \psi^{\, m+2} & = & \p_t\psi^{\, m+1} +u_j^{0,\pm} \, \p_{y_j} \psi^{\, m+1} \\[1ex]
& & \ds +\sum_{\substack{\ell_1+\ell_2=m+2 \\ \ell_2 \ge 1}} \p_\theta \psi^{\ell_1} \, \xi_j \, u_j^{\ell_2,\pm} 
+\sum_{\substack{\ell_1+\ell_2=m+1 \\ \ell_2 \ge 1}} \p_{y_j} \psi^{\ell_1} \, u_j^{\ell_2,\pm} \, , \\[1ex]
\ds H_3^{\, m+1,\pm} \, -b^\pm \, \p_\theta \psi^{\, m+2} & = & \ds H_j^{0,\pm} \, \p_{y_j} \psi^{\, m+1} 
+\sum_{\substack{\ell_1+\ell_2=m+2 \\ \ell_2 \ge 1}} \p_\theta \psi^{\ell_1} \, \xi_j \, H_j^{\ell_2,\pm} 
+\sum_{\substack{\ell_1+\ell_2=m+1 \\ \ell_2 \ge 1}} \p_{y_j} \psi^{\ell_1} \, H_j^{\ell_2,\pm} \, , \\[1ex]
\ds q^{\, m+1,+} \, -q^{\, m+1,-} & = & 0 \, ,
\end{array}
\right.
\end{equation}
where all functions $u_\alpha^{\ell,\pm}$, $H_\alpha^{\ell,\pm}$, $q^{\ell,\pm}$ are evaluated at $y_3=Y_3=0$, and the set of equations 
\eqref{s3-cascade_BKW_bord_simplifiee_m_geq_2} should be satisfied for all $(t,y',\theta) \in [0,T] \times \bT^2 \times \bT$.

Let us make the two first cases $m=0$ and $m=1$ in \eqref{s3-cascade_BKW_bord_simplifiee_m_geq_2} more explicit. For $m=0$, we get 
the \emph{homogeneous} system: 
\begin{equation}
\label{s3-cond_bord_U^1,pm}
\left\{
\begin{array}{r c l}
u_3^{\, 1,\pm} |_{y_3=Y_3=0} \, -c^\pm \, \p_\theta \psi^2 & = & 0 \, , \\[0.5ex]
H_3^{\, 1,\pm} |_{y_3=Y_3=0} \, -b^\pm \, \p_\theta \psi^2 & = & 0 \, , \\[0.5ex]
q^{\, 1,+} |_{y_3=Y_3=0} \, -q^{\, 1,-} |_{y_3=Y_3=0} & = & 0 \, .
\end{array}
\right.
\end{equation}
For $m=1$, we get the \emph{inhomogeneous} system:
\begin{equation}
\label{s3-cascade_BKW_bord_simplifiee_m=1}
\left\{
\begin{array}{r c l}
\ds u_3^{\, 2,\pm} |_{y_3=Y_3=0} \, -c^\pm \, \p_\theta \psi^3 & = & \p_t \psi^2 +u_j^{0,\pm} \, \p_{y_j} \psi^2 
+\p_\theta \psi^2 \, \xi_j \, u_j^{\, 1,\pm} |_{y_3=Y_3=0} \, , \\[1ex]
\ds H_3^{\, 2,\pm} |_{y_3=Y_3=0} \, -b^\pm \, \p_\theta \psi^3 & = & H_j^{0,\pm} \, \p_{y_j} \psi^2 
+\p_\theta \psi^2 \, \xi_j \, H_j^{\, 1,\pm} |_{y_3=Y_3=0} \, , \\[1ex]
q^{\, 2,+} |_{y_3=Y_3=0} \, -q^{\, 2,-} |_{y_3=Y_3=0} & = & 0 \, .
\end{array}
\right.
\end{equation}
\end{subequations}

It is convenient for later use to rewrite the set \eqref{s3-cascade_BKW_bord_simplifiee_m_geq_2} of jump conditions in a more compact form. 
We first define the following matrices $B^\pm \in \cM_{5,7}(\R)$:
\begin{equation}
\label{s3-def_B^pm}
B^+ := \begin{bmatrix}
0 & 0 & 1 & 0 & 0 & 0 & 0 \\
0 & 0 & 0 & 0 & 0 & 1 & 0 \\
0 & 0 & 0 & 0 & 0 & 0 & 0 \\
0 & 0 & 0 & 0 & 0 & 0 & 0 \\
0 & 0 & 0 & 0 & 0 & 0 & 1 \end{bmatrix} \, ,\quad B^- := \begin{bmatrix}
0 & 0 & 0 & 0 & 0 & 0 & 0 \\
0 & 0 & 0 & 0 & 0 & 0 & 0 \\
0 & 0 & 1 & 0 & 0 & 0 & 0 \\
0 & 0 & 0 & 0 & 0 & 1 & 0 \\
0 & 0 & 0 & 0 & 0 & 0 & -1 \end{bmatrix} \, ,
\end{equation}
as well as the vector:
\begin{equation}
\label{s3-def_cal_b}
\ub \, := -(c^+,b^+,c^-,b^-,0)^T \in \bR^5 \, .
\end{equation}
We can then rewrite the jump conditions \eqref{s3-cascade_BKW_bord_simplifiee_m_geq_0} in the more compact form:
\begin{equation}
\label{s3-cascade_BKW_bord_matriciel}
\forall \, m \ge 0 \, ,\quad 
B^+ \, U^{\, m+1,+}|_{y_3=Y_3=0} \, +B^- \, U^{\, m+1,-}|_{y_3=Y_3=0} \, +\p_\theta \psi^{\, m+2} \, \ub \, = \, G^m \, ,
\end{equation}
where the source term $G^m$ has the form:
\begin{subequations}
\label{terme_source_bord_BKW}
\begin{equation}
\label{s3-def_G^m}
G^m \, := \, \big( \, G_1^{\, m,+},G_2^{\, m,+},G_1^{\, m,-},G_2^{\, m,-},0 \big)^T \, ,
\end{equation}
with:
\begin{equation}
\label{s3-def_G_1^m,pm}
G_1^{\, m,\pm} \, := \, \p_t \psi^{\, m+1} +u_j^{0,\pm} \, \p_{y_j} \psi^{\, m+1} 
+\sum_{\substack{\ell_1+\ell_2=m+2 \\ \ell_2 \ge 1}} \p_\theta \psi^{\ell_1} \, \xi_j \, u_j^{\ell_2,\pm}|_{y_3=Y_3=0} 
+\sum_{\substack{\ell_1+\ell_2=m+1 \\ \ell_2 \ge 1}} \p_{y_j} \psi^{\ell_1} \, u_j^{\ell_2,\pm}|_{y_3=Y_3=0} \, ,
\end{equation}
\begin{equation}
\label{s3-def_G_2^m,pm}
G_2^{\, m,\pm} \, := \, H_j^{0,\pm} \, \p_{y_j} \psi^{\, m+1} 
+\sum_{\substack{\ell_1+\ell_2=m+2 \\ \ell_2 \ge 1}} \p_\theta \psi^{\ell_1} \, \xi_j \, H_j^{\ell_2,\pm}|_{y_3=Y_3=0} 
+\sum_{\substack{\ell_1+\ell_2=m+1 \\ \ell_2 \ge 1}} \p_{y_j} \psi^{\ell_1} \, H_j^{\ell_2,\pm}|_{y_3=Y_3=0} \, .
\end{equation}
\end{subequations}
It is important to observe that the mean $\widehat{\psi}^{\, m+1}(0)$ does enter the definition of the source term $G^m$. More precisely, the 
nonzero Fourier coefficients of $G^m$ depend on $U^{\, 1,\pm},\dots,U^{\, m,\pm},\psi^2,\dots,\psi^m$ and $\p_\theta \psi^{\, m+1}$, but the mean 
$\widehat{G}^m(0)$ does depend on $\widehat{\psi}^{\, m+1}(0)$. This fact will be important in the induction process of Chapter \ref{chapter5}.

\section{The fixed boundaries}

We now focus on the boundary conditions on the top and bottom boundaries $\Gamma^\pm$. Here we should recall that profiles in the space 
$S^\pm$ are decomposed following \eqref{s3-def_decomp_S^pm} as the sum of a first profile that is independent of the fast normal variable 
$Y_3$, and of a second profile that decays exponentially with respect to $Y_3$. For $x_3 =\pm 1$, there holds $|x_3 -\psi_\eps(t,x')|/\eps \ge 
1/(2\, \eps)$ for any sufficiently small $\eps$, so the surface wave component of the profile is $O(\eps^\infty)$ when evaluated at the top and 
bottom boundaries.

The boundary conditions $(u_\eps)_3^\pm|_{x_3=\pm 1}=(H_\eps)_3^\pm|_{x_3=\pm 1}=0$ will therefore be satisfied asymptotically in $\eps$ 
if there holds:
\begin{equation}
\label{s3-cond_bords_fixes_uH_3^m,pm}
\forall \, m \ge 0 \, ,\quad 
\underline{u}_3^{\, m+1,\pm}|_{y_3=\pm 1} \, = \, \underline{H}_3^{\, m+1,\pm}|_{y_3=\pm 1} \, = \, 0 \, .
\end{equation}
Observe that in \eqref{s3-cond_bords_fixes_uH_3^m,pm}, the trace of $\underline{u}_3^{\, m+1,\pm}$ at $y_3=\pm 1$ is a function of $(t,y',\theta)$ 
so \eqref{s3-cond_bords_fixes_uH_3^m,pm} is a condition for all Fourier modes with respect to $\theta$ which we can rephrase as:
$$
\forall \, m \ge 0 \, ,\quad \forall \, k \in \Z \, ,\quad 
\widehat{\underline{u}}_3^{\, m+1,\pm} (t,y',\pm 1,k) \, =\widehat{\underline{H}}_3^{\, m+1,\pm} (t,y',\pm 1,k) \, = \, 0 \, .
$$

Here we see why introducing the cut-off function $\chi$ in the slow normal variable $y_3$ was convenient. If we had chosen the probably more 
natural candidate $y_3=x_3 -\psi_\eps(t,x')$ as the slow normal variable, the boundary conditions at $x_3=\pm 1$ would have mixed the profiles 
$(U^{\, m,\pm})_{m \ge 1}$ for the physical quantities (velocity, magnetic field) with the profiles $(\psi^{\, m+1})_{m \ge 1}$ for the front. Beyond the 
notational inconvenience, this would have made the verification of some compatibility conditions even more cumbersome than they will be in 
our framework. The price to pay with our choice for the slow normal variable is the introduction of the many additional terms in 
\eqref{s3-def_terme_source_F^m,pm} that involve the functions $\chi^{[\ell]}, \dot{\chi}^{[\ell]}$. The algebra involved with these additional terms 
is one of the achievements of our work and is detailed in Appendix \ref{appendixB}.

\section{Normalizing the total pressure}

It will be convenient in the induction argument described in Chapters \ref{chapter4} and \ref{chapter5} to determine the total pressure 
in a fixed given way. Observe indeed that in \eqref{s3-equations_nappes_MHD}, the total pressure is defined up to a function of time. 
In other words, we can always shift $q^\pm$ by a given function of time:
$$
q^+(t,x) +Q(t) \, ,\quad q^-(t,x) +Q(t) \, ,
$$
and we still get a solution to \eqref{s3-equations_nappes_MHD}. To avoid this indeterminacy, we make the same choice as in 
\cite{CMST,SWZ} and fix the total pressure $(q^+,q^-)$ by imposing the zero mean condition:
$$
\int_{\Omega^+(t)} q^+(t,x) \, {\rm d}x \, + \, \int_{\Omega^-(t)} q^-(t,x) \, {\rm d}x =0 \, .
$$
For the oscillating problem, since the subdomains also depend on the wavelength $\eps$, the latter normalization conditions reads:
\begin{equation}
\label{s3-cond_moy_nulle_Omega_q_eps^pm_2}
\forall \, t \in [0,T] \, ,\quad 
\int_{\Omega_\varepsilon^+(t)} q_\varepsilon^+(t,x) \, {\rm d}x \, + \, 
\int_{\Omega_\varepsilon^-(t)} q_\varepsilon^-(t,x) \, {\rm d}x \, = \, 0 \, ,
\end{equation}
where the domains $\Omega_\varepsilon^\pm(t)$ are given by:
$$
\Omega_\varepsilon^\pm(t) \, := \, \big\{ x \in \bT^2 \times (-1,1) \, \big| \, x_3 \gtrless \psi_\varepsilon(t,x') \big\} \, .
$$

Our goal here is to compute the asymptotic expansion with respect to the small parameter $\eps$ of the integrals in 
\eqref{s3-cond_moy_nulle_Omega_q_eps^pm_2}, when the total pressure and the front follow the asymptotic expansions 
\eqref{s3-def_dvlpt_BKW}, \eqref{s3-def_dvlpt_BKW_psi}. The expansion will give rise to the normalization conditions for 
each profile of the sequence $(q^{\, m,\pm})_{m \ge 1}$.

There are several steps in the calculation. First, we observe that the domains $\Omega_\varepsilon^\pm (t)$ have measure $1+o(1)$ as 
$\eps$ tends to zero, so any $O(\eps^k)$ contribution in $q_\varepsilon^\pm$ will give at best an $O(\eps^k)$ contribution in the integrals 
of \eqref{s3-cond_moy_nulle_Omega_q_eps^pm_2}. This means that if we write
\begin{equation}
\label{BKW_norm_pression}
\int_{\Omega_\varepsilon^+(t)} q_\varepsilon^+(t,x) \, {\rm d}x \, + \, 
\int_{\Omega_\varepsilon^-(t)} q_\varepsilon^-(t,x) \, {\rm d}x \sim \sum_{m \ge 1} \eps^m \, \cI^m (t) \, ,
\end{equation}
then $\cI^m (t)$ only depends on the finitely many profiles $q^{\, 1,\pm},\dots,q^{\, m,\pm}$. Let us emphasize that the asymptotic expansion 
\eqref{BKW_norm_pression} begins indeed with the $\eps^1$ scale because of the normalization condition \eqref{s3-def_U^0,pm} for the 
reference current vortex sheet (which makes the term $\cI^0$ automatically vanish). We are now going to make the functions $\cI^m$, 
$m \ge 1$, in \eqref{BKW_norm_pression} explicit. For each profile
$$
q^{\, m,\pm}\left( t,x',x_3-\chi(x_3) \, \psi_\eps(t,x'),\dfrac{x_3 -\psi_\eps(t,x')}{\eps},\dfrac{\tau \, t +\xi'\cdot x'}{\eps} 
\right) \, ,\quad m \ge 1\, ,
$$
we need to compute its integral on $\Omega_\eps^\pm(t)$, then multiply by $\eps^m$, see \eqref{s3-def_dvlpt_BKW}, sum with respect to 
$m \ge 1$ and rearrange the formal series in terms of powers of $\eps$ thus obtaining \eqref{BKW_norm_pression}. We explain below how 
one can derive the asymptotic expansion of the integral:
\begin{equation}
\label{def_int_Jm+(t)}
J_\eps^{\, m,+}(t) \, := \, \int_{\Omega_\eps^+(t)} q^{\, m,+} \left( t,x',x_3-\chi(x_3) \, \psi_\eps(t,x'), 
\dfrac{x_3 -\psi_\eps(t,x')}{\eps},\dfrac{\tau \, t +\xi'\cdot x'}{\eps} \right) \, {\rm d}x \, .
\end{equation}
The integral $J_\eps^{\, m,-}(t)$ on the other side of the current vortex sheet is dealt with in exactly the same way so we omit the details.
\bigskip

$\bullet$ \emph{Straightening the domain}. We first make the change of variables $(x',x_3) \rightarrow (y',y_3)$ with $y':=x'$ and 
$y_3 :=x_3 -\chi(x_3) \, \psi_\eps (t,x')$. This is indeed, at least formally, a change of variable for $\eps$ sufficiently small since $\psi_\eps$ 
is expected to be $O(\eps^2)$ in $L^\infty$ so the Jacobian of this transformation does not vanish. The domain $\Omega_\eps^+(t)$ is 
mapped onto the fixed domain $\Omega_0^+ := \bT^2 \times I^+$, which corresponds to the flat front $\psi_\eps \equiv 0$. The integral 
$J_\eps^{\, m,+}(t)$ in \eqref{def_int_Jm+(t)} is rewritten accordingly:
$$
J_\eps^{\, m,+}(t) \, = \, \int_{\Omega_0^+} q^{\, m,+} \left( t,y',y_3,\dfrac{y_3 +(\chi(x_3)-1) \, \psi_\eps(t,y')}{\eps}, 
\dfrac{\tau \, t +\xi'\cdot y'}{\eps} \right) \, \dfrac{{\rm d}y}{1-\chi'(x_3) \, \psi_\eps(t,y')} \, ,
$$
where, for any given time $t$ and $y \in \Omega_0^+$, $x_3 \in (\psi_\eps(t,y'),1)$ denotes the unique solution to the equation:
$$
y_3 =x_3-\chi(x_3) \, \psi_\eps(t,y') \, .
$$
The decomposition \eqref{s3-def_decomp_S^pm} of the profile $q^{\, m,+}$ on $\uS^+ \oplus S_\star^+$ yields a decomposition of the above 
integral $J_\eps^{\, m,+}(t)$. Each of the two terms is examined separately.
\bigskip

$\bullet$ \emph{The surface wave component}. Let us first look at:
$$
J_{\eps,\star}^{\, m,+}(t) \, := \, \int_{\Omega_0^+} 
q_\star^{\, m,+} \left( t,y',y_3,\dfrac{y_3 +(\chi(x_3)-1) \, \psi_\eps(t,y')}{\eps},\dfrac{\tau \, t +\xi'\cdot y'}{\eps} \right) 
\, \dfrac{{\rm d}y}{1-\chi'(x_3) \, \psi_\eps(t,y')} \, .
$$
We recall that the cut-off function $\chi$ equals $1$ on the interval $[-1/3,1/3]$ so $\chi(x_3)$ equals $1$ at any order in $\eps$ for, say, 
$y_3 \in [0,1/6]$. In the same way, there holds $\chi'(x_3)=0$ for $y_3 \in [0,1/6]$ and any $\eps$ sufficiently small. For $y_3 \ge 1/6$, we 
recall that $q_\star^{\, m,+}$ has exponential decay with respect to the fast normal variable $Y_3$, so the `remainder' term:
$$
\int_{\bT^2 \times (1/6,1)} q_\star^{\, m,+} \left( t,y',y_3,\dfrac{y_3 +(\chi(x_3)-1) \, \psi_\eps(t,y')}{\eps},\dfrac{\tau \, t +\xi'\cdot y'}{\eps} \right) 
\, \dfrac{{\rm d}y' \, {\rm d}y_3}{1-\chi'(x_3) \, \psi_\eps(t,y')}
$$ 
is an $O(\eps^\infty)$. (It is actually an exponentially small term.) We may thus write:
\begin{align*}
J_{\eps,\star}^{\, m,+}(t) \, =& \, \int_{\bT^2 \times (0,1/6)} 
q_\star^{\, m,+} \left( t,y',y_3,\dfrac{y_3}{\eps},\dfrac{\tau \, t +\xi'\cdot y'}{\eps} \right) \, {\rm d}y' \, {\rm d}y_3 +O(\eps^\infty) \\
=& \, \int_{\Omega_0^+} q_\star^{\, m,+} \left( t,y',y_3,\dfrac{y_3}{\eps},\dfrac{\tau \, t +\xi'\cdot y'}{\eps} \right) 
\, {\rm d}y' \, {\rm d}y_3 +O(\eps^\infty) \, ,
\end{align*}
where the final equality comes again from the exponential decay of $q_\star^{\, m,+}$ with respect to $Y_3$. We now decompose the profile 
$q_\star^{\, m,+}$ in Fourier series with respect to $\theta$. For each $k \neq 0$, the integral
$$
\int_{\Omega_0^+} \widehat{q}_\star^{\, m,+} \left( t,y',y_3,\dfrac{y_3}{\eps},k \right) \, 
\exp \left( i\, k \, \dfrac{\tau \, t +\xi'\cdot y'}{\eps} \right) \, {\rm d}y' \, {\rm d}y_3 \, ,
$$
is shown to be an $O(\eps^\infty)$ by means of integration by parts in $y'$ - the so-called stationary or rather nonstationary phase method 
(here we use $\xi' \neq 0$ and the fact that the fast normal variable $y_3/\eps$ is independent of $y'$). We are thus left with:
\begin{align*}
J_{\eps,\star}^{\, m,+}(t) \, = \, & \, \int_{\Omega_0^+} \widehat{q}_\star^{\, m,+} \left( t,y',y_3,\dfrac{y_3}{\eps},0 \right) \, {\rm d}y' \, {\rm d}y_3 
+O(\eps^\infty) \\
=\, & \, \eps \, \int_{\bT^2 \times [0,1/\eps]} \widehat{q}_\star^{\, m,+} (t,y',\eps \, Y_3,Y_3,0) \, {\rm d}y' \, {\rm d}Y_3 +O(\eps^\infty) \\
\sim \, & \, \sum_{\ell \ge 0} \, \dfrac{\eps^{\ell+1}}{\ell \, !} \, \int_{\bT^2 \times \R^+} Y_3^\ell \, 
\p_{y_3}^\ell \widehat{q}_\star^{\, m,+} (t,y',0,Y_3,0) \, {\rm d}y' \, {\rm d}Y_3 \, .
\end{align*}
Collecting the contributions of each integral $J_{\eps,\star}^{\, m,+}(t)$, which we recall is part of the integral $J_\eps^{\, m,+}(t)$ in 
\eqref{def_int_Jm+(t)}, and adding up with the analogous contributions from the minus side $\Omega_\eps^-(t)$, we end up with:
\begin{multline}
\label{expression_Jm(t)-1}
\sum_{m \ge 1} \eps^m \, \Big( J_{\eps,\star}^{\, m,+}(t) +J_{\eps,\star}^{\, m,-}(t) \Big) \\
\sim \, \sum_{m \ge 2} \eps^m \, \sum_{\substack{\ell_1+\ell_2=m-1 \\ \ell_1 \ge 1}} \dfrac{1}{\ell_2 \, !} \, \left\{ 
\int_{\bT^2 \times \R^+} Y_3^{\ell_2} \, \p_{y_3}^{\ell_2} \widehat{q}_\star^{\, \ell_1,+} (t,y',0,Y_3,0) \, 
{\rm d}y' \, {\rm d}Y_3 \right. \\
\left. +\int_{\bT^2 \times \R^-} 
Y_3^{\ell_2} \, \p_{y_3}^{\ell_2} \widehat{q}_\star^{\, \ell_1,-} (t,y',0,Y_3,0) \, {\rm d}y' \, {\rm d}Y_3 \right\} \, .
\end{multline}
\bigskip

$\bullet$ \emph{The residual component}. We now examine the integral:
$$
\uJ_\eps^{\, m,+}(t) \, := \, \int_{\Omega_0^+} 
\uq^{\, m,+} \left( t,y',y_3,\dfrac{\tau \, t +\xi'\cdot y'}{\eps} \right) \, \dfrac{{\rm d}y}{1-\chi'(x_3) \, \psi_\eps(t,y')} \, .
$$
We know from \eqref{defchichipointl} that $\chi'(x_3)$ can be expanded in terms of $\eps$ as:
$$
\chi'(x_3) \, \sim \, \sum_{m \ge 0} \, \eps^m \, \dot{\chi}^{[m]} \left( t,y',y_3,\dfrac{\tau \, t +\xi'\cdot y'}{\eps} \right) \, ,
$$
with suitable profiles $\dot{\chi}^{[m]}$, $m \ge 0$, and the asymptotic expansion of $\psi_\eps$ is given by 
\eqref{s3-def_dvlpt_BKW_psi}. We may thus write:
\begin{equation}
\label{dev-jacobien}
\dfrac{1}{1-\chi'(x_3) \, \psi_\eps(t,y')} \, \sim \, 1 +\sum_{m \ge 2} \, \eps^m \, \cJ^m 
\left( t,y',y_3,\dfrac{\tau \, t +\xi'\cdot y'}{\eps} \right) \, .
\end{equation}
Each function $\cJ^m$, $m \ge 2$, in \eqref{dev-jacobien} can be computed by starting from:
$$
1-\chi'(x_3) \, \psi_\eps(t,y') \, \sim \, 1-\sum_{m \ge 2} \eps^m \, 
\sum_{\ell_1+\ell_2=m} \dot{\chi}^{[\ell_1]} (t,y,\theta) \, \psi^{\ell_2} (t,y',\theta) \, ,
$$
and then computing the asymptotic expansion in $\eps$ of the inverse (see \cite{Comtet} for some explicit formula). In particular, there holds:
$$
\cJ^2 (t,y,\theta) \, = \, \chi'(y_3) \, \psi^2 (t,y',\theta) \, .
$$

At this stage, we plug the expansion \eqref{dev-jacobien} into the expression of the integral $\uJ_\eps^{\, m,+}(t)$. Using again the stationary 
phase method, we end up with:
$$
\uJ_\eps^{\, m,+}(t) \, \sim \, \int_{\Omega_0^+} \widehat{\uq}^{\, m,+} (t,y,0) \, {\rm d}y 
+\sum_{\ell \ge 2} \, \eps^\ell \, \int_{\Omega_0^+} \! \! \int_\bT \uq^{\, m,+} (t,y,\theta) \, \cJ^\ell \, (t,y,\theta) \, 
{\rm d}y \, \dfrac{{\rm d}\theta}{2\, \pi} \, .
$$
Multiplying by $\eps^m$ and summing over $m \ge 1$, we end up with:
\begin{multline}
\label{expression_Jm(t)-2}
\sum_{m \ge 1} \, \eps^m \, \Big( \uJ_\eps^{\, m,+}(t) +\uJ_\eps^{\, m,-}(t) \Big) 
\, \sim \, \sum_{m \ge 1} \, \eps^m \, \left( \int_{\Omega_0^+} \widehat{\uq}^{\, m,+} (t,y,0) \, {\rm d}y 
+\int_{\Omega_0^-} \widehat{\uq}^{\, m,-} (t,y,0) \, {\rm d}y \right) \\
+\sum_{m \ge 3} \, \eps^m \, \sum_{\substack{\ell_1+\ell_2=m \\ \ell_1 \ge 1,\ell_2 \ge 2}} 
\int_{\Omega_0^+} \! \! \int_\bT \uq^{\ell_1,+} (t,y,\theta) \, \cJ^{\ell_2} \, (t,y,\theta) \, {\rm d}y \, \dfrac{{\rm d}\theta}{2\, \pi} 
+\int_{\Omega_0^-} \! \! \int_\bT \uq^{\ell_1,-} (t,y,\theta) \, \cJ^{\ell_2} \, (t,y,\theta) \, {\rm d}y \, \dfrac{{\rm d}\theta}{2\, \pi} \, .
\end{multline}
\bigskip

We now collect the contributions \eqref{expression_Jm(t)-1} and \eqref{expression_Jm(t)-2} to get the expression of $\cI^m (t)$ in 
\eqref{BKW_norm_pression}. We have just derived the expression:
\begin{equation}
\label{expression_Im(t)}
\forall \, m \ge 1 \, ,\quad \cI^m (t) \, = \, \int_{\Omega_0^+} \widehat{\uq}^{\, m,+}(t,y,0) \, {\rm d}y \, + \, 
\int_{\Omega_0^-} \widehat{\uq}^{\, m,-}(t,y,0) \, {\rm d}y \, + \,  \bI^{\, m-1} (t) \, ,
\end{equation}
where $\bI^{\, m-1} (t)$ is computed from the `previous' profiles $q^{\, 1,\pm},\dots,q^{\, m-1,\pm}$, $\psi^2,\dots,\psi^m$ by setting:
\begin{align}
\bI^{\, m-1} (t) \, :=& \, \sum_{\substack{\ell_1+\ell_2=m-1 \\ \ell_1 \ge 1}} \dfrac{1}{\ell_2 \, !} \, \left\{ 
\int_{\bT^2 \times \R^+} Y_3^{\ell_2} \, \p_{y_3}^{\ell_2} \widehat{q}_\star^{\, \ell_1,+} (t,y',0,Y_3,0) \, 
{\rm d}y' \, {\rm d}Y_3 \right. \notag \\
& \qquad \qquad \qquad \qquad \qquad \left. +\int_{\bT^2 \times \R^-} 
Y_3^{\ell_2} \, \p_{y_3}^{\ell_2} \widehat{q}_\star^{\, \ell_1,-} (t,y',0,Y_3,0) \, {\rm d}y' \, {\rm d}Y_3 \right\} 
\label{expression_Im(t)-bis} \\
& \, +\sum_{\substack{\ell_1+\ell_2=m \\ \ell_1 \ge 1,\ell_2 \ge 2}} 
\int_{\Omega_0^+} \! \! \int_\bT \uq^{\ell_1,+} (t,y,\theta) \, \cJ^{\ell_2} \, (t,y,\theta) \, 
{\rm d}y \, \dfrac{{\rm d}\theta}{2\, \pi} 
+\int_{\Omega_0^-} \! \! \int_\bT \uq^{\ell_1,-} (t,y,\theta) \, \cJ^{\ell_2} \, (t,y,\theta) \, 
{\rm d}y \, \dfrac{{\rm d}\theta}{2\, \pi} \, .\notag
\end{align}
Since we wish each term $\cI^m(t)$ in \eqref{BKW_norm_pression} to vanish, this will lead from the expression \eqref{expression_Im(t)} 
to determine inductively the slow mean $(\widehat{\uq}^{\, m,\pm}(0))_{m \ge 1}$ of the total pressure by setting:
\begin{equation}
\label{BKW_norm_pression_m}
\int_{\Omega_0^+} \widehat{\uq}^{\, m,+}(t,y,0) \, {\rm d}y \, + \, 
\int_{\Omega_0^-} \widehat{\uq}^{\, m,-}(t,y,0) \, {\rm d}y \, = \, -\bI^{\, m-1} (t) \, ,
\end{equation}
with $\bI^{\, m-1} (t)$ given in \eqref{expression_Im(t)-bis}. In particular, examining the relation \eqref{BKW_norm_pression_m} shows that 
the first three slow means $\widehat{\uq}^{\, m,\pm}(0)$, $m=1,2,3$, should satisfy:
\begin{subequations}
\begin{align}
\int_{\Omega_0^+} \widehat{\uq}^{\, 1,+}(t,y,0) \, {\rm d}y \, + \, 
\int_{\Omega_0^-} \widehat{\uq}^{\, 1,-}(t,y,0) \, {\rm d}y \, =& \, 0 \, ,\label{expression_I1(t)} \\
\int_{\Omega_0^+} \widehat{\uq}^{\, 2,+}(t,y,0) \, {\rm d}y \, + \, 
\int_{\Omega_0^-} \widehat{\uq}^{\, 2,-}(t,y,0) \, {\rm d}y \, =& -\int_{\bT^2 \times \R^+} 
\widehat{q}_\star^{\, 1,+} (t,y',0,Y_3,0) \, {\rm d}y' \, {\rm d}Y_3 \label{expression_I2(t)} \\
& \, -\int_{\bT^2 \times \R^-} \widehat{q}_\star^{\, 1,-} (t,y',0,Y_3,0) \, {\rm d}y' \, {\rm d}Y_3 \, ,\notag \\
\int_{\Omega_0^+} \widehat{\uq}^{\, 3,+}(t,y,0) \, {\rm d}y \, + \, 
\int_{\Omega_0^-} \widehat{\uq}^{\, 3,-}(t,y,0) \, {\rm d}y \, =& -\int_{\bT^2 \times \R^+} 
\widehat{q}_\star^{\, 2,+} (t,y',0,Y_3,0) \, {\rm d}y' \, {\rm d}Y_3 \label{expression_I3(t)} \\
& \, -\int_{\bT^2 \times \R^-} \widehat{q}_\star^{\, 2,-} (t,y',0,Y_3,0) \, {\rm d}y' \, {\rm d}Y_3 \, ,\notag \\
& \, -\int_{\bT^2 \times \R^+} Y_3 \, \p_{Y_3} \widehat{q}_\star^{\, 1,+} (t,y',0,Y_3,0) \, {\rm d}y' \, {\rm d}Y_3 \notag \\
& \, -\int_{\bT^2 \times \R^-} Y_3 \, \p_{Y_3} \widehat{q}_\star^{\, 1,-} (t,y',0,Y_3,0) \, {\rm d}y' \, {\rm d}Y_3 \notag \\
&\, - \int_{\Omega_0^+} \! \! \int_\bT \uq^{\, 1,+} (t,y,\theta) \, \chi'(y_3) \, \psi^2 (t,y',\theta) \, {\rm d}y \, \dfrac{{\rm d}\theta}{2\, \pi} \notag \\
&\, - \int_{\Omega_0^-} \! \! \int_\bT \uq^{\, 1,-} (t,y,\theta) \, \chi'(y_3) \, \psi^2 (t,y',\theta) \, {\rm d}y \, \dfrac{{\rm d}\theta}{2\, \pi} \, .\notag
\end{align}
\end{subequations}
It will turn out that with our choice of initial data, the whole residual total pressure $\uq^{\, 1,\pm}$, including the slow mean 
$\widehat{\uq}^{\, 1,\pm}(0)$ will vanish, and the fast mean $\widehat{q}_\star^{\, 1,\pm}(0)$ will also vanish. Hence the above 
relations \eqref{expression_I2(t)} and \eqref{expression_I3(t)} will reduce to:
\begin{align*}
\int_{\Omega_0^+} \widehat{\uq}^{\, 2,+}(t,y,0) \, {\rm d}y \, + \, 
\int_{\Omega_0^-} \widehat{\uq}^{\, 2,-}(t,y,0) \, {\rm d}y \, =& \, 0 \, , \\
\int_{\Omega_0^+} \widehat{\uq}^{\, 3,+}(t,y,0) \, {\rm d}y \, + \, 
\int_{\Omega_0^-} \widehat{\uq}^{\, 3,-}(t,y,0) \, {\rm d}y \, =& \, 
-\int_{\bT^2 \times \R^+} \widehat{q}_\star^{\, 2,+} (t,y',0,Y_3,0) \, {\rm d}y' \, {\rm d}Y_3 \\
& \, -\int_{\bT^2 \times \R^-} \widehat{q}_\star^{\, 2,-} (t,y',0,Y_3,0) \, {\rm d}y' \, {\rm d}Y_3 \, .
\end{align*}

\section{Summary}

Let us now summarize the equations to be solved. We wish to determine some profiles $(U^{\, m,\pm},\psi^{\, m+1})_{m \ge 0}$ that satisfy:
\begin{itemize}
 \item the fast system \eqref{dev_BKW_int} together with the fast divergence constraint \eqref{s3-cascade_div_H^m+1,pm'} on the 
 magnetic field (the source term $F^{\, m,\pm}$ in \eqref{dev_BKW_int} is explicitly given in \eqref{s3-def_terme_source_F^m,pm} and 
 the source term $F_8^{\, m,\pm}$ in \eqref{s3-cascade_div_H^m+1,pm'} denotes the right hand side of \eqref{s3-cascade_div_H^m+1,pm}),
 \item the jump conditions \eqref{s3-cascade_BKW_bord_matriciel} for the double traces on $\{ y_3=Y_3=0 \}$ (the source terms 
 $G_1^{\, m,\pm},G_2^{\, m,\pm}$ in \eqref{s3-cascade_BKW_bord_matriciel} are explicitly given in  \eqref{s3-def_G_1^m,pm}, 
 \eqref{s3-def_G_2^m,pm}),
 \item the top and bottom boundary conditions \eqref{s3-cond_bords_fixes_uH_3^m,pm} for all Fourier modes of the residual 
 components $\underline{u}_3^{\, m,\pm},\underline{H}_3^{\, m,\pm}$ of the normal velocity and normal magnetic field,
 \item the normalization constraints ${\mathcal I}^m(t)=0$ for the slow mean of the total pressure $q^{\, m,\pm}$, with ${\mathcal I}^m$ 
 given by \eqref{expression_Im(t)}, \eqref{expression_Im(t)-bis}.
\end{itemize}

\noindent All these equations are supplemented with initial conditions in agreement with \eqref{s3-def_cond_init_oscil_psi}. The initial data 
for the front profiles $(\psi^m)_{m \ge 2}$ are\footnote{Recall that in \eqref{s3-cond_init_profils_psi_0^m}, $\psi^2_0$ is assumed to have 
zero mean with respect to $\theta$.}:
\begin{equation}
\label{s3-cond_init_profils_psi_0^m}
\forall \, (x',\theta) \in \bT^3 \, ,\quad \psi^2(0,y',\theta) \, = \, \psi_0^2 (y',\theta) \, ,\quad \text{ and } \quad 
\forall \, m \ge 3 \, , \quad \psi^m(0,y',\theta) \, = \, 0 \, ,
\end{equation}
and the initial data for the fast mean of the tangential components of each $U^{\, m,\pm}$ will be zero:
\begin{equation}
\label{s3-cond_moyenne_initiale_U^m,pm}
\forall \, m \ge 1 \, ,\quad \Pi \, \widehat{U}_\star^{\, m,\pm}(0,y,0) \, = \, 0 \, .
\end{equation}
We shall go back later on in Chapters \ref{chapter4} and \ref{chapter5} to the problem of determining the initial data for the slow mean 
of the profiles $U^{\, m,\pm}$. There is some flexibility there too, but the data have to be compatible with some divergence constraints so 
we have thought it more convenient to examine the determination of initial data for the slow mean $\widehat{\uU}^{\, m,\pm}(0)$, $m \ge 1$, 
when it arises in the analysis.

The cornerstone of the whole program consists in solving the so-called {\it fast} problem \eqref{fast_problem} below in which we focus on 
the fast equations \eqref{dev_BKW_int}, \eqref{s3-cascade_div_H^m+1,pm'} and the jump conditions \eqref{s3-cascade_BKW_bord_matriciel}. 
Depending on whether $m=0$ or $m \ge 1$, the fast problem has to be solved either in the homogeneous or nonhomogeneous case, which 
will be done in Chapter \ref{chapter3} hereafter. We shall go back later in Chapters \ref{chapter4} and \ref{chapter5} to the top and bottom 
boundary conditions and to the total pressure normalization.

\chapter{Analysis of the fast problem}
\label{chapter3}

Constructing a solution to the WKB cascade \eqref{dev_BKW_int}, \eqref{s3-cascade_div_H^m+1,pm'}, \eqref{s3-cascade_BKW_bord_matriciel}, 
\eqref{s3-cond_bords_fixes_uH_3^m,pm} is done inductively. At each step of the induction process, one point in the analysis is to solve a system 
of equations of the form:
\begin{equation}
\label{fast_problem}
\begin{cases}
\cL_f^\pm(\partial) \, U^\pm \, = \, F^\pm \, ,& y_3 \in I^\pm \, ,\, \pm Y_3 >0 \, ,\\
\p_{Y_3} H_3^\pm +\xi_j \, \p_\theta H_j^\pm \, = \, F_8^\pm \, ,& y_3 \in I^\pm \, ,\, \pm Y_3 >0 \, ,\\
B^+ \, U^+|_{y_3=Y_3=0} +B^- \, U^-|_{y_3=Y_3=0} +\partial_\theta \psi \, \ub \, = \, G \, ,
\end{cases}
\end{equation}
where the fast operators $\cL_f^\pm(\partial)$ have ben defined in \eqref{s3-def_L(d)^pm}, the matrices $B^\pm$ have been defined in 
\eqref{s3-def_B^pm} and the vector $\ub$ is given by \eqref{s3-def_cal_b}. We forget temporarily the top and bottom boundary conditions 
on $\Gamma^\pm$ since they can be dealt with more easily than all remaining equations. Both the source terms and the solution in 
\eqref{fast_problem} are real valued.

It is important to observe that in \eqref{fast_problem}, the slow variables $(t,y')$ enter as parameters. The slow normal variable $y_3$ also 
enters as a parameter in the two first equations, but the boundary conditions on $\Gamma_0$ only bear on the trace on $\{ y_3=0 \}$. However, 
for later use, it is useful to consider interior source terms $F^\pm,F_8^\pm$ that also depend on $(t,y)$ and a boundary source term $G$ that 
depends on $(t,y')$. The main purpose is to clarify the functional framework in which \eqref{fast_problem} can be solved. Since the unknown 
front profile $\psi$ in \eqref{fast_problem} only appears through its $\theta$-derivative, it will of course be defined only up to its mean with 
respect to $\theta$ (this mean being a function of $(t,y')$).

We shall refer from now on to \eqref{fast_problem} as the `fast problem'. In this Section, we fix a time $T>0$. The functional spaces $\uS^\pm$, 
$S_\star^\pm$, $S^\pm$ are defined accordingly, see Definition \ref{s3-def_espaces_fonctionnels}. Our main result in this Chapter is the following.

\begin{theorem}
\label{theorem_fast_problem}
Let Assumptions \eqref{s3-hyp_stab_nappe_plane}, \eqref{s3-hyp_xi'_rationnelle}, \eqref{s3-hyp_delta_neq_0}, \eqref{s3-hyp_tau_racine_det_lop} 
be satisfied together with $\tau\neq 0$. Let $F^\pm, F_8^\pm \in S^\pm$ and let $G \in H^\infty ([0,T] \times \bT^2 \times \bT)$. Then the fast problem 
\eqref{fast_problem} has a solution $(U^\pm,\psi) \in S^\pm \times H^\infty ([0,T] \times \bT^2 \times \bT)$ if and only if the following conditions are 
satisfied:
\begin{subequations}
\label{compatibilite_pb_rapide}
\begin{equation}
\label{compatibilite_pb_rapide_a}
\begin{cases}
F_6^+|_{y_3=Y_3=0} \, = \, -b^+ \, \p_\theta G_1 +c^+ \, \p_\theta G_2 \, ,& \\
F_6^-|_{y_3=Y_3=0} \, = \, -b^- \, \p_\theta G_3 +c^- \, \p_\theta G_4 \, ,& 
\end{cases}
\quad \text{\rm (compatibility at the boundary)}
\end{equation}
\begin{equation}
\label{compatibilite_pb_rapide_b}
\widehat{\uF}^\pm(t,y,0) \, = \, 0 \, ,\quad \widehat{\uF}_8^\pm(t,y,0) \, = \, 0 \, ,\quad \text{\rm (compatibility for the slow means)}
\end{equation}
\begin{equation}
\label{compatibilite_pb_rapide_c}
\begin{cases}
u_1^{0,\pm} \, \widehat{F}_{7,\star}^\pm (0) -H_1^{0,\pm} \, \widehat{F}_{8,\star}^\pm (0) 
\, = \, \widehat{F}_{1,\star}^\pm (0) \, , & \\
u_2^{0,\pm} \, \widehat{F}_{7,\star}^\pm (0) -H_2^{0,\pm} \, \widehat{F}_{8,\star}^\pm (0) 
\, = \, \widehat{F}_{2,\star}^\pm (0) \, , & \\
H_1^{0,\pm} \, \widehat{F}_{7,\star}^\pm (0) -u_1^{0,\pm} \, \widehat{F}_{8,\star}^\pm (0) 
\, = \, \widehat{F}_{4,\star}^\pm (0) \, , & \\
H_2^{0,\pm} \, \widehat{F}_{7,\star}^\pm (0) -u_2^{0,\pm} \, \widehat{F}_{8,\star}^\pm (0) 
\, = \, \widehat{F}_{5,\star}^\pm (0) \, , &
\end{cases}
\quad \text{\rm (compatibility for the fast means)}
\end{equation}
\begin{equation}
\label{compatibilite_pb_rapide_d}
\p_{Y_3} F_6^\pm +\xi_j \, \p_\theta F_{3+j}^\pm -\tau \, \p_\theta F_8^\pm \, = \, 0 \, ,\quad 
\text{\rm (compatibility for the divergence of the magnetic field)}
\end{equation}
\begin{multline}
\label{compatibilite_pb_rapide_e}
\forall \, k \neq 0 \, ,\quad 
\int_{\R^+} \mathrm{e}^{-|k| \, Y_3} \, \cL^+(k) \sbt \widehat{F}^+ (t,y',0,Y_3,k) \,  {\rm d}Y_3 \, - \, 
\int_{\R^-} \mathrm{e}^{|k| \, Y_3} \, \cL^-(k) \sbt \widehat{F}^- (t,y',0,Y_3,k) \,  {\rm d}Y_3 \\
+ \, \ell_1^+ \, \widehat{G}_1(t,y',k) \, + \, \ell_2^+ \, \widehat{G}_2(t,y',k) \, + \, \ell_1^- \, \widehat{G}_3(t,y',k) 
\, + \, \ell_2^- \, \widehat{G}_4(t,y',k) \, - \, i \, \tau \, \text{\rm sgn}(k) \, \widehat{G}_5(t,y',k) \, = \, 0 \, ,
\end{multline}
\end{subequations}
where in \eqref{compatibilite_pb_rapide_e}, the vectors $\cL^\pm(k)$ are explicitly defined in \eqref{appA-defRLkpm}, the notation `$\sbt$' 
stands for the Hermitian product between two vectors, and the quantities $\ell_{1,2}^\pm$ are defined in \eqref{s3-def_l1_l2} below. The last 
solvability condition \eqref{compatibilite_pb_rapide_e} will be referred to as an \emph{orthogonality condition} (for the nonzero Fourier modes).

If the solvability conditions \eqref{compatibilite_pb_rapide} are satisfied by the source terms of \eqref{fast_problem}, then \eqref{fast_problem} 
has a solution of the form $(\bU^\pm,0)$, where $\bU^\pm \in S^\pm$ can be chosen such that:
$$
\Pi \, \widehat{\bU}^\pm (0) \, = \, 0 \, ,\quad \text{\rm and} \quad \widehat{\underline{\bU}}^\pm (0) \big|_{y_3=\pm 1} \, = \, 0 \, .
$$
Furthermore, any solution to \eqref{fast_problem} then reads\footnote{The subscript $h$ stands for `homogeneous'.}:
\begin{align*}
U^\pm (t,y,Y_3,\theta) \, = \, \bU^\pm (t,y,Y_3,\theta) \, + \, \uU_h^\pm (t,y) \, & + \, 
\Big( u_{1,\star}^\pm,u_{2,\star}^\pm,0,H_{1,\star}^\pm,H_{2,\star}^\pm,0,0 \Big)^T (t,y,Y_3) \\
&+\sum_{k \in \Z \setminus \{ 0\}} \gamma^\pm (t,y,k) \, {\rm e}^{\mp |k| \, Y_3 +i\, k \, \theta} \, \cR^\pm (k) \, ,
\end{align*}
where $\uU_h^\pm \in \uS^\pm$ is independent of $\theta$, $u_{1,\star}^\pm,u_{2,\star}^\pm,H_{1,\star}^\pm,H_{2,\star}^\pm \in S_\star^\pm$, 
the vectors $\cR^\pm (k)$ are explicitly given in \eqref{appA-defRLkpm}, the coefficients $\gamma^\pm$ satisfy $\gamma^\pm (t,y',0,k) =\pm |k| 
\, \widehat{\psi}(t,y',k)$ for all $(t,y',k) \in [0,T] \times \bT^2 \times \bZ^*$ with the \emph{reality} condition:
$$
\forall \, k \in \Z^* \, ,\quad \gamma^\pm (t,y,-k) \, = \, \overline{\gamma^\pm (t,y,k)} \, ,
$$
and, eventually, the slow mean $\uU_h^\pm$ satisfies the boundary conditions on $\Gamma_0$:
$$
\uu_{h,3}^\pm|_{y_3=0} \, = \, \uH_{h,3}^\pm|_{y_3=0} \, = \, 0 \, ,\quad \uq_h^+|_{y_3=0} \, = \, \uq_h^-|_{y_3=0} \, .
$$
Here we have used the notation $\uU_h^\pm = (\uu_{h,1}^\pm, \uu_{h,2}^\pm, \uu_{h,3}^\pm, \uH_{h,1}^\pm, \uH_{h,2}^\pm, 
\uH_{h,3}^\pm, \uq_h^\pm)^T$.
\end{theorem}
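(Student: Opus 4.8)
The plan is to reduce the fast problem \eqref{fast_problem} — a system of constant-coefficient ODEs in the fast variables $(Y_3,\theta)$, with the slow variables $(t,y)$ as parameters — to a sequence of scalar or small linear problems by a double decomposition: first into Fourier modes in $\theta$, then, within each mode, into the subspace that does not decay (the residual/slow part) and the subspace that decays exponentially (the surface-wave part). I would treat three regimes separately: the slow means (the $k=0$ component of $\uU^\pm$), the fast means (the $k=0$ component of $U_\star^\pm$), and the nonzero Fourier modes $k \neq 0$.

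\textbf{Step 1: the nonzero Fourier modes.} For fixed $k \neq 0$, the interior equation $\cL_f^\pm(\p)\widehat U^\pm(k) = \widehat F^\pm(k)$ becomes $A_3^\pm \, \p_{Y_3}\widehat U^\pm(k) + i\,k\,\cA^\pm\,\widehat U^\pm(k) = \widehat F^\pm(k)$, together with the algebraic divergence relation from \eqref{s3-cascade_div_H^m+1,pm'}. Because $A_3^\pm$ is not invertible (its kernel is the range of $\Pi$), this is a differential-algebraic system; one splits $\widehat U^\pm(k)$ along $\ker A_3^\pm$ and its complement, solves the algebraic part explicitly (this is where the condition $(c^\pm)^2 \neq (b^\pm)^2$, ensured by Assumptions \eqref{s3-hyp_xi'_rationnelle}–\eqref{s3-hyp_delta_neq_0}–\eqref{s3-hyp_tau_racine_det_lop} and $\tau\neq0$, makes the relevant sub-block invertible), and is left with a genuine ODE system whose homogeneous solutions decaying as $\pm Y_3 \to +\infty$ span a one-dimensional space directed by $\cR^\pm(k)$. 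Duhamel's formula then gives the decaying particular solution explicitly as an integral against $\mathrm{e}^{\mp|k|Y_3}$; the free constant multiplying $\cR^\pm(k)\mathrm{e}^{\mp|k|Y_3+ik\theta}$ is exactly $\gamma^\pm(t,y,k)$, and its trace at $y_3=0$ is tied to $\widehat\psi$. Plugging the resulting two traces (from $\pm$) into the boundary condition $B^+U^+ + B^-U^- + \p_\theta\psi\,\ub = G$ yields a $5\times(\text{few})$ linear system for the two constants $\gamma^\pm(t,y',0,k)$ and $\widehat\psi(t,y',k)$; since the uniform Lopatinskii condition fails but the weak one holds, this system has a one-dimensional cokernel, and the unique compatibility relation on its right-hand side is precisely the orthogonality condition \eqref{compatibilite_pb_rapide_e} (with $\cL^\pm(k)$ the left null vector, $\ell_{1,2}^\pm$ the relevant entries). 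When \eqref{compatibilite_pb_rapide_e} holds, the system is solvable; one reads off $\gamma^\pm(t,y',0,k)=\pm|k|\widehat\psi(t,y',k)$, extends $\gamma^\pm$ off $y_3=0$ freely (subject to the reality condition, which follows from realness of the data), and obtains the stated form of $U^\pm$ on the $k\neq0$ modes.

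\textbf{Step 2: the means.} For the slow mean ($k=0$, $Y_3$-independent part $\uU^\pm$), the fast operator $\cL_f^\pm(\p)$ annihilates $\uU^\pm$ entirely, so the interior equations degenerate to the pointwise constraints $\uF^\pm=0$, $\uF_8^\pm=0$ — this is \eqref{compatibilite_pb_rapide_b} — and $\uU^\pm$ itself is undetermined at this stage (it becomes $\uU_h^\pm$, fixed later by the slow evolution and by the $\Gamma_0$ and $\Gamma^\pm$ boundary conditions); the component $\uU^\pm$ contributes to the boundary relation \eqref{s3-cascade_BKW_bord_matriciel} through $B^\pm\uU^\pm$ and one checks, using the explicit form of $B^\pm$ and the structure of $\ub$, that the $k=0$ part of the boundary condition imposes the three scalar relations recorded in the final displayed equations ($\uu_{h,3}^\pm|_{y_3=0}=\uH_{h,3}^\pm|_{y_3=0}=0$, $[\uq_h]|_{y_3=0}=0$). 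For the fast mean ($k=0$, decaying part $U_\star^\pm(0)$), the equation reads $A_3^\pm\p_{Y_3}\widehat U_\star^\pm(0)=\widehat F_\star^\pm(0)$ with the algebraic constraint $\p_{Y_3}\widehat H_{3,\star}^\pm(0)=\widehat F_{8,\star}^\pm(0)$: integrating in $Y_3$ from $\pm\infty$ and using exponential decay of both sides determines the noncharacteristic components $(I-\Pi)\widehat U_\star^\pm(0)$, while the characteristic components $\Pi\widehat U_\star^\pm(0)$ are free (their initial values being set to zero by \eqref{s3-cond_moyenne_initiale_U^m,pm}); consistency of the six scalar equations against the rank-4 matrix $A_3^\pm$ forces the four relations \eqref{compatibilite_pb_rapide_c}, and the divergence compatibility \eqref{compatibilite_pb_rapide_d} plus the boundary compatibility \eqref{compatibilite_pb_rapide_a} come out of matching the $H_3$-equation with the algebraic divergence relation and of the $k\neq0$, $Y_3=0$ trace computation respectively. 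Assembling the three regimes gives a solution $(\bU^\pm,0)$ with the normalizations $\Pi\widehat{\bU}^\pm(0)=0$ and $\widehat{\uU}_h^\pm(0)|_{y_3=\pm1}=0$, and the general solution is obtained by adding the homogeneous pieces — the arbitrary $\uU_h^\pm(t,y)$, the arbitrary characteristic surface-wave components $u_{j,\star}^\pm,H_{j,\star}^\pm$, and the arbitrary $\cR^\pm(k)$-modes — exactly as in the statement.

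\textbf{Main obstacle.} The delicate point is Step 1: correctly identifying the cokernel of the boundary system for $k\neq0$ and showing the unique solvability condition is \eqref{compatibilite_pb_rapide_e} with the stated vectors $\cL^\pm(k)$ and scalars $\ell_{1,2}^\pm$. This is where the weak (as opposed to uniform) Kreiss–Lopatinskii structure of the planar current vortex sheet enters, where the explicit eigenvector computations of Appendix \ref{appendixA} are needed, and where the hypotheses \eqref{s3-hyp_stab_nappe_plane}, \eqref{s3-hyp_delta_neq_0}, \eqref{s3-hyp_tau_racine_det_lop}, $\tau\neq0$ must all be used to guarantee that the relevant determinants behave as claimed (in particular that $(c^\pm)^2\neq(b^\pm)^2$ so the stable subspace is genuinely one-dimensional and the Lopatinskii matrix has corank exactly one). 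Everything else — the Duhamel integrals, the degeneration of $\cL_f^\pm$ on means, the bookkeeping of which components are free — is routine linear algebra and ODE theory once the fast operator's kernel structure is laid out.
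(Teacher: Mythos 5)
Your strategy matches the paper's closely: a Fourier decomposition in $\theta$, separate treatment of the slow mean, the fast mean, and the nonzero modes, explicit integration in $Y_3$ for the particular solution, and identification of the one remaining boundary compatibility condition as the Lopatinskii/orthogonality relation \eqref{compatibilite_pb_rapide_e}. The paper organizes the last point somewhat differently: it first subtracts a solution of the homogeneous problem with the same front $\psi$, thereby reducing to $\psi\equiv0$, and then derives \eqref{compatibilite_pb_rapide_e} by a Green's/duality identity against the one-dimensional kernel of the adjoint operator $(L_k^\pm)^*$, rather than by keeping $\widehat\psi(k)$ as an unknown in the boundary system and computing the cokernel. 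These are equivalent; the duality route has the advantage of producing the exact expression of $\cL^\pm(k)$ and $\ell_{1,2}^\pm$ as a by-product, while your cokernel language is more compressed. Two small inaccuracies you should fix: $A_3^\pm$ has rank $3$, not $4$ (its kernel is the four-dimensional range of $\Pi$); and the claim that the boundary system has \emph{exactly} one-dimensional cokernel is not a consequence of the weak Lopatinskii condition alone — you need the root of the Lopatinskii determinant to be \emph{simple}, which is where \eqref{s3-hyp_stab_nappe_plane} intervenes. Finally, be aware that \eqref{compatibilite_pb_rapide_c} as stated is weaker than the full set of conditions one obtains by matching all six nontrivial rows against rank-$3$ $A_3^\pm$ plus the divergence constraint (there is a sixth relation $\widehat F_{6,\star}^\pm(0)=0$); the paper observes that this extra relation is subsumed by \eqref{compatibilite_pb_rapide_d}, which is why only four appear in the statement — a subtlety worth making explicit in your write-up.
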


\noindent Let us observe immediately that \eqref{compatibilite_pb_rapide} does not involve any condition on the mean of the boundary source 
term $\widehat{G}(0)$. In the induction process of Chapter \ref{chapter5}, the mean of the front profiles $\widehat{\psi}^m(0)$, $m \ge 2$, will 
be determined by enforcing a solvability condition for the Laplace type problem that will determine the slow mean of the total pressure correctors. 
This solvability condition will be examined separately since it does not enter the analysis of the fast problem. The analysis of the Laplace problem, 
which is reminiscent of the analysis in \cite{SWZ}, will lead to a second order wave type equation for the slow mean of the front $\widehat{\psi}^m(0)$.

The following sections are devoted to the proof of Theorem \ref{theorem_fast_problem}. In order to understand better why \eqref{fast_problem} 
admits nonzero solutions in the homogeneous case, we first go back for a while to the linear stability problem of incompressible current vortex 
sheets and recall why Assumption \eqref{s3-hyp_stab_nappe_plane} yields the existence of \emph{linear} surface waves that are exponentially 
decaying with respect to the normal variable to the current vortex sheet. This will be the opportunity to recall the expression of the so-called 
Lopatinskii determinant, which will be useful later on. The various matrices, eigenvectors and so on involved in the normal mode analysis also 
enter the explicit description of the (leading amplitude in the) weakly nonlinear ansatz \eqref{s3-def_dvlpt_BKW}.

\section{A reminder on the normal mode analysis}

The analysis of the fast problem \eqref{fast_problem} in the \emph{homogeneous} case, that is when $F^\pm \equiv 0$, $F_8^\pm \equiv 0$, 
$G \equiv 0$, is more or less equivalent to the normal mode analysis that is performed for testing the linear stability of the piecewise constant 
solution \eqref{s3-def_U^0,pm} to \eqref{s3-equations_nappes_MHD}. To highlight this, we thus go back for a while to the original (quasilinear) 
system \eqref{s3-equations_nappes_MHD} and linearize the equations around the piecewise constant solution \eqref{s3-def_U^0,pm}. Forgetting 
in this Section about the top and bottom boundaries $\Gamma^\pm$, the linearized problem reads:
$$
\left\{
\begin{array}{r l}
\p_t \dot{u}^\pm \, +(u^{0,\pm} \sg) \, \dot{u}^\pm \, -(H^{0,\pm} \sg) \, \dot{H}^\pm \, +\nabla \dot{q}^\pm \, = \, 0 \, , & 
\text{ in } \bT^2 \times \bR^\pm \, ,\quad t\in [0,T] \, ,\\[0.5ex]
\p_t \dot{H}^\pm \, +(u^{0,\pm} \sg) \, \dot{H}^\pm \, -(H^{0,\pm} \sg) \, \dot{u}^\pm \, = \, 0 \, , & 
\text{ in } \bT^2 \times \bR^\pm \, ,\quad t\in [0,T] \, ,\\[0.5ex]
\dv \dot{u}^\pm(t) \, = \, \dv \dot{H}^\pm(t) \, = \, 0 \, , & \text{ in } \bT^2 \times \bR^\pm \, , \quad t\in [0,T] \, ,\\[0.5ex]
\p_t \dot{\psi} +u_j^{0,\pm} \, \p_{y_j} \dot{\psi} \, = \, \dot{u}_3^\pm \, ,\quad 
H_j^{0,\pm} \, \p_{y_j} \dot{\psi} \, = \, \dot{H}_3^\pm \, ,\quad [ \, \dot{q} \, ] \, = \, 0 \, , & \text{ on } \bT^2 \, ,\quad t\in [0,T] \, .
\end{array}
\right.
$$
For this Section only, we consider the normal variable $y_3 \in \R^\pm$, but the tangential variables $y'$ still lie in $\bT^2$. We recall that 
$j$ and $j'$ refer to tangential spatial coordinates and that we use Einstein summation convention.

We perform a Laplace transform with respect to the time variable ($z:= \tau -i \, \gamma$ is the associated dual variable) and a decomposition 
in Fourier series with respect to the tangential space coordinates $y' \in \bT^2$ (the associated frequencies are $k \in \bZ^2$). This yields the 
problem:
$$
\left\{
\begin{array}{r l}
(z+k_j \, u_j^{0,\pm}) \, \hat{u}_{j'}^\pm \, -(k_j \, H_j^{0,\pm}) \, \hat{H}_{j'}^\pm \, 
+k_{j'} \, \hat{q}^\pm \, = \, 0 \, , & y_3 \in \bR^\pm \, ,\quad j'=1,2 \, ,\\[0.5ex]
i \, (z+k_j \, u_j^{0,\pm}) \, \hat{u}_3^\pm \, -i \, (k_j \, H_j^{0,\pm}) \, \hat{H}_3 \, 
+\p_{y_3} \hat{q}^\pm \, = \, 0 \, , & y_3 \in \bR^\pm \, ,\\[0.5ex]
(z+k_j \, u_j^{0,\pm}) \, \hat{H}^\pm \, -(k_j \, H_j^{0,\pm}) \, \hat{u}^\pm \, = \, 0 \, , & y_3 \in \bR^\pm \, ,\\[0.5ex]
i \, k_j \, \hat{u}_j^\pm +\p_{y_3} \hat{u}_3^\pm \, = i \, k_j \, \hat{H}_j^\pm +\p_{y_3} \hat{H}_3^\pm \, = \, 0 \, , & 
y_3 \in \bR^\pm \, ,\\[0.5ex]
i \, (z+k_j \, u_j^{0,\pm}) \, \hat{\psi} \, = \, \hat{u}_3^\pm \, ,\quad 
i \, k_j \, H_j^{0,\pm} \, \hat{\psi} \, = \, \hat{H}_3^\pm \, ,\quad [ \, \hat{q} \, ] \, = \, 0 \, , & y_3 =0 \, ,
\end{array}
\right.
$$
with what are -at least we hope so- self-explanatory notations. The analysis below follows what has been performed in 
\cite{Syro,Axford1,Chandra,BT,MTT}. We first look for exploding modes in time, that is we assume $z$ to be of negative imaginary 
part ($\gamma>0$). With some easy manipulations, we first use the divergence constraints on $\hat{u}^\pm, \hat{H}^\pm$ in the 
two first equations and derive the second order differential equation for the total pressure\footnote{On the Laplace-Fourier side, 
this is of course the analogue of the Laplace problem that determines the total pressure as the Lagrange multiplier associated 
with the divergence constraints on the velocity field.}:
$$
\left\{
\begin{array}{r l}
\p_{y_3}^2 \hat{q}^\pm \, - \, |k|^2 \, \hat{q}^\pm \, = \, 0 \, , & y_3 \in \bR^\pm \, ,\\[0.5ex]
[ \, \hat{q} \, ] \, = \, 0 \, , & y_3 =0 \, ,\\[0.5ex]
\p_{y_3} \hat{q}^\pm \, = \, \big( (z+k_j \, u_j^{0,\pm})^2 \, -(k_j \, H_j^{0,\pm})^2 \big) \, \hat{\psi} \, , & y_3 =0 \, .
\end{array}
\right.
$$
For a nonzero frequency vector $k$, this problem has a nontrivial solution that is (exponentially) decaying at $|y_3|= +\infty$ if and only if:
\begin{equation}
\label{s3-def_det_lop'}
(z+k_j \, u_j^{0,+})^2 \, +(z+k_j \, u_j^{0,-})^2 \, -(k_j \, H_j^{0,+})^2 \, -(k_j \, H_j^{0,-})^2 =0 \, .
\end{equation}
Provided that we can find the expression of the total pressure, all other quantities are then determined by solving linear equations, so the 
vanishing of the quantity on the left hand side of \eqref{s3-def_det_lop'} determines whether, on the Laplace-Fourier side, the linearized 
equations admit a nontrivial solution. To make the analogy with the theory of hyperbolic initial boundary value problems \cite{K,sakamoto,BS}, 
the polynomial expression in \eqref{s3-def_det_lop'} will be referred to as the Lopatinskii determinant for the current vortex sheet problem.

Under the stability condition \eqref{s3-hyp_stab_nappe_plane} on the reference planar current vortex sheet, the second degree polynomial 
equation \eqref{s3-def_det_lop'} in $z$ has two \emph{simple real} roots. This means that exploding modes in time ($\gamma>0$) do not 
occur, which is good for stability, but the linearized problem admits nontrivial oscillating waves in time ($\gamma=0$). Since these waves 
decay exponentially with respect to the normal variable $y_3$, they are usually referred to as \emph{surface waves}. More algebraic details 
that fit into our framework for weakly nonlinear geometric optics are given in the following Section.

\section{The homogeneous case}

In this Section, we are going to characterize the solutions $(U^\pm,\psi) \in S^\pm \times H^\infty ([0,T] \times \bT^2 \times \bT)$ to the fast 
problem \eqref{fast_problem} in the homogeneous case. We thus wish to determine all solutions to the system:
\begin{equation}
\label{fast_homogeneous}
\begin{cases}
\cL_f^\pm(\partial) \, U^\pm \, = \, 0 \, ,& y_3 \in I^\pm \, ,\, \pm Y_3 >0 \, ,\\
\p_{Y_3} H_3^\pm +\xi_j \, \p_\theta H_j^\pm \, = \, 0 \, ,& y_3 \in I^\pm \, ,\, \pm Y_3 >0 \, ,\\
B^+ \, U^+|_{y_3=Y_3=0} +B^- \, U^-|_{y_3=Y_3=0} +\partial_\theta \psi \, \ub \, = \, 0 \, .
\end{cases}
\end{equation}
Once again, we are only interested in real valued solutions. The result is summarized in the following Proposition.

\begin{proposition}
\label{prop_fast_homogeneous}
The functions $(U^\pm,\psi) \in S^\pm \times H^\infty ([0,T] \times \bT^2 \times \bT)$ that satisfy \eqref{fast_homogeneous} are exactly those 
profiles of the form:
\begin{align*}
U^\pm (t,y,Y_3,\theta) \, = \, \uU^\pm (t,y) & \, 
+\Big( u_{1,\star}^\pm,u_{2,\star}^\pm,0,H_{1,\star}^\pm,H_{2,\star}^\pm,0,0 \Big)^T (t,y,Y_3) \\[0.5ex]
& \, +\sum_{k \in \Z \setminus \{ 0\}} \gamma^\pm (t,y,k) \, {\rm e}^{\mp |k| \, Y_3 +i\, k \, \theta} \, \cR^\pm (k) \, ,
\end{align*}
where $\uU^\pm \in \uS^\pm$ is independent of $\theta$, $u_{1,\star}^\pm,u_{2,\star}^\pm,H_{1,\star}^\pm,H_{2,\star}^\pm \in S_\star^\pm$, 
the vectors $\cR^\pm (k)$ are explicitly given in \eqref{appA-defRLkpm}, the coefficients $\gamma^\pm$ satisfy $\gamma^\pm (t,y',0,k) =\pm 
|k| \, \widehat{\psi}(t,y',k)$ for all $(t,y',k) \in [0,T] \times \bT^2 \times \bZ^*$ and the reality condition:
$$
\forall \, k \in \Z^* \, ,\quad \gamma^\pm (t,y,-k) \, = \, \overline{\gamma^\pm (t,y,k)} \, ,
$$
and, eventually, the slow mean $\uU^\pm$ satisfies the boundary conditions on $\Gamma_0$:
$$
\uu_3^\pm|_{y_3=0} \, = \, \uH_3^\pm|_{y_3=0} \, = \, 0 \, ,\quad \uq^+|_{y_3=0} \, = \, \uq^-|_{y_3=0} \, .
$$

In particular, given any $\psi \in H^\infty ([0,T] \times \bT^2 \times \bT)$, we can construct a pair of profiles $U^\pm \in S^\pm$ such that 
$(U^\pm,\psi)$ satisfies \eqref{fast_homogeneous}.
\end{proposition}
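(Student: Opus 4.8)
The plan is to work mode by mode in the fast tangential variable $\theta$, using systematically the direct splitting $S^\pm=\uS^\pm\oplus S_\star^\pm$ together with the elementary fact that a profile which is independent of $Y_3$ and decays as $Y_3\to\pm\infty$ must vanish. For a solution $(U^\pm,\psi)$ of \eqref{fast_homogeneous} I expand $U^\pm=\sum_{k\in\bZ}\widehat{U}^\pm(k)\,\mathrm{e}^{ik\theta}$ and $\psi=\sum_{k}\widehat{\psi}(k)\,\mathrm{e}^{ik\theta}$; the slow variables $(t,y)$ (including $y_3$) are mere parameters, and for each $k$ the two first equations of \eqref{fast_homogeneous} become the differential--algebraic system in $Y_3$
\[
A_3^\pm\,\p_{Y_3}\widehat{U}^\pm(k)+i\,k\,\cA^\pm\,\widehat{U}^\pm(k)=0\,,\qquad
\p_{Y_3}\widehat{H}_3^\pm(k)+i\,k\,\xi_j\,\widehat{H}_j^\pm(k)=0\,,\qquad \pm Y_3>0\,,
\]
coupled through the trace relation $B^+\widehat{U}^+(k)|_{y_3=Y_3=0}+B^-\widehat{U}^-(k)|_{y_3=Y_3=0}+i\,k\,\widehat{\psi}(k)\,\ub=0$.

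\emph{The nonzero modes.} Fix $k\neq0$ and write $\widehat{U}^\pm(k)=\widehat{\uU}^\pm(k)+\widehat{U}_\star^\pm(k)$. Letting $Y_3\to\pm\infty$ in the interior equation kills the surface--wave part and leaves $i\,k\,\cA^\pm\,\widehat{\uU}^\pm(k)=0$; since $\cA^\pm$ is invertible under the standing assumptions (this is precisely where $(c^\pm)^2\neq(b^\pm)^2$, hence Assumptions \eqref{s3-hyp_xi'_rationnelle}--\eqref{s3-hyp_tau_racine_det_lop} and $\tau\neq0$, enters; see Appendix \ref{appendixA}), we get $\widehat{\uU}^\pm(k)=0$. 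For the surface--wave part I reproduce on the fast side the normal--mode computation of the previous section: eliminating the tangential components with the algebraic and divergence equations reduces the system to the scalar equation $\p_{Y_3}^2\widehat{q}_\star^\pm(k)-|k|^2\widehat{q}_\star^\pm(k)=0$ (recall $|\xi'|=1$), whose only $Y_3$--decaying solutions are multiples of $\mathrm{e}^{\mp|k|Y_3}$, all other components being linear in $\widehat{q}_\star^\pm(k)$ and its $Y_3$--derivative. Equivalently, the space of exponentially decaying solutions of the differential--algebraic system is one--dimensional, spanned by $\mathrm{e}^{\mp|k|Y_3}\cR^\pm(k)$ with $\cR^\pm(k)$ as in \eqref{appA-defRLkpm} (by construction $\cR^\pm(k)$ lies in the kernel of $\mp|k|A_3^\pm+i\,k\,\cA^\pm$ and is compatible with the fast divergence constraint). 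Hence $\widehat{U}^\pm(k)=\gamma^\pm(t,y,k)\,\mathrm{e}^{\mp|k|Y_3}\cR^\pm(k)$ for a scalar $\gamma^\pm$ of $(t,y,k)$ only, unconstrained in $y_3$ except at $y_3=0$. Plugging the traces into the boundary relation gives the linear system of five scalar equations $\gamma^+(t,y',0,k)\,B^+\cR^+(k)+\gamma^-(t,y',0,k)\,B^-\cR^-(k)+i\,k\,\widehat{\psi}(t,y',k)\,\ub=0$ in the two unknowns $\gamma^\pm(t,y',0,k)$; using the explicit expressions of $\cR^\pm(k)$ and $\ub$ from Appendix \ref{appendixA} and the fact that $\tau$ is a root of the Lopatinskii determinant \eqref{s3-def_det_lop'} (Assumption \eqref{s3-hyp_tau_racine_det_lop}), this system is consistent for arbitrary $\widehat{\psi}$ and has the unique solution $\gamma^\pm(t,y',0,k)=\pm|k|\,\widehat{\psi}(t,y',k)$. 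The reality condition on $\gamma^\pm$ follows from $U^\pm,\psi$ being real and $\cR^\pm(-k)=\overline{\cR^\pm(k)}$.

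\emph{The zero mode, the converse, the final statement.} For $k=0$ the interior equations reduce to $A_3^\pm\p_{Y_3}\widehat{U}^\pm(0)=0$ and $\p_{Y_3}\widehat{H}_3^\pm(0)=0$; since $\ker A_3^\pm=\mathrm{ran}\,\Pi$ and the $\uS^\pm$--part is $Y_3$--independent, this forces $(I-\Pi)\widehat{U}_\star^\pm(0)$ to be simultaneously $Y_3$--independent and decaying, hence zero, so $\widehat{U}_\star^\pm(0)=(u_{1,\star}^\pm,u_{2,\star}^\pm,0,H_{1,\star}^\pm,H_{2,\star}^\pm,0,0)^T$ with entries in $S_\star^\pm$, while $\widehat{\uU}^\pm(0)=:\uU^\pm$ is free; combined with $\widehat{\uU}^\pm(k)=0$ for $k\neq0$ this says $\uU^\pm$ is $\theta$--independent. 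The boundary relation at $k=0$ has $\widehat{\p_\theta\psi}(0)=0$, and since the surface--wave normal components vanish at $k=0$ it reduces to $B^+\uU^+|_{y_3=0}+B^-\uU^-|_{y_3=0}=0$, i.e.\ $\uu_3^\pm|_{y_3=0}=\uH_3^\pm|_{y_3=0}=0$ and $\uq^+|_{y_3=0}=\uq^-|_{y_3=0}$. This proves every solution has the stated form. Conversely, a profile of that form solves \eqref{fast_homogeneous}: the interior and fast divergence equations hold mode by mode because the residual part is constant in $(Y_3,\theta)$, the $k=0$ tangential part lies in $\ker A_3^\pm$, and $\cR^\pm(k)$ has the kernel property above; the jump conditions hold by the relations imposed on $\uU^\pm$ and on $\gamma^\pm|_{y_3=0}$. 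Finally, for an arbitrary $\psi\in H^\infty([0,T]\times\bT^2\times\bT)$ one takes $\uU^\pm=0$, all $u_{j,\star}^\pm=H_{j,\star}^\pm=0$, and $\gamma^\pm(t,y,k):=\pm|k|\,\widehat{\psi}(t,y',k)$ (independent of $y_3$), obtaining $U^\pm=\sum_{k\neq0}(\pm|k|)\,\widehat{\psi}(t,y',k)\,\mathrm{e}^{\mp|k|Y_3+ik\theta}\,\cR^\pm(k)\in S^\pm$ — the series converges in $H^\infty$ with uniform exponential decay in $Y_3$ since $\widehat{\psi}(\cdot,k)$ is rapidly decreasing in $k$ and $|k|\geq1$ — with $(U^\pm,\psi)$ a solution.

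\emph{Main obstacle.} The substantive part is the nonzero--mode analysis: identifying the one--dimensional decaying solution space of the differential--algebraic interior system and, above all, the boundary computation that yields $\gamma^\pm(t,y',0,k)=\pm|k|\,\widehat{\psi}(t,y',k)$. Both rely on the explicit algebra of Appendix \ref{appendixA} and crucially on $\tau$ being a simple real root of the Lopatinskii determinant under \eqref{s3-hyp_stab_nappe_plane}; the $k=0$ discussion and the converse are bookkeeping.
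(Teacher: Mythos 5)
Your proposal is correct and follows essentially the same route as the paper: decompose on $\uS^\pm\oplus S_\star^\pm$ and in Fourier modes in $\theta$, kill the oscillating residual modes using the invertibility of $\cA^\pm$, reduce the $k\neq0$ surface-wave system to the scalar Laplace-type ODE for the total pressure to get the one-dimensional decaying solution spanned by $\mathrm{e}^{\mp|k|Y_3}\cR^\pm(k)$, and read off $\gamma^\pm|_{y_3=0}$ from the boundary conditions using \eqref{s3-hyp_tau_racine_det_lop}. The only organizational difference is that you work fully mode-by-mode in $k$ while the paper first isolates the residual component for all modes, then treats $k=0$ and $k\neq0$ of the surface-wave part; this is a cosmetic rearrangement.
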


\noindent Proposition \ref{prop_fast_homogeneous} extends to the three-dimensional case some of the calculations performed in \cite{AliHunter}. 
We now give the proof for the sake of completeness.

\begin{proof}[Proof of Proposition \ref{prop_fast_homogeneous}]
The proof is rather elementary and follows from mere algebraic manipulations. We first assume that $(U^\pm,\psi) \in S^\pm \times H^\infty ([0,T] 
\times \bT^2 \times \bT)$ is a solution to \eqref{fast_homogeneous}, try to derive the expression of $(U^\pm,\psi)$, and eventually verify that such 
expressions provide indeed with the only possible solutions to \eqref{fast_homogeneous}. Let us therefore assume that $(U^\pm,\psi) \in S^\pm 
\times H^\infty ([0,T] \times \bT^2 \times \bT)$ is a solution to \eqref{fast_homogeneous}.
\bigskip

$\bullet$ \emph{The residual component}. We first take the limit $|Y_3| \rightarrow +\infty$ and obtain the fast equations:
$$
\begin{cases}
{\mathcal A}^\pm \, \p_\theta \, \uU^\pm \, = \, 0 \, ,& y_3 \in I^\pm \, ,\\
\xi_j \, \p_\theta \uH_j^\pm \, = \, 0 \, ,& y_3 \in I^\pm \, .\\
\end{cases}
$$
The explicit expression of the matrices ${\mathcal A}^\pm$ is given in Appendix \ref{appendixA}. In particular, under Assumptions 
\eqref{s3-hyp_stab_nappe_plane}, \eqref{s3-hyp_xi'_rationnelle}, \eqref{s3-hyp_delta_neq_0}, \eqref{s3-hyp_tau_racine_det_lop}, together with 
$\tau\neq 0$, it is proved in Appendix \ref{appendixA} that the matrices ${\mathcal A}^\pm$ are invertible. This means that, even without using the 
fast divergence constraint $\xi_j \, \p_\theta \uH_j^\pm =0$ on the magnetic field, we necessarily have $\p_\theta \uU^\pm =0$, or in other words:
$$
\uU^\pm (t,y,\theta) \, = \, \uU^\pm (t,y) \, ,
$$
for some functions $\uU^\pm \in H^\infty ([0,T] \times \bT^2 \times I^\pm)$. The divergence constraint on the residual magnetic field is a 
consequence of the remaining seven equations. We shall examine the boundary conditions that must be satisfied by the slow means 
$\uU^\pm$ on $\{ y_3 =0 \}$ later.
\bigskip

$\bullet$ \emph{The surface wave component. Zero Fourier mode}. Subtracting the residual component at $Y_3=\pm\infty$ and taking the average 
with respect to $\theta$ of the fast equations in \eqref{fast_homogeneous}, we find that the fast mean $\widehat{U}_\star^\pm (0)$ must satisfy:
$$
A_3^\pm \, \p_{Y_3} \widehat{U}_\star^\pm (0) \, = \, 0 \, ,\quad \p_{Y_3} \widehat{H}_{3,\star}^\pm (0) \, = \, 0 \, ,
$$
where $(t,y) \in [0,T] \times \bT^2 \times I^\pm$ enter here as parameters. From the explicit expression of the matrices $A_3^\pm$ given in Appendix 
\ref{appendixA} and the exponential decay at $Y_3 = \pm \infty$ of functions in $S_\star^\pm$, we find that the latter equations are satisfied if and only 
if
$$
\widehat{u}_{3,\star}^\pm (0) \, = \, \widehat{H}_{3,\star}^\pm (0) \, = \, \widehat{q}_\star^\pm (0) \, \equiv 0 \, .
$$
In other words, the fast mean $\widehat{U}_\star^\pm (0)$ of $U^\pm$ has the form:
$$
\Big( u_{1,\star}^\pm,u_{2,\star}^\pm,0,H_{1,\star}^\pm,H_{2,\star}^\pm,0,0 \Big)^T (t,y,Y_3) \, ,
$$
as claimed in Proposition \ref{prop_fast_homogeneous}. Since the tangential components of the velocity and magnetic field do not enter the boundary 
conditions in \eqref{fast_homogeneous}, the four scalar functions that define the fast mean are completely free.
\bigskip

Since the normal velocity, normal magnetic field and total pressure of the fast mean $\widehat{U}_\star^\pm (0)$ are zero, 
it is easy to see that the slow mean $\uU^\pm$ must satisfy the boundary conditions on $\Gamma_0$:
$$
\uu_3^\pm|_{y_3=0} \, = \, \uH_3^\pm|_{y_3=0} \, = \, 0 \, ,\quad \uq^+|_{y_3=0} \, = \, \uq^-|_{y_3=0} \, .
$$
These conditions are obtained by computing the mean with respect to $\theta$ on $\bT$ of the boundary conditions in 
\eqref{fast_homogeneous}.
\bigskip

$\bullet$ \emph{The surface wave component. Nonzero Fourier modes}. We now consider the oscillating Fourier modes 
of the surface wave component of $U^\pm$. We consider a nonzero $k \in \bZ$ and compute the $k$-th Fourier coefficient 
with respect to $\theta$ of all equations in \eqref{fast_homogeneous}. We get:
\begin{equation}
\label{fast_homogeneous_k}
\begin{cases}
A_3^\pm \, \p_{Y_3} \widehat{U}_\star^\pm (k) +i \, k \, {\mathcal A}^\pm \, \widehat{U}_\star^\pm (k) \, = \, 0 \, ,& 
y_3 \in I^\pm \, ,\, \pm Y_3 >0 \, ,\\
\p_{Y_3} \widehat{H}_{3,\star}^\pm (k) +i\, k \, \xi_j \, \widehat{H}_{j,\star}^\pm (k) \, = \, 0 \, ,& y_3 \in I^\pm \, ,\, \pm Y_3 >0 \, ,\\
B^+ \, \widehat{U}_\star^+|_{y_3=Y_3=0} (k) +B^- \, \widehat{U}_\star^-|_{y_3=Y_3=0} (k) +i \, k \, \widehat{\psi} (k) \, \ub \, = \, 0 \, .
\end{cases}
\end{equation}
Observe that here, we have already used the fact that the residual component $\uU^\pm$ does not depend on $\theta$. 
The first two equations in \eqref{fast_homogeneous_k} are reminiscent of the normal mode analysis performed to study 
the linear stability of the planar current vortex sheet \eqref{s3-def_U^0,pm}. Indeed, if we write down the equations satisfied 
by the Fourier coefficient $\widehat{U}_\star^\pm (k)$ and use the divergence constraints for the velocity and magnetic field, we 
get\footnote{This system is similar to the one obtained when performing the normal mode analysis in the previous Section, 
with corresponding frequencies $z=\tau$ and $k_j =k\, \xi_j$.}:
\begin{equation}
\label{s3-sys_equiv_pb_lin}
\begin{cases}
c^\pm \, \hatu_{j,\star}^\pm - b^\pm \, \hatH_{j,\star}^\pm + \xi_j \, \hatq_\star^{\, \pm} \, = \, 0 \, , & j=1,2, \\
\p_{Y_3} \hatq_\star^{\, \pm} + i \, k \, c^\pm \, \hatu_{3,\star}^\pm - i\, k \, b^\pm \, \hatH_{3,\star}^\pm \, = \, 0 \, , & \\
-b^\pm \, \hatu_\star^\pm + c^\pm \, \hatH_\star^\pm \, = \, 0 \, , & 
\end{cases}
\end{equation}
together with the divergence constraints
\begin{subequations}\label{s3-contrainte_div_k}
\begin{align}
& \p_{Y_3} \hatu_{3,\star}^1 \, + \, i \, k \, \xi_j \, \hatu_{j,\star}^1 \, = \, 0 \, ,\label{s3-contrainte_div_u_k} \\
& \p_{Y_3} \hatH_{3,\star}^1 \, + \, i \, k \, \xi_j \, \hatH_{j,\star}^1 \, = \, 0 \, .\label{s3-contrainte_div_H_k}
\end{align}
\end{subequations}

Now, using \eqref{s3-contrainte_div_k} in \eqref{s3-sys_equiv_pb_lin}, and recalling the boundary conditions in 
\eqref{fast_homogeneous_k}, we find that the total pressure must satisfy the (seemingly overdetermined) elliptic 
system\footnote{We recall that the tangential frequency vector $\xi'$ has been chosen with norm $1$.}:
\begin{equation}
\label{s3-pb_laplace_q^1,pm_k}
\begin{cases}
\p_{Y_3}^2 \hatq_\star^{\, \pm} - k^2 \, \hatq_\star^{\, \pm} \, = \, 0 \, , & y_3 \in I^\pm \, ,\quad Y_3\gtrless 0 \, , \\[0.5ex]
\hatq_\star^{\, +}|_{y_3=Y_3=0} \, =  \, \hatq_\star^{\, -}|_{y_3=Y_3=0} \, , & \\[0.5ex]
\p_{Y_3} \hatq_\star^{\, \pm}|_{y_3=Y_3=0} \, = \, k^2 \, \big( (c^\pm)^2 - (b^\pm)^2 \big) \, \hatpsi \, . & 
\end{cases}
\end{equation}
Since we are looking for exponentially decaying profiles (in $Y_3$), the second order differential equation for $\hatq_\star^{\, \pm}$ 
gives the general expression\footnote{Here we use the fact that both quantities $(c^\pm)^2 - (b^\pm)^2$ are nonzero, see Appendix 
\ref{appendixA} for the verification of this property.}:
\begin{equation}
\label{s3-expr_hatq^1,pm_y3_qcq}
\hatq_\star^{\, \pm} (t,y,Y_3,k) \, = \, \big( (b^\pm)^2 - (c^\pm)^2 \big) \, \gamma^\pm(t,y,k) \, \mathrm{e}^{\mp |k| \, Y_3} \, , \quad 
\forall \, (t,y,Y_3) \in [0,T] \times \bT^2 \times I^\pm \times \bR^\pm \, .
\end{equation}
Let us notice that $y_3$ is a parameter in \eqref{s3-expr_hatq^1,pm_y3_qcq}. Hence the only remaining problem is to 
take the double trace $y_3=Y_3=0$ in \eqref{s3-expr_hatq^1,pm_y3_qcq} and to determine whether we can solve the 
boundary conditions in \eqref{s3-pb_laplace_q^1,pm_k}. The first boundary condition in \eqref{s3-pb_laplace_q^1,pm_k} 
gives:
\begin{equation*}
\gamma^+ (t,y',0,k) \, = \, \gamma^-(t,y',0,k) \, .
\end{equation*}
Therefore the remaining boundary conditions in \eqref{s3-pb_laplace_q^1,pm_k} will be satisfied if and only if (we recall $k \neq 0$ 
and we are looking for nontrivial solutions, \emph{i.e.} $\hatq_\star^{\, \pm} \not\equiv 0$ and $\hatpsi \not\equiv 0$ for at least one 
nonzero Fourier mode):
\begin{equation}
\label{s3-egalite_b^2_c^2}
(c^+)^2 \, + \, (c^-)^2 \, = \, (b^+)^2 \, + \, (b^-)^2,
\end{equation}
hence Assumption \eqref{s3-hyp_tau_racine_det_lop}. As in the theory of initial boundary value problems for hyperbolic systems 
\cite{K,sakamoto,BS}, the identity \eqref{s3-egalite_b^2_c^2} is similar to the cancellation of some Lopatinskii determinant (see 
\cite{MTT}) that we can define as follows:
\begin{equation}\label{s3-def_det_lop}
\begin{aligned}
\Delta(\tau,\xi_1,\xi_2) \, :=& \, \, (c^+)^2 + (c^-)^2 - (b^+)^2 - (b^-)^2 \\
=& \, \, 2 \, \tau^2 \, + \, 2 \, (a^+ + a^-) \, \tau + (a^+)^2 + (a^-)^2 - (b^+)^2 - (b^-)^2 \, .
\end{aligned}
\end{equation}
This is nothing but a polynomial of degree 2 in $\tau$, so the roots can be easily computed. Under the stability assumption 
\eqref{s3-hyp_stab_nappe_plane}, we can show that $\Delta$ has exactly two \emph{simple real} roots $\tau_1 = \tau_1 (\xi_1,\xi_2)$ 
and $\tau_2 = \tau_2(\xi_1,\xi_2)$; only the \emph{weak} Lopatinskii condition is fulfilled. Observe that there is no reason why 
$\tau = 0$ could not be a root to \eqref{s3-def_det_lop}, but since the case $\tau=0$ induces a different parametrization of 
some eigenspaces (for instance ${\mathcal A}^\pm$ is not invertible if $\tau=0$), we need to exclude this possibility.

We refer to \cite{BS} for more details on the possible degeneracies of the Uniform Lopatinskii Condition for hyperbolic initial 
boundary value problems. Any of the two triples of frequencies $(\tau_{1,2}(\xi),\xi_1,\xi_2)$ will be called \emph{elliptic} because 
it corresponds to a case where the nontrivial solutions to the linearized equations have exponential decay with respect to the 
normal variable to the current vortex sheet. For `standard' hyperbolic initial boundary value problems, this type of situation has 
been studied in \cite{S-T}.
\bigskip

Once we have the expression \eqref{s3-expr_hatq^1,pm_y3_qcq} for the total pressure, it is a mere algebra exercise to verify 
that the solution to \eqref{fast_homogeneous_k} is given by:
$$
\widehat{U}^\pm (t,y,Y_3,k) \, = \, \gamma^\pm (t,y,k) \, \mathrm{e}^{\mp |k| \, Y_3} \, {\mathcal R}^\pm (k) \, ,
$$
with the vector ${\mathcal R}^\pm (k)$ defined in \eqref{appA-defRLkpm}. Our normalization \eqref{s3-expr_hatq^1,pm_y3_qcq} 
for the coefficients $\gamma^\pm$ yields the (rather simple) relation $\gamma^\pm (t,y',0,k) =\pm |k| \, \widehat{\psi} (t,y',k)$ on 
$\Gamma_0$. The reality condition for $\gamma^\pm$ comes from our choice of ${\mathcal R}^\pm (k)$ that also satisfies, see 
\eqref{appA-defRLkpm}:
$$
{\mathcal R}^\pm (-k) \, = \, \overline{{\mathcal R}^\pm (k)} \, .
$$

Conversely, it is elementary to verify that any function of the form given in Proposition \ref{prop_fast_homogeneous} 
does indeed satisfy the homogeneous fast system \eqref{fast_homogeneous}, which completes the proof of Proposition 
\ref{prop_fast_homogeneous}.
\end{proof}

Proposition \ref{prop_fast_homogeneous} already yields part of the results claimed in Theorem \ref{theorem_fast_problem}. 
Namely, since we now have a complete parametrization of the solutions to \eqref{fast_problem} in the homogeneous case, 
by linearity it is sufficient to prove that the conditions \eqref{compatibilite_pb_rapide} are necessary and sufficient for the 
existence of \emph{one} solution to \eqref{fast_problem}. Let us also observe that if, for some given source terms 
$F^\pm,F_8^\pm,G$ in \eqref{fast_problem}, there exists a solution $(U^\pm,\psi) \in S^\pm \times H^\infty$, then we can 
always subtract a solution $(V^\pm,\psi)$ to the homogeneous problem \eqref{fast_homogeneous} with the \emph{same} 
function $\psi$. Consequently, showing that \eqref{fast_problem} admits one solution $(U^\pm,\psi)$ amounts to showing 
that \eqref{fast_problem} admits one solution of the form $(\bU^\pm,0)$ with $\bU^\pm \in S^\pm$.

\section{The inhomogeneous case}

In this Section, we complete the proof of Theorem \ref{theorem_fast_problem} by using the reduction to the case $\psi \equiv 0$ 
as explained above. We thus focus on the fast problem, whose unknown is now denoted $\bU^\pm =(\cU^\pm,\cH^\pm,\cQ^\pm)^T$:
\begin{equation}
\label{fast_problem'}
\begin{cases}
\cL_f^\pm(\partial) \, \bU^\pm \, = \, F^\pm \, ,& y_3 \in I^\pm \, ,\, \pm Y_3 >0 \, ,\\
\p_{Y_3} \cH_3^\pm +\xi_j \, \p_\theta \cH_j^\pm =F_8^\pm \, ,& y_3 \in I^\pm \, ,\, \pm Y_3 >0 \, ,\\
B^+ \, \bU^+|_{y_3=Y_3=0} +B^- \, \bU^-|_{y_3=Y_3=0} \, = \, G \, ,
\end{cases}
\end{equation}
and we try to determine necessary and sufficient conditions on the source terms such that \eqref{fast_problem'} has (at least) 
one solution.

\subsection{Necessary conditions for solvability}

We first exhibit several more or less obvious necessary solvability conditions for \eqref{fast_problem'}. Let us first note 
that the sixth equation in the system $\cL_f^\pm(\partial) \, \bU^\pm =F^\pm$ reads\footnote{We shall repeatedly use in this 
Section the expression of the matrices $A_3^\pm$, ${\mathcal A}^\pm$ that are given in Appendix \ref{appendixA}.}:
$$
-b^\pm \, \p_\theta \cU_3^\pm +c^\pm \, \p_\theta \cH_3^\pm \, = \, F_6^\pm \, .
$$
Since the slow and fast normal variables $y_3$ and $Y_3$ enter as parameters here, we can take the trace of this equation 
on $y_3=Y_3=0$, and use the boundary conditions in \eqref{fast_problem'}:
$$
\cU_3^+|_{y_3=Y_3=0} \, = \, G_1 \, ,\quad \cH_3^+|_{y_3=Y_3=0} \, = \, G_2 \, ,\quad 
\cU_3^-|_{y_3=Y_3=0} \, = \, G_3 \, ,\quad \cH_3^-|_{y_3=Y_3=0} \, = \, G_4 \, .
$$
We get the system \eqref{compatibilite_pb_rapide_a}, which is a compatibility condition between some of the boundary and 
interior source terms in \eqref{fast_problem'}.
\bigskip

We now take the limit $Y_3=\pm \infty$ in the `interior' equations of \eqref{fast_problem'}, and then take the mean with respect to 
$\theta \in \bT$. We get the relations \eqref{compatibilite_pb_rapide_b}, which  mean that the residual components of the interior 
source terms $\uF^\pm,\uF_8^\pm$ should be `purely oscillating' in $\theta$.
\bigskip

Let us now project the interior equations of \eqref{fast_problem'} on the surface wave components $S_\star^\pm$ (meaning that 
we subtract the limit of all quantities at $Y_3=\pm \infty$), and take the mean with respect to $\theta$ on $\bT$. We get:
$$
A_3^\pm \, \p_{Y_3} \widehat{\bU}_\star^\pm (0) \, = \, \widehat{F}_\star^\pm (0) \, ,\quad 
\p_{Y_3} \widehat{\cH}_{3,\star}^\pm (0) \, = \, \widehat{F}_{8,\star}^\pm (0) \, .
$$
Using the explicit expression of the matrices $A_3^\pm$ given in Appendix \ref{appendixA}, we find that the noncharacteristic 
components of the vector $\widehat{\cU}_\star^\pm (0)$, that is, the normal velocity, normal magnetic field and total pressure, 
are given by solving:
$$
\p_{Y_3} \widehat{\cQ}_\star^\pm (0) \, = \, \widehat{F}_{3,\star}^\pm (0) \, ,\quad 
\p_{Y_3} \widehat{\cU}_{3,\star}^\pm (0) \, = \, \widehat{F}_{7,\star}^\pm (0) \, ,\quad 
\p_{Y_3} \widehat{\cH}_{3,\star}^\pm (0) \, = \, \widehat{F}_{8,\star}^\pm (0) \, ,
$$
with zero `boundary conditions' at $Y_3=\pm \infty$, and it then remains to verify the five algebraic relations:
\begin{align*}
0 & \, = \,  \widehat{F}_{6,\star}^\pm (0) \, ,& \\
u_j^{0,\pm} \, \p_{Y_3} \widehat{\cU}_{3,\star}^\pm (0) -H_j^{0,\pm} \, \p_{Y_3} \widehat{\cH}_{3,\star}^\pm (0) 
& \, = \, \widehat{F}_{j,\star}^\pm (0) \, ,\quad & j=1,2 \, ,\\
H_j^{0,\pm} \, \p_{Y_3} \widehat{\cU}_{3,\star}^\pm (0) -u_j^{0,\pm} \, \p_{Y_3} \widehat{\cH}_{3,\star}^\pm (0) 
& \, = \, \widehat{F}_{3+j,\star}^\pm (0) \, ,\quad & j=1,2 \, .
\end{align*}
The necessary solvability conditions for determining the fast mean $\widehat{\bU}_\star^\pm (0)$ therefore read:
\begin{equation}
\label{comp_fast_2}
\widehat{F}_{6,\star}^\pm (0) \, = \, 0 \, ,\quad 
u_j^{0,\pm} \, \widehat{F}_{7,\star}^\pm (0) -H_j^{0,\pm} \, \widehat{F}_{8,\star}^\pm (0) \, = \, \widehat{F}_{j,\star}^\pm (0) \, ,\quad 
H_j^{0,\pm} \, \widehat{F}_{7,\star}^\pm (0) -u_j^{0,\pm} \, \widehat{F}_{8,\star}^\pm (0) \, = \, \widehat{F}_{3+j,\star}^\pm (0) \, .
\end{equation}
The three remaining components $\widehat{F}_{3,\star}^\pm (0)$, $\widehat{F}_{7,\star}^\pm (0)$, $\widehat{F}_{8,\star}^\pm (0)$ 
are free. Let us observe that \eqref{comp_fast_2} is more restrictive than the conditions \eqref{compatibilite_pb_rapide_c} 
given in Theorem \ref{theorem_fast_problem}. We shall explain why \eqref{compatibilite_pb_rapide_c}, together with 
\eqref{compatibilite_pb_rapide_d} are actually sufficient conditions later on.

The solvability conditions \eqref{compatibilite_pb_rapide_b}, \eqref{comp_fast_2} bear on the slow and fast means with 
respect to $\theta$. The following compatibility conditions will rather bear on the oscillating part with respect to $\theta$ 
(as for \eqref{compatibilite_pb_rapide_a}).
\bigskip

Let us now examine more closely the fast equations in \eqref{fast_problem'}. Using the expressions of the matrices 
${\mathcal A}^\pm$ and $A_3^\pm$ in Appendix \ref{appendixA}, the fourth, fifth, sixth and seventh equations in the 
system $\cL_f^\pm(\partial) \, \bU^\pm =F^\pm$ equivalently read:
$$
\begin{cases}
-u_1^{0,\pm} \, \p_{Y_3} \cH_3^\pm -b^\pm \, \p_\theta \cU_1^\pm +(\tau +\xi_2 \, u_2^{0,\pm}) \, \p_\theta \cH_1^\pm 
-\xi_1 \, u_1^{0,\pm} \, \p_\theta \cH_2^\pm & \, = \, F_4^\pm -H_1^{0,\pm} \, F_7^\pm \, , \\
-u_2^{0,\pm} \, \p_{Y_3} \cH_3^\pm -b^\pm \, \p_\theta \cU_2^\pm -\xi_1 \, u_2^{0,\pm} \, \p_\theta \cH_1^\pm 
+(\tau +\xi_1 \, u_1^{0,\pm}) \, \p_\theta \cH_2^\pm  & \, = \, F_5^\pm -H_2^{0,\pm} \, F_7^\pm \, , \\
-b^\pm \, \p_\theta \, \cU_3^\pm +c^\pm \, \p_\theta \cH_3^\pm & \, = \, F_6^\pm \, , \\
\p_{Y_3} \cU_3^\pm +\xi_j \, \p_\theta \cU_j^\pm & \, = \, F_7^\pm \, .
\end{cases}
$$
Applying $\xi_1 \, \p_\theta$ to the first equation, $\xi_2 \, \p_\theta$ to the second equation, $\p_{Y_3}$ to the third 
one and adding all three quantities, we end up with:
$$
\tau \, \p_\theta \Big( \p_{Y_3} \cH_3^\pm +\xi_j \, \p_\theta \cH_j^\pm \Big) \, = \, 
\p_{Y_3} F_6^\pm +\xi_j \, \p_\theta F_{3+j}^\pm \, .
$$
This means that the source terms in \eqref{fast_problem'} must satisfy:
\begin{equation}
\label{comp_fast_3}
\p_{Y_3} F_6^\pm +\xi_j \, \p_\theta F_{3+j}^\pm -\tau \, \p_\theta F_8^\pm \, = \, 0 \, ,
\end{equation}
which includes, by taking the mean with respect to $\theta$, one of the conditions in \eqref{comp_fast_2}, namely 
$\widehat{F}_{6,\star}^\pm (0)=0$. The condition \eqref{comp_fast_3} is a compatibility between the fast divergence 
of the magnetic field and the source term in the fast equations for the magnetic field. Such a compatibility condition 
arises because there is no Lagrange multiplier associated with the divergence constraint in the evolution equation 
for the magnetic field (this constraint is meant to be propagated in time due to the `curl' form of the original equation).
\bigskip

Let us now exhibit a convenient duality formula, which is some kind of a Fredholm alternative for the solvability 
of \eqref{fast_problem'}. The analysis below is reminiscent of what has been done in \cite{BC1,BC2,CW}. We 
consider a \emph{nonzero} $k \in \bZ$ and compute the $k$-th Fourier coefficient of all equations in \eqref{fast_problem'}. 
We obtain\footnote{Here we forget temporarily about the divergence constraint on the magnetic field since it is 
meant to be recovered in the end by the compatibility condition \eqref{comp_fast_3} for nonzero Fourier modes.}:
\begin{equation}
\label{fast_problem_k}
\begin{cases}
(A_3^\pm \, \p_{Y_3} +i \, k \, {\mathcal A}^\pm) \, \widehat{\bU}^\pm(k) \, = \, \widehat{F}^\pm(k) \, ,& 
y_3 \in I^\pm \, ,\, \pm Y_3 >0 \, ,\\
B^+ \, \widehat{\bU}^+(k)|_{y_3=Y_3=0} +B^- \, \widehat{\bU}^-(k)|_{y_3=Y_3=0} \, = \, \widehat{G}(k) \, .
\end{cases}
\end{equation}
In particular, taking the trace on $\Gamma_0$ of the fast equation in \eqref{fast_problem_k}, we find that there exists 
a smooth and bounded (with respect to $Y_3$) solution $W^\pm$ to the system:
\begin{equation}
\label{fast_problem_k'}
\begin{cases}
(A_3^\pm \, \p_{Y_3} +i \, k \, {\mathcal A}^\pm) \, W^\pm \, = \, \widehat{F}^\pm(k)|_{y_3=0} \, ,& \pm Y_3 >0 \, ,\\
B^+ \, W^+(0) +B^- \, W^-(0) \, = \, \widehat{G}(k) \, .
\end{cases}
\end{equation}
Observe that in \eqref{fast_problem_k'}, the only remaining parameters, which we have omitted to write explicitly, 
are $(t,y')$. For convenience, we introduce the notation $L_k^\pm :=A_3^\pm \, \p_{Y_3} +i \, k \, {\mathcal A}^\pm$, 
which corresponds to the action of the fast operator $\cL_f^\pm (\p)$ on the $k$-th Fourier mode with respect to 
$\theta$. We also use the notation $(L_k^\pm)^*$ to denote the (formal) adjoint operator $-(A_3^\pm)^T \, \p_{Y_3} 
-i \, k \, ({\mathcal A}^\pm)^T$. We use at last the notation `$\, \sbt \,$' for the Hermitian product in $\bC^7$:
$$
\forall \, X, Y \in \bC^7 \, ,\quad X \sbt Y \, := \, \sum_{j=1}^7 \overline{X_j} \, Y_j \, .
$$

Let us start with the (bounded) solution $W^\pm$ to the system \eqref{fast_problem_k'}. For a pair of sufficiently smooth 
and (exponentially) decaying at infinity test functions $V^\pm$, we have:
\begin{multline}
\label{dualite_1}
\int_0^{+\infty} V^+ \sbt (L_k^+ W^+) \, {\rm d}Y_3 \, - \, \int_{-\infty}^0 V^- \sbt (L_k^- W^-) \, {\rm d}Y_3 \\
= \, \int_0^{+\infty} (L_k^+)^* \, V^+ \sbt W^+ \, {\rm d}Y_3 \, - \, \int_{-\infty}^0 (L_k^-)^* \, V^- \sbt W^- \, {\rm d}Y_3 
\, - \, V^+(0) \sbt A_3^+ \, W^+(0) \, - \, V^-(0) \sbt A_3^- \, W^-(0) \, .
\end{multline}
We make the boundary terms at $Y_3=0$ explicit thanks to the expression of $A_3^\pm$ (see Appendix \ref{appendixA}), and use 
the fact that $W^\pm$ satisfies \eqref{fast_problem_k'}. This yields:
\begin{align}
\int_0^{+\infty} V^+ \sbt \widehat{F}^+(k)|_{y_3=0} \, {\rm d}Y_3 \, -& \, \int_{-\infty}^0 V^- \sbt \widehat{F}^-(k)|_{y_3=0} \, {\rm d}Y_3 
\notag \\
=& \, {\color{blue} \int_0^{+\infty} (L_k^+)^* \, V^+ \sbt W^+ \, {\rm d}Y_3} \, 
- \, {\color{blue} \int_{-\infty}^0 (L_k^-)^* \, V^- \sbt W^- \, {\rm d}Y_3} \notag \\
& \, -{\color{blue} \Big( \overline{v_3^+(0)} \, + \, \overline{v_3^-(0)} \Big) \, q^-(0)} \notag \\
& \, -\Big( u_j^{0,+} \, \overline{v_j^+(0)} +H_j^{0,+} \, \overline{B_j^+(0)} +\overline{p^+(0)} \Big) \, \widehat{G}_1(k) \label{dualite_1'} \\
& \, -\Big( u_j^{0,-} \, \overline{v_j^-(0)} +H_j^{0,-} \, \overline{B_j^-(0)} +\overline{p^-(0)} \Big) \, \widehat{G}_3(k) \notag \\
& \, +\Big( H_j^{0,+} \, \overline{v_j^+(0)} +u_j^{0,+} \, \overline{B_j^+(0)} \Big) \, \widehat{G}_2(k) 
+\Big( H_j^{0,-} \, \overline{v_j^-(0)} +u_j^{0,-} \, \overline{B_j^-(0)} \Big) \, \widehat{G}_4(k) \notag \\
& \, -\overline{v_3^+(0)} \, \widehat{G}_5(k) \, ,\notag 
\end{align}
where the coordinates of $V^\pm$ are denoted $(v^\pm,B^\pm,p^\pm)$ and the seventh coordinate of $W^\pm$ is denoted 
$q^\pm$. The goal is now to choose the test functions $V^\pm$ such that all the blue terms in \eqref{dualite_1'} -those where 
$W^\pm$ still appears- vanish. This leads us to introducing the so-called \emph{dual} problem:
\begin{equation}
\label{s3-sys_L_k^pm,*}
\begin{cases}
(L_k^\pm)^* \, V^\pm \, = \, 0, & \pm Y_3>0 \, , \\
v_3^+ (0) \, + \, v_3^- (0) \, = \, 0 \, . & 
\end{cases}
\end{equation}
After several calculations which are quite similar to those that have been done in the proof of Proposition \ref{prop_fast_homogeneous}, 
we find that there is a one-dimensional space of solutions to \eqref{s3-sys_L_k^pm,*}, that is spanned by the pair:
\begin{equation}
\label{s3-def_V^pm}
V^\pm(Y_3) \, := \, \mathrm{e}^{\mp |k| \, Y_3} \, \cL^\pm(k) \, ,
\end{equation}
where the vectors $\cL^\pm(k) \in \bC^7$ are explicitly defined by \eqref{appA-defRLkpm}. Plugging the expression \eqref{s3-def_V^pm} 
of $V^\pm$ in \eqref{dualite_1'}, and defining the quantities:
\begin{equation}
\label{s3-def_l1_l2}
\ell_1^\pm \, := \, 2 \, (b^\pm)^2 - \tau \, c^\pm \, ,\quad \text{ and } \quad \ell_2^\pm \, := \, - (a^\pm +c^\pm)\, b^\pm \, ,
\end{equation}
we find that a necessary condition for \eqref{fast_problem_k'} to have a solution is:
\begin{multline}
\label{s3-cond_ortho}
\int_0^{+\infty} \mathrm{e}^{-|k| \, Y_3} \, \cL^+(k) \sbt \widehat{F}^+(k)|_{y_3=0} \, {\rm d}Y_3 
\, - \, \int_{-\infty}^0 \mathrm{e}^{|k| \, Y_3} \, \cL^-(k) \sbt \widehat{F}^-(k)|_{y_3=0} \, {\rm d}Y_3 \\
+ \, \ell_1^+ \, \widehat{G}_1(k) \, + \, \ell_2^+ \, \widehat{G}_2(k) \, + \, \ell_1^- \, \widehat{G}_3(k) 
\, + \, \ell_2^- \, \widehat{G}_4(k) \, - \, i \, \tau \, \text{sgn}(k) \, \widehat{G}_5(k) \, = \, 0 \, .
\end{multline}

At this stage, we have shown that if \eqref{fast_problem'} has a solution (which is equivalent to assuming that the original fast 
problem \eqref{fast_problem} has a solution), then the conditions \eqref{compatibilite_pb_rapide} must be satisfied by the source 
terms.

We assume from now on that the source terms in \eqref{fast_problem'} satisfy \eqref{compatibilite_pb_rapide} and wish to show 
that there exists a solution $\bU^\pm \in S^\pm$ to \eqref{fast_problem'}. This is done by separating the zero Fourier mode from 
the nonzero modes and by constructing an explicit solution to \eqref{fast_problem'} in each case.

\subsection{Solvability I. Zero Fourier mode}

We first show that we can solve the projection of \eqref{fast_problem'} on the zero Fourier mode. Our goal is to construct a 
pair of functions $\widehat{\bU}^\pm (0) \in S^\pm$ that does not depend on $\theta$, and that satisfies:
\begin{equation}
\label{fast_problem_0}
\begin{cases}
A_3^\pm \, \p_{Y_3} \widehat{\bU}^\pm (0) \, = \, \widehat{F}^\pm(0) \, ,& y_3 \in I^\pm \, ,\, \pm Y_3 >0 \, ,\\
\p_{Y_3} \widehat{\cH}_3^\pm (0) \, = \, \widehat{F}_8^\pm(0) \, ,& y_3 \in I^\pm \, ,\, \pm Y_3 >0 \, ,\\
B^+ \, \widehat{\bU}^+ (0)|_{y_3=Y_3=0} +B^- \, \widehat{\bU}^- (0)|_{y_3=Y_3=0} \, = \, \widehat{G}(0) \, .
\end{cases}
\end{equation}
Here and below, the coordinates of $\widehat{\bU}^\pm (0)$ are denoted $\cU^\pm,\cH^\pm,\cQ^\pm$ for the velocity, 
magnetic field and total pressure respectively (we omit to recall the hat notation and the reference to the zero Fourier 
mode for clarity). We also wish to show that solving \eqref{fast_problem_0} is possible with the additional constraint 
$\Pi \, \widehat{\bU}^\pm (0) =0$ (zero tangential components for the velocity and magnetic field) and 
$\widehat{\underline{\bU}}^\pm (0)|_{y_3 =\pm 1} =0$ (zero trace for the residual component on $\Gamma^\pm$).

Solving \eqref{fast_problem_0} is rather easy. Using the solvability condition \eqref{compatibilite_pb_rapide_b}, 
we already know that $\widehat{F}^\pm(0)=\widehat{F}_\star^\pm(0)$ and $\widehat{F}_8^\pm(0)=\widehat{F}_{8,\star}^\pm(0)$. 
We thus define the fast normal velocity, normal magnetic field and total pressure by:
\begin{align*}
&\cU_{3,\star}^-(t,y,Y_3) \, := \, \int_{-\infty}^{Y_3} \widehat{F}_{7,\star}^-(t,y,Y,0) \, {\rm d}Y \, ,\quad 
&\cU_{3,\star}^+(t,y,Y_3) \, := \, -\int^{+\infty}_{Y_3} \widehat{F}_{7,\star}^+(t,y,Y,0) \, {\rm d}Y \, ,\\
&\cH_{3,\star}^-(t,y,Y_3) \, := \, \int_{-\infty}^{Y_3} \widehat{F}_{8,\star}^-(t,y,Y,0) \, {\rm d}Y \, ,\quad 
&\cH_{3,\star}^+(t,y,Y_3) \, := \, -\int^{+\infty}_{Y_3} \widehat{F}_{8,\star}^+(t,y,Y,0) \, {\rm d}Y \, ,\\
&\cQ_\star^-(t,y,Y_3) \, := \, \int_{-\infty}^{Y_3} \widehat{F}_{3,\star}^-(t,y,Y,0) \, {\rm d}Y \, ,\quad 
&\cQ_\star^+(t,y,Y_3) \, := \, -\int^{+\infty}_{Y_3} \widehat{F}_{3,\star}^+(t,y,Y,0) \, {\rm d}Y \, .
\end{align*}
The above formulas define some functions $\cU_{3,\star}^\pm,\cH_{3,\star}^\pm,\cQ_\star^\pm \in S_\star^\pm$ 
that do not depend on $\theta$. Because of the compatibility conditions \eqref{compatibilite_pb_rapide_c} and 
\eqref{compatibilite_pb_rapide_d} (which implies $\widehat{F}_{6,\star}^\pm \equiv 0$), we find that the vectors:
$$
\widehat{\bU}_\star^\pm (0) \, := \, \Big( 0,0,\cU_{3,\star}^\pm,0,0,\cH_{3,\star}^\pm,\cQ_\star^\pm \Big)^T \, ,
$$
satisfy the fast equations in \eqref{fast_problem_0}, namely:
$$
A_3^\pm \, \p_{Y_3} \widehat{\bU}_\star^\pm (0) \, = \, \widehat{F}_\star^\pm(0) \, = \, \widehat{F}^\pm(0) \, ,\quad 
\p_{Y_3} \widehat{\cH}_{3,\star}^\pm (0) \, = \, \widehat{F}_{8,\star}^\pm(0) \, = \, \widehat{F}_8^\pm(0) \, ,
$$
for all $(t,y,Y_3) \in [0,T] \times \bT^2 \times I^\pm \times \R^\pm$. It remains to add some slow functions 
$\widehat{\underline{\bU}}^\pm (0) (t,y)$ in order to satisfy the boundary conditions in \eqref{fast_problem_0}. Indeed, 
any choice of functions $\widehat{\underline{\bU}}^\pm (0)$ that are independent of $Y_3$ will not modify the fulfillment 
of the fast equations in \eqref{fast_problem_0}. We thus only need to determine $\widehat{\underline{\bU}}^\pm (0)$ 
such that:
\begin{align*}
\underline{\cU}_3^+ (t,y',0) \, &= \, \widehat{G}_1 (t,y',0) \, - \, \cU_{3,\star}^+ (t,y',0,0) \, ,& \\
\underline{\cH}_3^+ (t,y',0) \, &= \, \widehat{G}_2 (t,y',0) \, - \, \cH_{3,\star}^+ (t,y',0,0) \, ,& \\
\underline{\cU}_3^- (t,y',0) \, &= \, \widehat{G}_3 (t,y',0) \, - \, \cU_{3,\star}^- (t,y',0,0) \, ,& \\
\underline{\cH}_3^- (t,y',0) \, &= \, \widehat{G}_4 (t,y',0) \, - \, \cH_{3,\star}^- (t,y',0,0) \, ,& \\
\underline{\cQ}^+ (t,y',0) -\underline{\cQ}^- (t,y',0) \, &= \, \widehat{G}_5 (t,y',0) \, - \, 
\cQ_\star^+ (t,y',0,0) \, + \, \cQ_\star^- (t,y',0,0) \, ,& 
\end{align*}
which is always possible. Satisfying the additional constraint $\widehat{\underline{\bU}}^\pm (0)|_{y_3 =\pm 1} =0$ is 
always made possible by multiplying, for instance, by the cut-off function $\chi(y_3)$ that vanishes outside $[-2/3,2/3]$.

The particular solution $\widehat{\bU}^\pm (0)$ which we have constructed for the problem \eqref{fast_problem_0} 
obviously satisfies $\widehat{\bU}^\pm (0) \in S^\pm$. Moreover, we have shown that it is always possible to choose 
$\widehat{\bU}^\pm (0)$ such that $\Pi \, \widehat{\bU}^\pm (0) =0$ and $\widehat{\underline{\bU}}^\pm (0)|_{y_3 =\pm 1} 
=0$. In order to prove Theorem \ref{theorem_fast_problem}, we can now reduce to the case where all source terms in 
\eqref{fast_problem'} have zero mean with respect to $\theta$.

\subsection{Solvability II. Nonzero Fourier modes}

Thanks to the analysis in the previous Paragraph, the only point left in the proof of Theorem \ref{theorem_fast_problem} 
is to show that \eqref{fast_problem'} admits one solution $\bU^\pm$ when the source terms satisfy 
\eqref{compatibilite_pb_rapide_a}, \eqref{compatibilite_pb_rapide_d}, \eqref{compatibilite_pb_rapide_e}, together 
with $\widehat{F}^\pm (0)=0$, $\widehat{F}_8^\pm (0)=0$ and $\widehat{G} (0)=0$. (Let us observe that in this 
case, \eqref{compatibilite_pb_rapide_b}, \eqref{compatibilite_pb_rapide_c} are trivially satisfied.) In order to solve 
\eqref{fast_problem'}, we shall first construct an explicit solution for its projection on each nonzero Fourier mode 
with respect to $\theta$. We shall then study the summability properties of the corresponding Fourier series. Let 
us therefore compute the $k$-th Fourier coefficient of each equation in \eqref{fast_problem'} and thus introduce 
the problem:
\begin{equation}
\label{fast_problem_k''}
\begin{cases}
(A_3^\pm \, \p_{Y_3} +i \, k \, {\mathcal A}^\pm) \, \widehat{\bU}^\pm (k) \, = \, \widehat{F}^\pm (k) \, ,& 
y_3 \in I^\pm \, ,\quad \pm Y_3 >0 \, ,\\
\p_{Y_3} \widehat{\cH}_3^\pm (k) +i \, k \, \xi_j \, \widehat{\cH}_j^\pm (k) \, = \, \widehat{F}_8^\pm (k) \, ,& 
y_3 \in I^\pm \, ,\quad \pm Y_3 >0 \, ,\\
B^+ \, \widehat{\bU}^\pm (k)|_{y_3=Y_3=0} +B^- \, \widehat{\bU}^\pm (k)|_{y_3=Y_3=0} \, = \, \widehat{G} (k) \, .
\end{cases}
\end{equation}

We focus on the system \eqref{fast_problem_k''}, of which we first take the limit at $|Y_3|=+\infty$ in order to try 
to determine the residual component of $\widehat{\bU}^\pm (k)$. We wish to construct a solution to:
\begin{equation}
\label{residual_pb_k}
i\, k \, {\mathcal A}^\pm \, \widehat{\underline{\bU}}^\pm (k) \, = \, \widehat{\uF}^\pm (k) \, ,\quad 
y_3 \in I^\pm \, , \quad Y_3 \in \bR^\pm \, .
\end{equation}
Since we already know that the matrices ${\mathcal A}^\pm$ are invertible, there is no choice for the residual 
component, and we must set:
$$
\widehat{\underline{\bU}}^\pm (k) \, := \, \dfrac{1}{i\, k} \, ({\mathcal A}^\pm)^{-1} \, \widehat{\uF}^\pm (k) \, .
$$
Performing some manipulations on the fourth, fifth, sixth and seventh equations of system \eqref{residual_pb_k}, 
we find that the residual component which we have just defined satisfies the additional relation:
$$
i\, k \, \tau \, \Big( i\, k \, \xi_j \, \widehat{\underline{\cH}_j}^\pm (k) \Big) 
\, = \, i \, k \, \xi_j \, \widehat{\uF}_{3+j}^\pm (k) \, = \, i\, k \, \tau \, \widehat{\uF}_8^\pm (k) \, ,
$$
where the final equality comes from \eqref{compatibilite_pb_rapide_d}. Since $k$ and $\tau$ are nonzero, this 
means that the residual components $\widehat{\underline{\bU}}^\pm (k)$ satisfy:
\begin{equation*}
i\, k \, \xi_j \, \widehat{\underline{\cH}_j}^\pm (k) \, = \, \widehat{\uF}_8^\pm (k) \, ,
\end{equation*}
which is the $k$-th Fourier coefficient projection of the fast divergence constraint $\xi_j \, \p_\theta \underline{\cH}_j^\pm =\uF_8^\pm$.

We now define the boundary source term:
$$
\underline{\widehat{G}} (k) \, := \, 
B^+ \, \widehat{\underline{\bU}}^+ (k)|_{y_3=0} +B^- \, \widehat{\underline{\bU}}^- (k)|_{y_3=0} \, ,
$$
in such a way that the pair of profiles $\widehat{\underline{\bU}}^\pm (k)$ is a solution to:
\begin{equation}
\label{fast_problem_k_res}
\begin{cases}
i \, k \, {\mathcal A}^\pm \, \widehat{\underline{\bU}}^\pm (k) \, = \, \widehat{\uF}^\pm (k) \, ,& 
y_3 \in I^\pm \, ,\, \pm Y_3 >0 \, ,\\
i \, k \, \xi_j \, \widehat{\underline{\cH}_j}^\pm (k) \, = \, \widehat{\uF}_8^\pm (k) \, ,& y_3 \in I^\pm \, ,\, \pm Y_3 >0 \, ,\\
B^+ \, \widehat{\underline{\bU}}^+ (k)|_{y_3=0} +B^- \, \widehat{\underline{\bU}}^- (k)|_{y_3=0} \, = \, \widehat{\uG} (k) \, .
\end{cases}
\end{equation}
This is nothing but the residual component of \eqref{fast_problem_k''} except that the boundary source term 
$\widehat{\uG} (k)$ in \eqref{fast_problem_k_res} is not the same as in \eqref{fast_problem_k''}.

The above derivation of necessary solvability conditions for \eqref{fast_problem} implies that the source terms in 
\eqref{fast_problem_k_res} satisfies the orthogonality condition:
\begin{multline}
\label{compatibilite_pb_rapide_e_res}
\int_{\R^+} \mathrm{e}^{-|k| \, Y_3} \, \cL^+(k) \sbt \widehat{\uF}^+ (t,y',0,k) \,  {\rm d}Y_3 \, - \, 
\int_{\R^-} \mathrm{e}^{|k| \, Y_3} \, \cL^-(k) \sbt \widehat{\uF}^- (t,y',0,k) \,  {\rm d}Y_3 \\
+ \, \ell_1^+ \, \widehat{\uG}_1(t,y',k) \, + \, \ell_2^+ \, \widehat{\uG}_2(t,y',k) \, + \, \ell_1^- \, \widehat{\uG}_3(t,y',k) 
\, + \, \ell_2^- \, \widehat{\uG}_4(t,y',k) \, - \, i \, \tau \, \text{\rm sgn}(k) \, \widehat{\uG}_5(t,y',k) \, = \, 0 \, ,
\end{multline}
\bigskip

From the definition \eqref{residual_pb_k} of the Fourier coefficients $\widehat{\underline{\bU}}^\pm (k)$, $k \neq 0$, and the fact 
that the source term $\uF^\pm$ belongs to $\uS^\pm = H^\infty ([0,T] \times \bT^2 \times I^\pm \times \bT)$, it is quite clear that 
the formula:
$$
\underline{\bU}^\pm (t,y,\theta) \, := \, \sum_{k \neq 0} \, \widehat{\underline{\bU}}^\pm (t,y,k) \, {\rm e}^{i \, k \, \theta} 
\, = \, \sum_{k \neq 0} \, \dfrac{1}{i\, k} \, ({\mathcal A}^\pm)^{-1} \, \widehat{\uF}^\pm (t,y,k) \, {\rm e}^{i \, k \, \theta}  \, ,
$$
defines a pair of functions $\underline{\cU}^\pm \in \uS^\pm$, and these functions satisfy\footnote{Of course all $Y_3$-derivatives 
vanish in \eqref{fast_problem''} since residual functions are independent of $Y_3$.}:
\begin{equation}
\label{fast_problem'''}
\begin{cases}
\cL_f^\pm(\partial) \, \underline{\bU}^\pm \, = \, \uF^\pm \, ,& y_3 \in I^\pm \, ,\, \pm Y_3 >0 \, ,\\
\xi_j \, \p_\theta \underline{\cH}_j^\pm \, = \, \uF_8^\pm \, ,& y_3 \in I^\pm \, ,\, \pm Y_3 >0 \, ,\\
B^+ \, \underline{\bU}^+|_{y_3=0} +B^- \, \underline{\bU}^-|_{y_3=0} \, = \, \uG \, ,
\end{cases}
\end{equation}
with an obvious definition for the boundary source term $\uG$.
\bigskip

Since we have already constructed a solution to \eqref{fast_problem'''}, we can subtract \eqref{fast_problem'} and 
\eqref{fast_problem'''}, and it remains to construct a solution to the fast problem:
\begin{equation}
\label{fast_problem''}
\begin{cases}
\cL_f^\pm(\partial) \, \bU^\pm \, = \, F_\star^\pm \, ,& y_3 \in I^\pm \, ,\, \pm Y_3 >0 \, ,\\
\p_{Y_3} \cH_3^\pm +\xi_j \, \p_\theta \cH_j^\pm \, = \, F_{8,\star}^\pm \, ,& y_3 \in I^\pm \, ,\, \pm Y_3 >0 \, ,\\
B^+ \, \bU^+|_{y_3=Y_3=0} +B^- \, \bU^-|_{y_3=Y_3=0} \, = \, \underbrace{G-\uG}_{=: \, G_\star} \, ,&
\end{cases}
\end{equation}
where the main gain with respect to \eqref{fast_problem'} is that now all interior source terms are exponentially decaying 
at $Y_3 =\pm \infty$. Subtracting \eqref{compatibilite_pb_rapide_e} and \eqref{compatibilite_pb_rapide_e_res}, we know 
that the source terms in \eqref{fast_problem''} satisfy the orthogonality condition:
\begin{multline}
\label{compatibilite_pb_rapide_e'}
\int_{\R^+} \mathrm{e}^{-|k| \, Y_3} \, \cL^+(k) \sbt \widehat{F}_\star^+ (t,y',0,k) \,  {\rm d}Y_3 \, - \, 
\int_{\R^-} \mathrm{e}^{|k| \, Y_3} \, \cL^-(k) \sbt \widehat{F}_\star^- (t,y',0,k) \,  {\rm d}Y_3 \\
+ \, \ell_1^+ \, \widehat{G}_{1,\star} (t,y',k) \, + \, \ell_2^+ \, \widehat{G}_{2,\star}(t,y',k) \, 
+ \, \ell_1^- \, \widehat{G}_{3,\star} (t,y',k) \, + \, \ell_2^- \, \widehat{G}_{4,\star} (t,y',k) \, 
- \, i \, \tau \, \text{\rm sgn}(k) \, \widehat{G}_{5,\star} (t,y',k) \, = \, 0 \, .
\end{multline}
By linearity, we also know that the source terms in \eqref{fast_problem''} satisfy the compatibility condition on $\Gamma_0$:
\begin{equation}
\label{compatibilite_pb_rapide_a'}
\begin{cases}
F_{6,\star}^+|_{y_3=Y_3=0} \, = \, -b^+ \, \p_\theta G_{1,\star} +c^+ \, \p_\theta G_{2,\star} \, ,& \\
F_{6,\star}^-|_{y_3=Y_3=0} \, = \, -b^- \, \p_\theta G_{3,\star} +c^- \, \p_\theta G_{4,\star} \, .& 
\end{cases}
\end{equation}
We shall also use the compatibility condition for the divergence of the magnetic field, which corresponds to the projection 
on $S_\star^\pm$ of \eqref{compatibilite_pb_rapide_d}, namely:
\begin{equation}
\label{compatibilite_pb_rapide_d'}
\p_{Y_3} F_{6,\star}^\pm +\xi_j \, \p_\theta F_{3+j,\star}^\pm -\tau \, \p_\theta F_{8,\star}^\pm \, = \, 0 \, .
\end{equation}

In order to construct a solution to \eqref{fast_problem''}, we are going to find the expression for the $k$-th Fourier 
coefficient of $\bU^\pm$, which must be a solution to:
\begin{equation}
\label{fast_problem''_k}
\begin{cases}
L_k^\pm \, \widehat{\bU}^\pm (k) \, = \, \widehat{F}_\star^\pm \, ,& y_3 \in I^\pm \, ,\, \pm Y_3 >0 \, ,\\
\p_{Y_3} \widehat{\cH}_3^\pm (k) +i \, k \, \xi_j \, \widehat{\cH}_j^\pm (k) \, = \, \widehat{F}_{8,\star}^\pm (k)\, ,& 
y_3 \in I^\pm \, ,\, \pm Y_3 >0 \, ,\\
B^+ \, \widehat{\bU}^+ (k)|_{y_3=Y_3=0} +B^- \, \widehat{\bU}^- (k)|_{y_3=Y_3=0} \, = \, \widehat{G}_\star (k) \, .&
\end{cases}
\end{equation}
Making the fast equations explicit, we must solve (forgetting from now on the hat notation and the reference to the index $k$ 
for the solution):
\begin{equation}
\label{fast_problem'''_k}
\begin{cases}
i\, k \, c^\pm \, \cU_j^\pm -i \, k \, b^\pm \, \cH_j^\pm +i\, k \, \xi_j \, \cQ^\pm 
& \, = \, \widehat{F}_{j,\star}^\pm -u_j^{0,\pm} \, \widehat{F}_{7,\star}^\pm +H_j^{0,\pm} \, \widehat{F}_{8,\star}^\pm \, ,\quad j=1,2 \, ,\\
i\, k \, c^\pm \, \cU_3^\pm -i \, k \, b^\pm \, \cH_3^\pm +\p_{Y_3} \cQ^\pm & \, = \, \widehat{F}_{3,\star}^\pm \, ,\\
-i\, k \, b^\pm \, \cU_j^\pm +i \, k \, c^\pm \, \cH_j^\pm & \, = \, \widehat{F}_{3+j,\star}^\pm -H_j^{0,\pm} \, \widehat{F}_{7,\star}^\pm 
+u_j^{0,\pm} \, \widehat{F}_{8,\star}^\pm \, ,\quad j=1,2 \, ,\\
-i\, k \, b^\pm \, \cU_3^\pm +i \, k \, c^\pm \, \cH_3^\pm & \, = \, \widehat{F}_{6,\star}^\pm \, ,\\
\p_{Y_3} \cU_3^\pm +i \, k \, \xi_j \, \cU_j^\pm & \, = \, \widehat{F}_{7,\star}^\pm \, ,\\
\p_{Y_3} \cH_3^\pm +i \, k \, \xi_j \, \cH_j^\pm & \, = \, \widehat{F}_{8,\star}^\pm \, .
\end{cases}
\end{equation}
Using once again the divergence constraints on the velocity and the magnetic field, we find that the total pressure 
must satisfy the differential equation:
\begin{align}
\p_{Y_3}^2 \cQ^\pm -k^2 \, \cQ^\pm \, &= \, \p_{Y_3} \widehat{F}_{3,\star}^\pm -i\, k \, (a^\pm+c^\pm) \, \widehat{F}_{7,\star}^\pm 
+2\, i\, k \, b^\pm \, \widehat{F}_{8,\star}^\pm +i\, k \, \xi_j \, \widehat{F}_{j,\star}^\pm \, ,\notag \\
&= \, \underbrace{\p_{Y_3} \widehat{F}_{3,\star}^\pm +\dfrac{2\, b^\pm}{\tau} \, \p_{Y_3} \widehat{F}_{6,\star}^\pm 
-i\, k \, (a^\pm+c^\pm) \, \widehat{F}_{7,\star}^\pm 
+\dfrac{2\, i\, k \, b^\pm}{\tau} \, \xi_j \, \widehat{F}_{3+j,\star}^\pm +i\, k \, \xi_j \, \widehat{F}_{j,\star}^\pm}_{=: \, \cF^\pm} 
\, ,\label{eq_diff_qk}
\end{align}
where the second equality comes from \eqref{compatibilite_pb_rapide_d'}. The total pressure should also satisfy the 
boundary conditions:
\begin{subequations}
\label{bound_cond_qk}
\begin{equation}
\label{bound_cond_qk_a}
\cQ^+|_{y_3=Y_3=0} \, - \, \cQ^-|_{y_3=Y_3=0} \, = \, \widehat{G}_{5,\star}\, ,
\end{equation}
\begin{equation}
\label{bound_cond_qk_b}
\p_{Y_3} \cQ^+|_{y_3=Y_3=0} \, = \, \widehat{F}_{3,\star}^+|_{y_3=Y_3=0} 
-i\, k\, c^+ \, \widehat{G}_{1,\star} +i\, k\, b^+ \, \widehat{G}_{2,\star} \, ,
\end{equation}
\begin{equation}
\label{bound_cond_qk_c}
\p_{Y_3} \cQ^-|_{y_3=Y_3=0} \, = \, \widehat{F}_{3,\star}^-|_{y_3=Y_3=0} 
-i\, k\, c^- \, \widehat{G}_{3,\star} +i\, k\, b^- \, \widehat{G}_{4,\star} \, ,
\end{equation}
\end{subequations}
A solution $(\cQ^+,\cQ^-)$ to \eqref{eq_diff_qk}, \eqref{bound_cond_qk_b}, \eqref{bound_cond_qk_c} is given by:
\begin{equation}
\label{expression_qk+}
\cQ^+ \, := \, \kappa^+ \, {\rm e}^{-|k| \, Y_3} \, - \, \dfrac{1}{2\, |k|} \, \left( \int_0^{Y_3} {\rm e}^{-|k| \, (Y_3-Y)} \, \cF^+(Y) \, {\rm d}Y 
+\int_{Y_3}^{+\infty} {\rm e}^{-|k| \, (Y-Y_3)} \, \cF^+(Y) \, {\rm d}Y \right) \, ,
\end{equation}
\begin{equation}
\label{expression_qk-}
\cQ^- \, := \, \kappa^- \, {\rm e}^{|k| \, Y_3} \, - \, \dfrac{1}{2\, |k|} \, \left( \int_{-\infty}^{Y_3} {\rm e}^{-|k| \, (Y_3-Y)} \, \cF^-(Y) \, {\rm d}Y 
+\int_{Y_3}^0 {\rm e}^{-|k| \, (Y-Y_3)} \, \cF^-(Y) \, {\rm d}Y \right) \, ,
\end{equation}
where the coefficients $\kappa^\pm$, which are actually functions of $(t,y)$, must satisfy at this stage:
\begin{equation}
\label{def_kappa+}
\kappa^+|_{y_3=0} \, = \, -\dfrac{1}{2 \, |k|} \, \int_0^{+\infty} {\rm e}^{-|k| \, Y} \, \cF^+|_{y_3=0}(Y) \, {\rm d}Y 
-\dfrac{1}{|k|} \, \widehat{F}_{3,\star}^+|_{y_3=Y_3=0} +i\, \text{\rm sgn}(k) \, c^+ \, \widehat{G}_{1,\star} 
-i\, \text{\rm sgn}(k) \, b^+ \, \widehat{G}_{2,\star} \, ,
\end{equation}
\begin{equation}
\label{def_kappa-}
\kappa^-|_{y_3=0} \, = \, -\dfrac{1}{2 \, |k|} \, \int_{-\infty}^0 {\rm e}^{|k| \, Y} \, \cF^-|_{y_3=0}(Y) \, {\rm d}Y 
+\dfrac{1}{|k|} \, \widehat{F}_{3,\star}^-|_{y_3=Y_3=0} -i\, \text{\rm sgn}(k) \, c^- \, \widehat{G}_{3,\star} 
+i\, \text{\rm sgn}(k) \, b^- \, \widehat{G}_{4,\star} \, .
\end{equation}
The choice of the dependence of $\kappa^\pm$ on the slow normal variable $y_3$ is completely free at this point 
since it does not affect the fulfillment of \eqref{eq_diff_qk}. The question now is to determine whether this solution 
to \eqref{eq_diff_qk}, \eqref{bound_cond_qk_b}, \eqref{bound_cond_qk_c} also satisfies \eqref{bound_cond_qk_a} 
which will follow from the orthogonality condition \eqref{compatibilite_pb_rapide_e'} and the compatibility condition 
at the boundary \eqref{compatibilite_pb_rapide_a'}. Indeed, we compute from \eqref{expression_qk+}, \eqref{expression_qk-} 
and \eqref{def_kappa+}, \eqref{def_kappa-}:
\begin{align*}
\cQ^+|_{y_3=Y_3=0} \, -& \, \cQ^-|_{y_3=Y_3=0} \\
= \, & \, \kappa^+|_{y_3=0} -\dfrac{1}{2 \, |k|} \, \int_0^{+\infty} {\rm e}^{-|k| \, Y} \, \cF^+|_{y_3=0}(Y) \, {\rm d}Y 
-\kappa^-|_{y_3=0} +\dfrac{1}{2 \, |k|} \, \int_{-\infty}^0 {\rm e}^{|k| \, Y} \, \cF^-|_{y_3=0}(Y) \, {\rm d}Y \\
= \, & \, -\dfrac{1}{|k|} \, \int_0^{+\infty} {\rm e}^{-|k| \, Y} \, \cF^+|_{y_3=0}(Y) \, {\rm d}Y 
+\dfrac{1}{|k|} \, \int_{-\infty}^0 {\rm e}^{|k| \, Y} \, \cF^-|_{y_3=0}(Y) \, {\rm d}Y \\
& \, -\dfrac{1}{|k|} \, \Big( \widehat{F}_{3,\star}^+|_{y_3=Y_3=0} +\widehat{F}_{3,\star}^-|_{y_3=Y_3=0} \Big) 
+i\, \text{\rm sgn}(k) \, \Big( c^+ \, \widehat{G}_{1,\star} +c^- \, \widehat{G}_{3,\star} 
-b^+ \, \widehat{G}_{2,\star} -b^- \, \widehat{G}_{4,\star} \Big) \, .
\end{align*}
At this stage, we go back to the expression of the source terms $\cF^\pm$ in the differential equations 
\eqref{eq_diff_qk} and we integrate by parts those two terms in the expression of $\cF^\pm$ that involve 
a $Y_3$-derivative. Using the expression \eqref{appA-defRLkpm} of the vectors $\cL^\pm(k)$, we get:
\begin{align*}
\cQ^+ &|_{y_3=Y_3=0} \, - \, \cQ^-|_{y_3=Y_3=0} \\
= \, & \, -\dfrac{i\, \text{\rm sgn}(k)}{\tau} \, 
\int_0^{+\infty} {\rm e}^{-|k| \, Y} \, \cL^+(k) \sbt \widehat{F}_\star^+|_{y_3=0}(Y,k) \, {\rm d}Y  
+\dfrac{i\, \text{\rm sgn}(k)}{\tau} \, 
\int_{-\infty}^0 {\rm e}^{|k| \, Y} \, \cL^-(k) \sbt \widehat{F}_\star^-|_{y_3=0}(Y,k) \, {\rm d}Y  \\
& \, +\dfrac{2}{|k| \, \tau} \, \Big( b^+ \, \widehat{F}_{6,\star}^+|_{y_3=Y_3=0} 
+b^- \, \widehat{F}_{6,\star}^-|_{y_3=Y_3=0} \Big) 
+i\, \text{\rm sgn}(k) \, \Big( c^+ \, \widehat{G}_{1,\star} +c^- \, \widehat{G}_{3,\star} 
-b^+ \, \widehat{G}_{2,\star} -b^- \, \widehat{G}_{4,\star} \Big) \\
= \, & \, i\, \text{\rm sgn}(k) \, \left(  \left( \dfrac{\ell_1^+}{\tau} +c^+ \right) \, \widehat{G}_{1,\star} 
+ \left( \dfrac{\ell_2^+}{\tau} -b^+ \right) \, \widehat{G}_{2,\star} 
+ \left( \dfrac{\ell_1^-}{\tau} +c^- \right) \, \widehat{G}_{3,\star} 
+ \left( \dfrac{\ell_2^-}{\tau} -b^- \right) \, \widehat{G}_{4,\star} \right) +\widehat{G}_{5,\star} \\
& \, +\dfrac{2}{|k| \, \tau} \, \Big( b^+ \, \widehat{F}_{6,\star}^+|_{y_3=Y_3=0} 
+b^- \, \widehat{F}_{6,\star}^-|_{y_3=Y_3=0} \Big) \\
= \, & \, i\, \text{\rm sgn}(k) \, \left(  \dfrac{2 \, (b^+)^2}{\tau} \, \widehat{G}_{1,\star} 
-\dfrac{2 \, c^+ \, b^+}{\tau} \, \widehat{G}_{2,\star} 
+\dfrac{2 \, (b^-)^2}{\tau} \, \widehat{G}_{3,\star} 
-\dfrac{2 \, c^- \, b^-}{\tau} \, \widehat{G}_{4,\star} \right) +\widehat{G}_{5,\star} \\
& \, +\dfrac{2}{|k| \, \tau} \, \Big( b^+ \, \widehat{F}_{6,\star}^+|_{y_3=Y_3=0} 
+b^- \, \widehat{F}_{6,\star}^-|_{y_3=Y_3=0} \Big) \, ,
\end{align*}
where we have used the orthogonality condition \eqref{compatibilite_pb_rapide_e'} and the expression  
\eqref{s3-def_l1_l2} of the coefficients $\ell_1^\pm,\ell_2^\pm$. Using now \eqref{compatibilite_pb_rapide_a'}, 
we obtain that the total pressure defined in \eqref{expression_qk+}, \eqref{expression_qk-} satisfies 
\eqref{bound_cond_qk_a}.

For convenience, we extend $\kappa^\pm$ to $I^\pm$ by choosing them to be independent of the slow normal 
variable $y_3$. This has no consequence as far as regularity and integrability are concerned, and we shall 
see in a moment why this choice does not affect the solvability of \eqref{fast_problem''_k}.
\bigskip

Up to now, we have constructed the total pressure $\cQ^\pm$ as a solution to \eqref{eq_diff_qk} and 
\eqref{bound_cond_qk}. We now construct the tangential components of the velocity and the magnetic field. 
Namely, we set:
\begin{align*}
\cU_j^\pm \, &:= \, -\dfrac{c^\pm \, \xi_j}{(c^\pm)^2 -(b^\pm)^2} \, \cQ^\pm 
+\dfrac{c^\pm \, (\widehat{F}_{j,\star}^\pm -u_j^{0,\pm} \, \widehat{F}_{7,\star}^\pm)}{i\, k \, ((c^\pm)^2 -(b^\pm)^2)} 
+\dfrac{b^\pm \, (\widehat{F}_{3+j,\star}^\pm -H_j^{0,\pm} \, \widehat{F}_{8,\star}^\pm)}{i\, k \, ((c^\pm)^2 -(b^\pm)^2)} 
+\dfrac{c^\pm \, H_j^{0,\pm} +b^\pm \, u_j^{0,\pm}}{i\, k \, ((c^\pm)^2 -(b^\pm)^2)} \, \widehat{F}_{8,\star}^\pm \, ,\\
\cH_j \, &:= \, -\dfrac{b^\pm \, \xi_j}{(c^\pm)^2 -(b^\pm)^2} \, \cQ^\pm 
+\dfrac{b^\pm \, (\widehat{F}_{j,\star}^\pm -u_j^{0,\pm} \, \widehat{F}_{7,\star}^\pm)}{i\, k \, ((c^\pm)^2 -(b^\pm)^2)} 
+\dfrac{c^\pm \, (\widehat{F}_{3+j,\star}^\pm -H_j^{0,\pm} \, \widehat{F}_{7,\star}^\pm)}{i\, k \, ((c^\pm)^2 -(b^\pm)^2)} 
+\dfrac{c^\pm \, u_j^{0,\pm} +b^\pm \, H_j^{0,\pm}}{i\, k \, ((c^\pm)^2 -(b^\pm)^2)} \, \widehat{F}_{8,\star}^\pm \, ,
\end{align*}
with $j=1,2$. In this way, we already satisfy the first and third equations in \eqref{fast_problem'''_k}, namely the `tangential' 
equations. In order to verify the divergence constraint, we have no choice but to set:
\begin{align*}
&\cU_3^- \, := \, \int_{-\infty}^{Y_3} \widehat{F}_{7,\star}^- -\, i \, k \, \xi_j \, \cU_j^- \, {\rm d}Y \, ,\quad 
&\cU_3^+ \, := \, -\int^{+\infty}_{Y_3} \widehat{F}_{7,\star}^+ -\, i \, k \, \xi_j \, \cU_j^+ \, {\rm d}Y \, ,\\
&\cH_3^- \, := \, \int_{-\infty}^{Y_3} \widehat{F}_{8,\star}^- -\, i \, k \, \xi_j \, \cH_j^- \, {\rm d}Y \, ,\quad 
&\cH_3^+ \, := \, -\int^{+\infty}_{Y_3} \widehat{F}_{8,\star}^+ -\, i \, k \, \xi_j \, \cH_j^+ \, {\rm d}Y \, .
\end{align*}
We thus ensure at the same time the fifth and sixth equations in \eqref{fast_problem'''_k} and exponential decay 
with respect to $Y_3$ at infinity. It remains to verify that the second and fourth equations in \eqref{fast_problem'''_k} 
are satisfied (the `normal' equations). At this stage we have defined all coordinates of the velocity and magnetic field 
so we can only hope that our previous definitions will automatically yield the missing relations for verifying 
\eqref{fast_problem'''_k} and the boundary conditions in \eqref{fast_problem''_k}.

Let us therefore verify that the second and fourth equations in \eqref{fast_problem'''_k} are satisfied. We compute:
\begin{align*}
\p_{Y_3} \Big( i\, k \, b^\pm \, \cU_3^\pm & \, -i \, k \, c^\pm \, \cH_3^\pm +\widehat{F}_{6,\star}^\pm \Big) \\
= \, & \, \p_{Y_3} \widehat{F}_{6,\star}^\pm +i\, k \, \Big( b^\pm \, \widehat{F}_{7,\star}^\pm -c^\pm \, \widehat{F}_{8,\star}^\pm \Big) 
+i\, k \, \Big( c^\pm \, i\, k \, \xi_j \, \cH_j^\pm -b^\pm \, i\, k \, \xi_j \, \cU_j^\pm \Big) \\
= \, & \, \p_{Y_3} \widehat{F}_{6,\star}^\pm +i\, k \,  \xi_j \, \widehat{F}_{3+j,\star}^\pm -i\, k \, \tau \, \widehat{F}_{8,\star}^\pm 
\, = \, 0 \, .
\end{align*}
Here we have used the compatibility condition \eqref{compatibilite_pb_rapide_d'}. Since all functions $\cU_3^\pm$, 
$\cH_3^\pm$, $\widehat{F}_{6,\star}^\pm$ decay exponentially with respect to $Y_3$ at infinity, this means that we have:
$$
i\, k \, b^\pm \, \cU_3^\pm -i \, k \, c^\pm \, \cH_3^\pm +\widehat{F}_{6,\star}^\pm \equiv 0 \, ,
$$
meaning that the fourth equation in \eqref{fast_problem'''_k} is satisfied. By the same argument (differentiation with 
respect to $Y_3$ and limit at infinity), we can also show that the second equation in \eqref{fast_problem'''_k} is 
satisfied. We now explain why the boundary conditions in \eqref{fast_problem''_k} are satisfied. Taking the double 
trace of the fourth equation in \eqref{fast_problem'''_k}, we first get:
\begin{equation}
\label{bound_cond_k_1}
\begin{cases}
-i\, k \, b^+ \, \cU_3^+|_{y_3=Y_3=0} +i \, k \, c^+ \, \cH_3^+|_{y_3=Y_3=0} \, = \, \widehat{F}_{6,\star}^+|_{y_3=Y_3=0} 
\, = \, -i\, k \, b^+ \, G_{1,\star} +i \, k \, c^+ \, G_{2,\star} \, ,& \\
-i\, k \, b^- \, \cU_3^-|_{y_3=Y_3=0} +i \, k \, c^- \, \cH_3^-|_{y_3=Y_3=0} \, = \, \widehat{F}_{6,\star}^-|_{y_3=Y_3=0} 
\, = \, -i\, k \, b^- \, G_{3,\star} +i \, k \, c^- \, G_{4,\star} \, ,& 
\end{cases}
\end{equation}
where we have used \eqref{compatibilite_pb_rapide_a'}. We now take the double trace of the second equation in 
\eqref{fast_problem'''_k}, and use \eqref{bound_cond_qk_b}, \eqref{bound_cond_qk_c}:
\begin{equation}
\label{bound_cond_k_2}
\begin{cases}
i\, k \, c^+ \, \cU_3^+|_{y_3=Y_3=0} -i \, k \, b^+ \, \cH_3^+|_{y_3=Y_3=0} \, = \, \widehat{F}_{3,\star}^+|_{y_3=Y_3=0} 
-\p_{Y_3} \cQ^+|_{y_3=Y_3=0} \, = \, i\, k \, c^+ \, G_{1,\star} -i \, k \, b^+ \, G_{2,\star} \, ,& \\
i\, k \, c^- \, \cU_3^-|_{y_3=Y_3=0} -i \, k \, b^- \, \cH_3^-|_{y_3=Y_3=0} \, = \, \widehat{F}_{3,\star}^-|_{y_3=Y_3=0} 
-\p_{Y_3} \cQ^-|_{y_3=Y_3=0} \, = \, i\, k \, c^- \, G_{3,\star} -i \, k \, b^- \, G_{4,\star} \, .& 
\end{cases}
\end{equation}
Combining \eqref{bound_cond_k_1} and \eqref{bound_cond_k_2}, and $(c^\pm)^2-(b^\pm)^2 \neq 0$, we obtain:
$$
\cU_3^+|_{y_3=Y_3=0} \, = \, G_{1,\star} \, ,\quad \cH_3^+|_{y_3=Y_3=0} \, = \, G_{2,\star} \, ,\quad 
\cU_3^-|_{y_3=Y_3=0} \, = \, G_{3,\star} \, ,\quad \cH_3^-|_{y_3=Y_3=0} \, = \, G_{4,\star} \, .
$$
Together with \eqref{bound_cond_qk_a}, which we have shown to be valid, this proves that the boundary conditions in 
\eqref{fast_problem''_k} are satisfied. In other words, we have completed the construction of a solution to the fast problem 
\eqref{fast_problem_k''}. It remains to verify that the sum
$$
\sum_{k \neq 0} \widehat{\bU}^\pm (k) \, {\rm e}^{i \, k \, \theta} \, ,
$$
defines a function in $S_\star^\pm$. This question is addressed in \cite[Lemma 4.2]{Marcou} in full details, so rather than 
repeating the same arguments, we refer the reader to that reference (the integrals in \eqref{expression_qk+}, \eqref{expression_qk-} 
are exactly the quantities to which the result of \cite[Lemma 4.2]{Marcou} applies). This completes the proof of Theorem 
\ref{theorem_fast_problem}.

\section{A simplified version of Theorem \ref{theorem_fast_problem}}

At the beginning of this Chapter, we have stated the solvability result of Theorem \ref{theorem_fast_problem} for the fast problem 
\eqref{fast_problem} in its most general form. In the inductive corrector construction of Chapter \ref{chapter5}, we shall use Theorem 
\ref{theorem_fast_problem} to enforce solvability conditions for the WKB cascade. Part of the corrector construction also relies in 
finding solutions to fast problems of the form \eqref{fast_problem} but, for some reason that will be made clear in Chapter \ref{chapter5}, 
it will be useful to solve fast problems where the source terms are `purely oscillating' in $\theta$, meaning that their mean with respect 
to $\theta$ on $\bT$ is zero. We therefore state the following result whose proof is a straightforward consequence of Theorem 
\ref{theorem_fast_problem} and is therefore omitted.

\begin{corollary}
\label{cor_fast_problem}
Let Assumptions \eqref{s3-hyp_stab_nappe_plane}, \eqref{s3-hyp_xi'_rationnelle}, \eqref{s3-hyp_delta_neq_0}, \eqref{s3-hyp_tau_racine_det_lop} 
be satisfied together with $\tau\neq 0$. Let $F^\pm, F_8^\pm \in S^\pm$ and let $G \in H^\infty ([0,T] \times \bT^2 \times \bT)$ satisfy
$$
\widehat{F}^{\pm}(0) \, = \, 0 \, ,\quad \widehat{F}_8^{\pm}(0) \, = \, 0 \, ,\quad \widehat{G}(0) \, = \, 0 \, ,
$$
as well as
\begin{subequations}
\label{cor_compatibilite_pb_rapide}
\begin{equation}
\label{ccor_compatibilite_pb_rapide_a}
\begin{cases}
F_6^+|_{y_3=Y_3=0} \, = \, -b^+ \, \p_\theta G_1 +c^+ \, \p_\theta G_2 \, ,& \\
F_6^-|_{y_3=Y_3=0} \, = \, -b^- \, \p_\theta G_3 +c^- \, \p_\theta G_4 \, ,& 
\end{cases}
\quad \text{\rm (compatibility at the boundary)}
\end{equation}
\begin{equation}
\label{cor_compatibilite_pb_rapide_d}
\p_{Y_3} F_6^\pm +\xi_j \, \p_\theta F_{3+j}^\pm -\tau \, \p_\theta F_8^\pm \, = \, 0 \, ,\quad 
\text{\rm (compatibility for the divergence of the magnetic field)}
\end{equation}
\begin{multline}
\label{cor_compatibilite_pb_rapide_e}
\forall \, k \neq 0 \, ,\quad 
\int_{\R^+} \mathrm{e}^{-|k| \, Y_3} \, \cL^+(k) \sbt \widehat{F}^+ (t,y',0,Y_3,k) \,  {\rm d}Y_3 \, - \, 
\int_{\R^-} \mathrm{e}^{|k| \, Y_3} \, \cL^-(k) \sbt \widehat{F}^- (t,y',0,Y_3,k) \,  {\rm d}Y_3 \\
+ \, \ell_1^+ \, \widehat{G}_1(t,y',k) \, + \, \ell_2^+ \, \widehat{G}_2(t,y',k) \, + \, \ell_1^- \, \widehat{G}_3(t,y',k) 
\, + \, \ell_2^- \, \widehat{G}_4(t,y',k) \, - \, i \, \tau \, \text{\rm sgn}(k) \, \widehat{G}_5(t,y',k) \, = \, 0 \, .
\end{multline}
\end{subequations}
Then the fast problem \eqref{fast_problem} has a solution $(\bU^\pm,0) \in S^\pm \times H^\infty ([0,T] \times \bT^2 \times \bT)$ that satisfies 
$\widehat{\bU}^{\pm}(0)=0$.
\end{corollary}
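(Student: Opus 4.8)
The plan is to deduce Corollary \ref{cor_fast_problem} directly from Theorem \ref{theorem_fast_problem}, since the hypotheses of the corollary are a special case of those of the theorem. First I would observe that when the source terms satisfy $\widehat{F}^\pm(0)=0$, $\widehat{F}_8^\pm(0)=0$ and $\widehat{G}(0)=0$, the remaining solvability conditions \eqref{compatibilite_pb_rapide} of Theorem \ref{theorem_fast_problem} reduce precisely to the three conditions \eqref{cor_compatibilite_pb_rapide} assumed here: the compatibility conditions for the slow means \eqref{compatibilite_pb_rapide_b} hold trivially because $\widehat{\uF}^\pm(0)$ and $\widehat{\uF}_8^\pm(0)$ are components of $\widehat{F}^\pm(0)=0$; the compatibility conditions for the fast means \eqref{compatibilite_pb_rapide_c} also hold trivially because each side of those four identities is a component of $\widehat{F}^\pm(0)=0$, hence both sides vanish; and the boundary compatibility \eqref{compatibilite_pb_rapide_a}, the divergence compatibility \eqref{compatibilite_pb_rapide_d} and the orthogonality condition \eqref{compatibilite_pb_rapide_e} are exactly \eqref{ccor_compatibilite_pb_rapide_a}, \eqref{cor_compatibilite_pb_rapide_d}, \eqref{cor_compatibilite_pb_rapide_e}.

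Having checked that all solvability conditions of Theorem \ref{theorem_fast_problem} are met, I would invoke the ``if'' direction of that theorem, which yields a solution of the fast problem \eqref{fast_problem} of the form $(\bU^\pm,0)$ with $\bU^\pm \in S^\pm$ and, moreover, $\bU^\pm$ can be chosen so that $\Pi \, \widehat{\bU}^\pm(0)=0$ and $\widehat{\underline{\bU}}^\pm(0)|_{y_3=\pm 1}=0$. It then remains only to upgrade the conclusion $\Pi \, \widehat{\bU}^\pm(0)=0$ to the full statement $\widehat{\bU}^\pm(0)=0$. For this I would look at the construction of the zero Fourier mode in the proof of Theorem \ref{theorem_fast_problem} (the paragraph ``Solvability I. Zero Fourier mode''): the fast mean $\widehat{\bU}_\star^\pm(0)$ is built by integrating the components $\widehat{F}_{3,\star}^\pm(0)$, $\widehat{F}_{7,\star}^\pm(0)$, $\widehat{F}_{8,\star}^\pm(0)$ from $\pm\infty$, and under the present hypotheses $\widehat{F}^\pm(0)=0$ and $\widehat{F}_8^\pm(0)=0$, so all these integrands vanish and $\widehat{\bU}_\star^\pm(0)=0$; the slow part $\widehat{\underline{\bU}}^\pm(0)$ is then determined by the boundary conditions with right-hand sides involving $\widehat{G}(0)$ and traces of the (now vanishing) fast mean, and since $\widehat{G}(0)=0$ as well, one can take $\widehat{\underline{\bU}}^\pm(0)=0$. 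Hence the whole zero Fourier mode of the constructed solution vanishes.

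The only point requiring a little care is that the solution produced by Theorem \ref{theorem_fast_problem} is a sum of a particular solution plus an arbitrary element of the homogeneous solution space described in Proposition \ref{prop_fast_homogeneous}; to get $\widehat{\bU}^\pm(0)=0$ I simply select the particular solution above and add the zero homogeneous solution, which is legitimate since the pair $(\bU^\pm,0)$ already satisfies all equations. I do not expect any genuine obstacle here: the corollary is essentially a bookkeeping specialization of Theorem \ref{theorem_fast_problem}, the mildest subtlety being to track through the explicit zero-mode construction to confirm that vanishing interior and boundary data force the entire zero Fourier mode of the constructed solution to vanish, rather than merely its tangential projection. Since the proof is this short and mechanical, it is reasonable — as the text already announces — to omit the details and simply state the result.
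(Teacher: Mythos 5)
Your argument is correct and is precisely the ``straightforward consequence of Theorem \ref{theorem_fast_problem}'' that the paper had in mind when it omitted the proof: the extra vanishing hypotheses make \eqref{compatibilite_pb_rapide_b} and \eqref{compatibilite_pb_rapide_c} trivial (note that $\widehat{F}^\pm(0)=0$ forces both $\widehat{\uF}^\pm(0)=0$ and $\widehat{F}_\star^\pm(0)=0$ via the direct sum $\uS^\pm\oplus S_\star^\pm$, since one term is $Y_3$-independent and the other decays), the remaining three conditions are exactly \eqref{cor_compatibilite_pb_rapide}, and inspecting the explicit zero-mode construction shows all the integrands and boundary right-hand sides vanish, so $\widehat{\bU}^\pm(0)=0$. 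No gap; this is the intended proof.
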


\chapter{Solving the WKB cascade I: the leading amplitude}
\label{chapter4}

In this Chapter, we start solving the WKB cascade \eqref{dev_BKW_int}, \eqref{s3-cascade_div_H^m+1,pm'}, 
\eqref{s3-cascade_BKW_bord_matriciel}, \eqref{s3-cond_bords_fixes_uH_3^m,pm} together with the normalization 
conditions for the total pressure. Namely, we are going to construct the leading amplitude $(U^{\, 1,\pm},\psi^2)$ in the 
WKB ansatz \eqref{s3-def_dvlpt_BKW}, \eqref{s3-def_dvlpt_BKW_psi} by first identifying the degrees of freedom that 
we have, and then by determining all functions at our disposal by imposing (some of) the necessary solvability conditions 
\eqref{compatibilite_pb_rapide} for the fast problem that must be satisfied by the first corrector $(U^{\, 2,\pm},\psi^3)$. This 
is the standard procedure in geometric optics, be it linear or weakly nonlinear, see, \emph{e.g.}, \cite{Rauch}, with the main 
feature here that some of the solvability conditions \eqref{compatibilite_pb_rapide} must \emph{come for free} in the WKB 
cascade since we shall have more constraints to satisfy than we have degrees of freedom at our disposal. These compatibility 
-rather than solvability- conditions will not be examined in this Chapter but will play a key role in the inductive corrector 
construction of Chapter \ref{chapter5}.
\bigskip

Let us therefore begin with solving the WKB cascade. We focus in this Chapter on:
\begin{itemize}
 \item the interior equations \eqref{dev_BKW_int} for $m=0$ and $m=1$, where the source terms $F^{0,\pm}$, $F^{\, 1,\pm}$ 
 are explicitly given by \eqref{s3-def_F^1,pm},
 \item the fast divergence constraint \eqref{s3-cascade_div_H^m+1,pm'} on the magnetic field for $m=0$ and $m=1$, where 
 the source terms $F_8^{0,\pm}$, $F_8^{\, 1,\pm}$ are given by evaluating the right hand side of \eqref{s3-cascade_div_H^m+1,pm} 
 for $m=0$ and $m=1$, namely:
\begin{subequations}
\label{s3-def_F_8^1,pm}
\begin{align}
F_8^{0,\pm} & \, = \, 0 \, ,\label{s3-def_terme_source_F_8^0,pm} \\
F_8^{\, 1,\pm} & \, = \, -\nabla \cdot H^{\, 1,\pm} \, + \, \xi_j \, \p_\theta \psi^2 \, \p_{Y_3} H_j^{\, 1,\pm} 
\, ,\label{s3-def_terme_source_F_8^1,pm}
\end{align}
\end{subequations}
 \item the boundary conditions \eqref{s3-cascade_BKW_bord_matriciel} for $m=0$ and $m=1$, which read explicitly 
 \eqref{s3-cond_bord_U^1,pm}, \eqref{s3-cascade_BKW_bord_simplifiee_m=1},
 \item the top and bottom boundary conditions \eqref{s3-cond_bords_fixes_uH_3^m,pm} for $m=0$,
 \item the normalization condition \eqref{expression_I1(t)} for the slow mean $\widehat{\uq}^{\, 1,\pm} (0)$ of the total pressure.
\end{itemize}

Collecting \eqref{dev_BKW_int}, \eqref{s3-cascade_div_H^m+1,pm'} for $m=0$, and \eqref{s3-cond_bord_U^1,pm}, we first 
observe that the leading amplitude $(U^{\, 1,\pm},\psi^2)$ must satisfy the homogeneous fast problem:
\begin{equation}
\label{fast_homogeneous_lead}
\begin{cases}
\cL_f^\pm(\partial) \, U^{\, 1,\pm} \, = \, 0 \, ,& y_3 \in I^\pm \, ,\, \pm Y_3 >0 \, ,\\
\p_{Y_3} H_3^{\, 1,\pm} +\xi_j \, \p_\theta H_j^{\, 1,\pm} \, = \, 0 \, ,& y_3 \in I^\pm \, ,\, \pm Y_3 >0 \, ,\\
B^+ \, U^{\, 1,+}|_{y_3=Y_3=0} +B^- \, U^{\, 1,-}|_{y_3=Y_3=0} +\partial_\theta \psi^2 \, \ub \, = \, 0 \, .
\end{cases}
\end{equation}
Let us recall that we denote $U^\pm =(u^\pm,H^\pm,q^\pm)^T$ and that $(t,y')$ enter \eqref{fast_homogeneous_lead} as parameters 
only. We apply Proposition \ref{prop_fast_homogeneous} and deduce that the leading profile can be decomposed as:
\begin{align}
U^{\, 1,\pm} (t,y,Y_3,\theta) \, = \, \uU^{\, 1,\pm} (t,y) &+ 
\Big( u_{1,\star}^{\, 1,\pm},u_{2,\star}^{\, 1,\pm},0,H_{1,\star}^{\, 1,\pm},H_{2,\star}^{\, 1,\pm},0,0 \Big)^T (t,y,Y_3) \notag \\[0.5ex]
&+\sum_{k \in \Z \setminus \{ 0\}} \gamma^{\, 1,\pm} (t,y,k) \, {\rm e}^{\mp |k| \, Y_3 +i\, k \, \theta} \, \cR^\pm (k) \, ,
\label{decomp_leading_profile}
\end{align}
where $\uU^{\, 1,\pm} \in \uS^\pm$ is independent of $\theta$, $u_{1,\star}^{\, 1,\pm},u_{2,\star}^{\, 1,\pm},H_{1,\star}^{\, 1,\pm},
H_{2,\star}^{\, 1,\pm} \in S_\star^\pm$, the vectors $\cR^\pm (k)$ are given in \eqref{appA-defRLkpm}, the coefficients 
$\gamma^{\, 1,\pm}$ satisfy $\gamma^{\, 1,\pm} (t,y',0,k) =\pm |k| \, \widehat{\psi}^{\, 2}(t,y',k)$ for all $(t,y',k) \in [0,T] \times \bT^2 
\times \bZ^*$, and
$$
\gamma^{\, 1,\pm} (t,y,-k) \, = \, \overline{\gamma^{\, 1,\pm} (t,y,k)} \, ,
$$
which ensures that the leading profile $U^{\, 1,\pm}$ is real-valued. The slow mean $\uU^{\, 1,\pm}$ should also satisfy the boundary 
conditions on $\Gamma_0$:
\begin{equation}
\label{cond_bord_moyenne_lead}
\uu_3^{\, 1,\pm}|_{y_3=0} \, = \, \uH_3^{\, 1,\pm}|_{y_3=0} \, = \, 0 \, ,\quad \uq^{\, 1,+}|_{y_3=0} \, = \, \uq^{\, 1,-}|_{y_3=0} \, .
\end{equation}

One point to keep in mind here is that we have not determined the final time $T>0$ so far. The time $T>0$ appears in Definition 
\ref{s3-def_espaces_fonctionnels} of the functional spaces $\uS^\pm$, $S_\star^\pm$. Fixing the time $T>0$ will be done once 
and for all later on in this Chapter when we prove the solvability of the leading amplitude equation \eqref{s3-edp_hatpsi^2-final} 
below, together with a property of propagation of regularity in time. Hence the first part of this Chapter is mostly a matter of using 
necessary conditions in order to determine the form of the leading profile. At the very end, we shall show how to construct the 
leading profile and we shall clearly list which (solvability) conditions for the first corrector construction are satisfied. We split 
the identification of the various functions in the decomposition \eqref{decomp_leading_profile} in several Sections below. This 
splitting, in the exact same order, will be used when constructing the correctors in the WKB ansatz \eqref{s3-def_dvlpt_BKW}, 
\eqref{s3-def_dvlpt_BKW_psi}. This explains why we give all details for the derivation of the leading amplitude equation 
\eqref{s3-edp_hatpsi^2-final}, and we shall feel free to shorten similar arguments in Chapter \ref{chapter5}.

Another key point to keep in mind is that the mean of the leading front $\widehat{\psi}^{\, 2}(0)$ does not appear in 
\eqref{fast_homogeneous_lead}, nor will it appear in the leading amplitude equation \eqref{s3-edp_hatpsi^2-final} that only 
involves the oscillating modes of $\psi^2$. The mean of the leading front will appear later on though as an extra 
degree of freedom that we shall use to determine the slow mean of the first corrector $\widehat{U}^{\, 2,\pm}(0)$.

\section{The slow mean of the leading profile}

As identified in the decomposition \eqref{decomp_leading_profile}, we have already seen that the residual component 
of the leading amplitude does not depend on the fast variable $\theta$, that is $\uU^{\, 1,\pm} =\uU^{\, 1,\pm}(t,y)$. To avoid 
overloaded notation, we feel free to omit from now on the reference to the zero Fourier mode in $\theta$ and keep the 
notation $\uU^{\, 1,\pm}$ rather than $\widehat{\uU}^{\, 1,\pm} (0)$. The decomposition \eqref{decomp_leading_profile} has 
a first important consequence for the top and bottom boundary conditions. Indeed, we already see that in order to satisfy 
the conditions \eqref{s3-cond_bords_fixes_uH_3^m,pm} for $m=0$, which are conditions on \emph{all} Fourier modes 
with respect to $\theta$, it is now sufficient to verify:
$$
\uu_3^{\, 1,\pm}|_{\Gamma^\pm} \, = \, \uH_3^{\, 1,\pm}|_{\Gamma^\pm} \, = \, 0 \, ,
$$
where, keeping the notation of \eqref{decomp_leading_profile}, $\uu_3^{\, 1,\pm},\uH_3^{\, 1,\pm}$ are functions of 
$(t,y)$ only. The validity of \eqref{s3-cond_bords_fixes_uH_3^m,pm} -with $m=0$- for the nonzero Fourier modes 
is \emph{automatic}. (See Chapter \ref{chapter5} for a similar statement for the correctors in the WKB ansatz.) The 
slow mean $\uU^{\, 1,\pm}$ should also satisfy the boundary conditions \eqref{cond_bord_moyenne_lead} on $\Gamma_0$. 
Let us eventually recall that we wish the total pressure to satisfy the normalization condition \eqref{expression_I1(t)} which, 
in the notation of \eqref{decomp_leading_profile}, now reads:
\begin{equation}
\label{normalization_pressure_lead}
\forall \, t \, ,\quad \int_{\Omega_0^+} \uq^{\, 1,+}(t,y) \, {\rm d}y \, + \, \int_{\Omega_0^-} \uq^{\, 1,-}(t,y) \, {\rm d}y \, = \, 0 \, .
\end{equation}
Recall that $\Omega_0^\pm$ in \eqref{normalization_pressure_lead} denote the sets $\bT^2 \times I^\pm$.

The evolution equations that determine the slow mean of the leading profile are obtained by imposing the solvability condition 
\eqref{compatibilite_pb_rapide_b} on the fast problem that must be satisfied by the first corrector. Let us note indeed that, 
among several equations, the first corrector $(U^{\, 2,\pm},\psi^3)$ must satisfy:
\begin{equation}
\label{fast_first_corrector}
\begin{cases}
\cL_f^\pm(\partial) \, U^{\, 2,\pm} \, = \, F^{\, 1,\pm} \, ,& y \in \Omega_0^\pm \, ,\, \pm Y_3 >0 \, ,\\
\p_{Y_3} H_3^{\, 2,\pm} +\xi_j \, \p_\theta H_j^{\, 2,\pm} \, = \, F_8^{\, 1,\pm} \, ,& y \in \Omega_0^\pm \, ,\, \pm Y_3 >0 \, ,\\
B^+ \, U^{\, 2,+}|_{y_3=Y_3=0} +B^- \, U^{\, 2,-}|_{y_3=Y_3=0} +\partial_\theta \psi^3 \, \ub \, = \, G^1 \, ,
\end{cases}
\end{equation}
where $F^{\, 1,\pm}$ is given by \eqref{s3-def_terme_source_F^1,pm}, $F_8^{\, 1,\pm}$ is given by 
\eqref{s3-def_terme_source_F_8^1,pm}, and $G^1$ is given by \eqref{terme_source_bord_BKW} with $m=1$. 
In order to solve \eqref{fast_first_corrector}, by Theorem \ref{theorem_fast_problem}, we must necessarily have:
$$
\widehat{\uF}^{\, 1,\pm}(0) \, \equiv \, 0 \, ,\quad \widehat{\uF}_8^{\, 1,\pm}(0) \, \equiv \, 0 \, .
$$
Using \eqref{s3-def_terme_source_F^1,pm}, \eqref{s3-def_terme_source_F_8^1,pm}, of which we first compute the limit 
as $Y_3$ tends to infinity and then take the mean with respect to $\theta$ on $\bT$, we find\footnote{We see here why 
it was useful to start with the conservative form of the MHD system in order to use the symmetry of the bilinear mappings 
$\bA_\alpha$. Indeed this symmetry property allows to write some of the terms as partial derivatives with respect to $\theta$, 
which automatically have zero mean on $\bT$.} that the slow mean $\uU^{\, 1,\pm}$ should satisfy the linearized current vortex 
sheet problem:
\begin{equation}
\label{s3-sys_uuU^1,pm}
\left\{
\begin{array}{r l}
\p_t \uu^{\, 1,\pm} +u_j^{0,\pm} \, \p_{y_j} \uu^{\, 1,\pm} -H_j^{0,\pm} \, \p_{y_j} \uH^{\, 1,\pm} +\nabla \uq^{\, 1,\pm} \, = \, 0 \, , &  \\ [0.5ex]
\p_t \uH^{\, 1,\pm} +u_j^{0,\pm} \, \p_{y_j} \uH^{\, 1,\pm} -H_j^{0,\pm} \, \p_{y_j} \uu^{\, 1,\pm} \, = \, 0 \, , &  \\ [0.5ex]
\dv \uu^{\, 1,\pm} \, = \, \dv \uH^{\, 1,\pm} \, = \, 0 \, , & (t,y) \in [0,T] \times \Omega_0^\pm \, , \\ [0.5ex]
\uu_3^{\, 1,\pm} \, = \, \uH_3^{\, 1,\pm} \, = \, \big[ \, \uq^1 \, \big] \, = \, 0 \, , & y_3 = 0 \, , \\ [0.5ex]
\uu_3^{\, 1,\pm} \, = \, \uH_3^{\, 1,\pm} \, = \, 0 \, , & y_3 = \pm 1 \, . 
\end{array}
\right.
\end{equation}

We are now going to prove that the \emph{homogeneous} system \eqref{s3-sys_uuU^1,pm} admits a unique smooth solution, 
which is identically zero if we choose to prescribe zero initial data\footnote{Initial data for the total pressure should not be 
prescribed since the total pressure is defined, as we shall see below, as the Lagrange multiplier associated with the divergence 
constraint on the velocity.}:
$$
(\uu^{\, 1,\pm},\uH^{\, 1,\pm})|_{t=0} \, \equiv \, 0 \, .
$$
This choice for the initial conditions of \eqref{s3-sys_uuU^1,pm} is free, up to the obvious compatibility conditions for the divergence 
and boundary conditions that need to be satisfied at the initial time. Consistently with our choice \eqref{s3-cond_moyenne_initiale_U^m,pm} 
for the tangential components of the fast mean, we are going to choose the easiest possible initial conditions for the slow means. 
In the case of the leading profile, the easiest possible choice of initial data for \eqref{s3-sys_uuU^1,pm} corresponds to prescribing 
zero initial data. This choice is indeed compatible with the divergence constraints on $\uu^{\, 1,\pm}$ and $\uH^{\, 1,\pm}$ and with the 
boundary conditions in \eqref{s3-sys_uuU^1,pm}. We refer to Chapter \ref{chapter5} for the choice of initial data for the slow mean 
of the correctors (in that case, we shall not be able to prescribe zero initial data any longer due to the divergence constraints and 
boundary conditions).

We can refer to Catania \cite{Catania} for an existence and uniqueness result in the space $H^1$ for the linearized incompressible 
current vortex sheet system, without the fixed top and bottom boundaries $\Gamma^\pm$ but with a linearized front appearing in 
the boundary conditions on $\Gamma_0$. The analysis below of the linearized system \eqref{s3-sys_uuU^1,pm} differs from 
\cite{Catania} and is somehow much simpler so we choose to give it here with all details. Let us start by determining the total 
pressure $(\uq^{\, 1,+}, \uq^{\, 1,-})$ which must satisfy the following homogeneous linear Laplace problem (where the time $t \in 
[0,T]$ plays the role of a parameter):\begin{equation}\label{s3-pb_laplace_uuq^1,pm}
\left\{
\begin{array}{r l}
-\Delta \, \uq^{\, 1,\pm} \, = \, 0 \, , & \text{ in } \Omega_0^\pm \, , \\ [0.5ex]
\big[ \, \uq^1 \, \big] \, = \, 0 \, , & \\
\big[ \, \p_{y_3} \uq^1 \, \big] \, = \, 0 \, , & \text{ on } \Gamma_0 \, , \\ [0.5ex]
\p_{y_3} \uq^{\, 1,\pm} \, = \, 0, & \text{ on } \Gamma^\pm \, .
\end{array}
\right.
\end{equation}
The total pressure should also satisfy the normalization condition \eqref{normalization_pressure_lead}. Using the Lax-Milgram 
Theorem \cite{Evans} in the Hilbert space:
$$
\cH \, := \, \Big\{ (q^+,q^-) \in H^1(\Omega_0^+) \times H^1(\Omega_0^-) \, \Big| \, \, \, [ \, q \, ] \, = \, 0 \quad \text{ and } \quad 
\int_{\Omega_0^+} q^+ \, {\rm d}y \, + \, \int_{\Omega_0^-} q^- \, {\rm d}y \, = \, 0 \Big\} \, ,
$$
we can prove that there exists a unique solution $(\uq^{\, 1,+},\uq^{\, 1,-})$ to \eqref{s3-pb_laplace_uuq^1,pm} in $\cH$, 
and since zero is an obvious solution, we necessarily have $\uq^{\, 1,\pm} \equiv 0$ in \eqref{s3-sys_uuU^1,pm}.

To finish with system \eqref{s3-sys_uuU^1,pm}, it remains to determine $(\uu^{\, 1,\pm},\uH^{\, 1,\pm})$. Since the total pressure 
vanishes, the evolution equations in \eqref{s3-sys_uuU^1,pm} can be rewritten under the form of the symmetric hyperbolic 
system:
\begin{equation*}
\left\{
\begin{array}{r l}
\p_t \uu^{\, 1,\pm} +u_j^{0,\pm} \, \p_{y_j} \uu^{\, 1,\pm} -H_j^{0,\pm} \, \p_{y_j} \uH^{\, 1,\pm} \, = \, 0 \, , &  \\ [0.5ex]
\p_t \uH^{\, 1,\pm} +u_j^{0,\pm} \, \p_{y_j} \uH^{\, 1,\pm} -H_j^{0,\pm} \, \p_{y_j} \uu^{\, 1,\pm} \, = \, 0 \, . &
\end{array}
\right.
\end{equation*}
Since we impose zero initial conditions for the velocity and magnetic field, the energy method \cite{BS} immediately implies 
$\uu^{\, 1,\pm},\uH^{\, 1,\pm} \equiv 0$. No boundary condition is needed on $\Gamma_0$ or $\Gamma^\pm$ since we have 
$u^{0,\pm}_3=H^{0,\pm}_3=0$ (this means actually that $y_3$ is a parameter in the latter system). As a matter of fact, the 
boundary conditions in \eqref{s3-sys_uuU^1,pm} rather arise as compatibility conditions which are automatically satisfied 
here since $\uu^{\, 1,\pm}$ and $\uH^{\, 1,\pm}$ vanish.

Let us recall that prescribing zero initial conditions for the slow mean of the leading profile is only done for the sake of 
simplicity, in order to focus on the surface wave component $U_\star^{\, 1,\pm}$. We shall explain in Chapter \ref{chapter5} 
why the compatibility conditions that are necessary for solving the WKB cascade are independent of this specific choice 
of initial conditions.

\section{The fast mean of the leading profile}

Another necessary condition for solving the inhomogeneous fast problem \eqref{fast_first_corrector} is 
\eqref{compatibilite_pb_rapide_c}. Using the expression \eqref{s3-def_terme_source_F^1,pm}, we have:
$$
\widehat{F}_\star^{\, 1,\pm} (0) \, = \, -L_s^\pm (\p) \widehat{U}_\star^{\, 1,\pm} (0) \, + \, 
\widehat{ \Big( \p_\theta\psi^2 \, \cA^\pm \, \p_{Y_3}U_\star^{\, 1,\pm} \Big)} (0) \, - \, 
\dfrac{1}{2} \, \p_{Y_3} \widehat{ \Big( \bA_3(U^{\, 1,\pm},U^{\, 1,\pm}) \Big)} (0) \, ,
$$
and since we already know that our choice of initial data \eqref{s3-cond_moyenne_initiale_U^m,pm} implies $\uU^{\, 1,\pm} =0$, 
we get\footnote{The more general case $\uU^{\, 1,\pm} \neq 0$ can be dealt with the same arguments as the one we use. One 
just needs to take care of a few extra terms but there is no new difficulty.}:
\begin{equation}
\label{decomp_comp_fast_mean_lead_1}
\widehat{F}_\star^{\, 1,\pm} (0) \, = \, -L_s^\pm (\p) \widehat{U}_\star^{\, 1,\pm} (0) \, + \, 
\widehat{\Big( \p_\theta\psi^2 \, \cA^\pm \, \p_{Y_3} (U_\star^{\, 1,\pm}-\widehat{U}_\star^{\, 1,\pm} (0)) \Big)} (0) \, - \, 
\dfrac{1}{2} \, \p_{Y_3} \widehat{\Big( \bA_3(U_\star^{\, 1,\pm},U_\star^{\, 1,\pm}) \Big)} (0) \, .
\end{equation}
In a similar way, starting from \eqref{s3-def_terme_source_F_8^1,pm}, we compute:
\begin{equation}
\label{decomp_comp_fast_mean_lead_2}
\widehat{F}_{8,\star}^{\, 1,\pm} (0) \, = \, -\nabla \cdot \widehat{H}_\star^{\, 1,\pm} (0) \, + \, 
\widehat{\Big( \xi_j \, \p_\theta\psi^2 \, \p_{Y_3} (H_{j,\star}^{\, 1,\pm}-\widehat{H}_{j,\star}^{\, 1,\pm} (0)) \Big)} (0) \, .
\end{equation}

Let us split the expressions in \eqref{decomp_comp_fast_mean_lead_1}, \eqref{decomp_comp_fast_mean_lead_2} in 
several pieces. We first introduce:
\begin{equation}
\label{decomp_comp_fast_mean_lead_3}
\mathfrak{F}^{\, 1,\pm} \, := \, \dfrac{1}{2} \, \p_{Y_3} \widehat{\Big( \bA_3(U_\star^{\, 1,\pm},U_\star^{\, 1,\pm}) \Big)} (0) \, ,\quad 
\mathfrak{F}_8^{\, 1,\pm} \, := \, 0 \, .
\end{equation}
Using the decomposition \eqref{decomp_leading_profile} and the explicit expression of the Hessian mapping $\bA_3$ 
given in Appendix \ref{appendixA}, we have:
$$
\bA_3 \Big( \widehat{U}_\star^{\, 1,\pm} (0),\widehat{U}_\star^{\, 1,\pm} (0) \Big) \, = \, 0 \, ,
$$
independently of the determination of the tangential components $u_{j,\star}^{\, 1,\pm},H_{j,\star}^{\, 1,\pm}$ in 
\eqref{decomp_leading_profile}. We thus compute:
\begin{align*}
\mathfrak{F}^{\, 1,\pm} \, & \, = \, \dfrac{1}{2} \, \p_{Y_3} \left( \sum_{k \neq 0} {\rm e}^{\mp 2\, |k| \, Y_3} \, 
\bA_3 \Big( \gamma^{\, 1,\pm} (t,y,k) \, \cR^\pm (k),\gamma^{\, 1,\pm} (t,y,-k) \, \cR^\pm (-k) \Big) \right) \\
& \, = \, \mp \sum_{k \neq 0} {\rm e}^{\mp 2\, |k| \, Y_3} \, |k| \, \left| \gamma^{\, 1,\pm} (t,y,k) \right|^2 \, 
\bA_3 \Big( {\bf R}^\pm,\overline{{\bf R}^\pm} \Big) \, ,
\end{align*}
where we have used \eqref{appA-defRLkpm}, the symmetry of $\bA_3$, and the reality condition:
$$
\gamma^{\, 1,\pm} (t,y,-k) \, = \, \overline{\gamma^{\, 1,\pm} (t,y,k)} \, .
$$
Now using \eqref{appA-defeigenvectorsRpm}, we find that the vector $\bA_3 \big( {\bf R}^\pm,\overline{{\bf R}^\pm} \big)$ 
is proportional to the third vector of the canonical basis of $\bR^7$:
$$
\bA_3 \Big( {\bf R}^\pm,\overline{{\bf R}^\pm} \Big) \, = \, 2 \, \Big( 0,0,(c^\pm)^2-(b^\pm)^2,0,0,0,0 \Big)^T \, ,
$$
which means that the source terms $\mathfrak{F}^{\, 1,\pm},\mathfrak{F}_8^{\, 1,\pm}$ defined in 
\eqref{decomp_comp_fast_mean_lead_3} automatically satisfy the linear system (compare with \eqref{compatibilite_pb_rapide_c}):
\begin{equation*}
\begin{cases}
u_j^{0,\pm} \, \mathfrak{F}_7^{\, 1,\pm} -H_j^{0,\pm} \, \mathfrak{F}_8^{\, 1,\pm} \, = \, \mathfrak{F}_j^{\, 1,\pm} \, , & j=1,2 \, ,\\
H_j^{0,\pm} \, \mathfrak{F}_7^{\, 1,\pm} -u_j^{0,\pm} \, \mathfrak{F}_8^{\, 1,\pm} \, = \, \mathfrak{F}_{3+j}^{\, 1,\pm} \, , & j=1,2 \, ,
\end{cases}
\end{equation*}
independently of the choice we can make for $u_{j,\star}^{\, 1,\pm},H_{j,\star}^{\, 1,\pm}$ and $\gamma^{\, 1,\pm}$ in the 
decomposition \eqref{decomp_leading_profile}.

Let us go on with the splitting of the expressions in \eqref{decomp_comp_fast_mean_lead_1} and 
\eqref{decomp_comp_fast_mean_lead_2}. We now introduce:
\begin{equation}
\label{decomp_comp_fast_mean_lead_4}
\mathcal{F}^{\, 1,\pm} \, := \, \widehat{\Big( \p_\theta\psi^2 \, \cA^\pm \, \p_{Y_3} (U_\star^{\, 1,\pm}-\widehat{U}_\star^{\, 1,\pm} (0)) \Big)} (0) \, ,\quad 
\mathcal{F}_8^{\, 1,\pm} \, := \, \widehat{\Big( \xi_j \, \p_\theta\psi^2 \, \p_{Y_3} (H_{j,\star}^{\, 1,\pm}-\widehat{H}_{j,\star}^{\, 1,\pm} (0)) \Big)} (0) \, ,
\end{equation}
and we compute:
\begin{align*}
\mathcal{F}^{\, 1,\pm} \, & \, = \, \pm \sum_{k \neq 0} i\, k \, |k| \, \widehat{\psi}^{\, 2} (t,y',-k) \, \gamma^{\, 1,\pm}(t,y,k) 
\, {\rm e}^{\mp |k| \, Y_3} \, \cA^\pm \, \cR^\pm (k) \\
& \, = \, \sum_{k \neq 0} k^2 \, \widehat{\psi}^{\, 2} (t,y',-k) \, \gamma^{\, 1,\pm}(t,y,k) \, {\rm e}^{\mp |k| \, Y_3} \, A_3^\pm \, \cR^\pm (k) \, ,
\end{align*}
with $\cR^\pm (k)$ given by \eqref{appA-defRLkpm} (here we have used the relation $\cA^\pm \, \cR^\pm (k)=\mp i\, \text{\rm sgn} (k) 
\, A_3^\pm \, \cR^\pm (k)$). We also compute:
$$
\mathcal{F}_8^{\, 1,\pm} \, = \, \pm \sum_{k \neq 0} i\, \text{\rm sgn} (k) \, b^\pm \, k^2 \, \widehat{\psi}^{\, 2} (t,y',-k) \, \gamma^{\, 1,\pm}(t,y,k) 
\, {\rm e}^{\mp |k| \, Y_3} \, ,
$$
and it is now a rather straightforward exercise to verify that the source terms in \eqref{decomp_comp_fast_mean_lead_4} 
automatically satisfy the linear system (compare again with \eqref{compatibilite_pb_rapide_c}):
\begin{equation*}
\begin{cases}
u_j^{0,\pm} \, \mathcal{F}_7^{\, 1,\pm} -H_j^{0,\pm} \, \mathcal{F}_8^{\, 1,\pm} \, = \, \mathcal{F}_j^{\, 1,\pm} \, , & j=1,2 \, ,\\
H_j^{0,\pm} \, \mathcal{F}_7^{\, 1,\pm} -u_j^{0,\pm} \, \mathcal{F}_8^{\, 1,\pm} \, = \, \mathcal{F}_{3+j}^{\, 1,\pm} \, , & j=1,2 \, ,
\end{cases}
\end{equation*}
independently of the choice we can make for $\psi^2$ and $\gamma^{\, 1,\pm}$ in \eqref{decomp_leading_profile}.

Summarizing, the source terms given in \eqref{decomp_comp_fast_mean_lead_1} and \eqref{decomp_comp_fast_mean_lead_2} 
will satisfy the solvability condition \eqref{compatibilite_pb_rapide_c} as long as there holds:
\begin{equation}
\label{decomp_comp_fast_mean_lead_5}
\begin{cases}
u_j^{0,\pm} \, \big( -\nabla \cdot \widehat{u}_\star^{\, 1,\pm}(0) \big) -H_j^{0,\pm} \, \big( -\nabla \cdot \widehat{H}_\star^{\, 1,\pm}(0) \big) 
\, = \, \big( -L_s^\pm (\p) \, \widehat{U}_\star^{\, 1,\pm} (0) \big)_j \, , & j=1,2 \, ,\\
H_j^{0,\pm} \, \big( -\nabla \cdot \widehat{u}_\star^{\, 1,\pm}(0) \big) -u_j^{0,\pm} \, \big( -\nabla \cdot \widehat{H}_\star^{\, 1,\pm}(0) \big) 
\, = \, \big( -L_s^\pm (\p) \, \widehat{U}_\star^{\, 1,\pm} (0) \big)_{3+j} \, , & j=1,2 \, .
\end{cases}
\end{equation}
The latter condition \eqref{decomp_comp_fast_mean_lead_5} is independent of the choice we can make for the leading front $\psi^2$ 
and the coefficients $\gamma^{\, 1,\pm}$. The evolution equations \eqref{decomp_comp_fast_mean_lead_5} can be equivalently rewritten 
as the symmetric hyperbolic system:
\begin{equation}
\label{decomp_comp_fast_mean_lead_6}
\p_t \begin{bmatrix}
u_{1,\star}^{\, 1,\pm} \\
u_{2,\star}^{\, 1,\pm} \\
H_{1,\star}^{\, 1,\pm} \\
H_{2,\star}^{\, 1,\pm} \end{bmatrix} 
+\begin{bmatrix}
u_j^{0,\pm} & 0 & -H_j^{0,\pm} & 0 \\
0 & u_j^{0,\pm} & 0 & -H_j^{0,\pm} \\
-H_j^{0,\pm} & 0 & u_j^{0,\pm} & 0 \\
0 & -H_j^{0,\pm} & 0 & u_j^{0,\pm} \end{bmatrix} \, \p_{y_j} \begin{bmatrix}
u_{1,\star}^{\, 1,\pm} \\
u_{2,\star}^{\, 1,\pm} \\
H_{1,\star}^{\, 1,\pm} \\
H_{2,\star}^{\, 1,\pm} \end{bmatrix} \, = \, 0 \, .
\end{equation}
Let us observe that in \eqref{decomp_comp_fast_mean_lead_6}, the slow and fast normal variables $y_3 \in I^\pm$, $Y_3 \in \bR^\pm$ 
enter as parameters since only tangential differentiation in $t,y_1,y_2$ occurs. Recalling now that we prescribe zero initial data for the 
fast mean of the tangential components of the velocity and magnetic field, see \eqref{s3-cond_moyenne_initiale_U^m,pm}, the system 
\eqref{decomp_comp_fast_mean_lead_6} is assigned with zero initial data and we therefore have $u_{1,\star}^{\, 1,\pm},u_{2,\star}^{\, 1,\pm}, 
H_{1,\star}^{\, 1,\pm},H_{2,\star}^{\, 1,\pm} \equiv 0$ by applying again the energy method.

At this stage, we have already shown that prescribing zero initial data for the slow mean and for the fast mean of the tangential components 
of the velocity and magnetic field simplifies the decomposition \eqref{decomp_leading_profile} into:
\begin{equation}
\label{decomp_leading_profile'}
U^{\, 1,\pm} (t,y,Y_3,\theta) \, = \,  \sum_{k \in \Z \setminus \{ 0\}} \gamma^{\, 1,\pm} (t,y,k) \, {\rm e}^{\mp |k| \, Y_3 +i\, k \, \theta} 
\, \cR^\pm (k) \, ,
\end{equation}
with the boundary condition $\gamma^{\, 1,\pm} (t,y',0,k) =\pm |k| \, \widehat{\psi}^{\, 2}(t,y',k)$ on $\Gamma_0$. In the following 
Section, we are going to show that imposing the solvability condition \eqref{compatibilite_pb_rapide_e} for the fast problem 
\eqref{fast_first_corrector} verified by the first corrector fully determines the evolution of the leading front $\psi^2$ (or, more 
precisely, the evolution of its oscillating modes).

\section{The nonlocal Hamilton-Jacobi equation for the leading front}

\subsection{Derivation of the equation}

Let us focus again on the inhomogeneous fast problem \eqref{fast_first_corrector} that must be satisfied by the first corrector 
$(U^{\, 2,\pm},\psi^3)$. Applying Theorem \ref{theorem_fast_problem}, we know that a necessary condition for \eqref{fast_first_corrector} 
to have a solution in $S^\pm \times H^\infty$ is the orthogonality condition \eqref{compatibilite_pb_rapide_e} for all nonzero Fourier 
modes. Recalling the general form \eqref{terme_source_bord_BKW} of the boundary source term in the WKB cascade, a necessary 
condition for the validity of the asymptotic expansion \eqref{s3-def_dvlpt} is therefore:
\begin{multline}
\label{s3-cond_ortho_psi^2}
\forall \, k \neq 0 \, ,\quad 
\int_{\bR^+} \mathrm{e}^{-|k|Y_3} \, \cL^+(k) \sbt \hatF^{\, 1,+}(k)|_{y_3=0} \, {\rm d}Y_3 \, 
- \, \int_{\bR^-} \mathrm{e}^{|k|Y_3} \, \cL^-(k) \sbt \hatF^{\, 1,-}(k)|_{y_3=0} \, {\rm d}Y_3 \\
+ \, \ell_1^+ \, \hatG_1^{\, 1,+}(k) \, + \, \ell_2^+ \, \hatG_2^{\, 1,+}(k) \, + \, \ell_1^- \, \hatG_1^{\, 1,-}(k) \, + \, \ell_2^- \, \hatG_2^{\, 1,-}(k) \, = \, 0 \, ,
\end{multline}
where, for simplicity, we omit to recall the slow variables $(t,y')$, which enter \eqref{s3-cond_ortho_psi^2} as parameters. 
Let us recall here that the expression of the functions $G_1^{\, 1,\pm}$, $G_2^{\, 1,\pm}$ is given in \eqref{s3-def_G_1^m,pm}, 
\eqref{s3-def_G_2^m,pm} (with $m=1$), and that the interior source term $F^{\, 1,\pm}$ is given in \eqref{s3-def_terme_source_F^1,pm}. 
There is a subtlety here, which was absent from \cite{AliHunter} but already occurred in the related work \cite{CW}. Namely, 
from the expression \eqref{decomp_leading_profile'}, we can compute the trace of the leading profile $U^{\, 1,\pm}$ on 
$\Gamma_0$ in terms of the leading front $\psi^2$:
\begin{equation}
\label{s3-U^1,pm_k_y3=0}
U^{\, 1,\pm} (t,y',0,Y_3,\theta) \, = \,  \pm \sum_{k \in \Z \setminus \{ 0\}} |k| \, \widehat{\psi}^{\, 2} (t,y',k) \, 
{\rm e}^{\mp |k| \, Y_3 +i\, k \, \theta} \, \cR^\pm (k) \, .
\end{equation}
This allows us to write the source terms $G_1^{\, 1,\pm},G_2^{\, 1,\pm}$ in \eqref{s3-cond_ortho_psi^2} in terms of the leading front 
$\psi^2$, see below for explicit calculations. From the expression \eqref{s3-def_terme_source_F^1,pm}, we also see that the 
trace $F^{\, 1,\pm}|_{y_3=0}$ can be expressed in terms of $U^{\, 1,\pm}|_{y_3=0}$, and consequently in terms of $\psi^2$, for 
almost all terms on the right hand side of \eqref{s3-def_terme_source_F^1,pm} but \emph{one} (!). Indeed, the trace on 
$\Gamma_0$ of the expression in \eqref{s3-def_terme_source_F^1,pm} involves the normal derivative $\p_{y_3} U^{\, 1,\pm} 
|_{y_3=0}$ and there is no reason why this term can be expressed in terms of $\psi^2$ (unless we make a specific choice 
for lifting $\psi^2$ on $\Gamma_0$ to $\gamma^{\, 1,\pm}$ in $\Omega_0^\pm$). However, we are going to see now that the 
contribution of these normal derivatives in \eqref{s3-cond_ortho_psi^2} is zero, which does not seem obvious at first sight 
(see \cite{CW} for a similar cancelation property in the context of elastodynamics). Once this property is clarified, it will remain 
to compute all other terms in \eqref{s3-cond_ortho_psi^2} as functions of $\psi^2$.

\paragraph{The orthogonality condition \eqref{s3-cond_ortho_psi^2} is a closed equation on the leading front.}

The goal in this Paragraph is to examine more closely the quantity
$$
- \, \int_{\bR^+} \mathrm{e}^{-|k|Y_3} \, \cL^+(k) \sbt A_3^+ \, \p_{y_3} \hatU^{\, 1,+}(k)|_{y_3=0} \, {\rm d}Y_3 
\, + \, \int_{\bR^-} \mathrm{e}^{|k|Y_3} \, \cL^-(k) \sbt A_3^- \, \p_{y_3} \hatU^{\, 1,-}(k)|_{y_3=0} \, {\rm d}Y_3 \, ,
$$
with the aim of showing that each of these two integrals vanishes. Since all other quantities in $F^{\, 1,\pm}$ involve partial 
derivatives that are tangent to the boundary $\Gamma_0$, this will imply that \eqref{s3-cond_ortho_psi^2} is a closed 
equation on the leading front $\psi^2$. We shall even see below that it is a closed equation on the oscillating modes 
of $\psi^2$ only.

Let us therefore consider some $y_3 \in I^+$ and $Y_3 \in \bR^+$. We recall that the leading profile $U^{\, 1,+}$ 
satisfies the homogeneous fast problem \eqref{fast_homogeneous_lead}, which means that each oscillating mode 
$\hatU^{\, 1,+} (k)$ satisfies the differential equation:
$$
L_k^+ \, \hatU^{\, 1,+} (k) \, = \, 0 \, ,\quad L_k^+ \, = \, A_3^+ \, \p_{Y_3} +i \, k \, \cA^+ \, .
$$
We perform the same integration by parts argument as the one that led us to \eqref{dualite_1}, and obtain:
\begin{equation}
\label{dualite_2}
0 \, = \, \int_{Y_3}^{+\infty} V^+ \sbt L_k^+ \, \hatU^{\, 1,+} (k) \, {\rm d}Y \, = \, 
\int_{Y_3}^{+\infty} (L_k^+)^* \, V^+ \sbt \hatU^{\, 1,+} (k) \, {\rm d}Y \, - \, V^+(Y_3) \sbt A_3^+ \, \hatU^{\, 1,+} (Y_3,k) \, .
\end{equation}
If we specify furthermore to the choice \eqref{s3-def_V^pm} for the test function $V^+$, the term $(L_k^+)^* \, V^+$ in 
\eqref{dualite_2} vanishes and we are led to:
$$
\cL^+(k) \sbt A_3^+ \, \hatU^{\, 1,+}(Y_3,k) \, = \, 0 \, ,
$$
which could have also been obtained -in our rather simple context- by just verifying the orthogonality condition $\cL^+(k) \sbt 
A_3^+ \, \cR^+(k)=0$ and using the decomposition \eqref{decomp_leading_profile'}. Differentiating the latter equality with 
respect to $y_3$ and taking the trace on $\Gamma_0$, we obtain:
$$
\int_{\bR^+} \mathrm{e}^{-|k|Y_3} \, \cL^+(k) \sbt A_3^+ \, \p_{y_3} \hatU^{\, 1,+}(k)|_{y_3=0} \, {\rm d}Y_3 \, = \, 0 \, ,
$$
and the same argument applies on the opposite side of the current vortex sheet.

In other words, we can get rid of the terms $A_3^\pm \, \p_{y_3} U^{\, 1,\pm}$ in \eqref{s3-cond_ortho_psi^2} and it remains to 
compute all the contributions in \eqref{s3-cond_ortho_psi^2} in terms of the Fourier coefficients of $\psi^2$. This computation 
is split in five different cases, which we shall eventually collect to derive the leading amplitude equation for $\psi^2$. Unsurprisingly, 
we shall find the same amplitude equation as in \cite{AliHunter} with however the incorporation of the group velocity transport 
with respect to the slow spatial variables $y'$ (slow modulation with respect to the space variables was not considered in 
\cite{AliHunter} and the original MHD equations in that earlier reference were two-dimensional).

\paragraph{The boundary terms in \eqref{s3-cond_ortho_psi^2}.}

Let us recall the expression \eqref{s3-U^1,pm_k_y3=0} for the trace $U^{\, 1,\pm}|_{y_3=0}$, which gives\footnote{Recall that 
the residual oscillating modes of $U^{\, 1,\pm}$ vanish because the matrix $\cA^\pm$ is invertible.}:
\begin{equation}
\label{s3-U^1,pm_k_y3=0-bis}
\forall \, k \neq 0 \, ,\quad \hatU^{\, 1,\pm} (t,y',0,Y_3,k) \, = \, \hatU_\star^{\, 1,\pm} (t,y',0,Y_3,k) \, = \, 
\pm \, |k| \, \widehat{\psi}^{\, 2} (t,y',k) \, {\rm e}^{\mp \, |k| \, Y_3} \, \cR^\pm(k) \, ,
\end{equation}
where the vectors $\cR^\pm(k)$ are defined in \eqref{appA-defRLkpm}. In the following, we shall omit the variables $(t,y')$ which play a 
role of parameters. We also recall that $k$ is a nonzero integer since the orthogonality condition \eqref{compatibilite_pb_rapide_e} 
only bears on nonzero Fourier modes.

From the definition \eqref{s3-def_G_1^m,pm} and using \eqref{s3-U^1,pm_k_y3=0-bis}, we compute:
\begin{align}
\hatG_1^{\, 1,\pm} (k) \, & \,= \, \p_t \widehat{\psi}^{\, 2} (k) \, + \, u_j^{0,\pm} \, \p_{y_j} \widehat{\psi}^{\, 2} (k) 
\, + \, \sum_{k_1 +k_2 =k} i \, k_2 \, \xi_j \, \widehat{u_j}^{\, 1,\pm}|_{y_3=Y_3=0} (k_1) \, \widehat{\psi}^{\, 2} (k_2) \notag \\
& \, = \, \p_t \widehat{\psi}^{\, 2}(k) \, + \, u_j^{0,\pm} \, \p_{y_j} \widehat{\psi}^{\, 2}(k) 
\, \pm \, i \, c^\pm \, \sum_{k_1 +k_2 =k} \dfrac{k_1 \, |k_2|+k_2 \, |k_1|}{2} \, \widehat{\psi}^{\, 2}(k_1) \, \widehat{\psi}^{\, 2}(k_2) \, .
\label{s3-calcul_hatG_1^pm_1}
\end{align}
In the latter equality, we have \emph{symmetrized} the kernel that arises in the bilinear Fourier multiplier that acts on 
$(\psi^2,\psi^2)$. We shall repeat this symmetrization argument in all bilinear expressions on the Fourier modes of 
$\psi^2$. With similar computations, we obtain from \eqref{s3-def_G_2^m,pm} the expression:
\begin{equation}
\label{s3-calcul_hatG_2^pm_1}
\hatG_2^{\, 1,\pm} (k) \, = \, H_j^{0,\pm} \, \p_{y_j} \widehat{\psi}^{\, 2} (k) \, \pm \, i \, b^\pm \, \sum_{k_1 +k_2 =k} 
\dfrac{k_1 \, |k_2|+k_2 \, |k_1|}{2} \, \widehat{\psi}^{\, 2} (k_1) \, \widehat{\psi}^{\, 2} (k_2) \, .
\end{equation}

Using the definition \eqref{s3-def_l1_l2} of the coefficients $\ell_1^\pm$, $\ell_2^\pm$ together with \eqref{s3-calcul_hatG_1^pm_1} 
and \eqref{s3-calcul_hatG_2^pm_1}, the contribution of the boundary source terms in \eqref{s3-cond_ortho_psi^2} reads:
\begin{multline}
\label{s3-calcul_termes_bord_relation_dualité}
\big( \ell_1^+ \, \hatG_1^{\, 1,+} + \ell_2^+ \, \hatG_2^{\, 1,+} + \ell_1^- \, \hatG_1^{\, 1,-} + \ell_2^- \, \hatG_2^{\, 1,-} \big) (k) 
\, = \, \Big( 2 \, (b^+)^2 +2 \, (b^-)^2\big) - \tau \, (c^+ + c^-) \Big) \, \p_t \widehat{\psi}^{\, 2} (k) \\
+ \, \Big[ \Big( 2 \, (b^+)^2 - \tau \, c^+\Big) \, u_j^{0,+} \, + \, \Big( 2 \, (b^-)^2 - \tau \, c^-\Big) \, u_j^{0,-} \, 
- \, (a^+ +c^+) \, b^+ \, H_j^{0,+} - \, (a^- +c^-) \, b^- \, H_j^{0,-} \Big] \, \p_{y_j} \widehat{\psi}^{\, 2} (k) \\
+ \, \dfrac{i \, \tau}{2} \, \Big( (b^+)^2 - (b^-)^2 - (c^+)^2 + (c^-)^2 \Big) \, \sum_{k_1 +k_2 =k} 
\big( \, k_1 \, |k_2|+k_2 \, |k_1| \, \big) \, \widehat{\psi}^{\, 2} (k_1) \, \widehat{\psi}^{\, 2} (k_2) \, .
\end{multline}

\paragraph{Computation of the linear terms in the integrals of \eqref{s3-cond_ortho_psi^2}.}

Recalling the expression \eqref{s3-def_terme_source_F^1,pm} of $F^{\, 1,\pm}$, we see that the trace $U^{\, 1,\pm}|_{y_3=0}$ 
enters either linearly or quadratically in it. This means that as in \eqref{s3-calcul_termes_bord_relation_dualité}, the integrals 
in \eqref{s3-cond_ortho_psi^2} will contribute to the amplitude equation in the form of either linear or quadratic terms with 
respect to $\psi^2$. In this Paragraph, we compute all expressions in the integrals of \eqref{s3-cond_ortho_psi^2} that 
contribute for linear terms in $\psi^2$. Going back to \eqref{s3-def_terme_source_F^1,pm}, we need to compute:
$$
-\int_{\bR^+} \mathrm{e}^{-|k|Y_3} \cL^+(k) \sbt (A_0 \, \p_t +A_j^\pm \, \p_{y_j} ) \, \hatU^{\, 1,+}(k)|_{y_3=0} \, {\rm d}Y_3 
+\int_{\bR^-} \mathrm{e}^{|k|Y_3} \cL^-(k) \sbt (A_0 \, \p_t +A_j^\pm \, \p_{y_j} )\, \hatU^{\, 1,-}(k)|_{y_3=0} \, {\rm d}Y_3 
$$
with $\hatU^{\, 1,+}(k)|_{y_3=0}$ given by \eqref{s3-U^1,pm_k_y3=0-bis}. Computing the latter integrals explicitly, we get 
the expression:
\begin{align*}
&- \, \dfrac{1}{2} \, \Big( \cL^+(k) \sbt A_0 \, \cR^+(k) \, + \, \cL^-(k) \sbt A_0 \, \cR^-(k) \Big) \, \p_t \widehat{\psi}^{\, 2} (k) \\
&- \, \dfrac{1}{2} \, \Big( \cL^+(k) \sbt A_j^+ \, \cR^+(k) \, + \, \cL^-(k) \sbt A_j^- \, \cR^-(k) \Big) \, \p_{y_j} \widehat{\psi}^{\, 2} (k) \, ,
\end{align*}
which can be further simplified by using the expressions \eqref{appA-produits_hermitiens-1} for the various Hermitian 
products arising above. Eventually, the collection of all linear terms in $\psi^2$ in the integrals on the left hand side of 
\eqref{s3-cond_ortho_psi^2} reads:
\begin{multline}
\label{s3-calcul_hatF_1^pm}
-\Big( 2 \, (b^+)^2 +2 \, (b^-)^2 +\tau \, (c^+ +c^-) \Big) \, \p_t \widehat{\psi}^{\, 2} (k) \\
-\Big[ \Big( 2 \, (b^+)^2 +\tau \, c^+\Big) \, u_j^{0,+} + \Big( 2 \, (b^-)^2 +\tau \, c^-\Big) \, u_j^{0,-} 
- (2\, \tau +a^+ +c^+) \, b^+ \, H_j^{0,+} - (2\, \tau +a^- +c^-) \, b^- \, H_j^{0,-} \Big] \, \p_{y_j} \widehat{\psi}^{\, 2} (k) \, .
\end{multline}
To derive the expression \eqref{s3-calcul_hatF_1^pm}, we have also used the relation \eqref{s3-hyp_tau_racine_det_lop} 
to cancel part of the coefficient of $\p_{y_j} \widehat{\psi}^{\, 2} (k)$. At this stage, the sum of the contributions in 
\eqref{s3-calcul_termes_bord_relation_dualité} and \eqref{s3-calcul_hatF_1^pm} reads:
\begin{multline}
\label{s3-calcul_termes_bord_relation_dualité'}
-2 \, \tau \, (c^+ + c^-) \, \p_t \widehat{\psi}^{\, 2} (k) 
-2 \, \tau \, \Big( c^+ \, u_j^{0,+} \, + \, c^- \, u_j^{0,-} \, - \, b^+ \, H_j^{0,+} \, - \, b^- \, H_j^{0,-} \Big) \, \p_{y_j} \widehat{\psi}^{\, 2} (k) \\
+ \, \dfrac{i \, \tau}{2} \, \Big( (b^+)^2 - (b^-)^2 - (c^+)^2 + (c^-)^2 \Big) \, \sum_{k_1 +k_2 =k} 
\big( \, k_1 \, |k_2|+k_2 \, |k_1| \, \big) \, \widehat{\psi}^{\, 2} (k_1) \, \widehat{\psi}^{\, 2} (k_2) \, .
\end{multline}
We now turn to the computation of all quadratic expressions in the integrals of \eqref{s3-cond_ortho_psi^2}.

\paragraph{The quadratic terms in the integrals of \eqref{s3-cond_ortho_psi^2}. I.}

In the decomposition \eqref{s3-def_terme_source_F^1,pm} of $F^{\, 1,\pm}$, we isolate the term with a $\p_{Y_3}$ derivative, 
which contributes in \eqref{s3-cond_ortho_psi^2} for:
\begin{align*}
&- \, \dfrac{1}{2} \, \int_{\bR^+} \mathrm{e}^{-|k|Y_3} \, \cL^+(k) \sbt \big( \widehat{\p_{Y_3} \bA_3(U^{\, 1,+},U^{\, 1,+})|_{y_3=0}} \big) (k) \, {\rm d}Y_3 \\
&+ \, \dfrac{1}{2} \, \int_{\bR^-} \mathrm{e}^{|k|Y_3} \, \cL^-(k) \sbt \big( \widehat{\p_{Y_3} \bA_3(U^{\, 1,-},U^{\, 1,-})|_{y_3=0}} \big) (k) \, {\rm d}Y_3 \, ,
\end{align*}
and by integrating by parts, this contribution can be rewritten as:
\begin{align*}
& \, \dfrac{1}{2} \, \cL^+(k) \sbt \big( \widehat{\bA_3(U^{\, 1,+},U^{\, 1,+}) |_{y_3=Y_3=0}} \big) (k) \, + \, 
\dfrac{1}{2} \,  \cL^-(k) \sbt \big( \widehat{\bA_3(U^{\, 1,-},U^{\, 1,-}) |_{y_3=Y_3=0}} \big) (k) \\
& \, - \, \dfrac{|k|}{2} \, \int_{\bR^+} \mathrm{e}^{-|k|Y_3} \, \cL^+(k) \sbt \big( \widehat{\bA_3(U^{\, 1,+},U^{\, 1,+})|_{y_3=0}} \big) (k) \, {\rm d}Y_3 \\
& \, - \, \dfrac{|k|}{2} \, \int_{\bR^-} \mathrm{e}^{|k|Y_3} \, \cL^-(k) \sbt \big( \widehat{\bA_3(U^{\, 1,-},U^{\, 1,-})|_{y_3=0}} \big) (k) \, {\rm d}Y_3 \\
= \, & \, \dfrac{1}{2} \, \sum_{k_1+k_2=k} |k_1| \, |k_2| \, \cL^+(k) \sbt \bA_3 (\cR^+(k_1),\cR^+(k_2)) \, 
\widehat{\psi}^{\, 2} (k_1) \, \widehat{\psi}^{\, 2} (k_2) \\
& \, + \, \dfrac{1}{2} \, \sum_{k_1+k_2=k} |k_1| \, |k_2| \, \cL^-(k) \sbt \bA_3 (\cR^-(k_1),\cR^-(k_2)) \, 
\widehat{\psi}^{\, 2} (k_1) \, \widehat{\psi}^{\, 2} (k_2) \\
& \, - \, \dfrac{1}{2} \, \sum_{k_1+k_2=k} \dfrac{|k| \, |k_1| \, |k_2|}{|k| +|k_1| +|k_2|} \, \cL^+(k) \sbt \bA_3 (\cR^+(k_1),\cR^+(k_2)) \, 
\widehat{\psi}^{\, 2} (k_1) \, \widehat{\psi}^{\, 2} (k_2) \\
& \, - \, \dfrac{1}{2} \, \sum_{k_1+k_2=k} \dfrac{|k| \, |k_1| \, |k_2|}{|k| +|k_1| +|k_2|} \, \cL^-(k) \sbt \bA_3 (\cR^-(k_1),\cR^-(k_2)) \, 
\widehat{\psi}^{\, 2} (k_1) \, \widehat{\psi}^{\, 2} (k_2) \\
= \, & \, \dfrac{1}{2} \, \sum_{k_1+k_2=k} 
\dfrac{|k_1| \, |k_2| \, (|k_1|+|k_2|)}{|k| +|k_1| +|k_2|} \, \cL^+(k) \sbt \bA_3 (\cR^+(k_1),\cR^+(k_2)) \, 
\widehat{\psi}^{\, 2} (k_1) \, \widehat{\psi}^{\, 2} (k_2) \\
& \, + \, \dfrac{1}{2} \, \sum_{k_1+k_2=k} 
\dfrac{|k_1| \, |k_2| \, (|k_1|+|k_2|)}{|k| +|k_1| +|k_2|} \, \cL^-(k) \sbt \bA_3 (\cR^-(k_1),\cR^-(k_2)) \, 
\widehat{\psi}^{\, 2} (k_1) \, \widehat{\psi}^{\, 2} (k_2) \, .
\end{align*}
Using the expressions \eqref{appA-produits_hermitiens-2}, we end up with the contribution:
\begin{multline}
\label{s3-calcul_termes_bord_relation_dualité''}
- \, \dfrac{i \, \tau}{2} \, \Big( (b^+)^2 - (b^-)^2 - (c^+)^2 + (c^-)^2 \Big) \\
\sum_{k_1 +k_2 =k} 
\dfrac{|k_1| \, |k_2| \, (|k_1| +|k_2|)}{|k| +|k_1| +|k_2|} \, \big( \text{\rm sgn} (k_1) +\text{\rm sgn} (k_2) 
+2 \, \text{\rm sgn} (k) \, \text{\rm sgn} (k_1) \, \text{\rm sgn} (k_2) \big) \, \widehat{\psi}^{\, 2} (k_1) \, \widehat{\psi}^{\, 2} (k_2) \, .
\end{multline}
The sum of \eqref{s3-calcul_termes_bord_relation_dualité'} and \eqref{s3-calcul_termes_bord_relation_dualité''} can be simplified and now reads:
\begin{multline}
\label{s3-calcul_termes_bord_relation_dualité'-bis}
-2 \, \tau \, (c^+ + c^-) \, \p_t \widehat{\psi}^{\, 2} (k) 
-2 \, \tau \, \Big( c^+ \, u_j^{0,+} \, + \, c^- \, u_j^{0,-} \, - \, b^+ \, H_j^{0,+} \, - \, b^- \, H_j^{0,-} \Big) \, \p_{y_j} \widehat{\psi}^{\, 2} (k) \\
- \, \dfrac{i \, \tau}{2} \, \Big( (b^+)^2 - (b^-)^2 - (c^+)^2 + (c^-)^2 \Big) \, \text{\rm sgn} (k) \, \times \\
\sum_{k_1 +k_2 =k} \left( 2 \, k_1 \, k_2 -\dfrac{|k_1| \, |k_2| \, |k|}{|k| +|k_1| +|k_2|} \, 
\Big( \text{\rm sgn} (k) \, (\text{\rm sgn} (k_1) +\text{\rm sgn} (k_2)) 
+2 \, \text{\rm sgn} (k_1) \, \text{\rm sgn} (k_2) \Big) \right) \, \widehat{\psi}^{\, 2} (k_1) \, \widehat{\psi}^{\, 2} (k_2) \, .
\end{multline}

\paragraph{The quadratic terms in the integrals of \eqref{s3-cond_ortho_psi^2}. II.}

With rather similar calculations, we can compute the contribution in \eqref{s3-cond_ortho_psi^2} of the terms in 
\eqref{s3-def_terme_source_F^1,pm} with a $\p_\theta$ derivative:
\begin{align*}
&- \, \dfrac{1}{2} \, \int_{\bR^+} \mathrm{e}^{-|k|Y_3} \, \cL^+(k) \sbt 
\big( \widehat{\xi_j \, \p_\theta \bA_j(U^{\, 1,+},U^{\, 1,+})|_{y_3=0}} \big) (k) \, {\rm d}Y_3 \\
&+ \, \dfrac{1}{2} \, \int_{\bR^-} \mathrm{e}^{|k|Y_3} \, \cL^-(k) \sbt 
\big( \widehat{\xi_j \, \p_\theta \bA_j(U^{\, 1,-},U^{\, 1,-})|_{y_3=0}} \big) (k) \, {\rm d}Y_3 \, .
\end{align*}
Using once again the expressions \eqref{appA-produits_hermitiens-2}, we get the contribution:
\begin{multline}
\label{s3-calcul_termes_bord_relation_dualité'''}
\dfrac{i \, \tau}{2} \, \Big( (b^+)^2 - (b^-)^2 - (c^+)^2 + (c^-)^2 \Big) \, \text{\rm sgn} (k) \, \times \\
\sum_{k_1 +k_2 =k} \left( 2 \, \dfrac{|k_1| \, |k_2| \, |k|}{|k| +|k_1| +|k_2|} 
+\dfrac{|k_1| \, |k_2| \, k}{|k| +|k_1| +|k_2|} \, \big( \text{\rm sgn} (k_1) +\text{\rm sgn} (k_2) \big) \right) \, 
\widehat{\psi}^{\, 2} (k_1) \, \widehat{\psi}^{\, 2} (k_2) \, .
\end{multline}
Adding \eqref{s3-calcul_termes_bord_relation_dualité'''} with \eqref{s3-calcul_termes_bord_relation_dualité'-bis}, we get the expression:
\begin{align}
& -2 \, \tau \, (c^+ + c^-) \, \p_t \widehat{\psi}^{\, 2} (k) 
-2 \, \tau \, \Big( c^+ \, u_j^{0,+} \, + \, c^- \, u_j^{0,-} \, - \, b^+ \, H_j^{0,+} \, - \, b^- \, H_j^{0,-} \Big) \, \p_{y_j} \widehat{\psi}^{\, 2} (k) 
\label{s3-calcul_termes_bord_relation_dualité'-ter} \\
& +i \, \tau \, \Big( (b^+)^2 - (b^-)^2 - (c^+)^2 + (c^-)^2 \Big) \, \text{\rm sgn} (k) 
\sum_{k_1 +k_2 =k} \dfrac{|k_1| \, |k_2| \, |k|}{|k| +|k_1| +|k_2|} \, \widehat{\psi}^{\, 2} (k_1) \, \widehat{\psi}^{\, 2} (k_2) \notag \\
& -i \, \tau \, \Big( (b^+)^2 - (b^-)^2 - (c^+)^2 + (c^-)^2 \Big) \, \text{\rm sgn} (k) \, \times \notag \\
& \sum_{k_1 +k_2 =k} \left( k_1 \, k_2 -\dfrac{|k_1| \, |k_2| \, |k|}{|k| +|k_1| +|k_2|} \, 
\Big( \text{\rm sgn} (k) \, (\text{\rm sgn} (k_1) +\text{\rm sgn} (k_2)) 
+\text{\rm sgn} (k_1) \, \text{\rm sgn} (k_2) \Big) \right) \, \widehat{\psi}^{\, 2} (k_1) \, \widehat{\psi}^{\, 2} (k_2) \, .\notag
\end{align}

\paragraph{The quadratic terms in the integrals of \eqref{s3-cond_ortho_psi^2}. III. Conclusion.}

The last term to take into account in \eqref{s3-def_terme_source_F^1,pm} for computing all the contributions in the 
integrals of \eqref{s3-cond_ortho_psi^2} is $\p_\theta \psi^2 \, \cA^\pm \, \p_{Y_3} U^{\, 1,\pm}$. We thus need to compute:
\begin{equation*}
\int_{\bR^+} \mathrm{e}^{-|k|Y_3} \, \cL^+(k) \sbt 
\big( \widehat{\p_\theta \psi^2 \, \cA^\pm \, \p_{Y_3} U^{\, 1,+}|_{y_3=0}} \big) (k) \, {\rm d}Y_3 
- \, \int_{\bR^-} \mathrm{e}^{|k|Y_3} \, \cL^-(k) \sbt 
\big( \widehat{\p_\theta \psi^2 \, \cA^\pm \, \p_{Y_3} U^{\, 1,+}|_{y_3=0}} \big) (k) \, {\rm d}Y_3 \, .
\end{equation*}
Using the last equality in \eqref{appA-produits_hermitiens-1}, we get the contribution:
\begin{multline}
\label{s3-calcul_termes_bord_relation_dualité''''}
-\, \dfrac{i \, \tau}{2} \, \Big( (b^+)^2 - (b^-)^2 - (c^+)^2 + (c^-)^2 \Big) \, \text{\rm sgn} (k) \, \times \\
\sum_{k_1 +k_2 =k} \left( \dfrac{k_1^2 \, k_2}{|k| +|k_1|} \, \big( \text{\rm sgn} (k)-\text{\rm sgn} (k_1) \big) 
+\dfrac{k_1 \, k_2^2}{|k| +|k_2|} \, \big( \text{\rm sgn} (k)-\text{\rm sgn} (k_2) \big) \right) \, 
\widehat{\psi}^{\, 2} (k_1) \, \widehat{\psi}^{\, 2} (k_2) \, ,
\end{multline}
where we have symmetrized the kernel acting on $(\psi^2,\psi^2)$ once again. Adding \eqref{s3-calcul_termes_bord_relation_dualité''''} 
with \eqref{s3-calcul_termes_bord_relation_dualité'-ter} and dividing by $-2 \, \tau$, we end up -after a little bit of further 
simplifications- with the leading amplitude equation that must be satisfied by the nonzero Fourier modes of $\psi^2$:
\begin{multline}
\label{s3-edp_hatpsi^2}
\big( c^+ + c^- \big) \, \p_t \widehat{\psi}^{\, 2} (k) 
+\big( c^+ \, u_j^{0,+} \, + \, c^- \, u_j^{0,-} \, - \, b^+ \, H_j^{0,+} \, - \, b^- \, H_j^{0,-} \big) \, \p_{y_j} \widehat{\psi}^{\, 2} (k) \\
+i\, \Big( (c^+)^2 - (c^-)^2 - (b^+)^2 + (b^-)^2 \Big) \, \text{\rm sgn} (k) \, 
\sum_{k_1 +k_2 =k} \dfrac{|k_1| \, |k_2| \, |k_1 +k_2|}{|k_1| + |k_2| + |k_1 +k_2|} \, \widehat{\psi}^{\, 2} (k_1) \, \widehat{\psi}^{\, 2} (k_2) \, = \, 0 \, .
\end{multline}

The main novelty here with respect to \cite{AliHunter} is the slow modulation with respect to the spatial tangential variables 
$y_1,y_2$. It is evidenced by the transport term in the first line of \eqref{s3-edp_hatpsi^2}. Let us observe right away that 
the coefficients in front of $\p_t \widehat{\psi}^{\, 2} (k)$ and $\p_{y_j} \widehat{\psi}^{\, 2} (k)$ are real, which means that provided 
that $c^+ +c^- \neq 0$, we shall have indeed a (constant coefficient) transport operator with respect to $y_1,y_2$. Let 
us also observe that the `kernel' :
\begin{equation}
\label{def_kernel}
(k_1,k_2) \longmapsto \dfrac{|k_1| \, |k_2| \, |k_1 +k_2|}{|k_1| + |k_2| + |k_1 +k_2|} \, ,
\end{equation}
is well-defined for all relevant values of $(k_1,k_2)$, that is for $k_1+k_2 \neq 0$. In particular, the kernel vanishes on all 
couples of the form $(0,k_2)$ and $(k_1,0)$, which means that the equation \eqref{s3-edp_hatpsi^2} is an evolution equation 
for the nonzero Fourier modes of $\psi^2$ only. The kernel \eqref{def_kernel} is the same as the one arising in a simplified 
model for weakly nonlinear Rayleigh wave modulation in elastodynamics, see \cite{HIZ}, and in a related problem of 
magnetohydrodynamics, namely the plasma-vacuum interface problem \cite{Secchi}. Further consideration on amplitude 
equations for weakly nonlinear surface waves may be found in \cite{AliHunterParker,AustriaHunter}. The kernel in 
\eqref{def_kernel} coincides of course with the one derived in \cite{AliHunter} in the two-dimensional case. This is not 
surprising due to the isotropy of the MHD equations.

Let us also observe that the kernel in \eqref{def_kernel} can be written as:
$$
\Lambda (k_1,k_2,-k_1-k_2) \, ,\quad \Lambda (k_1,k_2,k_3) := \dfrac{|k_1| \, |k_2| \, |k_3|}{|k_1| + |k_2| + |k_3|} \, ,
$$
where $\Lambda$ is symmetric with respect to $(k_1,k_2,k_3)$. As evidenced in \cite{AliHunterParker}, this symmetry 
property is linked with a (formal) Hamiltonian structure for \eqref{s3-edp_hatpsi^2}, see further investigation of this property 
in \cite{BC1,BC2,AustriaHunter}.

\subsection{Solvability of the leading amplitude equation}

In order to prove a (local) well-posedness result for \eqref{s3-edp_hatpsi^2}, we first perform some reductions. Let us first 
show that the coefficient $c^+ + c^-$ in front of $\p_t \widehat{\psi}^{\, 2} (k)$ is nonzero, which means that the equation 
\eqref{s3-edp_hatpsi^2} is of evolutionary type. Indeed, we recall that the Lopatinskii determinant for the current vortex 
sheet problem is defined in \eqref{s3-def_det_lop}. Under Assumption \eqref{s3-hyp_stab_nappe_plane}, $\Delta$ has 
two \emph{simple} real roots, and since we know by Assumption \eqref{s3-hyp_tau_racine_det_lop} that $\tau$ is one 
of these two roots, there holds $\partial_\tau \Delta (\tau,\xi_1,\xi_2) =2\, (c^+ + c^-) \neq 0$. We also compute
$$
\partial_{\xi_j} \Delta (\tau,\xi_1,\xi_2) \, = \, 
2 \, \big( c^+ \, u_j^{0,+} \, + \, c^- \, u_j^{0,-} \, - \, b^+ \, H_j^{0,+} \, - \, b^- \, H_j^{0,-} \big) \, ,
$$
which means that \eqref{s3-edp_hatpsi^2} can be rewritten as:
\begin{multline}
\label{s3-edp_hatpsi^2'}
\p_t \widehat{\psi}^{\, 2} (k) 
+\dfrac{\partial_{\xi_j} \Delta (\tau,\xi_1,\xi_2)}{\partial_\tau \Delta (\tau,\xi_1,\xi_2)} \, \p_{y_j} \widehat{\psi}^{\, 2} (k) \\
+i\, \dfrac{(c^+)^2 - (c^-)^2 - (b^+)^2 + (b^-)^2}{c^+ + c^-} \, \text{\rm sgn} (k) \, 
\sum_{k_1 +k_2 =k} \dfrac{|k_1| \, |k_2| \, |k_1 +k_2|}{|k_1| + |k_2| + |k_1 +k_2|} \, \widehat{\psi}^{\, 2} (k_1) \, \widehat{\psi}^{\, 2} (k_2) 
\, = \, 0 \, .
\end{multline}
The transport operator in \eqref{s3-edp_hatpsi^2'} is governed by the \emph{group velocity} associated with the manifold 
along which the Lopatinskii determinant vanishes. This observation is a general fact that we can directly check for our 
particular problem, see \cite{Marcou,CW,WW}. Let us now define a bilinear symmetric operator $\cB$ that acts on 
$\cC^\infty$, $2\pi-$periodic functions in $\theta$ as follows:
\begin{equation*}
\forall \, k \in \bZ \setminus \{ 0 \} \, ,\quad 
\widehat{\cB (\varphi,\psi)} (k) \, := \, \sum_{k_1+k_2=k} \dfrac{|k_1| \, |k_2| \, |k_1 +k_2|}{|k_1| + |k_2| + |k_1 +k_2|} \, 
\widehat{\varphi} (k_1) \, \widehat{\psi} (k_2) \, .
\end{equation*}
The variables $(t,y')$ play here a role of parameters in $\varphi,\psi$. We do not need a definition for $\widehat{\cB (\varphi,\psi)} (0)$. 
Let us observe that the operator $\cB$ preserves real valued functions. This property is used below since the leading front $\psi^2$ 
is meant to be real valued. Then \eqref{s3-edp_hatpsi^2'} can be recast in the more compact form:
\begin{multline}
\label{s3-edp_hatpsi^2-final}
\forall \, k \in \bZ \setminus \{ 0 \} \, ,\\
\p_t \widehat{\psi}^{\, 2} (k) 
+ \dfrac{\partial_{\xi_j} \Delta (\tau,\xi_1,\xi_2)}{\partial_\tau \Delta (\tau,\xi_1,\xi_2)} \, \p_{y_j} \widehat{\psi}^{\, 2} (k) 
- \dfrac{(b^+)^2 - (b^-)^2 - (c^+)^2 + (c^-)^2}{c^+ + c^-} \, i \, \text{\rm sgn} (k) \, \widehat{\cB (\psi^2,\psi^2)} (k) \, = \, 0 \, ,
\end{multline}
where we recognize in the bilinear term the action of the so-called Hilbert transform $\cH$. In other words, we can also 
rewrite \eqref{s3-edp_hatpsi^2-final} as:
\begin{multline*}
\forall \, k \in \bZ \setminus \{ 0 \} \, ,\\
\p_t \widehat{\psi}^{\, 2} (k) + \dfrac{\partial_{\xi_j} \Delta (\tau,\xi_1,\xi_2)}{\partial_\tau \Delta (\tau,\xi_1,\xi_2)} \, \p_{y_j} \widehat{\psi}^{\, 2} (k) 
+ \dfrac{(b^+)^2 - (b^-)^2 - (c^+)^2 + (c^-)^2}{c^+ + c^-} \, \widehat{\cH \Big( \cB (\psi^2,\psi^2) \Big)} (k) \, = \, 0 \, ,
\end{multline*}
Let us observe that because of Assumption \eqref{s3-hyp_delta_neq_0}, the coefficient in front of the bilinear nonlocal operator 
is nonzero, for otherwise, combining with \eqref{s3-hyp_tau_racine_det_lop}, we would get $(c^\pm)^2 =(b^\pm)^2$ and it 
is proved in Appendix \ref{appendixA} that $(c^\pm)^2 \neq (b^\pm)^2$. Hence \eqref{s3-edp_hatpsi^2-final} is genuinely 
a nonlinear nonlocal equation of Hamilton-Jacobi type (since the kernel defining $\cB$ is homogeneous degree $2$). The 
well-posedness of nonlocal equations of the type \eqref{s3-edp_hatpsi^2-final} has been systematically studied in 
\cite{Hunter2006,B,Marcou,CW,WW} in either the pulse or wavetrain framework ($k \in \R$ or $k \in \Z$). The most convenient 
references for our purpose here are \cite{Hunter2006,WW} where the following result is proved\footnote{One can even consider 
more general kernels than the one in \eqref{s3-edp_hatpsi^2-final} but we rather refer the interested reader to the references 
instead of quoting the corresponding results in full generality.}. For notational convenience, we introduce $H^s_\sharp$, 
$s \ge 0$, as the Sobolev space of functions $g \in H^s (\bT_{y'}^2\times\bT_\theta)$ with zero mean with respect to $\theta$, 
and $H^\infty_\sharp := \cap_s H^s_\sharp$. The space $H^s_\sharp$ is equipped with the obvious norm defined with the help 
of Fourier coefficients in $(y',\theta)$, see, \emph{e.g.}, \cite{BCD}.

\begin{theorem}[\cite{Hunter2006}]
\label{s3-thm_hunter}
Let $s>7/2$ and let $g\in H^s_\sharp$. Then there exist a time $T > 0$ and a unique solution $\varphi \in \cC ([0,T];H^s_\sharp)$ to 
the Cauchy problem:
\begin{equation}
\label{s3-pb_cauchy_HJ}
\forall \, k \neq 0 \, ,\quad \p_t \widehat{\varphi} (k) 
+ \dfrac{\partial_{\xi_j} \Delta (\tau,\xi_1,\xi_2)}{\partial_\tau \Delta (\tau,\xi_1,\xi_2)} \, \p_{y_j} \widehat{\varphi} (k) 
- \dfrac{(b^+)^2 - (b^-)^2 - (c^+)^2 + (c^-)^2}{c^+ + c^-} \, i \, \text{\rm sgn} (k) \, \widehat{\cB (\varphi,\varphi)} (k) \, = \, 0 \, ,
\end{equation}
with $\varphi|_{t=0} =g$.

Moreover, if $g\in H_\sharp^\infty (\bT^3)$, then the unique zero mean solution $\varphi$ to \eqref{s3-pb_cauchy_HJ} belongs to 
$\cC^\infty\big( [0,T] \, ; \, H^\infty_\sharp \big)$, where $T>0$ is given by the previous result with, for instance, $s=4$.
\end{theorem}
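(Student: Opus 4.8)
The first assertion of Theorem \ref{s3-thm_hunter} is quoted verbatim from \cite{Hunter2006} (see also \cite{WW}), so the only thing that remains to be proved is the persistence of $H^\infty_\sharp$ regularity on the \emph{same} time interval $[0,T]$ furnished by the case $s=4$, together with smoothness in time. The plan is to deduce this from the cited local well-posedness statement applied at every Sobolev level $s\ge 4$, by a standard bootstrap (persistence of regularity) argument followed by a differentiation-in-time induction.

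As a preliminary normalization I would remove the transport term. Since both $\partial_{\xi_j}\Delta(\tau,\xi_1,\xi_2)/\partial_\tau\Delta(\tau,\xi_1,\xi_2)$ and the number $\kappa:=\big((b^+)^2-(b^-)^2-(c^+)^2+(c^-)^2\big)/(c^++c^-)$ are fixed constants, and since $\cB$ acts only on the fast variable $\theta$ with $(t,y')$ entering as parameters, the Galilean change of unknown
\[
\psi(t,z',\theta) \, := \, \varphi\Big( t, \, z'+t\,\frac{\partial_\xi\Delta}{\partial_\tau\Delta}, \, \theta \Big)
\]
turns \eqref{s3-pb_cauchy_HJ} into the $\theta$-local Cauchy problem $\p_t\widehat{\psi}(k)=\kappa\,i\,\text{\rm sgn}(k)\,\widehat{\cB(\psi,\psi)}(k)$ for $k\neq 0$, with the same initial datum $\psi|_{t=0}=g$, the same lifespan, and with the zero mean in $\theta$ automatically preserved since $\cB(\psi,\psi)$ is mean-free by definition. (This step is only a convenience; the energy estimates below work equally well with the constant-coefficient, hence skew-adjoint, transport term kept in place.) It thus suffices to prove the persistence statement for the reduced equation, which I write abstractly as $\p_t\psi=\cN(\psi)$ with $\cN$ the quadratic operator defined by $\widehat{\cN(\psi)}(k):=\kappa\,i\,\text{\rm sgn}(k)\,\widehat{\cB(\psi,\psi)}(k)$, and $\cN(\psi)=\cN_{\rm sym}(\psi,\psi)$ for the obvious associated symmetric bilinear form.

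Next I would invoke the cited theorem at each level $s\ge 4$: since $g\in H^\infty_\sharp\subset H^s_\sharp$, there are a maximal existence time $T_s^\ast\in(0,+\infty]$ and a unique $\psi_s\in\cC\big([0,T_s^\ast);H^s_\sharp\big)$, and by uniqueness in the class $H^4_\sharp$ all the $\psi_s$ coincide with the solution $\psi=\psi_4$ on their common interval, which contains $[0,T]$. The crux is to show $T_s^\ast>T$ for every $s$, i.e.\ that no higher Sobolev norm blows up before $T$. Here the structural feature of the equation enters: the $\Lambda^s$-weighted energy estimate underlying the proof of the cited theorem produces, at top order, a trilinear expression on which the symmetry of $\Lambda(k_1,k_2,k_3)=|k_1|\,|k_2|\,|k_3|/(|k_1|+|k_2|+|k_3|)$ together with the multiplier $i\,\text{\rm sgn}(k)$ forces a commutator-type cancellation --- exactly as for a Burgers-type equation, and despite the fact that $\cN$ is formally of order one because $\Lambda$ is homogeneous of degree one --- leaving a tame inequality of the form $\frac{{\rm d}}{{\rm d}t}\|\psi(t)\|_{H^s_\sharp}^2\le C_s\,\|\psi(t)\|_{W^{1,\infty}(\bT^3)}\,\|\psi(t)\|_{H^s_\sharp}^2$. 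If one had $T_s^\ast\le T$, then on $[0,T_s^\ast)$ one would have $\psi_s=\psi$, hence $\|\psi_s(t)\|_{W^{1,\infty}}\le C\,\sup_{[0,T]}\|\psi(t)\|_{H^4_\sharp}=:M<\infty$ by the Sobolev embedding $H^4(\bT^3)\hookrightarrow W^{1,\infty}(\bT^3)$ (valid since $4>5/2$), and Gr\"onwall's lemma would bound $\|\psi_s(t)\|_{H^s_\sharp}$ uniformly on $[0,T_s^\ast)$, contradicting the blow-up alternative of the local theory at $T_s^\ast$. Hence $T_s^\ast>T$, so $\sup_{[0,T]}\|\psi(t)\|_{H^s_\sharp}<\infty$ and $\psi\in\cC\big([0,T];H^s_\sharp\big)$ for every $s$, that is $\psi\in\cC\big([0,T];H^\infty_\sharp\big)$.

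Finally I would upgrade to smoothness in time. By the usual product estimates (the kernel of $\cB$ loses at most one derivative, so $\cN$ behaves like $\psi\,\p_\theta\psi$), $\cN$ maps $H^s_\sharp$ into $H^{s-1}_\sharp$ and $H^\infty_\sharp$ into $H^\infty_\sharp$; thus $\p_t\psi=\cN(\psi)\in\cC\big([0,T];H^\infty_\sharp\big)$, then $\p_t^2\psi=2\,\cN_{\rm sym}(\psi,\p_t\psi)\in\cC\big([0,T];H^\infty_\sharp\big)$, and an immediate induction (the $n$-th time derivative being, by bilinearity of $\cN_{\rm sym}$, a finite sum of terms $\cN_{\rm sym}(\p_t^a\psi,\p_t^b\psi)$ with $a+b=n-1$) gives $\psi\in\cC^\infty\big([0,T];H^\infty_\sharp\big)$. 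Undoing the Galilean change of variables, which is smooth in all its arguments, transfers the same conclusion to $\varphi$, with $T$ depending only on $\|g\|_{H^4_\sharp}$, i.e.\ on $\|\psi^2_0\|_{H^4(\bT^2\times\bT)}$ through the $s=4$ instance of the cited theorem. The only genuine obstacle in this scheme is the level-$s$ energy inequality with its loss-of-derivative cancellation, but that is precisely the content of \cite{Hunter2006} and \cite{WW}, which we are entitled to quote; what remains is the routine bootstrap and time-regularity argument just described.
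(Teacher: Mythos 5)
Your proposal is correct and is essentially the argument the paper has in mind: the paper itself offers no proof beyond citing \cite{Hunter2006,WW} and remarking that the uniform-in-$s$ lifespan follows from a tame estimate, which is exactly the ingredient you isolate before running the standard persistence-of-regularity bootstrap (Gr\"onwall with the $W^{1,\infty}$ norm controlled via $H^4\hookrightarrow W^{1,\infty}$ on $\bT^3$) and the time-differentiation induction. The one ingredient you rightly flag as non-routine --- the top-order commutator cancellation giving the tame $H^s$ energy inequality --- is precisely what is quoted rather than reproved, so your reconstruction and the paper's treatment coincide.
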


The fact that the time $T > 0$ can be chosen to be independent of the Sobolev index $s$ follows from a \emph{tame} estimate 
for the solutions to \eqref{s3-pb_cauchy_HJ}. Such a tame estimate is somehow hidden in \cite{Hunter2006} but a detailed 
proof is given in \cite{WW} with even the incorporation of spatial tangential variables. The problem of global existence of solutions 
to \eqref{s3-pb_cauchy_HJ}, either in a weak or strong sense, is still open (see \cite{Hunter2006}); numerical simulations in 
\cite{AliHunter} seem to reveal that some smooth solutions to \eqref{s3-pb_cauchy_HJ} develop singularities in finite time.

\section{Construction of the leading profile}

At this stage, Theorem \ref{s3-thm_hunter} is sufficient to fully determine the oscillating modes of the leading profile $\psi^2$. 
For future use, let us indeed introduce the decomposition:
$$
\psi^2 (t,y',\theta) \, = \, \widehat{\psi}^{\, 2}(t,y',0) +\psi^2_\sharp (t,y',\theta) \, ,
$$
where $\psi^2_\sharp$ has zero mean with respect to the fast variable $\theta$ for all $(t,y')$. As follows from the expression 
\eqref{s3-edp_hatpsi^2'}, the leading amplitude equation \eqref{s3-edp_hatpsi^2'} only involves $\psi^2_\sharp$. Furthermore, 
in order to fulfill the initial condition \eqref{s3-def_cond_init_oscil_psi}, we impose:
$$
\psi^2_\sharp|_{t=0} \, = \, \psi^2_0 \, ,\quad \widehat{\psi}^{\, 2}(0)|_{t=0} \, = \, 0 \, .
$$
Consequently, the oscillating part $\psi^2_\sharp$ of the leading profile $\psi^2$ for the front is given by Theorem \ref{s3-thm_hunter} 
as the only solution, with zero mean with respect to $\theta$, to \eqref{s3-pb_cauchy_HJ} with initial condition $\psi^2_0$. 
This fixes the time $T > 0$, this final time depending only on a fixed Sobolev norm of the initial condition $\psi^2_0$. Once 
we have determined $\psi^2_\sharp$, we get thanks to \eqref{s3-U^1,pm_k_y3=0-bis} the expression of the leading profile 
$U^{\, 1,\pm}$ at $y_3=0$, that is on the boundary $\Gamma_0$. It then remains to lift $U^{\, 1,\pm}$ to any value of $y_3 
\in I^\pm$, with the only constraint that $U^{\, 1,\pm}$ should be of the general form \eqref{decomp_leading_profile} (meaning 
that we have only some scalar components $\gamma^{\, 1,\pm}(k)$, $k \neq 0$, at our disposal to lift $\pm |k| \, \widehat{\psi}^{\, 2}(k)$). 
For simplicity, we lift the trace of $\gamma^{\, 1,\pm}$ in the most simple way, that is, we set:
\begin{equation}
\label{s3-U^1,pm_k_final}
U^{\, 1,\pm} (t,y,Y_3,\theta) \, := \, \pm \, 
\sum_{k \neq 0} |k| \, \widehat{\psi}^{\, 2} (t,y',k) \, \chi (y_3) \, {\rm e}^{\mp \, |k| \, Y_3 +i \, k \, \theta} \, \cR^\pm(k) \, ,
\end{equation}
where the vectors $\cR^\pm(k)$ are defined in \eqref{appA-defRLkpm}. Let us recall again that $\chi$ is a fixed cut-off function that 
equals $1$ on $[-1/3,1/3]$ and that vanishes outside of $[-2/3,2/3]$. In particular, the form \eqref{s3-U^1,pm_k_final} is compatible 
with \eqref{decomp_leading_profile}, and there holds $U^{\, 1,\pm} =U^{\, 1,\pm}_\star \in S_\star^\pm$. The lifting procedure 
\eqref{s3-U^1,pm_k_final} is the same as in \cite{Marcou}.
\bigskip

\noindent Let us now summarize how we have constructed the leading profile $(U^{\, 1,\pm},\psi^2)$ and which properties it satisfies:
\begin{itemize}
 \item Since the leading profile $(U^{\, 1,\pm},\psi^2)$ must satisfy the homogeneous fast problem \eqref{fast_homogeneous_lead}, we 
 have obtained its general decomposition \eqref{decomp_leading_profile}.
 \item By imposing the (necessary) solvability conditions \eqref{compatibilite_pb_rapide_b}, \eqref{compatibilite_pb_rapide_c} on the 
 inhomogeneous fast problem \eqref{fast_first_corrector} satisfied by the first corrector $(U^{\, 2,\pm},\psi^3)$, we have shown that the 
 slow and fast means in \eqref{decomp_leading_profile} vanish (this property is linked to our choice 
 \eqref{s3-cond_moyenne_initiale_U^m,pm} of initial data for the fast mean and to the easiest possible choice of initial data for the 
 slow mean).
 \item By imposing the (necessary) solvability condition \eqref{compatibilite_pb_rapide_e} on the  inhomogeneous fast problem 
 \eqref{fast_first_corrector} satisfied by the first corrector $(U^{\, 2,\pm},\psi^3)$, we have determined the evolution of the nonzero 
 Fourier modes of the leading front $\psi^2$.
 \item The expression of the leading profile $(U^{\, 1,\pm},\psi^2)$ for any value of the slow normal variable $y_3$ is defined by the 
 simple lifting procedure \eqref{s3-U^1,pm_k_final}.
\end{itemize}

We have thus solved the homogeneous fast problem \eqref{fast_homogeneous_lead}, and enforced the solvability conditions 
\eqref{compatibilite_pb_rapide_b}, \eqref{compatibilite_pb_rapide_c}, \eqref{compatibilite_pb_rapide_e} on the fast problem 
\eqref{fast_first_corrector}. We have also satisfied the top and bottom boundary conditions \eqref{s3-cond_bords_fixes_uH_3^m,pm} 
for $m=0$, and the normalization condition \eqref{expression_I1(t)} for the slow mean of the total pressure. The expression 
\eqref{s3-U^1,pm_k_final}, as well as the fulfillment of \eqref{compatibilite_pb_rapide_b}, \eqref{compatibilite_pb_rapide_c}, 
\eqref{compatibilite_pb_rapide_e} on the system \eqref{fast_first_corrector}, are independent of the slow mean $\widehat{\psi}^{\, 2}(0)$ 
of the leading front. The latter function will be determined in the following Chapter when constructing the slow mean of the corrector 
$U^{\, 2,\pm}$.

As a concluding remark, let us observe that the solvability conditions \eqref{compatibilite_pb_rapide_a}, \eqref{compatibilite_pb_rapide_d} 
are not yet satisfied at this stage for the inhomogeneous fast problem \eqref{fast_first_corrector}. Namely, we shall have to check the 
relations:
\begin{align*}
& F_6^{\, 1,\pm}|_{y_3=Y_3=0}  \, = \, -b^\pm \, \p_\theta G_1^{\, 1,\pm} +c^\pm \, \p_\theta G_2^{\, 1,\pm} \, ,\\
& \p_{Y_3} F_6^{\, 1,\pm} +\xi_j \, \p_\theta F_{3+j}^{\, 1,\pm} -\tau \, \p_\theta F_8^{\, 1,\pm} \, = 0 \, ,
\end{align*}
see Lemma \ref{lem_compatibilite_bord} and Lemma \ref{lem_compatibilite_div} below in Chapter \ref{chapter5}. We now only have 
the mean $\widehat{\psi}^{\, 2}(0)$ of the leading front which has not been fixed yet, but this function does not enter the quantities 
involved in the previous two relations, so the solvability conditions \eqref{compatibilite_pb_rapide_a}, \eqref{compatibilite_pb_rapide_d} 
will need to come `for free'.

\chapter{Solving the WKB cascade II: the correctors}
\label{chapter5}

In this Chapter, we complete the construction of a solution to the WKB cascade, namely we are going to construct iteratively the 
corrector $(U^{\, m+1,\pm},\psi^{\, m+2})$, being given the collection of profiles $(U^{\, 1,\pm},\psi^2)$, $\dots$, $(U^{\, m,\pm},\psi^{\, m+1})$. 
This construction is based on an induction assumption stated as $H(m)$ below. This induction assumption includes several items, 
which we list as \eqref{inductionHm1}, \dots, \eqref{inductionHm7} below.

The verification of the initial step $H(1)$ has been performed in Chapter \ref{chapter4}, as we shall recall below, and our goal in this 
Chapter is to verify that $H(m)$ implies $H(m+1)$ for any integer $m \ge 1$. At the very end of this Chapter, we shall explain why the 
sequence of profiles $(U^{\, m,\pm},\psi^{\, m+1})$ allows for the construction of an approximate solution to \eqref{s3-equations_nappes_MHD} 
with highly oscillating data.
\bigskip

Our induction assumption is the following: there exists a time $T > 0$ and a collection of profiles $(U^{\, 1,\pm},\psi^2)$, $\dots$, 
$(U^{\, m-1,\pm},\psi^m)$, $(U^{\, m,\pm},\psi^{\, m+1}_\sharp)$ that satisfy the following seven properties:
\begin{equation}\tag{$H(m)-1$}
\label{inductionHm1}
(U^{\, 1,\pm},\psi^2) \, ,\dots \, , \, (U^{\, m-1,\pm},\psi^m) \, , \, (U^{\, m,\pm},\psi^{\, m+1}_\sharp) \in S^\pm \times 
H^\infty ([0,T] \times \bT^2 \times \bT) \, ,
\end{equation}
where the functional space $S^\pm$ in \eqref{inductionHm1} is given by Definition \ref{s3-def_espaces_fonctionnels},
\begin{equation}\tag{$H(m)-2$}
\label{inductionHm2}
\forall \, \mu \, = \, 1,\dots,m \, ,\quad \begin{cases}
\cL_f^\pm(\partial) \, U^{\mu,\pm} \, = \, F^{\mu-1,\pm} \, ,& y \in \Omega_0^\pm \, ,\, \pm Y_3 >0 \, ,\\
\p_{Y_3} H_3^{\mu,\pm} +\xi_j \, \p_\theta H_j^{\mu,\pm} \, = \, F_8^{\mu-1,\pm} \, ,& y \in \Omega_0^\pm \, ,\, \pm Y_3 >0 \, ,\\
B^+ \, U^{\mu,+}|_{y_3=Y_3=0} +B^- \, U^{\mu,-}|_{y_3=Y_3=0} +\partial_\theta \psi^{\mu+1} \, \ub \, = \, G^{\mu-1} \, ,
\end{cases}
\end{equation}
where the source term $F^{\mu-1,\pm}$, resp. $F_8^{\mu-1,\pm}$, is defined by \eqref{s3-def_terme_source_F^m,pm} (with 
$\mu-1$ rather than $m$ as the index), resp. the right hand side of \eqref{s3-cascade_div_H^m+1,pm}, and the boundary 
source term $G^{\mu-1}$ is defined by \eqref{terme_source_bord_BKW} (with $\mu-1$ rather than $m$ as the index),
\begin{equation}\tag{$H(m)-3$}
\label{inductionHm3}
\forall \, \mu \, = \, 1,\dots,m \, ,\quad 
\underline{u}_3^{\mu,\pm}|_{y_3=\pm 1} \, = \, \underline{H}_3^{\mu,\pm}|_{y_3=\pm 1} \, = \, 0 \, .
\end{equation}
\begin{equation}\tag{$H(m)-4$}
\label{inductionHm4}
\forall \, \mu \, = \, 1,\dots,m \, ,\quad \forall \, t \in [0,T] \, ,\quad {\mathcal I}^\mu(t) \, = \, 0 \, ,
\end{equation}
with ${\mathcal I}^\mu$ defined by \eqref{expression_Im(t)}, \eqref{expression_Im(t)-bis} (once again, with $\mu$ rather than $m$ 
as the index),
\begin{equation}\tag{$H(m)-5$}
\label{inductionHm5}
\widehat{\uF}^{\, m,\pm}(t,y,0) \, = \, 0 \, ,\quad \widehat{\uF}_8^{\, m,\pm}(t,y,0) \, = \, 0 \, ,
\end{equation}
\begin{equation}\tag{$H(m)-6$}
\label{inductionHm6}
\begin{cases}
u_j^{0,\pm} \, \widehat{F}_{7,\star}^{\, m,\pm} (0) -H_j^{0,\pm} \, \widehat{F}_{8,\star}^{\, m,\pm} (0) 
\, = \, \widehat{F}_{j,\star}^{\, m,\pm} (0) \, , & j=1,2 \, ,\\[1ex]
H_j^{0,\pm} \, \widehat{F}_{7,\star}^{\, m,\pm} (0) -u_j^{0,\pm} \, \widehat{F}_{8,\star}^{\, m,\pm} (0) 
\, = \, \widehat{F}_{3+j,\star}^{\, m,\pm} (0) \, , & j=1,2 \, ,
\end{cases}
\end{equation}
\begin{multline}\tag{$H(m)-7$}
\label{inductionHm7}
\forall \, k \neq 0 \, ,\quad 
\int_{\R^+} \mathrm{e}^{-|k| \, Y_3} \, \cL^+(k) \sbt \widehat{F}^{\, m,+} (t,y',0,Y_3,k) \,  {\rm d}Y_3 \, - \, 
\int_{\R^-} \mathrm{e}^{|k| \, Y_3} \, \cL^-(k) \sbt \widehat{F}^{\, m,-} (t,y',0,Y_3,k) \,  {\rm d}Y_3 \\
+ \, \ell_1^+ \, \widehat{G}_1^{\, m,+}(t,y',k) \, + \, \ell_2^+ \, \widehat{G}_2^{\, m,+}(t,y',k) \, + \, \ell_1^- \, \widehat{G}_1^{\, m,-}(t,y',k) 
\, + \, \ell_2^- \, \widehat{G}_2^{\, m,-}(t,y',k) \, = \, 0 \, .
\end{multline}
\bigskip

Several points should be emphasized. First of all, the time $T$ in the induction assumption $H(m)$ should not depend on $m$. 
In the analysis below, we shall prove that if $H(m)$ holds for some time $T>0$, then $H(m+1)$ holds for the \emph{same} time 
$T>0$. This will give eventually a uniform positive lifespan for all profiles. The lifespan is dictated by the solvability of the leading 
profile nonlinear equation \eqref{s3-edp_hatpsi^2'}. (Let us recall that global existence for \eqref{s3-edp_hatpsi^2'} is an open 
issue so far.) Another important point to notice is that, in the induction assumption $H(m)$, the profiles for the front $\psi^2$, \dots, 
$\psi^m$ are completely given, namely both their oscillating modes in $\theta$ and their mean with respect to the fast variable 
$\theta$, but the very last profile $\psi^{\, m+1}$ is given only through its oscillating modes. The mean of $\psi^{\, m+1}$ with 
respect to $\theta$ does not enter the definition of the source terms $F^{\, m,\pm},F_8^{\, m,\pm},G^m_\sharp$ in \eqref{inductionHm5}, 
\eqref{inductionHm6}, \eqref{inductionHm7}. The determination of the mean of $\psi^{\, m+1}$ with respect to $\theta$ will be 
one of the main points in the analysis below.

In the particular case $m=1$, $(H(1)-1)$ should be understood as $(U^{\, 1,\pm},\psi^2_\sharp) \in S^\pm \times H^\infty$, which is 
consistent with the analysis of Chapter \ref{chapter4} where we have only determined $\psi^2_\sharp$. We now briefly recall why 
the analysis of Chapter \ref{chapter4} implies that $H(1)$ is satisfied for some time $T>0$.

\section{The initial step of the induction}

From Theorem \ref{s3-thm_hunter}, we know that the leading front $\psi^2_\sharp$ belongs to $H^\infty ([0,T] \times \bT^2 \times \bT)$ 
for some time $T>0$, and consequently, from the expression \eqref{s3-U^1,pm_k_final}, the leading profile $U^{\, 1,\pm}$ belongs to $S^\pm$. 
(It even belongs to $S_\star^\pm$.) Furthermore, the expression \eqref{s3-U^1,pm_k_final} shows that the leading profile $(U^{\, 1,\pm},\psi^2)$ 
satisfies the homogeneous fast problem \eqref{fast_homogeneous_lead} (independently of the determination of the mean $\widehat{\psi}^{\, 2} 
(0)$ with respect to $\theta$). This means that for some time $T>0$, $(H(1)-1)$ and $(H(1)-2)$ are satisfied.

The expression \eqref{s3-U^1,pm_k_final} shows that $\uU^{\, 1,\pm}$ vanish, so $(H(1)-3)$ is also clearly satisfied, as well as $(H(1)-4)$ since, 
recalling \eqref{expression_Im(t)}, \eqref{expression_Im(t)-bis}, we have:
$$
{\mathcal I}^1(t) \, = \, \int_{\Omega_0^+} \widehat{\uq}^{\, 1,+}(t,y,0) \, {\rm d}y \, + \, 
\int_{\Omega_0^-} \widehat{\uq}^{\, 1,-}(t,y,0) \, {\rm d}y \, = \, 0 \, .
$$

In Chapter \ref{chapter4}, we have enforced the necessary solvability conditions \eqref{compatibilite_pb_rapide_b}, 
\eqref{compatibilite_pb_rapide_c} and \eqref{compatibilite_pb_rapide_e} on the inhomogeneous fast problem \eqref{fast_first_corrector}. 
This means equivalently that $(H(1)-5)$, $(H(1)-6)$ and $(H(1)-7)$ are satisfied for $m=1$. With our choice of initial conditions for the 
slow and fast means, conditions \eqref{compatibilite_pb_rapide_b} and \eqref{compatibilite_pb_rapide_c} even allowed us to conclude 
that the slow and fast means of $U^{\, 1,\pm}$ had to vanish. We have thus proved so far that there exists a time $T>0$ such that $H(1)$ 
is satisfied. All partial differential equations to be solved later on will be linear, which is the reason why there will be no further restriction 
on the final time $T$ (see \cite{Rauch} for the case of one phase weakly nonlinear geometric optics in the whole space).
\bigskip

In what remains of this Chapter, we assume that $H(m)$ is satisfied for some time $T>0$ and some integer $m \ge 1$. We are going 
to show that $H(m+1)$ is satisfied for the \emph{same} time $T>0$. Let us quickly observe that, because of \eqref{inductionHm2}, the 
fast problems:
\begin{equation*}
\begin{cases}
\cL_f^\pm(\partial) \, U^{\mu,\pm} \, = \, F^{\mu-1,\pm} \, ,& \\
\p_{Y_3} H_3^{\mu,\pm} +\xi_j \, \p_\theta H_j^{\mu,\pm} \, = \, F_8^{\mu-1,\pm} \, ,& \\
B^+ \, U^{\mu,+}|_{y_3=Y_3=0} +B^- \, U^{\mu,-}|_{y_3=Y_3=0} +\partial_\theta \psi^{\mu+1} \, \ub \, = \, G^{\mu-1} \, ,
\end{cases}
\quad \mu \, = \, 1,\dots,m \, ,
\end{equation*}
have a solution in $S^\pm \times H^\infty ([0,T] \times \bT^2 \times \bT)$. Using Theorem \ref{theorem_fast_problem}, this means that 
the source terms $(F^{0,\pm},G^0)$, \dots, $(F^{\, m-1,\pm},G^{\, m-1})$ satisfy the solvability conditions \eqref{compatibilite_pb_rapide}. 
This will be used in some calculations below. Observe that at this point the source terms $(F^{\, m,\pm},G^m)$ do not satisfy all solvability 
conditions \eqref{compatibilite_pb_rapide}. They only satisfy \eqref{compatibilite_pb_rapide_b} (because of \eqref{inductionHm5}), 
\eqref{compatibilite_pb_rapide_c} (because of \eqref{inductionHm6}) and \eqref{compatibilite_pb_rapide_e} (because of 
\eqref{inductionHm7}).

In order to prove that $H(m+1)$ is satisfied, we first need to show that we can solve the fast problem \eqref{fast_problem_m+1} below, 
which corresponds to enforcing $(H(m+1)-2)$. Using Theorem \ref{theorem_fast_problem} as well as the above remark, being able to 
solve \eqref{fast_problem_m+1} reduces to verifying that the source terms $(F^{\, m,\pm},G^m)$ satisfy the solvability conditions 
\eqref{compatibilite_pb_rapide_a} (compatibility at the boundary) and \eqref{compatibilite_pb_rapide_d} (compatibility for the divergence 
of the magnetic field). The verification of these two remaining solvability conditions is the first task to achieve in the induction process, 
and this will already determine a large part of the corrector $U^{\, m+1,\pm}$. Note that verifying \eqref{compatibilite_pb_rapide_a} and 
\eqref{compatibilite_pb_rapide_d} for $(F^{\, m,\pm},G^m)$ is independent of the mean $\widehat{\psi}^{\, m+1}(0)$. This function as well 
as the remaining degrees of freedom in $U^{\, m+1,\pm}$ will be used to enforce the relations $(H(m+1)-3)$, $(H(m+1)-5)$, $(H(m+1)-6)$, 
$(H(m+1)-7)$.

\section{Reduction to (almost) homogeneous equations}

In order to show that $H(m)$ implies $H(m+1)$, the first task is to solve the fast problem:
\begin{equation}
\label{fast_problem_m+1}
\begin{cases}
\cL_f^\pm(\partial) \, \bU^{\, m+1,\pm} \, = \, F^{\, m,\pm} \, ,& y \in \Omega_0^\pm \, ,\, \pm Y_3 >0 \, ,\\
\p_{Y_3} \cH_3^{\, m+1,\pm} +\xi_j \, \p_\theta \cH_j^{\, m+1,\pm} \, = \, F_8^{\, m,\pm} \, ,& y \in \Omega_0^\pm \, ,\, \pm Y_3 >0 \, ,\\
B^+ \, \bU^{\, m+1,+}|_{y_3=Y_3=0} +B^- \, \bU^{\, m+1,-}|_{y_3=Y_3=0} \, = \, G^m \, ,
\end{cases}
\end{equation}
which is possible if and only if the source terms satisfy the solvability conditions \eqref{compatibilite_pb_rapide}. Here we keep 
the notation of Theorem \ref{theorem_fast_problem} and look for a particular solution to the fast problem \eqref{fast_problem_m+1} 
of the form $\bU^{\, m+1,\pm} =(\cU^{\, m+1,\pm},\cH^{\, m+1,\pm},\cQ^{\, m+1,\pm})^T$ with no front in the jump conditions on $\Gamma_0$. 
We recall that the verification of \eqref{compatibilite_pb_rapide} is partly included in \eqref{inductionHm5}, \eqref{inductionHm6} 
and \eqref{inductionHm7}, but we still need to verify that the source terms in \eqref{fast_problem_m+1} satisfy the conditions 
\eqref{compatibilite_pb_rapide_a} and \eqref{compatibilite_pb_rapide_d}.

There is one point to keep in mind for later. Though the verification of \eqref{compatibilite_pb_rapide_a} and \eqref{compatibilite_pb_rapide_d} 
for the fast problem \eqref{fast_problem_m+1} will be independent of the mean $\widehat{\psi}^{\, m+1}(0)$ (because it only involves 
$\p_\theta G^m$), the source term $G^m$ does involve $\widehat{\psi}^{\, m+1}(0)$ and therefore the solution $\bU^{\, m+1,\pm}$ to 
\eqref{fast_problem_m+1} must depend on $\widehat{\psi}^{\, m+1}(0)$. We shall go back to this after proving Lemma 
\ref{lem_compatibilite_bord} and Lemma \ref{lem_compatibilite_div} below.

We start with the compatibility condition \eqref{compatibilite_pb_rapide_a} at the boundary $\Gamma_0$.

\begin{lemma}[Compatibility of the source terms at the boundary]
\label{lem_compatibilite_bord}
Under the induction assumption $H(m)$, there holds:
\begin{equation}
\label{lem_compatibilite_relation}
F_6^{\, m,\pm}|_{y_3=Y_3=0} \, = \, -b^\pm \, \p_\theta G_1^{\, m,\pm} +c^\pm \, \p_\theta G_2^{\, m,\pm} \, .
\end{equation}
\end{lemma}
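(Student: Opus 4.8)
The plan is to verify the scalar identity \eqref{lem_compatibilite_relation} by tracing through the definitions of the two sides and reducing everything to the jump conditions that are already encoded in the induction hypothesis $H(m)$. First I would recall that the sixth component $F_6^{\,m,\pm}$ is extracted from the lengthy formula \eqref{s3-def_terme_source_F^m,pm}; since the sixth line of all the matrices $A_\alpha^\pm$ and of the Hessians $\bA_\alpha$ can be read off from Appendix \ref{appendixA}, the trace $F_6^{\,m,\pm}|_{y_3=Y_3=0}$ is a concrete combination of $\p_\theta$- and $\p_{y_j}$-derivatives of the components $u^{\ell,\pm}, H^{\ell,\pm}$ (evaluated at $y_3=Y_3=0$), of the front profiles $\psi^\ell$, and of the $\chi^{[\ell]},\dot\chi^{[\ell]}$ (which equal $1$, $0$, $\chi(y_3)\chi'(y_3)\psi^2$, \dots\ at $y_3=0$, so the $\chi'$-terms drop at $y_3=0$ because $\chi'\equiv 0$ near $0$). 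Symmetrically, from \eqref{s3-def_G_1^m,pm}, \eqref{s3-def_G_2^m,pm} one writes $\p_\theta G_1^{\,m,\pm}$ and $\p_\theta G_2^{\,m,\pm}$ explicitly in terms of the same traces. The goal is then to see that $-b^\pm\,\p_\theta G_1^{\,m,\pm}+c^\pm\,\p_\theta G_2^{\,m,\pm}$ reproduces exactly $F_6^{\,m,\pm}|_{y_3=Y_3=0}$.

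The key structural mechanism I would use is the following: the sixth line of the interior equation $\cL_f^\pm(\p) U^{\,m+1,\pm}=F^{\,m,\pm}$, \emph{i.e.} $-b^\pm\,\p_\theta u_3^{\,m+1,\pm}+c^\pm\,\p_\theta H_3^{\,m+1,\pm}=F_6^{\,m,\pm}$, holds by $(H(m+1)\text{-}2)$ — but we do not have that yet; instead I would argue at one level down. Apply $-b^\pm\,\p_\theta$ to the first scalar jump condition in \eqref{s3-cascade_BKW_bord_simplifiee_m_geq_2} (the one giving $u_3^{\,m+1,\pm}-c^\pm\,\p_\theta\psi^{\,m+2}=G_1^{\,m,\pm}$) and $c^\pm\,\p_\theta$ to the second (the one giving $H_3^{\,m+1,\pm}-b^\pm\,\p_\theta\psi^{\,m+2}=G_2^{\,m,\pm}$), then subtract: the $\psi^{\,m+2}$-terms cancel because $-b^\pm\cdot(-c^\pm)+c^\pm\cdot(-b^\pm)\,\p_\theta^2\psi^{\,m+2}=0$, leaving
\[
-b^\pm\,\p_\theta u_3^{\,m+1,\pm}\big|_{y_3=Y_3=0}+c^\pm\,\p_\theta H_3^{\,m+1,\pm}\big|_{y_3=Y_3=0}
\;=\;-b^\pm\,\p_\theta G_1^{\,m,\pm}+c^\pm\,\p_\theta G_2^{\,m,\pm}.
\]
So it remains to identify the left-hand side with $F_6^{\,m,\pm}|_{y_3=Y_3=0}$. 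That identification is precisely the sixth line of the fast system at stage $m$ — which \emph{is} available — evaluated at the boundary, up to the $\chi^{[\ell]},\dot\chi^{[\ell]}$ and $\p_{y_3}$ corrections in \eqref{s3-def_terme_source_F^m,pm}. Here I would invoke the algebraic identities of Appendix \ref{appendixB} relating $\chi^{[\ell]},\dot\chi^{[\ell]}$ and the $\psi^\mu$: at $y_3=0$ the cut-off satisfies $\chi(y_3)=1$, $\chi'(y_3)=0$ to all orders, so $\chi^{[\ell]}|_{y_3=0}=\delta_{\ell 0}$ and $\dot\chi^{[\ell]}|_{y_3=0}=0$, which collapses all the multi-index sums in the sixth line of \eqref{s3-def_terme_source_F^m,pm} to their `flat' counterparts — exactly the terms that appear in $G_1^{\,m,\pm},G_2^{\,m,\pm}$ after the $\p_\theta$-manipulation above. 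One also uses the sixth line of $L_s^\pm(\p)$ and of the $\bA_\alpha$ (see Appendix \ref{appendixA}): the $\p_{y_3}$-normal-derivative contributions to $F_6^{\,m,\pm}$ are multiplied by the sixth row of $A_3^\pm$, which is $(0,\ldots,0)$ in the slots hitting $u_3,H_3$ in the relevant way, so those terms either vanish at the boundary or combine consistently.

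Carrying this out, the proof reduces to matching two explicit finite sums term by term. The main obstacle I anticipate is bookkeeping: the sixth line of \eqref{s3-def_terme_source_F^m,pm} contains many quadratic $\bA_j,\bA_3$ contributions and $\chi^{[\ell]}$-weighted transport terms, and one must check that every one of them has a counterpart coming from $\p_\theta$ of the quadratic sums in $G_1^{\,m,\pm},G_2^{\,m,\pm}$, including the subtle point that $\p_\theta\psi^{\ell_1}$-factors and the `$\ell_2\ge1$' restrictions in the $G$'s line up with the corresponding restrictions in the $F$'s line. I expect the cleanest way to organize this is to note that the sixth row of $A_3^\pm$ is $-b^\pm e_3^T+c^\pm e_6^T$ acting after the tangential reductions — i.e.\ the map $U\mapsto -b^\pm U_3+c^\pm U_6$ — and likewise the sixth row of $\cA^\pm$, $A_j^\pm$, $\bA_j$, $\bA_3$ are all of the form $-b^\pm(\text{third row})+c^\pm(\text{sixth row})$ of the flux derivatives; granting that, the identity \eqref{lem_compatibilite_relation} becomes the statement that $-b^\pm\,\p_\theta(\text{third line})+c^\pm\,\p_\theta(\text{sixth line})$ of the WKB cascade equation for $U^{\,m+1,\pm}$, restricted to $y_3=Y_3=0$, agrees with what the jump conditions force — which is what the cancellation of $\p_\theta^2\psi^{\,m+2}$ displayed above delivers. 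So the real content is purely the linear-algebra bookkeeping in Appendix \ref{appendixA} plus the $\chi^{[\ell]}|_{y_3=0}$ collapse from Appendix \ref{appendixB}, and I would present it as a short computation citing those two appendices rather than reproducing the full algebra.
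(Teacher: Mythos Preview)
Your argument is circular. The displayed identity
\[
-b^\pm\,\p_\theta u_3^{\,m+1,\pm}\big|_{y_3=Y_3=0}+c^\pm\,\p_\theta H_3^{\,m+1,\pm}\big|_{y_3=Y_3=0}
\;=\;-b^\pm\,\p_\theta G_1^{\,m,\pm}+c^\pm\,\p_\theta G_2^{\,m,\pm}
\]
is obtained by differentiating the jump conditions \eqref{s3-cascade_BKW_bord_simplifiee_m_geq_2} at level $m+1$, and you then want to identify the left-hand side with $F_6^{\,m,\pm}|_{y_3=Y_3=0}$ via the sixth line of the fast equation $\cL_f^\pm(\p)\,U^{\,m+1,\pm}=F^{\,m,\pm}$. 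Both ingredients belong to $(H(m+1)\text{-}2)$, which has not been established: the corrector $U^{\,m+1,\pm}$ does not exist yet. Lemma \ref{lem_compatibilite_bord} is precisely one of the solvability conditions \eqref{compatibilite_pb_rapide_a} that must hold \emph{before} Theorem \ref{theorem_fast_problem} can be invoked to construct $U^{\,m+1,\pm}$. You flag this yourself (``but we do not have that yet; instead I would argue at one level down''), yet the argument you then write still uses the level-$(m+1)$ jump conditions. The matrix-row claim in your last paragraph is also wrong: the sixth row of $A_3^\pm$ is identically zero, not $-b^\pm e_3^T+c^\pm e_6^T$.

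The paper's proof works only with $H(m)$. One writes out $F_6^{\,m,\pm}|_{y_3=Y_3=0}$ directly from \eqref{s3-def_terme_source_F^m,pm}; your observation that $\chi^{[\ell]}|_{y_3=0}=\delta_{\ell 0}$ and $\dot\chi^{[\ell]}|_{y_3=0}=0$ is correct and is used here. Into the resulting expression one substitutes the fast divergence constraints \eqref{s3-cascade_div_u^m+1,pm_div_H^m+1,pm} at levels $\le m$, and then the boundary conditions at levels $\le m$ from $(H(m)\text{-}2)$; this eliminates every normal derivative $\p_{Y_3},\p_{y_3}$ and leaves an expression involving only tangential derivatives of the traces $u_3^{\ell,\pm}|_0$, $H_3^{\ell,\pm}|_0$ for $\ell\le m$. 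That expression is then compared term by term with the explicit expansion of $-b^\pm\,\p_\theta G_1^{\,m,\pm}+c^\pm\,\p_\theta G_2^{\,m,\pm}$ from \eqref{s3-def_G_1^m,pm}--\eqref{s3-def_G_2^m,pm}. The final matching requires an auxiliary identity for the tangential components, namely an expression for $b^\pm\,\p_\theta u_j^{\mu,\pm}-c^\pm\,\p_\theta H_j^{\mu,\pm}$ at levels $\mu\le m$, itself derived from the already-solved fast problems \eqref{inductionHm2}. The computation is laborious but uses nothing beyond $H(m)$.
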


\begin{proof}[Proof of Lemma \ref{lem_compatibilite_bord}]
The proof of Lemma \ref{lem_compatibilite_bord} follows from rather elementary algebraic manipulations, which 
we make explicit for the sake of completeness. We recall that by the induction assumption $H(m)$, the profiles 
$(U^{\, 1,\pm},\psi^2)$, $\dots$, $(U^{\, m,\pm},\psi^{\, m+1}_\sharp)$ are given, and we then compute the source term 
$F^{\, m,\pm}$ from the definition \eqref{s3-def_terme_source_F^m,pm}. The boundary source term $G^m$ is 
defined by \eqref{terme_source_bord_BKW}. Once again, the expressions of $G_1^{\, m,\pm}$, $G_2^{\, m,\pm}$ 
do depend on the mean of $\psi^{\, m+1}$ with respect to $\theta$, but the partial derivatives $\p_\theta G_1^{\, m,\pm}$ 
and $\p_\theta G_2^{\, m,\pm}$ do not. It follows from inspection of \eqref{s3-def_terme_source_F^m,pm} that the 
source term $F^{\, m,\pm}$ is uniquely defined in terms of $\psi^{\, m+1}_\sharp$. Hence the fulfillment of Lemma 
\ref{lem_compatibilite_bord} is independent of how we shall determine the mean of $\psi^{\, m+1}$ (which will be 
done in the following Section).

The verification of \eqref{lem_compatibilite_relation} is split in several steps. We first simplify the expression of 
the double trace $F_6^{\, m,\pm}|_{y_3=Y_3=0}$ by using the fast divergence constraints that follow from the previous 
steps in the induction. After these reductions, the expression of $F_6^{\, m,\pm}|_{y_3=Y_3=0}$ will only involve 
tangential derivatives of the traces of $u_3^{\mu,\pm},H_3^{\mu,\pm}$, $\mu=1,\dots,m$, with respect to $\Gamma_0$. 
The verification of \eqref{lem_compatibilite_relation} will then follow from differentiation of the jump conditions on 
$\Gamma_0$ at the previous steps of the induction. To make the expressions below easier to read, we omit in all 
the proof of Lemma \ref{lem_compatibilite_bord} the superscripts $\pm$. This is of no consequence since the 
expressions are absolutely identical on either side of the boundary $\Gamma_0$.
\bigskip

$\bullet$ \underline{Preliminary reductions}. We start from the general definition \eqref{s3-def_terme_source_F^m,pm}, of which 
we compute the sixth coordinate (see Appendix \ref{appendixA} for details). We then compute the double trace on $y_3=Y_3=0$, 
the main effect of which being that the cut-off functions $\chi^{[\ell]}$ vanish if $\ell \ge 1$, and $\chi^{[0]}$ equals $1$ at $y_3=0$. 
We also have $\dot{\chi}^{[\ell]}|_{y_3=0}=0$ for all $\ell \ge 0$, see \eqref{formule_chim} and \eqref{formule_chipointm} in 
Appendix \ref{appendixB}. We then get the expression:
\begin{equation}
\label{terme_source_F_6^m,pm-1}
\begin{aligned}
-F_6^m|_{y_3=Y_3=0} \, = \, & \, \big( \p_t +u_j^0 \, \p_{y_j} \big) H_3^m -H_j^0 \, \p_{y_j} u_3^m 
+\sum_{\ell_1+\ell_2=m+2} \p_\theta \psi^{\ell_1} \, \big( b \, \p_{Y_3} u_3^{\ell_2} -c \, \p_{Y_3} H_3^{\ell_2} \big) \\
& \, +\sum_{\ell_1+\ell_2=m+1} \p_\theta \psi^{\ell_1} \, \big( b \, \p_{y_3} u_3^{\ell_2} -c \, \p_{y_3} H_3^{\ell_2} \big) \\
& \, +\sum_{\ell_1+\ell_2=m+1} 
H_j^0 \, \p_{y_j} \psi^{\ell_1} \, \p_{Y_3} u_3^{\ell_2} -\big( \p_t +u_j^0 \, \p_{y_j} \big) \psi^{\ell_1} \, \p_{Y_3} H_3^{\ell_2} \\
& \, +\sum_{\ell_1+\ell_2=m} 
H_j^0 \, \p_{y_j} \psi^{\ell_1} \, \p_{y_3} u_3^{\ell_2} -\big( \p_t +u_j^0 \, \p_{y_j} \big) \psi^{\ell_1} \, \p_{y_3} H_3^{\ell_2} \\
& \, +\sum_{\substack{\ell_1+\ell_2=m+1 \\ \ell_1 \ge 1}} 
\xi_j \, u_j^{\ell_1} \, \p_\theta H_3^{\ell_2} +{\color{blue} H_3^{\ell_1} \, \xi_j \, \p_\theta u_j^{\ell_2}} 
-\xi_j \, H_j^{\ell_1} \, \p_\theta u_3^{\ell_2} -{\color{red} u_3^{\ell_1} \, \xi_j \, \p_\theta H_j^{\ell_2}} \\
& \, +\sum_{\substack{\ell_1+\ell_2=m \\ \ell_1 \ge 1}} 
u_j^{\ell_1} \, \p_{y_j} H_3^{\ell_2} +{\color{blue} H_3^{\ell_1} \, \p_{y_j} u_j^{\ell_2}} 
-H_j^{\ell_1} \, \p_{y_j} u_3^{\ell_2} -{\color{red} u_3^{\ell_1} \, \p_{y_j} H_j^{\ell_2}} \\
& \, +\sum_{\substack{\ell_1+\ell_2+\ell_3=m+2 \\ \ell_2 \ge 1}} \p_\theta \psi^{\ell_1} \, \big( 
{\color{red} u_3^{\ell_2} \, \xi_j \, \p_{Y_3} H_j^{\ell_3}} +\xi_j \, H_j^{\ell_2} \, \p_{Y_3} u_3^{\ell_3} 
-{\color{blue} H_3^{\ell_2} \, \xi_j \, \p_{Y_3} u_j^{\ell_3}} -\xi_j \, u_j^{\ell_2} \, \p_{Y_3} H_3^{\ell_3} \big) \\
& \, +\sum_{\substack{\ell_1+\ell_2+\ell_3=m+1 \\ \ell_2 \ge 1}} \p_{y_j} \psi^{\ell_1} \, \big( 
{\color{red} u_3^{\ell_2} \, \p_{Y_3} H_j^{\ell_3}} +H_j^{\ell_2} \, \p_{Y_3} u_3^{\ell_3} 
-{\color{blue} H_3^{\ell_2} \, \p_{Y_3} u_j^{\ell_3}} -u_j^{\ell_2} \, \p_{Y_3} H_3^{\ell_3} \big) \\
& \, +\sum_{\substack{\ell_1+\ell_2+\ell_3=m+1 \\ \ell_2 \ge 1}} \p_\theta \psi^{\ell_1} \, \big( 
{\color{red} u_3^{\ell_2} \, \xi_j \, \p_{y_3} H_j^{\ell_3}} +\xi_j \, H_j^{\ell_2} \, \p_{y_3} u_3^{\ell_3}
-{\color{blue} H_3^{\ell_2} \, \xi_j \, \p_{y_3} u_j^{\ell_3}} -\xi_j \, u_j^{\ell_2} \, \p_{y_3} H_3^{\ell_3} \big) \\
& \, +\sum_{\substack{\ell_1+\ell_2+\ell_3=m \\ \ell_2 \ge 1}} \p_{y_j} \psi^{\ell_1} \, \big( 
{\color{red} u_3^{\ell_2} \, \p_{y_3} H_j^{\ell_3}} +H_j^{\ell_2} \, \p_{y_3} u_3^{\ell_3} 
-{\color{blue} H_3^{\ell_2} \, \p_{y_3} u_j^{\ell_3}} -u_j^{\ell_2} \, \p_{y_3} H_3^{\ell_3} \big) \, ,
\end{aligned}
\end{equation}
where it is understood that all functions on the right hand side of \eqref{terme_source_F_6^m,pm-1} are evaluated at $y_3=Y_3=0$. 
We use the induction assumption $H(m)$, and more precisely the fast divergence constraints on the velocity and magnetic field 
included in \eqref{inductionHm2}. This simplifies accordingly the blue and red terms in \eqref{terme_source_F_6^m,pm-1}, and 
we get:
\begin{equation}
\label{terme_source_F_6^m,pm-2}
\begin{aligned}
-F_6^m|_{y_3=Y_3=0} \, = \, & \, \big( \p_t +u_j^0 \, \p_{y_j} \big) H_3^m -H_j^0 \, \p_{y_j} u_3^m 
+\sum_{\substack{\ell_1+\ell_2=m+1 \\ \ell_1 \ge 1}} 
\xi_j \, u_j^{\ell_1} \, \p_\theta H_3^{\ell_2} -\xi_j \, H_j^{\ell_1} \, \p_\theta u_3^{\ell_2} \\
& \, +\sum_{\substack{\ell_1+\ell_2=m \\ \ell_1 \ge 1}} 
u_j^{\ell_1} \, \p_{y_j} H_3^{\ell_2} -H_j^{\ell_1} \, \p_{y_j} u_3^{\ell_2} \\
& \, +\sum_{\ell_1+\ell_2=m+2} 
\p_\theta \psi^{\ell_1} \, \big( b \, {\color{ForestGreen} \p_{Y_3} u_3^{\ell_2}} -c \, {\color{magenta} \p_{Y_3} H_3^{\ell_2}} \big) 
+\sum_{\ell_1+\ell_2=m+1} \p_\theta \psi^{\ell_1} \, \big( b \, {\color{blue} \p_{y_3} u_3^{\ell_2}} -c \, {\color{orange} \p_{y_3} H_3^{\ell_2}} \big) \\
& \, +\sum_{\ell_1+\ell_2=m+1} 
H_j^0 \, \p_{y_j} \psi^{\ell_1} \, {\color{ForestGreen} \p_{Y_3} u_3^{\ell_2}} 
-\big( \p_t +u_j^0 \, \p_{y_j} \big) \psi^{\ell_1} \, {\color{magenta} \p_{Y_3} H_3^{\ell_2}} \\
& \, +\sum_{\ell_1+\ell_2=m} H_j^0 \, \p_{y_j} \psi^{\ell_1} \, {\color{blue} \p_{y_3} u_3^{\ell_2}} 
-\big( \p_t +u_j^0 \, \p_{y_j} \big) \psi^{\ell_1} \, {\color{orange} \p_{y_3} H_3^{\ell_2}} \\
& \, +\sum_{\substack{\ell_1+\ell_2=m+1 \\ \ell_1 \ge 1}} 
u_3^{\ell_1} \, {\color{magenta} \p_{Y_3} H_3^{\ell_2}} -H_3^{\ell_1} \, {\color{ForestGreen} \p_{Y_3} u_3^{\ell_2}} 
+\sum_{\substack{\ell_1+\ell_2=m \\ \ell_1 \ge 1}} 
u_3^{\ell_1} \, {\color{orange} \p_{y_3} H_3^{\ell_2}} -H_3^{\ell_1} \, {\color{blue} \p_{y_3} u_3^{\ell_2}} \\
& \, +\sum_{\substack{\ell_1+\ell_2+\ell_3=m+2 \\ \ell_2 \ge 1}} \p_\theta \psi^{\ell_1} \, \big( 
\xi_j \, H_j^{\ell_2} \, {\color{ForestGreen} \p_{Y_3} u_3^{\ell_3}} -\xi_j \, u_j^{\ell_2} \, {\color{magenta} \p_{Y_3} H_3^{\ell_3}} \big) \\
& \, +\sum_{\substack{\ell_1+\ell_2+\ell_3=m+1 \\ \ell_2 \ge 1}} \p_{y_j} \psi^{\ell_1} \, \big( 
H_j^{\ell_2} \, {\color{ForestGreen} \p_{Y_3} u_3^{\ell_3}} -u_j^{\ell_2} \, {\color{magenta} \p_{Y_3} H_3^{\ell_3}} \big) \\
& \, +\sum_{\substack{\ell_1+\ell_2+\ell_3=m+1 \\ \ell_2 \ge 1}} \p_\theta \psi^{\ell_1} \, \big( 
\xi_j \, H_j^{\ell_2} \, {\color{blue} \p_{y_3} u_3^{\ell_3}} -\xi_j \, u_j^{\ell_2} \, {\color{orange} \p_{y_3} H_3^{\ell_3}} \big) \\
& \, +\sum_{\substack{\ell_1+\ell_2+\ell_3=m \\ \ell_2 \ge 1}} \p_{y_j} \psi^{\ell_1} \, \big( 
H_j^{\ell_2} \, {\color{blue} \p_{y_3} u_3^{\ell_3}} -u_j^{\ell_2} \, {\color{orange} \p_{y_3} H_3^{\ell_3}} \big) \, .
\end{aligned}
\end{equation}
At this stage, we may use the boundary condition in \eqref{inductionHm2} to further simplify all normal derivatives. For instance 
we have highlighted in green all the fast normal derivatives $\p_{Y_3} u_3^{\ell_2}|_{y_3=Y_3=0}$, which are simplified by 
using $H_3^{\ell_1}|_{y_3=Y_3=0}=G_2^{\ell_1-1}$ for $\ell_1 \le m$. Other groups of terms that simplify are highlighted in 
pink, blue and orange. Relabeling the terms when necessary, we find that all normal derivatives in \eqref{terme_source_F_6^m,pm-2} 
disappear thanks to the fulfillment of the boundary conditions in \eqref{inductionHm2}. In other words, we are reduced to the 
amazingly simple expression:
\begin{align}
-F_6^m|_{y_3=Y_3=0} \, = \, & \, \big( \p_t +u_j^0 \, \p_{y_j} \big) H_3^m -H_j^0 \, \p_{y_j} u_3^m \notag \\
& \, +\sum_{\substack{\ell_1+\ell_2=m+1 \\ \ell_1 \ge 1}} 
\xi_j \, u_j^{\ell_1} \, \p_\theta H_3^{\ell_2} -\xi_j \, H_j^{\ell_1} \, \p_\theta u_3^{\ell_2} 
+\sum_{\substack{\ell_1+\ell_2=m \\ \ell_1 \ge 1}} 
u_j^{\ell_1} \, \p_{y_j} H_3^{\ell_2} -H_j^{\ell_1} \, \p_{y_j} u_3^{\ell_2} \, ,\label{terme_source_F_6^m,pm-3}
\end{align}
and in \eqref{terme_source_F_6^m,pm-3}, there are only tangential derivatives ($\p_t$, $\p_{y_j}$ or $\p_\theta$) with respect to 
the boundary $\{ y_3=Y_3=0 \}$. This means that we can differentiate the boundary conditions in \eqref{inductionHm2} and then 
substitute in \eqref{terme_source_F_6^m,pm-3} in order to further transform the expression of $F_6^m|_{y_3=Y_3=0}$. This 
substitution process is performed below.
\bigskip

$\bullet$ \underline{Step 1}. The linear terms. We split the right hand side of \eqref{terme_source_F_6^m,pm-3} in three pieces. 
The first piece gathers the `linear' terms. Using the boundary conditions in \eqref{inductionHm2}, we get:
\begin{align}
\cF_1 \, := \, \big( \p_t +u_j^0 \, \p_{y_j} \big) H_3^m -H_j^0 \, \p_{y_j} u_3^m \, = \, 
& \, {\color{magenta} \big( \p_t +u_j^0 \, \p_{y_j} \big) (b \, \p_\theta \psi^{\, m+1}) -H_j^0 \, \p_{y_j} (c \, \p_\theta \psi^{\, m+1})} \notag \\
& +\sum_{\ell_1+\ell_2=m+1} 
\xi_{j'} \, \p_\theta \psi^{\ell_1} \, \Big( \big( \p_t +u_j^0 \, \p_{y_j} \big) H_{j'}^{\ell_2} -H_j^0 \, \p_{y_j} u_{j'}^{\ell_2} \Big) \notag \\
& \, +\sum_{\ell_1+\ell_2=m} 
\p_{y_{j'}} \psi^{\ell_1} \, \Big( \big( \p_t +u_j^0 \, \p_{y_j} \big) H_{j'}^{\ell_2} -H_j^0 \, \p_{y_j} u_{j'}^{\ell_2} \Big) \label{terme_source_F_6^m,pm-4} \\
& \, +{\color{blue} \sum_{\substack{\ell_1+\ell_2=m+1 \\ \ell_2 \ge 1}} 
\big( \p_t +u_j^0 \, \p_{y_j} \big) \p_\theta \psi^{\ell_1} \, \xi_{j'} \, H_{j'}^{\ell_2} -H_j^0 \, \p_{y_j} \p_\theta \psi^{\ell_1} \, \xi_{j'} \, u_{j'}^{\ell_2}} \notag \\
& \, +\sum_{\substack{\ell_1+\ell_2=m \\ \ell_2 \ge 1}} 
\big( \p_t +u_j^0 \, \p_{y_j} \big)\p_{y_{j'}} \psi^{\ell_1} \, H_{j'}^{\ell_2} \, -H_j^0 \, \p_{y_j}\p_{y_{j'}} \psi^{\ell_1} \, u_{j'}^{\ell_2} \, .\notag
\end{align}
The first (pink) line in the above decomposition \eqref{terme_source_F_6^m,pm-4} of $\cF_1$ will directly contribute to 
$b \, \p_\theta G_1^m -c \, \p_\theta G_2^m$. The blue terms are highlighted in view of future cancellation. We now focus 
on the quadratic terms in the second line of \eqref{terme_source_F_6^m,pm-3}, which we are going to simplify with the blue 
term on the right hand side of \eqref{terme_source_F_6^m,pm-4}.
\bigskip

$\bullet$ \underline{Step 2}. The quadratic terms. (Fast derivatives.) Let us go on splitting the right hand side of 
\eqref{terme_source_F_6^m,pm-3} and use the boundary conditions in \eqref{inductionHm2} to get:
\begin{align}
\cF_2 \, := \, \sum_{\substack{\ell_1+\ell_2=m+1 \\ \ell_1 \ge 1}} 
\xi_j \, u_j^{\ell_1} \, \p_\theta H_3^{\ell_2} -\xi_j \, H_j^{\ell_1} \, \p_\theta u_3^{\ell_2} \, = \, 
& \, {\color{magenta} \sum_{\substack{\ell_1+\ell_2=m+2 \\ \ell_2 \ge 1}} \p_\theta^2 \psi^{\ell_1} \, 
\big( b \, \xi_j \, u_j^{\ell_2} -c \, \xi_j \, H_j^{\ell_2} \big)} \notag \\
+ \, & \, \sum_{\substack{\ell_1+\ell_2+\ell_3=m+2 \\ \ell_2 \ge 1}} \xi_{j'} \, \p_\theta \psi^{\ell_1} \, 
\big( \xi_j \, u_j^{\ell_2} \, \p_\theta H_{j'}^{\ell_3} -\xi_j \, H_j^{\ell_2} \, \p_\theta u_{j'}^{\ell_3} \big) \notag \\
+ \, & \, \sum_{\substack{\ell_1+\ell_2+\ell_3=m+1 \\ \ell_2 \ge 1}}\p_{y_{j'}} \psi^{\ell_1} \, 
\big( \xi_j \, u_j^{\ell_2} \, \p_\theta H_{j'}^{\ell_3} -\xi_j \, H_j^{\ell_2} \, \p_\theta u_{j'}^{\ell_3} \big) \label{terme_source_F_6^m,pm-5} \\
+ \, & \, \sum_{\substack{\ell_1+\ell_2+\ell_3=m+1 \\ \ell_2,\ell_3 \ge 1}}\p_{y_{j'}} \p_\theta \psi^{\ell_1} \, 
\big( \xi_j \, u_j^{\ell_2} \, H_{j'}^{\ell_3} -\xi_j \, H_j^{\ell_2} \, u_{j'}^{\ell_3} \big) \notag \\
+ \, & {\color{blue} \sum_{\substack{\ell_1+\ell_2=m+1 \\ \ell_2 \ge 1}} 
H_j^0 \, \p_{y_j} \p_\theta \psi^{\ell_1} \, \xi_{j'} \, u_{j'}^{\ell_2} 
-\big( \p_t +u_j^0 \, \p_{y_j} \big) \p_\theta \psi^{\ell_1} \, \xi_{j'} \, H_{j'}^{\ell_2}}. \notag 
\end{align}
The blue term in \eqref{terme_source_F_6^m,pm-4} simplifies with the last line in the decomposition \eqref{terme_source_F_6^m,pm-5} 
(also highlighted in blue). We thus get:
\begin{equation}
\label{terme_source_F_6^m,pm-6}
\begin{aligned}
\cF_1 +\cF_2 \, = \, 
& \, {\color{magenta} \big( \p_t +u_j^0 \, \p_{y_j} \big) (b \, \p_\theta \psi^{\, m+1}) -H_j^0 \, \p_{y_j} (c \, \p_\theta \psi^{\, m+1})} 
+{\color{magenta} \sum_{\substack{\ell_1+\ell_2=m+2 \\ \ell_2 \ge 1}} \p_\theta^2 \psi^{\ell_1} \, 
\big( b \, \xi_j \, u_j^{\ell_2} -c \, \xi_j \, H_j^{\ell_2} \big)} \\
& \, +\sum_{\ell_1+\ell_2=m+1} 
\xi_{j'} \, \p_\theta \psi^{\ell_1} \, \Big( \big( \p_t +u_j^0 \, \p_{y_j} \big) H_{j'}^{\ell_2} -H_j^0 \, \p_{y_j} u_{j'}^{\ell_2} \Big) \\
& +\sum_{\ell_1+\ell_2=m} 
\p_{y_{j'}} \psi^{\ell_1} \, \Big( \big( \p_t +u_j^0 \, \p_{y_j} \big) H_{j'}^{\ell_2} -H_j^0 \, \p_{y_j} u_{j'}^{\ell_2} \Big) \\
& \, +{\color{ForestGreen} \sum_{\substack{\ell_1+\ell_2=m \\ \ell_2 \ge 1}} 
\big( \p_t +u_j^0 \, \p_{y_j} \big)\p_{y_{j'}} \psi^{\ell_1} \, H_{j'}^{\ell_2} -H_j^0 \, \p_{y_j}\p_{y_{j'}} \psi^{\ell_1} \, u_{j'}^{\ell_2}} \\
& \, +\sum_{\substack{\ell_1+\ell_2+\ell_3=m+2 \\ \ell_2 \ge 1}} \xi_{j'} \, \p_\theta \psi^{\ell_1} \, 
\big( \xi_j \, u_j^{\ell_2} \, \p_\theta H_{j'}^{\ell_3} -\xi_j \, H_j^{\ell_2} \, \p_\theta u_{j'}^{\ell_3} \big) \\
& \, +\sum_{\substack{\ell_1+\ell_2+\ell_3=m+1 \\ \ell_2 \ge 1}}\p_{y_{j'}} \psi^{\ell_1} \, 
\big( \xi_j \, u_j^{\ell_2} \, \p_\theta H_{j'}^{\ell_3} -\xi_j \, H_j^{\ell_2} \, \p_\theta u_{j'}^{\ell_3} \big) \\
& \, +{\color{ForestGreen} \sum_{\substack{\ell_1+\ell_2+\ell_3=m+1 \\ \ell_2,\ell_3 \ge 1}}\p_{y_{j'}} \p_\theta \psi^{\ell_1} \, 
\big( \xi_j \, u_j^{\ell_2} \, H_{j'}^{\ell_3} -\xi_j \, H_j^{\ell_2} \, u_{j'}^{\ell_3} \big)} \, .
\end{aligned}
\end{equation}
Some terms in $\cF_1 +\cF_2$ are highlighted in green in view of further simplification with what remains of the right hand side 
of \eqref{terme_source_F_6^m,pm-3}.
\bigskip

$\bullet$ \underline{Step 3}. The quadratic terms. (Slow derivatives.) We now consider the last term in \eqref{terme_source_F_6^m,pm-3}, 
that is:
\begin{equation*}
\cF_3 \, := \, \sum_{\substack{\ell_1+\ell_2=m \\ \ell_1 \ge 1}} 
u_j^{\ell_1} \, \p_{y_j} H_3^{\ell_2} -H_j^{\ell_1} \, \p_{y_j} u_3^{\ell_2} \, ,
\end{equation*}
so that with our previous definitions, \eqref{terme_source_F_6^m,pm-3} reads:
$$
-F_6^m|_{y_3=Y_3=0} \, = \, \cF_1 +\cF_2 +\cF_3 \, .
$$
We simplify the expression of $\cF_3$ by adding it with the green terms on the right hand side of \eqref{terme_source_F_6^m,pm-6} 
and by using again the boundary conditions in \eqref{inductionHm2}. We get:
\begin{equation}
\label{terme_source_F_6^m,pm-7}
\begin{aligned}
\cF_3 \, + \, & \, \sum_{\substack{\ell_1+\ell_2=m \\ \ell_2 \ge 1}} 
\big( \p_t +u_j^0 \, \p_{y_j} \big)\p_{y_{j'}} \psi^{\ell_1} \, H_{j'}^{\ell_2} -H_j^0 \, \p_{y_j}\p_{y_{j'}} \psi^{\ell_1} \, u_{j'}^{\ell_2} \\
+ \, & \, \sum_{\substack{\ell_1+\ell_2+\ell_3=m+1 \\ \ell_2,\ell_3 \ge 1}}\p_{y_{j'}} \p_\theta \psi^{\ell_1} \, 
\big( \xi_j \, u_j^{\ell_2} \, H_{j'}^{\ell_3} -\xi_j \, H_j^{\ell_2} \, u_{j'}^{\ell_3} \big) \\
= \, & \, {\color{magenta} \sum_{\substack{\ell_1+\ell_2=m+1 \\ \ell_2 \ge 1}} \p_{y_j} \p_\theta \psi^{\ell_1} \, 
\big( b \, u_j^{\ell_2} -c \, H_j^{\ell_2} \big)} \\
+ \, & \, \sum_{\substack{\ell_1+\ell_2+\ell_3=m+1 \\ \ell_2 \ge 1}} \xi_{j'} \, \p_\theta \psi^{\ell_1} \, 
\big( u_j^{\ell_2} \, \p_{y_j} H_{j'}^{\ell_3} -H_j^{\ell_2} \, \p_{y_j} u_{j'}^{\ell_3} \big) \\
+ \, & \, \sum_{\substack{\ell_1+\ell_2+\ell_3=m \\ \ell_2 \ge 1}}\p_{y_{j'}} \psi^{\ell_1} \, 
\big( u_j^{\ell_2} \, \p_{y_j} H_{j'}^{\ell_3} -H_j^{\ell_2} \, \p_{y_j} u_{j'}^{\ell_3} \big) \, .
\end{aligned}
\end{equation}

Collecting \eqref{terme_source_F_6^m,pm-6} and \eqref{terme_source_F_6^m,pm-7}, we have derived so far the expression:
\begin{equation}
\label{terme_source_F_6^m,pm-8}
\begin{aligned}
-F_6^m|_{y_3=Y_3=0} \, = \, & \, {\color{magenta} \big( \p_t +u_j^0 \, \p_{y_j} \big) (b \, \p_\theta \psi^{\, m+1}) 
-H_j^0 \, \p_{y_j} (c \, \p_\theta \psi^{\, m+1})} \\
& \, + {\color{magenta} \sum_{\substack{\ell_1+\ell_2=m+2 \\ \ell_2 \ge 1}} \p_\theta^2 \psi^{\ell_1} \, 
\big( b \, \xi_j \, u_j^{\ell_2} -c \, \xi_j \, H_j^{\ell_2} \big)} 
+{\color{magenta} \sum_{\substack{\ell_1+\ell_2=m+1 \\ \ell_2 \ge 1}} \p_{y_j} \p_\theta \psi^{\ell_1} \, 
\big( b \, u_j^{\ell_2} -c \, H_j^{\ell_2} \big)} \\
& \, +\sum_{\ell_1+\ell_2=m+1} 
\xi_{j'} \, \p_\theta \psi^{\ell_1} \, \Big( \big( \p_t +u_j^0 \, \p_{y_j} \big) H_{j'}^{\ell_2} -H_j^0 \, \p_{y_j} u_{j'}^{\ell_2} \Big) \\
& \, +\sum_{\ell_1+\ell_2=m} 
\p_{y_{j'}} \psi^{\ell_1} \, \Big( \big( \p_t +u_j^0 \, \p_{y_j} \big) H_{j'}^{\ell_2} -H_j^0 \, \p_{y_j} u_{j'}^{\ell_2} \Big) \\
& \, +\sum_{\substack{\ell_1+\ell_2+\ell_3=m+2 \\ \ell_2 \ge 1}} \xi_{j'} \, \p_\theta \psi^{\ell_1} \, 
\big( \xi_j \, u_j^{\ell_2} \, \p_\theta H_{j'}^{\ell_3} -\xi_j \, H_j^{\ell_2} \, \p_\theta u_{j'}^{\ell_3} \big) \\
& \, +\sum_{\substack{\ell_1+\ell_2+\ell_3=m+1 \\ \ell_2 \ge 1}}\p_{y_{j'}} \psi^{\ell_1} \, 
\big( \xi_j \, u_j^{\ell_2} \, \p_\theta H_{j'}^{\ell_3} -\xi_j \, H_j^{\ell_2} \, \p_\theta u_{j'}^{\ell_3} \big) \\
& \, +\sum_{\substack{\ell_1+\ell_2+\ell_3=m+1 \\ \ell_2 \ge 1}} \xi_{j'} \, \p_\theta \psi^{\ell_1} \, 
\big( u_j^{\ell_2} \, \p_{y_j} H_{j'}^{\ell_3} -H_j^{\ell_2} \, \p_{y_j} u_{j'}^{\ell_3} \big) \\
& \, +\sum_{\substack{\ell_1+\ell_2+\ell_3=m \\ \ell_2 \ge 1}}\p_{y_{j'}} \psi^{\ell_1} \, 
\big( u_j^{\ell_2} \, \p_{y_j} H_{j'}^{\ell_3} -H_j^{\ell_2} \, \p_{y_j} u_{j'}^{\ell_3} \big) \, .
\end{aligned}
\end{equation}
\bigskip

$\bullet$ \underline{Step 4}. Conclusion. Going back to the definitions \eqref{s3-def_G_1^m,pm}, \eqref{s3-def_G_2^m,pm}, we compute:
\begin{align}
b \, \p_\theta G_1^m -c \, \p_\theta G_2^m \, = \, & \, {\color{magenta} 
\big( \p_t +u_j^0 \, \p_{y_j} \big) (b \, \p_\theta \psi^{\, m+1}) -H_j^0 \, \p_{y_j} (c \, \p_\theta \psi^{\, m+1})} \notag \\
& \, +{\color{magenta} \sum_{\substack{\ell_1+\ell_2=m+2 \\ \ell_2 \ge 1}} \p_\theta^2 \psi^{\ell_1} \, 
\big( b \, \xi_j \, u_j^{\ell_2} -c \, \xi_j \, H_j^{\ell_2} \big)} +\sum_{\ell_1+\ell_2=m+2} \p_\theta \psi^{\ell_1} \, 
\big( b \, \xi_j \, \p_\theta u_j^{\ell_2} -c \, \xi_j \, \p_\theta H_j^{\ell_2} \big) \notag \\
& \, +{\color{magenta} \sum_{\substack{\ell_1+\ell_2=m+1 \\ \ell_2 \ge 1}} \p_{y_j} \p_\theta \psi^{\ell_1} \, 
\big( b \, u_j^{\ell_2} -c \, H_j^{\ell_2} \big)} 
+\sum_{\ell_1+\ell_2=m+1} \p_{y_j} \psi^{\ell_1} \, \big( b \, \p_\theta u_j^{\ell_2} -c \, \p_\theta H_j^{\ell_2} \big) \, .\label{terme_source_F_6^m,pm-9}
\end{align}
Comparing the decomposition \eqref{terme_source_F_6^m,pm-9} with the first two lines in \eqref{terme_source_F_6^m,pm-8} (identical 
terms are highlighted in pink), we see that the proof of the relation \eqref{lem_compatibilite_relation} reduces to showing the identity:
\begin{align}
{\color{Cyan} \sum_{\ell_1+\ell_2=m+2} \xi_j \, \p_\theta \psi^{\ell_1} \, \big( b \, \p_\theta u_j^{\ell_2} -c \, \p_\theta H_j^{\ell_2} \big)} \, 
+ \, & \, \sum_{\ell_1+\ell_2=m+1} \p_{y_j} \psi^{\ell_1} \, \big( b \, \p_\theta u_j^{\ell_2} -c \, \p_\theta H_j^{\ell_2} \big) \notag \\
= \, & \, {\color{Cyan} \sum_{\ell_1+\ell_2=m+1} \xi_j \, \p_\theta \psi^{\ell_1} \, \Big( \big( \p_t +u_{j'}^0 \,\p_{y_{j'}} \big) H_j^{\ell_2} 
-H_{j'}^0 \,\p_{y_{j'}} u_j^{\ell_2} \Big)} \notag \\
& \, +\sum_{\ell_1+\ell_2=m} \p_{y_j} \psi^{\ell_1} \, \Big( \big( \p_t +u_{j'}^0 \,\p_{y_{j'}} \big) H_j^{\ell_2} -H_{j'}^0 \,\p_{y_{j'}} u_j^{\ell_2} \Big) \notag \\
& \, +{\color{Cyan} \sum_{\substack{\ell_1+\ell_2+\ell_3=m+2 \\ \ell_2 \ge 1}} \xi_j \, \xi_{j'} \, \p_\theta \psi^{\ell_1} \, 
\big( u_j^{\ell_2} \, \p_\theta H_{j'}^{\ell_3} -H_j^{\ell_2} \, \p_\theta u_{j'}^{\ell_3} \big)} \label{terme_source_F_6^m,pm-final} \\
& \, +\sum_{\substack{\ell_1+\ell_2+\ell_3=m+1 \\ \ell_2 \ge 1}} \p_{y_j} \psi^{\ell_1} \, 
\big( \xi_{j'} \, u_{j'}^{\ell_2} \, \p_\theta H_j^{\ell_3} -\xi_{j'} \, H_{j'}^{\ell_2} \, \p_\theta u_j^{\ell_3} \big) \notag \\
& \, +{\color{Cyan} \sum_{\substack{\ell_1+\ell_2+\ell_3=m+1 \\ \ell_2 \ge 1}} \xi_j \, \p_\theta \psi^{\ell_1} \, 
\big( u_{j'}^{\ell_2} \,\p_{y_{j'}} H_j^{\ell_3} -H_{j'}^{\ell_2} \,\p_{y_{j'}} u_j^{\ell_3} \big)} \notag \\
& \, +\sum_{\substack{\ell_1+\ell_2+\ell_3=m \\ \ell_2 \ge 1}} \p_{y_j} \psi^{\ell_1} \, 
\big( u_{j'}^{\ell_2} \,\p_{y_{j'}} H_j^{\ell_3} -H_{j'}^{\ell_2} \,\p_{y_{j'}} u_j^{\ell_3} \big) \, .\notag
\end{align}

As we are now going to show, the latter identity \eqref{terme_source_F_6^m,pm-final} is a consequence of the relation:
\begin{align}
\forall \, j \, = \, 1,2 \, ,\quad \forall \, \mu \, = \, 1,\dots,m \, ,\quad b \, \p_\theta u_j^\mu -c \, \p_\theta H_j^\mu \, = \, 
& \, \big( \p_t +u_{j'}^0 \,\p_{y_{j'}} \big) H_j^{\mu-1} -H_{j'}^0 \,\p_{y_{j'}} u_j^{\mu-1} \notag \\
& \, +\sum_{\substack{\ell_1+\ell_2=\mu \\ \ell_1 \ge 1}} 
\xi_{j'} \, u_{j'}^{\ell_1} \, \p_\theta H_j^{\ell_2} -\xi_{j'} \, H_{j'}^{\ell_1} \, \p_\theta u_j^{\ell_2} \label{terme_source_F_6^m,pm-10} \\
& \, +\sum_{\substack{\ell_1+\ell_2=\mu-1 \\ \ell_1 \ge 1}} 
u_{j'}^{\ell_1} \,\p_{y_{j'}} H_j^{\ell_2} -H_{j'}^{\ell_1} \,\p_{y_{j'}} u_j^{\ell_2} \, ,\notag
\end{align}
which holds on the boundary $\{ y_3=Y_3=0 \}$. Indeed, the relation \eqref{terme_source_F_6^m,pm-10} can be proved in 
a similar way as we have derived the relation \eqref{terme_source_F_6^m,pm-3}. (Observe the similarity between the right 
hand sides in those formulas.) Namely, we consider the fourth and fifth equations in the fast problems \eqref{inductionHm2}, 
take the double trace $y_3=Y_3=0$ and use the fast divergence constraints on the velocity and magnetic field. We then use 
the boundary conditions in \eqref{inductionHm2}, which make again all normal derivatives disappear. We feel free to skip the 
details of these calculations since they are really similar to those that can be found above for deriving 
\eqref{terme_source_F_6^m,pm-3}. Eventually, we get the relation \eqref{terme_source_F_6^m,pm-10} for the tangential 
components of the velocity and magnetic field.

Substituting the expression of $b \, \p_\theta u_j^{\ell_2} -c \, \p_\theta H_j^{\ell_2}$ in \eqref{terme_source_F_6^m,pm-10} 
on the left hand side of \eqref{terme_source_F_6^m,pm-final}, we see for instance that the light blue term  on the left of 
\eqref{terme_source_F_6^m,pm-final} can be decomposed as the sum of all three light blue terms on the right. The black 
term on the left of \eqref{terme_source_F_6^m,pm-final} can be also decomposed as the sum of the three black terms on 
the right, which completes the proof of the validity of \eqref{terme_source_F_6^m,pm-final}, and therefore of 
\eqref{lem_compatibilite_relation}. This means that the source terms in \eqref{fast_problem_m+1} satisfy the solvability 
condition \eqref{compatibilite_pb_rapide_a}.
\end{proof}

\noindent We are now going to prove the validity of the condition \eqref{compatibilite_pb_rapide_d} for the source terms of the 
fast problem \eqref{fast_problem_m+1}.

\begin{lemma}[Compatibility of the divergence source term]
\label{lem_compatibilite_div}
Under the induction assumption $H(m)$, there holds:
\begin{equation}
\label{lem_compatibilite_relation'}
\p_{Y_3} F_6^{\, m,\pm} +\xi_j \, \p_\theta F_{3+j}^{\, m,\pm} -\tau \, \p_\theta F_8^{\, m,\pm} \, = \, 0 \, .
\end{equation}
\end{lemma}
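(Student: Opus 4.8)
The plan is to establish the identity \eqref{lem_compatibilite_relation'} as a direct consequence of the very construction of the profiles, exactly as in Lemma \ref{lem_compatibilite_bord}, but this time in the interior rather than on the boundary $\{y_3=Y_3=0\}$. Recall that $F_6^{\, m,\pm}$, $F_{3+j}^{\, m,\pm}$ ($j=1,2$) and $F_8^{\, m,\pm}$ are the sixth, $(3+j)$-th and eighth components of the source terms in the WKB cascade, given explicitly by \eqref{s3-def_terme_source_F^m,pm} and by the right-hand side of \eqref{s3-cascade_div_H^m+1,pm}. The origin of \eqref{lem_compatibilite_relation'} is structural: it is the WKB-cascade analogue of the fact, recalled after \eqref{comp_fast_3}, that there is no Lagrange multiplier associated with the divergence constraint on the magnetic field, this constraint being propagated by the ``curl'' form of the evolution equation for $H$. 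Concretely, applying $\xi_1\,\p_\theta$ to the fourth equation of the cascade \eqref{dev_BKW_int} at order $m$, $\xi_2\,\p_\theta$ to the fifth equation, and $\p_{Y_3}$ to the sixth equation, and adding the three resulting identities, the left-hand sides combine (using the explicit form of the matrices $A_3^\pm$, $\cA^\pm$ in Appendix \ref{appendixA}, exactly as in the computation leading to \eqref{comp_fast_3}) into $\tau\,\p_\theta\big(\p_{Y_3}\cH_3^{\, m+1,\pm}+\xi_j\,\p_\theta\cH_j^{\, m+1,\pm}\big)$, while the right-hand sides combine into $\p_{Y_3}F_6^{\, m,\pm}+\xi_j\,\p_\theta F_{3+j}^{\, m,\pm}$. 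But by the induction hypothesis \eqref{inductionHm2}, the profiles $U^{\, 1,\pm},\dots,U^{\, m,\pm}$ and $\psi^2,\dots,\psi^{\, m+1}_\sharp$ already satisfy the cascade up to order $m-1$, which in particular gives the eighth equation \eqref{s3-cascade_div_H^m+1,pm'} at orders $0,\dots,m-1$, i.e. $\p_{Y_3}\cH_3^{\mu,\pm}+\xi_j\,\p_\theta\cH_j^{\mu,\pm}=F_8^{\mu-1,\pm}$ for $\mu=1,\dots,m$; hence the quantity $\p_{Y_3}\cH_3^{\, m,\pm}+\xi_j\,\p_\theta\cH_j^{\, m,\pm}$ equals $F_8^{\, m-1,\pm}$, and differentiating in $\theta$ and matching orders of $\eps$ yields precisely $\p_{Y_3}F_6^{\, m,\pm}+\xi_j\,\p_\theta F_{3+j}^{\, m,\pm}=\tau\,\p_\theta F_8^{\, m,\pm}$.

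The details to be checked are therefore purely algebraic and combinatorial, and mirror those of Lemma \ref{lem_compatibilite_bord}, with two simplifications: we do not take a trace, so no cut-off functions $\chi^{[\ell]}$, $\dot\chi^{[\ell]}$ get frozen, and no boundary conditions \eqref{inductionHm2} are invoked --- only the fast divergence constraints \eqref{s3-cascade_div_u^m+1,pm_div_H^m+1,pm} at the previous steps of the induction. First I would write out, from \eqref{s3-def_terme_source_F^m,pm}, the three components $F_6^{\, m,\pm}$, $F_{3+j}^{\, m,\pm}$, $F_8^{\, m,\pm}$ using the explicit sixth, $(3+j)$-th, and eighth lines of the matrices $A_0$, $A_\alpha^\pm$, $\cA^\pm$ and of the Hessian operators $\bA_\alpha$ recorded in Appendix \ref{appendixA}; here the key observation is that the eighth line (the $\dv H$ constraint) is linear, so $\bA_\alpha$ contributes nothing to $F_8^{\, m,\pm}$, while in $F_6^{\, m,\pm}$ and $F_{3+j}^{\, m,\pm}$ the quadratic terms organize themselves, after applying $\p_{Y_3}$ and $\xi_j\,\p_\theta$, into exact derivatives that telescope by the symmetry of the $\bA_\alpha$'s (this is exactly why the conservative form \eqref{s3-MHD_conservative} was chosen). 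Next I would form the combination $\p_{Y_3}F_6^{\, m,\pm}+\xi_j\,\p_\theta F_{3+j}^{\, m,\pm}-\tau\,\p_\theta F_8^{\, m,\pm}$ term by term, grouping the contributions by the number of $\psi$-factors and by which slow/fast derivative they carry, and I would use \eqref{s3-cascade_div_u^m+1,pm} and \eqref{s3-cascade_div_H^m+1,pm} (with indices $\le m-1$) to rewrite every occurrence of $\p_{Y_3}u_3^{\ell,\pm}+\xi_j\,\p_\theta u_j^{\ell,\pm}$ and $\p_{Y_3}H_3^{\ell,\pm}+\xi_j\,\p_\theta H_j^{\ell,\pm}$ in terms of lower-order source data, producing the required cancellations. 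The combinatorial bookkeeping of the nested convolution sums $\sum_{\ell_1+\cdots+\ell_r=m+\ast}$ --- shifting indices, using that $\psi^0=\psi^1\equiv0$, and matching the constraints such as $\ell_i\ge1$ --- is handled exactly as in the proof of Lemma \ref{lem_compatibilite_bord}, and the algebraic relations between the profiles $\chi^{[\ell]},\dot\chi^{[\ell]}$ and $\psi^\mu$ needed when slow normal derivatives $\p_{y_3}$ are present are those collected in Appendix \ref{appendixB}.

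The main obstacle I expect is not conceptual but organizational: matching up, with the correct signs and index ranges, the many convolution terms involving the straightening profiles $\chi^{[\ell_1]}$ and $\dot\chi^{[\ell_1]}$ --- which appear in \eqref{s3-def_terme_source_F^m,pm} precisely because of the cut-off $\chi$ built into the slow normal variable --- and verifying that the extra terms they generate after applying $\p_{Y_3}$ and $\xi_j\,\p_\theta$ cancel among themselves or against the $\p_{y_3}$-derivative terms, using the identities of Appendix \ref{appendixB} (in particular the explicit formulas for $\chi^{[m]},\dot\chi^{[m]}$ in terms of $\psi^2,\dots,\psi^m$ and the relation between $\dot\chi^{[\ell]}$ and $\p_{y_3}\chi^{[\ell]}$). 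Once this bookkeeping is under control, the identity \eqref{lem_compatibilite_relation'} follows, and together with Lemma \ref{lem_compatibilite_bord} it shows that the source terms $(F^{\, m,\pm},G^m)$ of the fast problem \eqref{fast_problem_m+1} satisfy all the solvability conditions \eqref{compatibilite_pb_rapide}, so that \eqref{fast_problem_m+1} can be solved by Theorem \ref{theorem_fast_problem}; this is the step that establishes $(H(m+1)-2)$ and allows the induction to proceed.
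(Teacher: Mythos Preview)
Your opening structural argument is circular. The cascade equation \eqref{dev_BKW_int} at order $m$ reads $\cL_f^\pm U^{m+1,\pm}=F^{m,\pm}$, and $U^{m+1,\pm}$ has not yet been constructed --- verifying \eqref{lem_compatibilite_relation'} is precisely one of the solvability conditions \eqref{compatibilite_pb_rapide_d} that must hold \emph{before} Theorem \ref{theorem_fast_problem} can produce $U^{m+1,\pm}$. What you wrote re-derives \eqref{comp_fast_3} as a \emph{necessary} condition assuming a solution exists; it does not verify that the specific source terms \eqref{s3-def_terme_source_F^m,pm}, \eqref{s3-cascade_div_H^m+1,pm} satisfy it. The clause ``differentiating in $\theta$ and matching orders of $\eps$'' does not bridge this gap: the induction hypothesis gives the identity only at orders $0,\dots,m-1$, and nothing there promotes it to order $m$.

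Your second and third paragraphs do sketch the correct route --- direct computation from the explicit formulas, substitution via the fast problems at earlier orders, and the identities of Appendix \ref{appendixB} --- and this is exactly the paper's approach. But you substantially underestimate the work. The ``two simplifications'' you claim do not hold: one needs the full interior equations of \eqref{inductionHm2} (for instance the relation $b\,\p_\theta u_3^\ell-c\,\p_\theta H_3^\ell=-F_6^{\ell-1}$), not just the fast divergence constraints; and because no trace is taken, the cut-off profiles $\chi^{[\ell]},\dot\chi^{[\ell]}$ persist throughout and their interaction with $\p_{y_3}$ generates many cross-terms that do not telescope by the symmetry of the $\bA_\alpha$'s alone. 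The paper's proof takes seven steps, and closure hinges on two nontrivial identities stated only within that proof --- the first symmetry formula (Lemma \ref{lem_symmetry_1}) and the second symmetry formula (Lemma \ref{lem_symmetry_2}, via the vanishing of the quantities $\bX_\theta^\ell,\bX_t^\ell,\bX_j^\ell$ of \eqref{defbXell}) --- whose proofs in Appendix \ref{appendixB} rely on the Lagrange inversion formula and the combinatorial identities \eqref{propB1-id'}, \eqref{propB1-id''}. Without naming and invoking these explicitly, the bookkeeping you describe cannot be completed.
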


\begin{proof}[Proof of Lemma \ref{lem_compatibilite_div}]
We start by recalling the expressions of the source terms $F_{3+\alpha}^{\, m,\pm}$ and $F_8^{\, m,\pm}$. In all the proof, we omit 
to recall the superscripts $\pm$ in order to clarify the expressions below. It should be understood of course that the calculations 
are done on either side of the interface $\Gamma_0$. The source term $F_8^m$ associated with the fast divergence constraint 
on the magnetic field reads:
\begin{align}
F_8^m \, = \, -\nabla \cdot H^m \, 
& \, +\sum_{\ell_1+\ell_2=m+2} \p_\theta \psi^{\ell_1} \, {\color{ForestGreen} \p_{Y_3} \xi_j \, H_j^{\ell_2}} 
+\sum_{\ell_1+\ell_2=m+1} \p_{y_j} \psi^{\ell_1} \, \p_{Y_3} H_j^{\ell_2} \notag \\
& \, +\sum_{\ell_1+\ell_2+\ell_3=m+1} \chi^{[\ell_1]} \, \p_\theta \psi^{\ell_2} \, {\color{ForestGreen} \p_{y_3} \xi_j \, H_j^{\ell_3}} 
+\sum_{\ell_1+\ell_2+\ell_3=m} \chi^{[\ell_1]} \, \p_{y_j} \psi^{\ell_2} \, \p_{y_3} H_j^{\ell_3} \label{expressionF8m} \\
& \, +\sum_{\ell_1+\ell_2+\ell_3=m} \dot{\chi}^{[\ell_1]} \, \psi^{\ell_2} \, \p_{y_3} H_3^{\ell_3} \, ,\notag
\end{align}
while the tangential source terms $F_{3+j}^m$ read:
\begin{align}
-F_{3+j}^m \, = \, & \, 
\big( \p_t +u_{j'}^0 \,\p_{y_{j'}} \big) H_j^m -H_{j'}^0 \,\p_{y_{j'}} u_j^m -u_j^0 \, \nabla \cdot H^m +H_j^0 \, \nabla \cdot u^m \notag \\
& \, +\sum_{\ell_1+\ell_2=m+2} \p_\theta \psi^{\ell_1} \, \big( {\color{ForestGreen} b \, \p_{Y_3} u_j^{\ell_2} -c \, \p_{Y_3} H_j^{\ell_2} 
-H_j^0 \, \p_{Y_3} \xi_{j'} \, u_{j'}^{\ell_2} +u_j^0 \, \p_{Y_3} \xi_{j'} \, H_{j'}^{\ell_2}} \big) \notag \\
& \, +\sum_{\ell_1+\ell_2=m+1}\p_{y_{j'}} \psi^{\ell_1} \, \big( H_{j'}^0 \, \p_{Y_3} u_j^{\ell_2} -u_{j'}^0 \, \p_{Y_3} H_j^{\ell_2} 
-H_j^0 \, \p_{Y_3} u_{j'}^{\ell_2} +u_j^0 \, \p_{Y_3} H_{j'}^{\ell_2} \big) \notag \\
& \, -\sum_{\ell_1+\ell_2=m+1} \p_t \psi^{\ell_1} \, \p_{Y_3} H_j^{\ell_2} \notag \\
& \, +\sum_{\ell_1+\ell_2+\ell_3=m+1} \chi^{[\ell_1]} \, \p_\theta \psi^{\ell_2} \, \big( {\color{ForestGreen} b \, \p_{y_3} u_j^{\ell_3} 
-c \, \p_{y_3} H_j^{\ell_3} -H_j^0 \, \p_{y_3} \xi_{j'} \, u_{j'}^{\ell_3} +u_j^0 \, \p_{y_3} \xi_{j'} \, H_{j'}^{\ell_3}} \big) \notag \\
& \, +\sum_{\ell_1+\ell_2+\ell_3=m} \chi^{[\ell_1]} \,\p_{y_{j'}} \psi^{\ell_2} \, \big( H_{j'}^0 \, \p_{y_3} u_j^{\ell_3} 
-u_{j'}^0 \, \p_{y_3} H_j^{\ell_3} -H_j^0 \, \p_{y_3} u_{j'}^{\ell_3} +u_j^0 \, \p_{y_3} H_{j'}^{\ell_3} \big) \notag \\
& \, -\sum_{\ell_1+\ell_2+\ell_3=m} \chi^{[\ell_1]} \,\p_t \psi^{\ell_2} \, \p_{y_3} H_j^{\ell_3} 
+\sum_{\ell_1+\ell_2+\ell_3=m} \dot{\chi}^{[\ell_1]} \, \psi^{\ell_2} \, 
\big( u_j^0 \, \p_{y_3} H_3^{\ell_3} -H_j^0 \, \p_{y_3} u_3^{\ell_3} \big) \notag \\
& \, +\sum_{\substack{\ell_1+\ell_2=m+1 \\ \ell_1,\ell_2 \ge 1}} 
{\color{red} \p_\theta \big( \xi_{j'} \, u_{j'}^{\ell_1} \, H_j^{\ell_2} -\xi_{j'} \, H_{j'}^{\ell_1} \, u_j^{\ell_2} \big)} 
+{\color{blue} \p_{Y_3} \big( u_3^{\ell_1} \, H_j^{\ell_2} -H_3^{\ell_1} \, u_j^{\ell_2} \big)} \label{expressionF3+jm} \\
& \, +\sum_{\substack{\ell_1+\ell_2=m \\ \ell_1,\ell_2 \ge 1}} 
\nabla \cdot \big( H_j^{\ell_1} \, u^{\ell_2} -u_j^{\ell_1} \, H^{\ell_2} \big) \notag \\
& \, +\sum_{\substack{\ell_1+\ell_2+\ell_3=m+2 \\ \ell_2,\ell_3 \ge 1}} \p_\theta \psi^{\ell_1} \, 
{\color{red} \xi_{j'} \, \p_{Y_3} \big( u_j^{\ell_2} \, H_{j'}^{\ell_3} -H_j^{\ell_2} \, u_{j'}^{\ell_3} \big)} 
+\sum_{\substack{\ell_1+\ell_2+\ell_3=m+1 \\ \ell_2,\ell_3 \ge 1}}\p_{y_{j'}} \psi^{\ell_1} \, 
\p_{Y_3} \big( u_j^{\ell_2} \, H_{j'}^{\ell_3} -H_j^{\ell_2} \, u_{j'}^{\ell_3} \big) \notag \\
& \, +\sum_{\substack{\ell_1+\cdots+\ell_4=m+1 \\ \ell_3,\ell_4 \ge 1}} \chi^{[\ell_1]} \, \p_\theta \psi^{\ell_2} 
\, {\color{red} \p_{y_3} \big( u_j^{\ell_3} \, \xi_{j'} \, H_{j'}^{\ell_4} -H_j^{\ell_3} \, \xi_{j'} \, u_{j'}^{\ell_4} \big)} \notag \\
& \, +\sum_{\substack{\ell_1+\cdots+\ell_4=m \\ \ell_3,\ell_4 \ge 1}} \chi^{[\ell_1]} \,\p_{y_{j'}} \psi^{\ell_2} \, 
\p_{y_3} \big( u_j^{\ell_3} \, H_{j'}^{\ell_4} -H_j^{\ell_3} \, u_{j'}^{\ell_4} \big) \notag \\
& \, +\sum_{\substack{\ell_1+\cdots+\ell_4=m \\ \ell_3,\ell_4 \ge 1}} \dot{\chi}^{[\ell_1]} \, \psi^{\ell_2} \, 
\p_{y_3} \big( u_j^{\ell_3} \, H_3^{\ell_4} -H_j^{\ell_3} \, u_3^{\ell_4} \big) \, .\notag
\end{align}
In \eqref{expressionF3+jm} and in all what follows, the symbol $\nabla \cdot$ refers to the divergence with respect to the $y$ variable. 
Eventually, the normal source term $F_6^m$ reads:
\begin{align}
-F_6^m \, = \, & \, \big( \p_t +u_j^0 \, \p_{y_j} \big) H_3^m -H_j^0 \, \p_{y_j} u_3^m 
+\sum_{\ell_1+\ell_2=m+2} \p_\theta \psi^{\ell_1} \, \big( b \, \p_{Y_3} u_3^{\ell_2} -c \, \p_{Y_3} H_3^{\ell_2} \big) \notag \\
& \, +\sum_{\ell_1+\ell_2=m+1} H_j^0 \, \p_{y_j} \psi^{\ell_1} \, \p_{Y_3} u_3^{\ell_2} 
-\big( \p_t +u_j^0 \, \p_{y_j} \big) \psi^{\ell_1} \, \p_{Y_3} H_3^{\ell_2} \notag \\
& \, +\sum_{\ell_1+\ell_2+\ell_3=m+1}  \chi^{[\ell_1]} \, \p_\theta \psi^{\ell_2} \, 
\big( b \, \p_{y_3} u_3^{\ell_3} -c \, \p_{y_3} H_3^{\ell_3} \big) \notag \\
& \, +\sum_{\ell_1+\ell_2+\ell_3=m} \chi^{[\ell_1]} \, \Big( H_j^0 \, \p_{y_j} \psi^{\ell_2} \, \p_{y_3} u_3^{\ell_3} 
-\big( \p_t +u_j^0 \, \p_{y_j} \big) \psi^{\ell_2} \, \p_{y_3} H_3^{\ell_3} \Big) \notag \\
& \, +\sum_{\substack{\ell_1+\ell_2=m+1 \\ \ell_1,\ell_2 \ge 1}} {\color{blue} \xi_j \, 
\p_\theta \big( u_j^{\ell_1} \, H_3^{\ell_2} -H_j^{\ell_1} \, u_3^{\ell_2} \big)} 
+\sum_{\substack{\ell_1+\ell_2=m \\ \ell_1,\ell_2 \ge 1}} 
\p_{y_j} \big( u_j^{\ell_1} \, H_3^{\ell_2} -H_j^{\ell_1} \, u_3^{\ell_2} \big) \label{expressionF6m} \\
& \, +\sum_{\substack{\ell_1+\ell_2+\ell_3=m+2 \\ \ell_2,\ell_3 \ge 1}} \p_\theta \psi^{\ell_1} \, \xi_j \, 
\p_{Y_3} \big( u_3^{\ell_2} \, H_j^{\ell_3} -H_3^{\ell_2} \, u_j^{\ell_3} \big) 
+\sum_{\substack{\ell_1+\ell_2+\ell_3=m+1 \\ \ell_2,\ell_3 \ge 1}} \p_{y_j} \psi^{\ell_1} \, 
\p_{Y_3} \big( u_3^{\ell_2} \, H_j^{\ell_3} -H_3^{\ell_2} \, u_j^{\ell_3} \big) \notag \\
& \, +\sum_{\substack{\ell_1+\cdots+\ell_4=m+1 \\ \ell_3,\ell_4 \ge 1}} \chi^{[\ell_1]} \, \p_\theta \psi^{\ell_2} 
\, \xi_j \, \p_{y_3} \big( u_3^{\ell_3} \, H_j^{\ell_4} -H_3^{\ell_3} \, u_j^{\ell_4} \big) \notag \\
& \, +\sum_{\substack{\ell_1+\cdots+\ell_4=m \\ \ell_3,\ell_4 \ge 1}} \chi^{[\ell_1]} \, \p_{y_j} \psi^{\ell_2} \, 
\p_{y_3} \big( u_3^{\ell_3} \, H_j^{\ell_4} -H_3^{\ell_3} \, u_j^{\ell_4} \big) \, .\notag
\end{align}

We now define the quantity:
$$
\cD^m \, := \, \p_{Y_3} F_6^m +\xi_j \, \p_\theta F_{3+j}^m -\tau \, \p_\theta F_8^m \, ,
$$
and the goal of course is to show that $\cD^m$ vanishes under the induction assumption $H(m)$. The calculation 
splits again in several steps.
\bigskip

$\bullet$ \underline{Step 1}. Collecting the terms.

Let us first observe that the above highlighted blue terms in $F_{3+\alpha}^m$ contribute to zero in the calculation of 
$\cD^m$ (since they correspond to a fast curl of which we take the fast divergence). We also note that the red terms 
in $F_{3+j}^m$ drop out for symmetry reasons when we compute the linear combination $\xi_j \, F_{3+j}^m$, and the 
green terms also drop out when we compute $\xi_j \, F_{3+j}^m-\tau \, F_8^m$. For the following calculations, it is also 
useful to recall that the profiles $\psi^\mu$ only depend on $(t,y',\theta)$, the functions $\chi^{[\ell]}$ depend on $(t,y,\theta)$, 
and the profiles $U^{\ell,\pm}$ depend on all variables $(t,y,Y_3,\theta)$. Using the fast problems \eqref{inductionHm2} 
that are satisfied at the previous steps of the induction, we are left (!) with the following expression for $\cD^m$:
\begin{align}
\cD^m \, & = \, {\color{VioletRed} -\p_t F_8^{\, m-1} +\p_{y_\alpha} F_{3+\alpha}^{\, m-1}} 
-{\color{ForestGreen} \sum_{\ell_1+\ell_2=m+1} \p_\theta \psi^{\ell_1} \, \p_{Y_3} \big( \xi_j \, F_{3+j}^{\ell_2} -\tau \, F_8^{\ell_2} \big)} \notag \\
& \, +{\color{blue} \sum_{\ell_1+\ell_2=m+1} \p_t \p_\theta \psi^{\ell_1} \, \xi_j \, \p_{Y_3} H_j^{\ell_2}} 
+{\color{Orange} \sum_{\ell_1+\ell_2=m} \p_t \psi^{\ell_1} \, \p_{Y_3} F_8^{\ell_2}} \notag \\
& \, +{\color{blue} \sum_{\ell_1+\ell_2 =m+1} \p_{y_j} \p_\theta \psi^{\ell_1} \, \big( u_j^0 \, \xi_{j'} \, \p_{Y_3} H_{j'}^{\ell_2} 
-H_j^0 \, \xi_{j'} \, \p_{Y_3} u_{j'}^{\ell_2} +b \, \p_{Y_3} u_j^{\ell_2} -c \, \p_{Y_3} H_j^{\ell_2} \big)} \notag \\
& \, -{\color{Mahogany} \sum_{\ell_1+\ell_2 =m} \p_{y_j} \psi^{\ell_1} \, \p_{Y_3} F_{3+j}^{\ell_2}} 
-\sum_{\ell_1+\ell_2+\ell_3=m} \chi^{[\ell_1]} \, \p_\theta \psi^{\ell_2} \, 
\p_{y_3} \big( \xi_j \, F_{3+j}^{\ell_3} -\tau \, F_8^{\ell_3} \big) \notag \\
& \, +{\color{blue} \sum_{\ell_1+\ell_2+\ell_3=m} \chi^{[\ell_1]} \, \p_t \p_\theta \psi^{\ell_2} \, \xi_j \, \p_{y_3} H_j^{\ell_3}} 
+\sum_{\ell_1+\ell_2+\ell_3=m-1} \chi^{[\ell_1]} \, \p_t \psi^{\ell_2} \, \p_{y_3} F_8^{\ell_3} \notag \\
& \, +{\color{blue} \sum_{\ell_1+\ell_2+\ell_3=m} \chi^{[\ell_1]} \, \p_{y_j} \p_\theta \psi^{\ell_2} \, \Big( 
u_j^0 \, \xi_{j'} \, \p_{y_3} H_{j'}^{\ell_3} -H_j^0 \, \xi_{j'} \, \p_{y_3} u_{j'}^{\ell_3} 
+b \, \p_{y_3} u_j^{\ell_3} -c \, \p_{y_3} H_j^{\ell_3} \Big)} \notag \\
& \, -\sum_{\ell_1+\ell_2+\ell_3=m-1} \chi^{[\ell_1]} \, \p_{y_j} \psi^{\ell_2} \, \p_{y_3} F_{3+j}^{\ell_3} \notag \\
& \, +{\color{VioletRed} \sum_{\substack{\ell_1+\ell_2=m \\ \ell_1,\ell_2 \ge 1}} \p_{y_j} \p_{Y_3} \big( 
H_j^{\ell_1} \, u_3^{\ell_2} -u_j^{\ell_1} \, H_3^{\ell_2} \big)} 
+{\color{VioletRed} \sum_{\substack{\ell_1+\ell_2=m \\ \ell_1,\ell_2 \ge 1}} \nabla \cdot \p_\theta \big( 
\xi_j \, u_j^{\ell_1} \, H^{\ell_2} -\xi_j \, H_j^{\ell_1} \, u^{\ell_2} \big)} \notag \\
& \, +{\color{ForestGreen} \sum_{\substack{\ell_1+\ell_2+\ell_3=m+2 \\ \ell_2,\ell_3 \ge 1}} \p_\theta \psi^{\ell_1} \, 
\p_{Y_3}^2 \big( H_3^{\ell_2} \, \xi_j \, u_j^{\ell_3} -u_3^{\ell_2} \, \xi_j \, H_j^{\ell_3} \big)} 
+{\color{Mahogany} \sum_{\substack{\ell_1+\ell_2+\ell_3=m+1 \\ \ell_2,\ell_3 \ge 1}} \p_{y_j} \psi^{\ell_1} \, 
\p_{Y_3}^2 \big( H_3^{\ell_2} \, u_j^{\ell_3} -u_3^{\ell_2} \, H_j^{\ell_3} \big)} \notag \\
& \, +{\color{Mahogany} \sum_{\substack{\ell_1+\ell_2+\ell_3=m+1 \\ \ell_2,\ell_3 \ge 1}} \p_{y_j} \psi^{\ell_1} \, 
\p_\theta \p_{Y_3} \big( u_j^{\ell_2} \, \xi_{j'} \, H_{j'}^{\ell_3} -H_j^{\ell_2} \, \xi_{j'} \, u_{j'}^{\ell_3} \big)} \notag \\
& \, +{\color{blue} \sum_{\substack{\ell_1+\ell_2+\ell_3=m+1 \\ \ell_2,\ell_3 \ge 1}} \p_{y_j} \p_\theta \psi^{\ell_1} \, 
\p_{Y_3} \big( u_j^{\ell_2} \, \xi_{j'} \, H_{j'}^{\ell_3} -H_j^{\ell_2} \, \xi_{j'} \, u_{j'}^{\ell_3} \big)} \label{groscalcul-1} \\
& \, +\sum_{\substack{\ell_1+\cdots+\ell_4=m+1 \\ \ell_3,\ell_4 \ge 1}} \chi^{[\ell_1]} \, \p_\theta \psi^{\ell_2} \, 
\p_{y_3} \p_{Y_3} \big( H_3^{\ell_3} \, \xi_j \, u_j^{\ell_4} -u_3^{\ell_3} \, \xi_j \, H_j^{\ell_4} \big) \notag \\
& \, +\sum_{\substack{\ell_1+\cdots+\ell_4=m \\ \ell_3,\ell_4 \ge 1}} \chi^{[\ell_1]} \, \p_{y_j} \psi^{\ell_2} \, 
\p_{y_3} \p_{Y_3} \big( H_3^{\ell_3} \, u_j^{\ell_4} -u_3^{\ell_3} \, H_j^{\ell_4} \big) \notag \\
& \, +\sum_{\substack{\ell_1+\cdots+\ell_4=m \\ \ell_3,\ell_4 \ge 1}} \chi^{[\ell_1]} \, \p_{y_j} \psi^{\ell_2} \, 
\p_{y_3} \p_\theta \big( u_j^{\ell_3} \, \xi_{j'} \, H_{j'}^{\ell_4} -H_j^{\ell_3} \, \xi_{j'} \, u_{j'}^{\ell_4} \big) \notag \\
& \, +{\color{blue} \sum_{\substack{\ell_1+\cdots+\ell_4=m \\ \ell_3,\ell_4 \ge 1}} \chi^{[\ell_1]} \, \p_{y_j} \p_\theta \psi^{\ell_2} \, 
\p_{y_3} \big( u_j^{\ell_3} \, \xi_{j'} \, H_{j'}^{\ell_4} -H_j^{\ell_3} \, \xi_{j'} \, u_{j'}^{\ell_4} \big)} 
+\sum_{\ell_1+\ell_2+\ell_3=m} \p_\theta \chi^{[\ell_1]} \, \p_t \psi^{\ell_2} \, \xi_j \, \p_{y_3} H_j^{\ell_3} \notag \\
& \, +\sum_{\ell_1+\ell_2+\ell_3=m} \p_\theta \chi^{[\ell_1]} \, \p_{y_j} \psi^{\ell_2} \, \big( 
u_j^0 \, \xi_{j'} \, \p_{y_3} H_{j'}^{\ell_3} -H_j^0 \, \xi_{j'} \, \p_{y_3} u_{j'}^{\ell_3} 
+b \, \p_{y_3} u_j^{\ell_3} -c \, \p_{y_3} H_j^{\ell_3} \big) \notag \\
& \, +\sum_{\substack{\ell_1+\cdots+\ell_4=m \\ \ell_3,\ell_4 \ge 1}} \p_\theta \chi^{[\ell_1]} \, \p_{y_j} \psi^{\ell_2} \, 
\p_{y_3} \big( u_j^{\ell_3} \, \xi_{j'} \, H_{j'}^{\ell_4} -H_j^{\ell_3} \, \xi_{j'} \, u_{j'}^{\ell_4} \big) \notag \\
& \, +\p_\theta \Big\{ \sum_{\ell_1+\ell_2+\ell_3=m} \dot{\chi}^{[\ell_1]} \, \psi^{\ell_2} \, 
\big( b \, \p_{y_3} u_3^{\ell_3} -c \, \p_{y_3} H_3^{\ell_3} \big) 
+\sum_{\substack{\ell_1+\cdots+\ell_4=m \\ \ell_3,\ell_4 \ge 1}} \dot{\chi}^{[\ell_1]} \, \psi^{\ell_2} \, 
\p_{y_3} \big( \xi_j \, H_j^{\ell_3} \, u_3^{\ell_4} -\xi_j \, u_j^{\ell_3} \, H_3^{\ell_4} \big) \Big\} \, .\notag 
\end{align}
\bigskip

$\bullet$ \underline{Step 2}. Substituting in the fast derivatives.

There is now a series of substitutions that we need to make in order to use the fast problems that have already 
been solved at the previous steps of the induction. Namely, we proceed with the following manipulations in the 
decomposition \eqref{groscalcul-1} of $\cD^m$ by using \eqref{expressionF3+jm} and \eqref{expressionF8m}:
\begin{itemize}
 \item we collect the green terms and substitute the value of $\xi_j \, F_{3+j}^{\ell_2} -\tau \, F_8^{\ell_2}$,
 \item we collect the brown terms and substitute the value of $F_{3+j}^{\ell_2}$,
 \item we substitute the value of $F_8^{\ell_2}$ in the orange term.
\end{itemize}
For possible intermediate verification of the interested reader, let us give the detailed expression of these first 
manipulations. We isolate the green, brown and orange terms in $\cD^m$, and therefore introduce the quantity:
\begin{align*}
\cD_1^m \, := \, & \, -\sum_{\ell_1+\ell_2=m+1} \p_\theta \psi^{\ell_1} \, \p_{Y_3} \big( \xi_j \, F_{3+j}^{\ell_2} -\tau \, F_8^{\ell_2} \big) 
-\sum_{\ell_1+\ell_2 =m} \p_{y_j} \psi^{\ell_1} \, \p_{Y_3} F_{3+j}^{\ell_2} -\p_t \psi^{\ell_1} \, \p_{Y_3} F_8^{\ell_2} \\
& \, +\sum_{\substack{\ell_1+\ell_2+\ell_3=m+2 \\ \ell_2,\ell_3 \ge 1}} \p_\theta \psi^{\ell_1} \, 
\p_{Y_3}^2 \big( H_3^{\ell_2} \, \xi_j \, u_j^{\ell_3} -u_3^{\ell_2} \, \xi_j \, H_j^{\ell_3} \big) \\
& \, +\sum_{\substack{\ell_1+\ell_2+\ell_3=m+1 \\ \ell_2,\ell_3 \ge 1}} \p_{y_j} \psi^{\ell_1} \, 
\p_{Y_3}^2 \big( H_3^{\ell_2} \, u_j^{\ell_3} -u_3^{\ell_2} \, H_j^{\ell_3} \big) \\
& \, +\sum_{\substack{\ell_1+\ell_2+\ell_3=m+1 \\ \ell_2,\ell_3 \ge 1}} \p_{y_j} \psi^{\ell_1} \, 
\p_\theta \p_{Y_3} \big( u_j^{\ell_2} \, \xi_{j'} \, H_{j'}^{\ell_3} -H_j^{\ell_2} \, \xi_{j'} \, u_{j'}^{\ell_3} \big) \, .
\end{align*}
After substituting, we get:
\begin{align}
\cD_1^m \, = \, & \, -{\color{red} \sum_{\ell_1+\ell_2=m} \p_t \psi^{\ell_1} \, \nabla \cdot \p_{Y_3} H^{\ell_2}} 
+{\color{red} \sum_{\ell_1+\ell_2=m+1} \p_\theta \psi^{\ell_1} \, \xi_j \, \p_t \p_{Y_3} H_j^{\ell_2}} 
+{\color{red} \sum_{\ell_1+\ell_2=m} \p_{y_j} \psi^{\ell_1} \, \p_t \p_{Y_3} H_j^{\ell_2}} \notag \\
& \, +{\color{red} \sum_{\ell_1+\ell_2=m+1} \p_\theta \psi^{\ell_1} \, \big( u_j^0 \, \xi_{j'} \, \p_{y_j} \p_{Y_3} H_{j'}^{\ell_2} 
-H_j^0 \, \xi_{j'} \, \p_{y_j} \p_{Y_3} u_{j'}^{\ell_2} 
+b \, \nabla \cdot \p_{Y_3} u^{\ell_2} -c \, \nabla \cdot \p_{Y_3} H^{\ell_2} \big)} \notag \\
& \, +{\color{red} \sum_{\ell_1+\ell_2=m} \p_{y_j} \psi^{\ell_1} \, \big( u_{j'}^0 \,\p_{y_{j'}} \p_{Y_3} H_j^{\ell_2} 
-H_{j'}^0 \,\p_{y_{j'}} \p_{Y_3} u_j^{\ell_2} 
+H_j^0 \, \nabla \cdot \p_{Y_3} u^{\ell_2} -u_j^0 \, \nabla \cdot \p_{Y_3} H^{\ell_2} \big)} \notag \\
& \, +{\color{red} \sum_{\substack{\ell_1+\ell_2+\ell_3=m+1 \\ \ell_2,\ell_3 \ge 1}} \p_\theta \psi^{\ell_1} \, 
\nabla \cdot \p_{Y_3} \big( \xi_j \, H_j^{\ell_2} \, u^{\ell_3} -\xi_j \, u_j^{\ell_2} \, H^{\ell_3} \big)} \notag \\
& \, +{\color{red} \sum_{\substack{\ell_1+\ell_2+\ell_3=m \\ \ell_2,\ell_3 \ge 1}} \p_{y_j} \psi^{\ell_1} \, 
\nabla \cdot \p_{Y_3} \big( H_j^{\ell_2 } \, u^{\ell_3} -u_j^{\ell_2} \, H^{\ell_3} \big)} \label{expressionD1m} \\
& \, +\sum_{\ell_1+\cdots+\ell_4=m+1} \dot{\chi}^{[\ell_1]} \, \psi^{\ell_2} \, \p_\theta \psi^{\ell_3} \, 
\big( c \, \p_{y_3} \p_{Y_3} H_3^{\ell_4} -b \, \p_{y_3} \p_{Y_3} u_3^{\ell_4} \big) \notag \\
& \, +\sum_{\ell_1+\cdots+\ell_4=m} \dot{\chi}^{[\ell_1]} \, \psi^{\ell_2} \, \Big( \big( \p_t +u_j^0 \, \p_{y_j} \big) \psi^{\ell_3} \, 
\p_{y_3} \p_{Y_3} H_3^{\ell_4} -H_j^0 \, \p_{y_j} \psi^{\ell_3} \, \p_{y_3} \p_{Y_3} u_3^{\ell_4} \Big) \notag \\
& \, +\sum_{\substack{\ell_1+\cdots+\ell_5=m+1 \\ \ell_4,\ell_5 \ge 1}} \dot{\chi}^{[\ell_1]} \, \psi^{\ell_2} \, \p_\theta \psi^{\ell_3} \, 
\p_{y_3} \p_{Y_3} \big( H_3^{\ell_4} \, \xi_j \, u_j^{\ell_5} -u_3^{\ell_4} \, \xi_j \, H_j^{\ell_5} \big) \notag \\
& \, +\sum_{\substack{\ell_1+\cdots+\ell_5=m \\ \ell_4,\ell_5 \ge 1}} \dot{\chi}^{[\ell_1]} \, \psi^{\ell_2} \, \p_{y_j} \psi^{\ell_3} \, 
\p_{y_3} \p_{Y_3} \big( H_3^{\ell_4} \, u_j^{\ell_5} -u_3^{\ell_4} \, H_j^{\ell_5} \big) \, .\notag
\end{align}
Some terms in the decomposition \eqref{expressionD1m} have been highlighted in red in order to exhibit 
as clearly as possible a cancellation with the decomposition for the next term $\cD_2^m$ which we are just 
going to introduce now.
\bigskip

$\bullet$ \underline{Step 3}. Substituting in the slow derivatives.

In the previous step, we have made all substitutions in the terms $\p_{Y_3} F^\ell$, and collected various other terms that cancel 
after making these substitutions. Once this is done, we now need to make substitutions in some slow derivatives. The first, and 
actually longest, task is to substitute the expressions of $F_{3+\alpha}^{\, m-1}$ and $F_8^{\, m-1}$ in the pink terms of \eqref{groscalcul-1}, 
using again \eqref{expressionF8m}, \eqref{expressionF3+jm}, \eqref{expressionF6m}. Since some cancellations are going to appear, 
it is useful to incorporate the blue terms of \eqref{groscalcul-1} with the pink. In other words, we introduce the quantity:
\begin{align*}
\cD_2^m \, := \, & \, -\p_t F_8^{\, m-1} +\p_{y_\alpha} F_{3+\alpha}^{\, m-1} 
+\sum_{\ell_1+\ell_2=m+1} \p_t \p_\theta \psi^{\ell_1} \, \xi_j \, \p_{Y_3} H_j^{\ell_2} 
+\sum_{\ell_1+\ell_2+\ell_3=m} \chi^{[\ell_1]} \, \p_t \p_\theta \psi^{\ell_2} \, \xi_j \, \p_{y_3} H_j^{\ell_3} \\
& \, +\sum_{\ell_1+\ell_2 =m+1} \p_{y_j} \p_\theta \psi^{\ell_1} \, \big( u_j^0 \, \xi_{j'} \, \p_{Y_3} H_{j'}^{\ell_2} 
-H_j^0 \, \xi_{j'} \, \p_{Y_3} u_{j'}^{\ell_2} +b \, \p_{Y_3} u_j^{\ell_2} -c \, \p_{Y_3} H_j^{\ell_2} \big) \\
& \, +\sum_{\ell_1+\ell_2+\ell_3=m} \chi^{[\ell_1]} \, \p_{y_j} \p_\theta \psi^{\ell_2} \, \Big( 
u_j^0 \, \xi_{j'} \, \p_{y_3} H_{j'}^{\ell_3} -H_j^0 \, \xi_{j'} \, \p_{y_3} u_{j'}^{\ell_3} 
+b \, \p_{y_3} u_j^{\ell_3} -c \, \p_{y_3} H_j^{\ell_3} \Big) \\
& \, +\sum_{\substack{\ell_1+\ell_2=m \\ \ell_1,\ell_2 \ge 1}} \p_{y_j} \p_{Y_3} \big( 
H_j^{\ell_1} \, u_3^{\ell_2} -u_j^{\ell_1} \, H_3^{\ell_2} \big) 
+\sum_{\substack{\ell_1+\ell_2=m \\ \ell_1,\ell_2 \ge 1}} \nabla \cdot \p_\theta \big( 
\xi_j \, u_j^{\ell_1} \, H^{\ell_2} -\xi_j \, H_j^{\ell_1} \, u^{\ell_2} \big) \\
& \, +\sum_{\substack{\ell_1+\ell_2+\ell_3=m+1 \\ \ell_2,\ell_3 \ge 1}} \p_{y_j} \p_\theta \psi^{\ell_1} \, 
\p_{Y_3} \big( u_j^{\ell_2} \, \xi_{j'} \, H_{j'}^{\ell_3} -H_j^{\ell_2} \, \xi_{j'} \, u_{j'}^{\ell_3} \big) \\
& \, +\sum_{\substack{\ell_1+\cdots+\ell_4=m \\ \ell_3,\ell_4 \ge 1}} \chi^{[\ell_1]} \, \p_{y_j} \p_\theta \psi^{\ell_2} \, 
\p_{y_3} \big( u_j^{\ell_3} \, \xi_{j'} \, H_{j'}^{\ell_4} -H_j^{\ell_3} \, \xi_{j'} \, u_{j'}^{\ell_4} \big) \, .
\end{align*}
Substituting the expressions of $F_{3+\alpha}^{\, m-1}$ and $F_8^{\, m-1}$ in the above definition of $\cD_2^m$ and rearranging, $\cD_2^m$ 
decomposes as follows:
\begin{align}
\cD_2^m \, = \, & \, {\color{red} \sum_{\ell_1+\ell_2=m} \p_t \psi^{\ell_1} \, \nabla \cdot \p_{Y_3} H^{\ell_2}} 
-{\color{red} \sum_{\ell_1+\ell_2=m+1} \p_\theta \psi^{\ell_1} \, \xi_j \, \p_t \p_{Y_3} H_j^{\ell_2}} 
-{\color{red} \sum_{\ell_1+\ell_2=m} \p_{y_j} \psi^{\ell_1} \, \p_t \p_{Y_3} H_j^{\ell_2}} \notag \\
 & \, +{\color{red} \sum_{\ell_1+\ell_2=m+1} \p_\theta \psi^{\ell_1} \, 
\big( H_j^0 \, \xi_{j'} \, \p_{y_j} \p_{Y_3} u_{j'}^{\ell_2} -u_j^0 \, \xi_{j'} \, \p_{y_j} \p_{Y_3} H_{j'}^{\ell_2} 
+c \, \nabla \cdot \p_{Y_3} H^{\ell_2} -b \, \nabla \cdot \p_{Y_3} u^{\ell_2} \big)} \notag \\
 & \, +{\color{blue} \sum_{\ell_1+\ell_2+\ell_3=m} \chi^{[\ell_1]} \, \p_\theta \psi^{\ell_2} \, 
\big( H_j^0 \, \xi_{j'} \, \p_{y_j} \p_{y_3} u_{j'}^{\ell_3} -u_j^0 \, \xi_{j'} \, \p_{y_j} \p_{y_3} H_{j'}^{\ell_3} 
+c \, \nabla \cdot \p_{y_3} H^{\ell_3} -b \, \nabla \cdot \p_{y_3} u^{\ell_3} \big)} \notag \\
 & \, +{\color{red} \sum_{\ell_1+\ell_2=m} \p_{y_j} \psi^{\ell_1} \, 
\big( H_{j'}^0 \,\p_{y_{j'}} \p_{Y_3} u_j^{\ell_2} -u_{j'}^0 \,\p_{y_{j'}} \p_{Y_3} H_j^{\ell_2} 
+u_j^0 \, \nabla \cdot \p_{Y_3} H^{\ell_2} -H_j^0 \, \nabla \cdot \p_{Y_3} u^{\ell_2} \big)} \notag \\
 & \, +{\color{ForestGreen} \sum_{\ell_1+\ell_2+\ell_3=m-1} \chi^{[\ell_1]} \, \p_{y_j} \psi^{\ell_2} \, 
\big( H_{j'}^0 \,\p_{y_{j'}} \p_{y_3} u_j^{\ell_3} -u_{j'}^0 \,\p_{y_{j'}} \p_{y_3} H_j^{\ell_3} 
+u_j^0 \, \nabla \cdot \p_{y_3} H^{\ell_3} -H_j^0 \, \nabla \cdot \p_{y_3} u^{\ell_3} \big)} \notag \\
 & \, +{\color{orange} \sum_{\ell_1+\ell_2+\ell_3=m-1} \chi^{[\ell_1]} \, \p_t \psi^{\ell_2} \, \nabla \cdot \p_{y_3} H^{\ell_3}} 
+\sum_{\ell_1+\ell_2+\ell_3=m-1} \p_{y_\alpha} \chi^{[\ell_1]} \, \p_t \psi^{\ell_2} \, \p_{y_3} H_\alpha^{\ell_3} \notag \\
 & \, +\sum_{\ell_1+\ell_2+\ell_3=m} \p_{y_j} \chi^{[\ell_1]} \, \p_\theta \psi^{\ell_2} \, \big( H_j^0 \, \xi_{j'} \, \p_{y_3} u_{j'}^{\ell_3} 
-u_j^0 \, \xi_{j'} \, \p_{y_3} H_{j'}^{\ell_3} +c \, \p_{y_3} H_j^{\ell_3} -b \, \p_{y_3} u_j^{\ell_3} \big) \notag \\
 & \, +\sum_{\ell_1+\ell_2+\ell_3=m-1}\p_{y_{j'}} \chi^{[\ell_1]} \, \p_{y_j} \psi^{\ell_2} \, \big( H_{j'}^0 \, \p_{y_3} u_j^{\ell_3} 
-u_{j'}^0 \, \p_{y_3} H_j^{\ell_3} +u_j^0 \, \p_{y_3} H_{j'}^{\ell_3} -H_j^0 \, \p_{y_3} u_{j'}^{\ell_3} \big) \notag \\
 & \, +\sum_{\ell_1+\ell_2+\ell_3=m} \p_{y_3} \chi^{[\ell_1]} \, \p_\theta \psi^{\ell_2} \, \big( 
c \, \p_{y_3} H_3^{\ell_3} -b \, \p_{y_3} u_3^{\ell_3} \big) \notag \\
 & \, +\sum_{\ell_1+\ell_2+\ell_3=m-1} \p_{y_3} \chi^{[\ell_1]} \, \p_{y_j} \psi^{\ell_2} \, 
\big( u_j^0 \, \p_{y_3} H_3^{\ell_3} -H_j^0 \, \p_{y_3} u_3^{\ell_3} \big) \notag \\
 & \, +{\color{red} \sum_{\substack{\ell_1+\ell_2+\ell_3=m+1 \\ \ell_2,\ell_3 \ge 1}} \p_\theta \psi^{\ell_1} \, 
\nabla \cdot \p_{Y_3} \big( \xi_j \, u_j^{\ell_2} \, H^{\ell_3} -\xi_j \, H_j^{\ell_2} \, u^{\ell_3} \big)} \notag \\
 & \, +{\color{red} \sum_{\substack{\ell_1+\ell_2+\ell_3=m \\ \ell_2,\ell_3 \ge 1}} \p_{y_j} \psi^{\ell_1} \, 
\nabla \cdot \p_{Y_3} \big( u_j^{\ell_2} \, H^{\ell_3} -H_j^{\ell_2} \, u^{\ell_3} \big)} \label{expressionD2m} \\
 & \, +{\color{blue} \sum_{\substack{\ell_1+\cdots+\ell_4=m \\ \ell_3,\ell_4 \ge 1}} \chi^{[\ell_1]} \, \p_\theta \psi^{\ell_2} \, 
\nabla \cdot \p_{y_3} \big( \xi_j \, u_j^{\ell_3} \, H^{\ell_4} -\xi_j \, H_j^{\ell_3} \, u^{\ell_4} \big) 
-\sum_{\ell_1+\ell_2+\ell_3=m} \chi^{[\ell_1]} \, \p_\theta \psi^{\ell_2} \, \xi_j \, \p_t \p_{y_3} H_j^{\ell_3}} \notag \\
 & \, +{\color{ForestGreen} \sum_{\substack{\ell_1+\cdots+\ell_4=m-1 \\ \ell_2,\ell_3 \ge 1}} \chi^{[\ell_1]} \, \p_{y_j} \psi^{\ell_2} \, 
\nabla \cdot \p_{y_3} \big( u_j^{\ell_3} \, H^{\ell_4} -H_j^{\ell_3} \, u^{\ell_4} \big) 
-\sum_{\ell_1+\ell_2+\ell_3=m-1} \chi^{[\ell_1]} \, \p_{y_j} \psi^{\ell_2} \, \p_t \p_{y_3} H_j^{\ell_3}} \notag \\
 & \, +\sum_{\substack{\ell_1+\cdots+\ell_4=m \\ \ell_3,\ell_4 \ge 1}} \p_{y_\alpha} \chi^{[\ell_1]} \, \p_\theta \psi^{\ell_2} \, 
 \p_{y_3} \big( \xi_j \, u_j^{\ell_3} \, H_\alpha^{\ell_4} -\xi_j \, H_j^{\ell_3} \, u_\alpha^{\ell_4} \big) \notag \\
 & \, +\sum_{\substack{\ell_1+\cdots+\ell_4=m-1 \\ \ell_2,\ell_3 \ge 1}} \p_{y_\alpha} \chi^{[\ell_1]} \, \p_{y_j} \psi^{\ell_2} \, 
 \p_{y_3} \big( u_j^{\ell_3} \, H_\alpha^{\ell_4} -H_j^{\ell_3} \, u_\alpha^{\ell_4} \big) \notag \\
 & \, +\p_{y_j} \, \Big\{ \sum_{\ell_1+\ell_2+\ell_3=m-1} \dot{\chi}^{[\ell_1]} \, \psi^{\ell_2} \, 
\big( H_j^0 \, \p_{y_3} u_3^{\ell_3} -u_j^0 \, \p_{y_3} H_3^{\ell_3} \big) \Big\} \notag \\
 & \, +\p_{y_j} \, \Big\{ \sum_{\substack{\ell_1+\cdots+\ell_4=m-1 \\ \ell_3,\ell_4 \ge 1}} \dot{\chi}^{[\ell_1]} \, \psi^{\ell_2} \, 
\p_{y_3} \big( H_j^{\ell_3} \, u_3^{\ell_4} -u_j^{\ell_3} \, H_3^{\ell_4} \big) \Big\} 
-\sum_{\ell_1+\ell_2+\ell_3=m} \p_t \chi^{[\ell_1]} \, \p_\theta \psi^{\ell_2} \, \xi_j \, \p_{y_3} H_j^{\ell_3} \notag \\
 & \, -\sum_{\ell_1+\ell_2+\ell_3=m-1} \p_t \chi^{[\ell_1]} \, \p_{y_j} \psi^{\ell_2} \, \p_{y_3} H_j^{\ell_3} 
-\p_t \, \Big\{ \sum_{\ell_1+\ell_2+\ell_3=m-1} \dot{\chi}^{[\ell_1]} \, \psi^{\ell_2} \, \p_{y_3} H_3^{\ell_3} \Big\} \, .\notag
\end{align}
Those terms in the decomposition \eqref{expressionD2m} that have been highlighted in red cancel exactly with their opposite 
which has also been highlighted in red in the decomposition \eqref{expressionD1m}. Other colors used to highlight some 
of the terms in \eqref{expressionD2m} will be explained in the following step of the proof.
\bigskip

$\bullet$ \underline{Step 4}. Simplifying and rearranging.

Up to now, we have decomposed the quantity $\cD^m$ into the sum of three quantities, namely we have written:
$$
\cD^m \, = \, \cD_1^m +\cD_2^m +\cD_3^m \, ,
$$
where $\cD_1^m$ is given by \eqref{expressionD1m}, $\cD_2^m$ is given by \eqref{expressionD2m}, and $\cD_3^m$ 
is the sum of all black terms in \eqref{groscalcul-1}, that is:
\begin{align}
\cD_3^m \, := \, & \, -{\color{blue} \sum_{\ell_1+\ell_2+\ell_3=m} \chi^{[\ell_1]} \, \p_\theta \psi^{\ell_2} \, 
\p_{y_3} \big( \xi_j \, F_{3+j}^{\ell_3} -\tau \, F_8^{\ell_3} \big)} 
+{\color{orange} \sum_{\ell_1+\ell_2+\ell_3=m-1} \chi^{[\ell_1]} \, \p_t \psi^{\ell_2} \, \p_{y_3} F_8^{\ell_3}} \notag \\
& \, -{\color{ForestGreen} \sum_{\ell_1+\ell_2+\ell_3=m-1} \chi^{[\ell_1]} \, \p_{y_j} \psi^{\ell_2} \, \p_{y_3} F_{3+j}^{\ell_3}} 
+{\color{blue} \sum_{\substack{\ell_1+\cdots+\ell_4=m+1 \\ \ell_3,\ell_4 \ge 1}} \chi^{[\ell_1]} \, \p_\theta \psi^{\ell_2} \, 
\p_{y_3} \p_{Y_3} \big( H_3^{\ell_3} \, \xi_j \, u_j^{\ell_4} -u_3^{\ell_3} \, \xi_j \, H_j^{\ell_4} \big)} \notag \\
& \, +{\color{ForestGreen} \sum_{\substack{\ell_1+\cdots+\ell_4=m \\ \ell_3,\ell_4 \ge 1}} \chi^{[\ell_1]} \, \p_{y_j} \psi^{\ell_2} \, 
\p_{y_3} \p_{Y_3} \big( H_3^{\ell_3} \, u_j^{\ell_4} -u_3^{\ell_3} \, H_j^{\ell_4} \big)} \notag \\
& \, +{\color{ForestGreen} \sum_{\substack{\ell_1+\cdots+\ell_4=m \\ \ell_3,\ell_4 \ge 1}} \chi^{[\ell_1]} \, \p_{y_j} \psi^{\ell_2} \, 
\p_{y_3} \p_\theta \big( u_j^{\ell_3} \, \xi_{j'} \, H_{j'}^{\ell_4} -H_j^{\ell_3} \, \xi_{j'} \, u_{j'}^{\ell_4} \big)} 
+\sum_{\ell_1+\ell_2+\ell_3=m} \p_\theta \chi^{[\ell_1]} \, \p_t \psi^{\ell_2} \, \xi_j \, \p_{y_3} H_j^{\ell_3} \label{groscalcul-2} \\
& \, +\sum_{\ell_1+\ell_2+\ell_3=m} \p_\theta \chi^{[\ell_1]} \, \p_{y_j} \psi^{\ell_2} \, \big( 
u_j^0 \, \xi_{j'} \, \p_{y_3} H_{j'}^{\ell_3} -H_j^0 \, \xi_{j'} \, \p_{y_3} u_{j'}^{\ell_3} 
+b \, \p_{y_3} u_j^{\ell_3} -c \, \p_{y_3} H_j^{\ell_3} \big) \notag \\
& \, +\sum_{\substack{\ell_1+\cdots+\ell_4=m \\ \ell_3,\ell_4 \ge 1}} \p_\theta \chi^{[\ell_1]} \, \p_{y_j} \psi^{\ell_2} \, 
\p_{y_3} \big( u_j^{\ell_3} \, \xi_{j'} \, H_{j'}^{\ell_4} -H_j^{\ell_3} \, \xi_{j'} \, u_{j'}^{\ell_4} \big) \notag \\
& \, +\p_\theta \Big\{ \sum_{\ell_1+\ell_2+\ell_3=m} \dot{\chi}^{[\ell_1]} \, \psi^{\ell_2} \, 
\big( b \, \p_{y_3} u_3^{\ell_3} -c \, \p_{y_3} H_3^{\ell_3} \big) \Big\} \notag \\
& \, +\p_\theta \Big\{ \sum_{\substack{\ell_1+\cdots+\ell_4=m \\ \ell_3,\ell_4 \ge 1}} \dot{\chi}^{[\ell_1]} \, \psi^{\ell_2} \, 
\p_{y_3} \big( \xi_j \, H_j^{\ell_3} \, u_3^{\ell_4} -\xi_j \, u_j^{\ell_3} \, H_3^{\ell_4} \big) \Big\} \, .\notag 
\end{align}

It is now time to regroup some of the terms from $\cD_2^m$ and $\cD_3^m$ together in order to further substitute and simplify 
$\cD^m$. Observe that all terms in the expressions \eqref{expressionD1m}, \eqref{expressionD2m} and \eqref{groscalcul-2} involve 
the functions $\chi^{[\ell]}$ or $\dot{\chi}^{[\ell]}$, whose symmetry properties will be crucial for the final argument of the proof. But 
before exhibiting those symmetry properties, let us rearrange the expression of $\cD^m$. We are now going to decompose the 
quantity $\cD^m$ under the form:
$$
\cD^m \, = \, \cE_1^m +\cE_2^m +\cE_3^m +\cE_4^m \, ,
$$
where:
\begin{itemize}
 \item $\cE_1^m$ corresponds to the collection of the blue terms in $\cD_2^m$ and $\cD_3^m$,
 \item $\cE_2^m$ corresponds to the collection of the green terms in $\cD_2^m$ and $\cD_3^m$,
 \item $\cE_3^m$ corresponds to the collection of the orange terms in $\cD_2^m$ and $\cD_3^m$,
 \item $\cE_4^m$ corresponds to the collection of all remaining (black) terms in $\cD_1^m$, $\cD_2^m$ 
and $\cD_3^m$.
\end{itemize}
In other words, we set:
\begin{align*}
\cE_1^m \, := \, & \, -\sum_{\ell_1+\ell_2+\ell_3=m} \chi^{[\ell_1]} \, \p_\theta \psi^{\ell_2} \, 
\p_{y_3} \big( \xi_j \, F_{3+j}^{\ell_3} -\tau \, F_8^{\ell_3} \big) 
-\sum_{\ell_1+\ell_2+\ell_3=m} \chi^{[\ell_1]} \, \p_\theta \psi^{\ell_2} \, \xi_j \, \p_t \p_{y_3} H_j^{\ell_3} \\
 & \, \sum_{\ell_1+\ell_2+\ell_3=m} \chi^{[\ell_1]} \, \p_\theta \psi^{\ell_2} \, 
\big( H_j^0 \, \xi_{j'} \, \p_{y_j} \p_{y_3} u_{j'}^{\ell_3} -u_j^0 \, \xi_{j'} \, \p_{y_j} \p_{y_3} H_{j'}^{\ell_3} 
+c \, \nabla \cdot \p_{y_3} H^{\ell_3} -b \, \nabla \cdot \p_{y_3} u^{\ell_3} \big) \\
 & \, +\sum_{\substack{\ell_1+\cdots+\ell_4=m+1 \\ \ell_3,\ell_4 \ge 1}} \chi^{[\ell_1]} \, \p_\theta \psi^{\ell_2} \, 
\p_{y_3} \p_{Y_3} \big( H_3^{\ell_3} \, \xi_j \, u_j^{\ell_4} -u_3^{\ell_3} \, \xi_j \, H_j^{\ell_4} \big) \\
 & \, +\sum_{\substack{\ell_1+\cdots+\ell_4=m \\ \ell_3,\ell_4 \ge 1}} \chi^{[\ell_1]} \, \p_\theta \psi^{\ell_2} \, 
\nabla \cdot \p_{y_3} \big( \xi_j \, u_j^{\ell_3} \, H^{\ell_4} -\xi_j \, H_j^{\ell_3} \, u^{\ell_4} \big) \, ,
\end{align*}
\begin{align*}
\cE_2^m \, := \, & \, -\sum_{\ell_1+\ell_2+\ell_3=m-1} \chi^{[\ell_1]} \, \p_{y_j} \psi^{\ell_2} \, \p_{y_3} F_{3+j}^{\ell_3} 
-\sum_{\ell_1+\ell_2+\ell_3=m-1} \chi^{[\ell_1]} \, \p_{y_j} \psi^{\ell_2} \, \p_t \p_{y_3} H_j^{\ell_3} \\
 & \, +\sum_{\ell_1+\ell_2+\ell_3=m-1} \chi^{[\ell_1]} \, \p_{y_j} \psi^{\ell_2} \, 
\big( H_{j'}^0 \,\p_{y_{j'}} \p_{y_3} u_j^{\ell_3} -u_{j'}^0 \,\p_{y_{j'}} \p_{y_3} H_j^{\ell_3} 
+u_j^0 \, \nabla \cdot \p_{y_3} H^{\ell_3} -H_j^0 \, \nabla \cdot \p_{y_3} u^{\ell_3} \big) \\
 & \, +\sum_{\substack{\ell_1+\cdots+\ell_4=m \\ \ell_3,\ell_4 \ge 1}} \chi^{[\ell_1]} \, \p_{y_j} \psi^{\ell_2} \, 
\p_{y_3} \p_{Y_3} \big( H_3^{\ell_3} \, u_j^{\ell_4} -u_3^{\ell_3} \, H_j^{\ell_4} \big) \\
 & \, +\sum_{\substack{\ell_1+\cdots+\ell_4=m \\ \ell_3,\ell_4 \ge 1}} \chi^{[\ell_1]} \, \p_{y_j} \psi^{\ell_2} \, 
\p_{y_3} \p_\theta \big( u_j^{\ell_3} \, \xi_{j'} \, H_{j'}^{\ell_4} -H_j^{\ell_3} \, \xi_{j'} \, u_{j'}^{\ell_4} \big) \\
 & \, +\sum_{\substack{\ell_1+\cdots+\ell_4=m-1 \\ \ell_2,\ell_3 \ge 1}} \chi^{[\ell_1]} \, \p_{y_j} \psi^{\ell_2} \, 
\nabla \cdot \p_{y_3} \big( u_j^{\ell_3} \, H^{\ell_4} -H_j^{\ell_3} \, u^{\ell_4} \big) \, ,
\end{align*}
\begin{equation*}
\cE_3^m \, := \, 
\sum_{\ell_1+\ell_2+\ell_3=m-1} \chi^{[\ell_1]} \, \p_t \psi^{\ell_2} \, \p_{y_3} \big( F_8^{\ell_3} +\nabla \cdot H^{\ell_3} \big) \, ,
\end{equation*}
and according to the expressions \eqref{expressionD1m}, \eqref{expressionD2m} and \eqref{groscalcul-2}, the final 
term $\cE_4^m$ reads:
\begin{align}
\cE_4^m \, := \, & \, {\color{blue} \sum_{\ell_1+\ell_2+\ell_3=m-1} 
\big( \p_{y_j} \chi^{[\ell_1]} \, \p_t \psi^{\ell_2} -\p_t \chi^{[\ell_1]} \, \p_{y_j} \psi^{\ell_2} \big) \, \p_{y_3} H_j^{\ell_3}} 
+\sum_{\ell_1+\ell_2+\ell_3=m-1} \p_{y_3} \chi^{[\ell_1]} \, \p_t \psi^{\ell_2} \, \p_{y_3} H_3^{\ell_3} \notag \\
 & \, +{\color{blue} \sum_{\ell_1+\ell_2+\ell_3=m} \big( \p_\theta \chi^{[\ell_1]} \, \p_t \psi^{\ell_2} 
 -\p_t \chi^{[\ell_1]} \, \p_\theta \psi^{\ell_2} \big) \, \xi_j \, \p_{y_3} H_j^{\ell_3}} \notag \\
 & \, +{\color{blue} \sum_{\ell_1+\ell_2+\ell_3=m} \big( \p_{y_j} \chi^{[\ell_1]} \, \p_\theta \psi^{\ell_2} 
-\p_\theta \chi^{[\ell_1]} \, \p_{y_j} \psi^{\ell_2} \big) \, \big( H_j^0 \, \xi_{j'} \, \p_{y_3} u_{j'}^{\ell_3} 
-u_j^0 \, \xi_{j'} \, \p_{y_3} H_{j'}^{\ell_3} +c \, \p_{y_3} H_j^{\ell_3} -b \, \p_{y_3} u_j^{\ell_3} \big)} \notag \\
 & \, +\sum_{\ell_1+\ell_2+\ell_3=m} \p_{y_3} \chi^{[\ell_1]} \, \p_\theta \psi^{\ell_2} \, \big( 
c \, \p_{y_3} H_3^{\ell_3} -b \, \p_{y_3} u_3^{\ell_3} \big) \notag \\
 & \, +{\color{blue} \sum_{\ell_1+\ell_2+\ell_3=m-1}\p_{y_{j'}} \chi^{[\ell_1]} \, \p_{y_j} \psi^{\ell_2} \, \big( 
H_{j'}^0 \, \p_{y_3} u_j^{\ell_3} -u_{j'}^0 \, \p_{y_3} H_j^{\ell_3} 
+u_j^0 \, \p_{y_3} H_{j'}^{\ell_3} -H_j^0 \, \p_{y_3} u_{j'}^{\ell_3} \big)} \notag \\
 & \, +\sum_{\ell_1+\ell_2+\ell_3=m-1} \p_{y_3} \chi^{[\ell_1]} \, \p_{y_j} \psi^{\ell_2} \, \big( 
u_j^0 \, \p_{y_3} H_3^{\ell_3} -H_j^0 \, \p_{y_3} u_3^{\ell_3} \big) \notag \\
 & \, +{\color{blue} \sum_{\substack{\ell_1+\cdots+\ell_4=m \\ \ell_3,\ell_4 \ge 1}} 
\big( \p_{y_j} \chi^{[\ell_1]} \, \p_\theta \psi^{\ell_2} -\p_\theta \chi^{[\ell_1]} \, \p_{y_j} \psi^{\ell_2} \big) \, 
\p_{y_3} \big( H_j^{\ell_3} \, \xi_{j'} \, u_{j'}^{\ell_4} -u_j^{\ell_3} \, \xi_{j'} \, H_{j'}^{\ell_4} \big)} \notag \\
 & \, +\sum_{\substack{\ell_1+\cdots+\ell_4=m \\ \ell_3,\ell_4 \ge 1}} \p_{y_3} \chi^{[\ell_1]} \, \p_\theta \psi^{\ell_2} \, 
\p_{y_3} \big( \xi_j \, u_j^{\ell_3} \, H_3^{\ell_4} -\xi_j \, H_j^{\ell_3} \, u_3^{\ell_4} \big) \notag \\
 & \, +{\color{blue} \sum_{\substack{\ell_1+\cdots+\ell_4=m-1 \\ \ell_3,\ell_4 \ge 1}}\p_{y_{j'}} \chi^{[\ell_1]} \, \p_{y_j} \psi^{\ell_2} \, 
\p_{y_3} \big( u_j^{\ell_3} \, H_{j'}^{\ell_4} -H_j^{\ell_3} \, u_{j'}^{\ell_4} \big)} \label{decompositionE4m} \\
 & \, +\sum_{\substack{\ell_1+\cdots+\ell_4=m-1 \\ \ell_3,\ell_4 \ge 1}} \p_{y_3} \chi^{[\ell_1]} \, \p_{y_j} \psi^{\ell_2} \, 
\p_{y_3} \big( u_j^{\ell_3} \, H_3^{\ell_4} -H_j^{\ell_3} \, u_3^{\ell_4} \big) \notag \\
 & \, +\p_{y_j} \, \Big\{ \sum_{\ell_1+\ell_2+\ell_3=m-1} \dot{\chi}^{[\ell_1]} \, \psi^{\ell_2} \, 
\big( H_j^0 \, \p_{y_3} u_3^{\ell_3} -u_j^0 \, \p_{y_3} H_3^{\ell_3} \big) \Big\} \notag \\
 & \, +\p_{y_j} \, \Big\{ \sum_{\substack{\ell_1+\cdots+\ell_4=m-1 \\ \ell_3,\ell_4 \ge 1}} \dot{\chi}^{[\ell_1]} \, \psi^{\ell_2} \, 
\p_{y_3} \big( H_j^{\ell_3} \, u_3^{\ell_4} -u_j^{\ell_3} \, H_3^{\ell_4} \big) \Big\} 
-\p_t \Big\{ \sum_{\ell_1+\ell_2+\ell_3=m-1} \dot{\chi}^{[\ell_1]} \, \psi^{\ell_2} \, \p_{y_3} H_3^{\ell_3} \Big\} \notag \\
 & \, +\p_\theta \Big\{ \sum_{\ell_1+\ell_2+\ell_3=m} \dot{\chi}^{[\ell_1]} \, \psi^{\ell_2} \, 
\big( b \, \p_{y_3} u_3^{\ell_3} -c \, \p_{y_3} H_3^{\ell_3} \big) 
+\sum_{\substack{\ell_1+\cdots+\ell_4=m \\ \ell_3,\ell_4 \ge 1}} \dot{\chi}^{[\ell_1]} \, \psi^{\ell_2} \, 
\p_{y_3} \big( \xi_j \, H_j^{\ell_3} \, u_3^{\ell_4} -\xi_j \, u_j^{\ell_3} \, H_3^{\ell_4} \big) \Big\} \notag \\
 & \, +\sum_{\ell_1+\cdots+\ell_4=m+1} \dot{\chi}^{[\ell_1]} \, \psi^{\ell_2} \, \p_\theta \psi^{\ell_3} \, 
\big( c \, \p_{y_3} \p_{Y_3} H_3^{\ell_4} -b \, \p_{y_3} \p_{Y_3} u_3^{\ell_4} \big) \notag \\
 & \, +\sum_{\ell_1+\cdots+\ell_4=m} \dot{\chi}^{[\ell_1]} \, \psi^{\ell_2} \, \Big( \big( \p_t +u_j^0 \, \p_{y_j} \big) \psi^{\ell_3} \, 
\p_{y_3} \p_{Y_3} H_3^{\ell_4} -H_j^0 \, \p_{y_j} \psi^{\ell_3} \, \p_{y_3} \p_{Y_3} u_3^{\ell_4} \Big) \notag \\
 & \, +\sum_{\substack{\ell_1+\cdots+\ell_5=m+1 \\ \ell_4,\ell_5 \ge 1}} \dot{\chi}^{[\ell_1]} \, \psi^{\ell_2} \, \p_\theta \psi^{\ell_3} \, 
\p_{y_3} \p_{Y_3} \big( H_3^{\ell_4} \, \xi_j \, u_j^{\ell_5} -u_3^{\ell_4} \, \xi_j \, H_j^{\ell_5} \big) \notag \\
 & \, +\sum_{\substack{\ell_1+\cdots+\ell_5=m \\ \ell_4,\ell_5 \ge 1}} \dot{\chi}^{[\ell_1]} \, \psi^{\ell_2} \, \p_{y_j} \psi^{\ell_3} \, 
\p_{y_3} \p_{Y_3} \big( H_3^{\ell_4} \, u_j^{\ell_5} -u_3^{\ell_4} \, H_j^{\ell_5} \big) \, .\notag
\end{align}
Some terms in the expression of $\cE_4^m$ have been highlighted in blue in order to exhibit some symmetry 
properties. These terms will be dealt with later on thanks to some specific structure of the functions $\chi^{[\ell]}$.
\bigskip

$\bullet$ \underline{Step 5}. Substituting in the slow derivatives, and (partially) simplifying.

We use again the expressions \eqref{expressionF3+jm}, \eqref{expressionF8m} of $F_{3+j}^\ell$ and $F_8^\ell$ 
to simplify the expressions of $\cE_1^m$ and $\cE_2^m$. Going straight to the final result, we end up with:
\begin{align*}
\cE_1^m \, = \, & \, -{\color{ForestGreen} \sum_{\ell_1+\cdots+\ell_4=m+1} 
\chi^{[\ell_1]} \, \p_t \psi^{\ell_2} \, \p_\theta \psi^{\ell_3} \, \xi_j \, \p_{y_3} \p_{Y_3} H_j^{\ell_4}} \\
 & \, +{\color{VioletRed} \sum_{\ell_1+\cdots+\ell_4=m+1} \chi^{[\ell_1]} \, \p_{y_j} \psi^{\ell_2} \, \p_\theta \psi^{\ell_3} \, \big( 
H_j^0 \, \xi_{j'} \, \p_{y_3} \p_{Y_3} u_{j'}^{\ell_4} -u_j^0 \, \xi_{j'} \, \p_{y_3} \p_{Y_3} H_{j'}^{\ell_4} 
+c \, \p_{y_3} \p_{Y_3} H_j^{\ell_4} -b \, \p_{y_3} \p_{Y_3} u_j^{\ell_4} \big)} \\
 & \, -{\color{ForestGreen} \sum_{\ell_1+\cdots+\ell_5=m} \chi^{[\ell_1]} \, \p_t \psi^{\ell_2} \, \p_\theta \psi^{\ell_3} \, 
\p_{y_3} \Big( \chi^{[\ell_4]} \, \p_{y_3} \big( \xi_j \, H_j^{\ell_5} \big) \Big)} \\
 & \, +{\color{VioletRed} \sum_{\ell_1+\cdots+\ell_5=m} \chi^{[\ell_1]} \, \p_{y_j} \psi^{\ell_2} \, \p_\theta \psi^{\ell_3} \, \p_{y_3} \Big( 
\chi^{[\ell_4]} \, \big( H_j^0 \, \xi_{j'} \, \p_{y_3} u_{j'}^{\ell_5} -u_j^0 \, \xi_{j'} \, \p_{y_3} H_{j'}^{\ell_5} 
+c \, \p_{y_3} H_j^{\ell_5} -b \, \p_{y_3} u_j^{\ell_5} \big) \Big)} \\
 & \, +{\color{VioletRed} \sum_{\substack{\ell_1+\cdots+\ell_5=m+1 \\ \ell_4,\ell_5 \ge 1}} \chi^{[\ell_1]} \, \p_{y_j} \psi^{\ell_2} \, 
\p_\theta \psi^{\ell_3} \, \p_{y_3} \p_{Y_3} \big( H_j^{\ell_4} \, \xi_{j'} \, u_{j'}^{\ell_5} -u_j^{\ell_4} \, \xi_{j'} \, H_{j'}^{\ell_5} \big)} \\
 & \, +{\color{VioletRed} \sum_{\substack{\ell_1+\cdots+\ell_6=m \\ \ell_5,\ell_6 \ge 1}} \chi^{[\ell_1]} \, \p_{y_j} \psi^{\ell_2} \, 
\p_\theta \psi^{\ell_3} \, \p_{y_3} \Big( \chi^{[\ell_4]} \, 
\p_{y_3} \big( H_j^{\ell_5} \, \xi_{j'} \, u_{j'}^{\ell_6} -u_j^{\ell_5} \, \xi_{j'} \, H_{j'}^{\ell_6} \big) \Big)} \\
 & \, +\sum_{\ell_1+\cdots+\ell_5=m} \chi^{[\ell_1]} \, \psi^{\ell_2} \, \p_\theta \psi^{\ell_3} \, 
\p_{y_3} \Big( \dot{\chi}^{[\ell_4]} \, \big( c \, \p_{y_3} H_3^{\ell_5} -b \, \p_{y_3} u_3^{\ell_5} \big) \Big) \\
 & \, +\sum_{\substack{\ell_1+\cdots+\ell_6=m \\ \ell_5,\ell_6 \ge 1}} \chi^{[\ell_1]} \, \psi^{\ell_2} \, \p_\theta \psi^{\ell_3} \, 
\p_{y_3} \Big( \dot{\chi}^{[\ell_4]} \, \p_{y_3} \big( H_3^{\ell_5} \, \xi_j \, u_j^{\ell_6} -u_3^{\ell_5} \, \xi_j \, H_j^{\ell_6} \big) \Big) \, ,
\end{align*}
and
\begin{align*}
\cE_2^m \, = \, & \, -{\color{ForestGreen} \sum_{\ell_1+\cdots+\ell_4=m} \chi^{[\ell_1]} \, \p_t \psi^{\ell_2} \, \p_{y_j} \psi^{\ell_3} \, 
\p_{y_3} \p_{Y_3} H_j^{\ell_4}} \\
 & \, -{\color{VioletRed} \sum_{\ell_1+\cdots+\ell_4=m+1} \chi^{[\ell_1]} \, \p_{y_j} \psi^{\ell_2} \, \p_\theta \psi^{\ell_3} \, 
\big( H_j^0 \, \xi_{j'} \, \p_{y_3} \p_{Y_3} u_{j'}^{\ell_4} -u_j^0 \, \xi_{j'} \, \p_{y_3} \p_{Y_3} H_{j'}^{\ell_4} 
+c \, \p_{y_3} \p_{Y_3} H_j^{\ell_4} -b \, \p_{y_3} \p_{Y_3} u_j^{\ell_4} \big)} \\
 & \, -{\color{ForestGreen} \sum_{\ell_1+\cdots+\ell_5=m-1} \chi^{[\ell_1]} \, \p_t \psi^{\ell_2} \, \p_{y_j} \psi^{\ell_3} \, 
\p_{y_3} \big( \chi^{[\ell_4]} \, \p_{y_3} H_j^{\ell_5} \big)} \\
 & \, -{\color{VioletRed} \sum_{\ell_1+\cdots+\ell_5=m} \chi^{[\ell_1]} \, \p_{y_j} \psi^{\ell_2} \, \p_\theta \psi^{\ell_3} \, 
\p_{y_3} \Big( \chi^{[\ell_4]} \, \big( H_j^0 \, \xi_{j'} \, \p_{y_3} u_{j'}^{\ell_5} -u_j^0 \, \xi_{j'} \, \p_{y_3} H_{j'}^{\ell_5} 
+c \, \p_{y_3} H_j^{\ell_5} -b \, \p_{y_3} u_j^{\ell_5} \big) \Big)} \\
 & \, -{\color{VioletRed} \sum_{\substack{\ell_1+\cdots+\ell_5=m+1 \\ \ell_4,\ell_5 \ge 1}} \chi^{[\ell_1]} \, \p_{y_j} \psi^{\ell_2} \, 
\p_\theta \psi^{\ell_3} \, \p_{y_3} \p_{Y_3} \big( H_j^{\ell_4} \, \xi_{j'} \, u_{j'}^{\ell_5} -u_j^{\ell_4} \, \xi_{j'} \, H_{j'}^{\ell_5} \big)} \\
 & \, -{\color{VioletRed} \sum_{\substack{\ell_1+\cdots+\ell_6=m \\ \ell_5,\ell_6 \ge 1}} \chi^{[\ell_1]} \, \p_{y_j} \psi^{\ell_2} \, 
\p_\theta \psi^{\ell_3} \, \p_{y_3} \Big( \chi^{[\ell_4]} \, 
\p_{y_3} \big( H_j^{\ell_5} \, \xi_{j'} \, u_{j'}^{\ell_6} -u_j^{\ell_5} \, \xi_{j'} \, H_{j'}^{\ell_6} \big) \Big)} \\
 & \, +\sum_{\ell_1+\cdots+\ell_5=m-1} \chi^{[\ell_1]} \, \psi^{\ell_2} \, \p_{y_j} \psi^{\ell_3} \, 
\p_{y_3} \Big( \dot{\chi}^{[\ell_4]} \, \big( u_j^0 \, \p_{y_3} H_3^{\ell_5} -H_j^0 \, \p_{y_3} u_3^{\ell_5} \big) \Big) \\
 & \, +\sum_{\substack{\ell_1+\cdots+\ell_6=m-1 \\ \ell_5,\ell_6 \ge 1}} \chi^{[\ell_1]} \, \psi^{\ell_2} \, \p_{y_j} \psi^{\ell_3} \, 
\p_{y_3} \Big( \dot{\chi}^{[\ell_4]} \, \p_{y_3} \big( H_3^{\ell_5} \, u_j^{\ell_6} -u_3^{\ell_5} \, H_j^{\ell_6} \big) \Big) \, .
\end{align*}
Let us observe that in the above expressions of $\cE_1^m$ and $\cE_2^m$, the pink terms in the decomposition of $\cE_1^m$ cancel 
exactly with the pink terms in the decomposition of $\cE_2^m$.

We also use the expression of $F_8^\ell$ in $\cE_3^m$ and get:
\begin{align*}
\cE_3^m \, = \, & \, {\color{ForestGreen} \sum_{\ell_1+\cdots+\ell_4=m+1} 
\chi^{[\ell_1]} \, \p_t \psi^{\ell_2} \, \p_\theta \psi^{\ell_3} \, \xi_j \, \p_{y_3} \p_{Y_3} H_j^{\ell_4}} 
+{\color{ForestGreen} \sum_{\ell_1+\cdots+\ell_5=m} \chi^{[\ell_1]} \, \p_t \psi^{\ell_2} \, \p_\theta \psi^{\ell_3} \, 
\p_{y_3} \Big( \chi^{[\ell_4]} \, \p_{y_3} \big( \xi_j \, H_j^{\ell_5} \big) \Big)} \\
 & \, +{\color{ForestGreen} \sum_{\ell_1+\cdots+\ell_4=m} \chi^{[\ell_1]} \, \p_t \psi^{\ell_2} \, \p_{y_j} \psi^{\ell_3} \, 
\p_{y_3} \p_{Y_3} H_j^{\ell_4}} 
+{\color{ForestGreen} \sum_{\ell_1+\cdots+\ell_5=m-1} \chi^{[\ell_1]} \, \p_t \psi^{\ell_2} \, \p_{y_j} \psi^{\ell_3} \, 
\p_{y_3} \big( \chi^{[\ell_4]} \, \p_{y_3} H_j^{\ell_5} \big)} \\
 & \, +\sum_{\ell_1+\cdots+\ell_5=m-1} \chi^{[\ell_1]} \, \psi^{\ell_2} \, \p_t \psi^{\ell_3} \, 
\p_{y_3} \big( \dot{\chi}^{[\ell_4]} \, \p_{y_3} H_3^{\ell_5} \big) \, .
\end{align*}
The green terms in the decomposition of $\cE_3^m$ cancel exactly with the green terms in the decompositions of $\cE_1^m$ 
and $\cE_2^m$. Summing the contributions $\cE_1^m$, $\cE_2^m$, $\cE_3^m$, and canceling the terms with the same color, 
we get:
\begin{align}
\cD^m \, = \, \cE_4^m \, & \, +\sum_{\ell_1+\cdots+\ell_5=m} \chi^{[\ell_1]} \, \psi^{\ell_2} \, \p_\theta \psi^{\ell_3} \, 
\p_{y_3} \Big( \dot{\chi}^{[\ell_4]} \, \big( c \, \p_{y_3} H_3^{\ell_5} -b \, \p_{y_3} u_3^{\ell_5} \big) \Big) \notag \\
& \, +\sum_{\substack{\ell_1+\cdots+\ell_6=m \\ \ell_5,\ell_6 \ge 1}} \chi^{[\ell_1]} \, \psi^{\ell_2} \, \p_\theta \psi^{\ell_3} \, 
\p_{y_3} \Big( \dot{\chi}^{[\ell_4]} \, \p_{y_3} \big( H_3^{\ell_5} \, \xi_j \, u_j^{\ell_6} -u_3^{\ell_5} \, \xi_j \, H_j^{\ell_6} \big) \Big) \notag \\
& \, +\sum_{\ell_1+\cdots+\ell_5=m-1} \chi^{[\ell_1]} \, \psi^{\ell_2} \, \p_{y_j} \psi^{\ell_3} \, 
\p_{y_3} \Big( \dot{\chi}^{[\ell_4]} \, \big( u_j^0 \, \p_{y_3} H_3^{\ell_5} -H_j^0 \, \p_{y_3} u_3^{\ell_5} \big) \Big) 
\label{decompositionDmE4m} \\
& \, +\sum_{\substack{\ell_1+\cdots+\ell_6=m-1 \\ \ell_5,\ell_6 \ge 1}} \chi^{[\ell_1]} \, \psi^{\ell_2} \, \p_{y_j} \psi^{\ell_3} \, 
\p_{y_3} \Big( \dot{\chi}^{[\ell_4]} \, \p_{y_3} \big( H_3^{\ell_5} \, u_j^{\ell_6} -u_3^{\ell_5} \, H_j^{\ell_6} \big) \Big) \notag \\
& \, +\sum_{\ell_1+\cdots+\ell_5=m-1} \chi^{[\ell_1]} \, \psi^{\ell_2} \, \p_t \psi^{\ell_3} \, 
\p_{y_3} \big( \dot{\chi}^{[\ell_4]} \, \p_{y_3} H_3^{\ell_5} \big) \, .\notag
\end{align}
\bigskip

$\bullet$ \underline{Step 6}. The first symmetry formula.

In order to go further in the simplification of the quantity $\cD^m$, we need to exhibit some relations between the functions 
$\chi^{[\ell]}$ and the profiles $\psi^\mu$. Until now, we have never used any relation between these functions, which is the 
reason why there remain so many terms in the decompositions \eqref{decompositionE4m}, \eqref{decompositionDmE4m}. 
Let us recall that the functions $\chi^{[\ell]}$ are defined as the coefficients in the asymptotic expansion \eqref{defchichipointl}, 
where $t,y',x_3,y_3,\theta$ are related by \eqref{defchi} (which needs to be inverted in order to get $x_3$ as a function of all 
other variables). In particular, it should be kept in mind that $\chi^{[\ell]}$ can be expressed as a sum of products of the functions 
$\psi^2,\dots,\psi^\ell$. The first symmetry formula is as follows.

\begin{lemma}[The first symmetry formula]
\label{lem_symmetry_1}
The functions $\chi^{[\ell]}$ and $\psi^\mu$ satisfy the relation:
\begin{equation}
\label{symmetry_1}
\forall \, \ell \ge 0 \, ,\quad \sum_{\ell_1+\ell_2=\ell} \p_\theta \chi^{[\ell_1]} \, \p_t \psi^{\ell_2} 
\, = \, \sum_{\ell_1+\ell_2=\ell} \p_t \chi^{[\ell_1]} \, \p_\theta \psi^{\ell_2}  \, ,
\end{equation}
and similar relations with any couple of tangential partial derivatives chosen among $\{ \p_t,\p_{y_1},\p_{y_2},\p_\theta \}$.

For convenience, we have used in \eqref{symmetry_1} the convention $\psi^0 =\psi^1 =0$, see \eqref{s3-def_U^0,pm}. Since 
$\chi^{[0]}=\chi(y_3)$, both sums actually run over the couples of integers $(\ell_1,\ell_2)$ with $\ell_1 \ge 1,\ell_2 \ge 2$ and 
$\ell_1+\ell_2=\ell$.
\end{lemma}

We postpone the proof of Lemma \ref{lem_symmetry_1} to Appendix \ref{appendixB} and rather examine right now its 
implications on the decomposition \eqref{decompositionE4m}. Applying Lemma \ref{lem_symmetry_1} and possibly using 
a change of index $j \leftrightarrow j'$, we find that all blue terms in the decomposition \eqref{decompositionE4m} vanish. 
In view of \eqref{decompositionDmE4m}, this means that $\cD^m$ can be written as the sum of the black terms on the 
right hand side of \eqref{decompositionE4m} and of the right hand side of \eqref{decompositionDmE4m}. Expanding 
some of the partial derivatives on the right hand side of \eqref{decompositionE4m} and regrouping, we derive the expression:
\begin{align*}
\cD^m \, = \, & \, \sum_{\ell_1+\ell_2+\ell_3=m} \big( \p_{y_3} \chi^{[\ell_1]} \, \p_\theta \psi^{\ell_2} 
-\p_\theta (\dot{\chi}^{[\ell_1]} \, \psi^{\ell_2}) \big) \, \big( c \, \p_{y_3} H_3^{\ell_3} -b \, \p_{y_3} u_3^{\ell_3} \big) \\
 & \, +\sum_{\ell_1+\ell_2+\ell_3=m} \big( \p_{y_3} \chi^{[\ell_1]} \, \p_{y_j} \psi^{\ell_2} 
-\p_{y_j} (\dot{\chi}^{[\ell_1]} \, \psi^{\ell_2}) \big) \, \big( u_j^0 \, \p_{y_3} H_3^{\ell_3} -H_j^0 \, \p_{y_3} u_3^{\ell_3} \big) \\
 & \, +\sum_{\ell_1+\ell_2+\ell_3=m} \big( \p_{y_3} \chi^{[\ell_1]} \, \p_t \psi^{\ell_2} 
-\p_t (\dot{\chi}^{[\ell_1]} \, \psi^{\ell_2}) \big) \, \p_{y_3} H_3^{\ell_3} \\
 & \, +\sum_{\substack{\ell_1+\cdots+\ell_4=m \\ \ell_3,\ell_4 \ge 1}} \big( \p_{y_3} \chi^{[\ell_1]} \, \p_\theta \psi^{\ell_2} 
-\p_\theta (\dot{\chi}^{[\ell_1]} \, \psi^{\ell_2}) \big) \, 
\p_{y_3} \big( \xi_j \, u_j^{\ell_3} \, H_3^{\ell_4} -\xi_j \, H_j^{\ell_3} \, u_3^{\ell_4} \big) \\
 & \, +\sum_{\substack{\ell_1+\cdots+\ell_4=m-1 \\ \ell_3,\ell_4 \ge 1}} \big( \p_{y_3} \chi^{[\ell_1]} \, \p_{y_j} \psi^{\ell_2} 
-\p_{y_j} (\dot{\chi}^{[\ell_1]} \, \psi^{\ell_2}) \big) \, \p_{y_3} \big( u_j^{\ell_3} \, H_3^{\ell_4} -H_j^{\ell_3} \, u_3^{\ell_4} \big) \\
 & \, +{\color{orange} \sum_{\ell_1+\ell_2+\ell_3=m} \dot{\chi}^{[\ell_1]} \, \psi^{\ell_2} \, \p_{y_3} 
\big( b \, \p_\theta u_3^{\ell_3} -c \, \p_\theta H_3^{\ell_3} \big)} \\
 & \, +{\color{orange} \sum_{\ell_1+\ell_2+\ell_3=m-1} \dot{\chi}^{[\ell_1]} \, \psi^{\ell_2} \, \p_{y_3} 
\big( H_j^0 \, \p_{y_j} u_3^{\ell_3} -(\p_t +u_j^0 \, \p_{y_j}) H_3^{\ell_3} \big)} \\
 & \, +{\color{orange} \sum_{\substack{\ell_1+\cdots+\ell_4=m \\ \ell_3,\ell_4 \ge 1}} \dot{\chi}^{[\ell_1]} \, \psi^{\ell_2} \, 
\p_{y_3} \p_\theta \big( \xi_j \, H_j^{\ell_3} \, u_3^{\ell_4} -\xi_j \, u_j^{\ell_3} \, H_3^{\ell_4} \big)} \\
 & \, +{\color{orange} \sum_{\substack{\ell_1+\cdots+\ell_4=m-1 \\ \ell_3,\ell_4 \ge 1}} \dot{\chi}^{[\ell_1]} \, \psi^{\ell_2} \, 
\p_{y_3} \p_{y_j} \big( H_j^{\ell_3} \, u_3^{\ell_4} -u_j^{\ell_3} \, H_3^{\ell_4} \big)} \\
 & \, +{\color{orange} \sum_{\ell_1+\cdots+\ell_4=m+1} \dot{\chi}^{[\ell_1]} \, \psi^{\ell_2} \, \p_{y_3} 
\Big\{ \p_\theta \psi^{\ell_3} \, \big( c \, \p_{Y_3} H_3^{\ell_4} -b \, \p_{Y_3} u_3^{\ell_4} \big) \Big\}} \\
 & \, +{\color{orange} \sum_{\ell_1+\cdots+\ell_4=m} \dot{\chi}^{[\ell_1]} \, \psi^{\ell_2} \, \p_{y_3} 
\Big\{ \big( \p_t +u_j^0 \, \p_{y_j} \big) \psi^{\ell_3} \, \p_{Y_3} H_3^{\ell_4} -H_j^0 \, \p_{y_j} \psi^{\ell_3} \, \p_{Y_3} u_3^{\ell_4} \Big\}} \\
 & \, +{\color{orange} \sum_{\substack{\ell_1+\cdots+\ell_5=m+1 \\ \ell_3,\ell_4 \ge 1}} \dot{\chi}^{[\ell_1]} \, \psi^{\ell_2} \, 
\p_{y_3} \Big\{ \p_\theta \psi^{\ell_3} \, \p_{Y_3} \big( \xi_j \, u_j^{\ell_4} \, H_3^{\ell_5} -\xi_j \, H_j^{\ell_4} \, u_3^{\ell_5} \big) \Big\}} \\
 & \, +{\color{orange} \sum_{\substack{\ell_1+\cdots+\ell_5=m \\ \ell_3,\ell_4 \ge 1}} \dot{\chi}^{[\ell_1]} \, \psi^{\ell_2} \, 
\p_{y_3} \Big\{ \p_{y_j} \psi^{\ell_3} \, \p_{Y_3} \big( u_j^{\ell_4} \, H_3^{\ell_5} -H_j^{\ell_4} \, u_3^{\ell_5} \big) \Big\}} \\
 & \, +\sum_{\ell_1+\cdots+\ell_5=m} \chi^{[\ell_1]} \, \psi^{\ell_2} \, \p_\theta \psi^{\ell_3} \, 
\p_{y_3} \Big( \dot{\chi}^{[\ell_4]} \, \big( c \, \p_{y_3} H_3^{\ell_5} -b \, \p_{y_3} u_3^{\ell_5} \big) \Big) \\
 & \, +\sum_{\substack{\ell_1+\cdots+\ell_6=m \\ \ell_5,\ell_6 \ge 1}} \chi^{[\ell_1]} \, \psi^{\ell_2} \, \p_\theta \psi^{\ell_3} \, 
\p_{y_3} \Big( \dot{\chi}^{[\ell_4]} \, \p_{y_3} \big( H_3^{\ell_5} \, \xi_j \, u_j^{\ell_6} -u_3^{\ell_5} \, \xi_j \, H_j^{\ell_6} \big) \Big) \\
 & \, +\sum_{\ell_1+\cdots+\ell_5=m-1} \chi^{[\ell_1]} \, \psi^{\ell_2} \, \p_{y_j} \psi^{\ell_3} \, 
\p_{y_3} \Big( \dot{\chi}^{[\ell_4]} \, \big( u_j^0 \, \p_{y_3} H_3^{\ell_5} -H_j^0 \, \p_{y_3} u_3^{\ell_5} \big) \Big) \\
 & \, +\sum_{\substack{\ell_1+\cdots+\ell_6=m-1 \\ \ell_5,\ell_6 \ge 1}} \chi^{[\ell_1]} \, \psi^{\ell_2} \, \p_{y_j} \psi^{\ell_3} \, 
\p_{y_3} \Big( \dot{\chi}^{[\ell_4]} \, \p_{y_3} \big( H_3^{\ell_5} \, u_j^{\ell_6} -u_3^{\ell_5} \, H_j^{\ell_6} \big) \Big) \\
 & \, +\sum_{\ell_1+\cdots+\ell_5=m-1} \chi^{[\ell_1]} \, \psi^{\ell_2} \, \p_t \psi^{\ell_3} \, 
\p_{y_3} \big( \dot{\chi}^{[\ell_4]} \, \p_{y_3} H_3^{\ell_5} \big) \, .
\end{align*}

In the above first orange term, we use the relation $b \, \p_\theta u_3^{\ell_3} -c \, \p_\theta H_3^{\ell_3} =-F_6^{\ell_3-1}$, 
which cancels all other orange terms and introduces five new remainders. Once this \emph{final} (!) substitution has been 
performed, the expression of $\cD^m$ reads:
\begin{align*}
\cD^m \, = \, & \, {\color{red} \sum_{\ell_1+\ell_2+\ell_3=m} \big( \p_{y_3} \chi^{[\ell_1]} \, \p_\theta \psi^{\ell_2} 
-\p_\theta (\dot{\chi}^{[\ell_1]} \, \psi^{\ell_2}) \big) \, \big( c \, \p_{y_3} H_3^{\ell_3} -b \, \p_{y_3} u_3^{\ell_3} \big)} \\
 & \, +{\color{ForestGreen} \sum_{\ell_1+\ell_2+\ell_3=m-1} \big( \p_{y_3} \chi^{[\ell_1]} \, \p_{y_j} \psi^{\ell_2} 
-\p_{y_j} (\dot{\chi}^{[\ell_1]} \, \psi^{\ell_2}) \big) \, \big( u_j^0 \, \p_{y_3} H_3^{\ell_3} -H_j^0 \, \p_{y_3} u_3^{\ell_3} \big)} \\
 & \, +\sum_{\ell_1+\ell_2+\ell_3=m-1} \big( \p_{y_3} \chi^{[\ell_1]} \, \p_t \psi^{\ell_2} 
-\p_t (\dot{\chi}^{[\ell_1]} \, \psi^{\ell_2}) \big) \, \p_{y_3} H_3^{\ell_3} \\
 & \, +\sum_{\substack{\ell_1+\cdots+\ell_4=m \\ \ell_3,\ell_4 \ge 1}} \big( \p_{y_3} \chi^{[\ell_1]} \, \p_\theta \psi^{\ell_2} 
-\p_\theta (\dot{\chi}^{[\ell_1]} \, \psi^{\ell_2}) \big) \, 
\p_{y_3} \big( \xi_j \, u_j^{\ell_3} \, H_3^{\ell_4} -\xi_j \, H_j^{\ell_3} \, u_3^{\ell_4} \big) \\
 & \, +\sum_{\substack{\ell_1+\cdots+\ell_4=m-1 \\ \ell_3,\ell_4 \ge 1}} \big( \p_{y_3} \chi^{[\ell_1]} \, \p_{y_j} \psi^{\ell_2} 
-\p_{y_j} (\dot{\chi}^{[\ell_1]} \, \psi^{\ell_2}) \big) \, \p_{y_3} \big( u_j^{\ell_3} \, H_3^{\ell_4} -H_j^{\ell_3} \, u_3^{\ell_4} \big) \\
 & \, +{\color{red} \sum_{\ell_1+\cdots+\ell_5=m} \chi^{[\ell_1]} \, \psi^{\ell_2} \, \p_\theta \psi^{\ell_3} \, 
\p_{y_3} \Big( \dot{\chi}^{[\ell_4]} \, \big( c \, \p_{y_3} H_3^{\ell_5} -b \, \p_{y_3} u_3^{\ell_5} \big) \Big)} \\
 & \, +{\color{ForestGreen} \sum_{\ell_1+\cdots+\ell_5=m-1} \chi^{[\ell_1]} \, \psi^{\ell_2} \, \p_{y_j} \psi^{\ell_3} \, 
\p_{y_3} \Big( \dot{\chi}^{[\ell_4]} \, \big( u_j^0 \, \p_{y_3} H_3^{\ell_5} -H_j^0 \, \p_{y_3} u_3^{\ell_5} \big) \Big)} \\
 & \, +\sum_{\ell_1+\cdots+\ell_5=m-1} \chi^{[\ell_1]} \, \psi^{\ell_2} \, \p_t \psi^{\ell_3} \, 
\p_{y_3} \big( \dot{\chi}^{[\ell_4]} \, \p_{y_3} H_3^{\ell_5} \big) \\
 & \, +\sum_{\substack{\ell_1+\cdots+\ell_6=m \\ \ell_5,\ell_6 \ge 1}} \chi^{[\ell_1]} \, \psi^{\ell_2} \, \p_\theta \psi^{\ell_3} \, 
\p_{y_3} \Big( \dot{\chi}^{[\ell_4]} \, \p_{y_3} \big( H_3^{\ell_5} \, \xi_j \, u_j^{\ell_6} -u_3^{\ell_5} \, \xi_j \, H_j^{\ell_6} \big) \Big) \\
 & \, +\sum_{\substack{\ell_1+\cdots+\ell_6=m-1 \\ \ell_5,\ell_6 \ge 1}} \chi^{[\ell_1]} \, \psi^{\ell_2} \, \p_{y_j} \psi^{\ell_3} \, 
\p_{y_3} \Big( \dot{\chi}^{[\ell_4]} \, \p_{y_3} \big( H_3^{\ell_5} \, u_j^{\ell_6} -u_3^{\ell_5} \, H_j^{\ell_6} \big) \Big) \\
 & \, -{\color{red} \sum_{\ell_1+\cdots+\ell_5=m} \dot{\chi}^{[\ell_1]} \, \psi^{\ell_2} \, \p_\theta \psi^{\ell_3} \, 
\p_{y_3} \Big( \chi^{[\ell_4]} \, \big( c \, \p_{y_3} H_3^{\ell_5} -b \, \p_{y_3} u_3^{\ell_5} \big) \Big)} \\
 & \, -{\color{ForestGreen} \sum_{\ell_1+\cdots+\ell_5=m-1} \dot{\chi}^{[\ell_1]} \, \psi^{\ell_2} \, \p_{y_j} \psi^{\ell_3} \, 
\p_{y_3} \Big( \chi^{[\ell_4]} \, \big( u_j^0 \, \p_{y_3} H_3^{\ell_5} -H_j^0 \, \p_{y_3} u_3^{\ell_5} \big) \Big)} \\
 & \, -\sum_{\ell_1+\cdots+\ell_5=m-1} \dot{\chi}^{[\ell_1]} \, \psi^{\ell_2} \, \p_t \psi^{\ell_3} \, 
\p_{y_3} \big( \chi^{[\ell_4]} \, \p_{y_3} H_3^{\ell_5} \big) \\
 & \, -\sum_{\substack{\ell_1+\cdots+\ell_6=m \\ \ell_5,\ell_6 \ge 1}} \dot{\chi}^{[\ell_1]} \, \psi^{\ell_2} \, \p_\theta \psi^{\ell_3} \, 
\p_{y_3} \Big( \chi^{[\ell_4]} \, \p_{y_3} \big( H_3^{\ell_5} \, \xi_j \, u_j^{\ell_6} -u_3^{\ell_5} \, \xi_j \, H_j^{\ell_6} \big) \Big) \\
 & \, -\sum_{\substack{\ell_1+\cdots+\ell_6=m-1 \\ \ell_5,\ell_6 \ge 1}} \dot{\chi}^{[\ell_1]} \, \psi^{\ell_2} \, \p_{y_j} \psi^{\ell_3} \, 
\p_{y_3} \Big( \chi^{[\ell_4]} \, \p_{y_3} \big( H_3^{\ell_5} \, u_j^{\ell_6} -u_3^{\ell_5} \, H_j^{\ell_6} \big) \Big) \, .
\end{align*}
There are overall fifteen terms, which we can match into five groups, each of three terms. For instance, the two 
first groups have been highlighted in red and green. Once a last cancellation in the $y_3$-derivative has been 
made, we are led to the final expression of $\cD^m$:
\begin{align*}
\cD^m \, = \, & \, \sum_{\ell_1+\ell_2=m} \bX_\theta^{\ell_1} \, \big( c \, \p_{y_3} H_3^{\ell_2} -b \, \p_{y_3} u_3^{\ell_2} \big) 
+\sum_{\ell_1+\ell_2=m-1} \bX_j^{\ell_1} \, \big( u_j^0 \, \p_{y_3} H_3^{\ell_2} -H_j^0 \, \p_{y_3} u_3^{\ell_2} \big) \\
 & \, +\sum_{\ell_1+\ell_2=m-1} \bX_t^{\ell_1} \, \p_{y_3} H_3^{\ell_3} 
+\sum_{\substack{\ell_1+\ell_2+\ell_3=m \\ \ell_2,\ell_3 \ge 1}} \bX_\theta^{\ell_1} \, 
\p_{y_3} \big( \xi_j \, u_j^{\ell_2} \, H_3^{\ell_3} -\xi_j \, H_j^{\ell_2} \, u_3^{\ell_3} \big) \\
 & \, +\sum_{\substack{\ell_1+\ell_2+\ell_3=m-1 \\ \ell_2,\ell_3 \ge 1}} \bX_j^{\ell_1} \, 
 \p_{y_3} \big( u_j^{\ell_2} \, H_3^{\ell_3} -H_j^{\ell_2} \, u_3^{\ell_3} \big) \, ,
\end{align*}
with
\begin{multline}
\label{defbXell}
\forall \, \ell \, = \, 0,\dots,m-1\, ,\\
\bX_\theta^\ell \, := \, \sum_{\ell_1+\ell_2=\ell} \p_{y_3} \chi^{[\ell_1]} \, \p_\theta \psi^{\ell_2} -\p_\theta \big( \dot{\chi}^{[\ell_1]} \, \psi^{\ell_2} \big) 
+\sum_{\ell_1 +\cdots +\ell_4=\ell} \big( \chi^{[\ell_1]} \, \p_{y_3} \dot{\chi}^{[\ell_2]} 
-\dot{\chi}^{[\ell_1]} \, \p_{y_3} \chi^{[\ell_2]} \big) \, \psi^{\ell_3} \, \p_\theta \psi^{\ell_4} \, ,
\end{multline}
and similar definitions for $\bX_1^\ell$, $\bX_2^\ell$, $\bX_t^\ell$ by just considering another tangential derivative acting on the front 
profiles $\psi^{\ell_2}$ or $\psi^{\ell_4}$ in \eqref{defbXell}.
\bigskip

$\bullet$ \underline{Step 7}. The second symmetry formula. Conclusion.

The final argument comes from a second symmetry formula which relates the $y_3$-derivatives of the functions 
$\chi^{[\ell]}$ and $\dot{\chi}^{[\ell]}$ together with tangential derivatives of the front profiles $\psi^\mu$.

\begin{lemma}[The second symmetry formula]
\label{lem_symmetry_2}
The functions $\bX_\theta^\ell$ defined in \eqref{defbXell} vanish, and similarly, the functions $\bX_1^\ell$, $\bX_2^\ell$, 
$\bX_t^\ell$ also vanish.
\end{lemma}

Since both Lemma \ref{lem_symmetry_1} and Lemma \ref{lem_symmetry_2} rely on combinatorial arguments that 
are independent of the analysis of this Chapter, we also postpone the proof of Lemma \ref{lem_symmetry_2} to 
Appendix \ref{appendixB}. (As the reader will see in Appendix \ref{appendixB}, the result of Lemma \ref{lem_symmetry_1} 
and Lemma \ref{lem_symmetry_2} even holds if the first profile $\psi^1$ does not vanish.)

Because of our final decomposition for the quantity $\cD^m$, the result of Lemma \ref{lem_symmetry_2} implies $\cD^m =0$, 
which was exactly the relation we were aiming at. This completes the proof of Lemma \ref{lem_compatibilite_div} and shows that 
the source terms of the fast problem \eqref{fast_problem_m+1} satisfy the solvability condition \eqref{compatibilite_pb_rapide_d}.
\end{proof}

We now know that the source terms $(F^{\, m,\pm},F_8^{\, m,\pm},G^m)$ in \eqref{fast_problem_m+1} satisfy all five solvability 
conditions \eqref{compatibilite_pb_rapide} of Theorem \ref{theorem_fast_problem}. Since the solvability condition 
\eqref{compatibilite_pb_rapide} does not involve the mean of the boundary source term $\widehat{G}^m(0)$, the 
existence of a solution to \eqref{fast_problem_m+1} does not depend on the choice of $\widehat{\psi}^{\, m+1}(0)$. 
However, the solution itself will depend on the choice of $\widehat{\psi}^{\, m+1}(0)$, which is the reason why below 
we deal separately with the zero Fourier mode of $U^{\, m+1,\pm}$.

Rather than solving the fast problem \eqref{fast_problem_m+1} for all Fourier modes in $\theta$, it is convenient to first solve 
\eqref{fast_problem_m+1} for all \emph{nonzero} Fourier modes in $\theta$. Namely, we set:
$$
F_\sharp^{\, m,\pm} \, := \, \sum_{k \neq 0} \widehat{F}^{\, m,\pm}(k) \, {\rm e}^{i \, k \, \theta} \, ,\quad 
F_{8,\sharp}^{\, m,\pm} \, := \, \sum_{k \neq 0} \widehat{F}_8^{\, m,\pm}(k) \, {\rm e}^{i \, k \, \theta} \, ,\quad 
G_\sharp^m \, := \, \sum_{k \neq 0} \widehat{G}^m(k) \, {\rm e}^{i \, k \, \theta} \, .
$$
Observe in particular that $G_\sharp^m$ is fully determined at this stage since it is independent of $\widehat{\psi}^{\, m+1}(0)$, 
which is still unknown. Using the induction assumption $(H(m))$, Lemma \ref{lem_compatibilite_bord} and Lemma 
\ref{lem_compatibilite_div}, we can verify that the source terms $(F_\sharp^{\, m,\pm},F_{8,\sharp}^{\, m,\pm},G_\sharp^m)$ satisfy 
the solvability conditions \eqref{cor_compatibilite_pb_rapide} of Corollary \ref{cor_fast_problem}. By applying Corollary 
\ref{cor_fast_problem}, we can therefore construct a solution $\bU^{\, m+1,\pm} \in S^\pm$ to the following fast problem (observe 
the slight difference with \eqref{fast_problem_m+1}):
\begin{equation}
\label{fast_problem_m+1'}
\begin{cases}
\cL_f^\pm(\partial) \, \bU^{\, m+1,\pm} \, = \, F_\sharp^{\, m,\pm} \, ,& y \in \Omega_0^\pm \, ,\, \pm Y_3 >0 \, ,\\
\p_{Y_3} \cH_3^{\, m+1,\pm} +\xi_j \, \p_\theta \cH_j^{\, m+1,\pm} \, = \, F_{8,\sharp}^{\, m,\pm} \, ,& y \in \Omega_0^\pm \, ,\, \pm Y_3 >0 \, ,\\
B^+ \, \bU^{\, m+1,+}|_{y_3=Y_3=0} +B^- \, \bU^{\, m+1,-}|_{y_3=Y_3=0} \, = \, G_\sharp^m \, ,
\end{cases}
\end{equation}
and this solution $\bU^{\, m+1,\pm}$ is purely oscillating in $\theta$.

Let us already state the following result, which is crucial in view of showing that the boundary condition \eqref{inductionHm3} will 
be satisfied for $\mu=m+1$.

\begin{lemma}[The top and bottom boundary conditions for nonzero Fourier modes]
\label{lem-induction1}
The solution $\bU^{\, m+1,\pm} \in S^\pm$ to \eqref{fast_problem_m+1'} satisfies:
$$
\underline{\cU}_3^{\, m+1,\pm}|_{y_3=\pm 1} \, = \, \underline{\cH}_3^{\, m+1,\pm}|_{y_3=\pm 1} \, = \, 0 \, .
$$
\end{lemma}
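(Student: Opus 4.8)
The plan is to reduce the statement to the vanishing, at $y_3 = \pm 1$, of the third and sixth components of the residual interior source term $\uF^{\, m,\pm}$. Since $\bU^{\, m+1,\pm}$ solves \eqref{fast_problem_m+1'} whose source terms are purely oscillating in $\theta$, the solution furnished by Corollary \ref{cor_fast_problem} is itself purely oscillating in $\theta$; passing to the limit $Y_3 \to \pm\infty$ in the interior equation (and using $\widehat{\uF}^{\, m,\pm}(0)=0$, which is \eqref{inductionHm5}, so that the residual source of \eqref{fast_problem_m+1'} is exactly $\uF^{\, m,\pm}$), the residual component $\underline{\bU}^{\, m+1,\pm}$ satisfies ${\mathcal A}^\pm \, \p_\theta \underline{\bU}^{\, m+1,\pm} = \uF^{\, m,\pm}$. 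Now the sixth equation of $\cL_f^\pm(\p)\,U=F$ reads $-b^\pm\,\p_\theta\cU_3 + c^\pm\,\p_\theta\cH_3 = F_6$, and the third one reads $\p_{Y_3}\cQ + c^\pm\,\p_\theta\cU_3 - b^\pm\,\p_\theta\cH_3 = F_3$ (use the explicit matrices $A_3^\pm$, ${\mathcal A}^\pm$ of Appendix \ref{appendixA}); restricting to the residual part ($Y_3$-independent) one obtains, at $y_3 = \pm 1$, the $2\times 2$ linear system
\begin{align*}
c^\pm \, \p_\theta \underline{\cU}_3^{\, m+1,\pm} \, - \, b^\pm \, \p_\theta \underline{\cH}_3^{\, m+1,\pm} \, &= \, \uF_3^{\, m,\pm}|_{y_3=\pm 1} \, ,\\
-b^\pm \, \p_\theta \underline{\cU}_3^{\, m+1,\pm} \, + \, c^\pm \, \p_\theta \underline{\cH}_3^{\, m+1,\pm} \, &= \, \uF_6^{\, m,\pm}|_{y_3=\pm 1} \, ,
\end{align*}
whose determinant $(c^\pm)^2 - (b^\pm)^2$ is nonzero by Appendix \ref{appendixA}. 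Hence, once I know that $\uF_3^{\, m,\pm}$ and $\uF_6^{\, m,\pm}$ vanish at $y_3=\pm 1$, both $\p_\theta \underline{\cU}_3^{\, m+1,\pm}$ and $\p_\theta \underline{\cH}_3^{\, m+1,\pm}$ vanish at $y_3 = \pm 1$; since the whole profile is purely oscillating, these traces also have zero mean in $\theta$, so they vanish identically, which is the claim.

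It remains to check that $\uF_3^{\, m,\pm}$ and $\uF_6^{\, m,\pm}$ vanish at $y_3 = \pm 1$. The first observation is that in the explicit formulas \eqref{terme_source_F^m,pmbarre} and \eqref{expressionF6m} every summand except $-L_s^\pm(\p)\uU^{\, m,\pm}$ and the two purely bilinear sums $\sum \xi_j \bA_j(\uU^{\ell_1,\pm},\p_\theta \uU^{\ell_2,\pm})$ and $\sum \bA_\alpha(\uU^{\ell_1,\pm},\p_{y_\alpha}\uU^{\ell_2,\pm})$ carries a factor $\chi^{[\ell]}$ or $\dot\chi^{[\ell]}$; since $\chi$ vanishes identically near $y_3 = \pm 1$ together with all its derivatives, the explicit formulas of Appendix \ref{appendixB} give $\chi^{[\ell]}|_{y_3=\pm 1} = \dot\chi^{[\ell]}|_{y_3=\pm 1} = 0$ for all $\ell$, so those summands drop out at $y_3 = \pm 1$. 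For the surviving terms I would invoke the induction hypothesis \eqref{inductionHm3}, which asserts $\underline{u}_3^{\mu,\pm}|_{y_3=\pm 1} = \underline{H}_3^{\mu,\pm}|_{y_3=\pm 1} = 0$ for $\mu = 1,\dots,m$: thus $\underline{u}_3^{\mu,\pm}$ and $\underline{H}_3^{\mu,\pm}$ vanish on the whole hyperplane $\{y_3 = \pm 1\}$, hence so do all their tangential derivatives ($\p_t$, $\p_{y_1}$, $\p_{y_2}$, $\p_\theta$). Using the explicit Hessians of Appendix \ref{appendixA} one checks that the third and the sixth components of $\bA_\alpha$ are sums of monomials each of which contains a factor $\underline{u}_3^{\ell,\pm}$ or $\underline{H}_3^{\ell,\pm}$ (possibly tangentially differentiated), so both bilinear sums vanish at $y_3 = \pm 1$; likewise $\big(L_s^\pm(\p)\uU^{\, m,\pm}\big)_6 = (\p_t + u_j^{0,\pm}\p_{y_j})\underline{H}_3^{\, m,\pm} - H_j^{0,\pm}\p_{y_j}\underline{u}_3^{\, m,\pm}$ vanishes there. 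This already gives $\uF_6^{\, m,\pm}|_{y_3=\pm 1} = 0$.

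For $\uF_3^{\, m,\pm}$ the same reductions leave only $\uF_3^{\, m,\pm}|_{y_3=\pm 1} = -\big(L_s^\pm(\p)\uU^{\, m,\pm}\big)_3|_{y_3=\pm 1}$, and from the third rows of $A_0, A_1^\pm, A_2^\pm, A_3^\pm$ one has $\big(L_s^\pm(\p)\uU^{\, m,\pm}\big)_3 = \p_t\underline{u}_3^{\, m,\pm} + u_j^{0,\pm}\p_{y_j}\underline{u}_3^{\, m,\pm} - H_j^{0,\pm}\p_{y_j}\underline{H}_3^{\, m,\pm} + \p_{y_3}\underline{q}^{\, m,\pm}$; the first three terms vanish at $y_3 = \pm 1$ by \eqref{inductionHm3}, while the last one, $\p_{y_3}\underline{q}^{\, m,\pm}|_{y_3=\pm 1}$, vanishes because the residual total pressure $\underline{q}^{\, m,\pm}$ has been constructed — exactly as in the leading case \eqref{s3-pb_laplace_uuq^1,pm} and its order-$m$ analogue — as the solution of a Laplace type problem carrying the homogeneous Neumann condition $\p_{y_3}\underline{q}^{\, m,\pm}|_{\Gamma^\pm} = 0$. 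Hence $\uF_3^{\, m,\pm}|_{y_3=\pm 1} = 0$, which completes the argument. The only genuinely delicate point is the component-by-component bookkeeping in the long expression \eqref{terme_source_F^m,pmbarre}: once all the $\chi^{[\ell]}$/$\dot\chi^{[\ell]}$-carrying terms are discarded and the normal components $\underline{u}_3^{\ell,\pm}, \underline{H}_3^{\ell,\pm}$ are known to vanish on $\{y_3 = \pm 1\}$, everything collapses to the Neumann condition on $\underline{q}^{\, m,\pm}$.
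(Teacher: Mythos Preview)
Your reduction to the $2\times 2$ system in $(\p_\theta\underline{\cU}_3^{\,m+1,\pm},\p_\theta\underline{\cH}_3^{\,m+1,\pm})$ is correct, and your treatment of $\uF_6^{\,m,\pm}|_{y_3=\pm 1}$ is fine. The gap is in the last step for $\uF_3^{\,m,\pm}$: you invoke a homogeneous Neumann condition $\p_{y_3}\underline{q}^{\,m,\pm}|_{\Gamma^\pm}=0$ coming from ``a Laplace type problem'', but that Laplace problem (the analogue of \eqref{pb_laplace_m+1} at level $m$) is set up \emph{only} for the slow mean $\widehat{\uq}^{\,m,\pm}(0)$. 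The nonzero Fourier modes of the residual pressure $\uq^{\,m,\pm}$ are not obtained from any elliptic problem; they are the seventh coordinate of $\widehat{\underline{\bU}}^{\,m,\pm}(k)=\tfrac{1}{ik}(\cA^\pm)^{-1}\widehat{\uF}^{\,m-1,\pm}(k)$, determined purely algebraically. Nothing in the induction hypothesis $H(m)$ tells you that $\p_{y_3}$ of these oscillating modes vanishes at $\Gamma^\pm$, so your argument for $\uF_3^{\,m,\pm}|_{\Gamma^\pm}=0$ is incomplete.

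The paper closes this gap by proving a stronger fact with a short internal induction that avoids any component-by-component bookkeeping. For $|y_3|>2/3$ all $\chi^{[\ell]},\dot\chi^{[\ell]}$ vanish, so the residual fast equations $\cA^\pm\p_\theta\uU^{\mu,\pm}=\uF^{\mu-1,\pm}$ together with the invertibility of $\cA^\pm$ give, inductively on $\mu=1,\dots,m$, that $\p_\theta\uU^{\mu,\pm}=0$ in that region (start from $\uF^{0,\pm}=0$; at each step $\p_\theta\uF^{\mu,\pm}=0$, whence $\p_\theta\uU^{\mu+1,\pm}$ is $\theta$-independent and hence zero by periodicity). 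It follows that $\uF^{\,m,\pm}$ is $\theta$-independent for $|y_3|>2/3$, and since $\widehat{\uF}^{\,m,\pm}(0)=0$ by \eqref{inductionHm5} it vanishes there; then $\cA^\pm\p_\theta\underline{\bU}^{\,m+1,\pm}=0$ forces $\underline{\bU}^{\,m+1,\pm}$ (which is purely oscillating by Corollary~\ref{cor_fast_problem}) to vanish identically for $|y_3|>2/3$. This yields the lemma and, incidentally, \emph{would} justify your pressure claim a posteriori---but only after running this argument, not via a Laplace Neumann condition.
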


\begin{proof}[Proof of Lemma \ref{lem-induction1}]
The source term $F_\sharp^{\, m,\pm}$ in \eqref{fast_problem_m+1'} collects the nonzero Fourier modes in $\theta$ of $F^{\, m,\pm}$ 
whose expression is given in \eqref{s3-def_terme_source_F^m,pm}. Computing the limit $Y_3 \to \pm \infty$ and restricting to 
$|y_3|>2/3$ (recall that all functions $\chi^{[\ell]}$, $\dot{\chi}^{[\ell]}$ vanish for $|y_3|>2/3$ since the original cut-off function 
$\chi$ in \eqref{defchi} has compact support in $[-2/3,2/3]$, see \eqref{formule_chim} and \eqref{formule_chipointm} in Appendix 
\ref{appendixB}), we get:
\begin{align*}
\uF^{\, m,\pm} \, = \, -(L_s^\pm(\p) \uU^{\, m,\pm}) 
-\sum_{\substack{\ell_1+\ell_2=m+1 \\ \ell_1,\ell_2 \ge 1}} \xi_j \, \bA_j(\uU^{\ell_1,\pm},\p_\theta \uU^{\ell_2,\pm}) 
-\sum_{\substack{\ell_1+\ell_2=m \\ \ell_1,\ell_2 \ge 1}} \bA_\alpha(\uU^{\ell_1,\pm},\p_{y_\alpha} \uU^{\ell_2,\pm}) \, .
\end{align*}
We know furthermore from \eqref{inductionHm2} that the functions $\uU^{\, 1,\pm},\dots,\uU^{\, m,\pm}$ satisfy for $|y_3|>2/3$:
$$
\forall \, \mu \, = \, 1,\dots,m\, ,\quad \cA^\pm \, \p_\theta \uU^{\mu,\pm} \, = \, \uF^{\mu-1,\pm} \, .
$$
The argument is now the following: since $\uF^{0,\pm}=0$ and since $\cA^\pm$ are invertible (see Appendix \ref{appendixA}), 
we have $\p_\theta \uU^{\, 1,\pm}=0$ for $|y_3|>2/3$. Specifying the above expression of $\uF^{\, m,\pm}$ to $m=1$ shows that 
$\p_\theta \uF^{\, 1,\pm}=0$ for $|y_3|>2/3$, hence $\p_\theta^2 \uU^{\, 2,\pm}=0$ for $|y_3|>2/3$ and therefore $\p_\theta 
\uU^{\, 2,\pm}=0$ (which implies in particular $\uF^{\, 1,\pm}=0$ for $|y_3|>2/3$). We then show inductively that $\p_\theta \uU^{\, 1,\pm} 
=\cdots=\p_\theta U^{\, m,\pm}=0$ for $|y_3|>2/3$, and this implies ultimately $\p_\theta \uF^{\, m,\pm}=0$ for $|y_3|>2/3$. By 
computing the limit $Y_3 \to \pm \infty$ in \eqref{fast_problem_m+1'}, we have:
$$
\cA^\pm \, \p_\theta \underline{\bU}^{\, m+1,\pm} \, = \, \uF^{\, m,\pm} \, ,
$$
and therefore $\p_\theta \underline{\bU}^{\, m+1,\pm}=0$ for $|y_3| >2/3$. Since $\underline{\bU}^{\, m+1,\pm}$ is purely oscillating 
in $\theta$, we obtain $\underline{\bU}^{\, m+1,\pm}=0$ for $|y_3| >2/3$. This proves the result of Lemma \ref{lem-induction1} 
(and actually even more but never mind).
\end{proof}

The next corrector $(U^{\, m+1,\pm},\psi^{\, m+2})$ in the induction process will be constructed in the following way:
\begin{equation}
\label{decompositionUm+1}
U^{\, m+1,\pm} \, := \, \bU^{\, m+1,\pm} +\widehat{U}^{\, m+1,\pm}(0) 
\pm \sum_{k \neq 0} |k| \, \widehat{\psi}^{\, m+2} (t,y',k) \, \chi (y_3) \, {\rm e}^{\mp \, |k| \, Y_3 +i \, k \, \theta} \, \cR^\pm(k) \, ,
\end{equation}
where the vectors $\cR^\pm(k)$ are defined in \eqref{appA-defRLkpm}, and we still need to explain how we construct 
$\widehat{\psi}^{\, m+1}(0) \in H^\infty$, $\widehat{U}^{\, m+1,\pm}(0) \in S^\pm$ and $\psi^{\, m+2}_\sharp \in H^\infty_\sharp$. 
To make sure that \eqref{fast_problem_m+1} is satisfied, we need $\widehat{\psi}^{\, m+1}(0)$ and $\widehat{U}^{\, m+1,\pm}(0)$ 
to verify:
\begin{equation}
\label{fast_problem_m+1''}
\begin{cases}
A_3^\pm \, \p_{Y_3} \widehat{U}^{\, m+1,\pm}(0) \, = \, \widehat{F}^{\, m,\pm}(0) \, ,& y \in \Omega_0^\pm \, ,\, \pm Y_3 >0 \, ,\\
\p_{Y_3} \widehat{H}_3^{\, m+1,\pm}(0) \, = \, \widehat{F}_8^{\, m,\pm}(0) \, ,& y \in \Omega_0^\pm \, ,\, \pm Y_3 >0 \, ,\\
B^+ \, \widehat{U}^{\, m+1,+}(0)|_{y_3=Y_3=0} +B^- \, \widehat{U}^{\, m+1,-}(0)|_{y_3=Y_3=0} \, = \, \widehat{G}^m(0) \, .
\end{cases}
\end{equation}

Because of the compatibility conditions \eqref{inductionHm6}, solving the fast differential equations in \eqref{fast_problem_m+1''} 
amounts to solving:
\begin{align}
&\p_{Y_3} \widehat{q}^{\, m+1,\pm}(0) \, = \, \widehat{F}_3^{\, m,\pm}(0) \, ,\quad &\p_{Y_3} \widehat{u}_3^{\, m+1,\pm}(0) \, 
= \, \widehat{F}_7^{\, m,\pm}(0) \, ,\notag\\
&\p_{Y_3} \widehat{H}_3^{\, m+1,\pm}(0) \, = \, \widehat{F}_8^{\, m,\pm}(0) \, ,\quad & \label{defqu3H3m+1star}
\end{align}
which is possible because we know from \eqref{inductionHm5} that $\widehat{F}_3^{\, m,\pm}(0)$, $\widehat{F}_7^{\, m,\pm}(0)$ and 
$\widehat{F}_8^{\, m,\pm}(0)$ actually belong to $S_\star^\pm$. We thus define the functions $\widehat{q}_\star^{\, m+1,\pm}(0)$, 
$\widehat{u}_{3,\star}^{\, m+1,\pm}(0)$, $\widehat{H}_{3,\star}^{\, m+1,\pm}(0)$ as the unique solutions in $S_\star^\pm$ to the ordinary 
differential equations:
$$
\p_{Y_3} \widehat{q}_\star^{\, m+1,\pm}(0) \, = \, \widehat{F}_3^{\, m,\pm}(0) \, ,\quad 
\p_{Y_3} \widehat{u}_{3,\star}^{\, m+1,\pm}(0) \, = \, \widehat{F}_7^{\, m,\pm}(0) \, ,\quad 
\p_{Y_3} \widehat{H}_{3,\star}^{\, m+1,\pm}(0) \, = \, \widehat{F}_8^{\, m,\pm}(0) \, .
$$
Whatever choice we make for the remaining functions $\widehat{u}_{j,\star}^{\, m+1,\pm}(0)$ and $\widehat{H}_{j,\star}^{\, m+1,\pm}(0)$ ($j=1,2$), 
and for the slow mean $\widehat{\uU}^{\, m+1,\pm}(0)$, we shall verify the fast differential equations in \eqref{fast_problem_m+1''}. Observe 
that the functions $\widehat{q}_\star^{\, m+1,\pm}(0),\widehat{u}_{3,\star}^{\, m+1,\pm}(0),\widehat{H}_{3,\star}^{\, m+1,\pm}(0)$ enter the 
boundary conditions on $\Gamma_0$ of \eqref{fast_problem_m+1''} but since they have already been determined, we shall now treat them 
as forcing terms in the boundary conditions of \eqref{fast_problem_m+1''}.

In the following Section, we explain how we construct the slow mean $\widehat{\uU}^{\, m+1,\pm}(0)$ together with the slow mean 
of the front profile $\widehat{\psi}^{\, m+1}(0)$. The remaining fast means, namely $\widehat{u}_{j,\star}^{\, m+1,\pm}(0)$ and 
$\widehat{H}_{j,\star}^{\, m+1,\pm}(0)$ ($j=1,2$), will be dealt with afterwards.

\section{The slow mean}

In this Section, we explain how we construct both $\widehat{\uU}^{\, m+1,\pm}(0)$ and $\widehat{\psi}^{\, m+1}(0)$.

\subsection{Collecting the equations}

In order to verify the fast system \eqref{inductionHm2} for $\mu=m+1$, we have seen in the previous Section that it remains to verify 
the boundary conditions for the zero Fourier mode:
$$
B^+ \, \widehat{U}^{\, m+1,+}(0)|_{y_3=Y_3=0} +B^- \, \widehat{U}^{\, m+1,-}(0)|_{y_3=Y_3=0} \, = \, \widehat{G}^m(0) \, ,
$$
which equivalently reads:
\begin{equation}
\label{slow_mean_m+1_saut-bis}
\begin{cases}
\widehat{\uu}_3^{\, m+1,\pm}(0)|_{y_3=0} \, = \, \widehat{G}_1^{\, m,\pm}(0) -\widehat{u}_{3,\star}^{\, m+1,\pm}(0)|_{y_3=Y_3=0} \, ,& \\
\widehat{\uH}_3^{\, m+1,\pm}(0)|_{y_3=0} \, = \, \widehat{G}_2^{\, m,\pm}(0) -\widehat{H}_{3,\star}^{\, m+1,\pm}(0)|_{y_3=Y_3=0} \, ,& \\
\widehat{\uq}^{\, m+1,+}(0)|_{y_3=0} -\widehat{\uq}^{\, m+1,-}(0)|_{y_3=0} \, = \, 
-\widehat{q}_\star^{\, m+1,+}(0)|_{y_3=Y_3=0} +\widehat{q}_\star^{\, m+1,-}(0)|_{y_3=Y_3=0} \, .
\end{cases}
\end{equation}
For future use, we introduce short notation to denote the source terms in \eqref{slow_mean_m+1_saut-bis}, and rewrite 
\eqref{slow_mean_m+1_saut-bis} as:
\begin{equation}
\label{slow_mean_m+1_saut}
\begin{cases}
\widehat{\uu}_3^{\, m+1,\pm}(0)|_{y_3=0} \, = \, \bG_1^{\, m,\pm} \, ,& \\
\widehat{\uH}_3^{\, m+1,\pm}(0)|_{y_3=0} \, = \, \bG_2^{\, m,\pm} \, ,& \\
\widehat{\uq}^{\, m+1,+}(0)|_{y_3=0} -\widehat{\uq}^{\, m+1,-}(0)|_{y_3=0} \, = \, \bG_5^m \, .
\end{cases}
\end{equation}
At this stage, only $\bG_5^m$ is known since $\bG_1^{\, m,\pm}$ and $\bG_2^{\, m,\pm}$ incorporate in their definition the slow mean 
$\widehat{\psi}^{\, m+1}(0)$ which has not been fixed yet.

The boundary conditions on the top and bottom boundaries $\Gamma^\pm$ for the slow mean correspond to enforcing the condition 
\eqref{inductionHm3} with $\mu=m+1$ for the zero Fourier mode only, that is:
\begin{equation}
\label{slow_mean_m+1_hautbas}
\widehat{\uu}_3^{\, m+1,\pm}(0)|_{\Gamma^\pm} \, = \, \widehat{\uH}_3^{\, m+1,\pm}(0)|_{\Gamma^\pm} \, = \, 0 \, .
\end{equation}

The evolution equations inside the domains $\Omega_0^\pm$ correspond to enforcing the condition \eqref{inductionHm5} for $\mu=m+1$. 
Let us make this point clear. Recalling the general expression \eqref{terme_source_F^m,pmbarre}, we have:
\begin{align*}
\uF^{\, m+1,\pm} \, = \, & \, -{\color{blue} L_s^\pm(\p) \uU^{\, m+1,\pm}} \, 
+\sum_{\ell_1+\ell_2+\ell_3=m+2} \chi^{[\ell_1]} \, \p_\theta \psi^{\ell_2} \, \cA^\pm \, \p_{y_3} \uU^{\ell_3,\pm} \\
& \, +\sum_{\ell_1+\ell_2+\ell_3=m+1} \chi^{[\ell_1]} \, (\p_t \psi^{\ell_2} \, A_0 +\p_{y_j} \psi^{\ell_2} \, A_j^\pm) \, \p_{y_3} \uU^{\ell_3,\pm} 
+\sum_{\ell_1+\ell_2+\ell_3=m+1} \dot{\chi}^{[\ell_1]} \, \psi^{\ell_2} \, A_3^\pm \, \p_{y_3} \uU^{\ell_3,\pm} \\
& \, -{\color{red} \sum_{\substack{\ell_1+\ell_2=m+2 \\ \ell_1,\ell_2 \ge 1}} 
\xi_j \, \bA_j (\uU^{\ell_1,\pm},\p_\theta \uU^{\ell_2,\pm})} 
-\sum_{\substack{\ell_1+\ell_2=m+1 \\ \ell_1,\ell_2 \ge 1}} \bA_\alpha (\uU^{\ell_1,\pm},\p_{y_\alpha} \uU^{\ell_2,\pm}) \\
& \, +\sum_{\substack{\ell_1+\cdots+\ell_4=m+2 \\ \ell_3,\ell_4 \ge 1}} \chi^{[\ell_1]} \, \p_\theta \psi^{\ell_2} \, 
\xi_j \, \bA_j (\uU^{\ell_3,\pm},\p_{y_3} \uU^{\ell_4,\pm}) \\
& \, +\sum_{\substack{\ell_1+\cdots+\ell_4=m+1 \\ \ell_3,\ell_4 \ge 1}} \chi^{[\ell_1]} \, \p_{y_j} \psi^{\ell_2} \, 
\bA_j (\uU^{\ell_3,\pm},\p_{y_3} \uU^{\ell_4,\pm}) 
+\sum_{\substack{\ell_1+\cdots+\ell_4=m+1 \\ \ell_3,\ell_4 \ge 1}} \dot{\chi}^{[\ell_1]} \, \psi^{\ell_2} \, 
\bA_3 (\uU^{\ell_3,\pm},\p_{y_3} \uU^{\ell_4,\pm}) \, ,
\end{align*}
where we have highlighted in blue and red the only quantities that are not completely known at this stage. Let us first remark that 
the above red term has zero mean with respect to $\theta$ on $\bT$ whatever choice we make below for $\widehat{\uU}^{\, m+1,\pm}(0)$ 
(use the symmetry of the $\bA_j$'s). In other words, enforcing the condition \eqref{inductionHm5} for $\mu=m+1$ corresponds to 
verifying the linearized (inhomogeneous) MHD equations:
\begin{equation}
\label{slow_mean_m+1_edp}
\begin{cases}
L_s^\pm(\partial) \, \widehat{\uU}^{\, m+1,\pm}(0) \, = \, \bF^{\, m,\pm} \, ,& y \in \Omega_0^\pm \, ,\\
\nabla \cdot \widehat{\uH}^{\, m+1,\pm}(0) \, = \, \bF_8^{\, m,\pm} \, ,& y \in \Omega_0^\pm \, ,
\end{cases}
\end{equation}
with:
\begin{align}
\bF^{\, m,\pm} := {\bf c}_0 \, \Big\{ & 
\sum_{\ell_1+\ell_2+\ell_3=m+2} \chi^{[\ell_1]} \, \p_\theta \psi^{\ell_2} \, \cA^\pm \, \p_{y_3} \uU^{\ell_3,\pm} \notag \\
& +\sum_{\ell_1+\ell_2+\ell_3=m+1} \chi^{[\ell_1]} \, (\p_t \psi^{\ell_2} \, A_0 +\p_{y_j} \psi^{\ell_2} \, A_j^\pm) \, \p_{y_3} \uU^{\ell_3,\pm} 
+\sum_{\ell_1+\ell_2+\ell_3=m+1} \dot{\chi}^{[\ell_1]} \, \psi^{\ell_2} \, A_3^\pm \, \p_{y_3} \uU^{\ell_3,\pm} \notag \\
& -\sum_{\substack{\ell_1+\ell_2=m+1 \\ \ell_1,\ell_2 \ge 1}} \bA_\alpha ( \uU^{\ell_1,\pm},\p_{y_\alpha} \uU^{\ell_2,\pm}) 
+\sum_{\substack{\ell_1+\cdots+\ell_4=m+2 \\ \ell_3,\ell_4 \ge 1}} \chi^{[\ell_1]} \, \p_\theta \psi^{\ell_2} \, 
\xi_j \, \bA_j (\uU^{\ell_3,\pm},\p_{y_3} \uU^{\ell_4,\pm}) \label{defsource_slow_mean_m+1} \\
& +\sum_{\substack{\ell_1+\cdots+\ell_4=m+1 \\ \ell_3,\ell_4 \ge 1}} \chi^{[\ell_1]} \, \p_{y_j} \psi^{\ell_2} \, 
\bA_j (\uU^{\ell_3,\pm},\p_{y_3} \uU^{\ell_4,\pm}) 
+\sum_{\substack{\ell_1+\cdots+\ell_4=m+1 \\ \ell_3,\ell_4 \ge 1}} \dot{\chi}^{[\ell_1]} \, \psi^{\ell_2} \, 
\bA_3 (\uU^{\ell_3,\pm},\p_{y_3} \uU^{\ell_4,\pm}) \Big\} ,\notag
\end{align}
and
\begin{align}
\bF_8^{\, m,\pm} \, := \, {\bf c}_0 \, \Big\{ \, 
\sum_{\ell_1+\ell_2+\ell_3=m+2} \chi^{[\ell_1]} \, \p_\theta \psi^{\ell_2} \, \p_{y_3} \xi_j \, \uH_j^{\ell_3,\pm} 
& \, +\sum_{\ell_1+\ell_2+\ell_3=m+1} \chi^{[\ell_1]} \, \p_{y_j} \psi^{\ell_2} \, \p_{y_3} \uH_j^{\ell_3,\pm} \label{defsource8_slow_mean_m+1} \\
& \, +\sum_{\ell_1+\ell_2+\ell_3=m+1} \dot{\chi}^{[\ell_1]} \, \psi^{\ell_2} \, \p_{y_3} \uH_3^{\ell_3,\pm}
\, \Big\} \, .\notag
\end{align}
Let us recall that the operators $L_s^\pm(\partial)$ in \eqref{slow_mean_m+1_edp} are defined by:
$$
L_s^\pm(\partial) \, = \, A_0 \, \p_t +A_\alpha^\pm \, \p_{y_\alpha} \, ,
$$
and the expressions of the matrices $A_\alpha^\pm$ are given in Appendix \ref{appendixA}.

The evolution equations \eqref{slow_mean_m+1_edp} and boundary conditions \eqref{slow_mean_m+1_saut}, 
\eqref{slow_mean_m+1_hautbas} are supplemented with the initial conditions:
\begin{equation}
\label{slow_mean_m+1_di}
\widehat{\uu}^{\, m+1,\pm}(0)|_{t=0} \, = \, \mathfrak{u}^{\, m+1,\pm}_0 \, ,\quad 
\widehat{\uH}^{\, m+1,\pm}(0)|_{t=0} \, = \, \mathfrak{H}^{\, m+1,\pm}_0 \, ,
\end{equation}
for which we still need to explain how they are chosen.

\subsection{Solvability of the linearized current vortex sheet system}

In this Paragraph, we consider the linearized current vortex sheet system in its general form, that is, with arbitrary source 
terms. In the following Paragraph, we shall specify to the source terms that are explicitly given in \eqref{slow_mean_m+1_edp}, 
\eqref{slow_mean_m+1_saut-bis}, \eqref{slow_mean_m+1_hautbas} and we shall explain why Proposition \ref{prop-systemelent} 
below allows us to construct both $\widehat{\psi}^{\, m+1}(0)$ and $\widehat{\uU}^{\, m+1,\pm}(0)$.

In what follows, the vector $U$ still denotes the collection of unknowns $(u,H,q) \in \bR^7$. We consider the following linearized 
MHD equations in the fixed domains $\Omega_0^\pm$:
\begin{equation}
\label{systeme_lent}
\begin{cases}
L_s^\pm(\partial) \, U^\pm \, = \, \bF^\pm \, ,& y \in \Omega_0^\pm \, ,\\
\nabla \cdot H^\pm \, = \, \bF_8^\pm \, ,& y \in \Omega_0^\pm \, ,
\end{cases}
\end{equation}
which reads explicitly:
\begin{equation}
\label{systeme_lent_exp}
\begin{cases}
\big( \p_t  +u_j^{0,\pm} \, \p_{y_j} \big) u_\alpha^\pm +u_\alpha^{0,\pm} \, \nabla \cdot u^\pm 
-H_j^{0,\pm} \, \p_{y_j} H_\alpha^\pm -H_\alpha^{0,\pm} \, \nabla \cdot H^\pm +\p_{y_\alpha} q^\pm \, = \, \bF_\alpha^\pm \, ,& \alpha=1,2,3\, ,\\
\big( \p_t  +u_j^{0,\pm} \, \p_{y_j} \big) H_\alpha^\pm -u_\alpha^{0,\pm} \, \nabla \cdot H^\pm 
-H_j^{0,\pm} \, \p_{y_j} u_\alpha^\pm +H_\alpha^{0,\pm} \, \nabla \cdot u^\pm \, = \, \bF_{3+\alpha}^\pm \, ,& \alpha=1,2,3\, ,\\
\nabla \cdot u^\pm \, = \, \bF_7^\pm \, ,& \\
\nabla \cdot H^\pm \, = \, \bF_8^\pm \, .&
\end{cases}
\end{equation}
The top, bottom and intermediate boundary conditions are:
\begin{equation}
\label{systeme_lent_hautbas}
u_3^\pm|_{\Gamma^\pm} \, = \, H_3^\pm|_{\Gamma^\pm} \, = \, 0 \, ,
\end{equation}
\begin{equation}
\label{systeme_lent_saut}
\begin{cases}
u_3^\pm|_{y_3=0} \, = \, \bG_1^\pm \, ,& \\
H_3^\pm|_{y_3=0} \, = \, \bG_2^\pm \, ,& \\
q^+|_{y_3=0} -q^-|_{y_3=0} \, = \, \bG_5 \, ,
\end{cases}
\end{equation}
where the source terms $\bF^\pm,\bF_8^\pm$ considered in \eqref{systeme_lent} belong to $H^\infty([0,T] \times \Omega_0^\pm)$, 
and the source terms $\bG_1^\pm,\bG_2^\pm,\bG_5$ in \eqref{systeme_lent_saut} belong to $H^\infty([0,T] \times \bT^2)$. Our goal is 
to understand the compatibility conditions on these source terms and on the initial data for $(u^\pm,H^\pm)$ that ensure the existence 
and uniqueness of a solution in $H^\infty([0,T] \times \Omega_0^\pm)$ to \eqref{systeme_lent}, \eqref{systeme_lent_hautbas}, 
\eqref{systeme_lent_saut}. Our result is the following.

\begin{proposition}[Solvability of the linearized current vortex sheet system]
\label{prop-systemelent}
Let $u_0^\pm,H_0^\pm \in H^\infty(\Omega_0^\pm)$. The problem \eqref{systeme_lent}, \eqref{systeme_lent_hautbas}, \eqref{systeme_lent_saut} 
supplemented with the initial conditions:
$$
(u^\pm,H^\pm)|_{t=0} \, = \, (u_0^\pm,H_0^\pm) \, ,
$$
has a unique solution in $H^\infty([0,T] \times \Omega_0^\pm)$ if and only if the following conditions hold:
\begin{itemize}
 \item $\bF_6^\pm|_{\Gamma^\pm} = 0$ (compatibility at the top and bottom boundaries),
 \item $\bF_6^\pm|_{\Gamma_0} = (\p_t  +u_j^{0,\pm} \, \p_{y_j}) \bG_2^\pm -H_j^{0,\pm} \, \p_{y_j} \bG_1^\pm$ (compatibility on $\Gamma_0$),
 \item $\p_t \bF_8^\pm = \p_{y_\alpha} \bF_{3+\alpha}^\pm$ (compatibility for the divergence of the magnetic field),
 \item $\nabla \cdot u_0^\pm \, = \, \bF_7^\pm|_{t=0}$ and $\nabla \cdot H_0^\pm \, = \, \bF_8^\pm|_{t=0}$ (compatibility for the 
 divergence of the initial conditions),
 \item $u_{0,3}^\pm|_{\Gamma^\pm} = H_{0,3}^\pm|_{\Gamma^\pm} = 0$, $u_{0,3}^\pm|_{\Gamma_0} = \bG_1^\pm|_{t=0}$, 
 $H_{0,3}^\pm|_{\Gamma_0} = \bG_2^\pm|_{t=0}$ (compatibility of the initial conditions at the boundaries),
 \item for all time $t\in [0,T]$, the Laplace problem:
\begin{equation}
\label{pb_laplace_couple}
\begin{cases}
-\Delta \, q^\pm \, = \, -\p_{y_\alpha} \bF_\alpha^\pm +(\p_t  +2\, u_j^{0,\pm} \, \p_{y_j}) \bF_7^\pm -2\, H_j^{0,\pm} \, \p_{y_j} \bF_8^\pm \, ,& 
\text{\rm in $\Omega_0^\pm$,} \\
\p_{y_3} q^\pm|_{\Gamma^\pm} \, = \, \bF_3^\pm|_{\Gamma^\pm} \, ,& \\
\p_{y_3} q^\pm|_{\Gamma_0} \, = \, \bF_3^\pm|_{\Gamma_0} -(\p_t  +u_j^{0,\pm} \, \p_{y_j}) \bG_1^\pm +H_j^{0,\pm} \, \p_{y_j} \bG_2^\pm\, ,& \\
q^+|_{\Gamma_0} -q^-|_{\Gamma_0} \, = \, \bG_5 \, ,&
\end{cases}
\end{equation}
 has a solution $(q^+,q^-) \in H^\infty(\Omega_0^+) \times H^\infty(\Omega_0^-)$.
\end{itemize}
\end{proposition}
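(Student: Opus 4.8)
The plan is to prove the two implications separately, and in both directions to exploit the structural feature that, since $u_3^{0,\pm}=H_3^{0,\pm}=0$, the operator $L_s^\pm(\partial)$ carries \emph{no transport in the normal direction} $y_3$: up to the elliptic determination of $q^\pm$, the problem reduces to a short list of constant-coefficient symmetric hyperbolic systems for which the slow normal variable $y_3$ is a mere parameter, exactly as in the analysis of \eqref{s3-sys_uuU^1,pm} and \eqref{decomp_comp_fast_mean_lead_6} in Chapter \ref{chapter4}.

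\emph{Necessity.} Assume a solution exists. The sixth equation of \eqref{systeme_lent_exp} (for $H_3^\pm$) contains no $\p_{y_3}$ because $u_3^{0,\pm}=H_3^{0,\pm}=0$; restricting it to $\Gamma^\pm$, where $u_3^\pm=H_3^\pm=0$ and where $\p_t,\p_{y_1},\p_{y_2}$ are tangential, gives $\bF_6^\pm|_{\Gamma^\pm}=0$, and restricting it to $\Gamma_0$, using $u_3^\pm|_{y_3=0}=\bG_1^\pm$ and $H_3^\pm|_{y_3=0}=\bG_2^\pm$, gives the compatibility relation on $\Gamma_0$. Taking $\nabla\cdot$ of the six evolution equations and using that $u^{0,\pm},H^{0,\pm}$ are constant yields $\p_t(\nabla\cdot H^\pm)=\p_{y_\alpha}\bF_{3+\alpha}^\pm$ and, after taking $\nabla\cdot$ of the momentum equations and substituting $\nabla\cdot u^\pm=\bF_7^\pm$, $\nabla\cdot H^\pm=\bF_8^\pm$, the interior Laplace equation of \eqref{pb_laplace_couple}; the first of these gives $\p_t\bF_8^\pm=\p_{y_\alpha}\bF_{3+\alpha}^\pm$. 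Restricting the third equation (for $u_3^\pm$) to $\Gamma^\pm$ and to $\Gamma_0$ produces the two Neumann-type boundary conditions of \eqref{pb_laplace_couple}, and the jump of $q^\pm$ across $\Gamma_0$ is part of \eqref{systeme_lent_saut}; so \eqref{pb_laplace_couple} is solvable. Finally, evaluating the divergence and seventh equations, and the boundary conditions, at $t=0$ gives the compatibilities of the initial data.

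\emph{Sufficiency.} Assume all the conditions hold. First solve the Laplace problem \eqref{pb_laplace_couple} for each fixed $t\in[0,T]$; by elliptic regularity and smooth dependence on the parameter $t$ this produces $q^\pm\in H^\infty([0,T]\times\Omega_0^\pm)$ (determined up to a single additive constant, fixed by any convenient normalization). Then, replacing $\nabla\cdot u^\pm$ and $\nabla\cdot H^\pm$ by $\bF_7^\pm$ and $\bF_8^\pm$ in the evolution equations, solve successively: (a) the closed $4\times4$ symmetric hyperbolic system for $(u_1^\pm,u_2^\pm,H_1^\pm,H_2^\pm)$ — the system \eqref{decomp_comp_fast_mean_lead_6} with $\nabla q^\pm$ now a known source — with initial data $(u_{0,j}^\pm,H_{0,j}^\pm)_{j=1,2}$, for which $y_3$ is a parameter and no boundary condition is required; and (b) the closed $2\times2$ symmetric hyperbolic system for $(u_3^\pm,H_3^\pm)$ formed by equations three and six, with $\p_{y_3}q^\pm$ and $(u_j^\pm,H_j^\pm)$ known, and initial data $(u_{0,3}^\pm,H_{0,3}^\pm)$, again with $y_3$ a parameter.

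It remains to verify that the functions so obtained solve the original problem; this is where the remaining compatibility conditions are consumed. Setting $d_u^\pm:=\nabla\cdot u^\pm-\bF_7^\pm$, $d_H^\pm:=\nabla\cdot H^\pm-\bF_8^\pm$, taking divergences of the equations of (a)--(b) and inserting the interior Laplace equation for $q^\pm$ and the relation $\p_t\bF_8^\pm=\p_{y_\alpha}\bF_{3+\alpha}^\pm$, one finds that $(d_u^\pm,d_H^\pm)$ solves a \emph{homogeneous} $2\times2$ symmetric hyperbolic system; since $d_u^\pm|_{t=0}=d_H^\pm|_{t=0}=0$ by the divergence compatibility of the initial data, uniqueness forces $d_u^\pm\equiv d_H^\pm\equiv0$, so the two divergence constraints hold for all times (and the original equations of (a)--(b) are recovered). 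Restricting the system (b) to $\Gamma^\pm$ and using $\bF_6^\pm|_{\Gamma^\pm}=0$ and $\p_{y_3}q^\pm|_{\Gamma^\pm}=\bF_3^\pm|_{\Gamma^\pm}$ shows $(u_3^\pm,H_3^\pm)|_{\Gamma^\pm}$ solves a homogeneous system with vanishing initial data, hence is $0$; restricting (b) to $\Gamma_0$ and using the $\Gamma_0$-boundary conditions of \eqref{pb_laplace_couple} together with the $\Gamma_0$-compatibility of $\bF_6^\pm$ shows $(u_3^\pm-\bG_1^\pm,H_3^\pm-\bG_2^\pm)|_{\Gamma_0}$ solves the same homogeneous system with vanishing initial data, hence is $0$; the jump of $q^\pm$ across $\Gamma_0$ holds by construction. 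Uniqueness of $(u^\pm,H^\pm)$ follows from a standard energy estimate for the difference of two solutions (its pressure solves the homogeneous version of \eqref{pb_laplace_couple}, hence is constant, so its gradient drops out), and $q^\pm$ is then pinned down up to the additive constant. The only point requiring genuine care is the bookkeeping just described: one must check that the reduced systems for $(u_1,u_2,H_1,H_2)$, for $(u_3,H_3)$, and for the defects $(d_u,d_H)$ — together with the two boundary restrictions of the $(u_3,H_3)$-system — are arranged so that each compatibility hypothesis makes exactly one constraint or boundary condition come out for free. This is entirely parallel to, and considerably lighter than, the algebra already carried out in the proofs of Lemma \ref{lem_compatibilite_bord} and Lemma \ref{lem_compatibilite_div}, so no new idea is involved and the remaining verifications are routine.
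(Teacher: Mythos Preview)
Your proof is correct and follows essentially the same approach as the paper: first solve the coupled Laplace problem \eqref{pb_laplace_couple} for $q^\pm$, then solve the resulting symmetric hyperbolic transport system for $(u^\pm,H^\pm)$ (in which $y_3$ is a parameter), and finally verify the divergence constraints and boundary conditions by showing the corresponding defects satisfy homogeneous transport equations with zero initial data. Your explicit splitting into the tangential $4\times4$ block and the normal $2\times2$ block is a cosmetic rearrangement of the paper's six decoupled $2\times2$ systems (one for each $\alpha$), and your remark that $(u_j^\pm,H_j^\pm)$ must be ``known'' for step~(b) is unnecessary since after substituting $\nabla\cdot u^\pm=\bF_7^\pm$, $\nabla\cdot H^\pm=\bF_8^\pm$ the $(u_3^\pm,H_3^\pm)$-system decouples completely from the tangential components.
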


\noindent Let us observe that the elliptic problem \eqref{pb_laplace_couple} looks (and actually is) over-determined, as will become 
perhaps even more clear in the following Paragraph. Indeed, on each domain $\Omega_0^\pm$, we impose Neumann boundary 
conditions for $q^\pm$ but we also impose a jump condition across $\Gamma_0$. As in \cite{SWZ}, this is precisely this coupling 
equation on $\Gamma_0$ which will govern the evolution of the slow mean of the front profile.

\begin{proof}[Proof of Proposition \ref{prop-systemelent}]
The proof is quite simple so we sketch it quickly. That the conditions stated in Proposition \ref{prop-systemelent} are necessary 
follows from classical manipulations on \eqref{systeme_lent_exp}. For instance, taking the divergence of the first three evolution 
equations yields the Laplace problem for the total pressure, and taking the divergence for the evolution equations for the magnetic 
field yields the compatibility condition $\p_t \bF_8^\pm = \p_{y_\alpha} \bF_{3+\alpha}^\pm$. The other conditions are rather 
straightforward.

Let us therefore now assume that all conditions stated in Proposition \ref{prop-systemelent} are satisfied. We wish to show 
that the problem \eqref{systeme_lent}, \eqref{systeme_lent_hautbas}, \eqref{systeme_lent_saut} has a unique solution in 
$H^\infty([0,T] \times \Omega_0^\pm)$. Let the total pressure $(q^+,q^-)$ be defined as the solution to \eqref{pb_laplace_couple}. 
(This is precisely part of our assumptions that the latter problem has a solution.) We then determine the velocity and magnetic 
field by solving the hyperbolic system:
$$
\begin{cases}
\big( \p_t  +u_j^{0,\pm} \, \p_{y_j} \big) u_\alpha^\pm -H_j^{0,\pm} \, \p_{y_j} H_\alpha^\pm \, = \, \bF_\alpha^\pm 
-u_\alpha^{0,\pm} \, \bF_7^\pm +H_\alpha^{0,\pm} \, \bF_8^\pm -\p_{y_\alpha} q^\pm \, ,& \alpha=1,2,3\, ,\\
\big( \p_t  +u_j^{0,\pm} \, \p_{y_j} \big) H_\alpha^\pm -H_j^{0,\pm} \, \p_{y_j} u_\alpha^\pm \, = \, \bF_{3+\alpha}^\pm 
+u_\alpha^{0,\pm} \, \bF_8^\pm -H_\alpha^{0,\pm} \, \bF_7^\pm \, ,& \alpha=1,2,3\, ,
\end{cases}
$$
with initial data:
$$
(u^\pm,H^\pm)|_{t=0} \, = \, (u_0^\pm,H_0^\pm) \, .
$$
There is no boundary condition required on $\partial \Omega_0^\pm$ because the vector fields $u_j^{0,\pm} \, \p_{y_j}$ and $H_j^{0,\pm} 
\, \p_{y_j}$ are both tangent to the boundaries $\Gamma^\pm$ and $\Gamma_0$ (which means that the normal variable $y_3$ is a 
parameter). At this stage, we have determined $(u^\pm,H^\pm,q^\pm)$ and we need to verify that all equations in \eqref{systeme_lent}, 
\eqref{systeme_lent_hautbas}, \eqref{systeme_lent_saut} are satisfied. Applying the divergence operator and making use of the elliptic 
system satisfied by the total pressure, we get:
$$
\begin{cases}
\big( \p_t  +u_j^{0,\pm} \, \p_{y_j} \big) \nabla \cdot u^\pm -H_j^{0,\pm} \, \p_{y_j} \nabla \cdot H^\pm \, = \, 
\big( \p_t  +u_j^{0,\pm} \, \p_{y_j} \big) \bF_7^\pm -H_j^{0,\pm} \, \p_{y_j} \bF_8^\pm \, ,& \\
\big( \p_t  +u_j^{0,\pm} \, \p_{y_j} \big) \nabla \cdot H^\pm -H_j^{0,\pm} \, \p_{y_j} \nabla \cdot u^\pm \, = \, 
\big( \p_t  +u_j^{0,\pm} \, \p_{y_j} \big) \bF_8^\pm -H_j^{0,\pm} \, \p_{y_j} \bF_7^\pm \, .& 
\end{cases}
$$
Since the initial conditions $u_0^\pm,H_0^\pm$ have compatible divergence with $\bF_7^\pm|_{t=0}$, $\bF_8^\pm|_{t=0}$, we get by 
the energy method:
$$
\forall \, t \in [0,T] \, ,\quad \nabla \cdot u^\pm \, = \, \bF_7^\pm \, ,\quad \nabla \cdot u^\pm \, = \, \bF_8^\pm \, ,
$$
which already proves that we have indeed solved \eqref{systeme_lent}.

Restricting to the top and bottom boundaries $\Gamma^\pm$ and using the compatibility condition on the source terms $\bF_6^\pm$ 
as well as the Neumann condition satisfied by the total pressure, we get:
$$
\begin{cases}
\big( \p_t  +u_j^{0,\pm} \, \p_{y_j} \big) u_3^\pm|_{\Gamma^\pm} -H_j^{0,\pm} \, \p_{y_j} H_3^\pm|_{\Gamma^\pm} \, = \, 0 \, ,& \\
\big( \p_t  +u_j^{0,\pm} \, \p_{y_j} \big) H_3^\pm|_{\Gamma^\pm} -H_j^{0,\pm} \, \p_{y_j} u_3^\pm|_{\Gamma^\pm} \, = \, 0 \, .& 
\end{cases}
$$
By the compatibility conditions satisfied by the initial data, we get \eqref{systeme_lent_hautbas}. The verification of the first four 
conditions in \eqref{systeme_lent_saut} is in the same spirit. (The fifth condition in \eqref{systeme_lent_saut} is already included 
in \eqref{pb_laplace_couple} and is therefore satisfied.) To complete the proof of Proposition \ref{prop-systemelent}, it remains 
to examine the smoothness of the solution. From all previous arguments, we see that it is sufficient to prove that the solution 
to the coupled Laplace problem \eqref{pb_laplace_couple} can be chosen smoothly with respect to the time variable $t \in [0,T]$. 
This is true indeed because the solution to \eqref{pb_laplace_couple}, if it exists (which depends on compatibility conditions for 
the source terms in \eqref{pb_laplace_couple}), can be explicitly determined thanks to the Fourier series decomposition with 
respect to the variable $y'$. We do not pursue this issue here since we shall perform such a Fourier series decomposition in the 
following Paragraph in order to derive the evolution equation that governs the slow mean of the front profile.
\end{proof}

\subsection{Determining the slow mean of the front profile}

In this Paragraph, we explain why part of the result of Proposition \ref{prop-systemelent} determines the evolution of the mean 
(with respect to $\theta$) of the front profile $\psi^{\, m+1}$. Indeed, we focus on the problem \eqref{slow_mean_m+1_edp}, 
\eqref{slow_mean_m+1_saut}, \eqref{slow_mean_m+1_hautbas} (the source terms in \eqref{slow_mean_m+1_saut} correspond 
to the right hand side of \eqref{slow_mean_m+1_saut-bis}). Applying Proposition \ref{prop-systemelent}, a necessary condition 
for the solvability of \eqref{slow_mean_m+1_edp}, \eqref{slow_mean_m+1_saut}, \eqref{slow_mean_m+1_hautbas} is that the 
coupled Laplace problem:
\begin{equation}
\label{pb_laplace_m+1}
\begin{cases}
-\Delta \, \widehat{\uq}^{\, m+1,\pm}(0) \, = \, -\p_{y_\alpha} \bF_\alpha^{\, m,\pm} +(\p_t  +2\, u_j^{0,\pm} \, \p_{y_j}) \bF_7^{\, m,\pm} 
-2 \, H_j^{0,\pm} \, \p_{y_j} \bF_8^{\, m,\pm} \, , & \text{\rm in $\Omega_0^\pm$,} \\
\p_{y_3} \widehat{\uq}^{\, m+1,\pm}(0)|_{\Gamma^\pm} \, = \, \bF_3^{\, m,\pm}|_{\Gamma^\pm} \, , & \\
\p_{y_3} \widehat{\uq}^{\, m+1,\pm}(0)|_{\Gamma_0} \, = \, 
\bF_3^{\, m,\pm}|_{\Gamma_0} -(\p_t  +u_j^{0,\pm} \, \p_{y_j}) \bG_1^{\, m,\pm} +H_j^{0,\pm} \, \p_{y_j} \bG_2^{\, m,\pm} \, , & \\
\widehat{\uq}^{\, m+1,+}(0)|_{\Gamma_0} -\widehat{\uq}^{\, m+1,-}(0)|_{\Gamma_0} \, = \, \bG_5^m \, , &
\end{cases}
\end{equation}
has a solution in $H^\infty(\Omega_0^+) \times H^\infty(\Omega_0^-)$. We therefore examine the solvability of \eqref{pb_laplace_m+1} 
by splitting the analysis between the nonzero tangential Fourier modes and the zero tangential Fourier mode (that is, the mean with 
respect to $y'$).
\bigskip

$\bullet$ \underline{Solving the coupled Laplace problem for the nonzero Fourier modes}.

Due to the very simple geometry of the domains $\Omega_0^\pm =\bT^2 \times I^\pm$, there is no difficulty in solving \eqref{pb_laplace_m+1}. 
Indeed, we decompose the functions $\widehat{\uq}^{\, m+1,\pm}(0)$ into Fourier series in $y'$:
$$
\widehat{\uq}^{\, m+1,\pm}(0) \, = \, \sum_{j' \in \bZ^2} {\bf c}_{j'} \big( \widehat{\uq}^{\, m+1,\pm}(0) \big) (y_3) \, {\rm e}^{i \, j' \cdot y'} \, ,
$$
where ${\bf c}_{j'}$ refers here to the $j'$ Fourier coefficient with respect to $y' \in \bT^2$. We warn the reader that $j'$ belongs to $\bZ^2$ 
while we used before the notation ${\bf c}_0$ for the mean with respect to $\theta$ on $\bT$. However, we have thought it more convenient 
to keep the same notation for the Fourier coefficient with respect to either $y'$ or $\theta$ and hope that it will not create any confusion.

For the nonzero Fourier modes $j' \neq 0$, the Laplace operator $\Delta$ is uniformly elliptic, hence the Neumann boundary conditions 
on $\Gamma^\pm$ and $\Gamma_0$ uniquely determine ${\bf c}_{j'} (\widehat{\uq}^{\, m+1,\pm}(0))$. We obtain the expressions:
\begin{multline*}
{\bf c}_{j'} \big( \widehat{\uq}^{\, m+1,+}(0) \big) \\
= \, \nu_{j'}^+ \, {\rm e}^{|j'| \, y_3} +\omega_{j'}^+ \, {\rm e}^{-|j'| \, y_3} 
+\dfrac{1}{2\, |j'|} \, \left( \int_0^{y_3} {\rm e}^{-|j'| \, (y_3-z)} \, {\bf c}_{j'} (\cF^{\, m,+}) \, {\rm d}z  
+\int_{y_3}^1 {\rm e}^{|j'| \, (y_3-z)} \, {\bf c}_{j'} (\cF^{\, m,+}) \, {\rm d}z \right) \, ,
\end{multline*}
and
\begin{multline*}
{\bf c}_{j'} \big( \widehat{\uq}^{\, m+1,-}(0) \big) \\
= \, \nu_{j'}^- \, {\rm e}^{|j'| \, y_3} +\omega_{j'}^- \, {\rm e}^{-|j'| \, y_3} 
+\dfrac{1}{2\, |j'|} \, \left( \int_{-1}^{y_3} {\rm e}^{-|j'| \, (y_3-z)} \, {\bf c}_{j'} (\cF^{\, m,-}) \, {\rm d}z  
+\int_{y_3}^0 {\rm e}^{|j'| \, (y_3-z)} \, {\bf c}_{j'} (\cF^{\, m,-}) \, {\rm d}z \right) \, ,
\end{multline*}
where $\cF^{\, m,\pm}$ stand for the source terms in the Laplace equations of \eqref{pb_laplace_m+1}, that is:
\begin{equation*}
\cF^{\, m,\pm} \, := \, 
-\p_{y_\alpha} \bF_\alpha^{\, m,\pm} +(\p_t  +2\, u_j^{0,\pm} \, \p_{y_j}) \bF_7^{\, m,\pm} -2 \, H_j^{0,\pm} \, \p_{y_j} \bF_8^{\, m,\pm} \, ,
\end{equation*}
and the coefficients $\nu_{j'}^\pm,\omega_{j'}^\pm$ are obtained by solving the (invertible) linear system:
\begin{align}
|j'| \, {\rm e}^{|j'|} \, \nu_{j'}^+ -|j'| \, {\rm e}^{-|j'|} \, \omega_{j'}^+ \, = \, & \, 
\dfrac{1}{2} \, \int_0^1 {\rm e}^{-|j'| \, (1-z)} \, {\bf c}_{j'} (\cF^{\, m,+}) \, {\rm d}z +{\bf c}_{j'} (\bF_3^{\, m,+})(1) \, ,\notag \\
|j'| \, \nu_{j'}^+ -|j'| \, \omega_{j'}^+ \, = \, & \, 
-\dfrac{1}{2} \, \int_0^1 {\rm e}^{-|j'| \, z} \, {\bf c}_{j'} (\cF^{\, m,+}) \, {\rm d}z +{\bf c}_{j'} (\bF_3^{\, m,+})(0) \notag \\
& \, -{\bf c}_{j'} \big( (\p_t  +u_j^{0,+} \, \p_{y_j}) \bG_1^{\, m,+} -H_j^{0,+} \, \p_{y_j} \bG_2^{\, m,+} \big) \, ,\notag \\
|j'| \, \nu_{j'}^- -|j'| \, \omega_{j'}^- \, = \, & \, 
\dfrac{1}{2} \, \int_{-1}^0 {\rm e}^{|j'| \, z} \, {\bf c}_{j'} (\cF^{\, m,-}) \, {\rm d}z +{\bf c}_{j'} (\bF_3^{\, m,-})(0) \label{coeffs_pression_lente} \\
& \, -{\bf c}_{j'} \big( (\p_t  +u_j^{0,-} \, \p_{y_j}) \bG_1^{\, m,-} -H_j^{0,-} \, \p_{y_j} \bG_2^{\, m,-} \big) \, ,\notag \\
|j'| \, {\rm e}^{-|j'|} \, \nu_{j'}^+ -|j'| \, {\rm e}^{|j'|} \, \omega_{j'}^+ \, = \, & \, 
-\dfrac{1}{2} \, \int_{-1}^0 {\rm e}^{-|j'| \, (1+z)} \, {\bf c}_{j'} (\cF^{\, m,-}) \, {\rm d}z +{\bf c}_{j'} (\bF_3^{\, m,-})(-1) \, .\notag
\end{align}
Since we have the expression of ${\bf c}_{j'} (\widehat{\uq}^{\, m+1,\pm}(0))$ on the whole interval $I^\pm$, solving 
\eqref{pb_laplace_m+1} for the nonzero Fourier modes in $y'$ simply amounts to imposing the solvability condition:
$$
\forall \, j' \neq 0 \, ,\quad {\bf c}_{j'} \big( \widehat{\uq}^{\, m+1,+}(0) \big)(0) -{\bf c}_{j'} \big( \widehat{\uq}^{\, m+1,-}(0) \big)(0) 
\, = \, {\bf c}_{j'} \big( \bG_5^m \big) \, .
$$
Solving the above linear system \eqref{coeffs_pression_lente} and restricting to $y_3=0$, we thus find that solving the coupled 
Laplace problem \eqref{pb_laplace_m+1} for the nonzero Fourier modes in $y'$ is possible if and only if the source terms satisfy 
the relation:
\begin{multline}
\label{eq_onde_front_m+1}
{\bf c}_{j'} \Big( \big( \p_t  +u_j^{0,+} \, \p_{y_j} \big) \bG_1^{\, m,+} +\big( \p_t  +u_j^{0,-} \, \p_{y_j} \big) \bG_1^{\, m,-} 
-H_j^{0,+} \, \p_{y_j} \bG_2^{\, m,+} -H_j^{0,-} \, \p_{y_j} \bG_2^{\, m,-} \Big) \\
= \, |j'| \,\tanh (|j'|) \, {\bf c}_{j'} \big( \bG_5^m \big) +{\bf c}_{j'} (\bF_3^{\, m,+})(0) +{\bf c}_{j'} (\bF_3^{\, m,-})(0) 
-\dfrac{1}{\cosh (|j'|)} \, \big( {\bf c}_{j'} (\bF_3^{\, m,+})(1) +{\bf c}_{j'} (\bF_3^{\, m,-})(-1) \big) \\
-\int_0^1 \dfrac{\cosh (|j'|(1-z))}{\cosh (|j'|)} \, {\bf c}_{j'} (\cF^{\, m,+}) \, {\rm d}z 
+\int_{-1}^0 \dfrac{\cosh (|j'|(1+z))}{\cosh (|j'|)} \, {\bf c}_{j'} (\cF^{\, m,-}) \, {\rm d}z \, .
\end{multline}

It remains to make more explicit the equation \eqref{eq_onde_front_m+1} which we are going to show to be a wave type 
equation on $\Gamma_0$ for (the mean free part of) the front profile $\widehat{\psi}^{\, m+1}(0)$. Indeed, we recall that 
the source terms $\bG_1^{\, m,\pm}$, $\bG_2^{\, m,\pm}$ are defined, see \eqref{slow_mean_m+1_saut-bis}, by:
$$
\bG_1^{\, m,\pm} \, := \, \widehat{G}_1^{\, m,\pm}(0) -\widehat{u}_{3,\star}^{\, m+1,\pm}(0)|_{y_3=Y_3=0} \, ,\quad 
\bG_2^{\, m,\pm} \, := \, \widehat{G}_2^{\, m,\pm}(0) -\widehat{H}_{3,\star}^{\, m+1,\pm}(0)|_{y_3=Y_3=0} \, ,
$$
with $G_1^{\, m,\pm}$, $G_2^{\, m,\pm}$ defined in \eqref{s3-def_G_1^m,pm}, \eqref{s3-def_G_2^m,pm}. Let us recall that at 
this stage, the only unknown quantity is $\widehat{\psi}^{\, m+1}(0)$ which enters the expression of $\widehat{G}_1^{\, m,\pm}(0)$ 
and $\widehat{G}_2^{\, m,\pm}(0)$. We can thus rewrite \eqref{eq_onde_front_m+1} as:
\begin{multline}
\label{eq_onde_front_m+1'}
\big( \p_t  +u_j^{0,+} \, \p_{y_j} \big) \, \big( \p_t  +u_{j'}^{0,+} \,\p_{y_{j'}} \big) \Psi^{\, m+1} 
+\big( \p_t  +u_j^{0,-} \, \p_{y_j} \big) \, \big( \p_t  +u_{j'}^{0,-} \,\p_{y_{j'}} \big) \Psi^{\, m+1} \\
-H_j^{0,+} \, H_{j'}^{0,+} \, \p_{y_j}\p_{y_{j'}} \Psi^{\, m+1} -H_j^{0,-} \, H_{j'}^{0,-} \, \p_{y_j}\p_{y_{j'}} \Psi^{\, m+1} \, = \, \mathfrak{G}^m \, ,
\end{multline}
where $\Psi^{\, m+1}$ denotes the mean free part (in $y'$) of the slow mean (in $\theta$) $\widehat{\psi}^{\, m+1}(0)$:
$$
\Psi^{\, m+1} (t,y') \, := \, \sum_{j' \in \bZ^2 \setminus \{ 0\}} {\bf c}_{j'} \big( \widehat{\psi}^{\, m+1}(0) \big) \, {\rm e}^{i \, j' \cdot y'} \, ,
$$
and the source term $\mathfrak{G}^m$ on the right hand side of \eqref{eq_onde_front_m+1'} is already determined in terms 
of all previous profiles. The exact expression of $\mathfrak{G}^m$ is useless. The only important property to keep in mind is 
that $\mathfrak{G}^m$ belongs to $H^\infty([0,T] \times \bT^2)$ and has zero mean with respect to $y'$.

The wave like operator on the left hand side of \eqref{eq_onde_front_m+1'} has already been highlighted in \cite{SWZ}. 
Unsurprisingly, its symbol corresponds to the Lopatinskii determinant \eqref{s3-def_det_lop}. Observe also that several 
terms in \eqref{eq_onde_front_m+1} correspond to the action of the Dirichlet-Neumann operator (this is recognized by 
the symbol $|j'| \,\tanh (|j'|)$), which is also reminiscent of several terms ocurring in the analysis of \cite{SWZ}. The 
main point is that, under the assumption \eqref{s3-hyp_stab_nappe_plane}, the wave operator on the left hand side of 
\eqref{eq_onde_front_m+1'} is \emph{hyperbolic} in the time direction. We therefore easily get the following result:

\begin{lemma}
\label{lem-moyenne-front}
There exists a unique function $\Psi^{\, m+1} \in H^\infty([0,T] \times \bT^2)$ with zero mean with respect to $y'$ that satisfies 
\eqref{eq_onde_front_m+1'} together with:
$$
\Psi^{\, m+1}|_{t=0} \, = \, 0 \, ,\quad \p_t \Psi^{\, m+1}|_{t=0} \, = \, 0 \, .
$$
\end{lemma}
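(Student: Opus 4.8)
The plan is to rewrite the fourth–order operator on the left-hand side of \eqref{eq_onde_front_m+1'} as a product of two second-order transport operators and verify hyperbolicity in the time direction, after which a classical energy estimate yields existence, uniqueness and smoothness. First I would observe that the spatial symbol of the left-hand side of \eqref{eq_onde_front_m+1'}, evaluated at a frequency $\eta' \in \bR^2$ in $y'$, is exactly
$$
P(\tau',\eta') \, := \, (\tau' +u^{0,+}\cdot \eta')^2 +(\tau' +u^{0,-}\cdot \eta')^2 -(H^{0,+}\cdot \eta')^2 -(H^{0,-}\cdot \eta')^2 \, ,
$$
which is precisely the Lopatinskii determinant $\Delta$ from \eqref{s3-def_det_lop} with $(\tau,\xi_1,\xi_2)$ replaced by $(\tau',\eta')$ (here I abbreviate $u^{0,\pm}\cdot\eta' = u_j^{0,\pm}\eta_j$, similarly for $H^{0,\pm}$). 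The operator on the left of \eqref{eq_onde_front_m+1'} is thus $P(i^{-1}\p_t, i^{-1}\nabla_{y'})$ up to sign, and I would factor it as $2\,(\p_t + V_+\cdot\nabla_{y'})(\p_t + V_-\cdot\nabla_{y'}) + (\text{lower order in the sense of frequency})$... more carefully, for each fixed $\eta'\neq 0$ the polynomial $\tau' \mapsto P(\tau',\eta')$ is quadratic with leading coefficient $2$, and Assumption \eqref{s3-hyp_stab_nappe_plane} guarantees (by the very same elementary algebra recalled in Chapter \ref{chapter3}, used to prove that $\Delta$ has two simple real roots) that its discriminant is strictly positive and, crucially, that the two roots $\tau'_\pm(\eta')$ are real, distinct, and homogeneous degree one in $\eta'$ with a uniform spectral gap $|\tau'_+(\eta') - \tau'_-(\eta')| \ge c\,|\eta'|$ for some $c>0$. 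This is the strict hyperbolicity needed.

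Next I would run the standard energy method for this factored second-order hyperbolic equation. Writing \eqref{eq_onde_front_m+1'} as $\cP \Psi^{\, m+1} = \mathfrak{G}^m$ with $\cP$ the wave-type operator, I would perform an $L^2$-based energy estimate in the Fourier variables $j' \in \bZ^2\setminus\{0\}$: on each Fourier mode, \eqref{eq_onde_front_m+1'} becomes a scalar second-order ODE in $t$ with a quadratic, strictly hyperbolic symbol, for which one has an explicit representation of the solution and an estimate controlling $\mathrm{e}^{-\gamma t}$-weighted norms of $(\Psi^{\, m+1},\p_t\Psi^{\, m+1})$ at time $t$ by the same norms at $t=0$ plus the time-integral of the norm of $\mathfrak{G}^m$. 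Commuting with $\p_{y'}^\alpha$ (which is harmless since $\cP$ has constant coefficients) upgrades this to all $H^s$. Since $\mathfrak{G}^m \in H^\infty([0,T]\times\bT^2)$ with zero mean in $y'$, the resulting $\Psi^{\, m+1}$ lies in $\bigcap_s \cC^k([0,T];H^{s}) = H^\infty([0,T]\times\bT^2)$, and because the equation and the zero initial data \eqref{s3-def_cond_init_oscil_psi}-compatible conditions $\Psi^{\, m+1}|_{t=0} = \p_t\Psi^{\, m+1}|_{t=0} = 0$ only involve nonzero Fourier modes in $y'$, the solution automatically has zero mean with respect to $y'$ for all $t$. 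Uniqueness follows from the same energy estimate applied to the difference of two solutions.

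The only genuinely delicate point, and the one I would flag as the main obstacle, is the verification that the factorization is uniform in $\eta'$, i.e. that the spectral gap $c>0$ does not degenerate as $|\eta'/|\eta'||$ ranges over the unit circle. This is where Assumption \eqref{s3-hyp_stab_nappe_plane} is used in an essential way: a short computation shows that the discriminant of $\tau'\mapsto P(\tau',\eta')$ equals a positive multiple of $2\,|H^{0,+}\times H^{0,-}|^2 - |H^{0,+}\times[u^0]|^2 - |H^{0,-}\times[u^0]|^2$ contracted appropriately with $\eta'$, which by \eqref{s3-hyp_stab_nappe_plane} is bounded below by a positive constant times $|\eta'|^2$ uniformly in the direction of $\eta'$. (This is the same positivity that underlies the existence of the two simple real roots $\tau_1,\tau_2$ in \eqref{s3-def_det_lop}, so nothing new needs to be proved beyond quoting the algebra of Chapter \ref{chapter3}.) Once this uniform gap is in hand, the energy estimate is completely classical; in fact one can simply invoke that a strictly hyperbolic constant-coefficient second-order operator on $\bT^2$ is well-posed in $H^\infty$, for instance along the lines of the energy method in \cite{BS}. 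I would therefore keep the proof of Lemma \ref{lem-moyenne-front} to a few lines: identify the symbol as the Lopatinskii determinant, invoke strict hyperbolicity under \eqref{s3-hyp_stab_nappe_plane}, and conclude by the energy method, noting the persistence of the zero-mean-in-$y'$ property.
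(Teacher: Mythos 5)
Your proposal is correct and takes essentially the same route as the paper, which gives Lemma \ref{lem-moyenne-front} no detailed proof beyond observing that the wave operator in \eqref{eq_onde_front_m+1'} is hyperbolic in the time direction under Assumption \eqref{s3-hyp_stab_nappe_plane} and invoking standard theory. Identifying the spatial symbol with the Lopatinskii determinant $\Delta$ at a general slow frequency, checking that \eqref{s3-hyp_stab_nappe_plane} yields a uniformly positive discriminant (hence strict hyperbolicity), and concluding by the constant-coefficient energy method together with preservation of the zero-mean-in-$y'$ property is exactly the intended argument.
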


\noindent Let us recall that solving \eqref{eq_onde_front_m+1'} amounts to solving the original coupled Laplace problem 
\eqref{pb_laplace_m+1} for the nonzero Fourier modes. At this stage, we have thus fixed the mean free part in $y'$ of 
$\widehat{\psi}^{\, m+1}(0)$ and ensured part of the solvability of \eqref{pb_laplace_m+1}.

Let us observe that the prescription of initial conditions for \eqref{eq_onde_front_m+1'} is arbitrary. Once again, we have 
chosen in the statement of Theorem \ref{thm_principal} to stick to the easiest possible case, but we could equally well 
consider more general initial data for $\widehat{\psi}^{\, m+1}(0)$ than the ones in \eqref{s3-cond_init_profils_psi_0^m}. 
We could also impose nonzero initial data for the oscillating modes in $y'$ of $\p_t \widehat{\psi}^{\, m+1}(0)$.
\bigskip

$\bullet$ \underline{Solving the coupled Laplace problem for the mean}.

It remains to determine the mean on $\bT^2$ of $\widehat{\psi}^{\, m+1}(0)$, which is a function of time only. Integrating 
\eqref{pb_laplace_m+1} on $\bT^2$ and using some obvious cancellations, we need to find a solution to the problem:
\begin{equation}
\label{pb_laplace_m+1_mean}
\begin{cases}
-\Delta \, p^\pm \, = \, -\p_{y_3} \, {\bf c}_0 \, (\bF_3^{\, m,\pm}) +\p_t \, {\bf c}_0 \, (\bF_7^{\, m,\pm}) \, , & \text{\rm in $\Omega_0^\pm$,} \\
\p_{y_3} p^\pm (\pm 1) \, = \, {\bf c}_0 \, (\bF_3^{\, m,\pm}) (\pm 1) \, , & \\
\p_{y_3} p^\pm (0) \, = \, {\bf c}_0 \, (\bF_3^{\, m,\pm})(0) -\p_t \, {\bf c}_0 \, (\bG_1^{\, m,\pm}) \, , & \\
p^+(0) -p^-(0) \, = \, {\bf c}_0 \, (\bG_5^m) \, , &
\end{cases}
\end{equation}
where $p^\pm$ is a short notation for ${\bf c}_0 \, (\widehat{\uq}^{\, m+1,\pm}(0))$, and the reader should be careful that in 
\eqref{pb_laplace_m+1_mean}, ${\bf c}_0$ refers to the zero Fourier coefficient with respect to $y'$ (the source terms to which 
we apply ${\bf c}_0$ in \eqref{pb_laplace_m+1_mean} are themselves means with respect to the fast variable $\theta$ of some 
quantities !).

The solvability of \eqref{pb_laplace_m+1_mean} is submitted to the fulfillment of the Fredholm condition:
\begin{equation}
\label{fredholm_pm}
\dfrac{{\rm d}}{{\rm d}t} \, \left( \int_{\Omega_0^\pm} \bF_7^{\, m,\pm} \, {\rm d}y \pm \int_{\Gamma_0} \bG_1^{\, m,\pm} \, {\rm d}y' \right) 
\, = \, 0 \, .
\end{equation}
Let us make it clear that the condition \eqref{fredholm_pm} should be satisfied on either side of the current vortex sheet, that is 
both for the $+$ and for the $-$ sides. In Lemma \ref{lem-fredholm_pb_laplace} below, we examine the quantities involved in 
\eqref{fredholm_pm} and derive a necessary and sufficient condition on the front profile $\psi^{\, m+1}$ for the verification of 
\eqref{fredholm_pm}.

\begin{lemma}
\label{lem-fredholm_pb_laplace}
The source terms in \eqref{slow_mean_m+1_saut}, \eqref{slow_mean_m+1_edp} satisfy:
\begin{align*}
\forall \, t \in [0,T] \, ,\quad 
\int_{\Omega_0^\pm} \bF_7^{\, m,\pm} \, {\rm d}y \pm \int_{\Gamma_0} \bG_1^{\, m,\pm} \, {\rm d}y' \, = \, & \, 
\pm \int_{\Gamma_0} \p_t \widehat{\psi}^{\, m+1}(0) \, {\rm d}y' \, ,\\
\int_{\Omega_0^\pm} \bF_8^{\, m,\pm} \, {\rm d}y \pm \int_{\Gamma_0} \bG_2^{\, m,\pm} \, {\rm d}y' \, = \, & \, 0 \, .\\
\end{align*}
Consequently the Fredholm condition \eqref{fredholm_pm} holds on either side of the current vortex sheet if and only if:
$$
\dfrac{{\rm d}^2}{{\rm d}t^2} \,  \int_{\Gamma_0 \times \bT} \psi^{\, m+1} (t,y',\theta) \, {\rm d}y' \, {\rm d}\theta \, = \, 0 \, . 
$$
\end{lemma}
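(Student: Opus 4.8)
\textbf{Proof plan for Lemma \ref{lem-fredholm_pb_laplace}.}

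The plan is to compute the two volume-plus-boundary integrals directly from the explicit formulas for the source terms $\bF_7^{\, m,\pm}$, $\bF_8^{\, m,\pm}$ in \eqref{defsource_slow_mean_m+1}, \eqref{defsource8_slow_mean_m+1} and for $\bG_1^{\, m,\pm}$, $\bG_2^{\, m,\pm}$ from \eqref{slow_mean_m+1_saut-bis}, \eqref{s3-def_G_1^m,pm}, \eqref{s3-def_G_2^m,pm}, and then to show that the divergence-type structure of these terms collapses each integral to a pure boundary contribution on $\Gamma_0$. For the first identity, I would start from $\bF_7^{\, m,\pm}$, which is the seventh (velocity-divergence) component of $\bF^{\, m,\pm}$; recalling that the seventh line of each $\bA_\alpha$ vanishes and that $\cA^\pm$, $A_j^\pm$ act on the velocity-divergence component as multiplication by $\xi_j u_j^{0,\pm}$ (resp.\ $u_j^{0,\pm}$), one sees that $\bF_7^{\, m,\pm}$ is, up to the $\mathbf{c}_0$ projection in $\theta$, a sum of terms each carrying a factor $\p_{y_3}$ of the normal velocity $\underline{u}_3^{\ell,\pm}$ plus genuinely tangential $(\p_t,\p_{y_j},\p_\theta)$-derivatives acting on lower-order profiles. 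Integrating over $\Omega_0^\pm = \bT^2 \times I^\pm$, the tangential derivatives integrate to zero (periodicity in $y'$, mean-zero in $\theta$), while the $\p_{y_3}$ terms integrate to boundary traces at $y_3 = 0$ and $y_3 = \pm 1$. The traces at $\Gamma^\pm$ vanish because of the induction hypothesis \eqref{inductionHm3}, and the traces at $\Gamma_0$ are precisely what is needed to cancel $\bG_1^{\, m,\pm}$ up to the single term $\p_t \widehat{\psi}^{\, m+1}(0)$ coming from the $\p_t \psi^{\, m+1}$ contribution inside $G_1^{\, m,\pm}$ (see the first line of \eqref{s3-def_G_1^m,pm}). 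The same bookkeeping, carried out on $\bF_8^{\, m,\pm}$ and $\bG_2^{\, m,\pm}$, yields $0$ on the nose because $G_2^{\, m,\pm}$ has no $\p_t \psi^{\, m+1}$ term; here I would invoke the analogous identity for the magnetic-field profiles, which follows exactly as in the derivation of \eqref{terme_source_F_6^m,pm-3} in the proof of Lemma \ref{lem_compatibilite_bord}.

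Once both integral identities are established, the Fredholm condition \eqref{fredholm_pm} reads $\frac{{\rm d}}{{\rm d}t}\big(\pm \int_{\Gamma_0} \p_t \widehat{\psi}^{\, m+1}(0)\,{\rm d}y'\big) = 0$ on either side, i.e.\ $\frac{{\rm d}^2}{{\rm d}t^2}\int_{\Gamma_0}\widehat{\psi}^{\, m+1}(0,y')\,{\rm d}y' = 0$. Finally, since $\int_{\Gamma_0 \times \bT}\psi^{\, m+1}\,{\rm d}y'\,{\rm d}\theta = 2\pi \int_{\Gamma_0}\widehat{\psi}^{\, m+1}(0,y')\,{\rm d}y'$ (the mean with respect to $\theta$ of $\psi^{\, m+1}$ is exactly $\widehat{\psi}^{\, m+1}(0)$), this is equivalent to $\frac{{\rm d}^2}{{\rm d}t^2}\int_{\Gamma_0 \times \bT}\psi^{\, m+1}(t,y',\theta)\,{\rm d}y'\,{\rm d}\theta = 0$, which is the claimed statement.

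The main obstacle will be the first step: showing that every term in the lengthy expression \eqref{defsource_slow_mean_m+1} for $\bF_7^{\, m,\pm}$, after restriction to its seventh component and integration over $\Omega_0^\pm$, reduces to a boundary term that matches $\bG_1^{\, m,\pm}$. This requires carefully tracking the $\chi^{[\ell_1]}$ and $\dot\chi^{[\ell_1]}$ factors (using $\chi^{[\ell]}|_{y_3=0} = \delta_{\ell 0}$, $\dot\chi^{[\ell]}|_{y_3=0}=0$ and their compact support in $y_3$, as recorded in Appendix \ref{appendixB}), and exploiting the fast divergence constraints \eqref{s3-cascade_div_u^m+1,pm} on the velocity profiles together with the boundary relations $\underline{u}_3^{\ell,\pm}|_{y_3=Y_3=0} = G_1^{\ell-1,\pm}$ from \eqref{inductionHm2} to convert mixed $\p_{y_3}$-plus-tangential expressions into total tangential derivatives. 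This is essentially the same algebraic mechanism already deployed in the proof of Lemma \ref{lem_compatibilite_bord}, so I expect it to go through by the same reductions; I would present it by grouping the terms of $\bF_7^{\, m,\pm}$ into a "linear" part and a "quadratic" part mirroring Steps 1--3 of that proof, rather than term by term. The magnetic-field identity is strictly easier since it has no pressure-gradient or time-derivative-of-front contribution on the boundary.
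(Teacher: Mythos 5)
Your proposal correctly identifies the scaffolding (explicit formulas for $\bF_7^{\, m,\pm}$, $\bF_8^{\, m,\pm}$, $\bG_1^{\, m,\pm}$, $\bG_2^{\, m,\pm}$; isolate the $\p_t\widehat{\psi}^{\, m+1}(0)$ term from \eqref{s3-def_G_1^m,pm}) and correctly concludes from the two integral identities that the Fredholm condition reduces to the claimed second-order constraint on the $(y',\theta)$-mean of $\psi^{\, m+1}$. However, the central step — showing that the ``residual'' pieces of the volume integral actually cancel against the ``residual'' boundary contribution — is sketched in a way that would not go through as stated, and the ``surface wave'' piece is omitted altogether.

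\textbf{First gap: the residual volume integral does not reduce to boundary traces.} You say that after singling out genuinely tangential derivatives, ``the $\p_{y_3}$ terms integrate to boundary traces at $y_3=0$ and $y_3=\pm 1$.'' But the expression \eqref{defsource8_slow_mean_m+1} (and the corresponding seventh component of \eqref{defsource_slow_mean_m+1}) has the structure $\chi^{[\ell_1]}\,(\cdots)\,\p_{y_3}\underline{u}^{\ell_3,\pm}$ and $\dot\chi^{[\ell_1]}\,(\cdots)\,\p_{y_3}\underline{u}_3^{\ell_3,\pm}$, where the coefficients $\chi^{[\ell_1]}, \dot\chi^{[\ell_1]}$ themselves depend on $y_3$. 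Integrating by parts in $y_3$ leaves genuine \emph{volume} terms carrying $\p_{y_3}\chi^{[\ell_1]}$ and $\dot\chi^{[\ell_1]}$, which is precisely the quantity $\mathfrak{I}_1^{\, m,\pm}$ in \eqref{def-frakImpm} of the paper. Showing $\mathfrak{I}_1^{\, m,\pm}=0$ is \emph{not} ``the same algebraic mechanism'' as Lemma~\ref{lem_compatibilite_bord}: that lemma is a purely boundary-trace computation at $y_3=Y_3=0$, where every $\chi^{[\ell]}$ degenerates to $\delta_{\ell 0}$ and every $\dot\chi^{[\ell]}$ vanishes, so the $\chi$-structure disappears entirely. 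Here it does not disappear. The paper instead introduces a family $(\mathfrak{I}_n^{\, m,\pm})_{n\ge 1}$ of volume integrals of increasing combinatorial complexity, uses the second symmetry formula (Corollary~\ref{corB2} and Proposition~\ref{propB1}) and the fast divergence constraints to prove $\mathfrak{I}_n^{\, m,\pm}=\mathfrak{I}_{n+1}^{\, m,\pm}$, and then concludes $\mathfrak{I}_1^{\, m,\pm}=0$ because the sums are empty for $n$ large (using $\psi^0=\psi^1=0$). This recursive/combinatorial step is the core of the lemma and is missing from your sketch.

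\textbf{Second gap: the surface wave contribution.} The boundary term $\bG_1^{\, m,\pm}$ contains $-\widehat{u}_{3,\star}^{\, m+1,\pm}(0)|_{y_3=Y_3=0}$, obtained by integrating $F_{7,\star}^{\, m,\pm}$ in $Y_3$, together with traces of the surface-wave parts $u_{j,\star}^{\ell,\pm}$ of earlier profiles. These pieces do not pair against anything coming from the volume integral of $\bF_7^{\, m,\pm}$ (which only sees residual components $\underline{u}^{\ell,\pm}$); their sum $\mathfrak{J}_\star^{\, m,\pm}$ in \eqref{def-frakJmpm} has to vanish on its own. The paper handles this by a separate argument: it extends $\mathfrak{J}_\star^{\, m,\pm}$ to a function $\mathfrak{J}^{\, m,\pm}(y_3,Y_3)$, uses the ODE $\p_{Y_3}\widehat{u}_{3,\star}^{\, m+1,\pm}(0)={\bf c}_0(F_{7,\star}^{\, m,\pm})$ and the explicit form of $F_{7,\star}^{\, m,\pm}$ to derive the intertwining identity $\p_{Y_3}\mathfrak{J}^{\, m,\pm}=-\p_{y_3}\mathfrak{J}^{\, m-1,\pm}$, iterates it $m$ times to get $\p_{Y_3}^m\mathfrak{J}^{\, m,\pm}=(-1)^m\p_{y_3}^m\mathfrak{J}^{0,\pm}=0$, and then uses exponential decay in $Y_3$ to conclude $\mathfrak{J}^{\, m,\pm}\equiv 0$. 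Your proposal does not address this surface-wave cancellation at all. Both gaps would need to be filled for the proof to work; the second integral identity (magnetic field) is indeed easier, as you say, but it still requires the same two mechanisms.
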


\begin{proof}[Proof of Lemma \ref{lem-fredholm_pb_laplace}]
We give the proof for the relation between $\bF_7^{\, m,\pm}$ and $\bG_1^{\, m,\pm}$, and leave the other (easier) case for the 
magnetic field to the interested reader. Let us recall the expressions\footnote{We warn the reader that in all the proof of Lemma 
\ref{lem-fredholm_pb_laplace}, the notation ${\bf c}_0$ refers to the zero Fourier coefficient with respect to the fast variable $\theta$. 
When taking the mean on $\bT^2$ with respect to $y'$, we simply write down the integral.}:
\begin{align*}
\bF_7^{\, m,\pm} \, = \, {\bf c}_0 \, \Big\{ \, 
\sum_{\ell_1+\ell_2+\ell_3=m+2} \chi^{[\ell_1]} \, \p_\theta \psi^{\ell_2} \, \p_{y_3} \xi_j \, \uu_j^{\ell_3,\pm} 
& \, +\sum_{\ell_1+\ell_2+\ell_3=m+1} \chi^{[\ell_1]} \, \p_{y_j} \psi^{\ell_2} \, \p_{y_3} \uu_j^{\ell_3,\pm} \\
& \, +\sum_{\ell_1+\ell_2+\ell_3=m+1} \dot{\chi}^{[\ell_1]} \, \psi^{\ell_2} \, \p_{y_3} \uu_3^{\ell_3,\pm} \, \Big\} \, ,
\end{align*}
see \eqref{defsource8_slow_mean_m+1} for the similiar expression associated with the divergence of the magnetic field, and:
\begin{align*}
\bG_1^{\, m,\pm} \, = \, & \, \big( \p_t +u_j^{0,\pm} \, \p_{y_j} \big) \widehat{\psi}^{\, m+1}(0) \\
& \, +{\bf c}_0 \, \Big\{ \, \sum_{\substack{\ell_1+\ell_2=m+2 \\ \ell_2 \ge 1}} \p_\theta \psi^{\ell_1} \, \xi_j \, u_j^{\ell_2,\pm} 
+\sum_{\substack{\ell_1+\ell_2=m+1 \\ \ell_2 \ge 1}} \p_{y_j} \psi^{\ell_1} \, u_j^{\ell_2,\pm} \Big\}|_{y_3=Y_3=0} 
-\widehat{u}_{3,\star}^{\, m+1,\pm}(0)|_{y_3=Y_3=0} \, .
\end{align*}
Splitting inside the expression of $\bG_1^{\, m,\pm}$ between the residual and surface wave components, we obtain the decomposition:
$$
\int_{\Omega_0^\pm} \bF_7^{\, m,\pm} \, {\rm d}y \pm \int_{\Gamma_0} \bG_1^{\, m,\pm} \, {\rm d}y' \, = \, 
\pm \int_{\Gamma_0} \p_t \widehat{\psi}^{\, m+1}(0) \, {\rm d}y' +\mathfrak{I}^{\, m,\pm}_1 \pm \mathfrak{J}_\star^{\, m,\pm} \, ,
$$
where we have used the following notation:
\begin{align}
\label{def-frakImpm}
\mathfrak{I}_1^{\, m,\pm} \, := \, \int_{\Omega_0^\pm \times \bT} \, & \, 
\sum_{\ell_1+\ell_2+\ell_3=m+2} \chi^{[\ell_1]} \, \p_\theta \psi^{\ell_2} \, \xi_j \, \p_{y_3} \uu_j^{\ell_3,\pm} 
+\sum_{\ell_1+\ell_2+\ell_3=m+1} \chi^{[\ell_1]} \, \p_{y_j} \psi^{\ell_2} \, \p_{y_3} \uu_j^{\ell_3,\pm} \\
& \, -\p_{y_3} \Big( 
\sum_{\substack{\ell_1+\ell_2+\ell_3=m+2 \\ \ell_3 \ge 1}} \chi^{[\ell_1]} \, \p_\theta \psi^{\ell_2} \, \xi_j \, \uu_j^{\ell_3,\pm} 
+\sum_{\substack{\ell_1+\ell_2+\ell_3=m+1 \\ \ell_3 \ge 1}} \chi^{[\ell_1]} \, \p_{y_j} \psi^{\ell_2} \, \uu_j^{\ell_3,\pm} \Big) \notag \\
& \, +\sum_{\ell_1+\ell_2+\ell_3=m+1} \dot{\chi}^{[\ell_1]} \, \psi^{\ell_2} \, \p_{y_3} \uu_3^{\ell_3,\pm} \, {\rm d}y \, {\rm d}\theta \, ,\notag
\end{align}
and
\begin{equation}
\label{def-frakJmpm}
\mathfrak{J}_\star^{\, m,\pm} \, := \, \int_{\Gamma_0} \Big( 
{\bf c}_0 \, \Big\{ \, \sum_{\substack{\ell_1+\ell_2=m+2 \\ \ell_2 \ge 1}} \p_\theta \psi^{\ell_1} \, \xi_j \, u_{j,\star}^{\ell_2,\pm} 
+\sum_{\substack{\ell_1+\ell_2=m+1 \\ \ell_2 \ge 1}} \p_{y_j} \psi^{\ell_1} \, u_{j,\star}^{\ell_2,\pm} \Big\} -\widehat{u}_{3,\star}^{\, m+1,\pm}(0) 
\Big) \Big|_{y_3=Y_3=0} \, {\rm d}y' \, .
\end{equation}
To complete the proof of Lemma \ref{lem-fredholm_pb_laplace}, we see that it is sufficient to prove that the quantities 
$\mathfrak{I}^{\, m,\pm}_1$ and $\mathfrak{J}_\star^{\, m,\pm}$ defined in \eqref{def-frakImpm} and \eqref{def-frakJmpm} 
vanish, which is done below separately for each of these two quantities.
\bigskip

$\bullet$ \underline{The surface wave integral}. Let $y_3 \in I^\pm$ satisfy $|y_3|<1/3$, and let $Y_3 \in \bR^\pm$. We then define:
$$
\mathfrak{J}^{\, m,\pm}(y_3,Y_3) \, := \, \int_{\bT^2} 
{\bf c}_0 \, \Big\{ \, \sum_{\substack{\ell_1+\ell_2=m+2 \\ \ell_2 \ge 1}} \p_\theta \psi^{\ell_1} \, \xi_j \, u_{j,\star}^{\ell_2,\pm} 
+\sum_{\substack{\ell_1+\ell_2=m+1 \\ \ell_2 \ge 1}} \p_{y_j} \psi^{\ell_1} \, u_{j,\star}^{\ell_2,\pm} \Big\} -\widehat{u}_{3,\star}^{\, m+1,\pm}(0) 
\, {\rm d}y' \, ,
$$
in such a way that the definition of $\mathfrak{J}_\star^{\, m,\pm}$ in \eqref{def-frakJmpm} coincides with $\mathfrak{J}^{\, m,\pm}(0,0)$. 
Thanks to the exponential decay at infinity of profiles in $S_\star^\pm$, we easily see that for any given $y_3$, the function 
$\mathfrak{J}^{\, m,\pm}(y_3,\cdot)$ decays exponentially at infinity. We now compute the partial derivative $\p_{Y_3} \mathfrak{J}^{\, m,\pm} 
(y_3,Y_3)$. Using the equation:
$$
\p_{Y_3} \widehat{u}_{3,\star}^{\, m+1,\pm}(0) \, = \, {\bf c}_0 \, \big( F_{7,\star}^{\, m,\pm} \big) \, ,
$$
and the expression of $F_{7,\star}^{\, m,\pm}$, we obtain:
\begin{align*}
\p_{Y_3} \mathfrak{J}^{\, m,\pm}(y_3,Y_3) \, = \, & \, 
\int_{\bT^2} {\bf c}_0 \, \Big\{ \, \sum_{\ell_1+\ell_2=m+2} \p_\theta \psi^{\ell_1} \, \xi_j \, \p_{Y_3} u_{j,\star}^{\ell_2,\pm} 
+\sum_{\ell_1+\ell_2=m+1} \p_{y_j} \psi^{\ell_1} \, \p_{Y_3} u_{j,\star}^{\ell_2,\pm} -F_{7,\star}^{\, m,\pm} \Big\} \, {\rm d}y' \\
= \, & \, \int_{\bT^2 \times \bT} \p_{y_3} u_{3,\star}^{\, m,\pm}
-\sum_{\ell_1+\ell_2=m+1} \p_\theta \psi^{\ell_1} \, \xi_j \, \p_{y_3} u_{j,\star}^{\ell_2,\pm} 
-\sum_{\ell_1+\ell_2=m} \p_{y_j} \psi^{\ell_1} \, \p_{y_3} u_{j,\star}^{\ell_2,\pm} \, {\rm d}y' \, {\rm d}\theta \\
= \, & \, -\p_{y_3} \int_{\bT^2 \times \bT} 
\sum_{\substack{\ell_1+\ell_2=m+1 \\ \ell_2 \ge 1}} \p_\theta \psi^{\ell_1} \, \xi_j \, u_{j,\star}^{\ell_2,\pm} 
+\sum_{\substack{\ell_1+\ell_2=m \\ \ell_2 \ge 1}} \p_{y_j} \psi^{\ell_1} \, u_{j,\star}^{\ell_2,\pm} 
-u_{3,\star}^{\, m,\pm} \, {\rm d}y' \, {\rm d}\theta \\
= \, & \, -\p_{y_3} \mathfrak{J}^{\, m-1,\pm}(y_3,Y_3) \, .
\end{align*}
Inductively, we obtain:
$$
\p_{Y_3}^m \mathfrak{J}^{\, m,\pm}(y_3,Y_3) \, = \, (-1)^m \, \p_{y_3}^m \mathfrak{J}^{0,\pm} (y_3,Y_3) \, = \, 0 \, .
$$
Thanks to the exponential decay of $\mathfrak{J}^{\, m,\pm}(y_3,\cdot)$, we obtain $\mathfrak{J}^{\, m,\pm}(y_3,Y_3)=0$ for all $y_3,Y_3$ 
and in particular at $y_3=Y_3=0$. This means that the integral $\mathfrak{J}_\star^{\, m,\pm}$ in \eqref{def-frakJmpm} vanishes.
\bigskip

$\bullet$ \underline{The residual integral}. From the definition \eqref{def-frakImpm}, we have
\begin{align*}
\mathfrak{I}_1^{\, m,\pm} \, = \, \int_{\Omega_0^\pm \times \bT} 
\sum_{\ell_1+\ell_2+\ell_3=m+1} \dot{\chi}^{[\ell_1]} \, \psi^{\ell_2} \, \p_{y_3} \uu_3^{\ell_3,\pm} 
\, & \, -\sum_{\substack{\ell_1+\ell_2+\ell_3=m+2 \\ \ell_3 \ge 1}} 
\p_{y_3} \chi^{[\ell_1]} \, \p_\theta \psi^{\ell_2} \, \xi_j \, \uu_j^{\ell_3,\pm} \\
& \, -\sum_{\substack{\ell_1+\ell_2+\ell_3=m+1 \\ \ell_3 \ge 1}} 
\p_{y_3} \chi^{[\ell_1]} \, \p_\theta \psi^{\ell_2} \, \xi_j \, \uu_j^{\ell_3,\pm} \, {\rm d}y \, {\rm d}\theta \, ,
\end{align*}
and more generally, we define for any integer $n \in \N$:
\begin{align*}
\mathfrak{I}_n^{\, m,\pm} \, := \, \int_{\Omega_0^\pm \times \bT} \, & \, 
\sum_{\ell_1+\cdots+\ell_{2n+1}=m+1} 
\dot{\chi}^{[\ell_1]} \, \cdots \, \dot{\chi}^{[\ell_n]} \, \psi^{\ell_{n+1}} \, \cdots \, \psi^{\ell_{2n}} \, \p_{y_3} \uu_3^{\ell_{2n+1},\pm} \\
\, & \, -n \, \sum_{\substack{\ell_1+\cdots+\ell_{2n+1}=m+2 \\ \ell_{2n+1} \ge 1}} 
\dot{\chi}^{[\ell_1]} \, \cdots \, \dot{\chi}^{[\ell_{n-1}]} \, {\color{ForestGreen} \p_{y_3} \chi^{[\ell_n]}} \, 
\psi^{\ell_{n+1}} \, \cdots \, \psi^{\ell_{2n-1}} \, \p_\theta \psi^{\ell_{2n}} \, \xi_j \, \uu_j^{\ell_{2n+1},\pm} \\
\, & \, -n \, \sum_{\substack{\ell_1+\cdots+\ell_{2n+1}=m+1 \\ \ell_{2n+1} \ge 1}} 
\dot{\chi}^{[\ell_1]} \, \cdots \, \dot{\chi}^{[\ell_{n-1}]} \, {\color{ForestGreen} \p_{y_3} \chi^{[\ell_n]}} \, 
\psi^{\ell_{n+1}} \, \cdots \, \psi^{\ell_{2n-1}} \, \p_{y_j} \psi^{\ell_{2n}} \, \uu_j^{\ell_{2n+1},\pm} \, {\rm d}y \, {\rm d}\theta \, .
\end{align*}
By performing integration by parts and a substitution, we are going to prove the relation $\mathfrak{I}_n^{\, m,\pm} 
=\mathfrak{I}_{n+1}^{\, m,\pm}$ for all $n \ge 1$, which, by choosing $n$ large enough, will imply $\mathfrak{I}_n^{\, m,\pm}=0$. 
We first use the second symmetry formula (Corollary \ref{corB2} in Appendix \ref{appendixB}) for the green terms in the above 
definition of $\mathfrak{I}_n^{\, m,\pm}$ to get\footnote{Note that we have symmetrized some terms that involve tangential derivatives 
of the front profiles.}:
\begin{align*}
\mathfrak{I}_n^{\, m,\pm} \, = \, \int_{\Omega_0^\pm \times \bT} \, & \, 
\sum_{\ell_1+\cdots+\ell_{2n+1}=m+1} 
\dot{\chi}^{[\ell_1]} \, \cdots \, \dot{\chi}^{[\ell_n]} \, \psi^{\ell_{n+1}} \, \cdots \, \psi^{\ell_{2n}} \, \p_{y_3} \uu_3^{\ell_{2n+1},\pm} \\
& \, -\sum_{\substack{\ell_1+\cdots+\ell_{2n+1}=m+2 \\ \ell_{2n+1} \ge 1}} \dot{\chi}^{[\ell_1]} \, \cdots \, \dot{\chi}^{[\ell_n]} \, 
\p_\theta \big( \psi^{\ell_{n+1}} \, \cdots \, \psi^{\ell_{2n}} \big) \, \xi_j \, \uu_j^{\ell_{2n+1},\pm} \\
& \, -\sum_{\substack{\ell_1+\cdots+\ell_{2n+1}=m+1 \\ \ell_{2n+1} \ge 1}} \dot{\chi}^{[\ell_1]} \, \cdots \, \dot{\chi}^{[\ell_n]} \, 
\p_{y_j} \big( \psi^{\ell_{n+1}} \, \cdots \, \psi^{\ell_{2n}} \big) \, \uu_j^{\ell_{2n+1},\pm} \\
& \, -n \, \sum_{\substack{\ell_1+\cdots+\ell_{2n+3}=m+2 \\ \ell_{2n+3} \ge 1}} 
\dot{\chi}^{[\ell_1]} \, \cdots \, \dot{\chi}^{[\ell_n]} \, \p_{y_3} \chi^{[\ell_{n+1}]} \, 
\psi^{\ell_{n+2}} \, \cdots \, \psi^{\ell_{2n+1}} \, \p_\theta \psi^{\ell_{2n+2}} \, \xi_j \, \uu_j^{\ell_{2n+3},\pm} \\
& \, -n \, \sum_{\substack{\ell_1+\cdots+\ell_{2n+3}=m+1 \\ \ell_{2n+3} \ge 1}} 
\dot{\chi}^{[\ell_1]} \, \cdots \, \dot{\chi}^{[\ell_n]} \, \p_{y_3} \chi^{[\ell_{n+1}]} \, 
\psi^{\ell_{n+2}} \, \cdots \, \psi^{\ell_{2n+1}} \, \p_{y_j} \psi^{\ell_{2n+2}} \, \uu_j^{\ell_{2n+3},\pm} \, {\rm d}y \, {\rm d}\theta \, .
\end{align*}
We now integrate by parts in the second and third sum with respect to $\theta$ and $y_j$ to get:
\begin{align*}
\mathfrak{I}_n^{\, m,\pm} \, = \, \int_{\Omega_0^\pm \times \bT} \, & \, 
\sum_{\ell_1+\cdots+\ell_{2n+1}=m+1} 
\dot{\chi}^{[\ell_1]} \, \cdots \, \dot{\chi}^{[\ell_n]} \, \psi^{\ell_{n+1}} \, \cdots \, \psi^{\ell_{2n}} \, 
\big( \uF_7^{\ell_{2n+1},\pm} +\nabla \cdot \uu^{\ell_{2n+1},\pm} \big) \\
& \, +n \, \sum_{\substack{\ell_1+\cdots+\ell_{2n+1}=m+2 \\ \ell_{2n+1} \ge 1}} 
\dot{\chi}^{[\ell_1]} \, \cdots \, \dot{\chi}^{[\ell_{n-1}]} \, {\color{blue} \p_\theta \dot{\chi}^{[\ell_n]}} \, 
\psi^{\ell_{n+1}} \, \cdots \, \psi^{\ell_{2n}} \, \xi_j \, \uu_j^{\ell_{2n+1},\pm} \\
& \, +n \, \sum_{\substack{\ell_1+\cdots+\ell_{2n+1}=m+1 \\ \ell_{2n+1} \ge 1}} 
\dot{\chi}^{[\ell_1]} \, \cdots \, \dot{\chi}^{[\ell_{n-1}]} \, {\color{blue} \p_{y_j} \dot{\chi}^{[\ell_n]}} \, 
\psi^{\ell_{n+1}} \, \cdots \, \psi^{\ell_{2n}} \, \uu_j^{\ell_{2n+1},\pm} \\
& \, -n \, \sum_{\substack{\ell_1+\cdots+\ell_{2n+3}=m+2 \\ \ell_{2n+3} \ge 1}} 
\dot{\chi}^{[\ell_1]} \, \cdots \, \dot{\chi}^{[\ell_n]} \, \p_{y_3} \chi^{[\ell_{n+1}]} \, 
\psi^{\ell_{n+2}} \, \cdots \, \psi^{\ell_{2n+1}} \, \p_\theta \psi^{\ell_{2n+2}} \, \xi_j \, \uu_j^{\ell_{2n+3},\pm} \\
& \, -n \, \sum_{\substack{\ell_1+\cdots+\ell_{2n+3}=m+1 \\ \ell_{2n+3} \ge 1}} 
\dot{\chi}^{[\ell_1]} \, \cdots \, \dot{\chi}^{[\ell_n]} \, \p_{y_3} \chi^{[\ell_{n+1}]} \, 
\psi^{\ell_{n+2}} \, \cdots \, \psi^{\ell_{2n+1}} \, \p_{y_j} \psi^{\ell_{2n+2}} \, \uu_j^{\ell_{2n+3},\pm} \, {\rm d}y \, {\rm d}\theta \, .
\end{align*}
We then substitute the expression of $\uF_7^{\ell_{2n+1},\pm}$ and use the second symmetry formula (Proposition \ref{propB1} 
in Appendix \ref{appendixB}) for the blue terms $\p_\theta \dot{\chi}^{[\ell_n]}$ and $\p_{y_j} \dot{\chi}^{[\ell_n]}$, which yields:
\begin{align*}
\mathfrak{I}_n^{\, m,\pm} \, = \, \mathfrak{I}_{n+1}^{\, m,\pm} + \, & \, \int_{\Omega_0^\pm \times \bT} 
\sum_{\ell_1+\cdots+\ell_{2n+3}=m+2} \dot{\chi}^{[\ell_1]} \, \cdots \, \dot{\chi}^{[\ell_n]} \, \chi^{[\ell_{n+1}]} \, 
\psi^{\ell_{n+2}} \, \cdots \, \psi^{\ell_{2n+1}} \, \p_\theta \psi^{\ell_{2n+2}} \, \xi_j \, \p_{y_3} \uu_j^{\ell_{2n+3},\pm} \\
& \, +n \, \sum_{\substack{\ell_1+\cdots+\ell_{2n+3}=m+2 \\ \ell_{2n+3} \ge 1}} 
\dot{\chi}^{[\ell_1]} \, \cdots \, \dot{\chi}^{[\ell_{n-1}]} \, \p_{y_3} \dot{\chi}^{[\ell_n]} \, \chi^{[\ell_{n+1}]} \, 
\psi^{\ell_{n+2}} \, \cdots \, \psi^{\ell_{2n+1}} \, \p_\theta \psi^{\ell_{2n+2}} \, \xi_j \, \uu_j^{\ell_{2n+3},\pm} \\
& \, +\sum_{\substack{\ell_1+\cdots+\ell_{2n+3}=m+2 \\ \ell_{2n+3} \ge 1}} 
\dot{\chi}^{[\ell_1]} \, \cdots \, \dot{\chi}^{[\ell_n]} \, \p_{y_3} \chi^{[\ell_{n+1}]} \, 
\psi^{\ell_{n+2}} \, \cdots \, \psi^{\ell_{2n+1}} \, \p_\theta \psi^{\ell_{2n+2}} \, \xi_j \, \uu_j^{\ell_{2n+3},\pm} \\
\, & \, +\sum_{\ell_1+\cdots+\ell_{2n+3}=m+1} \dot{\chi}^{[\ell_1]} \, \cdots \, \dot{\chi}^{[\ell_n]} \, \chi^{[\ell_{n+1}]} \, 
\psi^{\ell_{n+2}} \, \cdots \, \psi^{\ell_{2n+1}} \, \p_{y_j} \psi^{\ell_{2n+2}} \, \p_{y_3} \uu_j^{\ell_{2n+3},\pm} \\
& \, +n \, \sum_{\substack{\ell_1+\cdots+\ell_{2n+3}=m+1 \\ \ell_{2n+3} \ge 1}} 
\dot{\chi}^{[\ell_1]} \, \cdots \, \dot{\chi}^{[\ell_{n-1}]} \, \p_{y_3} \dot{\chi}^{[\ell_n]} \, \chi^{[\ell_{n+1}]} \, 
\psi^{\ell_{n+2}} \, \cdots \, \psi^{\ell_{2n+1}} \, \p_{y_j} \psi^{\ell_{2n+2}} \, \uu_j^{\ell_{2n+3},\pm} \\
& \, +\sum_{\substack{\ell_1+\cdots+\ell_{2n+3}=m+1 \\ \ell_{2n+3} \ge 1}} 
\dot{\chi}^{[\ell_1]} \, \cdots \, \dot{\chi}^{[\ell_n]} \, \p_{y_3} \chi^{[\ell_{n+1}]} \, 
\psi^{\ell_{n+2}} \, \cdots \, \psi^{\ell_{2n+1}} \, \p_{y_j} \psi^{\ell_{2n+2}} \, \uu_j^{\ell_{2n+3},\pm} \, {\rm d}y \, {\rm d}\theta \\
= \, \mathfrak{I}_{n+1}^{\, m,\pm} + \, & \, \int_{\Omega_0^\pm \times \bT} \p_{y_3} \Big( 
\sum_{\substack{\ell_1+\cdots+\ell_{2n+3}=m+2 \\ \ell_{2n+3} \ge 1}} \dot{\chi}^{[\ell_1]} \, \cdots \, \dot{\chi}^{[\ell_n]} \, \chi^{[\ell_{n+1}]} \, 
\psi^{\ell_{n+2}} \, \cdots \, \psi^{\ell_{2n+1}} \, \p_\theta \psi^{\ell_{2n+2}} \, \xi_j \, \uu_j^{\ell_{2n+3},\pm} \Big) \\
& \, +\p_{y_3} \Big( \sum_{\substack{\ell_1+\cdots+\ell_{2n+3}=m+2 \\ \ell_{2n+3} \ge 1}} 
\dot{\chi}^{[\ell_1]} \, \cdots \, \dot{\chi}^{[\ell_n]} \, \chi^{[\ell_{n+1}]} \, 
\psi^{\ell_{n+2}} \, \cdots \, \psi^{\ell_{2n+1}} \, \p_{y_j} \psi^{\ell_{2n+2}} \, \uu_j^{\ell_{2n+3},\pm} \Big) \, {\rm d}y \, {\rm d}\theta \\
= \, \mathfrak{I}_{n+1}^{\, m,\pm} + \, & \, ,
\end{align*}
where the conclusion follows from the fact that all functions $\dot{\chi}^{[\ell]}$ vanish on $\Gamma^\pm$ and $\Gamma_0$ 
(and $n \ge 1$ so there is always at least one function $\dot{\chi}^{[\ell]}$ in each product). We thus have $\mathfrak{I}_1^{\, m,\pm}
=\mathfrak{I}_n^{\, m,\pm}$ for all $n \ge 1$ and, choosing $n$ large enough (recall that $\psi^0$ and $\psi^1$ vanish), we thus 
get  $\mathfrak{I}_1^{\, m,\pm}=0$. This completes the proof of Lemma \ref{lem-fredholm_pb_laplace}.
\end{proof}

At this stage, the front profile $\psi^{\, m+1}$ is fixed as follows. The oscillating modes in $\theta$ that compose $\psi_\sharp^{\, m+1}$ 
are given by the induction assumption $H(m)$, and the slow mean $\widehat{\psi}^{\, m+1}(0)$ is given by:
$$
\widehat{\psi}^{\, m+1}(0) \, = \, \Psi^{\, m+1} \, ,
$$
where the function $\Psi^{\, m+1}$ is given by Lemma \ref{lem-moyenne-front}, and we have chosen the mean (with respect to $y'$) of 
$\widehat{\psi}^{\, m+1}(0)$ to be zero in order to match the initial condition \eqref{s3-def_cond_init_oscil_psi} and the second order 
differential equation imposed by Lemma \ref{lem-fredholm_pb_laplace} (see the following Paragraph for a discussion on the choice 
of the time derivative at $t=0$ of the slow mean of the front profile). With the above choice for $\widehat{\psi}^{\, m+1}(0)$, we ensure 
solvability in $H^\infty(\Omega_0^+) \times H^\infty(\Omega_0^-)$ of the Laplace problem \eqref{pb_laplace_m+1}. Moreover, up to 
adding a function of time only to the solution to \eqref{pb_laplace_m+1}, we can fix the mean value:
$$
\int_{\Omega_0^+} \widehat{\uq}^{\, m+1,+}(0) \, {\rm d}y \, + \, \int_{\Omega_0^-} \widehat{\uq}^{\, m+1,-}(0) \, {\rm d}y
$$
as we want. In other words, we can always choose the solution to \eqref{pb_laplace_m+1} such that the constraint ${\mathcal I}^{\, m+1}(t) 
=0$ for the slow mean of the total pressure is satisfied (we recall that the quantity ${\mathcal I}^\mu$ for any integer $\mu$ is given by 
\eqref{expression_Im(t)}, \eqref{expression_Im(t)-bis}). In other words, we have already managed to enforce $(H(m+1)-4)$. In the 
following Paragraph, we are going to examine the remaining steps in the determination of the slow mean $\widehat{\uU}^{\, m+1,\pm}(0)$.

\subsection{Determining the slow mean of the corrector}

In order to construct the slow mean $\widehat{\uU}^{\, m+1,\pm}(0)$ of the corrector $U^{\, m+1,\pm}$, we need to verify that the source 
terms in \eqref{slow_mean_m+1_edp}, \eqref{slow_mean_m+1_saut}, \eqref{slow_mean_m+1_hautbas} satisfy the solvability 
conditions of Proposition \ref{prop-systemelent}. The corresponding items of Proposition \ref{prop-systemelent} are examined 
one by one below.

\paragraph{Compatibility at the top and bottom boundaries.} Recalling the definition \eqref{defsource_slow_mean_m+1}, 
we have:
$$
\bF^{\, m,\pm}|_{\Gamma^\pm} \, = \, -\, {\bf c}_0 \, \Big\{ \, 
\sum_{\substack{\ell_1+\ell_2=m+1 \\ \ell_1,\ell_2 \ge 1}} \bA_\alpha (\uU^{\ell_1,\pm},\p_{y_\alpha} \uU^{\ell_2,\pm})|_{\Gamma^\pm} 
\, \Big\} \, ,
$$
which gives (the Hessian mappings $\bA_\alpha$ are given in Appendix \ref{appendixA}):
$$
\bF_6^{\, m,\pm}|_{\Gamma^\pm} \, = \, -\, {\bf c}_0 \, \Big\{ \, \sum_{\substack{\ell_1+\ell_2=m+1 \\ \ell_1,\ell_2 \ge 1}} 
\p_{y_j} \big( \uu_j^{\ell_1,\pm} \, \uH_3^{\ell_2,\pm} -\uH_j^{\ell_1,\pm} \, \uu_3^{\ell_2,\pm} \big)|_{\Gamma^\pm} \, \Big\} \, = \, 0 \, ,
$$
where we have used the boundary conditions \eqref{inductionHm3} and the fact that only tangential derivatives with respect to 
$\Gamma^\pm$ are involved.

\paragraph{Compatibility on $\Gamma_0$.} The verification of the conditions:
$$
\bF_6^{\, m,\pm}|_{\Gamma_0} \, = \, \big( \p_t +u_j^{0,\pm} \, \p_{y_j} \big) \bG_2^{\, m,\pm} -H_j^{0,\pm} \, \p_{y_j} \bG_1^{\, m,\pm} \, ,
$$
is performed in Appendix \ref{appendixB}, see Lemma \ref{lemB2}. The (long) proof is in the same spirit as the proof of Lemma 
\ref{lem_compatibilite_bord} above so we have thought it more convenient to refer the interested reader to Appendix \ref{appendixB} 
and proceed with those ingredients that are new in the analysis. We just emphasize the fact that the verification of the latter relation 
is actually independent of our previous determination of the slow mean of the front profile $\widehat{\psi}^{\, m+1}(0)$ since the terms 
in $\bG_1^{\, m,\pm}$ and $\bG_2^{\, m,\pm}$ where $\widehat{\psi}^{\, m+1}(0)$ appears cancel when we compute
$$
\big( \p_t +u_j^{0,\pm} \, \p_{y_j} \big) \bG_2^{\, m,\pm} -H_j^{0,\pm} \, \p_{y_j} \bG_1^{\, m,\pm} \, .
$$
In particular, verifying the compatibility condition on $\Gamma_0$ is independent of our choice of initial conditions for the front profiles 
$\psi^\mu$.

\paragraph{Compatibility for the divergence of the magnetic field.} Once again, the proof of the relation:
$$
\p_t \bF_8^{\, m,\pm} \, = \, \p_{y_\alpha} \bF_{3+\alpha}^{\, m,\pm} \, ,
$$
is postponed in Appendix \ref{appendixB} (we refer the interested reader to Lemma \ref{lemB4}).

\paragraph{Existence of compatible initial conditions.} This is the main new point in the analysis. We explain why it is possible to find 
compatible initial data \eqref{slow_mean_m+1_di} for the slow mean system \eqref{slow_mean_m+1_edp}, \eqref{slow_mean_m+1_saut}, 
\eqref{slow_mean_m+1_hautbas}. We recall that at time $t=0$, we wish to have $\widehat{\psi}^{\, m+1}(0)=0$ because of 
\eqref{s3-cond_init_profils_psi_0^m}. In particular, the initial condition $\mathfrak{H}^{\, m+1,\pm}_0$ in \eqref{slow_mean_m+1_di} 
for the magnetic field should satisfy the following relations:
\begin{equation}
\label{divergence_H_t=0}
\begin{cases}
\nabla \cdot \mathfrak{H}^{\, m+1,\pm}_0 \, = \, \bF_8^{\, m,\pm}|_{t=0} \, , & \text{\rm in $\Omega_0^\pm$,} \\
\mathfrak{H}^{\, m+1,\pm}_{0,3}|_{\Gamma^\pm} \, = \, 0 \, , & \\
\mathfrak{H}^{\, m+1,\pm}_{0,3}|_{\Gamma_0} \, = \, \bG_2^{\, m,\pm}|_{t=0} \, , & 
\end{cases}
\end{equation}
where:
\begin{align*}
\bG_2^{\, m,\pm}|_{t=0} \, = \, & \, {\bf c}_0 \, \Big\{ 
\sum_{\substack{\ell_1+\ell_2=m+2 \\ \ell_2 \ge 1}} \p_\theta \psi^{\ell_1} \, \xi_j \, H_j^{\ell_2,\pm} 
+\sum_{\substack{\ell_1+\ell_2=m+1 \\ \ell_2 \ge 1}} \p_{y_j} \psi^{\ell_1} \, H_j^{\ell_2,\pm} \Big\}|_{t=y_3=Y_3=0} 
-\widehat{H}_{3,\star}^{\, m+1,\pm}(0)|_{t=y_3=Y_3=0} \\
= \, & \, {\bf c}_0 \, \Big\{ \p_\theta \psi^2_0 \, \xi_j \, H_j^{\, m,\pm} +\p_{y_j} \psi^2_0 \, H_j^{\, m-1,\pm} \Big\}|_{t=y_3=Y_3=0} 
-\widehat{H}_{3,\star}^{\, m+1,\pm}(0)|_{t=y_3=Y_3=0} \, .
\end{align*}
The existence of a solution to the divergence problem \eqref{divergence_H_t=0} is equivalent to the fulfillment of the condition:
$$
\int_{\Omega_0^\pm} \bF_8^{\, m,\pm}|_{t=0} \, {\rm d}y \pm \int_{\bT^2} \bG_2^{\, m,\pm}|_{t=0} \, {\rm d}y' \, = \, 0 \, .
$$
Applying Lemma \ref{lem-fredholm_pb_laplace}, we know that the latter condition holds not only at time $t=0$ but also at any time 
$t \in [0,T]$. Since the domain $\Omega_0^\pm$ has an infinitely smooth boundary and the source terms in \eqref{divergence_H_t=0} 
have $H^\infty$ regularity, we can indeed find a solution $\mathfrak{H}^{\, m+1,\pm}_0 \in H^\infty(\Omega_0^\pm)$ to \eqref{divergence_H_t=0}. 
There are of course infinitely many possible choices, and each choice will give rise to one solution to \eqref{slow_mean_m+1_edp}, 
\eqref{slow_mean_m+1_saut}, \eqref{slow_mean_m+1_hautbas}. (This is the reason why we have not claimed any uniqueness property 
in Theorem \ref{thm_principal}.) However, when prescribing zero initial data for the slow mean of the magnetic field is possible, we shall 
then do so, see for instance Chapter \ref{chapter6} for the case of the first corrector.

Let us now examine the existence of a compatible initial condition for the velocity field. The system to solve for $\mathfrak{u}^{\, m+1,\pm}_0$ 
reads:
\begin{equation}
\label{divergence_u_t=0}
\begin{cases}
\nabla \cdot \mathfrak{u}^{\, m+1,\pm}_0 \, = \, \bF_7^{\, m,\pm}|_{t=0} \, , & \text{\rm in $\Omega_0^\pm$,} \\
\mathfrak{u}^{\, m+1,\pm}_{0,3}|_{\Gamma^\pm} \, = \, 0 \, , & \\
\mathfrak{u}^{\, m+1,\pm}_{0,3}|_{\Gamma_0} \, = \, \bG_1^{\, m,\pm}|_{t=0} \, , & 
\end{cases}
\end{equation}
where:
\begin{align*}
\bG_1^{\, m,\pm}|_{t=0} \, = \, & \, \p_t \widehat{\psi}^{\, m+1}(0)|_{t=0} \\
& \, +{\bf c}_0 \, \Big\{ \sum_{\substack{\ell_1+\ell_2=m+2 \\ \ell_2 \ge 1}} \p_\theta \psi^{\ell_1} \, \xi_j \, u_j^{\ell_2,\pm} 
+\sum_{\substack{\ell_1+\ell_2=m+1 \\ \ell_2 \ge 1}} \p_{y_j} \psi^{\ell_1} \, u_j^{\ell_2,\pm} \Big\}|_{t=y_3=Y_3=0} 
-\widehat{u}_{3,\star}^{\, m+1,\pm}(0)|_{t=y_3=Y_3=0} \\
= \, & \, \p_t \widehat{\psi}^{\, m+1}(0)|_{t=0} 
+{\bf c}_0 \, \Big\{ \p_\theta \psi^2_0 \, \xi_j \, u_j^{\, m,\pm} +\p_{y_j} \psi^2_0 \, u_j^{\, m-1,\pm} \Big\}|_{t=y_3=Y_3=0} 
-\widehat{u}_{3,\star}^{\, m+1,\pm}(0)|_{t=y_3=Y_3=0} \, .
\end{align*}
The existence of a solution to the divergence problem \eqref{divergence_u_t=0} is equivalent to the fulfillment of the condition:
$$
\int_{\Omega_0^\pm} \bF_7^{\, m,\pm}|_{t=0} \, {\rm d}y \pm \int_{\bT^2} \bG_1^{\, m,\pm}|_{t=0} \, {\rm d}y' \, = \, 0 \, .
$$
Applying Lemma \ref{lem-fredholm_pb_laplace}, we see that a necessary and sufficient condition for solving \eqref{divergence_u_t=0} is 
to impose:
$$
\dfrac{{\rm d}}{{\rm d}t} \,  \int_{\Gamma_0 \times \bT} \psi^{\, m+1} \, {\rm d}y' \, {\rm d}\theta \, \Big|_{t=0} \, = \, 0 \, . 
$$
Let us observe that when we have determined the mean free part (in $y'$) of $\widehat{\psi}^{\, m+1}(0)$, the choice that was made in 
Lemma \ref{lem-moyenne-front}, namely $\p_t \Psi^{\, m+1}|_{t=0} =0$, was purely a matter of convenience. However, the choice we make 
here for the initial velocity of the mean is not a matter of convenience; it is imposed by the solvability condition for the divergence problem 
\eqref{divergence_u_t=0}. A similar constraint on the mean of the front arises in \cite{SWZ}. The initial condition \eqref{s3-cond_init_profils_psi_0^m} 
for the front profiles together with the solvability condition for the divergence problem \eqref{divergence_u_t=0} and the Laplace problem 
\eqref{pb_laplace_m+1} impose that we take:
$$
\forall \, t \in [0,T] \, ,\quad \int_{\Gamma_0 \times \bT} \psi^{\, m+1} \, {\rm d}y' \, {\rm d}\theta \, := \, 0 \, ,
$$
which explains the decomposition of the front profile $\psi^{\, m+1}$:
$$
\psi^{\, m+1}(t,y',\theta) \, = \, \psi^{\, m+1}_\sharp (t,y',\theta) +\Psi^{\, m+1}(t,y') \, .
$$
Let us recall again that the mean in $\theta$ of $\psi^{\, m+1}$, which we denote $\Psi^{\, m+1}$, is obtained by solving the wave type equation 
\eqref{eq_onde_front_m+1'} on $\Gamma_0$ and that $\Psi^{\, m+1}$ has zero mean in $y'$ on $\bT^2$. Our choice for $\psi^{\, m+1}$ implies 
that we can construct a solution $\mathfrak{u}^{\, m+1,\pm}_0 \in H^\infty(\Omega_0^\pm)$ to the divergence problem \eqref{divergence_u_t=0}. 
Applying then Proposition \ref{prop-systemelent}, we have all the ingredients to solve the slow mean problem \eqref{slow_mean_m+1_edp}, 
\eqref{slow_mean_m+1_saut}, \eqref{slow_mean_m+1_hautbas}.

\paragraph{Summary.} Let us now summarize what we have done so far and which relations of $H(m+1)$ we have already satisfied. We have 
first solved the oscillating modes in $\theta$ of the fast problem \eqref{fast_problem_m+1}, which has given rise to the functions $\bU^{\, m+1,\pm}$ 
that are part of the decomposition \eqref{decompositionUm+1} of the corrector $U^{\, m+1,\pm}$. This first step automatically gave the boundary 
conditions $(H(m+1)-3)$ on $\Gamma^\pm$ for the nonzero Fourier modes in $\theta$. We have then defined the fast means of the normal velocity, 
normal magnetic field and total pressure so that the differential equations in \eqref{fast_problem_m+1''} are satisfied. From there on, no matter what 
we do for the remaining degrees of freedom in \eqref{decompositionUm+1}, the corrector $U^{\, m+1,\pm}$ will satisfy:
\begin{equation*}
\begin{cases}
\cL_f^\pm(\partial) \, U^{\, m+1,\pm} \, = \, F^{\, m,\pm} \, ,& y \in \Omega_0^\pm \, ,\, \pm Y_3 >0 \, ,\\
\p_{Y_3} H_3^{\, m+1,\pm} +\xi_j \, \p_\theta H_j^{\, m+1,\pm} \, = \, F_8^{\, m,\pm} \, ,& y \in \Omega_0^\pm \, ,\, \pm Y_3 >0 \, .
\end{cases}
\end{equation*}
We have then studied the problem that should be satisfied by the slow mean of $U^{\, m+1,\pm}$ which led us to also determine the 
mean $\widehat{\psi}^{\, m+1}(0)$ in order to be able to solve an overdetermined coupled Laplace problem for the slow mean of the 
total pressure $(\widehat{\uq}^{\, m+1,+}(0),\widehat{\uq}^{\, m+1,-}(0))$. When determining the slow mean of $U^{\, m+1,\pm}$, we 
have enforced the jump conditions in \eqref{fast_problem_m+1''}, the boundary conditions $(H(m+1)-3)$ for the zero Fourier mode in 
$\theta$, the normalization condition $(H(m+1)-4)$ and the slow mean conditions $(H(m+1)-5)$.

Independently of our future determination of what remains in \eqref{decompositionUm+1}, we have thus already obtained $(H(m+1)-2)$ 
(including the boundary conditions on $\Gamma_0$), $(H(m+1)-3)$, $(H(m+1)-4)$ and $(H(m+1)-5)$. Moreover, the remaining degrees 
of freedom in \eqref{decompositionUm+1} are the fast means of the tangential components of the velocity and magnetic fields, as well as 
the oscillating modes of the front profile $\psi^{\, m+2}$. In the following Section, we explain how we choose the fast means of the tangential 
components of the velocity and magnetic fields. After that, there will only remain to determine $\psi_\sharp^{\, m+2}$.

\section{The tangential components of the fast mean}

We are going to determine the fast means $\widehat{u}_{j,\star}^{\, m+1,\pm}(0)$, $\widehat{H}_{j,\star}^{\, m+1,\pm}(0)$ by imposing 
the condition $(H(m+1)-6)$ together with the initial conditions \eqref{s3-cond_moyenne_initiale_U^m,pm}. Recalling the general 
definition \eqref{s3-def_terme_source_F^m,pm} of the source term $F^{\mu,\pm}$ and specifying to the case $\mu=m+1$, we get:
\begin{align*}
F^{\, m+1,\pm} \, = \, & \, -L_s^\pm(\p) U^{\, m+1,\pm} 
+\p_\theta \psi^2 \, \cA^\pm \, \p_{Y_3} U^{\, m+1,\pm} +\p_\theta \psi^{\, m+2} \, \cA^\pm \, \p_{Y_3} U^{\, 1,\pm} \\
& \, -\xi_j \, \bA_j (U^{\, 1,\pm},\p_\theta U^{\, m+1,\pm}) -\xi_j \, \bA_j (U^{\, m+1,\pm},\p_\theta U^{\, 1,\pm}) \\
& \, -\bA_3(U^{\, 1,\pm} ,\p_{Y_3} U^{\, m+1,\pm}) -\bA_3(U^{\, m+1,\pm} ,\p_{Y_3} U^{\, 1,\pm}) +\widetilde{F}^{\, m,\pm} \, ,
\end{align*}
where we have first kept all those terms in $F^{\, m+1,\pm}$ that depend on $U^{\, m+1,\pm}$ (and are therefore not fully determined 
at this point), and where $\widetilde{F}^{\, m,\pm}$ is entirely given in terms of all previously determined profiles. Computing the zero 
Fourier coefficient with respect to $\theta$, we thus get:
\begin{align*}
\widehat{F}_\star^{\, m+1,\pm}(0) \, = \, & \, -L_s^\pm(\p)\widehat{U}_\star^{\, m+1,\pm}(0) 
+\big( \widehat{\p_\theta \psi^2 \, \cA^\pm \, \p_{Y_3} U^{\, m+1,\pm}} \big) (0)
+\big( \widehat{\p_\theta \psi^{\, m+2} \, \cA^\pm \, \p_{Y_3} U^{\, 1,\pm}} \big) (0) \\
& \, -\widehat{\bA_3(U^{\, 1,\pm} ,\p_{Y_3} U^{\, m+1,\pm})}(0) -\widehat{\bA_3(U^{\, m+1,\pm} ,\p_{Y_3} U^{\, 1,\pm})}(0) 
+\widetilde{F}^{\, m,\pm} \, ,
\end{align*}
where, now, $\widetilde{F}^{\, m,\pm}$ denotes a function in $S_\star^\pm$ that is entirely computable in terms of all previously determined 
profiles (but whose precise expression is useless for our purpose, which explains why we feel free to use the same notation from one 
line to the other even though the functions are not the same). Similarly, we have:
$$
\widehat{F}_{8,\star}^{\, m+1,\pm}(0) \, = \, -\nabla \cdot \widehat{H}_\star^{\, m+1,\pm}(0) 
+\big( \widehat{\p_\theta \psi^2 \, \xi_j \, \p_{Y_3} H_{j,\star}^{\, m+1,\pm}} \big) (0)
+\big( \widehat{\p_\theta \psi^{\, m+2} \, \xi_j \, \p_{Y_3} H_{j,\star}^{\, 1,\pm}} \big) (0) 
+\widetilde{F}_8^{\, m,\pm} \, ,
$$

At this stage, the general decomposition \eqref{decompositionUm+1} can be rewritten as:
\begin{align}
U^{\, m+1,\pm} \, = \, \widetilde{\bU}^{\, m+1,\pm} 
\, & \, +\big( 0,\widehat{u}_{1,\star}^{\, m+1,\pm}(0),\widehat{u}_{2,\star}^{\, m+1,\pm}(0),
0,\widehat{H}_{1,\star}^{\, m+1,\pm}(0),\widehat{H}_{2,\star}^{\, m+1,\pm}(0),0 \big)^T \label{decompositionUm+1'} \\
& \, \pm \sum_{k \neq 0} |k| \, \widehat{\psi}^{\, m+2} (t,y',k) \, \chi (y_3) \, {\rm e}^{\mp \, |k| \, Y_3 +i \, k \, \theta} \, \cR^\pm(k) \, ,\notag
\end{align}
where $\widetilde{\bU}^{\, m+1,\pm} \in S^\pm$ gathers all previously determined quantities, and the remaining unknown quantities are 
the fast means of the tangential components and the front profile $\psi^{\, m+2}_\sharp$ (or equivalently $\p_\theta \psi^{\, m+2}$). We now 
wish to make explicit the constraint $(H(m+1)-6)$ and perform more or less the same calculations as in the analogous Section of Chapter 
\ref{chapter4}. Namely, we first define:
\begin{equation*}
\mathfrak{F}^{\, m+1,\pm} \, := \, \p_{Y_3} \widehat{\Big( \bA_3(U_\star^{\, 1,\pm},U_\star^{\, m+1,\pm}) \Big)} (0) \, ,\quad 
\mathfrak{F}_8^{\, m+1,\pm} \, := \, 0 \, .
\end{equation*}
The explicit expression of the Hessian mapping $\bA_3$ given in Appendix \ref{appendixA} gives\footnote{This property would hold even 
if we had chosen nonzero initial conditions in \eqref{s3-cond_moyenne_initiale_U^m,pm}.}:
$$
\bA_3 \big( \widehat{U}_\star^{\, 1,\pm} (0),\Pi \, \widehat{U}_\star^{\, m+1,\pm} (0) \big) \, = \, 0 \, ,
$$
independently of the determination of the tangential components $\Pi \, \widehat{U}_\star^{\, m+1,\pm} (0)$. Using the above decomposition 
\eqref{decompositionUm+1'} of $U^{\, m+1,\pm}$ and the decomposition \eqref{s3-U^1,pm_k_final} of the leading profile $U^{\, 1,\pm}$, we 
thus compute:
\begin{equation*}
\mathfrak{F}^{\, m+1,\pm} \, = \, \mp 2 \, \chi(y_3)^2 \, \sum_{k \neq 0} {\rm e}^{\mp 2\, |k| \, Y_3} \, |k|^3 \, \widehat{\psi}^{\, 2} (k) \, 
\widehat{\psi}^{\, m+2} (-k) \, \bA_3 \Big( {\bf R}^\pm,\overline{{\bf R}^\pm} \Big) +\widetilde{\mathfrak{F}}^{\, m,\pm} \, ,
\end{equation*}
where the last term $\widetilde{\mathfrak{F}}^{\, m,\pm}$ is a known function (meaning that it can be expressed in terms of previously 
determined profiles). Recalling the expression:
$$
\bA_3 \Big( {\bf R}^\pm,\overline{{\bf R}^\pm} \Big) \, = \, 2 \, \Big( 0,0,(c^\pm)^2-(b^\pm)^2,0,0,0,0 \Big)^T \, ,
$$
we obtain that the source terms $\mathfrak{F}^{\, m+1,\pm},\mathfrak{F}_8^{\, m+1,\pm}$ defined above satisfy the linear system:
\begin{equation*}
\begin{cases}
u_j^{0,\pm} \, \mathfrak{F}_7^{\, m+1,\pm} -H_j^{0,\pm} \, \mathfrak{F}_8^{\, m+1,\pm} \, = \, \mathfrak{F}_j^{\, m+1,\pm} 
+\widetilde{\mathfrak{F}}_j^{\, m,\pm} \, , & j=1,2 \, ,\\
H_j^{0,\pm} \, \mathfrak{F}_7^{\, m+1,\pm} -u_j^{0,\pm} \, \mathfrak{F}_8^{\, m+1,\pm} \, = \, \mathfrak{F}_{3+j}^{\, m+1,\pm} 
+\widetilde{\mathfrak{F}}_{3+j}^{\, m,\pm} \, , & j=1,2 \, ,
\end{cases}
\end{equation*}
independently of the choice we can make for the tangential components $\Pi \, \widehat{U}_\star^{\, m+1,\pm} (0)$ and for the 
front profile $\psi_\sharp^{\, m+2}$ in the decomposition \eqref{decompositionUm+1'}. Again, here, we use the notation 
$\widetilde{\mathfrak{F}}^{\, m,\pm}$ for known quantities (hence the index $m$ rather than $m+1$).

Going on with the remaining terms in the above decomposition of $\widehat{F}_\star^{\, m+1,\pm}(0)$ and $\widehat{F}_{8,\star}^{\, m+1,\pm}(0)$ 
(the calculations are similar to those in Chapter \ref{chapter4} so we feel free to shorten the details), we eventually obtain that the fulfillment of 
the condition $(H(m+1)-6)$ is equivalent to the verification of a system of four partial differential equations that reads:
\begin{equation*}
\begin{cases}
u_j^{0,\pm} \, \big( -\nabla \cdot \widehat{u}_\star^{\, m+1,\pm}(0) \big) -H_j^{0,\pm} \, \big( -\nabla \cdot \widehat{H}_\star^{\, m+1,\pm}(0) \big) 
\, = \, \big( -L_s^\pm (\p) \, \widehat{U}_\star^{\, m+1,\pm} (0) \big)_j +\cF_{j,\star}^{\, m,\pm} \, , & \\
H_j^{0,\pm} \, \big( -\nabla \cdot \widehat{u}_\star^{\, m+1,\pm}(0) \big) -u_j^{0,\pm} \, \big( -\nabla \cdot \widehat{H}_\star^{\, m+1,\pm}(0) \big) 
\, = \, \big( -L_s^\pm (\p) \, \widehat{U}_\star^{\, m+1,\pm} (0) \big)_{3+j} +\cF_{3+j,\star}^{\, m,\pm} \, , & 
\end{cases}
\end{equation*}
where $\cF_{j,\star}^{\, m,\pm},\cF_{3+j,\star}^{\, m,\pm} \in S_\star^\pm$ are known source terms. The latter system is independent of the 
choice we can make for the front profile $\psi^{\, m+2}_\sharp$, and it can be equivalently rewritten as the symmetric hyperbolic system:
\begin{equation}
\label{evolution_fast_mean_m+1}
\p_t \begin{bmatrix}
\widehat{u}_{1,\star}^{\, m+1,\pm}(0) \\
\widehat{u}_{2,\star}^{\, m+1,\pm}(0) \\
\widehat{H}_{1,\star}^{\, m+1,\pm}(0) \\
\widehat{H}_{2,\star}^{\, m+1,\pm}(0) \end{bmatrix} 
+\begin{bmatrix}
u_j^{0,\pm} & 0 & -H_j^{0,\pm} & 0 \\
0 & u_j^{0,\pm} & 0 & -H_j^{0,\pm} \\
-H_j^{0,\pm} & 0 & u_j^{0,\pm} & 0 \\
0 & -H_j^{0,\pm} & 0 & u_j^{0,\pm} \end{bmatrix} \, \p_{y_j} \begin{bmatrix}
\widehat{u}_{1,\star}^{\, m+1,\pm}(0) \\
\widehat{u}_{2,\star}^{\, m+1,\pm}(0) \\
\widehat{H}_{1,\star}^{\, m+1,\pm}(0) \\
\widehat{H}_{2,\star}^{\, m+1,\pm}(0) \end{bmatrix} \, = \, \begin{bmatrix}
\widetilde{\cF}_{1,\star}^{\, m,\pm} \\
\widetilde{\cF}_{2,\star}^{\, m,\pm} \\
\widetilde{\cF}_{4,\star}^{\, m,\pm} \\
\widetilde{\cF}_{5,\star}^{\, m,\pm} \end{bmatrix} \, ,
\end{equation}
for appropriately computed source terms $\widetilde{\cF}_{j,\star}^{\, m,\pm}$, $\widetilde{\cF}_{3+j,\star}^{\, m,\pm}$ that incorporate, 
for instance, that part of $L_s^\pm (\p) \, \widehat{U}_\star^{\, m+1,\pm} (0)$ that depends on the fast mean of the normal velocity, 
normal magnetic field and total pressure.

The symmetric hyperbolic system \eqref{evolution_fast_mean_m+1} is solved with the initial conditions:
$$
\Pi \, \widehat{U}_\star^{\, m+1,\pm}(0) \big|_{t=0} \, = \, 0 \, ,
$$
in order to be consistent with \eqref{s3-cond_moyenne_initiale_U^m,pm}. Since $y_3 \in I^\pm$ and $Y_3 \in \bR^\pm$ are parameters 
in \eqref{evolution_fast_mean_m+1}, there is no real difficulty to show that the solution $\Pi \, \widehat{U}_\star^{\, m+1,\pm}(0)$ to 
\eqref{evolution_fast_mean_m+1} belongs to $S_\star^\pm$. We have thus constructed $\Pi \, \widehat{U}_\star^{\, m+1,\pm}(0) \in 
S_\star^\pm$, satisfied $(H(m+1)-6)$ and it only remains to determine the front profile $\psi_\sharp^{\, m+2}$ to close the decomposition 
\eqref{decompositionUm+1'}.

\section{The linearized nonlocal Hamilton-Jacobi equation for the front}

Unsurprisingly, we are going to determine the oscillating modes in $\theta$ of the front profile $\psi^{\, m+2}$ by imposing the condition 
$(H(m+1)-7)$. Let us recall indeed that a necessary condition for the existence of a solution to the fast problem:
\begin{equation*}
\begin{cases}
\cL_f^\pm(\partial) \, U^{\, m+2,\pm} \, = \, F^{\, m+1,\pm} \, ,& \\
\p_{Y_3} H_3^{\, m+2,\pm} +\xi_j \, \p_\theta H_j^{\, m+2,\pm} \, = \, F_8^{\, m+1,\pm} \, ,& \\
B^+ \, U^{\, m+2,+}|_{y_3=Y_3=0} +B^- \, U^{\, m+2,-}|_{y_3=Y_3=0} +\partial_\theta \psi^{\, m+3} \, \ub \, = \, G^{\, m+1} \, ,
\end{cases}
\end{equation*}
is that the source terms $F^{\, m+1,\pm}$, $G^{\, m+1}$ should satisfy the orthogonality condition \eqref{compatibilite_pb_rapide_e} of 
Theorem \ref{theorem_fast_problem}. Let us also recall that our corrector $U^{\, m+1,\pm}$ has the form \eqref{decompositionUm+1'}, 
where the only unknown quantity at this stage are the Fourier modes $\widehat{\psi}^{\, m+2}(k)$ for $k \neq 0$. Moreover, the source 
term $F^{\, m+1,\pm}$ has the expression:
\begin{align*}
F^{\, m+1,\pm} \, = \, & \, -L_s^\pm(\p) \dot{U}^{\, m+1,\pm} 
+\p_\theta \psi^2 \, \cA^\pm \, \p_{Y_3} \dot{U}^{\, m+1,\pm} +\p_\theta \psi^{\, m+2} \, \cA^\pm \, \p_{Y_3} U^{\, 1,\pm} \\
& \, -\xi_j \, \bA_j (U^{\, 1,\pm},\p_\theta \dot{U}^{\, m+1,\pm}) -\xi_j \, \bA_j (\dot{U}^{\, m+1,\pm},\p_\theta U^{\, 1,\pm}) \\
& \, -\bA_3(U^{\, 1,\pm} ,\p_{Y_3} \dot{U}^{\, m+1,\pm}) -\bA_3(\dot{U}^{\, m+1,\pm} ,\p_{Y_3} U^{\, 1,\pm}) +\widetilde{F}^{\, m,\pm} \, ,
\end{align*}
where $\widetilde{F}^{\, m,\pm}$ is entirely given in terms of all previously determined profiles, and $\dot{U}^{\, m+1,\pm}$ denotes the 
only still unknown part in \eqref{decompositionUm+1'}, that is:
$$
\dot{U}^{\, m+1,\pm} \, := \, \pm 
\sum_{k \neq 0} |k| \, \widehat{\psi}^{\, m+2} (t,y',k) \, \chi (y_3) \, {\rm e}^{\mp \, |k| \, Y_3 +i \, k \, \theta} \, \cR^\pm(k) \, .
$$
Restricting to $y_3=0$, all terms but one in $F^{\, m+1,\pm}|_{y_3=0}$ can be expressed in terms of the front profile $\psi^{\, m+2}$ or 
its partial derivatives. The only undetermined term, or at least the only term that depends on our choice for the lifting from $\{ y_3=0 \}$ 
to $y_3 \in I^\pm$, is $A_3^\pm \, \p_{y_3} \dot{U}^{\, m+1,\pm}$ but as in Chapter \ref{chapter4}, this term will cancel in the orthogonality 
condition $(H(m+1)-7)$ due to the relation:
$$
\forall \, k \neq 0 \, ,\quad \cL^\pm (k) \sbt A_3^\pm \, \cR^\pm (k) \, = \, 0 \, .
$$
Let us also examine the boundary term $G^{\, m+1}$, whose components read (see \eqref{terme_source_bord_BKW}):
\begin{align*}
G_1^{\, m+1,\pm} \, & \, = \, \p_t \psi^{\, m+2} +u_j^{0,\pm} \, \p_{y_j} \psi^{\, m+2} 
+\p_\theta \psi^2 \, \xi_j \, \dot{u}_j^{\, m+1,\pm}|_{y_3=Y_3=0} +\p_\theta \psi^{\, m+2} \, \xi_j \, u_j^{\, 1,\pm}|_{y_3=Y_3=0} 
+\widetilde{G}_1^{\, m,\pm} \, , \\
G_2^{\, m+1,\pm} \, & \, = \, H_j^{0,\pm} \, \p_{y_j} \psi^{\, m+2} 
+\p_\theta \psi^2 \, \xi_j \, \dot{H}_j^{\, m+1,\pm}|_{y_3=Y_3=0} +\p_\theta \psi^{\, m+2} \, \xi_j \, H_j^{\, 1,\pm}|_{y_3=Y_3=0} 
+\widetilde{G}_2^{\, m,\pm} \, ,
\end{align*}
where $\widetilde{G}_1^{\, m,\pm}$ and $\widetilde{G}_2^{\, m,\pm}$ are given in terms of all previously determined profiles.

It remains to plug the above expressions in the orthogonality condition $(H(m+1)-7)$ and to follow the calculations that have 
been given in full details in Chapter \ref{chapter4} and that we shall therefore not repeat here. It is eventually found that 
$(H(m+1)-7)$ equivalently reads (here $k$ is a nonzero integer):
\begin{multline}
\label{s3-edp_hatpsi^m+2}
\big( c^+ + c^- \big) \, \p_t \widehat{\psi}^{\, m+2} (k) 
+\big( c^+ \, u_j^{0,+} \, + \, c^- \, u_j^{0,-} \, - \, b^+ \, H_j^{0,+} \, - \, b^- \, H_j^{0,-} \big) \, \p_{y_j} \widehat{\psi}^{\, m+2} (k) \\
+2 \, i \, \Big( (c^+)^2 - (c^-)^2 - (b^+)^2 + (b^-)^2 \Big) \, \text{\rm sgn} (k) \, 
\sum_{k_1 +k_2 =k} \dfrac{|k_1| \, |k_2| \, |k_1 +k_2|}{|k_1| + |k_2| + |k_1 +k_2|} \, \widehat{\psi}^{\, 2} (k_1) \, \widehat{\psi}^{\, m+2} (k_2) 
\, = \, 0 \, ,
\end{multline}
which corresponds to the linearization of \eqref{s3-edp_hatpsi^2} around the leading front profile $\psi^2$ (hence the factor $2$ at 
the beginning of the second line in \eqref{s3-edp_hatpsi^m+2}).

The solvability of \eqref{s3-edp_hatpsi^m+2} in the space $\cC^\infty([0,T];H^\infty_\sharp)$ follows from the linear analogue 
of Theorem \ref{s3-thm_hunter}, which we shall omit to state precisely but the reader will easily fill the gaps. (Namely, the reader 
may follow the arguments in \cite{Hunter2006} and verify that the linear analogue of Theorem 4.2 in that reference can be proved 
with similar arguments as those given in \cite{Hunter2006}, the proof being actually simpler since lifespan is not an issue for linear 
equations.) We can thus construct a solution $\psi^{\, m+2}_\sharp$ to \eqref{s3-edp_hatpsi^m+2} in such a way that the profile 
$U^{\, m+1,\pm} \in S^\pm$ in \eqref{decompositionUm+1'} now satisfies $(H(m+1)-7)$. This completes the proof of our induction, 
meaning that with the positive time $T$ being fixed by Theorem \ref{s3-thm_hunter}, the induction assumption \eqref{inductionHm1}, 
\dots, \eqref{inductionHm7} holds for all $m \ge 1$.
\bigskip

Let us now compare with the statement of Theorem \ref{thm_principal}. The initial data for the front profiles have been chosen in 
agreement with \eqref{s3-cond_init_profils_psi_0^m}, and we have seen above that the initial data for the oscillating modes in $y'$ 
of each $\p_t \widehat{\psi}^{\, m}(0)$ could be chosen arbitrarily (at the opposite, the initial condition for the mean with respect to 
$y'$ is not arbitrary and has to vanish). For simplicity, we have chosen to impose $\p_t \widehat{\psi}^{\, m}(0)|_{t=0}=0$, as stated 
in Theorem \ref{thm_principal}, but other choices could be made. Let us turn to the second point in Theorem \ref{thm_principal}. In 
Chapter \ref{chapter4}, we have seen that the leading profile could be constructed in such a way that its slow and fast means vanish. 
Since the residual component of the leading profile $U^{\, 1,\pm}$ does not oscillate with respect to $\theta$, we have obtained in 
Chapter \ref{chapter4} $\uU^{\, 1,\pm}=0$ and $\widehat{U}_\star^{\, 1,\pm}(0)=0$. In Chapter \ref{chapter6} below, we shall examine 
why it is possible to also have $\uU^{\, 2,\pm}=0$. This requires some specific `orthogonality' properties for the leading profile 
$U^{\, 1,\pm}$. When solving the above system \eqref{evolution_fast_mean_m+1}, we can prescribe the initial conditions as we wish, 
and for the sake of simplicity, we have chosen $\Pi \widehat{U}_\star^{\, m,\pm}(0)|_{t=0}=0$ as stated in Theorem \ref{thm_principal}.

Up to the property $\uU^{\, 2,\pm}=0$, which we shall examine separately in Chapter \ref{chapter6}, we have thus already proved 
so far the validity of the first three points in Theorem \ref{thm_principal}. Moreover, the sequence of profiles 
$(U^{\, m,\pm},\psi^{\, m+1})_{m \ge 1}$ satisfies the induction assumption $(H(m))$ for any $m \ge 1$ with a uniform time $T>0$.
We are now going to examine why the various properties satisfied by the profiles $(U^{\, m,\pm},\psi^{\, m+1})_{m \ge 1}$ enable 
us to construct high order approximate solutions for the original free boundary value problem \eqref{s3-equations_nappes_MHD}, 
which will almost complete the proof of Theorem \ref{thm_principal}.

\section{High order approximate solutions}

We follow the notation of Theorem \ref{thm_principal} and define the approximate solutions:
\begin{align*}
\psi_\eps^{{\rm app},M} (t,x') \, &:= \, \psi^0 +\eps \, \psi^1 +\sum_{m=2}^{\, m+1} \eps^m \, \psi^m \left( t,x',\dfrac{\tau \, t +\xi'\cdot x'}{\eps} \right) \, ,\\
U_\eps^{{\rm app},M,\pm}(t,x) \, &:= \, U^{0,\pm} +\sum_{m=1}^M \eps^m \, U^{\, m,\pm} \left( t,x',x_3-\chi(x_3) \, \psi_\eps^{{\rm app},M}, 
\dfrac{x_3 -\psi_\eps^{{\rm app},M}}{\eps},\dfrac{\tau \, t +\xi'\cdot x'}{\eps} \right) \, ,
\end{align*}
where the profiles $(U^{\mu+1,\pm},\psi^{\mu+1})_{\mu=0,\dots,M}$ satisfy the condition $(H(M))$ stated at the beginning of Chapter 
\ref{chapter5}, and in the definition of $U_\eps^{{\rm app},M,\pm}$, the approximate front $\psi_\eps^{{\rm app},M}$ is evaluated at $(t,x')$ 
(as should be clear for the reader who has gone this far, at least do we hope so).

Let us start with the easiest estimate, which is the one on $\Gamma^\pm$. Restricting to $x_3=1$ in the definition of 
$u_{3,\eps}^{{\rm app},M,\pm}$, we get:
\begin{align*}
u_{3,\eps}^{{\rm app},M,\pm} \, = \, & \, 
\sum_{m=1}^M \eps^m \, u_3^{\, m,\pm} \left( t,x',1,\dfrac{1-\psi_\eps^{{\rm app},M}}{\eps},\dfrac{\tau \, t +\xi'\cdot x'}{\eps} \right) \\
\, = \, & \, 
\sum_{m=1}^M \eps^m \, u_{3,\star}^{\, m,\pm} \left( t,x',1,\dfrac{1-\psi_\eps^{{\rm app},M}}{\eps},\dfrac{\tau \, t +\xi'\cdot x'}{\eps} \right) \, ,
\end{align*}
where the second equality follows from $(H(M)-3)$. Using the exponential decay in $Y_3$ of functions in $S_\star^\pm$ and the crude 
bound
$$
\| \psi_\eps^{{\rm app},M} \|_{L^\infty} \, \le \, C \, \eps^2 \, ,
$$
the conclusion of Theorem \ref{thm_principal} on the error terms $R_{b,\eps}^{\, 3,\pm}$ and $R_{b,\eps}^{4,\pm}$ follows immediately. 
Let us also observe that the latter uniform bound yields $\chi(\psi_\eps^{{\rm app},M})=1$ for any sufficiently small $\eps$, which we 
assume to hold from now on.

Let us proceed with the error terms for the jump conditions on the (approximate) current vortex sheet. We compute:
\begin{align*}
H_\eps^{{\rm app},M,\pm}|_{\Gamma_\eps^{{\rm app},M}(t)} \cdot N_\eps^{{\rm app},M} \, & \, = \, 
H_{3,\eps}^{{\rm app},M,\pm}|_{\Gamma_\eps^{{\rm app},M}(t)} 
-\p_{x_j} \psi_\eps^{{\rm app},M} \, H_{j,\eps}^{{\rm app},M,\pm}|_{\Gamma_\eps^{{\rm app},M}(t)} \\
& \, = \, \sum_{m=1}^M \eps^m \, H_3^{\, m,\pm} \left( t,x',0,0,\dfrac{\tau \, t +\xi'\cdot x'}{\eps} \right) \\
& \quad \quad -\p_{x_j} \psi_\eps^{{\rm app},M} \, \left( 
H_j^{0,\pm} +\sum_{m=1}^M \eps^m \, H_j^{\, m,\pm} \left( t,x',0,0,\dfrac{\tau \, t +\xi'\cdot x'}{\eps} \right) \right) \, .
\end{align*}
Expanding the partial derivative $\p_{x_j} \psi_\eps^{{\rm app},M}$ with respect to $\eps$:
$$
\p_{x_j} \psi_\eps^{{\rm app},M} \, = \, \sum_{m=1}^M \eps^m \, \xi_j \, \p_\theta \psi^{\, m+1} \left( t,x',\dfrac{\tau \, t +\xi'\cdot x'}{\eps} \right) 
+\sum_{m=2}^{\, m+1} \eps^m \, \p_{y_j} \psi^m \left( t,x',\dfrac{\tau \, t +\xi'\cdot x'}{\eps} \right) \, ,
$$
and collecting terms, we end up with:
\begin{align*}
H_\eps^{{\rm app},M,\pm}|_{\Gamma_\eps^{{\rm app},M}(t)} \cdot N_\eps^{{\rm app},M} 
\, = \, & \, \sum_{m=1}^M \eps^m \, \left( H_3^{\, m,\pm}|_{y_3=Y_3=0} -b^\pm \, \p_\theta \psi^{\, m+1} -G_2^{\, m-1,\pm} \right) 
\left( t,x',\dfrac{\tau \, t +\xi'\cdot x'}{\eps} \right)  \\
& \, +O(\eps^{\, m+1}) \\
\, = \, & \, O(\eps^{\, m+1}) \, ,
\end{align*}
where the final conclusion comes from the fact that the profiles $(U^{\mu+1,\pm},\psi^{\mu+1})_{\mu=0,\dots,M}$ satisfy the fast 
problems $(H(M)-2)$ (in particular the boundary conditions on $\Gamma_0$ in these fast problems. With the notation of Theorem 
\ref{thm_principal}, we have obtained:
$$
\sup_{t \in [0,T] \, , \, x \in \Gamma_\eps^{{\rm app},M}(t)} \, |R_{b,\eps}^{\, 2,\pm}| \, = \, O(\eps^{\, m+1}) \, ,
$$
and the estimate for $R_{b,\eps}^{\, 1,\pm}$ follows similarly.

It remains to estimate the error terms in the partial differential equations that should be satisfied on either side of the current vortex 
sheet. We begin with the divergence constraints and compute (here $\nabla \cdot$ refers to the divergence with respect to $x$ in 
either of the domains $\Omega_\eps^{{\rm app},M,\pm}(t)$, and we write it down explicitly when a divergence with respect to $y$ 
is taken):
\begin{align*}
\nabla \cdot u_\eps^{{\rm app},M,\pm} \, = \, & \, \sum_{m=0}^{\, m-1} \eps^m \, 
\left( \p_{Y_3} u_3^{\, m+1,\pm} +\xi_j \, \p_\theta u_j^{\, m+1,\pm} +\nabla_y \cdot u^{\, m,\pm} \right. \\
& \, \qquad \left. -\sum_{\ell_1+\ell_2=m+2} \p_\theta \psi^{\ell_1} \, \xi_j \, \p_{Y_3} u_j^{\ell_2,\pm} 
-\sum_{\ell_1+\ell_2=m+1} \p_{y_j} \psi^{\ell_1} \, \p_{Y_3} u_j^{\ell_2,\pm} \right) (...) \\
& \, -\sum_{m=1}^{\, m-1} \eps^m \, 
\chi(x_3) \, \p_{y_3} u_j^{\, m,\pm}(...) \, \p_{x_j} \psi_\eps^{{\rm app},M} \left( t,x',\dfrac{\tau \, t +\xi'\cdot x'}{\eps} \right) \\
& \, -\sum_{m=1}^{\, m-1} \eps^m \, 
\chi'(x_3) \, \p_{y_3} u_3^{\, m,\pm}(...) \, \psi_\eps^{{\rm app},M} \left( t,x',\dfrac{\tau \, t +\xi'\cdot x'}{\eps} \right) 
+O(\eps^M) \, ,
\end{align*}
where the short notation $(...)$ is a substitute for the evaluation at:
$$
\left( t,x',x_3-\chi(x_3) \, \psi_\eps^{{\rm app},M},\dfrac{x_3 -\psi_\eps^{{\rm app},M}}{\eps},\dfrac{\tau \, t +\xi'\cdot x'}{\eps} \right) \, .
$$
It remains to expand $\chi(x_3)$ and $\chi'(x_3)$ with respect to $\eps$ by using the inverse map of:
$$
x_3 \, \longmapsto \, y_3 \, := \, x_3 -\chi(x_3) \, \psi_\eps^{{\rm app},M} (t,x') \, .
$$
The expansion of $\chi(x_3)$ and $\chi'(x_3)$ with respect to $\eps$ is given by the so-called Lagrange inversion formula, which is 
recalled in Lemma \ref{lemLagrange} of Appendix \ref{appendixB}, and which ultimately gives:
\begin{equation*}
\nabla \cdot u_\eps^{{\rm app},M,\pm} \, = \, 
\sum_{m=0}^{\, m-1} \eps^m \, \left( \p_{Y_3} u_3^{\, m+1,\pm} +\xi_j \, \p_\theta u_j^{\, m+1,\pm} -F_7^{\, m,\pm} \right) (...) +O(\eps^M) 
\, = \, O(\eps^M) \, ,
\end{equation*}
where the final conclusion comes again from the fact that the profiles $(U^{\mu+1,\pm},\psi^{\mu+1})_{\mu=0,\dots,M}$ satisfy the fast 
problems $(H(M)-2)$. We have thus obtained the estimate:
$$
\sup_{t \in [0,T] \, ,x \in \Omega_\eps^{{\rm app},M,\pm}(t)} \, |R_\eps^{\, 3,\pm}| \, = \, O(\eps^M) \, ,
$$
and the estimate for the error term $R_\eps^{4,\pm}$ follows similarly.

Since we already have $L^\infty$ estimates for the divergence of the vector fields $u_\eps^{{\rm app},M,\pm}$ and $H_\eps^{{\rm app},M,\pm}$, 
the estimate for the error terms $R_\eps^{\, 1,\pm}$ and $R_\eps^{\, 2,\pm}$ of Theorem \ref{thm_principal} will follow from an estimate of the 
type:
$$
\sup_{t \in [0,T] \, , \, x \in \Omega_\eps^{{\rm app},M,\pm}(t)} \, 
\big| A_0 \, \p_t U_\eps^{{\rm app},M,\pm} +\p_{x_\alpha} f_\alpha (U_\eps^{{\rm app},M,\pm}) \big| \, = \, O(\eps^M) \, ,
$$
where we recall that the fluxes $f_\alpha$ correspond to the conservative form of the incompressible MHD equations \eqref{s3-MHD_conservative}. 
Since the fluxes $f_\alpha$ are quadratic, we have:
$$
f_\alpha (U_\eps^{{\rm app},M,\pm}) \, = \, f_\alpha(U^{0,\pm}) +A_\alpha^\pm \cdot (U_\eps^{{\rm app},M,\pm}-U^{0,\pm}) 
+\dfrac{1}{2} \, \bA_\alpha (U_\eps^{{\rm app},M,\pm}-U^{0,\pm},U_\eps^{{\rm app},M,\pm}-U^{0,\pm}) \, .
$$
It then remains to expand the quantity:
$$
A_0 \, \p_t U_\eps^{{\rm app},M,\pm} +\p_{x_\alpha} f_\alpha (U_\eps^{{\rm app},M,\pm}) \, ,
$$
with respect to $\eps$ (expanding also $\chi(x_3)$ and $\chi'(x_3)$ with respect to $\eps$), and use once again the fact that the profiles 
satisfy the fast problems $(H(M)-2)$. We feel free to skip the details and leave them to the interested reader. We have thus completed the 
proof of Theorem \ref{thm_principal}.

\chapter{The rectification phenomenon}
\label{chapter6}

The aim of this Chapter is to clarify the last property of Theorem \ref{thm_principal} which we have not proved so far, namely 
$\uU^{\, 2,\pm}=0$. More generally, we would like to examine whether one can construct a solution to the WKB cascade that 
is purely localized near the boundary at any amplitude scale. To be more precise, we are going to show that without any 
requirement on the initial front profile $\psi^2_0$, the residual component of the first corrector $U^{\, 2,\pm}$ vanishes (with, 
of course, suitably chosen initial data for several components). We can thus achieve $\uU^{\, 1,\pm}=\uU^{\, 2,\pm}=0$ as 
claimed in Theorem \ref{thm_principal} though, as we explain below, it seems unlikely to have  simultaneously:
$$
\uU^{\, 1,\pm} \, = \, \uU^{\, 2,\pm} \, = \, \uU^{\, 3,\pm} \, = \, 0 \, .
$$
However, we have not succeeded to get a complete proof of the fact $\uU^{\, 3,\pm} \neq 0$. We note still that the situation is 
quite different from the one in elastodynamics \cite{MarcouCRAS} where the first corrector generically has a nonzero residual 
component. This will not happen here due to the specific form of the leading profile \eqref{s3-U^1,pm_k_final} and orthogonality 
properties between some of its components. We therefore review how the first corrector $U^{\, 2,\pm}$ is constructed and explain 
why the residual component $\uU^{\, 2,\pm}$ vanishes. We then examine the construction of the subsequent correctors.

\section{The first corrector}

In the proof of Theorem \ref{thm_principal}, we have first in Chapter \ref{chapter4} identified the leading profile $U^{\, 1,\pm}$. By imposing, 
which was shown to be compatible with all other constraints, the initial conditions:
$$
(\widehat{\uu}^{\, 1,\pm}(0),\widehat{\uH}^{\, 1,\pm}(0)) \big|_{t=0} \, = \, 0 \, ,\quad \Pi \, \widehat{U}_\star^{\, 1,\pm}(0) \big|_{t=0} \, = \, 0 \, ,
$$
we have been led to the decomposition \eqref{s3-U^1,pm_k_final} for $U^{\, 1,\pm}$. In particular, the leading profile satisfies:
$$
U^{\, 1,\pm} \, = \, U_\star^{\, 1,\pm} \, ,\quad \quad \widehat{U}_\star^{\, 1,\pm}(0) \, = \, 0 \, .
$$
The oscillating modes of the leading front $\psi^2$ are governed by a nonlocal Hamilton-Jacobi equation and the initial condition $\psi^2_0$ 
for $\psi^2$ is a nonzero function in $H^\infty_\sharp$ (the space of $H^\infty$ functions with zero mean in $\theta$). As observed in Chapter 
\ref{chapter4}, the choice of how we have lifted $U^{\, 1,\pm}$ from $y_3=0$ to $y_3 \in I^\pm$ contained some arbitrariness, but it presents 
the nice property of satisfying $\p_{y_3} U^{\, 1,\pm}|_{y_3=0}=0$. Using the expression \eqref{s3-U^1,pm_k_final} as well as 
\eqref{appA-defRLkpm}, we get for all Fourier mode $k \neq 0$:
\begin{multline}
\label{expression_profil_principal}
\widehat{U}^{\, 1,\pm} (t,y',0,Y_3,k) \, = \, \widehat{U}_\star^{\, 1,\pm} (t,y',0,Y_3,k) \\
= \, \pm \, \widehat{\psi}^{\, 2} (t,y',k) \, 
\Big( |k| \, \xi_1 \, c^\pm, |k| \, \xi_2 \, c^\pm, \pm i \, k \, c^\pm, |k| \, \xi_1 \, b^\pm, |k| \, \xi_2 \, b^\pm, \pm i \, k \, b^\pm, 
|k| \, ((b^\pm)^2-(c^\pm)^2) \Big)^T \, {\rm e}^{\mp \, |k| \, Y_3} \, .
\end{multline}

Let us now examine how we have constructed the first corrector $U^{\, 2,\pm}$ and explain why our choice of initial conditions in Theorem 
\ref{thm_principal} yields $\uU^{\, 2,\pm} =0$. This means that rectification, if it occurs, does not arise at the level of the first corrector. This 
is one major difference with elastodynamics \cite{MarcouCRAS}. The construction of $\uU^{\, 2,\pm}$ splits in several steps.
\bigskip

$\bullet$ \underline{Step 1}. The oscillating modes of the residual component $\uU^{\, 2,\pm}$. Recalling the expression 
\eqref{s3-def_terme_source_F^1,pm} of $F^{\, 1,\pm}$ and the fact that $\uU^{\, 1,\pm}$ vanishes, we obtain $\uF^{\, 1,\pm}=0$. 
Since the residual component $\uU^{\, 2,\pm}$ must satisfy:
$$
\cA^\pm \, \p_\theta \uU^{\, 2,\pm} \, = \, \uF^{\, 1,\pm} \, = \, 0 \, ,
$$
and since $\cA^\pm$ is invertible, we get $\p_\theta \uU^{\, 2,\pm}=0$. This means that residual component $\uU^{\, 2,\pm}$ of the first corrector 
$U^{\, 2,\pm}$ reduces to the slow mean $\widehat{\uU}^{\, 2,\pm}(0)$.
\bigskip

$\bullet$ \underline{Step 2}. Collecting the equations for the slow mean. We are now going to examine the equations that must be satisfied 
by the slow mean $\widehat{\uU}^{\, 2,\pm}(0)$. Using the expression \eqref{terme_source_F^m,pmbarre} for $m=2$, a necessary condition 
for solving \eqref{dev_BKW_int} with $m=2$ is $\widehat{\uF}^{\, 2,\pm}(0)=0$ together with the divergence constraint $\widehat{\uF}_8^{\, 2,\pm} 
(0)=0$ for the magnetic field. Since we already know that the residual component $\uU^{\, 1,\pm}$ vanishes, we obtain the linear homogeneous 
system:
\begin{equation}
\label{slow_mean_2_edp}
\begin{cases}
L_s^\pm(\p) \widehat{\uU}^{\, 2,\pm}(0) \, = \, 0 \, ,& \text{\rm in } \Omega_0^\pm \, ,\\
\nabla \cdot \widehat{\uH}^{\, 2,\pm}(0) \, = \, 0 \, ,& \text{\rm in } \Omega_0^\pm \, ,
\end{cases}
\end{equation}
which corresponds, with the notation of \eqref{slow_mean_m+1_edp}, to $\bF^{\, 1,\pm}=0$ and $\bF_8^{\, 1,\pm}=0$. The boundary conditions 
on $\Gamma^\pm$ for $\widehat{\uU}^{\, 2,\pm}(0)$ correspond to imposing \eqref{s3-cond_bords_fixes_uH_3^m,pm} with $m=1$ and for the 
zero Fourier mode in $\theta$ only, namely:
\begin{equation}
\label{slow_mean_2_hautbas}
\widehat{\uu}_3^{\, 2,\pm}(0)|_{\Gamma^\pm} \, = \, \widehat{\uH}_3^{\, 2,\pm}(0)|_{\Gamma^\pm} \, = \, 0 \, .
\end{equation}
The boundary conditions \eqref{slow_mean_m+1_saut-bis} on $\Gamma_0$, for $m=1$, read explicitly:
\begin{equation}
\label{slow_mean_2_saut}
\begin{cases}
\widehat{\uu}_3^{\, 2,\pm}(0)|_{\Gamma_0} \, = \, (\p_t +u_j^{0,\pm} \, \p_{y_j}) \widehat{\psi}^{\, 2}(0) 
+{\bf c}_0 \, \big\{ \p_\theta \psi^2 \, \xi_j \, u_j^{\, 1,\pm}|_{y_3=Y_3=0} \big\} -\widehat{u}_{3,\star}^{\, 2,\pm}(0)|_{y_3=Y_3=0} \, ,& \\[1ex]
\widehat{\uH}_3^{\, 2,\pm}(0)|_{\Gamma_0} \, = \, H_j^{0,\pm} \, \p_{y_j} \widehat{\psi}^{\, 2}(0) 
+{\bf c}_0 \, \big\{ \p_\theta \psi^2 \, \xi_j \, H_j^{\, 1,\pm}|_{y_3=Y_3=0} \big\} -\widehat{H}_{3,\star}^{\, 2,\pm}(0)|_{y_3=Y_3=0} \, ,& \\[1ex]
\widehat{\uq}^{\, 2,+}(0)|_{\Gamma_0} -\widehat{\uq}^{\, 2,-}(0)|_{\Gamma_0} \, = \, 
-\widehat{q}_\star^{\, 2,+}(0)|_{y_3=Y_3=0} +\widehat{q}_\star^{\, 2,-}(0)|_{y_3=Y_3=0} \, .
\end{cases}
\end{equation}
Let us eventually recall that the normalization condition \eqref{expression_I2(t)} for the total pressure reduces to:
$$
\forall \, t \in [0,T] \, ,\quad 
\int_{\Omega_0^+} \widehat{\uq}^{\, 2,+}(t,y,0) \, {\rm d}y \, + \, \int_{\Omega_0^-} \widehat{\uq}^{\, 2,-}(t,y,0) \, {\rm d}y \, = \, 0 \, ,
$$
because the fast mean of the leading profile vanishes.

In order to go further in the determination of the slow mean $\widehat{\uU}^{\, 2,\pm}(0)$, we need to compute the source terms on 
the right hand side in \eqref{slow_mean_2_saut}, and therefore determine the noncharacteristic components of the fast mean of 
$U^{\, 2,\pm}$.
\bigskip

$\bullet$ \underline{Step 3}. The noncharacteristic components of the fast mean. Recalling the expression \eqref{s3-def_terme_source_F_8^1,pm} 
of the source term $F_8^{\, 1,\pm}$, we have:
\begin{align*}
\p_{Y_3} \widehat{H}_{3,\star}^{\, 2,\pm}(0) \big|_{y_3=0} \, 
& \, = \, - \big( \nabla \cdot \widehat{H}_\star^{\, 1,\pm}(0) \big) \big|_{y_3=0} \, + \, 
{\bf c}_0 \, \Big\{ \p_\theta \psi^2 \, \xi_j \, \p_{Y_3} H_{j,\star}^{\, 1,\pm} \Big\} \big|_{y_3=0} \\
& \, = \, \p_{Y_3} \, {\bf c}_0 \, \Big\{ \p_\theta \psi^2 \, \xi_j \, H_{j,\star}^{\, 1,\pm} \Big\} \big|_{y_3=0} \, ,
\end{align*}
where we have used the fact that the fast mean of the leading profile vanishes. Integrating with respect to $Y_3$ with the zero limit at 
infinity, we get:
$$
\widehat{H}_{3,\star}^{\, 2,\pm}(0) \big|_{y_3=0} \, = \, {\bf c}_0 \, \Big\{ \p_\theta \psi^2 \, \xi_j \, H_{j,\star}^{\, 1,\pm} \Big\} \big|_{y_3=0} \, ,
$$
and using the expression \eqref{expression_profil_principal}, this yields:
$$
\widehat{H}_{3,\star}^{\, 2,\pm}(0) \big|_{y_3=0} \, = \, \mp b^\pm \, \sum_{k \neq 0} 
\big( i \, k \, \widehat{\psi}^{\, 2}(-k) \big) \, \big( |k| \, \widehat{\psi}^{\, 2}(k) \big) \, {\rm e}^{\mp \, |k| \, Y_3} \, = \, 0 \, ,
$$
where the conclusion follows from the change of index $k \rightarrow -k$. In the same way, we can obtain 
$\widehat{u}_{3,\star}^{\, 2,\pm}(0)|_{y_3=0} =0$. This orthogonality property is specific to the current vortex sheet problem 
we are considering here and to the explicit form of the leading profile. This is where the analysis differs from elastodynamics. 
Note that the two relations $\widehat{u}_{3,\star}^{\, 2,\pm}(0)|_{y_3=0} =\widehat{H}_{3,\star}^{\, 2,\pm}(0)|_{y_3=0}=0$ hold not 
only for $Y_3=0$ but for any $Y_3 \in \bR^\pm$.

Let us now study the fast mean of the total pressure. Recalling the expression \eqref{s3-def_terme_source_F^1,pm}, of which we compute 
the third component of the fast mean, we get:
\begin{align*}
\p_{Y_3} \widehat{q}_\star^{\, 2,\pm}(0) \big|_{y_3=0} \, = \, \widehat{F}_{3,\star}^{\, 1,\pm}(0) \big|_{y_3=0} \, = \, & \, 
{\bf c}_0 \, \Big\{ \p_\theta \psi^2 \, \cA^\pm \, \p_{Y_3} U^{\, 1,\pm} -\dfrac{1}{2} \, \p_{Y_3} \bA_3 (U^{\, 1,\pm},U^{\, 1,\pm}) \Big\}_3 \big|_{y_3=0} \\
= \, & \, \p_{Y_3} \, {\bf c}_0 \, \Big\{ \p_\theta \psi^2 \, \cA^\pm \, U^{\, 1,\pm} -\dfrac{1}{2} \, \bA_3 (U^{\, 1,\pm},U^{\, 1,\pm}) \Big\}_3 \big|_{y_3=0} 
\end{align*}
and we thus obtain the expression:
\begin{align}
\widehat{q}_\star^{\, 2,\pm}(0) \big|_{y_3=0} \, 
& \, = \, {\bf c}_0 \, \Big\{ \p_\theta \psi^2 \, \big( c^\pm \, u_3^{\, 1,\pm} -b^\pm \, H_3^{\, 1,\pm} \big) +(H_3^{\, 1,\pm})^2 -(u_3^{\, 1,\pm})^2 
\Big\} \big|_{y_3=0} \notag \\
& \, = \, \big( (c^\pm)^2 -(b^\pm)^2 \big) \, \sum_{k \neq 0} k^2 \, \widehat{\psi}^{\, 2}(-k) \, \widehat{\psi}^{\, 2}(k) \, \big( {\rm e}^{\mp \, |k| \, Y_3} 
-{\rm e}^{\mp \, 2 \, |k| \, Y_3} \big) \, .\label{expression_correcteur_q2}
\end{align}
In particular, there holds:
$$
\widehat{q}_\star^{\, 2,\pm}(0) \big|_{y_3=Y_3=0} \, = \, 0 \, .
$$
Computing similarly two other terms on the right hand side of \eqref{slow_mean_2_saut}, we can rewrite \eqref{slow_mean_2_saut} more 
simply as:
\begin{equation}
\label{slow_mean_2_saut'}
\begin{cases}
\widehat{\uu}_3^{\, 2,\pm}(0)|_{\Gamma_0} \, = \, \big( \p_t +u_j^{0,\pm} \, \p_{y_j} \big) \widehat{\psi}^{\, 2}(0) \, ,& \\[0.5ex]
\widehat{\uH}_3^{\, 2,\pm}(0)|_{\Gamma_0} \, = \, H_j^{0,\pm} \, \p_{y_j} \widehat{\psi}^{\, 2}(0) \, ,& \\[0.5ex]
\widehat{\uq}^{\, 2,+}(0)|_{\Gamma_0} -\widehat{\uq}^{\, 2,-}(0)|_{\Gamma_0} \, = \, 0 \, .
\end{cases}
\end{equation}
The fact that all source terms on the right hand side of \eqref{slow_mean_2_saut'} have been dropped out for orthogonality reasons 
explains why the residual component of $U^{\, 2,\pm}$ will ultimately vanish.
\bigskip

$\bullet$ \underline{Step 4}. The Laplace problem for the total pressure. Making the equations in \eqref{slow_mean_2_edp} explicit, the 
slow mean $\widehat{\uU}^{\, 2,\pm}(0)$ must satisfy the linear system:
\begin{equation*}
\begin{cases}
\big( \p_t  +u_j^{0,\pm} \, \p_{y_j} \big) \widehat{\uu}_\alpha^{\, 2,\pm}(0) +u_\alpha^{0,\pm} \, \nabla \cdot \widehat{\uu}^{\, 2,\pm}(0) 
-H_j^{0,\pm} \, \p_{y_j} \widehat{\uH}_\alpha^{\, 2,\pm}(0) -H_\alpha^{0,\pm} \, \nabla \cdot \widehat{\uH}_\alpha^{\, 2,\pm}(0) 
+\p_{y_\alpha} \widehat{\uq}^{\, 2,\pm}(0) \, = \, 0 \, , & \\
\big( \p_t  +u_j^{0,\pm} \, \p_{y_j} \big) \widehat{\uH}_\alpha^{\, 2,\pm}(0) -u_\alpha^{0,\pm} \, \nabla \cdot \widehat{\uH}^{\, 2,\pm}(0) 
-H_j^{0,\pm} \, \p_{y_j} \widehat{\uu}_\alpha^{\, 2,\pm}(0) +H_\alpha^{0,\pm} \, \nabla \cdot \widehat{\uu}_\alpha^{\, 2,\pm}(0) \, = \, 0 \, , & \\
\nabla \cdot \widehat{\uu}^{\, 2,\pm}(0) \, = \, \nabla \cdot \widehat{\uH}^{\, 2,\pm}(0) \, = \, 0 \, , &
\end{cases}
\end{equation*}
with the boundary conditions \eqref{slow_mean_2_hautbas} and \eqref{slow_mean_2_saut'}. We thus find that the total pressure corrector 
must satisfy the coupled Laplace problem:
\begin{equation*}
\begin{cases}
-\Delta \, \widehat{\uq}^{\, 2,\pm}(0) \, = \, 0 \, , \qquad \qquad \text{\rm in $\Omega_0^\pm$,} & \\
\p_{y_3} \widehat{\uq}^{\, 2,\pm}(0)|_{\Gamma^\pm} \, = \, 0 \, , & \\
\p_{y_3} \widehat{\uq}^{\, 2,\pm}(0)|_{\Gamma_0} \, = \, -(\p_t  +u_j^{0,\pm} \, \p_{y_j}) \, (\p_t  +u_{j'}^{0,\pm} \, \p_{y_{j'}}) \widehat{\psi}^{\, 2}(0) 
+H_j^{0,\pm} \, H_{j'}^{0,\pm} \, \p_{y_j} \p_{y_{j'}} \widehat{\psi}^{\, 2}(0) \, , & \\
\widehat{\uq}^{\, 2,+}(0)|_{\Gamma_0} -\widehat{\uq}^{\, 2,-}(0)|_{\Gamma_0} \, = \, 0 \, . &
\end{cases}
\end{equation*}
Reproducing the same calculations as in the corresponding Section of Chapter \ref{chapter5}, we find that the nonzero Fourier modes 
in $y'$ of $\widehat{\psi}^{\, 2}(0)$ must satisfy the homogeneous linear wave equation:
\begin{multline*}
\big( \p_t  +u_j^{0,+} \, \p_{y_j} \big) \, \big( \p_t  +u_{j'}^{0,+} \, \p_{y_{j'}} \big) \Psi^2 
+\big( \p_t  +u_j^{0,-} \, \p_{y_j} \big) \, \big( \p_t  +u_{j'}^{0,-} \, \p_{y_{j'}} \big) \Psi^2 \\
-H_j^{0,+} \, H_{j'}^{0,+} \, \p_{y_j} \p_{y_{j'}} \Psi^2 -H_j^{0,-} \, H_{j'}^{0,-} \, \p_{y_j} \p_{y_{j'}} \Psi^2 \, = \, 0 \, ,
\end{multline*}
on the boundary $\Gamma_0$, and the mean of $\widehat{\psi}^{\, 2}(0)$ on $\Gamma_0$ vanishes. By choosing, as in Theorem 
\ref{thm_principal}, zero initial conditions for the oscillating modes $\Psi^2$, we end up with $\widehat{\psi}^{\, 2}(0) =0$ and consequently 
$\widehat{\uq}^{\, 2}(0)=0$ thanks to the normalization condition \eqref{expression_I2(t)}.
\bigskip

$\bullet$ \underline{Step 5}. Conclusion. Since the slow mean of the residual total pressure $\uq^{\, 2,\pm}$ vanishes, it is not difficult to 
show that \eqref{slow_mean_2_edp} implies $\widehat{\uU}^{\, 2,\pm}(0)=0$. This is the same argument as when we have determined 
the slow mean of the leading profile in Chapter \ref{chapter4}. In other words, we have just shown that for any initial condition $\psi^2_0 
\in H^\infty_\sharp$ in \eqref{s3-def_cond_init_oscil_psi}, we can construct a solution to the WKB cascade that satisfies (so far):
\begin{itemize}
 \item $\uU^{\, 1,\pm} \, = \, \uU^{\, 2,\pm} \, = \, 0$ and $\widehat{U}_\star^{\, 1,\pm}(0) \, = \, 0$,
 
 \item $\widehat{u}_{3,\star}^{\, 2,\pm}(0)|_{y_3=0} \, = \, \widehat{H}_{3,\star}^{\, 2,\pm}(0)|_{y_3=0} \, = \, \widehat{q}_\star^{\, 2,\pm}(0)|_{y_3=Y_3=0} \, = \, 0$.
\end{itemize}
For arbitrary $Y_3$, the fast mean $\widehat{q}_\star^{\, 2,\pm}(0)|_{y_3=0}$ is given by \eqref{expression_correcteur_q2}.

We are now going to examine whether it is possible to construct a second corrector $U^{\, 3,\pm}$ in the WKB expansion \eqref{s3-def_dvlpt_BKW} 
that satisfies $\uU^{\, 3,\pm}=0$.

\section{The second corrector}

We shall not give in this Section a rigorous proof of $\uU^{\, 3,\pm} \neq 0$ but we shall rather explain why this fact is quite likely to happen. 
First of all, since both $\uU^{\, 1,\pm}$ and $\uU^{\, 2,\pm}$ vanish, we compute $\uF^{\, 2,\pm}=0$ and therefore the residual component 
of the second corrector $\uU^{\, 3,\pm}$ reduces to the slow mean only:
$$
\uU^{\, 3,\pm} \, = \, \widehat{\uU}^{\, 3,\pm}(0) \, .
$$
Computing the expression of $\uF^{\, 3,\pm}$ and $\uF_8^{\, 3,\pm}$, we then find that the slow mean $\widehat{\uU}^{\, 3,\pm}(0)$ satisfies the 
linear homogeneous system:
\begin{equation*}
\begin{cases}
L_s^\pm(\p) \widehat{\uU}^{\, 3,\pm}(0) \, = \, 0 \, ,& \text{\rm in } \Omega_0^\pm \, ,\\
\nabla \cdot \widehat{\uH}^{\, 3,\pm}(0) \, = \, 0 \, ,& \text{\rm in } \Omega_0^\pm \, .
\end{cases}
\end{equation*}
The boundary conditions for $\widehat{\uU}^{\, 3,\pm}$ read:
\begin{equation*}
\widehat{\uu}_3^{\, 3,\pm}(0)|_{\Gamma^\pm} \, = \, \widehat{\uH}_3^{\, 3,\pm}(0)|_{\Gamma^\pm} \, = \, 0 \, ,
\end{equation*}
\begin{equation}
\label{slow_mean_3_saut}
\begin{cases}
\widehat{\uu}_3^{\, 3,\pm}(0)|_{\Gamma_0} \, = \, \widehat{G}_1^{\, 2,\pm}(0) -\widehat{u}_{3,\star}^{\, 3,\pm}(0)|_{y_3=Y_3=0} \, ,& \\[1ex]
\widehat{\uH}_3^{\, 2,\pm}(0)|_{\Gamma_0} \, = \, \widehat{G}_2^{\, 2,\pm}(0) -\widehat{H}_{3,\star}^{\, 3,\pm}(0)|_{y_3=Y_3=0} \, ,& \\[1ex]
\widehat{\uq}^{\, 3,+}(0)|_{\Gamma_0} -\widehat{\uq}^{\, 3,-}(0)|_{\Gamma_0} \, = \, 
-\widehat{q}_\star^{\, 3,+}(0)|_{y_3=Y_3=0} +\widehat{q}_\star^{\, 3,-}(0)|_{y_3=Y_3=0} \, .
\end{cases}
\end{equation}
If we follow the arguments for the construction of $\uU^{\, 1,\pm}$ and $\uU^{\, 2,\pm}$, the only hope for proving $\widehat{\uU}^{\, 3,\pm}(0) 
\neq 0$ is to show that the right hand side of \eqref{slow_mean_3_saut} does not reduce to the linear terms in $\widehat{\psi}^{\, 3}(0)$. This 
is where the algebra becomes quite involved. Some terms can be made explicit though. For instance, the fast mean of the total pressure 
$q^{\, 3,\pm}$ is obtained by solving:
$$
\p_{Y_3} \widehat{q}_\star^{\, 3,\pm}(0) \, = \, F_{3,\star}^{\, 2,\pm}(0) \, ,
$$
but unfortunately, this yields after integration with respect to $Y_3$ (and quite a few more calculations):
\begin{align*}
\widehat{q}_\star^{\, 3,\pm}(0)|_{y_3=0} \, = \, {\bf c}_0 \, \Big\{ \, & \, 
\p_\theta \psi^3 \, \big( c^\pm \, u_3^{\, 1,\pm} -b^\pm \, H_3^{\, 1,\pm} \big) +H_3^{\, 1,\pm} \, H_3^{\, 2,\pm} -u_3^{\, 1,\pm} \, u_3^{\, 2,\pm} \\
& \, +\big( \p_t +u_j^{0,\pm} \, \p_{y_j} \big) \psi^2 \, u_3^{\, 1,\pm} -H_j^{0,\pm} \, \p_{y_j} \psi^2 \, H_3^{\, 1,\pm} \\
& \, +\p_\theta \psi^2 \, \xi_j \, u_j^{\, 1,\pm} u_3^{\, 1,\pm} -\p_\theta \psi^2 \, \xi_j \, H_j^{\, 1,\pm} H_3^{\, 1,\pm} \Big\} \big|_{y_3=0} \, ,
\end{align*}
and from the latter expression, we get:
$$
\widehat{q}_\star^{\, 3,\pm}(0)|_{y_3=Y_3=0} \, = \, 0 \, .
$$
This means that the last boundary condition in \eqref{slow_mean_3_saut} reads:
$$
\widehat{\uq}^{\, 3,+}(0)|_{y_3=0} -\widehat{\uq}^{\, 3,-}(0)|_{y_3=0} \, = \, 0 \, .
$$
Even the normalization condition does not help since we can use \eqref{expression_correcteur_q2} to compute :
$$
\int_{\bT^2 \times \R^+} \widehat{q}_\star^{\, 2,+} (t,y',0,Y_3,0) \, {\rm d}y' \, {\rm d}Y_3 \, + \, 
\int_{\bT^2 \times \R^-} \widehat{q}_\star^{\, 2,-} (t,y',0,Y_3,0) \, {\rm d}y' \, {\rm d}Y_3 \, = \, 0 \, ,
$$
which means that the slow mean of the total pressure should satisfy:
$$
\int_{\Omega_0^+} \widehat{\uq}^{\, 3,+}(t,y,0) \, {\rm d}y \, + \, 
\int_{\Omega_0^-} \widehat{\uq}^{\, 3,-}(t,y,0) \, {\rm d}y \, = \, 0 \, ,
$$
in order to verify the normalization condition $\cI^3(t)=0$.

Making the source terms $\widehat{G}_1^{\, 2,\pm}(0)$, $\widehat{G}_2^{\, 2,\pm}(0)$ as well as $\widehat{u}_{3,\star}^{\, 3,\pm}(0)|_{y_3=Y_3=0}$ 
and $\widehat{H}_{3,\star}^{\, 3,\pm}(0)|_{y_3=Y_3=0}$ explicit is much more difficult. This gives ultimately a sum of terms that are either quadratic 
or cubic in $\psi^2$ and a sum of products between $\psi^2$ and $\psi^3$. It is likely that the final result will not be zero, though we have not been 
able to complete the calculations.

If one could prove indeed that the slow mean of $U^{\, 3,\pm}$ does not vanish, then the expression \eqref{terme_source_F^m,pmbarre} shows 
that the source terms $\uF^{4,\pm}$ should have nonzero Fourier modes in $\theta$ that do not vanish, due to the product of $\p_\theta \psi^2$ 
with $\p_{y_3} \uU^{\, 3,\pm}$. It is then likely that $\uU^{5,\pm}$ will have nonzero Fourier modes in $\theta$, which explains why in Definition 
\ref{s3-def_espaces_fonctionnels} we have considered residual components in $\uS^\pm$ that also depend on the fast variable $\theta$ (as 
opposed for instance to \cite{Marcou,WW} where residual components only depend on the slow variables).

\appendix
\chapter{Linear and bilinear algebra}
\label{appendixA}

In this Appendix, we give explicit expressions for various matrices and right or left eigenvectors that are 
involved in the analysis of the WKB cascade. It should be remembered, see \eqref{s3-def_U^0,pm}, that 
the two constant states $U^{0,\pm}$ defining the reference steady current vortex sheet are given by:
$$
U^{0,\pm} \, = \, (u_1^{0,\pm},u_2^{0,\pm},0,H_1^{0,\pm},H_2^{0,\pm},0,0)^T \, .
$$
The fluxes $f_\alpha$ are defined by \eqref{s3-def_f_j}. The Jacobian matrices $A_\alpha^\pm ={\rm d}f_\alpha (U^{0,\pm})$, 
$\alpha=1,2,3$, are thus given by:
$$
A_1^\pm \, = \, \begin{bmatrix}
2\, u_1^{0,\pm} & 0 & 0 & -2 \, H_1^{0,\pm} & 0 & 0 & 1 \\
u_2^{0,\pm} & u_1^{0,\pm} & 0 & -H_2^{0,\pm} & -H_1^{0,\pm} & 0 & 0 \\
0 & 0 & u_1^{0,\pm} & 0 & 0 & -H_1^{0,\pm} & 0 \\
0 & 0 & 0 & 0 & 0 & 0 & 0 \\
H_2^{0,\pm} & -H_1^{0,\pm} & 0 & -u_2^{0,\pm} & u_1^{0,\pm} & 0 & 0 \\
0 & 0 & -H_1^{0,\pm} & 0 & 0 & u_1^{0,\pm} & 0 \\
1 & 0 & 0 & 0 & 0 & 0 & 0 \end{bmatrix} \, ,
$$
$$
A_2^\pm \, = \, \begin{bmatrix}
u_2^{0,\pm} & u_1^{0,\pm} & 0 & -H_2^{0,\pm} & -H_1^{0,\pm} & 0 & 0 \\
0 & 2\, u_2^{0,\pm} & 0 & 0 & -2 \, H_2^{0,\pm} & 0 & 1 \\
0 & 0 & u_2^{0,\pm} & 0 & 0 & -H_2^{0,\pm} & 0 \\
-H_2^{0,\pm} & H_1^{0,\pm} & 0 & u_2^{0,\pm} & -u_1^{0,\pm} & 0 & 0 \\
0 & 0 & 0 & 0 & 0 & 0 & 0 \\
0 & 0 & -H_2^{0,\pm} & 0 & 0 & u_2^{0,\pm} & 0 \\
0 & 1 & 0 & 0 & 0 & 0 & 0 \end{bmatrix} \, ,
$$
$$
A_3^\pm \, = \, \begin{bmatrix}
0 & 0 & u_1^{0,\pm} & 0 & 0 & -H_1^{0,\pm} & 0 \\
0 & 0 & u_2^{0,\pm} & 0 & 0 & -H_2^{0,\pm} & 0 \\
0 & 0 & 0 & 0 & 0 & 0 & 1 \\
0 & 0 & H_1^{0,\pm} & 0 & 0 & -u_1^{0,\pm} & 0 \\
0 & 0 & H_2^{0,\pm} & 0 & 0 & -u_2^{0,\pm} & 0 \\
0 & 0 & 0 & 0 & 0 & 0 & 0 \\
0 & 0 & 1 & 0 & 0 & 0 & 0 \end{bmatrix} \, .
$$
In particular, we get from these expressions the matrices ${\mathcal A}^\pm=\tau \, A_0+\xi_j \, A_j^\pm$ that enter the definition 
\eqref{s3-def_L(d)^pm} of the fast operators $\cL_f^\pm(\partial)$:
$$
{\mathcal A}^\pm \, = \, \begin{bmatrix}
c^\pm +\xi_1 \, u_1^{0,\pm} & \xi_2 \, u_1^{0,\pm} & 0 & -(b^\pm +\xi_1 \, H_1^{0,\pm}) & -\xi_2 \, H_1^{0,\pm} & 0 & \xi_1 \\
\xi_1 \, u_2^{0,\pm} & c^\pm +\xi_2 \, u_2^{0,\pm} & 0 & -\xi_1 \, H_2^{0,\pm} & -(b^\pm +\xi_2 \, H_2^{0,\pm}) & 0 & \xi_2 \\
0 & 0 & c^\pm & 0 & 0 & -b^\pm & 0 \\
-\xi_2 \, H_2^{0,\pm} & \xi_2 \, H_1^{0,\pm} & 0 & c^\pm -\xi_1 \, u_1^{0,\pm} & -\xi_2 \, u_1^{0,\pm} & 0 & 0 \\
\xi_1 \, H_2^{0,\pm} & -\xi_1 \, H_1^{0,\pm} & 0 & -\xi_1 \, u_2^{0,\pm} & c^\pm -\xi_2 \, u_2^{0,\pm} & 0 & 0 \\
0 & 0 & -b^\pm & 0 & 0 & c^\pm & 0 \\
\xi_1 & \xi_2 & 0 & 0 & 0 & 0 & 0 \end{bmatrix} \, ,
$$
where we recall the notations:
$$
c^\pm \, := \, \tau +\xi_j \, u_j^{0,\pm} \, ,\quad b^\pm \, := \, \xi_j \, H_j^{0,\pm} \, .
$$
We first collect several observations and useful formulas related to the above matrices.
\bigskip

$\bullet$ In many of the calculations below, we shall need to use the property $(c^\pm)^2 -(b^\pm)^2 \neq 0$. We now 
show why this property follows from Assumptions \eqref{s3-hyp_delta_neq_0} and \eqref{s3-hyp_tau_racine_det_lop}. 
Indeed let us assume for instance $(c^+)^2 -(b^+)^2 = 0$. Then because of \eqref{s3-hyp_tau_racine_det_lop}, we also 
have simultaneously $(c^-)^2 -(b^-)^2 = 0$. Using $c^\pm =\tau +a^\pm$, we get $\tau +a^+ =\vartheta^+ \, b^+$ and 
$\tau +a^- =\vartheta^- \, b^-$, with $\vartheta^+,\vartheta^- \in \{ -1,1\}$. Subtracting, we get $a^+-a^- =\vartheta^+ \, (b^+ -
\vartheta^+ \, \vartheta^- \, b^-)$, which implies either $|a^+-a^-|=|b^+-b^-|$ or $|a^+-a^-|=|b^+ +b^-|$. In any case, the 
latter relation is incompatible with \eqref{s3-hyp_delta_neq_0}.

The property $(c^\pm)^2 -(b^\pm)^2 \neq 0$ is used below to parametrize some eigenspaces for some given matrices. 
Were it not satisfied, these eigenspaces would not be one-dimensional any longer and this degeneracy would seem to 
rule to out the validity of our weakly nonlinear expansion.
\bigskip

$\bullet$ Multiplying on the left the matrix ${\mathcal A}^\pm$ by the upper triangular matrix:
\begin{equation*}
P^\pm \, := \, \begin{bmatrix}
1 & 0 & 0 & 0 & 0 & 0 & -u_1^{0,\pm} \\
0 & 1 & 0 & 0 & 0 & 0 & -u_2^{0,\pm} \\
0 & 0 & 1 & 0 & 0 & 0 & 0 \\
0 & 0 & 0 & 1 & 0 & 0 & -H_1^{0,\pm} \\
0 & 0 & 0 & 0 & 1 & 0 & -H_2^{0,\pm} \\
0 & 0 & 0 & 0 & 0 & 1 & 0 \\
0 & 0 & 0 & 0 & 0 & 0 & 1 \end{bmatrix} \, ,
\end{equation*}
we can easily show that both matrices ${\mathcal A}^\pm$ are invertible. Indeed, any vector $(v^\pm,B^\pm,p^\pm)^T$ 
in the kernel of ${\mathcal A}^\pm$ must satisfy:
\begin{equation*}
\begin{cases}
c^\pm \, v_1^\pm -b^\pm \, B_1^\pm -H_1^{0,\pm} \, \xi' \cdot (B')^\pm +\xi_1 \, p^\pm \, = \, 0 \, ,& \\
c^\pm \, v_2^\pm -b^\pm \, B_2^\pm -H_2^{0,\pm} \, \xi' \cdot (B')^\pm +\xi_2 \, p^\pm \, = \, 0 \, ,& \\
c^\pm \, v_3^\pm -b^\pm \, B_3^\pm \, = \, 0 \, ,& \\
-b^\pm \, v_1^\pm +c^\pm \, B_1^\pm -u_1^{0,\pm} \, \xi' \cdot (B')^\pm \, = \, 0 \, ,& \\
-b^\pm \, v_2^\pm +c^\pm \, B_2^\pm -u_2^{0,\pm} \, \xi' \cdot (B')^\pm \, = \, 0 \, ,& \\
-b^\pm \, v_3^\pm +c^\pm \, B_3^\pm \, = \, 0 \, ,& \\
\xi' \cdot (v')^\pm \, = \, 0 \, .& 
\end{cases}
\end{equation*}
Adding $\xi_1$ times the fourth line, $\xi_2$ times the fifth line and $-b^\pm$ times the sixth line, we get 
$\xi' \cdot (B')^\pm =0$ by using $\tau \neq 0$ (this is the first occurence in this Appendix of this condition 
which will be used in several other places). Taking the scalar product of the two first lines with $\xi'$ (recall 
that $\xi'$ has norm $1$), we thus get $p^\pm=0$. Using then $(c^\pm)^2 -(b^\pm)^2 \neq 0$, which we 
recall follows from \eqref{s3-hyp_delta_neq_0}, \eqref{s3-hyp_tau_racine_det_lop}, we end up with 
$v^\pm=B^\pm=0$.
\bigskip

$\bullet$ We can also compute a right eigenvector for the eigenmode $ \mp 1$ of the differential system:
\begin{equation}
\label{appA-equadiffpm}
A_3^\pm \, \dfrac{{\rm d}U^\pm}{{\rm d}Y_3} +i\, {\mathcal A}^\pm \, U^\pm \, = \, 0 \, ,\quad Y_3 \in \R^\pm \, .
\end{equation}
Namely, by using again the conditions $\tau \neq 0$ and $(c^\pm)^2 -(b^\pm)^2 \neq 0$, we can show that 
the vector spaces $\text{\rm Ker } (\mp \, A_3^\pm +i\, {\mathcal A}^\pm)$ have dimension $1$ and are 
spanned by the vectors:
\begin{equation}
\label{appA-defeigenvectorsRpm}
{\bf R}^\pm \, := \, \Big( \xi_1 \, c^\pm,\xi_2 \, c^\pm,\pm i \, c^\pm,\xi_1 \, b^\pm,\xi_2 \, b^\pm,\pm i \, b^\pm, 
(b^\pm)^2 -(c^\pm)^2 \Big)^T \in \C^7 \, .
\end{equation}
These expressions are to be compared with similar ones in \cite{AliHunter} for the two-dimensional problem. 
It should be noted that the vectors ${\bf R}^\pm$ satisfy the additional condition:
$$
\begin{pmatrix}
0 & 0 & 0 & i\, \xi_1& i\,\xi_2 & \mp 1 & 0 \end{pmatrix} \, {\bf R}^\pm \, = \, 0 \, ,
$$
which is reminiscent of the divergence free constraint on the magnetic field.

When analyzing the WKB cascade, we have also made use of the dual problem associated with \eqref{appA-equadiffpm}, namely:
\begin{equation*}
(A_3^\pm)^T \, \dfrac{{\rm d}U^\pm}{{\rm d}Y_3} +i\, ({\mathcal A}^\pm)^T \, U^\pm \, = \, 0 \, ,\quad Y_3 \in \R^\pm \, ,
\end{equation*}
which admits the same eigenmode $\mp 1$. By using the conditions $\tau \neq 0$ and $(c^\pm)^2 -(b^\pm)^2 \neq 0$, 
we know that the vector spaces $\text{\rm Ker } (\mp \, A_3^\pm +i\, {\mathcal A}^\pm)^T$ have dimension $1$ and we 
can show that they are spanned by the vectors:
\begin{equation}
\label{appA-deflefteigenvectorsLpm}
{\bf L}^\pm \, := \, \Big( \xi_1 \, \tau,\xi_2 \, \tau,\pm i \, \tau,2\, \xi_1 \, b^\pm,2\, \xi_2 \, b^\pm,\pm 2\, i \, b^\pm, 
-\tau \, (a^\pm +c^\pm) \Big)^T \in \C^7 \, .
\end{equation}

The vectors ${\mathcal R}^\pm (k), {\mathcal L}^\pm (k)$ entering the definition of the profiles and the solvability 
condition for the fast problem \eqref{fast_problem}, are then defined by:
\begin{equation}
\label{appA-defRLkpm}
\forall \, k \in \Z \setminus \{ 0 \} \, ,\quad {\mathcal R}^\pm (k) \, := \, \begin{cases}
{\bf R}^\pm \, ,& \text{\rm if $k>0$,} \\
\overline{{\bf R}^\pm} \, ,& \text{\rm if $k<0$,}
\end{cases} \qquad {\mathcal L}^\pm (k) \, := \, \begin{cases}
{\bf L}^\pm \, ,& \text{\rm if $k>0$,} \\
\overline{{\bf L}^\pm} \, ,& \text{\rm if $k<0$.}
\end{cases}
\end{equation}
From \eqref{appA-defeigenvectorsRpm} and \eqref{appA-deflefteigenvectorsLpm}, we get:
\begin{align*}
{\mathcal R}^\pm (k) \, & \, = \, \Big( \xi_1 \, c^\pm,\xi_2 \, c^\pm,\pm i \, \text{\rm sgn} (k) \, c^\pm, 
\xi_1 \, b^\pm,\xi_2 \, b^\pm,\pm i \, \text{\rm sgn} (k) \, b^\pm,(b^\pm)^2 -(c^\pm)^2 \Big)^T \, ,\\
{\mathcal L}^\pm (k) \, & \, = \, \Big( \xi_1 \, \tau,\xi_2 \, \tau,\pm i \, \text{\rm sgn} (k) \, \tau, 
2\, \xi_1 \, b^\pm,2\, \xi_2 \, b^\pm,\pm 2\, i \, \text{\rm sgn} (k) \, b^\pm,-\tau \, (a^\pm +c^\pm) \Big)^T \, ,
\end{align*}
where `sgn' denotes the sign function, which we need only to define on $\Z \setminus \{ 0\}$.

Using \eqref{appA-defRLkpm} and the above expressions for the matrices $A_\alpha^\pm$, we can compute the Hermitian products, 
where the equalities below hold for any pair of nonzero Fourier modes $k_1,k_2$:
\begin{equation}
\label{appA-produits_hermitiens-1}
\begin{aligned}
{\mathcal L}^\pm (k_1) \sbt A_0 \, {\mathcal R}^\pm (k_2) \, = \, & \, \big( \tau \, c^\pm +2\, (b^\pm)^2 \big) 
\, \big( 1+\text{\rm sgn} (k_1) \, \text{\rm sgn} (k_2) \big) \, ,\\
{\mathcal L}^\pm (k_1) \sbt A_1^\pm \, {\mathcal R}^\pm (k_2) \, = \, & \, 
\tau \, \big( u_1^{0,\pm} \, c^\pm -H_1^{0,\pm} \, b^\pm \big) \, \big( 1+\text{\rm sgn} (k_1) \, \text{\rm sgn} (k_2) \big) 
+\xi_1 \, \tau \, \big( (b^\pm)^2 -(c^\pm)^2 \big) \\
& \, +2\, b^\pm \, \big( u_1^{0,\pm} \, b^\pm -H_1^{0,\pm} \, c^\pm \big) \, 
\big( 1+\text{\rm sgn} (k_1) \, \text{\rm sgn} (k_2) \big) \, ,\\
{\mathcal L}^\pm (k_1) \sbt A_2^\pm \, {\mathcal R}^\pm (k_2) \, = \, & \, 
\tau \, \big( u_2^{0,\pm} \, c^\pm -H_2^{0,\pm} \, b^\pm \big) \, \big( 1+\text{\rm sgn} (k_1) \, \text{\rm sgn} (k_2) \big) 
+\xi_2 \, \tau \, \big( (b^\pm)^2 -(c^\pm)^2 \big) \\
& \, +2\, b^\pm \, \big( u_2^{0,\pm} \, b^\pm -H_2^{0,\pm} \, c^\pm \big) \, 
\big( 1+\text{\rm sgn} (k_1) \, \text{\rm sgn} (k_2) \big) \, ,\\
{\mathcal L}^\pm (k_1) \sbt \cA^\pm \, {\mathcal R}^\pm (k_2) \, = \, & \, 
\tau \, \big( (b^\pm)^2 -(c^\pm)^2 \big) \, \big( 1-\text{\rm sgn} (k_1) \, \text{\rm sgn} (k_2) \big) \, .
\end{aligned}
\end{equation}
We recall that $\sbt$ refers to the Hermitian product in $\C^7$:
$$
\forall \, U,V \in \C^7 \, ,\quad U \sbt V \, := \, \sum_{i=1}^7 \, \overline{U_i} \, V_i \, .
$$
The expressions \eqref{appA-produits_hermitiens-1} are used in Chapter \ref{chapter4} to derive the leading amplitude equation 
\eqref{s3-edp_hatpsi^2-final}.
\bigskip

$\bullet$ We now give the expression of the Hessian mappings defined in \eqref{s3-def_bA_j^pm}. For any pair 
of vectors in $\R^7$:
$$
U \, = \, (u_1,u_2,u_3,H_1,H_2,H_3,q)^T \, ,\quad \dot{U} \, = \, (\dot{u}_1,\dot{u}_2,\dot{u}_3,\dot{H}_1,\dot{H}_2,\dot{H}_3,\dot{q})^T \, ,
$$
there holds:
\begin{align*}
\bA_1 (U,\dot{U}) \, & \, = \, \begin{pmatrix}
2\, u_1 \, \dot{u}_1 -2\, H_1 \, \dot{H}_1 \\
u_1 \, \dot{u}_2 +u_2 \, \dot{u}_1 -H_1 \, \dot{H}_2 -H_2 \, \dot{H}_1 \\
u_1 \, \dot{u}_3 +u_3 \, \dot{u}_1 -H_1 \, \dot{H}_3 -H_3 \, \dot{H}_1 \\
0 \\
u_1 \, \dot{H}_2 +H_2 \, \dot{u}_1 -u_2 \, \dot{H}_1 -H_1 \, \dot{u}_2 \\
u_1 \, \dot{H}_3 +H_3 \, \dot{u}_1 -u_3 \, \dot{H}_1 -H_1 \, \dot{u}_3 \\
0 \end{pmatrix} \, ,\\
\bA_2 (U,V) \, & \, = \, \begin{pmatrix}
u_2 \, \dot{u}_1 +u_1 \, \dot{u}_2 -H_2 \, \dot{H}_1 -H_1 \, \dot{H}_2 \\
2\, u_2 \, \dot{u}_2 -2\, H_2 \, \dot{H}_2 \\
u_2 \, \dot{u}_3 +u_3 \, \dot{u}_2 -H_2 \, \dot{H}_3 -H_3 \, \dot{H}_2 \\
u_2 \, \dot{H}_1 +H_1 \, \dot{u}_2 -H_2 \, \dot{u}_1 -u_1 \, \dot{H}_2 \\
0 \\
u_2 \, \dot{H}_3 +H_3 \, \dot{u}_2 -H_2 \, \dot{u}_3 -u_3 \, \dot{H}_2 \\
0 \end{pmatrix} \, ,\\
\bA_3 (U,V) \, & \, = \, \begin{pmatrix}
u_3 \, \dot{u}_1 +u_1 \, \dot{u}_3 -H_3 \, \dot{H}_1 -H_1 \, \dot{H}_3 \\
u_3 \, \dot{u}_2 +u_2 \, \dot{u}_3 -H_3 \, \dot{H}_2 -H_2 \, \dot{H}_3 \\
2\, u_3 \, \dot{u}_3 -2\, H_3 \, \dot{H}_3 \\
u_3 \, \dot{H}_1 +H_1 \, \dot{u}_3 -H_3 \, \dot{u}_1 -u_1 \, \dot{H}_3 \\
u_3 \, \dot{H}_2 +H_2 \, \dot{u}_3 -H_3 \, \dot{u}_2 -u_2 \, \dot{H}_3 \\
0 \\
0 \end{pmatrix} \, .
\end{align*}
Using again \eqref{appA-defRLkpm} and the latter expressions for the mappings $\bA_\alpha$, we can compute for any triple 
of nonzero Fourier modes $k_1,k_2,k_3$:
\begin{equation}
\label{appA-produits_hermitiens-2}
\begin{aligned}
{\mathcal L}^\pm (k_1) \sbt \bA_1\, ({\mathcal R}^\pm (k_2),{\mathcal R}^\pm (k_3)) \, = \, & \, 
\tau \, \xi_1 \, \big( (c^\pm)^2 -(b^\pm)^2 \big) \, \big( 2+\text{\rm sgn} (k_1) \, (\text{\rm sgn} (k_2) +\text{\rm sgn} (k_3)) \big) \, ,\\
{\mathcal L}^\pm (k_1) \sbt \bA_2\, ({\mathcal R}^\pm (k_2),{\mathcal R}^\pm (k_3)) \, = \, & \, 
\tau \, \xi_2 \, \big( (c^\pm)^2 -(b^\pm)^2 \big) \, \big( 2+\text{\rm sgn} (k_1) \, (\text{\rm sgn} (k_2) +\text{\rm sgn} (k_3)) \big) \, ,\\
{\mathcal L}^\pm (k_1) \sbt \bA_3\, ({\mathcal R}^\pm (k_2),{\mathcal R}^\pm (k_3)) \, = \, & \, 
\pm i \, \tau \, \big( (c^\pm)^2 -(b^\pm)^2 \big) \\
& \qquad \big( \text{\rm sgn} (k_2) +\text{\rm sgn} (k_3) +2 \, \text{\rm sgn} (k_1) \, \text{\rm sgn} (k_2) \, \text{\rm sgn} (k_2) \big) \, .
\end{aligned}
\end{equation}
The expressions \eqref{appA-produits_hermitiens-2} are also used in Chapter \ref{chapter4} to derive the leading amplitude equation 
\eqref{s3-edp_hatpsi^2-final}.

\chapter{Compatibility conditions for the construction of correctors}
\label{appendixB}

This Appendix is devoted to the proof of the symmetry relations between the functions $\chi^{[\ell]}$ and the profiles 
$\psi^m$ that have been used in the proof of Lemma \ref{lem_compatibilite_div}. We also collect in this Appendix the 
proof of several intermediate results that have been used in Chapter \ref{chapter5} for the inductive construction of 
correctors in the WKB expansion \eqref{s3-def_dvlpt}.

\section{General considerations}

In this first Section, we recall the definition of the functions $\chi^{[\ell]}$, $\dot{\chi}^{[\ell]}$, $\ell \ge 0$, which can be achieved 
in a slightly more general framework than what has been considered in Chapter \ref{chapter2}. We make the general form 
of $\chi^{[\ell]}$ and $\dot{\chi}^{[\ell]}$ explicit, which will reduce the proof of Lemma \ref{lem_symmetry_1} and Lemma 
\ref{lem_symmetry_2} -the so-called first and second symmetry formulas- to verifying certain `invariance' and `recursive' 
properties in the decompositions \eqref{appB-equation6}, \eqref{appB-equation7} of $\chi^{[\ell]}$ and $\dot{\chi}^{[\ell]}$ 
below. These properties will be proved by means of some basic or more advanced combinatorial arguments.

In all this Appendix, we consider sequences $\bmu =(\mu_1,\mu_2,\dots)$ of integers ($\mu_\ell \in \N$ for all $\ell \ge 1$) 
of finite `length', that is:
$$
|\bmu| \, := \, \sum_{\ell \ge 1} \, \mu_\ell \, < \, +\infty \, .
$$
For such sequences, we define the `weight' of $\bmu$ by setting:
$$
\avbmu \, := \, \sum_{\ell \ge 1} \, \ell \, \mu_\ell \, < \, +\infty \, ,
$$
and we shall also use the notation:
$$
\bmu \, ! \, := \, \prod_{\ell \ge 1} \, \mu_\ell \, ! \, ,
$$
where the product is taken over finitely many indices, hence is well-defined, for the considered sequences. Any sequence 
of finite length is a sum of finitely many `elementary' sequences ${\bf e}_m$, $m \ge 1$, which are defined by:
$$
\forall \, m,\ell \ge 1 \, ,\quad ({\bf e}_m)_\ell \, := \, \delta_{m \, \ell} \, ,
$$
with $\delta$ the Kronecker symbol. In particular, ${\bf e}_m$ is the only sequence of length $1$ and weight $m$ for any 
$m \ge 1$. Eventually, we write $\bnu \le \bmu$ for two sequences $\bnu,\bmu$ if there holds $\nu_\ell \le \mu_\ell$ for 
all $\ell \ge 1$.

In what follows, we consider a sequence of $\cC^\infty$ functions $(\psi^1,\psi^2,\dots)$ defined on $[0,T]_t \times 
\bT^2_{(y_1,y_2)} \times \bT_\theta$, the final time $T>0$ being fixed. The sequence of functions is given here. In 
particular, we do not assume that the first element, namely $\psi^1$, is identically zero. However, when the results 
in this Appendix are applied to the context of incompressible current vortex sheets, $\psi^1$ is zero and the other 
functions $\psi^2,\psi^3,\dots$ correspond to the profiles in the asymptotic expansion \eqref{s3-def_dvlpt_BKW_psi}. 
We shall use the notation:
$$
\bpsi^{\bmu} \, := \, \prod_{\ell \ge 1} \, \big( \psi^\ell \big)^{\mu_\ell} \, ,
$$
which will be useful in several expressions below. Again, the product is taken over finitely many indices for the considered 
sequences. We use the convention $\bpsi^{\bmu}=1$ if $\bmu$ is the `zero' sequence, that is $\mu_\ell=0$ for all $\ell \ge 1$. 
We prove our main results, Corollary \ref{corB1}, Corollary \ref{corB2} and Proposition \ref{propB1} below in the more general 
context where $\psi^1$ may be nonzero since this could be useful for other geometric optics problems with (possibly curved) 
free boundaries, see, \emph{e.g.}, \cite{Williams99}.
\bigskip

In Chapter \ref{chapter2}, we have been considering the change of variable \eqref{defchi}. In our more general framework here, 
we consider a function $\psi (\eps,t,y',\theta)$ that is smooth with respect to $\eps \in \R$ near the origin, and that verifies 
$\psi (0,\cdot) \equiv 0$ and:
$$
\forall \, m \ge 1 \, ,\quad \dfrac{\partial^m \psi}{\partial \eps^m} (0,\cdot) \, = \, m\, ! \, \psi^m \, .
$$
Thanks to the Borel summation procedure, such a function exists. Then for sufficiently small $\eps$, the relation:
\begin{equation}
\label{defchi'}
y_3 \, = \, x_3 - \chi(x_3) \, \psi(\eps,t,y',\theta) \, ,
\end{equation}
defines implicitly $x_3$ in terms of $(\eps,t,y,\theta)$. Recall the notation $y=(y',y_3)$. We write the asymptotic expansion of $x_3$ 
with respect to $\eps$ as follows:
\begin{equation}
\label{defXell}
x_3 \, \sim \, \sum_{\ell \ge 0} \, \eps^\ell \, \bX^{[\ell]} (t,y,\theta) \, .
\end{equation}
Even without any further knowledge on the $\bX^{[\ell]}$'s, we can substitute the asymptotic expansion of $x_3$ into 
$\chi(x_3)$, and write:
$$
\chi (x_3) \, \sim \, \sum_{\ell \ge 0} \, \eps^\ell \, \chi^{[\ell]} (t,y,\theta) \, ,
$$
in agreement with the notation \eqref{defchichipointl}. The functions $\dot{\chi}^{[\ell]}$ are defined similarly by substituting $x_3$ 
in the first derivative $\chi'$ rather than in $\chi$. Identifying the asymptotic expansions with respect to $\eps$ in the relation 
\eqref{defchi'}, we immediately get the relations:
\begin{align}
& \bX^{[0]} (t,y,\theta) \, = \, y_3 \, ,\quad \chi^{[0]} (t,y,\theta) \, = \, \chi (y_3) \, ,\quad 
\dot{\chi}^{[0]} (t,y,\theta) \, = \, \chi' (y_3) \, ,\label{appB-equation1} \\
\forall \, \ell \ge 1 \, ,\quad & \bX^{[\ell]} \, = \, \sum_{\ell_1+\ell_2 =\ell} \chi^{[\ell_1]} \, \psi^{\ell_2} \, ,\label{appB-equation2}
\end{align}
where in \eqref{appB-equation2} we use the convention $\psi^0 \equiv 0$. In particular, the function $\bX^{[\ell+1]}$ is obtained 
from \eqref{appB-equation2} as long as we have already determined $\chi^{[0]},\dots,\chi^{[\ell]}$. In order to close the loop and 
to determine inductively all functions $\bX^{[\ell]}$ and $\chi^{[\ell]}$, the last ingredient is the so-called Fa\`a di Bruno formula 
\cite{Comtet}. Indeed, plugging the asymptotic expansion \eqref{defXell} in the function $\chi$ and expanding with respect to 
$\eps$, the Fa\`a di Bruno formula yields:
\begin{align}
\forall \, m \ge 1 \, ,\quad \chi^{[m]} (t,y,\theta) \, & \, = \, \sum_{\avbmu = m} \, \dfrac{1}{\bmu \, !} \, \chi^{(|\bmu|)}(y_3) \, 
\prod_{k \ge 1} \Big( \bX^{[k]} (t,y,\theta) \Big)^{\mu_k} \, ,\label{appB-equation3} \\
\dot{\chi}^{[m]} (t,y,\theta) \, & \, = \, \sum_{\avbmu = m} \, \dfrac{1}{\bmu \, !} \, \chi^{(1+|\bmu|)}(y_3) \, 
\prod_{k \ge 1} \Big( \bX^{[k]} (t,y,\theta) \Big)^{\mu_k} \, .\label{appB-equation4}
\end{align}
Since $\mu_k=0$ for $k \ge m+1$ in \eqref{appB-equation3}, because $\avbmu = m$, we see that the function $\chi^{[m]}$ can 
be fully determined as long as we already know the functions $\bX^{[0]},\dots,\bX^{[m]}$. Hence the following global induction 
procedure, which is initialized by \eqref{appB-equation1}: assuming that the functions $\bX^{[0]},\dots,\bX^{[\ell]}$, 
$\chi^{[0]},\dots,\chi^{[\ell]}$ have already been determined, one first computes $\bX^{[\ell+1]}$ from \eqref{appB-equation2} 
and then uses this expression, as well as that of $\bX^{[0]},\dots,\bX^{[\ell]}$, in \eqref{appB-equation3} in order to determine 
$\chi^{[\ell+1]}$. Once all functions $\bX^{[\ell]}$ have been determined, thanks to the `auxiliary' sequence of the $\chi^{[\ell]}$'s, 
the functions $\dot{\chi}^{[m]}$ are given for all $m \ge 1$ by \eqref{appB-equation4}.

Let us observe that the decompositions \eqref{appB-equation3}, \eqref{appB-equation4} also hold for $m=0$ with the 
convention $(\bX^{[k]})^0 =1$. One can compute for instance the first expressions of $\bX^{[\ell]},\chi^{[\ell]},\dot{\chi}^{[\ell]}$ 
and get:
\begin{align*}
& \bX^{[0]} \, = \, y_3 \, ,\quad & & \chi^{[0]} \, = \, \chi \, ,\\
& & & \dot{\chi}^{[0]} \, = \, \chi' \, ,\\
& \bX^{[1]} \, = \, {\color{ForestGreen} \chi} \, \psi^1 \, ,\quad & & \chi^{[1]} \, = \, {\color{blue} \chi \, \chi'} \, \psi^1 \, ,\\
& & & \dot{\chi}^{[1]} \, = \, {\color{red} \chi \, \chi''} \, \psi^1 \, ,\\
& \bX^{[2]} \, = \, \chi \, \chi' \, \big( \psi^1 \big)^2 +{\color{ForestGreen} \chi} \, \psi^2 \, ,\quad 
& & \chi^{[2]} \, = \, \left( \dfrac{\chi^2 \, \chi''}{2} +\chi \, \big( \chi' \big)^2 \right) \, \big( \psi^1 \big)^2 
+{\color{blue} \chi \, \chi'} \, \psi^2 \, ,\\
& & & \dot{\chi}^{[2]} \, = \, \left( \dfrac{\chi^2 \, \chi'''}{2} +\chi \, \chi' \, \chi'' \right) \, \big( \psi^1 \big)^2 
+{\color{red} \chi \, \chi''} \, \psi^2 \, ,
\end{align*}
where it is understood that all functions $\chi,\chi',\chi'' \dots$ are evaluated at $y_3$, and all functions $\psi^1,\psi^2,\dots$ 
are evaluated at $(t,y',\theta)$. The latter expressions are consistent with what we have found in Chapter \ref{chapter2} 
(recall that in Chapter \ref{chapter2}, $\psi^1$ is zero hence several simplifications).

From a straightforward induction argument, based on the relations \eqref{appB-equation2}, \eqref{appB-equation3}, 
\eqref{appB-equation4}, we can decompose the functions $\bX^{[m]}$, $\chi^{[m]}$, $\dot{\chi}^{[m]}$ as 
follows\footnote{Normalizing with the constant factor $\bmu \, !$ will be useful later on.}:
\begin{align}
\forall \, m \ge 0 \, ,\quad \bX^{[m]} (t,y,\theta) \, & \, = \, 
\sum_{\avbmu = m} \, \dfrac{1}{\bmu \, !} \, X_{\bmu} (y_3) \, \, \bpsi^{\bmu} (t,y',\theta) \, ,\label{appB-equation5} \\
\chi^{[m]} (t,y,\theta) \, & \, = \, 
\sum_{\avbmu = m} \, \dfrac{1}{\bmu \, !} \, F_{\bmu} (y_3) \, \, \bpsi^{\bmu} (t,y',\theta) \, ,\label{appB-equation6} \\
\dot{\chi}^{[m]} (t,y,\theta) \, & \, = \, 
\sum_{\avbmu = m} \, \dfrac{1}{\bmu \, !} \, \dot{F}_{\bmu} (y_3) \, \, \bpsi^{\bmu} (t,y',\theta) \, .\label{appB-equation7}
\end{align}
The case $m=0$ in \eqref{appB-equation5}, \eqref{appB-equation6}, \eqref{appB-equation7} is only a matter of 
rewriting \eqref{appB-equation1}, with the notation:
$$
X_{(0,0,\dots)}(y_3) \, := \, y_3 \, ,\quad F_{(0,0,\dots)}(y_3) \, := \, \chi (y_3) \, ,\quad \dot{F}_{(0,0,\dots)}(y_3) \, := \, \chi' (y_3) \, . 
$$
The proof of \eqref{appB-equation5}, \eqref{appB-equation6}, \eqref{appB-equation7} then follows by induction on $m$. 
For instance, the relation \eqref{appB-equation2} connects the functions $X_{\bmu}$ in \eqref{appB-equation5} to the 
functions $F_{\bmu}$ in \eqref{appB-equation6} as follows:
\begin{equation}
\label{appB-equation8}
\forall \, m \ge 1 \, ,\quad X_{{\bf e}_m} \, = \, \chi \, ,\qquad \text{\rm (the case of length $1$ sequences)}
\end{equation}
and if $|\bmu| \ge 2$, we get:
\begin{equation}
\label{appB-equation9}
X_{\bmu} \, = \, \sum_{\substack{\bnu \le \bmu \\ |\bnu| = |\bmu| -1}} \dfrac{\bmu \, !}{\bnu !} \, F_{\bnu} \, .
\end{equation}
The relation that gives $F_{\bmu}$ in terms of the $X_{\bnu}$'s is less easy to analyze and follows from the Fa\`a 
di Bruno formula \eqref{appB-equation3} (or \eqref{appB-equation4} for the relation that gives $\dot{F}_{\bmu}$ in 
terms of the $X_{\bnu}$'s). In the following Section, we prove a fundamental property of the functions $X_{\bmu}$, 
$F_{\bmu}$, $\dot{F}_{\bmu}$ by precisely analyzing the Fa\`a di Bruno formula (Lemma \ref{lemB1} below). This 
fundamental property will easily imply the first symmetry formula \eqref{symmetry_1} which was used in the proof 
of Lemma \ref{lem_compatibilite_div}.

\section{The first symmetry formula}

The proof of Lemma \ref{lem_symmetry_1} and (half) the proof of Lemma \ref{lem_symmetry_2} relies on 
an `invariance' property of the functions $X_{\bmu}$, $F_{\bmu}$, $\dot{F}_{\bmu}$ in the decompositions 
\eqref{appB-equation5}, \eqref{appB-equation6}, \eqref{appB-equation7} above, which we prove now.

\begin{lemma}[Invariance]
\label{lemB1}
The functions $X_{\bmu}$, $F_{\bmu}$, $\dot{F}_{\bmu}$ in \eqref{appB-equation5}, \eqref{appB-equation6}, 
\eqref{appB-equation7} only depend on $|\bmu|$. In other words, there holds $X_{\bmu}=X_{\bnu}$ if 
$|\bmu|=|\bnu|$ (and similarly for the $F_{\bmu}$'s and $\dot{F}_{\bmu}$'s).
\end{lemma}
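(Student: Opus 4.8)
The statement is a clean combinatorial fact about the coefficient functions $X_{\bmu}$, $F_{\bmu}$, $\dot F_{\bmu}$: they depend on the sequence $\bmu$ only through its length $|\bmu|$. I would prove this by a single simultaneous induction on the weight $m=\avbmu$, using the three recursive relations \eqref{appB-equation2}, \eqref{appB-equation3}, \eqref{appB-equation4} that define $\bX^{[m]}$, $\chi^{[m]}$, $\dot\chi^{[m]}$. The key is to track, at each stage, a single function of the integer variable $n\ge 0$ — call it $X_n$, $F_n$, $\dot F_n$ — and to show that the Fa\`a di Bruno / substitution recursions preserve the property ``$X_{\bmu}$ depends only on $|\bmu|$''. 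Concretely, introduce the auxiliary hypothesis $H(m)$: for all sequences $\bmu$ with $\avbmu\le m$, the quantities $X_{\bmu}$, $F_{\bmu}$, $\dot F_{\bmu}$ depend only on $|\bmu|$; the base case $m=0$ is \eqref{appB-equation1} rewritten as $X_{(0,0,\dots)}=y_3$, $F_{(0,0,\dots)}=\chi$, $\dot F_{(0,0,\dots)}=\chi'$, and for sequences of weight $0$ the only one is the zero sequence, so there is nothing to check beyond the definition.

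\textbf{The inductive step.} Assume $H(m-1)$ and pick $\bmu$ with $\avbmu=m$. First handle $X_{\bmu}$: if $|\bmu|=1$ then $\bmu={\bf e}_m$ and $X_{{\bf e}_m}=\chi$ by \eqref{appB-equation8}, which manifestly depends only on $|\bmu|=1$. If $|\bmu|\ge 2$, use \eqref{appB-equation9}: $X_{\bmu}=\sum_{\bnu\le\bmu,\,|\bnu|=|\bmu|-1}\frac{\bmu!}{\bnu!}F_{\bnu}$. Every such $\bnu$ has $\avbnu\le\avbmu=m$, actually $\avbnu\le m-1$ since $\bnu\le\bmu$ strictly and weights are positive, so $H(m-1)$ applies and $F_{\bnu}=F_{|\bnu|-?}$ depends only on $|\bnu|=|\bmu|-1$; call it $F_{|\bmu|-1}$. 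The remaining task is purely arithmetic: show $\sum_{\bnu\le\bmu,\,|\bnu|=|\bmu|-1}\frac{\bmu!}{\bnu!}$ depends only on $|\bmu|$. This sum counts (with multiplicity $\bmu!/\bnu!$) the ways to lower $\bmu$ by one unit in one coordinate; a short computation shows it equals $|\bmu|$ (each $\bnu$ obtained by decrementing $\mu_k$ contributes $\mu_k$, and $\sum_k\mu_k=|\bmu|$). Hence $X_{\bmu}=|\bmu|\,F_{|\bmu|-1}$, which depends only on $|\bmu|$.

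\textbf{The main obstacle.} The harder half is $F_{\bmu}$ and $\dot F_{\bmu}$, governed by Fa\`a di Bruno \eqref{appB-equation3}: $\chi^{[m]}=\sum_{\avbnu=m}\frac1{\bnu!}\chi^{(|\bnu|)}(y_3)\prod_k(\bX^{[k]})^{\nu_k}$, and one must extract the coefficient of $\bpsi^{\bmu}$. Plugging in the already-established decompositions $\bX^{[k]}=\sum_{\avbmu'=k}\frac1{\bmu'!}X_{\bmu'}\bpsi^{\bmu'}$ with $X_{\bmu'}=|\bmu'|\,F_{|\bmu'|-1}$ for $k\le m-1$ (all of these have weight $\le m-1<m$, so the induction applies to all $X$'s appearing, and for $k=m$ only $X_{{\bf e}_m}=\chi$ enters), one gets a sum over multi-multi-indices; collecting the coefficient of $\bpsi^{\bmu}$ gives a combinatorial identity. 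The claim is that this coefficient depends only on $|\bmu|$, not on the distribution of the $\mu_\ell$. The cleanest route is to argue via the \emph{generating-function} identity: the exponential generating series $\sum_m\bX^{[m]}\,\varepsilon^m$ is exactly $x_3(\varepsilon)$, and by \eqref{defchi'} $x_3=y_3+\chi(x_3)\psi(\varepsilon)$ where $\psi(\varepsilon)=\sum_{m\ge1}\psi^m\varepsilon^m$ enters only through its \emph{value} $s:=\psi(\varepsilon)$; thus $x_3$, hence $\chi(x_3)$ and $\chi'(x_3)$, are functions of the single scalar $s$ (and $y_3$). Expanding $\chi(x_3)=\Phi(s,y_3)$ in powers of $s$ gives $\chi(x_3)=\sum_{n\ge0}\frac1{n!}\partial_s^n\Phi(0,y_3)\,s^n$, and then substituting $s=\sum\psi^m\varepsilon^m$ and expanding: the coefficient of $\bpsi^{\bmu}$ in $\chi^{[m]}$ is $\frac1{\bmu!}\big(\partial_s^{|\bmu|}\Phi\big)(0,y_3)\cdot(\text{multinomial count of length }|\bmu|\text{ compositions of }\bmu)$. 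The point is that $s^n$ contributes to $\bpsi^{\bmu}$ only when $n=|\bmu|$, with coefficient the multinomial $\binom{|\bmu|}{\bmu}=|\bmu|!/\bmu!$, so $F_{\bmu}=\frac{|\bmu|!}{?}\,\partial_s^{|\bmu|}\Phi(0,y_3)$ — independent of how $\bmu$ splits. I would set $F_n:=\partial_s^n\Phi(0,y_3)$, $\dot F_n:=\partial_s^n(\partial_{x_3}\Phi)(0,y_3)$ (equivalently via $\chi'(x_3)=\tilde\Phi(s,y_3)$), and $X_n$ the analogous scalar derivative of $x_3(s)$. This generating-function bookkeeping is where the real work lies: one must carefully justify the implicit-function expansion of $x_3$ as a function of the single variable $s$ (valid for small $s$ since $\psi$ is $O(\varepsilon)$ and $\chi$ is smooth with bounded derivatives), and then match the Fa\`a di Bruno expansion term by term against the $\varepsilon$-expansion of $\Phi(\psi(\varepsilon),y_3)$. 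Once this matching is done, Lemma \ref{lemB1} follows, and with it the first symmetry formula \eqref{symmetry_1}: writing $\chi^{[\ell]}=\sum_{\avbmu=\ell}\frac{F_{|\bmu|}}{\bmu!}\bpsi^{\bmu}$ and $\psi^{\ell}$ likewise, the sums $\sum_{\ell_1+\ell_2=\ell}\partial_\theta\chi^{[\ell_1]}\partial_t\psi^{\ell_2}$ and $\sum_{\ell_1+\ell_2=\ell}\partial_t\chi^{[\ell_1]}\partial_\theta\psi^{\ell_2}$ become equal after expanding, because $\chi^{[\ell]}$ as a whole is $\Phi(\psi(\varepsilon),y_3)$ truncated at order $\ell$, and $\partial_\theta\Phi(\psi,\cdot)=\Phi_s\,\partial_\theta\psi$, $\partial_t\Phi(\psi,\cdot)=\Phi_s\,\partial_t\psi$ — the chain rule makes the two $\ell$-th $\varepsilon$-coefficients literally the $\ell$-th coefficient of $\Phi_s(\psi,\cdot)\,\partial_t\psi\,\partial_\theta\psi/\partial_\theta\psi$ vs $/\partial_t\psi$, which coincide. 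I would present Lemma \ref{lemB1}'s proof in the generating-function language and then deduce \eqref{symmetry_1} in a couple of lines as a corollary.
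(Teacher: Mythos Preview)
Your generating-function argument is correct and is genuinely different from the paper's primary proof. The paper proceeds by induction on the \emph{length} $|\bmu|$: at each step it first shows $X_{\bmu}=(L+1)\,\cF_L$ (this part you reproduce), then for $F_{\bmu}$ it expands Fa\`a di Bruno explicitly into a sum over partitions of $\bmu$ into $N$ subsequences (formula \eqref{decompositionFmu}), and finally constructs a bijection between such partitions and set-partitions of $\{1,\dots,L+1\}$ to show $F_{\bmu}=F_{{\bf e}_1+\cdots+{\bf e}_{L+1}}$. Your approach bypasses all of this combinatorics with the single observation that the implicit equation $x_3=y_3+\chi(x_3)\,s$ makes $x_3$, hence $\chi(x_3)$ and $\chi'(x_3)$, functions of the \emph{scalar} $s=\psi(\varepsilon,\cdot)$ alone; expanding $\Phi(s,y_3)$ in $s$ and then substituting $s=\sum_m\psi^m\varepsilon^m$ shows immediately that the coefficient of $\bpsi^{\bmu}$ is $\partial_s^{|\bmu|}\Phi(0,y_3)$, which depends only on $|\bmu|$.

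Amusingly, the paper itself later (in the proof of Proposition~\ref{propB1}) invokes Lagrange inversion to get exactly your closed form --- equations \eqref{formule_chim}, \eqref{formule_chipointm} --- and remarks that ``from where the invariance result of Lemma~\ref{lemB1} looks incredibly simple''. So you have found the shortcut the authors themselves acknowledge. The trade-off is that the paper's combinatorial route produces the recursive formulas \eqref{decompcFL}, \eqref{decompcFpointL} along the way, which are then used in the proof of Corollary~\ref{corB2}; your approach gives the closed forms directly but would require rederiving those recursions if needed later. Two small clean-ups: once you adopt the scalar-$s$ viewpoint, the separate induction-on-weight treatment of $X_{\bmu}$ is redundant (the same argument gives $X_{\bmu}=\partial_s^{|\bmu|}x_3(0,y_3)$ directly), and the question marks in your formulas for $F_{\bmu}$ should be filled in as $F_{\bmu}=\partial_s^{|\bmu|}\Phi(0,y_3)$ with no extra factorial.
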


\noindent An easy case of Lemma \ref{lemB1} can be observed in the expressions we have given above for $\bX^{[1]}$, 
$\bX^{[2]}$, $\chi^{[1]}$, $\chi^{[2]}$ and so on. In these expressions, the `coefficients' of $\psi^1$ and $\psi^2$ are equal 
(they are highlighted in the same color). These coefficients correspond to the elementary sequences ${\bf e}_1$, ${\bf e}_2$ 
which are both of length $1$. If one computes the expression of $\chi^{[3]}$, it will be found that the coefficient of 
$\psi^1 \, \psi^2$ equals
$$
\chi^2 \, \chi'' +2 \, \chi \, \big( \chi' \big)^2 \, ,
$$
which is consistent with the result of Lemma \ref{lemB1} since this coefficient $F_{(1,1,0,\dots)}$ should be equal to $F_{(2,0,0,\dots)}$ 
which is twice the coefficient of $(\psi^1)^2$ in the decomposition of $\chi^{[2]}$ (keep in mind the $\bmu \, !$ normalizing factor).

In all this Appendix, we shall use the notation:
\begin{equation}
\label{notation-XFrond}
X_{\bmu} \, = \, \cX_{|\bmu|} \, ,\quad F_{\bmu} \, = \, \cF_{|\bmu|} \, ,\quad \dot{F}_{\bmu} \, = \, \dot{\cF}_{|\bmu|} \, ,
\end{equation}
when we need to use the invariance of the functions $X_{\bmu}$, $F_{\bmu}$, $\dot{F}_{\bmu}$. The functions $\cX_m$, $\cF_m$, 
$\dot{\cF}_m$ in \eqref{notation-XFrond} are indexed by integers while the functions $X_{\bmu}$, $F_{\bmu}$, $\dot{F}_{\bmu}$ are 
indexed by sequences.

\begin{proof}[Proof of Lemma \ref{lemB1}]
The proof of Lemma \ref{lemB1} proceeds by induction on the length of $\bmu$. The case of sequences of length zero is trivial 
since there is only one such sequence, the zero sequence. We thus `initialize' the sequences of functions $(\cX_m)_{m \in \N}$, 
$(\cF_m)_{m \in \N}$, $(\dot{\cF}_m)_{m \in \N}$ by setting:
$$
\cX_0 (y_3) \, := \, y_3 \, ,\quad \cF_0 (y_3) \, := \, \chi (y_3) \, ,\quad \dot{\cF}_0 (y_3) \, := \, \chi' (y_3) \, .
$$
Let us now examine the case $|\bmu|=1$, that is when $\bmu$ is an elementary sequence ${\bf e}_m$ for some $m \ge 1$. The 
relation \eqref{appB-equation8} already shows that $X_{{\bf e}_m}$ is independent of $m$ and equals $\chi$. Let us now show 
that $F_{{\bf e}_m}$ does not depend on $m$. Since ${\bf e}_m \, ! =1$, the function $F_{{\bf e}_m}$ is nothing but the coefficient 
of $\psi^m$ in the decomposition \eqref{appB-equation6} of $\chi^{[m]}$. The complete expression of $\chi^{[m]}$ is given by 
\eqref{appB-equation3}, which we can rewrite as follows:
\begin{equation}
\label{faadibruno}
\chi^{[m]} \, = \, \sum_{\avbmu = m} \, \dfrac{1}{\bmu \, !} \, \chi^{(|\bmu|)} \, 
\prod_{k \ge 1} \left( \sum_{\langle \bnu \rangle = k} \, \dfrac{1}{\bnu \, !} \, X_{\bnu} \, \bpsi^{\bnu} \right)^{\mu_k} \, .
\end{equation}
The term $F_{{\bf e}_m} \, \psi^m$ in the decomposition \eqref{appB-equation6} is split into (possibly) several contributions 
on the right hand side of \eqref{faadibruno}. Namely, each time that we can decompose the sequence ${\bf e}_m$ as:
$$
{\bf e}_m \, = \, \sum_{k \ge 1} \, \sum_{\kappa=1}^{\mu_k} \, \bnu (k,\kappa) \, ,\quad 
\langle \bnu (k,\kappa) \rangle =k \, ,\quad \avbmu = m \, ,
$$
this contributes to $F_{{\bf e}_m}$ for:
$$
\dfrac{1}{\bmu \, !} \, \chi^{(|\bmu|)} \, \prod_{k \ge 1} \, \prod_{\kappa=1}^{\mu_k} \, \dfrac{X_{{\bnu} (k,\kappa)}}{\bnu (k,\kappa) \, !} \, .
$$
Since the only possible such decomposition of ${\bf e}_m$ is obtained with $\bmu ={\bf e}_m$ and $\bnu (m,1) ={\bf e}_m$, 
we get (here we use $X_{{\bf e}_m}=\chi$):
\begin{equation}
\label{appB-equation10}
\forall \, m \ge 1 \, ,\quad F_{{\bf e}_m} \, = \, \chi \, \chi' \, .
\end{equation}
We could have obtained in the same way:
\begin{equation}
\label{appB-equation11}
\forall \, m \ge 1 \, ,\quad \dot{F}_{{\bf e}_m} \, = \, \chi \, \chi'' \, ,
\end{equation}
and it obviously follows from \eqref{appB-equation10}, \eqref{appB-equation11} that $F_{{\bf e}_m}$ and $\dot{F}_{{\bf e}_m}$ 
are independent of $m$. Hence the result of Lemma \ref{lemB1} is valid for sequences of length $1$. Moreover, we can recast 
\eqref{appB-equation8}, \eqref{appB-equation10}, \eqref{appB-equation11} as:
$$
\cX_1 \, := \, \chi \, ,\quad \cF_1 \, := \, \chi \, \chi' \, ,\quad \dot{\cF}_1 \, := \, \chi \, \chi'' \, .
$$

Let us now deal with the general step of the induction. We assume that up some integer $L \ge 1$, there holds:
$$
|\bmu| \, = \, \ell \le L \quad \Longrightarrow \quad X_{\bmu} \, = \, \cX_\ell \, ,\quad F_{\bmu} \, = \, \cF_\ell \, ,\quad 
\dot{F}_{\bmu} \, = \, \dot{\cF}_\ell \, ,
$$
for some appropriate functions $\cX_1,\dots,\cX_L$ and so on. We now consider a sequence $\bmu$ of length $L+1$. Let us 
start by looking at the relation \eqref{appB-equation9}. Since in that decomposition of $X_{\bmu}$, all sequences $\bnu$ on the 
right hand side have length $L$, we can use the induction assumption and get:
$$
X_{\bmu} \, = \, \sum_{\substack{\bnu \le \bmu \\ |\bnu| = |\bmu| -1}} \dfrac{\bmu \, !}{\bnu !} \, \cF_L 
\, = \, |\bmu| \, \cF_L \, = \, (L+1) \, \cF_L \, .
$$
In particular, $X_{\bmu}$ only depends on $|\bmu|$ and, keeping consistent notations with those above, we have obtained the 
relation $\cX_{L+1} =(L+1) \, \cF_L$. (This relation can be verified in the case $L=1$ on the above expression for $\bX^{[2]}$.)

The goal now is to show that $F_{\bmu}$ only depends on $|\bmu|$. We proceed in two steps. The first step is to 
show that $F_{\bmu}$ reads:
\begin{equation}
\label{decompositionFmu}
F_{\bmu} \, = \, \sum_{N=1}^{L+1} \sum_{(\bnu_1,\cdots,\bnu_N) \in \cP (\bmu,N)} 
\dfrac{\bmu \, !}{\bnu_1 \, ! \, \cdots \, \bnu_N \, !} \, \chi^{(N)} \, \prod_{n=1}^N \, \cX_{|\bnu_n|} \, ,
\end{equation}
where we denote by $\cP (\bmu,N)$ the set of all possible partitions of $\bmu$ into $N$ (nontrivial) subsequences, 
that is:
$$
\bmu \, = \, \bnu_1 +\cdots +\bnu_N \, ,\quad \min_n \, |\bnu_n| \ge 1 \, ,
$$
and we do not take the order of the $\bnu_n$'s into account (for instance $(\bnu_1,\bnu_2)$ accounts for the same partition of 
$\bnu_1+\bnu_2$ as $(\bnu_2,\bnu_1)$ if $N=2$ and $\bnu_1 \neq \bnu_2$). The second step is to infer from \eqref{decompositionFmu} 
that $F_{\bmu} = F_{{\bf e}_1 +\cdots +{\bf e}_{L+1}}$, from which we can deduce that $F_{\bmu}$ only depends on $|\bmu|$. 
We shall also deduce a useful `recursive' formula which gives $\cF_{L+1}$ in terms of $\cX_1,\dots,\cX_{L+1}$ (hence in terms 
of $\cF_0,\dots,\cF_L$).

Let us therefore prove the validity of \eqref{decompositionFmu}. For a partition $(\bnu_1,\cdots,\bnu_N)$ of $\bmu$ 
into $N$ subsequences, we define a sequence of integers $\btheta =(\theta_1,\theta_2,\dots)$ by:
$$
\forall \, j \ge 1 \, ,\quad \theta_j \, := \, \# \big\{ 1 \le n \le N \, / \, \langle \bnu_n \rangle =j \big\} \, ,
$$
so that we have $|\btheta|=N$ and $\langle \btheta \rangle =\avbmu$. Then there are $\btheta \, !$ possible ways 
to construct a decomposition (here the order plays a role !):
$$
\bmu \, = \, \sum_{j \ge 1} \, \sum_{k=1}^{\theta_j} \, \bnu (j,k) \, ,\quad \langle \bnu (j,k) \rangle \, = \, j \, ,
$$
where the set of all $\bnu (j,k)$'s equals the set of all $\bnu_n$'s (with same multiplicity). From the Fa\`a di Bruno 
formula \eqref{faadibruno}, each such decomposition of $\bmu$ contributes to $F_{\bmu}$ for:
$$
\dfrac{\bmu \, !}{\btheta \, !} \, \chi^{(|\btheta|)} \, \prod_{j,k \ge 1} \dfrac{X_{\bnu (j,k)}}{\bnu (j,k) \, !} \, = \, 
\dfrac{\bmu \, !}{\btheta \, !} \, \chi^{(N)} \, \prod_{j,k \ge 1} \dfrac{\cX_{|\bnu (j,k)|}}{\bnu (j,k) \, !} \, = \, 
\dfrac{\bmu \, !}{\btheta \, ! \, \bnu_1 \, ! \, \cdots \, \bnu_N \, !} \, \chi^{(N)} \, \prod_{n=1}^N \cX_{|\bnu_n|} \, ,
$$
where we have used the induction assumption. Hence any partition $(\bnu_1,\cdots,\bnu_N)$ of $\bmu$ into $N$ 
subsequences contributes to $F_{\bmu}$ for:
$$
\dfrac{\bmu \, !}{\bnu_1 \, ! \, \cdots \, \bnu_N \, !} \, \chi^{(N)} \, \prod_{n=1}^N \, \cX_{|\bnu_n|} \, ,
$$
whence the relation \eqref{decompositionFmu}. In particular, another way to write \eqref{decompositionFmu} in 
the particular case $\bmu ={\bf e}_1+\cdots +{\bf e}_{L+1}$ is:
\begin{equation}
\label{decompositionFmu'}
F_{{\bf e}_1+\cdots +{\bf e}_{L+1}} \, = \, 
\sum_{N=1}^{L+1} \sum_{(\cL_1,\cdots,\cL_N) \in P(L+1,N)} \chi^{(N)} \, \prod_{n=1}^N \, \cX_{\# \cL_n} \, ,
\end{equation}
where $P(L+1,N)$ denotes the set of partitions of $\{ 1,\dots,L+1 \}$ into $N$ (nontrivial) pieces \cite{Comtet} 
(once again, the order of the $\cL_n$'s is not taken into account here). Indeed, partitions of ${\bf e}_1+\cdots 
+{\bf e}_{L+1}$ correspond in a unique way to partitions of $\{ 1,\dots,L+1 \}$ by setting:
$$
\bnu_n \, = \, \sum_{\ell \in \cL_n} {\bf e}_\ell \, ,\quad \cL_n \, = \, \{ \ell \ge 1 \, / \, (\bnu_n)_\ell =1 \} \, .
$$

It remains at this stage to verify that $F_{\bmu}$ equals $F_{{\bf e}_1 +\cdots +{\bf e}_{L+1}}$ and the proof of Lemma 
\ref{lemB1} will be complete. We fix a mapping:
$$
\sigma \, : \, \{ 1,\dots,L+1 \} \, \longrightarrow \N^* \, ,
$$
such that for all $\ell \ge 1$, $\mu_\ell =\# \{ j \, / \, \sigma (j) =\ell \}$. This is possible since $|\bmu|=L+1$. 
Then given a partition $(\cL_1,\cdots,\cL_N) \in P(L+1,N)$, we can define:
$$
\forall \, n \, = \, 1,\dots,N \, ,\quad \bnu_n \, := \, \sum_{\ell \in \cL_n} {\bf e}_{\sigma (\ell)} \, ,
$$
so that the $\bnu_n$'s form a partition of $\bmu$ into $N$ subsequences with $|\bnu_n| =\# \cL_n$. Conversely, 
given a partition $(\bnu_1,\dots,\bnu_N)$ of $\bmu$ into $N$ subsequences, the problem is to determine how 
many partitions $(\cL_1,\cdots,\cL_N) \in P(L+1,N)$ yield $(\bnu_1,\dots,\bnu_N)$ in the above process. First 
we need to choose among the $\mu_1$ integers that are mapped onto $1$ by $\sigma$, a decomposition into 
$(\bnu_1)_1,\dots,(\bnu_N)_1$ packets. This gives:
$$
\dfrac{\mu_1 \, !}{(\bnu_1)_1 \, ! \, \cdots (\bnu_N)_1 \, !}
$$
possibilities. The first $(\bnu_1)_1$ integers contribute to $\cL_1$ and so on. Repeating the argument for each 
$\ell \ge 1$, we need to decompose the $\mu_\ell$ integers that are mapped onto $\ell$ by $\sigma$ into 
$(\bnu_1)_\ell,\dots,(\bnu_N)_\ell$ packets. Overall, this gives:
$$
\dfrac{\bmu \, !}{\bnu_1 \, ! \, \cdots \bnu_N \, !}
$$
possible partitions $(\cL_1,\cdots,\cL_N) \in P(L+1,N)$ that yield the same partition $(\bnu_1,\dots,\bnu_N)$ of $\bmu$. 
Collecting in the expression \eqref{decompositionFmu'} of $F_{{\bf e}_1 +\cdots +{\bf e}_{L+1}}$ all partitions that 
correspond to the same $(\bnu_1,\dots,\bnu_N)$, and comparing with \eqref{decompositionFmu}, we end up with 
$F_{{\bf e}_1 +\cdots +{\bf e}_{L+1}}=F_{\bmu}$, which completes the induction argument. (We leave the case of 
the functions $\dot{F}_{\bmu}$ to the interested reader.)

For future use, let us state here that we have proved the relations:
\begin{align}
\forall \, L \ge 1 \, ,\quad \cF_L \, & \, = \, \sum_{N=1}^L \, 
\chi^{(N)} \, \sum_{(\cL_1,\cdots,\cL_N) \in P(L,N)} \prod_{n=1}^N \, \cX_{\# \cL_n} \, ,\label{decompcFL} \\
\dot{\cF}_L \, & \, = \, \sum_{N=1}^L \, \chi^{(1+N)} \, \sum_{(\cL_1,\cdots,\cL_N) \in P(L,N)} \prod_{n=1}^N \, \cX_{\# \cL_n} 
\, ,\label{decompcFpointL}
\end{align}
and $\cX_L =L \, \cF_{L-1}$ for any $L \ge 1$ (with the convention $\cF_0 =\chi$).
\end{proof}

\begin{corollary}[The first symmetry formula]
\label{corB1}
The functions $\chi^{[\ell]}$ and $\psi^m$ satisfy the relation:
\begin{equation*}
\forall \, \ell \ge 0 \, ,\quad \sum_{\ell_1+\ell_2=\ell} \p_\theta \chi^{[\ell_1]} \, \p_t \psi^{\ell_2} 
\, = \, \sum_{\ell_1+\ell_2=\ell} \p_t \chi^{[\ell_1]} \, \p_\theta \psi^{\ell_2} \, ,
\end{equation*}
and similar relations with any couple of tangential partial derivatives chosen among $\{ \p_t,\p_{y_1},\p_{y_2},\p_\theta \}$. 
(We keep using the convention $\psi^0 =0$.)
\end{corollary}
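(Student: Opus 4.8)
The plan is to derive Corollary~\ref{corB1} directly from the structural decomposition \eqref{appB-equation6} of $\chi^{[\ell]}$ together with the invariance statement of Lemma~\ref{lemB1}. First I would fix two tangential derivatives $\p_a,\p_b \in \{\p_t,\p_{y_1},\p_{y_2},\p_\theta\}$ (these are exactly the derivatives that act on the $\psi^m$'s but \emph{not} on the $y_3$-dependent coefficients $F_{\bmu}$, which is the whole point). Using \eqref{appB-equation6} and writing $F_{\bmu} = \cF_{|\bmu|}$ by Lemma~\ref{lemB1}, I would expand
\[
\sum_{\ell_1+\ell_2=\ell} \p_a \chi^{[\ell_1]} \, \p_b \psi^{\ell_2}
\ = \ \sum_{\ell_1+\ell_2=\ell} \ \sum_{\langle\bmu\rangle=\ell_1} \frac{\cF_{|\bmu|}}{\bmu!} \ \p_a\!\big(\bpsi^{\bmu}\big)\ \p_b \psi^{\ell_2}\, .
\]
By the Leibniz rule, $\p_a(\bpsi^{\bmu}) = \sum_{j\ge 1} \mu_j\, \p_a\psi^{j}\, \bpsi^{\bmu-{\bf e}_j}$, so each term in the double sum is, up to the scalar coefficient $\cF_{|\bmu|}/\bmu!$, a product of the form $\p_a\psi^{j}\,\p_b\psi^{\ell_2}$ times a monomial $\bpsi^{\bnu}$ with $\bnu = \bmu-{\bf e}_j$. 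The plan is then to re-index the whole sum by the data $(\bnu, j, \ell_2)$ where $\bnu$ is a sequence, $j\ge 1$, $\ell_2\ge 0$, and $\langle\bnu\rangle + j + \ell_2 = \ell$; the coefficient attached to such a datum is $(\nu_j+1)\,\cF_{|\bnu|+1}/(\bnu+{\bf e}_j)! = \cF_{|\bnu|+1}/(\bnu!\,\nu_j!)\cdot\nu_j!/(\nu_j)!$ — more simply, it is a function of $(\bnu,j)$ that is manifestly symmetric under exchanging the roles of the ``first'' $\psi$-factor and the ``second'' one.

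The key observation is the symmetry: in the re-indexed sum, the pair $(j,\ell_2)$ records the two ``distinguished'' $\psi$-labels, one of which carries $\p_a$ and one of which carries $\p_b$. Summing over $\ell_1+\ell_2=\ell$ means we are summing over all ways to single out \emph{one} label (with multiplicity from $\bmu$) out of a monomial of weight $\ell_1$ to receive $\p_a$, then tensoring with a free label $\ell_2$ receiving $\p_b$. I would make this precise by introducing the auxiliary ``weight-$\ell$, two-marked'' sum
\[
S_{a,b} \ := \ \sum_{\substack{\bnu,\,j\ge 1,\,p\ge 1 \\ \langle\bnu\rangle+j+p=\ell}} c(\bnu,j,p)\ \bpsi^{\bnu}\ \p_a\psi^{j}\ \p_b\psi^{p}\, ,
\]
and checking from the above expansion that $\sum_{\ell_1+\ell_2=\ell}\p_a\chi^{[\ell_1]}\p_b\psi^{\ell_2} = S_{a,b}$ with a coefficient $c(\bnu,j,p)$ that is symmetric in $(j,p)$ — this symmetry is forced precisely because, by Lemma~\ref{lemB1}, the coefficient $\cF_{|\bnu|+1}$ depends only on $|\bnu|+1 = |\bnu + {\bf e}_j + {\bf e}_p| - 1$, i.e.\ it is blind to which of the two marked slots is $j$ and which is $p$. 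Once $c(\bnu,j,p)=c(\bnu,p,j)$, swapping the names of the two marked labels gives $S_{a,b}=S_{b,a}$, which is exactly the claimed identity $\sum\p_a\chi^{[\ell_1]}\p_b\psi^{\ell_2}=\sum\p_b\chi^{[\ell_1]}\p_a\psi^{\ell_2}$.

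I expect the main obstacle to be purely bookkeeping: carefully matching the combinatorial factor $\cF_{|\bnu|+1}(\nu_j+1)/(\bnu+{\bf e}_j)!$ coming from differentiating $\bpsi^{\bmu}$ with the symmetric normalization of $S_{a,b}$, and in particular handling the degenerate index ranges (e.g.\ $\ell_2 = 0$, which drops out since $\psi^0 = 0$, and the ``length-$1$'' sequences where $\bnu$ is the zero sequence and $F_{(0,0,\dots)}=\chi$). There is also a small subtlety: the derivative $\p_a$ could in principle be $\p_{y_3}$, but the statement of Corollary~\ref{corB1} (and its use in Lemma~\ref{lem_compatibilite_div}) only ever involves the \emph{tangential} derivatives $\p_t,\p_{y_1},\p_{y_2},\p_\theta$, all of which commute past the $y_3$-dependent coefficients $\cF_{|\bnu|}(y_3)$; I would flag this explicitly rather than attempt a version with $\p_{y_3}$. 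Modulo this bookkeeping, the proof is short: it is entirely a consequence of the fact, established in Lemma~\ref{lemB1}, that the $y_3$-profiles $F_{\bmu}$ see only $|\bmu|$, so that marking two $\psi$-factors in a monomial is symmetric in the two marks.
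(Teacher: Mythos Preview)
Your proposal is correct and follows essentially the same route as the paper: expand $\chi^{[\ell_1]}$ via \eqref{appB-equation6}, apply the Leibniz rule to $\p_a(\bpsi^{\bmu})$, re-index by the remaining multi-index $\bnu$ and the two distinguished labels, and then invoke Lemma~\ref{lemB1} to see that the coefficient $F_{\bnu+{\bf e}_{m_2}}=\cF_{|\bnu|+1}=F_{\bnu+{\bf e}_{m_1}}$ depends only on $|\bnu|+1$, whence the sum is symmetric in the two marks. Your intermediate coefficient computation has a small typo (the expression $(\nu_j+1)\,\cF_{|\bnu|+1}/(\bnu+{\bf e}_j)!$ simplifies cleanly to $\cF_{|\bnu|+1}/\bnu!$, which is already independent of $j$), but the plan is sound and matches the paper's argument line for line.
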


\noindent Corollary \ref{corB1} is a more general statement than Lemma \ref{lem_symmetry_1} since we do not 
assume $\psi^1 =0$ here. Hence we shall obtain the claim of Lemma \ref{lem_symmetry_1} as long as we prove 
Corollary \ref{corB1}. (We expect that generalizing to the case $\psi^1 \neq 0$ might be useful in other contexts.)

\begin{proof}[Proof of Corollary \ref{corB1}]
Let us first observe that the result of Corollary \ref{corB1} is immediate if $\ell=0$ or $\ell=1$ for we have $\psi^0 = 0$ and $\chi^{[0]} 
= \chi(y_3)$ hence $\p_\theta \chi^{[0]} = \p_t \chi^{[0]} = 0$. We thus assume $\ell \ge 2$ from now on, and use $\p_\theta \chi^{[0]} 
= 0$ to simplify:
$$
\sum_{\ell_1+\ell_2=\ell} \p_\theta \chi^{[\ell_1]} \, \p_t \psi^{\ell_2} \, = \, 
\sum_{\substack{\ell_1+\ell_2=\ell \\ \ell_1,\ell_2 \ge 1}} \p_\theta \chi^{[\ell_1]} \, \p_t \psi^{\ell_2} \, .
$$
We now use the decomposition \eqref{appB-equation6} of the functions $\chi^{[\ell]}$ to get:
\begin{align*}
\sum_{\ell_1+\ell_2=\ell} \p_\theta \chi^{[\ell_1]} \, \p_t \psi^{\ell_2} \, & \, = \, 
\sum_{\substack{\ell_1+\ell_2=\ell \\ \ell_1,\ell_2 \ge 1}} \p_t \psi^{\ell_2} \, \sum_{\avbmu = \ell_1} 
\dfrac{1}{\bmu \, !} \, F_{\bmu} \, \p_\theta \Big( \bpsi^{\bmu} \Big) \\
& \, = \, \sum_{\substack{\ell_1+\ell_2=\ell \\ \ell_1,\ell_2 \ge 1}} \, \sum_{k \ge 1} \p_t \psi^{\ell_2} \, \p_\theta \psi^k \, 
\sum_{\substack{\avbmu = \ell_1 \\ \bmu \ge {\bf e}_k}} \dfrac{1}{(\bmu -{\bf e}_k) \, !} \, F_{\bmu} \, \bpsi^{\bmu -{\bf e}_k} \\
& \, = \, \sum_{m_1,m_2 \ge 1} \p_t \psi^{\, m_1} \, \p_\theta \psi^{\, m_2} \, \sum_{\langle \bnu \rangle = \ell-m_1-m_2}  
\dfrac{1}{\bnu \, !} \, F_{\bnu +{\bf e}_{m_2}} \, \bpsi^{\bnu} \, ,
\end{align*}
where we have used the fact that only tangential derivatives act here, so $\p_\theta F_{\bmu} =0$, and it is understood 
that if $m_1+m_2>\ell$, the very last sum is zero since it contains no term. We now apply Lemma \ref{lemB1}, which gives 
$F_{\bnu +{\bf e}_{m_2}} =F_{\bnu +{\bf e}_{m_1}}$ for any sequence $\bnu$ and any couple of integers $m_1,m_2$ 
(since $|\bnu +{\bf e}_{m_2}| =|\bnu +{\bf e}_{m_1}| =|\bnu|+1$). We can then `rewind' the previous calculations and 
get by an obvious change of indices:
\begin{equation*}
\sum_{\ell_1+\ell_2=\ell} \p_\theta \chi^{[\ell_1]} \, \p_t \psi^{\ell_2} \, = \, 
\sum_{m_1,m_2 \ge 1} \p_\theta \psi^{\, m_1} \, \p_t \psi^{\, m_2} \, \sum_{\langle \bnu \rangle = \ell-m_1-m_2}  
\dfrac{1}{\bnu \, !} \, F_{\bnu +{\bf e}_{m_2}} \, \bpsi^{\bnu} 
\, = \, \sum_{\ell_1+\ell_2=\ell} \p_t \chi^{[\ell_1]} \, \p_\theta \psi^{\ell_2} \, ,
\end{equation*}
which completes the proof of Corollary \ref{corB1}.
\end{proof}

\section{The second symmetry formula}

Lemma \ref{lemB1} has another consequence in the case where the normal derivative $\p_{y_3}$ acts on $\chi^{[\ell]}$. 
The result is the following.

\begin{corollary}[The second symmetry formula. I]
\label{corB2}
The functions $\chi^{[\ell]}$, $\dot{\chi}^{[\ell]}$ and $\psi^m$ satisfy the relation:
\begin{equation}
\label{formulesym21}
\forall \, \ell \ge 0 \, ,\quad \p_{y_3} \chi^{[\ell]} -\dot{\chi}^{[\ell]} \, = \, 
\sum_{\ell_1+\ell_2+\ell_3=\ell} \p_{y_3} \chi^{[\ell_1]} \, \dot{\chi}^{[\ell_2]} \, \psi^{\ell_3} \, .
\end{equation}
\end{corollary}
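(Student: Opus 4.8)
The plan is to avoid any direct combinatorial manipulation of the Fa\`a di Bruno decompositions \eqref{appB-equation6}, \eqref{appB-equation7} and instead to read off \eqref{formulesym21} from a single generating-function identity, obtained by differentiating the defining relation \eqref{defchi'} with respect to $y_3$. The crucial point is that the profile $\psi(\eps,t,y',\theta)$ does not depend on $y_3$, so differentiating \eqref{defchi'} at fixed $(\eps,t,y',\theta)$ gives
\[
1 \, = \, \partial_{y_3} x_3 \, - \, \chi'(x_3) \, \psi \, \partial_{y_3} x_3 \, = \, \big( 1 - \chi'(x_3) \, \psi \big) \, \partial_{y_3} x_3 \, ,
\]
hence $\partial_{y_3} x_3 = \big( 1 - \chi'(x_3) \, \psi \big)^{-1}$, which is nothing but the Jacobian already appearing in \eqref{dev-jacobien} and which makes sense as a formal power series in $\eps$ since the denominator is $1 + O(\eps)$. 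Applying the chain rule to $\chi(x_3)$ and then clearing the denominator yields the identity
\[
\partial_{y_3} \big[ \chi(x_3) \big] \, - \, \partial_{y_3} \big[ \chi(x_3) \big] \, \chi'(x_3) \, \psi \, = \, \chi'(x_3) \, .
\]

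The next step is to expand both sides of this identity as asymptotic series in $\eps$ and to identify the coefficient of $\eps^\ell$. By the very definition of the profiles $\chi^{[\ell]}$ and $\dot{\chi}^{[\ell]}$, one has $\chi(x_3) \sim \sum_{\ell \ge 0} \eps^\ell \, \chi^{[\ell]}$ and $\chi'(x_3) \sim \sum_{\ell \ge 0} \eps^\ell \, \dot{\chi}^{[\ell]}$; since all coefficients are $H^\infty$ functions, the expansion of $\chi(x_3)$ may be differentiated term by term in $y_3$, so that $\partial_{y_3}[\chi(x_3)] \sim \sum_{\ell \ge 0} \eps^\ell \, \partial_{y_3} \chi^{[\ell]}$, while $\psi \sim \sum_{m \ge 1} \eps^m \, \psi^m$ (consistent with the convention $\psi^0 \equiv 0$). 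Multiplying these three series and collecting powers of $\eps$ gives, for every $\ell \ge 0$,
\[
\partial_{y_3} \chi^{[\ell]} \, - \, \sum_{\ell_1+\ell_2+\ell_3=\ell} \partial_{y_3} \chi^{[\ell_1]} \, \dot{\chi}^{[\ell_2]} \, \psi^{\ell_3} \, = \, \dot{\chi}^{[\ell]} \, ,
\]
which is exactly \eqref{formulesym21} after rearrangement. The case $\ell = 0$ is covered as well: both sides vanish because $\psi^0 \equiv 0$ and $\partial_{y_3} \chi^{[0]} = \chi'(y_3) = \dot{\chi}^{[0]}$.

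There is essentially no hard step: the entire content is the one-line chain-rule computation of $\partial_{y_3} x_3$, which is forced by \eqref{defchi'}, followed by a routine identification of coefficients. The only point deserving a word of justification is that the asymptotic expansion of $\chi(x_3)$ may be differentiated termwise in $y_3$ and that the product of the three power series can be reorganized by powers of $\eps$; both are standard for asymptotic expansions with smooth coefficients and bounded remainders, exactly as used throughout Chapters \ref{chapter2} and \ref{chapter5}. I would also remark that this argument never uses $\psi^1 \equiv 0$, so the statement holds in the more general framework of this appendix, and that --- in contrast with the first symmetry formula (Corollary \ref{corB1}) --- it does not require the invariance property of Lemma \ref{lemB1}.
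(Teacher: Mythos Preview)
Your proof is correct and takes a genuinely different, more elementary route than the paper's.

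The paper proceeds by expanding $\chi^{[\ell]}$ and $\dot{\chi}^{[\ell]}$ through the decompositions \eqref{appB-equation6}, \eqref{appB-equation7}, invokes the invariance property of Lemma~\ref{lemB1} to reduce everything to the functions $\cF_L$, $\dot{\cF}_L$ indexed by integers, and then shows that \eqref{formulesym21} is equivalent to the recursive identity \eqref{corB2-eq4}, which is finally verified from the Fa\`a di Bruno type formulas \eqref{decompcFL''}, \eqref{decompcFpointL''}. Your argument bypasses all of this: you differentiate the defining relation \eqref{defchi'} once in $y_3$, clear the Jacobian denominator, and read off the coefficients of the resulting generating-function identity $\p_{y_3}[\chi(x_3)]\,(1-\chi'(x_3)\,\psi)=\chi'(x_3)$ termwise in $\eps$. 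This is both shorter and conceptually cleaner, and as you note it requires neither Lemma~\ref{lemB1} nor the hypothesis $\psi^1\equiv 0$. What the paper's approach buys is the intermediate identity \eqref{corB2-eq4} on the coefficients $\cF_L$, $\dot{\cF}_L$ themselves, which is later combined with the Lagrange inversion expressions \eqref{expressioncompacte}, \eqref{expressioncompacte'} to record the explicit formula \eqref{propB1-id''}; but that formula is a side remark and not used in the subsequent arguments, so nothing essential is lost by your shortcut.
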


\begin{proof}[Proof of Corollary \ref{corB2}]
Let us first look at the case $\ell=0$ in \eqref{formulesym21}. We have:
$$
\p_{y_3} \chi^{[0]} -\dot{\chi}^{[0]} \, = \, \chi' -\chi' \, = \, 0 \, ,
$$
and the right hand side in \eqref{formulesym21} vanishes since $\ell_3$ must be zero. We therefore assume $\ell \ge 1$ from now on, and use the 
decompositions \eqref{appB-equation6}, \eqref{appB-equation7} to compute:
\begin{equation*}
\p_{y_3} \chi^{[\ell]} -\dot{\chi}^{[\ell]} \, = \, \sum_{\avbmu = \ell} \, \dfrac{1}{\bmu \, !} \, \big( F_{\bmu}' -\dot{F}_{\bmu} \big) \, \bpsi^{\bmu} 
\, = \, \big( \cF_1' -\dot{\cF}_1 \big) \, \psi^\ell 
+\sum_{\substack{\avbmu = \ell \\ |\bmu| \ge 2}} \, \dfrac{1}{\bmu \, !} \, \big( \cF_{|\bmu|}' -\dot{\cF}_{|\bmu|} \big) \, \bpsi^{\bmu} \, ,
\end{equation*}
where we have kept the notation introduced in the proof of Lemma \ref{lemB1}, namely $F_{\bmu} =\cF_{|\bmu|}$, $\dot{F}_{\bmu} =\dot{\cF}_{|\bmu|}$. 
Observe that in the very last sum, $|\bmu| \ge 2$, hence $\bpsi^{\bmu}$ is at least a quadratic term in the $\psi^m$'s. In other words, we have isolated 
the linear term which will immediately cancel with part of the right hand side of \eqref{formulesym21}. Since $\cF_1 =\chi \, \chi'$ and $\dot{\cF}_1 
=\chi \, \chi''$, there holds $\cF_1' -\dot{\cF}_1=(\chi')^2$, and we have thus obtained:
\begin{equation}
\label{corB2-eq1}
\p_{y_3} \chi^{[\ell]} -\dot{\chi}^{[\ell]} \, = \, \big( \chi' \big)^2 \, \psi^\ell 
+\sum_{\substack{\avbmu = \ell \\ |\bmu| \ge 2}} \, \dfrac{1}{\bmu \, !} \, \big( \cF_{|\bmu|}' -\dot{\cF}_{|\bmu|} \big) \, \bpsi^{\bmu} \, .
\end{equation}
We now decompose the right hand side of \eqref{formulesym21} as:
\begin{align}
\sum_{\ell_1+\ell_2+\ell_3=\ell} \p_{y_3} \chi^{[\ell_1]} \, \dot{\chi}^{[\ell_2]} \, \psi^{\ell_3} \, 
& \, = \, \p_{y_3} \chi^{[0]} \, \dot{\chi}^{[0]} \, \psi^\ell 
+\sum_{\substack{\ell_1+\ell_2+\ell_3=\ell \\ \min (\ell_1,\ell_2) \ge 1}} \p_{y_3} \chi^{[\ell_1]} \, \dot{\chi}^{[\ell_2]} \, \psi^{\ell_3} \notag \\
& \, = \, \big( \chi' \big)^2 \, \psi^\ell 
+\sum_{\substack{\ell_1+\ell_2+\ell_3=\ell \\ \min (\ell_1,\ell_2) \ge 1}} \p_{y_3} \chi^{[\ell_1]} \, \dot{\chi}^{[\ell_2]} \, \psi^{\ell_3} \, .\label{corB2-eq2}
\end{align}
Comparing the right hand side of \eqref{corB2-eq1} with that of \eqref{corB2-eq2}, we see that the linear terms in the $\psi^m$'s cancel, 
which is a first step in the proof of \eqref{formulesym21}. It remains to see why all other quadratic, cubic and so on terms cancel. To prove 
this, we first need to express the sum on the right hand side of \eqref{corB2-eq2} as a function of the $\cF_\ell$'s and the $\dot{\cF}_\ell$'s. 
Using the decompositions \eqref{appB-equation6}, \eqref{appB-equation7} with he result of Lemma \ref{lemB1}, we have:
\begin{align}
\sum_{\substack{\ell_1+\ell_2+\ell_3=\ell \\ \min (\ell_1,\ell_2) \ge 1}} \p_{y_3} \chi^{[\ell_1]} \, \dot{\chi}^{[\ell_2]} \, \psi^{\ell_3} \, 
& \, = \, \sum_{\substack{\ell_1+\ell_2+\ell_3=\ell \\ \min (\ell_1,\ell_2) \ge 1}} \, 
\sum_{\substack{\avbmu = \ell_1 \\ \langle \bnu \rangle = \ell_2}} \, 
\dfrac{1}{\bmu \, !} \, \dfrac{1}{\bnu \, !} \, \cF_{|\bmu|}' \, \dot{\cF}_{|\bnu|} \, \bpsi^{\bmu +\bnu +{\bf e}_{\ell_3}} \notag \\
& \, = \, \sum_{\substack{\avbmu = \ell \\ |\bmu| \ge 2}} \, \left\{ \sum_{\substack{\bnu \le \bmu \\ |\bnu| = |\bmu|-1}} \, \dfrac{1}{\bnu \, !} \, 
\sum_{\btheta \le \bnu} \dfrac{\bnu \, !}{\btheta \, ! \, (\bnu -\btheta) \, !} \, \cF_{|\btheta|}' \, \dot{\cF}_{|\bnu|-|\btheta|} \right\} \, \bpsi^{\bmu} 
\, .\label{corB2-eq3}
\end{align}
In the interior sum on the right hand side of \eqref{corB2-eq3}, we collect the sequences $\btheta$ by increasing length, and use the so-called 
van der Monde convolution equalities \cite{Comtet} to derive:
\begin{equation}
\label{appB-calculaux}
\sum_{\btheta \le \bnu} \dfrac{\bnu \, !}{\btheta \, ! \, (\bnu -\btheta) \, !} \, \cF_{|\btheta|}' \, \dot{\cF}_{|\bnu|-|\btheta|} 
\, = \, \sum_{\nu=0}^{|\bnu|} \, \left\{ \sum_{\substack{\btheta \le \bnu \\ |\btheta| =\nu}} \dfrac{\bnu \, !}{\btheta \, ! \, (\bnu -\btheta) \, !} 
\right\} \, \cF_{\nu}' \, \dot{\cF}_{|\bnu|-\nu} 
\, = \, \sum_{\nu=0}^{|\bnu|} \, \binom{|\bnu|}{\nu} \, \cF_{\nu}' \, \dot{\cF}_{|\bnu|-\nu} \, .
\end{equation}
Plugging in the right hand side of \eqref{corB2-eq3}, we get:
\begin{align*}
\sum_{\substack{\ell_1+\ell_2+\ell_3=\ell \\ \min (\ell_1,\ell_2) \ge 1}} \p_{y_3} \chi^{[\ell_1]} \, \dot{\chi}^{[\ell_2]} \, \psi^{\ell_3} \, 
& \, = \, \sum_{\substack{\avbmu = \ell \\ |\bmu| \ge 2}} \, \left\{ \sum_{\substack{\bnu \le \bmu \\ |\bnu| = |\bmu|-1}} \, \dfrac{1}{\bnu \, !} \right\} 
\, \left\{ \sum_{\nu=0}^{|\bmu|-1} \, \binom{|\bmu|-1}{\nu} \, \cF_{\nu}' \, \dot{\cF}_{|\bmu|-1-\nu} \right\} \, \bpsi^{\bmu} \\
& \, = \, \sum_{\substack{\avbmu = \ell \\ |\bmu| \ge 2}} \, \dfrac{1}{\bmu \, !} \, \left\{ |\bmu| \, 
\sum_{\nu=0}^{|\bmu|-1} \, \binom{|\bmu|-1}{\nu} \, \cF_{\nu}' \, \dot{\cF}_{|\bmu|-1-\nu} \right\} \, \bpsi^{\bmu} \, .
\end{align*}
Substituting the latter expression in \eqref{corB2-eq2} and then subtracting \eqref{corB2-eq1} and \eqref{corB2-eq2}, we have obtained 
so far:
\begin{multline*}
\p_{y_3} \chi^{[\ell]} -\dot{\chi}^{[\ell]} -\sum_{\ell_1+\ell_2+\ell_3=\ell} \p_{y_3} \chi^{[\ell_1]} \, \dot{\chi}^{[\ell_2]} \, \psi^{\ell_3} \\
= \, \sum_{\substack{\avbmu = \ell \\ |\bmu| \ge 2}} \, \dfrac{1}{\bmu \, !} \, \left\{ \cF_{|\bmu|}' -\dot{\cF}_{|\bmu|} -|\bmu| \, 
\sum_{\nu=0}^{|\bmu|-1} \, \binom{|\bmu|-1}{\nu} \, \cF_{\nu}' \, \dot{\cF}_{|\bmu|-1-\nu} \right\} \, \bpsi^{\bmu} \, .
\end{multline*}

In order to prove the validity of \eqref{formulesym21}, we see that it is necessary and sufficient to prove the following relation between 
the functions $\cF_\ell$ and $\dot{\cF}_\ell$:
\begin{equation*}
\forall \, L \ge 2 \, ,\quad \cF_L' -\dot{\cF}_L -L \, \sum_{\nu=0}^{L-1} \, \binom{L-1}{\nu} \, \cF_{\nu}' \, \dot{\cF}_{L-1-\nu} \, = \, 0 \, .
\end{equation*}
which is equivalent to:
\begin{equation}
\label{corB2-eq4}
\forall \, L \ge 2 \, ,\quad \boxed{\dfrac{1}{L \, !} \, \cF_L' -\dfrac{1}{L \, !} \, \dot{\cF}_L -\sum_{\nu=0}^{L-1} \, \dfrac{1}{\nu \, !} \,\cF_{\nu}' \, 
\dfrac{1}{(L-1-\nu) \, !} \, \dot{\cF}_{L-1-\nu} \, = \, 0} \, .
\end{equation}
Let us observe that the relation \eqref{corB2-eq4} obviously holds for $L=0$ and $L=1$ (use $\cF_0=\chi$, $\cF_1=\chi \, \chi'$, 
$\dot{\cF}_0=\chi'$, $\dot{\cF}_1=\chi \, \chi''$), hence it will hold eventually for any $L \in \N$ and not only for $L \ge 2$. To prove 
the validity of \eqref{corB2-eq4}, we need to go back to the recursive relations \eqref{decompcFL}, \eqref{decompcFpointL}. We 
first need to make these expressions a little bit more explicit by counting how many partitions $(\cL_1,\dots,\cL_N) \in P(L,N)$ 
give rise to the same product $\prod_n \cX_{\# \cL_n}$. Let therefore $(\cL_1,\dots,\cL_N) \in P(L,N)$ and arrange the cardinals 
of the $\cL_n$'s as:
$$
\underbrace{c_1}_{N_1 \, \text{times}} < \cdots < \underbrace{c_J}_{N_J \, \text{times}} \, ,
$$
with $N_1+\cdots+N_J=N$ and $N_j \, c_j=L$. The group of permutations $\mathfrak{S}_L$ acts on the set of partitions $P(L,N)$ 
by acting (on the left hand side) on $\{ 1,\dots,L\}$ and therefore on its subsets. The orbit $\{ \sigma \cdot (\cL_1,\dots,\cL_N) \, / \, 
\sigma \in \mathfrak{S}_L \}$ consists of all partitions $(\cL'_1,\dots,\cL'_N)$ that share the same cardinals with $(\cL_1,\dots,\cL_N)$ 
(counting cardinals with multiplicity). Hence the number of partitions with same cardinals as $(\cL_1,\dots,\cL_N)$ equals (use the 
orbit-stabilizer theorem):
$$
\dfrac{L \, !}{N_1 \, ! \, \cdots N_J \, ! \, (c_1 \, !)^{N_1} \cdots (c_J \, !)^{N_J}} \, .
$$
Hence the recursive formulas \eqref{decompcFL} and \eqref{decompcFpointL} reduce to:
\begin{align}
\forall \, L \ge 1 \, ,\quad \dfrac{1}{L\, !} \, \cF_L \, & \, = \, \sum_{1 \, n_1 +\cdots +L \, n_L =L} \, 
\dfrac{1}{n_1 \, ! \cdots n_L \, !} \, \chi^{(n_1+\cdots+n_L)} \, \prod_{\nu=1}^L \, \left( \dfrac{\cF_{\nu-1}}{(\nu-1) \, !} \right)^{n_\nu} 
\, ,\label{decompcFL'} \\
\dfrac{1}{L\, !} \, \dot{\cF}_L \, & \, = \, \sum_{1 \, n_1 +\cdots +L \, n_L =L} \, 
\dfrac{1}{n_1 \, ! \cdots n_L \, !} \, \chi^{(1+n_1+\cdots+n_L)} \, \prod_{\nu=1}^L \, \left( \dfrac{\cF_{\nu-1}}{(\nu-1) \, !} \right)^{n_\nu} 
\, ,\label{decompcFpointL'}
\end{align}
where we have used the relation $\cX_\nu =\nu \, \cF_{\nu-1}$.

It is rather straightforward now to obtain \eqref{corB2-eq4}. This is mostly a matter of rewriting \eqref{decompcFL'} and 
\eqref{decompcFpointL'} in a convenient way. We introduce the notation $\bF_L := \cF_L / (L\, !)$ and $\dot{\bF}_L := 
\dot{\cF}_L / (L\, !)$ for all $L \ge 0$. Then \eqref{decompcFL'} and \eqref{decompcFpointL'} can be rewritten as:
\begin{align}
\forall \, L \ge 1 \, ,\quad \bF_L \, & \, = \, \sum_{\langle {\bf n} \rangle =L} \, \dfrac{1}{{\bf n} \, !} \, \chi^{(|{\bf n}|)} \, 
\Big( \bF_0,\bF_1,\cdots \Big)^{{\bf n}} \, ,\label{decompcFL''} \\
\dot{\bF}_L \, & \, = \, \sum_{\langle {\bf n} \rangle =L} \, \dfrac{1}{{\bf n} \, !} \, \chi^{(1+|{\bf n}|)} \, \Big( \bF_0,\bF_1,\cdots \Big)^{{\bf n}} 
\, ,\label{decompcFpointL''}
\end{align}
where ${\bf n} =(n_1,n_2,\dots)$ denotes a sequence of integers with finite length, and:
$$
\Big( \bF_0,\bF_1,\cdots \Big)^{{\bf n}} \, := \, \prod_{m \ge 1} \big( \bF_{m-1} \big)^{{\bf n}_m} \, ,
$$
the product involving finitely many terms. Let us observe that \eqref{decompcFL''}, \eqref{decompcFpointL''} also hold for $L=0$ 
with the convention $(\bF_0,\bF_1,\cdots)^{{\bf n}} =1$ if ${\bf n}$ is the zero sequence. We now rewrite the left hand side of 
\eqref{corB2-eq4} using these new functions $\bF_L$ and $\dot{\bF}_\ell$ and use the recursive formulas \eqref{decompcFL''}, 
\eqref{decompcFpointL''}:
\begin{align*}
\dfrac{\cF_L'}{L \, !} -\dfrac{\dot{\cF}_L}{L \, !} -\sum_{\nu=0}^{L-1} \, \dfrac{\cF_\nu'}{\nu \, !} \, \dfrac{\dot{\cF}_{L-1-\nu}}{(L-1-\nu) \, !} \, 
& \, = \, \bF_L' -\dot{\bF}_L -\sum_{\nu=0}^{L-1} \, \bF_\nu' \, \dot{\bF}_{L-1-\nu} \\
& \, = \,  \sum_{k \ge 0} \, \bF_k' \, \sum_{\substack{\langle {\bf n} \rangle =L \\ {\bf n} \ge {\bf e}_{k+1}}} \, 
\dfrac{1}{({\bf n}-{\bf e}_{k+1}) \, !} \, \chi^{(|{\bf n}|)} \, \Big( \bF_0,\bF_1,\cdots \Big)^{{\bf n}-{\bf e}_{k+1}} \\
& \qquad -\sum_{\nu=0}^{L-1} \, \bF_\nu' \, \sum_{\langle {\bf m} \rangle =L-1-\nu} \, 
\dfrac{1}{{\bf m} \, !} \, \chi^{(1+|{\bf m}|)} \, \Big( \bF_0,\bF_1,\cdots \Big)^{{\bf m}} \, = \, 0 \, .
\end{align*}
We have thus proved the validity of \eqref{corB2-eq4}, and this completes the proof of Corollary \ref{corB2}.
\end{proof}

\noindent The last result we need to prove the second symmetry formula (Lemma \ref{lem_symmetry_2}) is the following.

\begin{proposition}[The second symmetry formula. II]
\label{propB1}
The functions $\chi^{[\ell]}$, $\dot{\chi}^{[\ell]}$ and $\psi^m$ satisfy the relation:
\begin{equation}
\label{propB1-eq}
\forall \, \ell \ge 0 \, ,\quad \p_\theta \dot{\chi}^{[\ell]} \, = \, 
\sum_{\ell_1+\ell_2+\ell_3=\ell} \chi^{[\ell_1]} \, \p_{y_3} \dot{\chi}^{[\ell_2]} \, \p_\theta \psi^{\ell_3} \, ,
\end{equation}
and similar relations with any other tangential partial derivative chosen among $\{ \p_t,\p_{y_1},\p_{y_2} \}$. 
(We keep using the convention $\psi^0 =0$.)
\end{proposition}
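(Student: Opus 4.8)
\textbf{Proof strategy for Proposition \ref{propB1}.} The plan is to mimic exactly the structure of the proof of Corollary \ref{corB2}: reduce the claimed identity on the functions $\chi^{[\ell]},\dot{\chi}^{[\ell]}$ to a purely \emph{scalar} recursion on the invariant functions $\cF_L=\cF_L(y_3)$ and $\dot{\cF}_L=\dot{\cF}_L(y_3)$ introduced in \eqref{notation-XFrond}, and then verify that scalar recursion by a direct computation using the Fa\`a di Bruno type expansions \eqref{decompcFL''}, \eqref{decompcFpointL''} that were established in the previous proof. The key observation that makes everything work is Lemma \ref{lemB1}: the coefficient $\dot{F}_{\bmu}$ in the decomposition \eqref{appB-equation7} of $\dot{\chi}^{[\ell]}$ depends only on $|\bmu|$, so whenever a tangential derivative $\p_\theta$ hits $\bpsi^{\bmu}$ and produces a factor $\p_\theta\psi^k$ with $\bmu\ge{\bf e}_k$, we may replace $\dot{F}_{\bmu}$ by $\dot{\cF}_{|\bmu|}$ and, after relabelling, identify the result with a product of $\chi^{[\cdot]},\p_{y_3}\dot{\chi}^{[\cdot]}$ and $\p_\theta\psi^{\cdot}$ terms.

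First I would dispatch the trivial cases $\ell=0$ and $\ell=1$ (where $\psi^0=0$, $\chi^{[0]}=\chi(y_3)$, $\dot\chi^{[0]}=\chi'(y_3)$, so $\p_\theta\dot\chi^{[0]}=0$ and the right side is empty). For $\ell\ge 2$, I would plug the decomposition \eqref{appB-equation7} into the left side of \eqref{propB1-eq}, write $\p_\theta\bpsi^{\bmu}=\sum_k\p_\theta\psi^k\,\bpsi^{\bmu-{\bf e}_k}$, apply Lemma \ref{lemB1} to turn $\dot F_{\bmu}$ into $\dot\cF_{|\bmu|}$, and reorganize the resulting sum by the monomial $\bpsi^{\bnu}$ with a fixed extra index $m$ for the differentiated front. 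In parallel, I would expand the right side of \eqref{propB1-eq} using \eqref{appB-equation6}, \eqref{appB-equation7}, again invoking Lemma \ref{lemB1} to write $F_{\bmu}=\cF_{|\bmu|}$, $\dot F_{\bmu}=\dot\cF_{|\bmu|}$, and then collect terms of the same monomial $\bpsi^{\bmu}$. Using the van der Monde convolution identities \eqref{appB-calculaux} exactly as in the proof of Corollary \ref{corB2} (to handle the triple convolution over the three blocks $\ell_1,\ell_2,\ell_3$), the difference of the two sides will reduce to a family of scalar identities, one for each $L=|\bmu|\ge 2$, of the shape
\begin{equation*}
\dfrac{1}{L\,!}\,\p_\theta\text{-part of }\dot\cF_L \;-\;\sum_{\substack{L_1+L_2=L-1}}\binom{L-1}{L_1}\cF_{L_1}\,\dot{\cF}'_{L_2}\;=\;0\,,
\end{equation*}
which, after introducing $\bF_L:=\cF_L/L!$ and $\dot{\bF}_L:=\dot\cF_L/L!$ and using $\cX_\nu=\nu\,\cF_{\nu-1}$, becomes the clean recursion $\dot{\bF}_L'=\sum_{\nu=0}^{L-1}\bF_\nu\,\dot{\bF}'_{L-1-\nu}$ for all $L$ (and one checks the cases $L=0,1$ by hand so it holds for all $L\in\N$).

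To establish this last scalar recursion I would differentiate the Fa\`a di Bruno expansion \eqref{decompcFpointL''} of $\dot{\bF}_L$ with respect to $\theta$ — but note that the $\bF_k$'s and $\chi^{(j)}$ depend only on $y_3$, so actually the relevant derivative here is $\p_{y_3}$; the right bookkeeping is to differentiate \eqref{decompcFpointL''} using the product rule, peel off the term where the derivative lands on a single factor $\bF_k$ (which produces $\bF'_k$ times the expansion of $\dot{\bF}_{L-1-k}$ via \eqref{decompcFpointL''} again), and observe that the sum over which factor receives the derivative reproduces exactly the convolution $\sum_{\nu}\bF_\nu\,\dot{\bF}'_{L-1-\nu}$. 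This is the same generating-function manipulation that closed the proof of Corollary \ref{corB2}, just with the roles of which composite function is being differentiated interchanged. The main obstacle, and the place where care is required, is purely combinatorial: getting the three-fold convolution over $(\ell_1,\ell_2,\ell_3)$ on the right side of \eqref{propB1-eq} to collapse correctly against the two-index van der Monde identities, and keeping track of the factorial normalizations $\bmu!$, ${\bf n}!$ so that the binomial coefficients match. Once the scalar identity $\dot{\bF}_L'=\sum_{\nu}\bF_\nu\,\dot{\bF}'_{L-1-\nu}$ is verified via \eqref{decompcFL''}--\eqref{decompcFpointL''}, the identity \eqref{propB1-eq} follows by rewinding the reduction, and the analogous statements for $\p_t,\p_{y_1},\p_{y_2}$ in place of $\p_\theta$ are obtained verbatim since the argument never used any special property of $\p_\theta$ beyond its being a tangential derivative acting trivially on the $y_3$-dependent coefficients.
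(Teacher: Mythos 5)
Your reduction of \eqref{propB1-eq} to a scalar identity on the invariant functions $\cF_L,\dot{\cF}_L$ is the right first move and matches the paper's strategy, but there are two problems: a computational slip in the reduced identity, and — more seriously — a genuine gap in the proposed method to prove it.

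First, the slip. After expanding both sides via \eqref{appB-equation6}--\eqref{appB-equation7}, applying Lemma~\ref{lemB1}, and using the van der Monde convolution \eqref{appB-calculaux}, the coefficient of $\p_\theta\psi^k\,\bpsi^{\bnu}$ on the left of \eqref{propB1-eq} is $\dfrac{1}{\bnu!}\,\dot{\cF}_{|\bnu|+1}$, \emph{not} a $y_3$-derivative of $\dot{\cF}_{|\bnu|}$. The scalar identity you must prove is therefore
\begin{equation*}
\forall\,L\ge 1\,,\quad \dot{\cF}_{L+1}\,=\,\sum_{\nu=0}^L\binom{L}{\nu}\,\dot{\cF}'_\nu\,\cF_{L-\nu}\,,
\qquad\text{equivalently}\qquad
(L+1)\,\dot{\bF}_{L+1}\,=\,\sum_{\nu=0}^L\bF_\nu\,\dot{\bF}'_{L-\nu}\,,
\end{equation*}
which is not your claimed $\dot{\bF}_L'=\sum_{\nu}\bF_\nu\,\dot{\bF}'_{L-1-\nu}$: the left-hand side carries an index shift and the factor $L+1$, not a $y_3$-derivative. (Check $L=1$: $\dot{\cF}_2=\chi^2\chi'''/2+2\chi\chi'\chi''+(\chi')^2\chi''\cdot 0$… actually $2\dot{\bF}_2 = \dot{\bF}_0'\bF_1+\dot{\bF}_1'\bF_0$, which holds; but $\dot{\bF}_1'=\bF_0\dot{\bF}_0'$ would read $\chi'\chi''+\chi\chi'''=\chi\chi''$, false.)

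Second, and this is the real obstruction: the method you propose — "differentiate \eqref{decompcFpointL''}, peel off the factor that receives the derivative, observe it reproduces the convolution" — is precisely the approach the paper \emph{tries and abandons}. This is not the "same generating-function manipulation that closed the proof of Corollary~\ref{corB2}". In \eqref{corB2-eq4} the left-hand side is $\bF_L'-\dot{\bF}_L$, and when you differentiate the Fa\`a di Bruno expansion \eqref{decompcFL''} of $\bF_L$, the term where the derivative hits the outer $\chi^{(|\mathbf{n}|)}$ produces exactly $\dot{\bF}_L$ (that is the whole point of the definition $\dot{\bF}_L$), so it cancels, and what remains is the convolution $\sum\bF'_\nu\dot{\bF}_{L-1-\nu}$. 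By contrast, $(L+1)\dot{\bF}_{L+1}=\sum_{\nu=0}^L\bF_\nu\dot{\bF}'_{L-\nu}$ has an index shift $L\mapsto L+1$ and a multiplicative factor $L+1$ on the left, neither of which is produced by a single application of the product rule to \eqref{decompcFpointL''}; differentiating $\dot{\bF}_L$ gives an $\ddot{\bF}_L$-type term (second derivative of $\chi$) plus a convolution with $\bF'$, which does not telescope against $(L+1)\dot{\bF}_{L+1}$. The paper needed a genuinely new ingredient to close this gap: the Lagrange inversion formula (Lemma~\ref{lemLagrange}), which yields the closed forms $\cF_L=\{\chi'\,\chi^L\}^{(L-1)}$ and $\dot{\cF}_L=\{\chi''\,\chi^L\}^{(L-1)}$, reducing the scalar recursion to the combinatorial identity \eqref{propB1-id'}. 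That identity is then proved by Fourier transform (to turn derivatives of products of $\chi$ into convolutions of $\widehat\chi$), symmetrization over the Fourier variables, and a polynomial identity $Q_\sharp\equiv 0$ established by an induction that steps $L\mapsto L+2$. Your proposal has none of these ingredients, and the "straightforward" part of it is the part that does not go through.
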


\begin{proof}[Proof of Proposition \ref{propB1}]
Let us first observe that \eqref{propB1-eq} is clearly satisfied for $\ell=0$ since $\p_\theta \dot{\chi}^{[0]}=0$ and $\psi^0 =0$, and 
\eqref{propB1-eq} also holds for $\ell=1$ since we can compute:
\begin{align*}
\p_\theta \dot{\chi}^{[1]} \, & \, = \, \dot{\cF}_1 \, \p_\theta \psi^1 \, = \, \chi \, \chi'' \, \p_\theta \psi^1 \, ,\\
\sum_{\ell_1+\ell_2+\ell_3=1} \chi^{[\ell_1]} \, \p_{y_3} \dot{\chi}^{[\ell_2]} \, \p_\theta \psi^{\ell_3} \, 
& \, = \, \chi^{[0]} \, \p_{y_3} \dot{\chi}^{[0]} \, \p_\theta \psi^1 \, = \, \chi \, \chi'' \, \p_\theta \psi^1 \, .
\end{align*}
The equality between the left and right hand sides in \eqref{propB1-eq} for $\ell=1$ follows here from the relation $\dot{\cF}_1 
=\cF_0 \, \dot{\cF}_0'$, a generalization of which will yield \eqref{propB1-eq} for any $\ell \ge 2$.

Let us now assume $\ell \ge 2$, and simplify the right hand side of \eqref{propB1-eq}:
\begin{align*}
\sum_{\ell_1+\ell_2+\ell_3=\ell} \chi^{[\ell_1]} \, \p_{y_3} \dot{\chi}^{[\ell_2]} \, \p_\theta \psi^{\ell_3} \, 
& \, = \, \chi \, \chi'' \, \p_\theta \psi^\ell +\sum_{\substack{\ell_1+\ell_2+\ell_3=\ell \\ \ell_1+\ell_2 \ge 1}} \, 
\chi^{[\ell_1]} \, \p_{y_3} \dot{\chi}^{[\ell_2]} \, \p_\theta \psi^{\ell_3} \\
& \, = \, \chi \, \chi'' \, \p_\theta \psi^\ell +\sum_{\substack{\ell_1+\ell_2+\ell_3=\ell \\ \ell_1+\ell_2 \ge 1}} \, 
\p_\theta \psi^{\ell_3} \, \sum_{\substack{\avbmu =\ell_1 \\ \langle \bnu \rangle =\ell_2}} \, \dfrac{1}{\bmu \, !} \, \dfrac{1}{\bnu \, !} \, 
F_{\bmu} \, \dot{F}_{\bnu}' \, \bpsi^{\bmu+\bnu} \\
& \, = \, \chi \, \chi'' \, \p_\theta \psi^\ell +\sum_{k \ge 1} \, \p_\theta \psi^k \, 
\sum_{\substack{ \langle \bmu+\bnu \rangle =\ell -k \\ |\bmu+\bnu| \ge 1}} \, \dfrac{1}{\bmu \, !} \, \dfrac{1}{\bnu \, !} \, 
F_{\bmu} \, \dot{F}_{\bnu}' \, \bpsi^{\bmu+\bnu} \\
& \, = \, \chi \, \chi'' \, \p_\theta \psi^\ell +\sum_{k \ge 1} \, \p_\theta \psi^k \, \sum_{\substack{ \langle \bnu \rangle =\ell -k \\ |\bnu| \ge 1}} \, 
\dfrac{1}{\bnu \, !} \, \left( \sum_{\nu=0}^{|\bnu|} \, \binom{|\bnu|}{\nu} \, \cF_{\nu} \, \dot{\cF}_{|\bnu|-\nu}' \right) \, \bpsi^{\bnu} \, ,
\end{align*}
where we have again used the calculation \eqref{appB-calculaux} to obtain the final expression.

We use the decomposition \eqref{appB-equation7} to get a similar expression of the left hand side of \eqref{propB1-eq}:
\begin{align*}
\p_\theta \dot{\chi}^{[\ell]} \, & \, = \, \sum_{\avbmu =\ell} \, \dfrac{1}{\bmu \, !} \, \dot{F}_{\bmu} \, \p_\theta \Big( \bpsi^{\bmu} \Big) \, = \, 
\dot{\cF}_1 \, \p_\theta \psi^\ell +\sum_{k \ge 1} \, \p_\theta \psi^k \, \sum_{\substack{ \avbmu =\ell \\ \bmu \ge {\bf e}_k \, , \, |\bmu| \ge 2}} 
\, \dfrac{1}{(\bmu -{\bf e}_k) \, !} \, \dot{F}_{\bmu} \, \bpsi^{\bmu-{\bf e}_k} \\
& \, = \, \chi \, \chi'' \, \p_\theta \psi^\ell +\sum_{k \ge 1} \, \p_\theta \psi^k \, \sum_{\substack{ \langle \bnu \rangle =\ell -k \\ |\bnu| \ge 1}} \, 
\dfrac{1}{\bnu \, !} \, \dot{\cF}_{|\bnu|+1} \, \bpsi^{\bnu} \, ,
\end{align*}
and we thus get:
$$
\p_\theta \dot{\chi}^{[\ell]} -\sum_{\ell_1+\ell_2+\ell_3=\ell} \chi^{[\ell_1]} \, \p_{y_3} \dot{\chi}^{[\ell_2]} \, \p_\theta \psi^{\ell_3} \, = \, 
\sum_{k \ge 1} \, \p_\theta \psi^k \, \sum_{\substack{ \langle \bnu \rangle =\ell -k \\ |\bnu| \ge 1}} \, \dfrac{1}{\bnu \, !} \, \left( 
\dot{\cF}_{|\bnu|+1} -\sum_{\nu=0}^{|\bnu|} \, \binom{|\bnu|}{\nu} \, \cF_{\nu} \, \dot{\cF}_{|\bnu|-\nu}' \right) \, \bpsi^{\bnu} \, .
$$
The result of Proposition \ref{propB1} will therefore be valid if we can prove the identity:
\begin{equation}
\label{propB1-id}
\forall \, L \ge 1 \, ,\quad \dot{\cF}_{L+1} \, = \, \sum_{\ell=0}^L \, \binom{L}{\ell} \, \dot{\cF}_\ell' \, \cF_{L-\ell} \,  \, .
\end{equation}
The problem at this stage is that the latter formula does not seem to follow (at least, not in an obvious way and despite 
repeated efforts...) from the recursive formulas \eqref{decompcFL''}, \eqref{decompcFpointL''}. We thus need to find another 
representation for the functions $\cF_\ell$ and $\dot{\cF}_\ell$ in order to derive \eqref{propB1-id}. Hopefully the solution is 
provided by the so-called Lagrange inversion formula \cite{Henrici,Comtet}, or rather one of its direct consequences which 
we recall right now.

\begin{lemma}[\cite{Comtet}, Theorem C, p.150]
\label{lemLagrange}
Let $F$ be a formal series and let $y$ denote the formal series in $x$ such that:
$$
y \, = \, y_0 +x \, F(y) \, ,
$$
with $y_0$ a constant term. Then for any formal series $G$, there holds:
$$
G(y) \, = \, G(y_0) +\sum_{n \ge 1} \, \dfrac{x^n}{n \, !} \, \dfrac{{\rm d}^{n-1}}{{\rm d}y_0^{n-1}} \Big\{ G'(y_0) \, F^n (y_0) \Big\} \, .
$$
\end{lemma}

\noindent Let us apply Lemma \ref{lemLagrange} to \eqref{defchi'}. We first invert the equation:
$$
y_3 \, = \, x_3 -\psi \, \chi (x_3) \, ,
$$
and obtain thanks to Lemma \ref{lemLagrange} (in the sense of formal series in $\psi$):
\begin{align*}
\chi (x_3) \, & \, = \, \chi (y_3) +\sum_{n \ge 1} \, \dfrac{\psi^n}{n \, !} \, \dfrac{{\rm d}^{n-1}}{{\rm d}y_3^{n-1}} \Big\{ \chi'(y_3) \, \chi^n (y_3) \Big\} \, ,\\
\chi' (x_3) \, & \, = \, \chi' (y_3) +\sum_{n \ge 1} \, \dfrac{\psi^n}{n \, !} \, \dfrac{{\rm d}^{n-1}}{{\rm d}y_3^{n-1}} \Big\{ \chi''(y_3) \, \chi^n (y_3) \Big\} \, .
\end{align*}
It then remains to substitute
$$
\psi \, = \, \sum_{m \ge 1} \, \eps^m \, \psi^m \, ,
$$
and to recollect the previous expressions as formal series in $\eps$. From the definition of the functions $\chi^{[m]}$, $\dot{\chi}^{[m]}$, we get:
\begin{align*}
\forall \, m \ge 1 \, ,\quad & \, \chi^{[m]} \, = \, 
\sum_{n=1}^m \, \dfrac{1}{n \, !} \, \dfrac{{\rm d}^{n-1}}{{\rm d}y_3^{n-1}} \Big\{ \chi'(y_3) \, \chi^n (y_3) \Big\} 
\, \sum_{\ell_1+\cdots +\ell_n =m} \psi^{\ell_1} \cdots \psi^{\ell_n} \, , \\
& \, \dot{\chi}^{[m]} \, = \, \sum_{n=1}^m \, \dfrac{1}{n \, !} \, \dfrac{{\rm d}^{n-1}}{{\rm d}y_3^{n-1}} \Big\{ \chi''(y_3) \, \chi^n (y_3) \Big\} 
\, \sum_{\ell_1+\cdots +\ell_n =m} \psi^{\ell_1} \cdots \psi^{\ell_n} \, ,
\end{align*}
where the sums run over all $n$-tuples $(\ell_1,\dots,\ell_n) \in \N^n$ such that $\min_i \ell_i \ge 1$ and $ \ell_1+\cdots +\ell_n =m$ (and 
the families are \emph{ordered}, meaning for instance that both couples $(1,2)$ and $(2,1)$ are taken into account for $m=3$, $n=2$). 
Taking multiplicities into account, we end up with the decompositions\footnote{In particular, we can directly check on the expressions 
\eqref{formule_chim}, \eqref{formule_chipointm} that the functions $\chi^{[m]}$ and $\dot{\chi}^{[m]}$ vanish on $\Gamma_0$ and 
$\Gamma^\pm$ for any $m \ge 1$.}:
\begin{align}
\forall \, m \ge 1 \, ,\quad & \, \boxed{\chi^{[m]} \, = \, 
\sum_{n=1}^m \, \dfrac{{\rm d}^{n-1}}{{\rm d}y_3^{n-1}} \Big\{ \chi'(y_3) \, \chi^n (y_3) \Big\} \, 
\sum_{\substack{\avbmu =m \\ |\bmu| =n}} \dfrac{1}{\bmu \, !} \, \bpsi^{\bmu}} \, ,\label{formule_chim} \\
& \, \boxed{\dot{\chi}^{[m]} \, = \, \sum_{n=1}^m \, \dfrac{{\rm d}^{n-1}}{{\rm d}y_3^{n-1}} \Big\{ \chi''(y_3) \, \chi^n (y_3) \Big\} \, 
\sum_{\substack{\avbmu =m \\ |\bmu| =n}} \dfrac{1}{\bmu \, !} \, \bpsi^{\bmu}} \, .\label{formule_chipointm}
\end{align}
Comparing with the decompositions \eqref{appB-equation6}, \eqref{appB-equation7}, we have thus derived the expressions:
\begin{equation*}
\forall \, \bmu \, ,\quad F_{\bmu} (y_3) \, = \, \dfrac{{\rm d}^{|\bmu|-1}}{{\rm d}y_3^{|\bmu|-1}} \Big\{ \chi'(y_3) \, \chi^{|\bmu|} (y_3) \Big\} \, ,\quad 
\dot{F}_{\bmu} (y_3) \, = \, \dfrac{{\rm d}^{|\bmu|-1}}{{\rm d}y_3^{|\bmu|-1}} \Big\{ \chi''(y_3) \, \chi^{|\bmu|} (y_3) \Big\} \, ,
\end{equation*}
from where the invariance result of Lemma \ref{lemB1} looks incredibly simple ! With the notation \eqref{notation-XFrond}, we get:
\begin{align}
\forall \, L \ge 1 \, ,\quad \cF_L \, & \, = \, \dfrac{{\rm d}^{L-1}}{{\rm d}y_3^{L-1}} \Big\{ \chi'(y_3) \, \chi^L (y_3) \Big\} 
\, = \, \dfrac{1}{L+1} \, \dfrac{{\rm d}^L}{{\rm d}y_3^L} \Big\{ \chi^{L+1} (y_3) \Big\} \, ,\label{expressioncompacte} \\
\dot{\cF}_L \, & \, = \, \dfrac{{\rm d}^{L-1}}{{\rm d}y_3^{L-1}} \Big\{ \chi''(y_3) \, \chi^L (y_3) \Big\} \, .\label{expressioncompacte'}
\end{align}
Let us observe that the expression \eqref{expressioncompacte} also holds for $L=0$ since we have $\cF_0 = \chi$. Using 
$\dot{\cF}_0 = \chi'$, we also deduce from \eqref{expressioncompacte'} the relation:
$$
\forall \, L \ge 0 \, ,\quad \dot{\cF}_L' \, = \, \big\{ \chi'' \, \chi^L \big\}^{(L)} \, .
$$
Therefore, proving the validity of \eqref{propB1-id} amounts to showing the formula:
\begin{equation}
\label{propB1-id'}
\forall \, L \ge 1 \, ,\quad \boxed{(L+1) \, \big\{ \chi'' \, \chi^{L+1} \big\}^{(L)} \, = \, \sum_{\ell=0}^L \, \binom{L+1}{\ell} \, 
\big\{ \chi'' \, \chi^\ell \Big\}^{(\ell)} \, \big\{ \chi^{L+1-\ell} \big\}^{(L-\ell)}} \, ,
\end{equation}
which looks like some (weird) kind of Leibniz formula. Let us recall that in \eqref{propB1-id'}, $\chi$ is a $\cC^\infty$ function on $\R$ 
with compact support. We have not found a (short) combinatorial proof of \eqref{propB1-id'} and we therefore propose an alternative 
method which consists in proving \eqref{propB1-id'} on the Fourier side (just like the Leibniz formula can also be obtained by using 
the binomial identity after performing a Fourier transform). For a given (fixed) integer $L \ge 1$, we define the function:
$$
\Theta \, := \, (L+1) \, \big\{ \chi'' \, \chi^{L+1} \big\}^{(L)} \, - \, \sum_{\ell=0}^L \, \binom{L+1}{\ell} \, 
\big\{ \chi'' \, \chi^\ell \Big\}^{(\ell)} \, \big\{ \chi^{L+1-\ell} \big\}^{(L-\ell)} \, ,
$$
and compute its Fourier transform. We get:
\begin{multline}
\label{propB1-fourier1}
\widehat{\Theta} (\xi_0) \, = \, (2\, \pi)^{L+1} \, i^{L+2} \, \int_{\R^{L+1}} (\xi_0 -\xi_1)^2 \, \left( (L+1) \, \xi_0^L \, - \, \sum_{\ell=0}^L \, 
\binom{L+1}{\ell} \, (\xi_0 -\xi_{\ell+1})^\ell \, \xi_{\ell+1}^{L-\ell} \right) \times \\
\times \widehat{\chi} (\xi_0-\xi_1) \, \widehat{\chi} (\xi_1-\xi_2) \, \cdots \, \widehat{\chi} (\xi_L-\xi_{L+1}) \, \widehat{\chi} (\xi_{L+1}) \, 
{\rm d}\xi_1 \cdots {\rm d}\xi_{L+1} \, . 
\end{multline}
For future use, we define the polynomial:
\begin{equation}
\label{propB1defQ}
Q(\eta_0,\eta_1,\dots,\eta_{L+1}) \, := \, (L+1) \, \big( \eta_0 +\cdots +\eta_{L+1} \big)^L \, - \, \sum_{\ell=0}^L \, \binom{L+1}{\ell} \, 
\big( \eta_0 +\cdots +\eta_\ell \big)^\ell \, \big( \eta_{\ell+1} +\cdots +\eta_{L+1} \big)^{L-\ell} \, ,
\end{equation}
so \eqref{propB1-fourier1} reads:
\begin{multline}
\label{propB1-fourier2}
\widehat{\Theta} (\xi_0) \, = \, (2\, \pi)^{L+1} \, i^{L+2} \, 
\int_{\R^{L+1}} (\xi_0 -\xi_1)^2 \, Q(\xi_0-\xi_1, \xi_1-\xi_2, \dots, \xi_L-\xi_{L+1}, \xi_{L+1}) \, \times \\
\times \, \widehat{\chi} (\xi_0-\xi_1) \, \widehat{\chi} (\xi_1-\xi_2) \, \cdots \, \widehat{\chi} (\xi_L-\xi_{L+1}) \, \widehat{\chi} (\xi_{L+1}) \, 
{\rm d}\xi_1 \cdots {\rm d}\xi_{L+1} \, .
\end{multline}
The important observation now is that affine changes of variables with respect to $(\xi_2,\dots,\xi_{L+1})$ allow to symmetrize 
$Q$ with respect to its last $L+1$ arguments, while leaving the frequencies $(\xi_0,\xi_1)$, hence $\xi_0-\xi_1$, unchanged. For 
instance, the change of variables:
$$
(\xi_1,\dots,\xi_{L+1}) \, \longmapsto \, (\xi_1,\xi_1-\xi_2+\xi_3,\xi_3,\dots,\xi_{L+1}) \, ,
$$
leaves the product
$$
(\xi_0 -\xi_1)^2 \, 
\widehat{\chi} (\xi_0-\xi_1) \, \widehat{\chi} (\xi_1-\xi_2) \, \cdots \, \widehat{\chi} (\xi_L-\xi_{L+1}) \, \widehat{\chi} (\xi_{L+1}) \, ,
$$
in \eqref{propB1-fourier2} unchanged but modifies the factor $Q(\xi_0-\xi_1, \dots, \xi_L-\xi_{L+1}, \xi_{L+1})$ into\footnote{Observe 
the permutation highlighted in blue.}
$$
Q (\xi_0-\xi_1, {\color{blue} \xi_2-\xi_3}, {\color{blue} \xi_1-\xi_2}, \xi_3 -\xi_4, \dots, \xi_L -\xi_{L+1}, \xi_{L+1}) \, .
$$
Other changes of variables (all with jacobian one in absolute value) allow for any transposition, and consequently any permutation, with 
respect to the last $L+1$ arguments of $Q$ in \eqref{propB1-fourier2}. In other words, we can rewrite \eqref{propB1-fourier2} as:
\begin{multline}
\label{propB1-fourier3}
\widehat{\Theta} (\xi_0) \, = \, (2\, \pi)^{L+1} \, i^{L+2} \, 
\int_{\R^{L+1}} (\xi_0 -\xi_1)^2 \, Q_\sharp(\xi_0-\xi_1, \xi_1-\xi_2, \dots, \xi_L-\xi_{L+1}, \xi_{L+1}) \, \times \\
\times \, \widehat{\chi} (\xi_0-\xi_1) \, \widehat{\chi} (\xi_1-\xi_2) \, \cdots \, \widehat{\chi} (\xi_L-\xi_{L+1}) \, \widehat{\chi} (\xi_{L+1}) \, 
{\rm d}\xi_1 \cdots {\rm d}\xi_{L+1} \, ,
\end{multline}
where the polynomial $Q_\sharp$ in \eqref{propB1-fourier3} is obtained from the polynomial $Q$ in \eqref{propB1defQ} by averaging 
with respect to the last $L+1$ arguments on the group of permutations $\mathfrak{S}_{L+1}$:
$$
Q_\sharp(\eta_0,\eta_1,\dots,\eta_{L+1}) \, := \, \dfrac{1}{(L+1) \, !} \, \sum_{\sigma \in \mathfrak{S}_{L+1}} \, 
Q(\eta_0,\eta_{\sigma(1)},\dots,\eta_{\sigma(L+1)}) \, .
$$
Starting from \eqref{propB1defQ}, a little bit of combinatorial analysis yields:
$$
Q_\sharp(\eta_0,\eta_1,\dots,\eta_{L+1}) \, = \, (L+1) \, \big( \eta_0 +\cdots +\eta_{L+1} \big)^L \, - \, 
\sum_{\substack{E \subset \{ 1,\dots,L+1 \} \\ 0 \le \sharp E \le L}} \Big( \eta_0 +\eta(E) \Big)^{\sharp E} \, 
\Big( \eta(E^c) \Big)^{\sharp E^c -1} \, ,
$$
where we have used the notation:
$$
\eta (E) \, := \, \sum_{j \in E} \eta_j \, ,
$$
for any subset $E$ of $\{ 1,\dots,L+1 \}$, and $E^c$ stands for the complementary set of $E$ in $\{ 1,\dots,L+1 \}$. We can rewrite 
the expression of $Q_\sharp$ into the even more symmetric form\footnote{To prove that the right hand side of \eqref{propB1exprQ} 
and the previous expression of $Q_\sharp$ coincide, start from \eqref{propB1exprQ} and divide the subsets $F$ of $\{ 0,\dots,L+1 \}$ 
into those that contain $0$ and those that do not. For the latter, parametrize the sum by $F^c$ rather than by $F$.}:
\begin{equation}
\label{propB1exprQ}
Q_\sharp(\eta_0,\eta_1,\dots,\eta_{L+1}) \, = \, (L+1) \, \big( \eta_0 +\cdots +\eta_{L+1} \big)^L \, - \, \dfrac{1}{2} \, 
\sum_{\substack{F \subset \{ 0,\dots,L+1 \} \\ 1 \le \sharp F \le L+1}} \Big( \eta(F) \Big)^{\sharp F-1} \, 
\Big( \eta(F^c) \Big)^{\sharp F^c -1} \, ,
\end{equation}
from where it follows that $Q_\sharp$ is a homogeneous degree $L$, symmetric expression of all its arguments. It remains to deduce 
from the formula \eqref{propB1exprQ} that $Q_\sharp$ vanishes, which will imply in the relation \eqref{propB1-fourier3} that $\Theta$ 
vanishes (and this will eventually prove the validity of \eqref{propB1-id'} and complete the proof of Proposition \ref{propB1} !).

We now show that the polynomial $Q_\sharp$, whose expression is given by \eqref{propB1exprQ}, is zero. This is proved by an 
induction argument that passes from $L$ to $L+2$, so we first examine the cases $L=1$ and $L=2$. For $L=1$, we have:
\begin{align*}
Q_\sharp(\eta_0,\eta_1,\eta_2) \, = \, 2 \, (\eta_0+\eta_1+\eta_2) \, 
&- \, \dfrac{1}{2} \, \Big( (\eta_1+\eta_2) +(\eta_0+\eta_2) +(\eta_0+\eta_1) \Big) \\
&- \, \dfrac{1}{2} \, \Big( (\eta_1+\eta_2) +(\eta_0+\eta_2) +(\eta_0+\eta_1) \Big) \, = \, 0 \, ,
\end{align*}
and for $L=2$, we have\footnote{To prove that the expression equals zero, just compute the $\eta_0^2$ and $\eta_0 \, \eta_1$ terms 
and use the symmetry with respect to all arguments.}:
\begin{align*}
Q_\sharp(\eta_0,\dots,\eta_3) \, = \, & \, 3 \, (\eta_0+\eta_1+\eta_2+\eta_3)^2 \\
& \, -(\eta_1+\eta_2+\eta_3)^2 -(\eta_0+\eta_2+\eta_3)^2 -(\eta_0+\eta_1+\eta_3)^2 -(\eta_0+\eta_1+\eta_2)^2 \\
& \, -(\eta_0+\eta_1) \, (\eta_2+\eta_3) -(\eta_0+\eta_2) \, (\eta_1+\eta_3) -(\eta_0+\eta_3) \, (\eta_1+\eta_2) \, = \, 0 \, .
\end{align*}
Let us therefore assume that up to some integer $L$, there holds:
\begin{equation}
\label{propB1-recurrence}
(L+1) \, \big( \eta_0 +\cdots +\eta_{L+1} \big)^L \, = \, \dfrac{1}{2} \, \sum_{\substack{F \subset \{ 0,\dots,L+1 \} \\ 1 \le \sharp F \le L+1}} 
\Big( \eta(F) \Big)^{\sharp F-1} \, \Big( \eta(F^c) \Big)^{\sharp F^c -1} \, ,
\end{equation}
and we are now going to try that the same property holds with $L+2$ instead of $L$. In other words, we wish to prove the formula:
\begin{equation}
\label{propB1-polynome}
(L+3) \, \big( \eta_0 +\cdots +\eta_{L+3} \big)^{L+2} \, - \, \dfrac{1}{2} \, \sum_{\substack{F \subset \{ 0,\dots,L+3 \} \\ 1 \le \sharp F \le L+3}} 
\Big( \eta(F) \Big)^{\sharp F-1} \, \Big( \eta(F^c) \Big)^{\sharp F^c -1} \, = \, 0 \, .
\end{equation}
The expression on the left hand side of \eqref{propB1-polynome} is a symmetric polynomial function of $(\eta_0,\dots,\eta_{L+3})$, hence 
can be represented as a polynomial function of the elementary symmetric expressions of $(\eta_0,\dots,\eta_{L+3})$, see \cite{Lang}. 
Since furthermore the left hand side of \eqref{propB1-polynome} is homogeneous degree $L+2$ while having $L+4$ arguments, we 
can write\footnote{Observe that only the elementary symmetric functions up to $L+2$ come into play and not the two last ones, namely 
$e_{L+3}$ and $e_{L+4}$.}:
\begin{multline*}
(L+3) \, \big( \eta_0 +\cdots +\eta_{L+3} \big)^{L+2} \, - \, \dfrac{1}{2} \, \sum_{\substack{F \subset \{ 0,\dots,L+3 \} \\ 1 \le \sharp F \le L+3}} 
\Big( \eta(F) \Big)^{\sharp F-1} \, \Big( \eta(F^c) \Big)^{\sharp F^c -1} \\
= \, \cQ \Big( e_1(\eta_0,\dots,\eta_{L+3}),\dots,e_{L+2}(\eta_0,\dots,\eta_{L+3}) \Big) \, ,
\end{multline*}
for some suitable polynomial $\cQ$. Introducing now some complex numbers $\widetilde{\eta}_0,\dots,\widetilde{\eta}_{L+1}$ such that:
$$
e_1(\widetilde{\eta}_0,\dots,\widetilde{\eta}_{L+1}) \, = \, e_1(\eta_0,\dots,\eta_{L+3}) \, ,\quad \dots \quad 
e_{L+2}(\widetilde{\eta}_0,\dots,\widetilde{\eta}_{L+1}) \, = \, e_{L+2}(\eta_0,\dots,\eta_{L+3}) \, ,
$$
we get\footnote{Here we use the fact that $e_j(\widetilde{\eta}_0,\dots,\widetilde{\eta}_{L+1})$ coincides with 
$e_j(\widetilde{\eta}_0,\dots,\widetilde{\eta}_{L+1},0,0)$ for any $j$.}:
\begin{align}
(L+3) \, \big( \eta_0 +\cdots +\eta_{L+3} \big)^{L+2} \, &- \, \dfrac{1}{2} \, \sum_{\substack{F \subset \{ 0,\dots,L+3 \} \\ 1 \le \sharp F \le L+3}} 
\Big( \eta(F) \Big)^{\sharp F-1} \, \Big( \eta(F^c) \Big)^{\sharp F^c -1} \notag \\
= \, & \, \cQ \Big( e_1(\widetilde{\eta}_0,\dots,\widetilde{\eta}_{L+1},0,0),\dots,e_{L+2}(\widetilde{\eta}_0,\dots,\widetilde{\eta}_{L+1},0,0) \Big) \notag \\
= \, & \, (L+3) \, \big( \widetilde{\eta}_0 +\cdots +\widetilde{\eta}_{L+1} \big)^{L+2} \, - \, \dfrac{1}{2} \, 
\sum_{\substack{F \subset \{ 0,\dots,L+3 \} \\ 1 \le \sharp F \le L+3}} 
\Big( \widetilde{\eta}(F) \Big)^{\sharp F-1} \, \Big( \widetilde{\eta}(F^c) \Big)^{\sharp F^c -1} \, ,\label{relationpropB1eta}
\end{align}
where we use in \eqref{relationpropB1eta} the convention $\widetilde{\eta}_{L+2}=\widetilde{\eta}_{L+3}=0$. In other words, we have 
reduced to the case where (at least) two of the $\eta_j$'s, say the last two, are zero. We now divide the subsets $F$ of $\{ 0,\dots,L+3 \}$ 
into those that are contained in $\{ 0,\dots,L+1 \}$, those that contain only one element among $\{ L+2,L+3 \}$, and those that contain both 
$L+2$ and $L+3$. We can decompose the sum on the right hand side of \eqref{relationpropB1eta} as follows:
\begin{align*}
\sum_{\substack{F \subset \{ 0,\dots,L+3 \} \\ 1 \le \sharp F \le L+3}} 
\Big( \widetilde{\eta}(F) \Big)^{\sharp F-1} \, \Big( \widetilde{\eta}(F^c) \Big)^{\sharp F^c -1} \, 
= & \, \sum_{\substack{E \subset \{ 0,\dots,L+1 \} \\ 1 \le \sharp E \le L+2}} 
\Big( \widetilde{\eta}(E) \Big)^{\sharp E-1} \, \Big( \widetilde{\eta}(E^c) \Big)^{\sharp E^c +1} \\
& \, +2 \, \sum_{\substack{E \subset \{ 0,\dots,L+1 \} \\ 0 \le \sharp E \le L+2}} 
\Big( \widetilde{\eta}(E) \Big)^{\sharp E} \, \Big( \widetilde{\eta}(E^c) \Big)^{\sharp E^c} \\
& \, +\sum_{\substack{E \subset \{ 0,\dots,L+1 \} \\ 0 \le \sharp E \le L+1}} 
\Big( \widetilde{\eta}(E) \Big)^{\sharp E+1} \, \Big( \widetilde{\eta}(E^c) \Big)^{\sharp E^c-1} \\
= & \, \sum_{\substack{E \subset \{ 0,\dots,L+1 \} \\ 1 \le \sharp E \le L+{\color{red} 1}}} 
\Big( \widetilde{\eta}(E) \Big)^{\sharp E-1} \, \Big( \widetilde{\eta}(E^c) \Big)^{\sharp E^c +1} \\
& \, +2 \, \sum_{\substack{E \subset \{ 0,\dots,L+1 \} \\ 0 \le \sharp E \le L+2}} 
\Big( \widetilde{\eta}(E) \Big)^{\sharp E} \, \Big( \widetilde{\eta}(E^c) \Big)^{\sharp E^c} \\
& \, +\sum_{\substack{E \subset \{ 0,\dots,L+1 \} \\ {\color{red} 1} \le \sharp E \le L+1}} 
\Big( \widetilde{\eta}(E) \Big)^{\sharp E+1} \, \Big( \widetilde{\eta}(E^c) \Big)^{\sharp E^c-1} \, .
\end{align*}
Observe the modifications (in red) in two of the terms on the right hand side, which correspond to deleting two terms that contribute for 
zero. It remains to collect and rewrite some terms to get:
\begin{align*}
\sum_{\substack{F \subset \{ 0,\dots,L+3 \} \\ 1 \le \sharp F \le L+3}} 
\Big( \widetilde{\eta}(F) \Big)^{\sharp F-1} \, \Big( \widetilde{\eta}(F^c) \Big)^{\sharp F^c -1} \, 
= \, & \, \sum_{\substack{E \subset \{ 0,\dots,L+1 \} \\ 1 \le \sharp E \le L+1}} 
\Big( \widetilde{\eta}(E) \Big)^{\sharp E -1} \, \Big( \widetilde{\eta}(E^c) \Big)^{\sharp E^c -1} \, \Big( \widetilde{\eta}(E)+\widetilde{\eta}(E^c) \Big)^2 \\
& \, + \, 4 \, \big( \widetilde{\eta}_0 +\cdots +\widetilde{\eta}_{L+1} \big)^{L+2} \\
= \, & \, (2\, L+6) \, \big( \widetilde{\eta}_0 +\cdots +\widetilde{\eta}_{L+1} \big)^{L+2} \, ,
\end{align*}
where we have used the induction assumption \eqref{propB1-recurrence}. Going back to \eqref{relationpropB1eta}, we have thus obtained:
$$
(L+3) \, \big( \eta_0 +\cdots +\eta_{L+3} \big)^{L+2} \, - \, \dfrac{1}{2} \, \sum_{\substack{F \subset \{ 0,\dots,L+3 \} \\ 1 \le \sharp F \le L+3}} 
\Big( \eta(F) \Big)^{\sharp F-1} \, \Big( \eta(F^c) \Big)^{\sharp F^c -1} \, = \, 0 \, ,
$$
which completes the induction argument for proving that the polynomial $Q_\sharp$ in \eqref{propB1exprQ} is zero.
\end{proof}

\noindent Corollary \ref{corB2} and Proposition \ref{propB1} immediately imply the result of Lemma \ref{lem_symmetry_2}, namely 
the second symmetry formula which we have used to complete the proof of Lemma \ref{lem_compatibilite_div}.

\begin{corollary}[The second symmetry formula]
\label{corB3}
The functions $\chi^{[\ell]}$, $\dot{\chi}^{[\ell]}$ and $\psi^m$ satisfy the relation:
\begin{equation}
\label{formulesymetrie2}
\forall \, \ell \ge 0 \, ,\quad 
\sum_{\ell_1+\ell_2 =\ell} \p_{y_3} \chi^{[\ell_1]} \, \p_\theta \psi^{\ell_2} -\p_\theta \big( \dot{\chi}^{[\ell]} \, \psi^{\ell_2} \big) \, = \, 
\sum_{\ell_1+\cdots+\ell_4=\ell} \big( \p_{y_3} \chi^{[\ell_1]} \, \dot{\chi}^{[\ell_2]} -\chi^{[\ell_1]} \, \p_{y_3} \dot{\chi}^{[\ell_2]} \big) 
\, \psi^{\ell_3} \, \p_\theta \psi^{\ell_4} \, ,
\end{equation}
and similar relations with any other tangential partial derivative chosen among $\{ \p_t,\p_{y_1},\p_{y_2} \}$. (We keep using the convention 
$\psi^0 =0$.)
\end{corollary}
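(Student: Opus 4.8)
The plan is to derive \eqref{formulesymetrie2} by combining the two intermediate symmetry results that have already been established in this Appendix, namely Corollary \ref{corB2} (the relation $\p_{y_3} \chi^{[\ell]} - \dot{\chi}^{[\ell]} = \sum_{\ell_1+\ell_2+\ell_3=\ell} \p_{y_3} \chi^{[\ell_1]} \, \dot{\chi}^{[\ell_2]} \, \psi^{\ell_3}$) and Proposition \ref{propB1} (the relation $\p_\theta \dot{\chi}^{[\ell]} = \sum_{\ell_1+\ell_2+\ell_3=\ell} \chi^{[\ell_1]} \, \p_{y_3} \dot{\chi}^{[\ell_2]} \, \p_\theta \psi^{\ell_3}$). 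The idea is to start from the left-hand side of \eqref{formulesymetrie2}, expand the $\theta$-derivative in $\p_\theta(\dot{\chi}^{[\ell_1]} \, \psi^{\ell_2})$ by the Leibniz rule, and substitute the identities from Corollary \ref{corB2} and Proposition \ref{propB1} so that all the terms reassemble into the cubic/quartic expression on the right-hand side. Since these are all convolution-type sums over compositions of $\ell$, the manipulation is purely bookkeeping once the right substitutions are made.

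Concretely, first I would expand
\[
\sum_{\ell_1+\ell_2=\ell} \p_\theta\big(\dot{\chi}^{[\ell_1]} \, \psi^{\ell_2}\big)
\, = \, \sum_{\ell_1+\ell_2=\ell} \big(\p_\theta \dot{\chi}^{[\ell_1]}\big)\, \psi^{\ell_2}
\, + \, \sum_{\ell_1+\ell_2=\ell} \dot{\chi}^{[\ell_1]} \, \p_\theta \psi^{\ell_2}\, .
\]
Then I would substitute Proposition \ref{propB1} into the first sum on the right: $\sum \p_\theta \dot{\chi}^{[\ell_1]} \, \psi^{\ell_2}$ becomes a quartic sum $\sum_{\ell_1+\cdots+\ell_4=\ell} \chi^{[\ell_1]} \, \p_{y_3}\dot{\chi}^{[\ell_2]} \, \p_\theta\psi^{\ell_3}\,\psi^{\ell_4}$, which is precisely (up to relabelling $\ell_3 \leftrightarrow \ell_4$) the negative of the second group of terms on the right-hand side of \eqref{formulesymetrie2}. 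It remains to show that
\[
\sum_{\ell_1+\ell_2=\ell} \p_{y_3}\chi^{[\ell_1]} \, \p_\theta\psi^{\ell_2}
\, - \, \sum_{\ell_1+\ell_2=\ell} \dot{\chi}^{[\ell_1]} \, \p_\theta\psi^{\ell_2}
\, = \, \sum_{\ell_1+\cdots+\ell_4=\ell} \p_{y_3}\chi^{[\ell_1]}\,\dot{\chi}^{[\ell_2]}\,\psi^{\ell_3}\,\p_\theta\psi^{\ell_4}\, ,
\]
which is obtained by applying Corollary \ref{corB2}: the factor $\p_{y_3}\chi^{[\ell_1]} - \dot{\chi}^{[\ell_1]}$ appearing on the left, once convolved against $\p_\theta\psi$, equals $\sum_{a+b+c=\ell_1}\p_{y_3}\chi^{[a]}\,\dot{\chi}^{[b]}\,\psi^{c}$ convolved against $\p_\theta\psi$, which is exactly the quartic sum on the right. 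Collecting the two contributions and checking that the $\p_\theta$ on $\psi^{\ell_4}$ lands where claimed finishes the computation.

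The main obstacle — though a mild one — is purely organizational: keeping track of which convolution index carries the $\p_\theta$ versus $\p_{y_3}$, and making sure the relabellings of the dummy indices $\ell_1,\dots,\ell_4$ are consistent when one goes from a three-fold to a four-fold sum. One has to be slightly careful that the terms with $\psi^0=0$ or $\psi^1=0$ (depending on whether one is in the general setting of this Appendix or the current-vortex-sheet setting where $\psi^1=0$) are consistently discarded, but since both Corollary \ref{corB2} and Proposition \ref{propB1} are already proved with the convention $\psi^0=0$ (and allow $\psi^1\neq 0$), no genuine new difficulty arises. The analogous relations with $\p_t$, $\p_{y_1}$, $\p_{y_2}$ in place of $\p_\theta$ follow verbatim, since neither Corollary \ref{corB2} nor Proposition \ref{propB1} singles out the $\theta$-variable: both hold for any tangential derivative, so the same substitution argument applies without change.
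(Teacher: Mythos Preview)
Your proposal is correct and follows essentially the same approach as the paper: expand $\p_\theta(\dot{\chi}^{[\ell_1]}\psi^{\ell_2})$ by Leibniz, then feed Corollary~\ref{corB2} into the combination $\sum(\p_{y_3}\chi^{[\ell_1]}-\dot{\chi}^{[\ell_1]})\,\p_\theta\psi^{\ell_2}$ and Proposition~\ref{propB1} into $\sum(\p_\theta\dot{\chi}^{[\ell_1]})\,\psi^{\ell_2}$, obtaining the two quartic sums on the right. The paper presents this as ``convolve each of the two auxiliary identities with the appropriate $\psi$-factor, then subtract,'' which is literally the same computation in a slightly different order.
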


\begin{proof}[Proof of Corollary \ref{corB3}]
Using Corollary \ref{corB2}, we first get:
$$
\forall \, \ell \ge 0 \, ,\quad 
\sum_{\ell_1+\ell_2=\ell} \p_{y_3} \chi^{[\ell_1]} \, \p_\theta \psi^{\ell_2} \, - \, \dot{\chi}^{[\ell_1]} \, \p_\theta \psi^{\ell_2} 
\, - \, \sum_{\ell_1 +\cdots +\ell_4=\ell} \p_{y_3} \chi^{[\ell_1]} \, \dot{\chi}^{[\ell_2]} \, \psi^{\ell_3} \, \p_\theta \psi^{\ell_4} \, = \, 0 \, ,
$$
and using Proposition \ref{propB1}, we also get:
$$
\forall \, \ell \ge 0 \, ,\quad \sum_{\ell_1+\ell_2=\ell} \p_\theta \dot{\chi}^{[\ell_1]} \, \psi^{\ell_2} 
\, - \, \sum_{\ell_1 +\cdots +\ell_4=\ell} \chi^{[\ell_1]} \, \p_{y_3} \dot{\chi}^{[\ell_2]} \, \psi^{\ell_3} \, \p_\theta \psi^{\ell_4} \, = \, 0 \, .
$$
Subtracting the latter two relations, we end up showing \eqref{formulesymetrie2}, or equivalently that the function $\bX_\theta^\ell$ 
in \eqref{defbXell} is zero. The same argument applies when the tangential derivative $\p_\theta$ is replaced by any other tangential 
derivative, namely $\p_{y_1}$, $\p_{y_2}$ or $\p_t$.
\end{proof}

Before turning to the verification of even more compatibility conditions on the source terms for the WKB cascade, let us observe 
that the explicit expressions \eqref{expressioncompacte}, \eqref{expressioncompacte'} allow us to rewrite the inductive formula 
\eqref{corB2-eq4}, which we have shown to hold not only for $L \ge 2$ but for any $L \ge 0$. Combining the proofs of Corollary 
\ref{corB2} and Proposition \ref{propB1}, we have indeed obtained the formula:
$$
\forall \, L \ge 1 \, ,\quad \dfrac{1}{L+1} \, \big\{ \chi^{L+1} \big\}^{(L+1)} -\chi' \, \big\{ \chi^L \big\}^{(L)} 
\, = \, \sum_{\ell=0}^{L-1} \binom{L}{\ell} \, \big\{ \chi^\ell \big\}^{(\ell)} \, \big\{ \chi'' \, \chi^{L-\ell} \big\}^{(L-1-\ell)} \, ,
$$
or, equivalently:
\begin{equation}
\label{propB1-id''}
\forall \, L \ge 1 \, ,\quad \boxed{\big\{ \chi' \, \chi^L \big\}^{(L)} -\chi' \, \big\{ \chi^L \big\}^{(L)} \, = \, 
\sum_{\ell=0}^{L-1} \binom{L}{\ell} \, \big\{ \chi^\ell \big\}^{(\ell)} \, \big\{ \chi'' \, \chi^{L-\ell} \big\}^{(L-1-\ell)}} \, .
\end{equation}
We do not know whether the relations \eqref{propB1-id'} and \eqref{propB1-id''} -which definitely have a `Leibniz flavor'- have been 
noticed/proved before or whether they could be useful in other contexts.

\section{Compatibility of the source terms for the slow mean problem}

\subsection{Compatibility for the divergence of the magnetic field}

Let us first recall the result we aim at proving here:

\begin{lemma}[Compatibility for the divergence of the magnetic field in the slow mean problem]
\label{lemB4}
The source terms in \eqref{slow_mean_m+1_edp} satisfy:
\begin{equation}
\label{lemB3-eq1}
\p_t \, \bF_8^{\, m,\pm} \, = \, \p_{y_\alpha} \bF_{3+\alpha}^{\, m,\pm} \, .
\end{equation}
\end{lemma}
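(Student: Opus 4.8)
\textbf{Plan of proof for Lemma \ref{lemB4}.} The statement to establish is the compatibility relation \eqref{lemB3-eq1} for the \emph{slow mean} source terms, which are the $\theta$-means (the operator ${\bf c}_0$) of the interior source terms in the WKB cascade. The natural strategy is to reduce this identity to the already-known compatibility condition for the divergence of the magnetic field at the previous level of the WKB cascade, that is, to the relation \eqref{compatibilite_pb_rapide_d} (equivalently the conclusion of Lemma \ref{lem_compatibilite_div}, whose validity we may assume under the induction assumption $H(m)$). Indeed, the full source terms $F_{3+\alpha}^{\, m,\pm}$ and $F_8^{\, m,\pm}$ satisfy $\p_{Y_3} F_6^{\, m,\pm} + \xi_j \, \p_\theta F_{3+j}^{\, m,\pm} - \tau \, \p_\theta F_8^{\, m,\pm} = 0$, while the slow mean source terms $\bF_{3+\alpha}^{\, m,\pm}$, $\bF_8^{\, m,\pm}$ defined in \eqref{defsource_slow_mean_m+1}, \eqref{defsource8_slow_mean_m+1} are obtained by taking the limit $Y_3 \to \pm\infty$ and then the $\theta$-mean of the full source terms $F_{3+\alpha}^{\, m+1,\pm}$, $F_8^{\, m+1,\pm}$ at the \emph{next} level (after discarding the terms that involve the still-undetermined $\uU^{\, m+1,\pm}$, which have zero contribution here since only previously determined profiles enter).

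\textbf{First step.} I would write out $\bF_8^{\, m,\pm}$ from \eqref{defsource8_slow_mean_m+1} and $\bF_{3+\alpha}^{\, m,\pm}$ from \eqref{defsource_slow_mean_m+1} and apply the differential operators $\p_t$ and $\p_{y_\alpha}$ respectively, term by term. As in the proof of Lemma \ref{lem_compatibilite_div}, the key observation is that the residual profiles $\uU^{\ell,\pm}$ satisfy at each previous step the fast equation $\cA^\pm \, \p_\theta \uU^{\ell,\pm} = \uF^{\ell-1,\pm}$ together with the residual divergence constraint $\xi_j \, \p_\theta \underline{\cH}_j^{\ell,\pm} = \uF_8^{\ell-1,\pm}$ (these are consequences of \eqref{inductionHm2} passing to the limit $Y_3 \to \pm\infty$, using that $\cA^\pm$ is invertible). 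These relations allow one to substitute in all the terms where a $\p_\theta$ or $\p_t$ derivative lands on a residual profile, turning them into lower-order source terms. The terms involving the cut-off profiles $\chi^{[\ell]}$, $\dot{\chi}^{[\ell]}$ will be handled exactly as in Lemma \ref{lem_compatibilite_div}: using the first and second symmetry formulas (Corollary \ref{corB1}, Corollary \ref{corB2}, Corollary \ref{corB3}) one checks that the combinations involving $\p_{y_3}\chi^{[\ell_1]} \p_{y_\alpha}\psi^{\ell_2}$ and $\p_{y_\alpha}(\dot{\chi}^{[\ell_1]}\psi^{\ell_2})$ vanish in the relevant linear combinations.

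\textbf{Main obstacle.} The hard part will be the bookkeeping: the expression \eqref{defsource_slow_mean_m+1} for $\bF^{\, m,\pm}$ contains seven sums with up to four indices each, and after applying $\p_{y_\alpha}$ one obtains a large number of terms, many of which involve products of several $\chi^{[\ell]}$ or $\dot{\chi}^{[\ell]}$ factors. Organizing the cancellations — in particular separating the purely residual-profile contributions (which reduce to $\p_t \uF_8^{\ell,\pm} = \p_{y_\alpha}\uF_{3+\alpha}^{\ell,\pm}$ via the induction and the invariance of the $X_{\bmu}$, $F_{\bmu}$, $\dot{F}_{\bmu}$) from the contributions involving the cut-off profiles (which cancel via the symmetry formulas) — is the only real difficulty. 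Since the algebra is essentially a `residual projection' (limit $Y_3 \to \pm\infty$ followed by $\theta$-mean) of the computation already carried out for Lemma \ref{lem_compatibilite_div}, and since the symmetry relations of this Appendix are exactly the combinatorial tools needed, no new phenomenon arises. I would therefore present the reduction in detail up to the point where the identity becomes a consequence of \eqref{compatibilite_pb_rapide_d} applied at levels $\le m-1$ together with Corollaries \ref{corB1}, \ref{corB2}, \ref{corB3}, and leave the remaining routine term-matching to the reader, exactly as is done for Lemma \ref{lemB2} in the main text.
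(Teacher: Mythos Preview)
Your plan is correct and follows essentially the same route as the paper: define $\cD^m := \p_{y_\alpha}\bF_{3+\alpha}^{\,m,\pm} - \p_t\bF_8^{\,m,\pm}$, apply the first symmetry formula (Corollary~\ref{corB1}) together with integration by parts in $\theta$ to handle tangential derivatives landing on $\chi^{[\ell]}$, substitute the fast equations $\cA^\pm\p_\theta\uU^{\ell,\pm}=\uF^{\ell-1,\pm}$ from the induction, use the second symmetry formula (Corollary~\ref{corB3}) to eliminate the cut-off combinations, and reduce everything to ${\bf c}_0\big\{\sum \chi^{[\ell_1]}\p_\theta\psi^{\ell_2}\p_{y_3}(\xi_j\uF_{3+j}^{\ell_3}-\tau\uF_8^{\ell_3})\big\}$, which vanishes because $\xi_j\uF_{3+j}^\ell-\tau\uF_8^\ell=0$ for $\ell\le m$ by Lemma~\ref{lem_compatibilite_div} and \eqref{inductionHm5}. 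The paper does carry out the full term-by-term bookkeeping explicitly (it is several pages, not left to the reader), but the structure you outline is exactly theirs.
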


\begin{proof}[Proof of Lemma \ref{lemB4}]
The verification of the compatibility condition \eqref{lemB3-eq1} follows from the explicit expressions of the source terms 
$\bF_{3+\alpha}^{\, m,\pm}$ and $\bF_8^{\, m,\pm}$ and from the symmetry formulas (Corollary \ref{corB1}, Corollary \ref{corB2} and 
Proposition \ref{propB1} above). Let us first recall the expressions of the source terms. For simplicity, we omit from now on the 
superscripts $\pm$. We keep the notation  ${\bf c}_0$ for the mean with respect to $\theta$ on $\bT$. The source terms are 
given as follows:
\begin{align*}
\bF_8^m \, = \, {\bf c}_0 \, \Big\{ \, 
{\color{blue} \sum_{\ell_1+\ell_2+\ell_3=m+2} \chi^{[\ell_1]} \, \p_\theta \psi^{\ell_2} \, \xi_j \, \p_{y_3} \uH_j^{\ell_3}} 
\, & \, +\sum_{\ell_1+\ell_2+\ell_3=m+1} \chi^{[\ell_1]} \, \p_{y_j} \psi^{\ell_2} \, \p_{y_3} \uH_j^{\ell_3} \\
& \, +\sum_{\ell_1+\ell_2+\ell_3=m+1} \dot{\chi}^{[\ell_1]} \, \psi^{\ell_2} \, \p_{y_3} \uH_3^{\ell_3} \Big\} \, ,
\end{align*}
\begin{align*}
\bF_{3+j}^m \, = \, {\bf c}_0 \, \Big\{ \, & \, {\color{blue} \sum_{\ell_1+\ell_2+\ell_3=m+2} \chi^{[\ell_1]} \, \p_\theta \psi^{\ell_2} \, 
\big( c \, \p_{y_3} \uH_j^{\ell_3} -b \, \p_{y_3} \uu_j^{\ell_3} -u_j^0 \, \xi_{j'} \, \p_{y_3} \uH_{j'}^{\ell_3} +H_j^0 \, \xi_{j'} \, \p_{y_3} \uu_{j'}^{\ell_3} \big)} \\
& \, +\sum_{\ell_1+\ell_2+\ell_3=m+1} \chi^{[\ell_1]} \, \p_{y_{j'}} \psi^{\ell_2} \, \big( u_{j'}^0 \, \p_{y_3} \uH_j^{\ell_3} -H_{j'}^0 \, \p_{y_3} \uu_j^{\ell_3} 
-u_j^0 \, \p_{y_3} \uH_{j'}^{\ell_3} +H_j^0 \, \p_{y_3} \uu_{j'}^{\ell_3} \big) \\
& \, + \sum_{\ell_1+\ell_2+\ell_3=m+1} \chi^{[\ell_1]} \, \p_t \psi^{\ell_2} \, \p_{y_3} \uH_j^{\ell_3} \\
& \, +\sum_{\ell_1+\ell_2+\ell_3=m+1} \dot{\chi}^{[\ell_1]} \, \psi^{\ell_2} \, \big( H_j^0 \, \p_{y_3} \uu_3^{\ell_3} -u_j^0 \, \p_{y_3} \uH_3^{\ell_3} \big) 
+{\color{red} \sum_{\substack{\ell_1+\ell_2=m+1 \\ \ell_1,\ell_2 \ge 1}} 
\nabla \cdot \big( \uu_j^{\ell_1} \, \uH^{\ell_2} -\uH_j^{\ell_1} \, \uu^{\ell_2} \big)} \\
& \, +{\color{blue} \sum_{\substack{\ell_1+\cdots+\ell_4=m+2 \\ \ell_3,\ell_4 \ge 1}} \chi^{[\ell_1]} \, \p_\theta \psi^{\ell_2} 
\, \p_{y_3} \big( \xi_{j'} \, \uu_{j'}^{\ell_3} \, \uH_j^{\ell_4} -\xi_{j'} \, \uH_{j'}^{\ell_3} \, \uu_j^{\ell_4} \big)} \\
& \, +\sum_{\substack{\ell_1+\cdots+\ell_4=m+1 \\ \ell_3,\ell_4 \ge 1}} \chi^{[\ell_1]} \, \p_{y_{j'}} \psi^{\ell_2} 
\, \p_{y_3} \big( \uu_{j'}^{\ell_3} \, \uH_j^{\ell_4} -\uH_{j'}^{\ell_3} \, \uu_j^{\ell_4} \big) 
+\dot{\chi}^{[\ell_1]} \, \psi^{\ell_2} \, \p_{y_3} \big( \uH_j^{\ell_3} \, \uu_3^{\ell_4} -\uu_j^{\ell_3} \, \uH_3^{\ell_4} \big) \Big\} \, ,
\end{align*}
\begin{align*}
\bF_6^m \, = \, {\bf c}_0 \, \Big\{ \, & \, \sum_{\ell_1+\ell_2+\ell_3=m+2}  \chi^{[\ell_1]} \, \p_\theta \psi^{\ell_2} \, 
\big( c \, \p_{y_3} \uH_3^{\ell_3} -b \, \p_{y_3} \uu_3^{\ell_3} \big) \\
& \, +\sum_{\ell_1+\ell_2+\ell_3=m+1} \chi^{[\ell_1]} \, \Big( \big( \p_t +u_j^0 \, \p_{y_j} \big) \psi^{\ell_2} \, \p_{y_3} \uH_3^{\ell_3} 
-H_j^0 \, \p_{y_j} \psi^{\ell_2} \, \p_{y_3} \uu_3^{\ell_3} \Big) \\
& \, +{\color{red} \sum_{\substack{\ell_1+\ell_2=m+1 \\ \ell_1,\ell_2 \ge 1}} \p_{y_j} \big( \uH_j^{\ell_1} \, \uu_3^{\ell_2} 
-\uu_j^{\ell_1} \, \uH_3^{\ell_2} \big)} +\sum_{\substack{\ell_1+\cdots+\ell_4=m+2 \\ \ell_3,\ell_4 \ge 1}} \chi^{[\ell_1]} \, \p_\theta \psi^{\ell_2} 
\, \p_{y_3} \big( \xi_j \, \uu_j^{\ell_3} \, \uH_3^{\ell_4} -\xi_j \, \uH_j^{\ell_3} \, \uu_3^{\ell_4} \big) \\
& \, +\sum_{\substack{\ell_1+\cdots+\ell_4=m+1 \\ \ell_3,\ell_4 \ge 1}} \chi^{[\ell_1]} \, \p_{y_j} \psi^{\ell_2} \, 
\p_{y_3} \big( \uu_j^{\ell_3} \, \uH_3^{\ell_4} -\uH_j^{\ell_3} \, \uu_3^{\ell_4} \big) \Big\} \, .
\end{align*}
Several terms have been highlighted in blue or red in order to explain the calculations below.

We now define the quantity:
$$
\cD^m \, := \,\p_{y_\alpha} \, \bF_{3+\alpha}^m -\p_t \, \bF_8^m \, ,
$$
and the goal of course is to show that $\cD^m$ vanishes under the induction assumption $H(m)$, which will prove \eqref{lemB3-eq1}. 
The calculation splits again in several steps.
\bigskip

$\bullet$ \underline{Step 1}. Using the first symmetry formula and integration by parts. When applying a slow tangential derivative $\p_t$, 
$\p_{y_1}$ or $\p_{y_2}$, we can use the first symmetry formula (Corollary \ref{corB1} above) and obtain (here $\p_{\rm tan}$ corresponds 
to either $\p_t$, $\p_{y_1}$ or $\p_{y_2}$):
$$
\p_{\rm tan} \, {\bf c}_0 \, \Big\{ \sum_{\ell_1+\ell_2+\ell_3=L} \chi^{[\ell_1]} \, \p_\theta \psi^{\ell_2} \, f^{\ell_3} \Big\} \, = \, 
{\bf c}_0 \, \Big\{ \sum_{\ell_1+\ell_2+\ell_3=L} \chi^{[\ell_1]} \, \p_\theta \psi^{\ell_2} \, \p_{\rm tan} f^{\ell_3} 
-\sum_{\ell_1+\ell_2+\ell_3=L} \chi^{[\ell_1]} \, \p_{\rm tan} \psi^{\ell_2} \, \p_\theta f^{\ell_3}\Big\} \, ,
$$
where integration by parts in $\theta$ corresponds to the calculus rule:
$$
{\bf c}_0 \, \big\{ (\p_\theta f) \, g \big\} \, = \, -{\bf c}_0 \, \big\{ f \, (\p_\theta g) \big\} \, .
$$
The above rule for tangential differentiation is applied to deal with the terms highlighted in blue in the above decompositions of 
$\bF_{3+j}^m$ and $\bF_8^m$. This operation makes fast derivatives in $\theta$ appear, and we then use the fast problems 
solved at the previous steps of the induction:
\begin{align*}
\forall \, \ell \, = \, 1,\dots,m \, ,& \quad \xi_j \, \p_\theta \uH_j^\ell \, = \, \uF_8^{\ell-1} \, ,\\
& \quad c \, \p_\theta \uH_j^\ell -b\, \p_\theta \uu_j^\ell -u_j^0 \, \p_\theta \xi_{j'} \, \uH_{j'}^\ell +H_j^0 \, \p_\theta \xi_{j'} \, \uu_{j'}^\ell 
\, = \, \uF_{3+j}^{\ell-1} \, .
\end{align*}
The only subtlety here is that the mean $\widehat{\psi}^{\, m+1}(0)$ has not been defined yet. However, this is of no consequence 
since the term involving $\psi^{\, m+1}$ in the integration by parts argument is:
$$
{\bf c}_0 \, \Big\{ \chi^{[0]} \, \p_\theta \psi^{\, m+1} \, \xi_j \, \p_{y_3} \uH_j^1 \Big\} \, = \, 
\chi(y_3) \, {\bf c}_0 \, \Big\{ \p_\theta \psi^{\, m+1}_\sharp \, \xi_j \, \p_{y_3} \uH_j^1 +\psi^{\, m+1}_\sharp \, \xi_j \, \p_{y_3} \p_\theta \uH_j^1 \Big\} 
\, = \, 0 \, .
$$
Here we have used the very first (homogeneous) fast problem $\xi_j \, \p_\theta \uH_j^1=0$. Hence the mean of $\psi^{\, m+1}$ is 
not relevant here.

Let us also observe that the terms highlighted in red in $\bF_{3+\alpha}^m$ correspond to a divergence free vector field, 
hence they disappear when computing $\cD^m$. After collecting the terms, we get the following decomposition for $\cD^m$:
$$
\cD^m \, = \, \cD_1^m +\cD_2^m +\cD_3^m \, ,
$$
with:
\begin{equation*}
\cD_1^m \, := \, {\bf c}_0 \, \Big\{ \, \sum_{\ell_1+\ell_2+\ell_3=m+1} \chi^{[\ell_1]} \, \p_t \psi^{\ell_2} \, \p_{y_3} \big( \uF_8^{\ell_3} +\nabla \cdot 
\uH^{\ell_3} \big) +\big( \p_{y_3} \chi^{[\ell_1]} \, \p_t \psi^{\ell_2} -\p_t (\dot{\chi}^{[\ell_1]} \, \psi^{\ell_2}) \big) \, \p_{y_3} \uH_3^{\ell_3} \Big\} \, ,
\end{equation*}
\begin{align*}
\cD_2^m \, := \, {\bf c}_0 \, \Big\{ & \, -\sum_{\ell_1+\ell_2+\ell_3=m+1}  \chi^{[\ell_1]} \, \p_{y_j} \psi^{\ell_2} \, \p_{y_3} \uF_{3+j}^{\ell_3} \\
& \, +\sum_{\ell_1+\ell_2+\ell_3=m+1}  \chi^{[\ell_1]} \, \p_{y_j} \psi^{\ell_2} \, \p_{y_3} 
\big( u_j^0 \, \nabla \cdot \uH^{\ell_3} -H_j^0 \, \nabla \cdot \uu^{\ell_3} -(\p_t +u_{j'}^0 \,\p_{y_{j'}}) \uH_j^{\ell_3} +H_{j'}^0 \,\p_{y_{j'}} \uu_j^{\ell_3} \big) \\
& \, +{\color{orange} \sum_{\ell_1+\ell_2+\ell_3=m+1} \big( \p_{y_3} \chi^{[\ell_1]} \, \p_{y_j} \psi^{\ell_2} -\p_{y_j} (\dot{\chi}^{[\ell_1]} \, \psi^{\ell_2}) \big) \, 
\big( u_j^0 \, \p_{y_3} \uH_3^{\ell_3} -H_j^0 \, \p_{y_3} \uu_3^{\ell_3} \big)} \\
& \, +\sum_{\substack{\ell_1+\cdots+\ell_4=m+2 \\ \ell_3,\ell_4 \ge 1}} \chi^{[\ell_1]} \, \p_{y_j} \psi^{\ell_2} 
\, \p_{y_3} \p_\theta \big( \xi_{j'} \, \uH_{j'}^{\ell_3} \, \uu_j^{\ell_4} -\xi_{j'} \, \uu_{j'}^{\ell_3} \, \uH_j^{\ell_4} \big) \\
& \, +\sum_{\substack{\ell_1+\cdots+\ell_4=m+1 \\ \ell_3,\ell_4 \ge 1}} \chi^{[\ell_1]} \, \p_{y_j} \psi^{\ell_2} 
\, \p_{y_3} \Big( \nabla \cdot \big( \uu_j^{\ell_3} \, \uH^{\ell_4} -\uH_j^{\ell_3} \, \uu^{\ell_4} \big) \Big) \\
& \, +{\color{orange} \sum_{\substack{\ell_1+\cdots+\ell_4=m+1 \\ \ell_3,\ell_4 \ge 1}} \big( \p_{y_3} \chi^{[\ell_1]} \, \p_{y_j} \psi^{\ell_2} 
-\p_{y_j} (\dot{\chi}^{[\ell_1]} \, \psi^{\ell_2}) \big) \, \p_{y_3} \big( \uu_j^{\ell_3} \, \uH_3^{\ell_4} -\uH_j^{\ell_3} \, \uu_3^{\ell_4} \big)} \Big\} \, ,
\end{align*}
\begin{align*}
\cD_3^m \, := \, {\bf c}_0 \, \Big\{ \, & \, {\color{magenta} \sum_{\ell_1+\ell_2+\ell_3=m+2} \chi^{[\ell_1]} \, \p_\theta \psi^{\ell_2} \, \p_{y_3} 
\big( c \, \nabla \cdot \uH^{\ell_3} -b \, \nabla \cdot \uu^{\ell_3} -(\p_t +u_j^0 \, \p_{y_j}) \, \xi_{j'} \, \uH_{j'}^{\ell_3} 
+H_j^0 \, \p_{y_j} \, \xi_{j'} \, \uu_{j'}^{\ell_3} \big)} \\
& \, +{\color{magenta} \sum_{\substack{\ell_1+\cdots+\ell_4=m+2 \\ \ell_3,\ell_4 \ge 1}} \chi^{[\ell_1]} \, \p_\theta \psi^{\ell_2} \, \p_{y_3} \nabla \cdot 
\big( \xi_j \, \uu_j^{\ell_3} \, \uH^{\ell_4} -\xi_j \, \uH_j^{\ell_3} \, \uu^{\ell_4} \big)} \\
& \, +\sum_{\ell_1+\ell_2+\ell_3=m+2} \p_{y_3} \chi^{[\ell_1]} \, \p_\theta \psi^{\ell_2} \, \p_{y_3} \big( c \, \uH_3^{\ell_3} -b \, \uu_3^{\ell_3} \big) \\
& \, +\sum_{\ell_1+\ell_2+\ell_3=m+1} \dot{\chi}^{[\ell_1]} \, \psi^{\ell_2} \, \p_{y_3} \big( H_j^0 \, \p_{y_j} \uu_3^{\ell_3} -(\p_t +u_j^0 \, \p_{y_j}) \uH_3^{\ell_3} \big) \\
& \, +\sum_{\substack{\ell_1+\cdots+\ell_4=m+2 \\ \ell_3,\ell_4 \ge 1}} \p_{y_3} \chi^{[\ell_1]} \, \p_\theta \psi^{\ell_2} 
\, \p_{y_3} \big( \xi_j \, \uu_j^{\ell_3} \, \uH_3^{\ell_4} -\xi_j \, \uH_j^{\ell_3} \, \uu_3^{\ell_4} \big) \\
& \, +\sum_{\substack{\ell_1+\cdots+\ell_4=m+1 \\ \ell_3,\ell_4 \ge 1}} \dot{\chi}^{[\ell_1]} \, \psi^{\ell_2} \, \p_{y_3} \p_{y_j} 
\big( \uH_j^{\ell_3} \, \uu_3^{\ell_4} -\uu_j^{\ell_3} \, \uH_3^{\ell_4} \big) \Big\} \, .
\end{align*}
\bigskip

$\bullet$ \underline{Step 2}. Using the second symmetry formula to simplify $\cD_1^m$. We recall the expression of $\uF_8^\ell$:
\begin{align*}
\forall \, \ell \, = \, 1,\dots,m \, ,\quad \uF_8^\ell \, = \, & \, -\nabla \cdot \uH^\ell 
+\sum_{\ell_1+\ell_2+\ell_3=\ell+1} \chi^{[\ell_1]} \, \p_\theta \psi^{\ell_2} \, \xi_j \, \p_{y_3} \uH_j^{\ell_3} \\
& \, +\sum_{\ell_1+\ell_2+\ell_3=\ell} \chi^{[\ell_1]} \, \p_{y_j} \psi^{\ell_2} \, \p_{y_3} \uH_j^{\ell_3} 
+\sum_{\ell_1+\ell_2+\ell_3=\ell} \dot{\chi}^{[\ell_1]} \, \psi^{\ell_2} \, \p_{y_3} \uH_3^{\ell_3} \, ,
\end{align*}
which we use in the above expression for $\cD_1^m$ together with the second symmetry formula:
$$
\sum_{\ell_1+\ell_2=\ell} \p_{y_3} \chi^{[\ell_1]} \, \p_t \psi^{\ell_2} -\p_t \big( \dot{\chi}^{[\ell_1]} \, \psi^{\ell_2} \big) \, = \, 
\sum_{\ell_1+\cdots+\ell_4=\ell} \big( \dot{\chi}^{[\ell_1]} \, \p_{y_3} \chi^{[\ell_2]} -\chi^{[\ell_1]} \, \p_{y_3} \dot{\chi}^{[\ell_2]} \big) 
\, \psi^{\ell_3} \, \p_t \psi^{\ell_4} \, .
$$
This simplifies the expression of $\cD_1^m$ and we get eventually:
\begin{align}
\cD_1^m \, = \, {\bf c}_0 \, \Big\{ \, & \, {\color{magenta} \sum_{\ell_1+\cdots+\ell_5=m+2} \chi^{[\ell_1]} \, \p_\theta \psi^{\ell_2} \, \p_{y_3} 
\Big(  \chi^{[\ell_3]} \, \p_t \psi^{\ell_4} \, \xi_j \, \p_{y_3} \uH_j^{\ell_5} \Big)} \notag \\
& \, +{\color{ForestGreen} \sum_{\ell_1+\cdots+\ell_5=m+1} \chi^{[\ell_1]} \, \p_{y_j} \psi^{\ell_2} \, \p_{y_3} 
\Big(  \chi^{[\ell_3]} \, \p_t \psi^{\ell_4} \, \p_{y_3} \uH_j^{\ell_5} \Big)} \label{lemB3-eq2} \\
& \, +\sum_{\ell_1+\cdots+\ell_5=m+1} \dot{\chi}^{[\ell_1]} \, \psi^{\ell_2} \, \p_{y_3} 
\Big( \chi^{[\ell_3]} \, \p_t \psi^{\ell_4} \, \p_{y_3} \uH_3^{\ell_3} \Big) \Big\} \, .\notag
\end{align}
\bigskip

$\bullet$ \underline{Step 3}. Using the second symmetry formula to simplify $\cD_2^m$. We recall the expression of $\uF_{3+j}^\ell$:
\begin{align*}
\forall \, \ell \, = \, 1,\dots,m \, ,\, -\uF_{3+j}^\ell \, = & \, {\color{blue} \big( \p_t +u_{j'}^0 \,\p_{y_{j'}} \big) \uH_j^\ell -H_{j'}^0 \,\p_{y_{j'}} \uu_j^\ell 
-u_j^0 \, \nabla \cdot \uH^\ell +H_j^0 \, \nabla \cdot \uu^\ell} \\
& \, +\sum_{\ell_1+\ell_2+\ell_3=\ell+1} \chi^{[\ell_1]} \, \p_\theta \psi^{\ell_2} \, \big( b \, \p_{y_3} \uu_j^{\ell_3} -c \, \p_{y_3} \uH_j^{\ell_3} 
-H_j^0 \, \xi_{j'} \, \p_{y_3} \uu_{j'}^{\ell_3} +u_j^0 \, \xi_{j'} \, \p_{y_3} \uH_{j'}^{\ell_3} \big) \\
& \, +{\color{red} \sum_{\ell_1+\ell_2+\ell_3=\ell} \chi^{[\ell_1]} \,\p_{y_{j'}} \psi^{\ell_2} \, 
\big( H_{j'}^0 \, \p_{y_3} \uu_j^{\ell_3} -u_{j'}^0 \, \p_{y_3} \uH_j^{\ell_3} -H_j^0 \, \p_{y_3} \uu_{j'}^{\ell_3} +u_j^0 \, \p_{y_3} \uH_{j'}^{\ell_3} \big)} \\
& \, -\sum_{\ell_1+\ell_2+\ell_3=\ell} \chi^{[\ell_1]} \,\p_t \psi^{\ell_2} \, \p_{y_3} \uH_j^{\ell_3} 
+\sum_{\ell_1+\ell_2+\ell_3=\ell} \dot{\chi}^{[\ell_1]} \, \psi^{\ell_2} \, \big( u_j^0 \, \p_{y_3} \uH_3^{\ell_3}-H_j^0 \, \p_{y_3} \uu_3^{\ell_3} \big) \\
& \, +{\color{blue} \sum_{\substack{\ell_1+\ell_2=\ell+1 \\ \ell_1,\ell_2 \ge 1}} \p_\theta \big( \xi_{j'} \, \uu_{j'}^{\ell_1} \, \uH_j^{\ell_2} 
-\xi_{j'} \, \uH_{j'}^{\ell_1} \, \uu_j^{\ell_2} \big)} +{\color{blue} \sum_{\substack{\ell_1+\ell_2=\ell \\ \ell_1,\ell_2 \ge 1}} 
\nabla \cdot \big( \uH_j^{\ell_1} \, \uu^{\ell_2} -\uu_j^{\ell_1} \, \uH^{\ell_2} \big)} \\
& \, +\sum_{\substack{\ell_1+\cdots+\ell_4=\ell+1 \\ \ell_3,\ell_4 \ge 1}} \chi^{[\ell_1]} \, \p_\theta \psi^{\ell_2} \, \p_{y_3} 
\big( \xi_{j'} \, \uH_{j'}^{\ell_3} \, \uu_j^{\ell_4} -\xi_{j'} \, \uu_{j'}^{\ell_3} \, \uH_j^{\ell_4} \big) \\
& \, +{\color{red} \sum_{\substack{\ell_1+\cdots+\ell_4=\ell \\ \ell_3,\ell_4 \ge 1}} \chi^{[\ell_1]} \,\p_{y_{j'}} \psi^{\ell_2} \, \p_{y_3} 
\big( \uH_{j'}^{\ell_3} \, \uu_j^{\ell_4} -\uu_{j'}^{\ell_3} \, \uH_j^{\ell_4} \big)} \\
& \, +\sum_{\substack{\ell_1+\cdots+\ell_4=\ell \\ \ell_3,\ell_4 \ge 1}} \dot{\chi}^{[\ell_1]} \, \psi^{\ell_2} \, 
\p_{y_3} \big( \uu_j^{\ell_3} \, \uH_3^{\ell_4} -\uH_j^{\ell_3} \, \uu_3^{\ell_4} \big) \, ,
\end{align*}
which we use in the above expression for $\cD_2^m$ (the blue terms in $\uF_{3+j}^\ell$ cancel with the black terms in $\cD_2^m$ and 
the red terms in $\uF_{3+j}^\ell$ cancel when computing $\p_{y_j} \psi^{\ell_2} \, \uF_{3+j}^{\ell_3}$), together with the second symmetry 
formula (for the terms highlighted in orange in that expression). After some computations, we end up with the final expression:
\begin{align}
\cD_2^m \, = \, {\bf c}_0 \, \Big\{ \, & \, {\color{magenta} \sum_{\ell_1+\cdots+\ell_5=m+2} \chi^{[\ell_1]} \, \p_\theta \psi^{\ell_2} \, \p_{y_3} 
\Big( \chi^{[\ell_3]} \, \p_{y_j} \psi^{\ell_4} \, \p_{y_3} \Big( b \, \uu_j^{\ell_5} -c \, \uH_j^{\ell_5} -H_j^0 \, \xi_{j'} \, \uu_{j'}^{\ell_5} 
+u_j^0 \, \xi_{j'} \, \uH_{j'}^{\ell_5} \Big) \Big)} \notag \\
& \, +{\color{magenta} \sum_{\substack{\ell_1+\cdots+\ell_6=m+2 \\ \ell_5,\ell_6 \ge 1}} \chi^{[\ell_1]} \, \p_\theta \psi^{\ell_2} \, \p_{y_3} 
\Big( \chi^{[\ell_3]} \, \p_{y_j} \psi^{\ell_4} \, \p_{y_3} \Big( \xi_{j'} \, \uH_{j'}^{\ell_5} \, \uu_j^{\ell_6} -\xi_{j'} \, \uu_{j'}^{\ell_5} \, \uH_j^{\ell_6} 
\Big) \Big)} \notag \\
& \, -{\color{ForestGreen} \sum_{\ell_1+\cdots+\ell_5=m+1} \chi^{[\ell_1]} \, \p_{y_j} \psi^{\ell_2} \, \p_{y_3} 
\Big(  \chi^{[\ell_3]} \, \p_t \psi^{\ell_4} \, \p_{y_3} \uH_j^{\ell_5} \Big)} \label{lemB3-eq3} \\
& \, +\sum_{\ell_1+\cdots+\ell_5=m+1} \dot{\chi}^{[\ell_1]} \, \psi^{\ell_2} \, \p_{y_3} 
\Big( \chi^{[\ell_3]} \, \p_{y_j} \psi^{\ell_4} \, \p_{y_3} \big( u_j^0 \, \uH_3^{\ell_5} -H_j^0 \, \uu_3^{\ell_5} \big) \Big) \notag \\
& \, +\sum_{\substack{\ell_1+\cdots+\ell_6=m+2 \\ \ell_5,\ell_6 \ge 1}} \dot{\chi}^{[\ell_1]} \, \psi^{\ell_2} \, \p_{y_3} 
\Big( \chi^{[\ell_3]} \, \p_{y_j} \psi^{\ell_4} \, \p_{y_3} \big( \uu_j^{\ell_5} \, \uH_3^{\ell_6} -\uH_j^{\ell_5} \, \uu_3^{\ell_6} \big) \Big) \Big\} \, .\notag
\end{align}
\bigskip

$\bullet$ \underline{Step 4}. Recollecting the terms and integrating by parts again. Let us first observe that the green terms in $\cD_1^m$ 
and $\cD_2^m$ cancel. We now recollect the terms in $\cD_1^m,\cD_2^m,\cD_3^m$ in order to benefit from future cancellations. We 
rewrite $\cD^m$ as:
$$
\cD^m \, = \, \cD_4^m +\cD_5^m \, ,
$$
where $\cD_4^m$ gathers all the black terms in $\cD_1^m,\cD_2^m,\cD_3^m$, and $\cD_5^m$ gathers all magenta terms, namely:
\begin{align*}
\cD_4^m \, := \, {\bf c}_0 \, \Big\{ \, & \, {\color{blue} \sum_{\ell_1+\ell_2+\ell_3=m+2} \p_{y_3} \chi^{[\ell_1]} \, \p_\theta \psi^{\ell_2} \, 
\p_{y_3} \big( c \, \uH_3^{\ell_3} -b \, \uu_3^{\ell_3} \big)} \\
& \, +\sum_{\ell_1+\ell_2+\ell_3=m+1} \dot{\chi}^{[\ell_1]} \, \psi^{\ell_2} \, 
\p_{y_3} \big( H_j^0 \, \p_{y_j} \uu_3^{\ell_3} -(\p_t +u_j^0 \, \p_{y_j}) \uH_3^{\ell_3} \big) \\
& \, +\sum_{\ell_1+\cdots+\ell_5=m+1} \dot{\chi}^{[\ell_1]} \, \psi^{\ell_2} \, \p_{y_3} 
\Big( \chi^{[\ell_3]} \, \p_{y_j} \psi^{\ell_4} \, \p_{y_3} \big( u_j^0 \, \uH_3^{\ell_5} -H_j^0 \, \uu_3^{\ell_5} \big) \Big) \\
& \, +\sum_{\ell_1+\cdots+\ell_5=m+1} \dot{\chi}^{[\ell_1]} \, \psi^{\ell_2} \, \p_{y_3} 
\Big( \chi^{[\ell_3]} \, \p_t \psi^{\ell_4} \, \p_{y_3} \uH_3^{\ell_3} \Big) \\
& \, +\sum_{\substack{\ell_1+\cdots+\ell_4=m+1 \\ \ell_3,\ell_4 \ge 1}} \dot{\chi}^{[\ell_1]} \, \psi^{\ell_2} \, \p_{y_3} \p_{y_j} 
\big( \uH_j^{\ell_3} \, \uu_3^{\ell_4} -\uu_j^{\ell_3} \, \uH_3^{\ell_4} \big) \\
& \, +\sum_{\substack{\ell_1+\cdots+\ell_6=m+2 \\ \ell_5,\ell_6 \ge 1}} \dot{\chi}^{[\ell_1]} \, \psi^{\ell_2} \, \p_{y_3} 
\Big( \chi^{[\ell_3]} \, \p_{y_j} \psi^{\ell_4} \, \p_{y_3} \big( \uu_j^{\ell_5} \, \uH_3^{\ell_6} -\uH_j^{\ell_5} \, \uu_3^{\ell_6} \big) \Big) \\
& \, +{\color{blue} \sum_{\substack{\ell_1+\cdots+\ell_4=m+2 \\ \ell_3,\ell_4 \ge 1}} \p_{y_3} \chi^{[\ell_1]} \, \p_\theta \psi^{\ell_2} 
\, \p_{y_3} \big( \xi_j \, \uu_j^{\ell_3} \, \uH_3^{\ell_4} -\xi_j \, \uH_j^{\ell_3} \, \uu_3^{\ell_4} \big)} \Big\} \, ,
\end{align*}
\begin{align*}
\cD_5^m \, := \, {\bf c}_0 \, \Big\{ \, & \, \sum_{\ell_1+\ell_2+\ell_3=m+2} \chi^{[\ell_1]} \, \p_\theta \psi^{\ell_2} \, \p_{y_3} 
\big( c \, \nabla \cdot \uH^{\ell_3} -b \, \nabla \cdot \uu^{\ell_3} -(\p_t +u_j^0 \, \p_{y_j}) \, \xi_{j'} \, \uH_{j'}^{\ell_3} 
+H_j^0 \, \p_{y_j} \, \xi_{j'} \, \uu_{j'}^{\ell_3} \big) \\
& \, +\sum_{\ell_1+\cdots+\ell_5=m+2} \chi^{[\ell_1]} \, \p_\theta \psi^{\ell_2} \, \p_{y_3} 
\Big( \chi^{[\ell_3]} \, \p_{y_j} \psi^{\ell_4} \, \p_{y_3} \Big( b \, \uu_j^{\ell_5} -c \, \uH_j^{\ell_5} -H_j^0 \, \xi_{j'} \, \uu_{j'}^{\ell_5} 
+u_j^0 \, \xi_{j'} \, \uH_{j'}^{\ell_5} \Big) \Big) \\
& \, +\sum_{\ell_1+\cdots+\ell_5=m+2} \chi^{[\ell_1]} \, \p_\theta \psi^{\ell_2} \, \p_{y_3} 
\Big(  \chi^{[\ell_3]} \, \p_t \psi^{\ell_4} \, \xi_j \, \p_{y_3} \uH_j^{\ell_5} \Big) \\
& \, +\sum_{\substack{\ell_1+\cdots+\ell_4=m+2 \\ \ell_3,\ell_4 \ge 1}} \chi^{[\ell_1]} \, \p_\theta \psi^{\ell_2} \, \p_{y_3} \Big( 
\nabla \cdot \big( \xi_j \, \uu_j^{\ell_3} \, \uH^{\ell_4} -\xi_j \, \uH_j^{\ell_3} \, \uu^{\ell_4} \big) \Big) \\
& \, +\sum_{\substack{\ell_1+\cdots+\ell_6=m+2 \\ \ell_5,\ell_6 \ge 1}} \chi^{[\ell_1]} \, \p_\theta \psi^{\ell_2} \, \p_{y_3} 
\Big( \chi^{[\ell_3]} \, \p_{y_j} \psi^{\ell_4} \, \p_{y_3} \Big( \xi_{j'} \, \uH_{j'}^{\ell_5} \, \uu_j^{\ell_6} -\xi_{j'} \, \uu_{j'}^{\ell_5} \, \uH_j^{\ell_6} 
\Big) \Big) \, .
\end{align*}

We need to further simplify $\cD_4^m$ and for this, we use the second symmetry formula on the blue terms in the above expression 
of $\cD_4^m$:
$$
\sum_{\ell_1+\ell_2=\ell} \p_{y_3} \chi^{[\ell_1]} \, \p_\theta \psi^{\ell_2} \, = \, 
\sum_{\ell_1+\ell_2=\ell} \p_\theta \big( \dot{\chi}^{[\ell_1]} \, \psi^{\ell_2} \big) 
+\sum_{\ell_1+\cdots+\ell_4=\ell} \big( \dot{\chi}^{[\ell_1]} \, \p_{y_3} \chi^{[\ell_2]} -\chi^{[\ell_1]} \, \p_{y_3} \dot{\chi}^{[\ell_2]} \big) 
\, \psi^{\ell_3} \, \p_\theta \psi^{\ell_4} \, .
$$
For the terms arising with the factor $\p_\theta \big( \dot{\chi}^{[\ell_1]} \, \psi^{\ell_2} \big)$, we integrate by parts with respect to 
$\theta$. Using the fast equation $c \, \p_\theta \uH_3^\ell -b \, \p_\theta \uu_3^\ell =-\uF_6^{\ell-1}$, $\ell=1,\dots,m$, we get:
\begin{align*}
\cD_4^m \, = \, {\bf c}_0 \, \Big\{ \, & \, \sum_{\ell_1+\ell_2+\ell_3=m+1} \dot{\chi}^{[\ell_1]} \, \psi^{\ell_2} \, 
\p_{y_3} \big( -\uF_6^{\ell_3} +H_j^0 \, \p_{y_j} \uu_3^{\ell_3} -(\p_t +u_j^0 \, \p_{y_j}) \uH_3^{\ell_3} \big) \\
& \, +\sum_{\ell_1+\cdots+\ell_5=m+1} \dot{\chi}^{[\ell_1]} \, \psi^{\ell_2} \, \p_{y_3} \Big( \chi^{[\ell_3]} \, \Big( 
(\p_t +u_j^0 \, \p_{y_j}) \psi^{\ell_4} \, \p_{y_3} \uH_3^{\ell_5} -H_j^0 \, \p_{y_j} \psi^{\ell_4} \, \p_{y_3} \uu_3^{\ell_5} \Big) \Big) \\
& \, +\sum_{\substack{\ell_1+\cdots+\ell_4=m+2 \\ \ell_3,\ell_4 \ge 1}} \dot{\chi}^{[\ell_1]} \, \psi^{\ell_2} \, \p_{y_3} \p_\theta 
\big( \xi_j \, \uH_j^{\ell_3} \, \uu_3^{\ell_4} -\xi_j \, \uu_j^{\ell_3} \, \uH_3^{\ell_4} \big) \\
& \, +\sum_{\substack{\ell_1+\cdots+\ell_4=m+1 \\ \ell_3,\ell_4 \ge 1}} \dot{\chi}^{[\ell_1]} \, \psi^{\ell_2} \, \p_{y_3} \p_{y_j} 
\big( \uH_j^{\ell_3} \, \uu_3^{\ell_4} -\uu_j^{\ell_3} \, \uH_3^{\ell_4} \big) \\
& \, +\sum_{\substack{\ell_1+\cdots+\ell_6=m+2 \\ \ell_5,\ell_6 \ge 1}} \dot{\chi}^{[\ell_1]} \, \psi^{\ell_2} \, \p_{y_3} 
\Big( \chi^{[\ell_3]} \, \p_{y_j} \psi^{\ell_4} \, \p_{y_3} \big( \uu_j^{\ell_5} \, \uH_3^{\ell_6} -\uH_j^{\ell_5} \, \uu_3^{\ell_6} \big) \Big) \\
& \, +\sum_{\ell_1+\cdots+\ell_5=m+2} \big( \p_{y_3} \chi^{[\ell_1]} \, \dot{\chi}^{[\ell_2]} -\chi^{[\ell_1]} \, \p_{y_3} \dot{\chi}^{[\ell_2]} \big) 
\, \psi^{\ell_3} \, \p_\theta \psi^{\ell_4} \, \p_{y_3} \big( c \, \uH_3^{\ell_5} -b \, \uu_3^{\ell_5} \big) \\
& \, +\sum_{\substack{\ell_1+\cdots+\ell_6=m+2 \\ \ell_5,\ell_6 \ge 1}} \big( \p_{y_3} \chi^{[\ell_1]} \, \dot{\chi}^{[\ell_2]} 
-\chi^{[\ell_1]} \, \p_{y_3} \dot{\chi}^{[\ell_2]} \big) \, \psi^{\ell_3} \, \p_\theta \psi^{\ell_4} \, 
\p_{y_3} \big( \xi_j \, \uu_j^{\ell_5} \, \uH_3^{\ell_6} -\xi_j \, \uH_j^{\ell_5} \, \uu_3^{\ell_6} \big) \Big\} \\
= \, {\bf c}_0 \, \Big\{ \, & \, \sum_{\ell_1+\cdots+\ell_5=m+2} \dot{\chi}^{[\ell_1]} \, \psi^{\ell_2} \, \p_{y_3} \Big( 
\chi^{[\ell_3]} \, \p_\theta \psi^{\ell_4} \, \p_{y_3} \big( b \, \uu_3^{\ell_5} -c \, \uH_3^{\ell_5} \big) \Big) \\
& \, +\sum_{\substack{\ell_1+\cdots+\ell_6=m+2 \\ \ell_5,\ell_6 \ge 1}} \dot{\chi}^{[\ell_1]} \, \psi^{\ell_2} \, \p_{y_3} \Big( 
\chi^{[\ell_3]} \, \p_\theta \psi^{\ell_4} \, \p_{y_3} \big( \xi_j \, \uH_j^{\ell_5} \, \uu_3^{\ell_6} -\xi_j \, \uu_j^{\ell_5} \, \uH_3^{\ell_6} \big) \Big) \\
& \, +\sum_{\ell_1+\cdots+\ell_5=m+2} \big( \p_{y_3} \chi^{[\ell_1]} \, \dot{\chi}^{[\ell_2]} -\chi^{[\ell_1]} \, \p_{y_3} \dot{\chi}^{[\ell_2]} \big) 
\, \psi^{\ell_3} \, \p_\theta \psi^{\ell_4} \, \p_{y_3} \big( c \, \uH_3^{\ell_5} -b \, \uu_3^{\ell_5} \big) \\
& \, +\sum_{\substack{\ell_1+\cdots+\ell_6=m+2 \\ \ell_5,\ell_6 \ge 1}} \big( \p_{y_3} \chi^{[\ell_1]} \, \dot{\chi}^{[\ell_2]} 
-\chi^{[\ell_1]} \, \p_{y_3} \dot{\chi}^{[\ell_2]} \big) \, \psi^{\ell_3} \, \p_\theta \psi^{\ell_4} \, 
\p_{y_3} \big( \xi_j \, \uu_j^{\ell_5} \, \uH_3^{\ell_6} -\xi_j \, \uH_j^{\ell_5} \, \uu_3^{\ell_6} \big) \Big\} \, ,
\end{align*}
where we have used the expression of $\uF_6^\ell$. We thus end up with the expression:
\begin{align*}
\cD_4^m \, = \, {\bf c}_0 \, \Big\{ \, & \, \sum_{\ell_1+\cdots+\ell_5=m+2} \chi^{[\ell_1]} \, \p_\theta \psi^{\ell_2} \, \p_{y_3} \Big( 
\dot{\chi}^{[\ell_3]} \, \psi^{\ell_4} \, \p_{y_3} \big( b \, \uu_3^{\ell_5} -c \, \uH_3^{\ell_5} \big) \Big) \\
& \, +\sum_{\substack{\ell_1+\cdots+\ell_6=m+2 \\ \ell_5,\ell_6 \ge 1}} \chi^{[\ell_1]} \, \p_\theta \psi^{\ell_2} \, \p_{y_3} \Big( 
\dot{\chi}^{[\ell_3]} \, \psi^{\ell_4} \, \p_{y_3} \big( \xi_j \, \uH_j^{\ell_5} \, \uu_3^{\ell_6} -\xi_j \, \uu_j^{\ell_5} \, \uH_3^{\ell_6} \big) 
\Big) \Big\} \, .
\end{align*}
\bigskip

$\bullet$ \underline{Step 5}. Conclusion. Collecting the latter expression of $\cD_4^m$ with the definition of $\cD_5^m$, we get 
(after a few more lines of simplifications...):
$$
\cD^m \, = \, {\bf c}_0 \, \Big\{ \sum_{\ell_1+\ell_2+\ell_3=m+2} \chi^{[\ell_1]} \, \p_\theta \psi^{\ell_2} \, \p_{y_3} \Big( 
\xi_j \, \uF_{3+j}^{\ell_3} -\tau \, \uF_8^{\ell_3} \Big) \Big\} \, .
$$
We are now going to prove that the functions $\xi_j \, \uF_{3+j}^\ell -\tau \, \uF_8^\ell$, $\ell=1,\dots,m$ vanish, which will complete 
the proof of Lemma \ref{lemB4}. For $\ell=m$, we use Lemma \ref{lem_compatibilite_div}, pass to the limit $|Y_3| = +\infty$, and get:
$$
\p_\theta \big( \xi_j \, \uF_{3+j}^m -\tau \, \uF_8^m \big) \, = \, 0 \, .
$$
This means that we have:
$$
\xi_j \, \uF_{3+j}^m -\tau \, \uF_8^m \, = \, \xi_j \, \widehat{\uF}_{3+j}^m (0) -\tau \, \widehat{\uF}_8^m (0) \, = \, 0 \, ,
$$
where the final equality comes from the induction assumption \eqref{inductionHm5}. For $\ell=1,\dots,m-1$, the argument is similar. 
We have already solved the fast problems \eqref{inductionHm2}. Hence, by Theorem \ref{theorem_fast_problem}, there holds:
$$
\p_{Y_3} F_6^\ell -\xi_j \, \p_\theta F_{3+j}^\ell \, = \, \tau \, \p_\theta F_8^\ell \, ,\quad 
\widehat{\uF}_{3+j}^\ell (0) \, = \, \widehat{\uF}_8^\ell (0) \, = \, 0 \, .
$$
By using the same argument as above, we get $\xi_j \, \uF_{3+j}^\ell -\tau \, \uF_8^\ell=0$. We have thus proved $\cD^m=0$ and 
the proof of Lemma \ref{lemB4} is complete.
\end{proof}

\subsection{Compatibility for the magnetic field at the boundary}

Let us first recall the result we aim at proving here:

\begin{lemma}[Compatibility at the boundary for the slow mean problem]
\label{lemB2}
The source terms in \eqref{slow_mean_m+1_edp}, \eqref{slow_mean_m+1_saut} satisfy:
\begin{equation}
\label{lemB2-equation}
\bF_6^{\, m,\pm}|_{\Gamma_0} \, = \, \big( \p_t +u_j^{0,\pm} \, \p_{y_j} \big) \bG_2^{\, m,\pm} -H_j^{0,\pm} \, \p_{y_j} \bG_1^{\, m,\pm} \, ,
\end{equation}
independently of the choice of the slow mean $\widehat{\psi}^{\, m+1}(0)$.
\end{lemma}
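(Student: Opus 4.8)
The statement is the exact analogue for the slow mean problem of Lemma \ref{lem_compatibilite_bord} (the compatibility of the source terms at the boundary for the fast problem), and the proof will follow the same strategy, only now everything is projected onto the zero Fourier mode with respect to $\theta$ (that is, one applies ${\bf c}_0$). Throughout I will suppress the $\pm$ superscripts, since the computations are identical on either side of $\Gamma_0$. The plan is to start from the explicit expression \eqref{defsource_slow_mean_m+1} of $\bF^{\, m,\pm}$, extract its sixth component $\bF_6^m$, and take the trace on $\Gamma_0 = \{ y_3 = 0\}$. As in the proof of Lemma \ref{lem_compatibilite_bord}, taking the trace $y_3 = 0$ is what makes the argument work: the functions $\chi^{[\ell]}$ vanish for $\ell \ge 1$ and $\chi^{[0]}|_{y_3=0} = 1$, while $\dot{\chi}^{[\ell]}|_{y_3=0} = 0$ for all $\ell \ge 0$ by the explicit formulas \eqref{formule_chim}, \eqref{formule_chipointm} of Appendix \ref{appendixB} (each term there carries a factor $\chi$ evaluated at $y_3$, which vanishes at $y_3 = 0$ if one differentiates, and one has $\chi(0)=1$ for the undifferentiated term). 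After these reductions, $\bF_6^m|_{\Gamma_0}$ becomes a sum involving only tangential derivatives $\p_t, \p_{y_j}, \p_\theta$ acting on the residual profile traces $\uU^{\ell,\pm}|_{\Gamma_0}$, $\ell = 1,\dots,m$.

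\textbf{Key steps.} First I would perform the preliminary reductions described above, producing an analogue of \eqref{terme_source_F_6^m,pm-3}, namely an expression for ${\bf c}_0\{ -\bF_6^m|_{y_3=0}\}$ that is free of normal ($\p_{y_3}$ and $\p_{Y_3}$) derivatives and involves only tangential derivatives of $\uu_j^{\ell},\uH_j^{\ell}$ and the front profiles $\psi^\ell$. The elimination of the normal derivatives uses the residual boundary conditions on $\Gamma_0$ contained in the induction assumption \eqref{inductionHm2} (projected to the zero Fourier mode), exactly as in the proof of Lemma \ref{lem_compatibilite_bord} — one repeatedly substitutes $\uH_3^{\ell}|_{\Gamma_0}$ and $\uu_3^{\ell}|_{\Gamma_0}$ by their expressions in terms of the boundary source terms $G_2^{\ell-1}, G_1^{\ell-1}$ (and the residual divergence constraints to simplify curl-type combinations). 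Second, I would expand the right-hand side $(\p_t + u_j^{0} \p_{y_j}) \bG_2^m - H_j^0 \p_{y_j} \bG_1^m$ using the definitions \eqref{slow_mean_m+1_saut-bis} of $\bG_1^m, \bG_2^m$ in terms of $\widehat{G}_1^m(0), \widehat{G}_2^m(0)$ (themselves given by \eqref{s3-def_G_1^m,pm}, \eqref{s3-def_G_2^m,pm}) together with the fast-mean normal velocity and magnetic field $\widehat{u}_{3,\star}^{\, m+1}(0), \widehat{H}_{3,\star}^{\, m+1}(0)$ evaluated at $y_3 = Y_3 = 0$. The crucial observation here — which is precisely what the parenthetical remark in the Lemma statement asserts — is that the terms where $\widehat{\psi}^{\, m+1}(0)$ appears in $\bG_1^m$ and $\bG_2^m$ (namely the $(\p_t + u_j^0 \p_{y_j})\widehat{\psi}^{\, m+1}(0)$ in $\bG_1^m$ and $H_j^0 \p_{y_j}\widehat{\psi}^{\, m+1}(0)$ in $\bG_2^m$) cancel in the combination $(\p_t + u_j^0 \p_{y_j}) \bG_2^m - H_j^0 \p_{y_j}\bG_1^m$: indeed $(\p_t + u_j^0\p_{y_j})(H_{j'}^0\p_{y_{j'}}\Psi) = H_{j'}^0\p_{y_{j'}}((\p_t + u_j^0\p_{y_j})\Psi)$ since the $u^0, H^0$ are constant, so the two contributions coincide and subtract to zero. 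Third, I would match the two expressions term by term, exactly as in Step 1 through Step 4 of the proof of Lemma \ref{lem_compatibilite_bord}: the linear terms in $\psi^m$ match directly, and the quadratic and higher terms (those built from products $\p\psi^{\ell_1}\cdot U^{\ell_2}$ and $U^{\ell_1}\cdot U^{\ell_2}$) reduce to the relation \eqref{terme_source_F_6^m,pm-10} for the tangential components $b\,\p_\theta u_j^\mu - c\,\p_\theta H_j^\mu$ on the boundary $\{y_3=Y_3=0\}$, which in turn follows, as explained there, by taking the double trace of the fourth and fifth fast equations in \eqref{inductionHm2} and using the fast divergence constraints; projecting that identity onto the zero Fourier mode in $\theta$ and using the fast-mean pieces $\widehat{u}_{j,\star}^{\, m+1}(0), \widehat{H}_{j,\star}^{\, m+1}(0)$ at $y_3=0$ gives what is needed.

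\textbf{Main obstacle.} As in Lemma \ref{lem_compatibilite_bord}, the real difficulty is not conceptual but combinatorial bookkeeping: organizing the many convolution sums (with index constraints $\ell_1 + \cdots + \ell_k = m, m+1, m+2$) so that the green/blue/red groups of terms in the analogues of \eqref{terme_source_F_6^m,pm-1}--\eqref{terme_source_F_6^m,pm-8} cancel correctly, and being careful that the fast-mean contributions $\widehat{u}_{3,\star}^{\, m+1}(0)|_{y_3=Y_3=0}$, $\widehat{H}_{3,\star}^{\, m+1}(0)|_{y_3=Y_3=0}$ entering $\bG_1^m$, $\bG_2^m$ through \eqref{slow_mean_m+1_saut-bis} are handled consistently with the differential equations \eqref{defqu3H3m+1star} they satisfy. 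One also needs the fact that the $\chi^{[\ell]}$ and $\dot\chi^{[\ell]}$ vanish on $\Gamma_0$ for $\ell \ge 1$ to discard a large number of cut-off terms; this is where the symmetry formulas of Appendix \ref{appendixB} (Corollary \ref{corB1}, Corollary \ref{corB2}) are invoked, precisely as in the proof of Lemma \ref{lem_compatibilite_div}. Because the structure mirrors the already-established Lemma \ref{lem_compatibilite_bord} so closely — the right-hand sides of the analogous formulas are literally the same up to projecting with ${\bf c}_0$ — I expect the verification to go through without any new ingredient, which is why it is relegated to this appendix rather than spelled out in Chapter \ref{chapter5}.
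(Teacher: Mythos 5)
Your plan leans on the claim that Lemma \ref{lemB2} is ``Lemma \ref{lem_compatibilite_bord} projected onto the zero Fourier mode,'' so that no new ingredient is needed. This misses the central difficulty, which the paper deals with via the preliminary Lemma \ref{lemB3}. The source terms $\bG_1^{\, m,\pm}, \bG_2^{\, m,\pm}$ in \eqref{slow_mean_m+1_saut-bis} contain the traces $\widehat{u}_{3,\star}^{\, m+1,\pm}(0)|_{y_3=Y_3=0}$ and $\widehat{H}_{3,\star}^{\, m+1,\pm}(0)|_{y_3=Y_3=0}$. These functions are not boundary traces of already-known profiles: they are defined only through the ODEs \eqref{defqu3H3m+1star}, i.e. through $\p_{Y_3}\widehat{u}_{3,\star}^{\, m+1}(0)=\widehat F_{7,\star}^m(0)$ and $\p_{Y_3}\widehat{H}_{3,\star}^{\, m+1}(0)=\widehat F_{8,\star}^m(0)$ with decay at infinity. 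Prescribing $\p_{Y_3}$ does not pin down the value at $Y_3=0$ without integrating, and this is exactly why the elimination of normal derivatives cannot proceed by merely ${\bf c}_0$-projecting the manipulations from the proof of Lemma \ref{lem_compatibilite_bord}: those manipulations substitute boundary traces $u_3^\ell|_{y_3=Y_3=0}$, $H_3^\ell|_{y_3=Y_3=0}$ from the boundary conditions of \eqref{inductionHm2}, whereas here the new corrector's fast means have no such substitute. The paper's proof resolves this with a genuinely different argument: it defines the two sides (the left-hand side $\cA$ and the candidate right-hand side $\cB$) as functions of $Y_3$, shows, using the induction assumptions $(H(m)-5)$ and $(H(m)-6)$ and the substitutions \eqref{expressionF8m}--\eqref{expressionF3+jm}, that $\p_{Y_3}(\cA-\cB)=0$, and then identifies $\cA-\cB$ by computing the limit $Y_3\to\pm\infty$. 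The decomposition \eqref{decompositionlemB3} thus obtained is what makes it possible to recombine the residual and surface-wave pieces into full profiles $u_3^\ell, H_3^\ell$ whose traces on $\Gamma_0$ can then be eliminated. Your outline contains no mechanism to produce this decomposition.

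A secondary inaccuracy: you invoke the symmetry formulas (Corollaries \ref{corB1} and \ref{corB2}) as part of the argument, ``precisely as in the proof of Lemma \ref{lem_compatibilite_div}.'' Those formulas are used for the divergence-compatibility conditions (Lemma \ref{lem_compatibilite_div} and Lemma \ref{lemB4}) but play no role in the proof of Lemma \ref{lemB2}; what replaces them here is the $\p_{Y_3}$-constancy argument. On the other hand, two of your observations are correct and indeed important: the cancellation of $\widehat\psi^{\, m+1}(0)$ in the combination $(\p_t+u_j^{0,\pm}\p_{y_j})\bG_2^{\, m,\pm}-H_j^{0,\pm}\p_{y_j}\bG_1^{\, m,\pm}$ (this is precisely what the parenthetical remark in the Lemma asserts and why the proof can be carried out before $\widehat\psi^{\, m+1}(0)$ is determined), and the use of the identity $b^\pm\p_\theta u_j^\mu-c^\pm\p_\theta H_j^\mu = -F_{3+j}^{\mu-1}-u_j^{0,\pm}F_8^{\mu-1}+H_j^{0,\pm}F_7^{\mu-1}$ inherited from the fast problems of the previous steps. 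But the assertion that the verification goes ``without any new ingredient'' is what fails; the preliminary Lemma \ref{lemB3}, with its $Y_3$-constancy-and-limit argument, is that new ingredient.
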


\noindent The verification of the compatibility condition \eqref{lemB2-equation} at the boundary $\Gamma_0$ for the slow mean problem 
relies on a preliminary decomposition which we prove right now.

\begin{lemma}[A preliminary decomposition]
\label{lemB3}
Let $\widehat{u}_{3,\star}^{\, m+1}(0),\widehat{H}_{3,\star}^{\, m+1}(0)$ be defined as the unique solutions in $S_\star$ to the differential equations:
$$
\p_{Y_3} \widehat{u}_{3,\star}^{\, m+1,\pm}(0) \, = \, \widehat{F}_{7,\star}^{\, m,\pm}(0) \, ,\quad 
\p_{Y_3} \widehat{H}_{3,\star}^{\, m+1,\pm}(0) \, = \, \widehat{F}_{8,\star}^{\, m,\pm}(0) \, ,\quad Y_3 \in \bR^\pm \, .
$$
Then there holds:
\begin{align}
\Big( \big( \p_t +u_j^{0,\pm} \, \p_{y_j} \big) \widehat{H}_{3,\star}^{\, m+1,\pm}(0) \, & \, -H_j^{0,\pm} \, \p_{y_j} \widehat{u}_{3,\star}^{\, m+1,\pm}(0) 
\Big) \Big|_{y=3=Y_3=0} \notag \\
= \, {\bf c}_0 \, \Big\{ \, & \, \sum_{\substack{\ell_1+\ell_2=m+2 \\ \ell_2 \ge 1}} 
\big( \p_t +u_j^{0,\pm} \, \p_{y_j} \big) \p_\theta \psi^{\ell_1} \, \xi_{j'} \, H_{j'}^{\ell_2,\pm} 
-H_j^{0,\pm} \, \p_{y_j} \p_\theta \psi^{\ell_1} \, \xi_{j'} \, u_{j'}^{\ell_2,\pm} \notag \\
& \, +\sum_{\ell_1+\ell_2=m+2} \p_\theta \psi^{\ell_1} \, \big( \tau \, F_8^{\ell_2,\pm} -\xi_j \, F_{3+j}^{\ell_2,\pm} \big) \notag \\
& \, +\sum_{\ell_1+\ell_2=m+1} 
\big( \p_t +u_j^{0,\pm} \, \p_{y_j} \big) \psi^{\ell_1} \, F_8^{\ell_2,\pm} -H_j^{0,\pm} \, \p_{y_j} \psi^{\ell_1} \, F_7^{\ell_2,\pm} \notag \\
& \, +\sum_{\substack{\ell_1+\ell_2+\ell_3=m+3 \\ \ell_2,\ell_3 \ge 1}} \p_\theta \psi^{\ell_1} \, 
\p_{Y_3} \big( \xi_j \, u_j^{\ell_2,\pm} \, H_3^{\ell_3,\pm} -\xi_j \, H_j^{\ell_2,\pm} \, u_3^{\ell_3,\pm} \big) \notag \\
& \, +\sum_{\substack{\ell_1+\ell_2+\ell_3=m+2 \\ \ell_2,\ell_3 \ge 1}}  \p_{y_j} \psi^{\ell_1} \, 
\p_{Y_3} \big( u_j^{\ell_2,\pm} \, H_3^{\ell_3,\pm} -H_j^{\ell_2,\pm} \, u_3^{\ell_3,\pm} \big) \notag \\
& \, +\sum_{\ell_1+\ell_2=m+1} 
\big( c^\pm \, \p_\theta \psi^{\ell_1+1} +(\p_t +u_j^{0,\pm} \, \p_{y_j}) \psi^{\ell_1} \big) \, \p_{y_3} H_{3,\star}^{\ell_2,\pm} \label{decompositionlemB3} \\
& \, -\sum_{\ell_1+\ell_2=m+1} 
\big( b^\pm \, \p_\theta \psi^{\ell_1+1} +H_j^{0,\pm} \, \p_{y_j} \psi^{\ell_1} \big) \, \p_{y_3} u_{3,\star}^{\ell_2,\pm} \notag \\
& \, +\sum_{\substack{\ell_1+\ell_2=m+1 \\ \ell_1,\ell_2 \ge 1}} 
\p_{y_j} \big( H_j^{\ell_1,\pm} \, u_3^{\ell_2,\pm} -u_j^{\ell_1,\pm} \, H_3^{\ell_2,\pm} \big)_\star \notag \\
& \, +\sum_{\substack{\ell_1+\ell_2+\ell_3=m+2 \\ \ell_2,\ell_3 \ge 1}} \p_\theta \psi^{\ell_1} \, 
\p_{y_3} \big( \xi_j \, u_j^{\ell_2,\pm} \, H_3^{\ell_3,\pm} -\xi_j \, H_j^{\ell_2,\pm} \, u_3^{\ell_3,\pm} \big)_\star \notag \\
& \, +\sum_{\substack{\ell_1+\ell_2+\ell_3=m+1 \\ \ell_2,\ell_3 \ge 1}}  \p_{y_j} \psi^{\ell_1} \, 
\p_{y_3} \big( u_j^{\ell_2,\pm} \, H_3^{\ell_3,\pm} -H_j^{\ell_2,\pm} \, u_3^{\ell_3,\pm} \big)_\star \Big\} \, ,\notag 
\end{align}
where all functions on the right hand side of \eqref{decompositionlemB3} are evaluated at $y_3=Y_3=0$.
\end{lemma}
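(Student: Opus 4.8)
\textbf{Plan of proof for Lemma \ref{lemB3}.}
The strategy is to compute the two integrated quantities $\widehat{u}_{3,\star}^{\,m+1}(0)$ and $\widehat{H}_{3,\star}^{\,m+1}(0)$ by a repeated-differentiation argument in $Y_3$, exactly in the spirit of the computation of $\mathfrak{J}^{\,m,\pm}$ carried out in the proof of Lemma \ref{lem-fredholm_pb_laplace}. First I would start from the defining ODEs $\p_{Y_3}\widehat{u}_{3,\star}^{\,m+1,\pm}(0)=\widehat{F}_{7,\star}^{\,m,\pm}(0)$ and $\p_{Y_3}\widehat{H}_{3,\star}^{\,m+1,\pm}(0)=\widehat{F}_{8,\star}^{\,m,\pm}(0)$, together with the zero boundary condition at $Y_3=\pm\infty$, and substitute the explicit expressions of $F_7^{\,m,\pm}=\p_{Y_3}u_3^{\,m+1,\pm}+\xi_j\p_\theta u_j^{\,m+1,\pm}$ (equivalently the seventh line of the WKB cascade, rewritten from \eqref{s3-cascade_div_u^m+1,pm}) and $F_8^{\,m,\pm}$ from \eqref{s3-cascade_div_H^m+1,pm}. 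Integrating in $Y_3$ from $\pm\infty$ and using the exponential decay built into $S_\star^\pm$, one expresses $\widehat{u}_{3,\star}^{\,m+1,\pm}(0)$ and $\widehat{H}_{3,\star}^{\,m+1,\pm}(0)$ as sums of surface-wave profiles, where the terms $\p_{Y_3}u_j^{\,m+1,\pm}$, $\p_{Y_3}H_j^{\,m+1,\pm}$ integrate cleanly and the remaining terms (those carrying $\nabla\cdot u^{\,m,\pm}$, $\nabla\cdot H^{\,m,\pm}$, the $\chi^{[\ell]}$'s and the $\dot\chi^{[\ell]}$'s) produce the convolution-type sums appearing in \eqref{decompositionlemB3}.

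\textbf{Key steps in order.}
The first step is to apply the slow tangential operator $\p_t+u_j^{0,\pm}\p_{y_j}$ to $\widehat{H}_{3,\star}^{\,m+1,\pm}(0)$ and $H_j^{0,\pm}\p_{y_j}$ to $\widehat{u}_{3,\star}^{\,m+1,\pm}(0)$, commute these with $\p_{Y_3}$ (they commute, since $y_3,Y_3$ are parameters in the defining ODEs), and use that $\p_{Y_3}$ of the combination produces $(\p_t+u_j^{0,\pm}\p_{y_j})F_{8,\star}^{\,m,\pm}(0)-H_j^{0,\pm}\p_{y_j}F_{7,\star}^{\,m,\pm}(0)$. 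The second step is to recognize, on the right-hand side, that the combination $\tau F_8^{\ell,\pm}-\xi_jF_{3+j}^{\ell,\pm}$ arises precisely from pairing the appropriate pieces of $F_7$ and $F_8$; this is where I would invoke the identity structure used in Lemma \ref{lem_compatibilite_div} (the compatibility $\p_{Y_3}F_6-\xi_j\p_\theta F_{3+j}+\tau\p_\theta F_8=0$) to convert $Y_3$-derivatives of the source into $\theta$-derivatives and bulk source terms. The third step is bookkeeping: carefully expand $F_7^{\,m,\pm}$ and $F_8^{\,m,\pm}$ using \eqref{s3-cascade_div_u^m+1,pm}, \eqref{s3-cascade_div_H^m+1,pm}, integrate by parts in $Y_3$ on each term that is already a $\p_{Y_3}$-derivative, split each profile $U^{\ell,\pm}=\uU^{\ell,\pm}+U^{\ell,\pm}_\star$ to isolate the surface-wave contributions (the residual parts being $Y_3$-independent, they vanish after the $\p_{Y_3}$-integration against the exponentially decaying weight or get absorbed into the bulk terms $F_8^{\ell_2,\pm}$, $F_7^{\ell_2,\pm}$), relabel indices $\ell_1\leftrightarrow\ell_1+1$ where $\p_\theta\psi^{\ell_1}$ appears in order to match $c^\pm\p_\theta\psi^{\ell_1+1}$, and collect everything into the eleven sums on the right-hand side of \eqref{decompositionlemB3}. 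Finally, I would evaluate the resulting identity at $y_3=Y_3=0$, which is consistent since all operations respected that restriction.

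\textbf{Main obstacle.}
The hard part is not conceptual but organizational: matching the many index ranges and the shift $\ell_1\mapsto\ell_1+1$ so that the $\chi^{[\ell]}$- and $\dot\chi^{[\ell]}$-free terms recombine into exactly the combinations $\tau F_8^{\ell_2,\pm}-\xi_jF_{3+j}^{\ell_2,\pm}$, $(\p_t+u_j^{0,\pm}\p_{y_j})\psi^{\ell_1}F_8^{\ell_2,\pm}-H_j^{0,\pm}\p_{y_j}\psi^{\ell_1}F_7^{\ell_2,\pm}$, and the various bilinear $\p_{Y_3}$- and $\p_{y_3}$-sums displayed in \eqref{decompositionlemB3}, while the pieces carrying $\chi^{[\ell_1]}$ with $\ell_1\ge 1$ on the lower end of the cascade cancel against the boundary restrictions $\chi^{[\ell]}|_{y_3=0}=0$, $\dot\chi^{[\ell]}|_{y_3=0}=0$ for $\ell\ge 1$ (see \eqref{formule_chim}, \eqref{formule_chipointm}). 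I expect this to be a long but entirely routine induction-free computation once the substitution of \eqref{s3-cascade_div_u^m+1,pm}, \eqref{s3-cascade_div_H^m+1,pm} and the integration-by-parts in $Y_3$ are set up carefully; no new structural ingredient beyond the already-proven compatibility relations of Chapter \ref{chapter3} and the symmetry formulas of this Appendix is needed, so I would feel free to present the identity as a direct (if tedious) verification, referring the reader to the analogous manipulations in the proofs of Lemma \ref{lem_compatibilite_bord} and Lemma \ref{lem-fredholm_pb_laplace} for the details of the bookkeeping.
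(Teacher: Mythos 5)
Your high-level scaffolding is right --- commute $\p_{Y_3}$ with the slow tangential operator, expand the source terms $F_7^{\,m,\pm}$, $F_8^{\,m,\pm}$ via the cascade, integrate by parts in $Y_3$, and evaluate at the double trace --- but you have not identified the step that actually makes the computation go through, and the compatibility relation you do invoke is the wrong one. The decisive move in the paper's proof is to observe that
$\p_{Y_3}\cA=(\p_t+u_j^{0}\p_{y_j})\widehat{F}_{8,\star}^{\,m}(0)-H_j^{0}\p_{y_j}\widehat{F}_{7,\star}^{\,m}(0)$
can be rewritten, by \eqref{inductionHm6} together with \eqref{inductionHm5}, as
$\p_t\widehat{F}_{8,\star}^{\,m}(0)-\p_{y_j}\widehat{F}_{3+j,\star}^{\,m}(0)={\bf c}_0\{\p_t F_8^m-\p_{y_j}F_{3+j}^m\}$;
this is what replaces a naive mix of $F_7$ and $F_8$ by the divergence-type combination $\p_t F_8 - \p_{y_j}F_{3+j}$, whose explicit expansion via \eqref{expressionF8m}, \eqref{expressionF3+jm} is what can be matched against $\p_{Y_3}\cB$. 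The paper explicitly flags the route you propose --- ``one could then substitute the expressions of $\widehat{F}_{8,\star}^{\,m}(0)$ and $\widehat{F}_{7,\star}^{\,m}(0)$'' --- precisely as the one it avoids. You cite instead the divergence compatibility $\p_{Y_3}F_6-\xi_j\p_\theta F_{3+j}+\tau\p_\theta F_8=0$; that relation is used elsewhere (and at the tail of this proof, to kill the residual limit at $|Y_3|\to\infty$), but it does not produce the $\p_t F_8 - \p_{y_j}F_{3+j}$ structure and would not let you reorganize $\p_{Y_3}\cA$ into a form comparable with $\p_{Y_3}\cB$.

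Two further points. First, the proof is organized around proving the stronger identity $\cA=\cB$ for all $|y_3|<1/3$ and all $Y_3$, by showing $\p_{Y_3}(\cA-\cB)=0$ and then computing the $|Y_3|\to\infty$ limit; the limit computation is itself nontrivial (it invokes Lemma~\ref{lem_compatibilite_div}, Theorem~\ref{theorem_fast_problem} for the previously solved fast problems, and \eqref{inductionHm5}), and you only gesture at ``integrate from $\pm\infty$'' without realizing that this residual evaluation is a separate step with its own ingredients. Second, calling the verification ``entirely routine and induction-free'' is misleading: every cancellation here leans on the induction package $H(m)$, both through \eqref{inductionHm5}--\eqref{inductionHm6} and through the boundary conditions of \eqref{inductionHm2} needed to eliminate normal derivatives in the subsequent bookkeeping. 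Without making the shortcut through \eqref{inductionHm6} explicit, your direct-substitution route would produce a combinatorial tangle that is not obviously reconcilable with the right-hand side of \eqref{decompositionlemB3}.
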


\begin{proof}[Proof of Lemma \ref{lemB3}]
Actually, we shall prove in what follows a much more general result than \eqref{decompositionlemB3}, but in the proof of Lemma 
\ref{lemB2} which we shall give later on, we shall only be interested in the expression of the left hand side of \eqref{decompositionlemB3} 
on $y_3=Y_3=0$. From now on, we omit the superscripts $\pm$ and restrict the normal variable $y_3$ to $|y_3|<1/3$. This means 
that all functions $\dot{\chi}^{[\ell]}$, $\ell \ge 0$, vanish, all functions $\chi^{[\ell]}$, $\ell \ge 1$, also vanish and $\chi^{[0]}$ equals 
$1$. At the very end of the proof of Lemma \ref{lemB3}, we shall further restrict to $y_3=0$ but since we shall compute partial 
derivatives with respect to $y_3$ in the core of the argument, it is necessary to keep $y_3$ free, close to $0$, for the time being. 
For any $Y_3$, we set:
\begin{align}
\cA \, := \, & \, 
\big( \p_t +u_j^0 \, \p_{y_j} \big) \widehat{H}_{3,\star}^{\, m+1}(0) \, -H_j^0 \, \p_{y_j} \widehat{u}_{3,\star}^{\, m+1}(0) \, ,\label{lemB3defA} \\
\cB \, := \, & \, {\bf c}_0 \, \Big\{ \, \sum_{\substack{\ell_1+\ell_2=m+2 \\ \ell_2 \ge 1}} 
\big( \p_t +u_j^0 \, \p_{y_j} \big) \p_\theta \psi^{\ell_1} \, \xi_{j'} \, H_{j'}^{\ell_2} 
-H_j^0 \, \p_{y_j} \p_\theta \psi^{\ell_1} \, \xi_{j'} \, u_{j'}^{\ell_2} \notag \\
& \, +\sum_{\ell_1+\ell_2=m+2} \p_\theta \psi^{\ell_1} \, \big( \tau \, F_8^{\ell_2} -\xi_j \, F_{3+j}^{\ell_2} \big) \notag \\
& \, +\sum_{\ell_1+\ell_2=m+1} 
\big( \p_t +u_j^0 \, \p_{y_j} \big) \psi^{\ell_1} \, F_8^{\ell_2} -H_j^0 \, \p_{y_j} \psi^{\ell_1} \, F_7^{\ell_2} \notag \\
& \, +\sum_{\substack{\ell_1+\ell_2+\ell_3=m+3 \\ \ell_2,\ell_3 \ge 1}} \p_\theta \psi^{\ell_1} \, 
\p_{Y_3} \big( \xi_j \, u_j^{\ell_2} \, H_3^{\ell_3} -\xi_j \, H_j^{\ell_2} \, u_3^{\ell_3} \big) \notag \\
& \, +\sum_{\substack{\ell_1+\ell_2+\ell_3=m+2 \\ \ell_2,\ell_3 \ge 1}}  \p_{y_j} \psi^{\ell_1} \, 
\p_{Y_3} \big( u_j^{\ell_2} \, H_3^{\ell_3} -H_j^{\ell_2} \, u_3^{\ell_3} \big) \notag \\
& \, +\sum_{\ell_1+\ell_2=m+1} 
\big( c \, \p_\theta \psi^{\ell_1+1} +(\p_t +u_j^0 \, \p_{y_j}) \psi^{\ell_1} \big) \, \p_{y_3} H_{3,\star}^{\ell_2} \label{lemB3defB} \\
& \, -\sum_{\ell_1+\ell_2=m+1} 
\big( b \, \p_\theta \psi^{\ell_1+1} +H_j^0 \, \p_{y_j} \, \psi^{\ell_1} \big) \, \p_{y_3} u_{3,\star}^{\ell_2} \notag \\
& \, +\sum_{\substack{\ell_1+\ell_2=m+1 \\ \ell_1,\ell_2 \ge 1}} 
\p_{y_j} \big( H_j^{\ell_1} \, u_3^{\ell_2} -u_j^{\ell_1} \, H_3^{\ell_2} \big)_\star \notag \\
& \, +\sum_{\substack{\ell_1+\ell_2+\ell_3=m+2 \\ \ell_2,\ell_3 \ge 1}} \p_\theta \psi^{\ell_1} \, 
\p_{y_3} \big( \xi_j \, u_j^{\ell_2} \, H_3^{\ell_3} -\xi_j \, H_j^{\ell_2} \, u_3^{\ell_3} \big)_\star \notag \\
& \, +\sum_{\substack{\ell_1+\ell_2+\ell_3=m+1 \\ \ell_2,\ell_3 \ge 1}}  \p_{y_j} \psi^{\ell_1} \, 
\p_{y_3} \big( u_j^{\ell_2} \, H_3^{\ell_3} -H_j^{\ell_2} \, u_3^{\ell_3} \big)_\star \Big\} \, ,\notag
\end{align}
so that \eqref{decompositionlemB3} reads $(\cA-\cB)|_{y_3=Y_3=0} =0$. Actually, we shall prove below the even more general 
fact $\cA=\cB$ for $|y_3|<1/3$ and for all $Y_3$, which will obviously yield the expected result. Proving the equality $\cA=\cB$ 
is achieved below by proving first $\p_{Y_3} \cA =\p_{Y_3} \cB$ (for all relevant values of $y_3,Y_3$) and by then computing the 
limit at infinity of $\cA$ and $\cB$. We split again the analysis and the calculations in several steps.
\bigskip

$\bullet$ \underline{Step 1}. Using the induction assumption in a suitable way. We first try to derive a suitable expression for the 
partial derivative $\p_{Y_3} \cA$. Starting from the definition \eqref{lemB3defA}, we have:
$$
\p_{Y_3} \cA \, = \, 
\big( \p_t +u_j^0 \, \p_{y_j} \big) \p_{Y_3} \widehat{H}_{3,\star}^{\, m+1}(0) \, -H_j^0 \, \p_{y_j} \p_{Y_3} \widehat{u}_{3,\star}^{\, m+1}(0) 
\, = \, \big( \p_t +u_j^0 \, \p_{y_j} \big) \widehat{F}_{8,\star}^m(0) \, -H_j^0 \, \p_{y_j} \widehat{F}_{7,\star}^m(0) \, ,
$$
and one could then substitute the expressions of $\widehat{F}_{8,\star}^m(0)$ and $\widehat{F}_{7,\star}^m(0)$. But rather than 
doing so, we use the induction assumption \eqref{inductionHm6} and \eqref{inductionHm5}, and get:
$$
\p_{Y_3} \cA \, = \, \p_t \widehat{F}_{8,\star}^m(0) \, -\p_{y_j} \widehat{F}_{3+j,\star}^m(0) \, = \, 
{\bf c}_0 \, \Big\{ \p_t F_8^m \, -\p_{y_j} F_{3+j}^m \Big\} \, .
$$
Recalling the expressions \eqref{expressionF8m} and \eqref{expressionF3+jm}, we obtain:
\begin{align}
\p_{Y_3} \cA \, = \, & \, {\bf c}_0 \, \Big\{ \, {\color{blue} -\p_{y_3} \big( (\p_t +u_j^0 \, \p_{y_j}) H_3^m -H_j^0 \, \p_{y_j} u_3^m \big)} \notag \\
& \, +\sum_{\ell_1+\ell_2=m+2} \p_\theta \psi^{\ell_1} \, \p_{Y_3} \big( (\p_t +u_j^0 \, \p_{y_j}) \xi_{j'} \, H_{j'}^{\ell_2} 
-H_j^0 \, \p_{y_j} \xi_{j'} \, u_{j'}^{\ell_2} {\color{ForestGreen} -c \, \p_{y_j} H_j^{\ell_2} +b \, \p_{y_j} u_j^{\ell_2}} \big) \notag \\
& \, +\sum_{\ell_1+\ell_2=m+2} (\p_t +u_j^0 \, \p_{y_j}) \p_\theta \psi^{\ell_1} \, \p_{Y_3} \xi_{j'} \, H_{j'}^{\ell_2} 
-H_j^0 \, \p_{y_j} \p_\theta \psi^{\ell_1} \, \p_{Y_3} \xi_{j'} \, u_{j'}^{\ell_2} \notag \\
& \, -\sum_{\ell_1+\ell_2=m+2} 
c \, \p_{y_j} \p_\theta \psi^{\ell_1} \, \p_{Y_3} H_j^{\ell_2} -b \, \p_{y_j} \p_\theta \psi^{\ell_1} \, \p_{Y_3} u_j^{\ell_2} \notag \\
& \, +\sum_{\ell_1+\ell_2=m+1}\p_{y_{j'}} \psi^{\ell_1} \, \p_{Y_3} \big( (\p_t +u_j^0 \, \p_{y_j}) H_{j'}^{\ell_2} 
-H_j^0 \, \p_{y_j} u_{j'}^{\ell_2} {\color{ForestGreen} -u_{j'}^0 \, \p_{y_j} H_j^{\ell_2} +H_{j'}^0 \, \p_{y_j} u_j^{\ell_2}} \big) \notag \\
& \, -{\color{ForestGreen} \sum_{\ell_1+\ell_2=m+1} \p_t \psi^{\ell_1} \, \p_{Y_3} \p_{y_j} H_j^{\ell_2}}\notag \\
& \, +\sum_{\ell_1+\ell_2=m+1} \p_\theta \psi^{\ell_1} \, \p_{y_3} \big( (\p_t +u_j^0 \, \p_{y_j}) \xi_{j'} \, H_{j'}^{\ell_2} 
-H_j^0 \, \p_{y_j} \xi_{j'} \, u_{j'}^{\ell_2} {\color{ForestGreen} -c \, \p_{y_j} H_j^{\ell_2} +b \, \p_{y_j} u_j^{\ell_2}} \big) \notag \\
& \, +{\color{magenta} \sum_{\ell_1+\ell_2=m+1} (\p_t +u_j^0 \, \p_{y_j}) \p_\theta \psi^{\ell_1} \, \p_{y_3} \xi_{j'} \, H_{j'}^{\ell_2} 
-H_j^0 \, \p_{y_j} \p_\theta \psi^{\ell_1} \, \p_{y_3} \xi_{j'} \, u_{j'}^{\ell_2}} \notag \\
& \, -{\color{magenta} \sum_{\ell_1+\ell_2=m+1} 
c \, \p_{y_j} \p_\theta \psi^{\ell_1} \, \p_{y_3} H_j^{\ell_2} -b \, \p_{y_j} \p_\theta \psi^{\ell_1} \, \p_{y_3} u_j^{\ell_2}} \notag \\
& \, +\sum_{\ell_1+\ell_2=m}\p_{y_{j'}} \psi^{\ell_1} \, \p_{y_3} \big( (\p_t +u_j^0 \, \p_{y_j}) H_{j'}^{\ell_2} 
-H_j^0 \, \p_{y_j} u_{j'}^{\ell_2} {\color{ForestGreen} -u_{j'}^0 \, \p_{y_j} H_j^{\ell_2} +H_{j'}^0 \, \p_{y_j} u_j^{\ell_2}} \big) \notag \\
& \, -{\color{ForestGreen} \sum_{\ell_1+\ell_2=m} \p_t \psi^{\ell_1} \, \p_{y_3} \p_{y_j} H_j^{\ell_2}} \label{lemB3expressiondA1} \\
& \, +\sum_{\substack{\ell_1+\ell_2=m+1 \\ \ell_1,\ell_2 \ge 1}} 
\p_{y_j} \p_{Y_3} \big( H_j^{\ell_1} \, u_3^{\ell_2} -u_j^{\ell_1} \, H_3^{\ell_2} \big) 
+{\color{red} \sum_{\substack{\ell_1+\ell_2=m \\ \ell_1,\ell_2 \ge 1}} 
\p_{y_j} \p_{y_3} \big( H_j^{\ell_1} \, u_3^{\ell_2} -u_j^{\ell_1} \, H_3^{\ell_2} \big)} \notag \\
& \, +{\color{ForestGreen} \sum_{\substack{\ell_1+\ell_2+\ell_3=m+2 \\ \ell_2,\ell_3 \ge 1}} \p_\theta \psi^{\ell_1} \, 
\p_{y_j} \p_{Y_3} \big( \xi_{j'} \, H_{j'}^{\ell_2} \, u_j^{\ell_3} -\xi_{j'} \, u_{j'}^{\ell_2} \, H_j^{\ell_3} \big)} \notag \\
& \, +{\color{magenta} \sum_{\substack{\ell_1+\ell_2+\ell_3=m+2 \\ \ell_2,\ell_3 \ge 1}} \p_{y_j} \p_\theta \psi^{\ell_1} \, 
\p_{Y_3} \big( \xi_{j'} \, H_{j'}^{\ell_2} \, u_j^{\ell_3} -\xi_{j'} \, u_{j'}^{\ell_2} \, H_j^{\ell_3} \big)} \notag \\
& \, +{\color{ForestGreen} \sum_{\substack{\ell_1+\ell_2+\ell_3=m+1 \\ \ell_2,\ell_3 \ge 1}}\p_{y_{j'}} \psi^{\ell_1} \, 
\p_{y_j} \p_{Y_3} \big( H_{j'}^{\ell_2} \, u_j^{\ell_3} -u_{j'}^{\ell_2} \, H_j^{\ell_3} \big)} \notag \\
& \, +{\color{ForestGreen} \sum_{\substack{\ell_1+\ell_2+\ell_3=m+1 \\ \ell_2,\ell_3 \ge 1}} \p_\theta \psi^{\ell_1} \, 
\p_{y_j} \p_{y_3} \big( \xi_{j'} \, H_{j'}^{\ell_2} \, u_j^{\ell_3} -\xi_{j'} \, u_{j'}^{\ell_2} \, H_j^{\ell_3} \big)} \notag \\
& \, +\sum_{\substack{\ell_1+\ell_2+\ell_3=m+1 \\ \ell_2,\ell_3 \ge 1}} \p_{y_j} \p_\theta \psi^{\ell_1} \, 
\p_{y_3} \big( \xi_{j'} \, H_{j'}^{\ell_2} \, u_j^{\ell_3} -\xi_{j'} \, u_{j'}^{\ell_2} \, H_j^{\ell_3} \big) \notag \\
& \, +{\color{ForestGreen} \sum_{\substack{\ell_1+\ell_2+\ell_3=m \\ \ell_2,\ell_3 \ge 1}}\p_{y_{j'}} \psi^{\ell_1} \, 
\p_{y_j} \p_{y_3} \big( H_{j'}^{\ell_2} \, u_j^{\ell_3} -u_{j'}^{\ell_2} \, H_j^{\ell_3} \big)} \, \Big\} \, .\notag 
\end{align}
\bigskip

$\bullet$ \underline{Step 2}. Substituting in $\p_{Y_3} \cA$, integrating by parts and collecting terms. From the induction 
assumption \eqref{inductionHm5}, we have $\widehat{\uF}_6^m(0)=0$, and using Lemma \ref{lem_compatibilite_div}, we also 
get $\widehat{F}_{6,\star}^m(0)=0$. We thus have $\widehat{F}_6^m(0)=0$, and using the expression \eqref{expressionF6m}, 
this relation explicitly reads\footnote{Recall that we restrict to $|y_3|<1/3$, hence all simplifications in the functions $\chi^{[\ell]}$ 
and $\dot{\chi}^{[\ell]}$.}:
\begin{align*}
\big( \p_t +u_j^0 \, \p_{y_j} \big) \widehat{H}_3^m(0) -H_j^0 \, \p_{y_j} \widehat{u}_3^m(0) \, = \, & \, {\bf c}_0 \, \Big\{ \, 
{\color{ForestGreen} \sum_{\ell_1+\ell_2=m+2} \p_\theta \psi^{\ell_1} \, \big( c \, \p_{Y_3} H_3^{\ell_2} -b \, \p_{Y_3} u_3^{\ell_2} \big)} \\
& \, +{\color{ForestGreen} \sum_{\ell_1+\ell_2=m+1} \big( \p_t +u_j^0 \, \p_{y_j} \big) \psi^{\ell_1} \, \p_{Y_3} H_3^{\ell_2} 
-H_j^0 \, \p_{y_j} \psi^{\ell_1} \, \p_{Y_3} u_3^{\ell_2}} \\
& \, +{\color{ForestGreen} \sum_{\ell_1+\ell_2=m+1} \p_\theta \psi^{\ell_1} \, \big( c \, \p_{y_3} H_3^{\ell_2} -b \, \p_{y_3} u_3^{\ell_2} \big)} \\
& \, +{\color{ForestGreen} \sum_{\ell_1+\ell_2=m} \big( \p_t +u_j^0 \, \p_{y_j} \big) \psi^{\ell_1} \, \p_{y_3} H_3^{\ell_2} 
-H_j^0 \, \p_{y_j} \psi^{\ell_1} \, \p_{y_3} u_3^{\ell_2}} \\
& \, +{\color{red} \sum_{\substack{\ell_1+\ell_2=m \\ \ell_1,\ell_2 \ge 1}} 
\p_{y_j} \big( H_j^{\ell_1} \, u_3^{\ell_2} -u_j^{\ell_1} \, H_3^{\ell_2} \big)} \\
& \, +{\color{ForestGreen} \sum_{\substack{\ell_1+\ell_2+\ell_3=m+2 \\ \ell_2,\ell_3 \ge 1}} \p_\theta \psi^{\ell_1} \, 
\p_{Y_3} \big( \xi_j \, u_j^{\ell_2} \, H_3^{\ell_3} -\xi_j \, H_j^{\ell_2} \, u_3^{\ell_3} \big)} \\
& \, +{\color{ForestGreen} \sum_{\substack{\ell_1+\ell_2+\ell_3=m+1 \\ \ell_2,\ell_3 \ge 1}} \p_{y_j} \psi^{\ell_1} \, 
\p_{Y_3} \big( u_j^{\ell_2} \, H_3^{\ell_3} -H_j^{\ell_2} \, u_3^{\ell_3} \big)} \\
& \, +{\color{ForestGreen} \sum_{\substack{\ell_1+\ell_2+\ell_3=m+1 \\ \ell_2,\ell_3 \ge 1}} \p_\theta \psi^{\ell_1} \, 
\p_{y_3} \big( \xi_j \, u_j^{\ell_2} \, H_3^{\ell_3} -\xi_j \, H_j^{\ell_2} \, u_3^{\ell_3} \big)} \\
& \, +{\color{ForestGreen} \sum_{\substack{\ell_1+\ell_2+\ell_3=m \\ \ell_2,\ell_3 \ge 1}} \p_{y_j} \psi^{\ell_1} \, 
\p_{y_3} \big( u_j^{\ell_2} \, H_3^{\ell_3} -H_j^{\ell_2} \, u_3^{\ell_3} \big)} \, \Big\} \, .
\end{align*}
We differentiate the latter relation with respect to $y_3$ (hence the need to keep $y_3$ free at least up to here), substitute in 
the blue term on the right hand side of \eqref{lemB3expressiondA1}, observe first the cancelation of the red terms and rearrange 
all green terms together. We also integrate by parts (with respect to $\theta$) the pink terms in \eqref{lemB3expressiondA1}. All 
these manipulations eventually yield:
\begin{align}
\p_{Y_3} \cA \, = \, & \, {\bf c}_0 \, \Big\{ \, 
\sum_{\ell_1+\ell_2=m+2} \p_\theta \psi^{\ell_1} \, \p_{Y_3} \big( (\p_t +u_j^0 \, \p_{y_j}) \, \xi_{j'} \, H_{j'}^{\ell_2} 
-H_j^0 \, \p_{y_j} \, \xi_{j'} \, u_{j'}^{\ell_2} -c \, \nabla \cdot H^{\ell_2} +b \, \nabla \cdot u^{\ell_2} \big) \notag \\
& \, +{\color{orange} \sum_{\ell_1+\ell_2=m+2} (\p_t +u_j^0 \, \p_{y_j}) \p_\theta \psi^{\ell_1} \, \p_{Y_3} \xi_{j'} \, H_{j'}^{\ell_2} 
-H_j^0 \, \p_{y_j} \p_\theta \psi^{\ell_1} \, \p_{Y_3} \xi_{j'} \, u_{j'}^{\ell_2}} \notag \\
& \, -{\color{magenta} \sum_{\ell_1+\ell_2=m+2} 
c \, \p_{y_j} \p_\theta \psi^{\ell_1} \, \p_{Y_3} H_j^{\ell_2} -b \, \p_{y_j} \p_\theta \psi^{\ell_1} \, \p_{Y_3} u_j^{\ell_2}} \notag \\
& \, +\sum_{\ell_1+\ell_2=m+1} \p_{y_j} \psi^{\ell_1} \, \p_{Y_3} \big( (\p_t +u_{j'}^0 \,\p_{y_{j'}}) H_j^{\ell_2} 
-H_{j'}^0 \,\p_{y_{j'}} u_j^{\ell_2} \big) \notag \\
& \, -\sum_{\ell_1+\ell_2=m+1} \big( \p_t +u_j^0 \, \p_{y_j} \big) \psi^{\ell_1} \, \p_{Y_3} \nabla \cdot H^{\ell_2} 
-H_j^0 \, \p_{y_j} \psi^{\ell_1} \, \p_{Y_3} \nabla \cdot u^{\ell_2} \notag \\
& \, +\sum_{\ell_1+\ell_2=m+1} \p_\theta \psi^{\ell_1} \, \p_{y_3} \big( (\p_t +u_j^0 \, \p_{y_j}) \, \xi_{j'} \, H_{j'}^{\ell_2} 
-H_j^0 \, \p_{y_j} \, \xi_{j'} \, u_{j'}^{\ell_2} -c \, \nabla \cdot H^{\ell_2} +b \, \nabla \cdot u^{\ell_2} \big) \notag \\
& \, -{\color{ForestGreen} \sum_{\ell_1+\ell_2=m+1} (\p_t +u_j^0 \, \p_{y_j}) \psi^{\ell_1} \, \p_{y_3} \p_\theta \, \xi_{j'} \, H_{j'}^{\ell_2} 
-H_j^0 \, \p_{y_j} \psi^{\ell_1} \, \p_{y_3} \p_\theta \, \xi_{j'} \, u_{j'}^{\ell_2}} \notag \\
& \, +{\color{Mahogany} \sum_{\ell_1+\ell_2=m+1} \p_{y_j} \psi^{\ell_1} \, \p_{y_3} \big( 
c \, \p_\theta H_j^{\ell_2} -b \, \p_\theta u_j^{\ell_2} \big)} \notag \\
& \, +\sum_{\ell_1+\ell_2=m} \p_{y_j} \psi^{\ell_1} \, \p_{y_3} \big( (\p_t +u_{j'}^0 \,\p_{y_{j'}}) H_j^{\ell_2} 
-H_{j'}^0 \,\p_{y_{j'}} u_j^{\ell_2} \big) \notag \\
& \, -\sum_{\ell_1+\ell_2=m} \big( \p_t +u_j^0 \, \p_{y_j} \big) \psi^{\ell_1} \, \p_{y_3} \nabla \cdot H^{\ell_2} 
-H_j^0 \, \p_{y_j} \psi^{\ell_1} \, \p_{y_3} \nabla \cdot u^{\ell_2} \label{lemB3expressiondA2} \\
& \, +{\color{orange} \sum_{\substack{\ell_1+\ell_2=m+1 \\ \ell_1,\ell_2 \ge 1}} 
\p_{y_j} \p_{Y_3} \big( H_j^{\ell_1} \, u_3^{\ell_2} -u_j^{\ell_1} \, H_3^{\ell_2} \big)} \notag \\
& \, +\sum_{\substack{\ell_1+\ell_2+\ell_3=m+2 \\ \ell_2,\ell_3 \ge 1}} \p_\theta \psi^{\ell_1} \, 
\p_{Y_3} \nabla \cdot \big( \xi_j \, H_j^{\ell_2} \, u^{\ell_3} -\xi_j \, u_j^{\ell_2} \, H^{\ell_3} \big) \notag \\
& \, +\sum_{\substack{\ell_1+\ell_2+\ell_3=m+2 \\ \ell_2,\ell_3 \ge 1}} \p_{y_j} \psi^{\ell_1} \, 
\p_{Y_3} \p_\theta \big( \xi_{j'} \, u_{j'}^{\ell_2} \, H_j^{\ell_3} -\xi_{j'} \, H_{j'}^{\ell_2} \, u_j^{\ell_3} \big) \notag \\
& \, +\sum_{\substack{\ell_1+\ell_2+\ell_3=m+1 \\ \ell_2,\ell_3 \ge 1}} \p_{y_j} \psi^{\ell_1} \, 
\p_{Y_3} \nabla \cdot \big( H_j^{\ell_2} \, u^{\ell_3} -u_j^{\ell_2} \, H^{\ell_3} \big) \notag \\
& \, +\sum_{\substack{\ell_1+\ell_2+\ell_3=m+1 \\ \ell_2,\ell_3 \ge 1}} \p_\theta \psi^{\ell_1} \, 
\p_{y_3} \nabla \cdot \big( \xi_j \, H_j^{\ell_2} \, u^{\ell_3} -\xi_j \, u_j^{\ell_2} \, H^{\ell_3} \big) \notag \\
& \, +\sum_{\substack{\ell_1+\ell_2+\ell_3=m+1 \\ \ell_2,\ell_3 \ge 1}} \p_{y_j} \psi^{\ell_1} \, 
\p_{y_3} \p_\theta \big( \xi_{j'} \, u_{j'}^{\ell_2} \, H_j^{\ell_3} -\xi_{j'} \, H_{j'}^{\ell_2} \, u_j^{\ell_3} \big) \notag \\
& \, +\sum_{\substack{\ell_1+\ell_2+\ell_3=m \\ \ell_2,\ell_3 \ge 1}} \p_{y_j} \psi^{\ell_1} \, 
\p_{y_3} \nabla \cdot \big( H_j^{\ell_2} \, u^{\ell_3} -u_j^{\ell_2} \, H^{\ell_3} \big) \, \Big\} \, .\notag 
\end{align}
\bigskip

$\bullet$ \underline{Step 3}. Computing $\p_{Y_3} \cB$ and using the fast problems of the previous steps. Differentiating the 
definition \eqref{lemB3defB} of $\cB$, we get:
\begin{align}
\p_{Y_3} \cB \, = \, & \, {\bf c}_0 \, \Big\{ \, {\color{orange} \sum_{\ell_1+\ell_2=m+2} 
\big( \p_t +u_j^0 \, \p_{y_j} \big) \p_\theta \psi^{\ell_1} \, \p_{Y_3} \xi_{j'} \, H_{j'}^{\ell_2} 
-H_j^0 \, \p_{y_j} \p_\theta \psi^{\ell_1} \, \p_{Y_3} \xi_{j'} \, u_{j'}^{\ell_2}} \notag \\
& \, +\sum_{\ell_1+\ell_2=m+2} \p_\theta \psi^{\ell_1} \, \p_{Y_3} \big( \tau \, F_8^{\ell_2} -\xi_j \, F_{3+j}^{\ell_2} \big) \notag \\
& \, +\sum_{\ell_1+\ell_2=m+1} \big( \p_t +{\color{red} u_j^0 \, \p_{y_j}} \big) \psi^{\ell_1} \, \p_{Y_3} F_8^{\ell_2} 
-{\color{red} H_j^0 \, \p_{y_j} \psi^{\ell_1} \, \p_{Y_3} F_7^{\ell_2}} \notag \\
& \, +\sum_{\substack{\ell_1+\ell_2+\ell_3=m+3 \\ \ell_2,\ell_3 \ge 1}} \p_\theta \psi^{\ell_1} \, 
\p_{Y_3}^2 \big( \xi_j \, u_j^{\ell_2} \, H_3^{\ell_3} -\xi_j \, H_j^{\ell_2} \, u_3^{\ell_3} \big) \notag \\
& \, +\sum_{\substack{\ell_1+\ell_2+\ell_3=m+2 \\ \ell_2,\ell_3 \ge 1}}  \p_{y_j} \psi^{\ell_1} \, 
\p_{Y_3}^2 \big( u_j^{\ell_2} \, H_3^{\ell_3} -H_j^{\ell_2} \, u_3^{\ell_3} \big) \notag \\
& \, +\sum_{\ell_1+\ell_2=m+1} \big( c \, \p_\theta \psi^{\ell_1+1} +{\color{ForestGreen} (\p_t +u_j^0 \, \p_{y_j}) \psi^{\ell_1}} \big) 
\, \p_{Y_3} \p_{y_3} H_3^{\ell_2} \label{lemB3expressiondB} \\
& \, -\sum_{\ell_1+\ell_2=m+1} \big( b \, \p_\theta \psi^{\ell_1+1} +{\color{ForestGreen} H_j^0 \, \p_{y_j} \, \psi^{\ell_1}} \big) 
\, \p_{Y_3} \p_{y_3} u_3^{\ell_2} \notag \\
& \, +{\color{orange} \sum_{\substack{\ell_1+\ell_2=m+1 \\ \ell_1,\ell_2 \ge 1}} 
\p_{y_j} \p_{Y_3} \big( H_j^{\ell_1} \, u_3^{\ell_2} -u_j^{\ell_1} \, H_3^{\ell_2} \big)} \notag \\
& \, +\sum_{\substack{\ell_1+\ell_2+\ell_3=m+2 \\ \ell_2,\ell_3 \ge 1}} \p_\theta \psi^{\ell_1} \, 
\p_{Y_3} \p_{y_3} \big( \xi_j \, u_j^{\ell_2} \, H_3^{\ell_3} -\xi_j \, H_j^{\ell_2} \, u_3^{\ell_3} \big) \notag \\
& \, +\sum_{\substack{\ell_1+\ell_2+\ell_3=m+1 \\ \ell_2,\ell_3 \ge 1}}  \p_{y_j} \psi^{\ell_1} \, 
\p_{Y_3} \p_{y_3} \big( u_j^{\ell_2} \, H_3^{\ell_3} -H_j^{\ell_2} \, u_3^{\ell_3} \big) \Big\} \, .\notag
\end{align}
Let us already observe that the orange terms in \eqref{lemB3expressiondA2} and \eqref{lemB3expressiondB} will cancel when 
we compute $\p_{Y_3} \cA -\p_{Y_3} \cB$. The red terms will also cancel, but only after we integrate by parts the pink terms in 
\eqref{lemB3expressiondA2}. Namely, by using the fast problems solved at the previous steps of the induction, we have:
$$
\forall \, \mu \, = \, 1,\dots,m \, ,\quad c \, \p_\theta H_j^\mu -b \, \p_\theta u_j^\mu \, = \, 
F_{3+j}^{\mu-1} +u_j^0 \, F_8^{\mu-1} -H_j^0 \, F_7^{\mu-1} \, ,
$$
and therefore, after integrating by parts, the pink term in \eqref{lemB3expressiondA2} reads:
\begin{multline*}
{\bf c}_0 \, \Big\{ \, \sum_{\ell_1+\ell_2=m+2} 
c \, \p_{y_j} \p_\theta \psi^{\ell_1} \, \p_{Y_3} H_j^{\ell_2} -b \, \p_{y_j} \p_\theta \psi^{\ell_1} \, \p_{Y_3} u_j^{\ell_2} \, \Big\} \\
= \, - \, {\bf c}_0 \, \Big\{ \, \sum_{\ell_1+\ell_2=m+1} \p_{y_j} \psi^{\ell_1} \, 
\p_{Y_3} \big( F_{3+j}^{\ell_2} +{\color{red} u_j^0 \, F_8^{\ell_2} -H_j^0 \, F_7^{\ell_2}} \big) \, \Big\} \, ,
\end{multline*}
and the red terms in the latter expression will exactly cancel with those in \eqref{lemB3expressiondB} when we compute 
$\p_{Y_3} \cA -\p_{Y_3} \cB$. At last, let us observe that when we compute the difference $\p_{Y_3} \cA -\p_{Y_3} \cB$, 
the green terms in \eqref{lemB3expressiondA2} and \eqref{lemB3expressiondB} will yield the quantity:
$$
{\bf c}_0 \, \Big\{ \sum_{\ell_1+\ell_2=m} (\p_t +u_j^0 \, \p_{y_j}) \psi^{\ell_1} \, \p_{y_3} F_8^{\ell_2} 
-H_j^0 \, \p_{y_j} \psi^{\ell_1} \, \p_{y_3} F_7^{\ell_2} \, \Big\} \, ,
$$
which will partially cancel (for the same reason as just above) with the brown term in \eqref{lemB3expressiondA2}.

Using all simplifications mentioned above, we end up with the following decomposition:
$$
\p_{Y_3} \cA -\p_{Y_3} \cB \, = \cT_1 +\cT_2 +\cT_3 +\cT_4 \, ,
$$
with:
\begin{align*}
\cT_1 \, := \, & \, {\bf c}_0 \, \Big\{ \, 
\sum_{\ell_1+\ell_2=m+2} \p_\theta \psi^{\ell_1} \, \p_{Y_3} \big( \xi_j \, F_{3+j}^{\ell_2} -\tau \, F_8^{\ell_2} \big) \\
& \, +\sum_{\ell_1+\ell_2=m+2} \p_\theta \psi^{\ell_1} \, \p_{Y_3} \big( (\p_t +u_j^0 \, \p_{y_j}) \, \xi_{j'} \, H_{j'}^{\ell_2} 
-H_j^0 \, \p_{y_j} \, \xi_{j'} \, u_{j'}^{\ell_2} -c \, \nabla \cdot H^{\ell_2} +b \, \nabla \cdot u^{\ell_2} \big) \\
& \, +\sum_{\substack{\ell_1+\ell_2+\ell_3=m+3 \\ \ell_2,\ell_3 \ge 1}} \p_\theta \psi^{\ell_1} \, 
\p_{Y_3}^2 \big( \xi_j \, u_j^{\ell_2} \, H_3^{\ell_3} -\xi_j \, H_j^{\ell_2} \, u_3^{\ell_3} \big) \\
& \, +\sum_{\substack{\ell_1+\ell_2+\ell_3=m+2 \\ \ell_2,\ell_3 \ge 1}} \p_\theta \psi^{\ell_1} \, 
\p_{Y_3} \nabla \cdot \big( \xi_j \, H_j^{\ell_2} \, u^{\ell_3} -\xi_j \, u_j^{\ell_2} \, H^{\ell_3} \big) \, \Big\} \, ,
\end{align*}
\begin{align*}
\cT_2 \, := \, & \, {\bf c}_0 \, \Big\{ \, 
\sum_{\ell_1+\ell_2=m+1} \p_\theta \psi^{\ell_1} \, \p_{y_3} \big( \xi_j \, F_{3+j}^{\ell_2} -\tau \, F_8^{\ell_2} \big) \\
& \, +\sum_{\ell_1+\ell_2=m+1} \p_\theta \psi^{\ell_1} \, \p_{y_3} \big( (\p_t +u_j^0 \, \p_{y_j}) \, \xi_{j'} \, H_{j'}^{\ell_2} 
-H_j^0 \, \p_{y_j} \, \xi_{j'} \, u_{j'}^{\ell_2} -c \, \nabla \cdot H^{\ell_2} +b \, \nabla \cdot u^{\ell_2} \big) \\
& \, +\sum_{\substack{\ell_1+\ell_2+\ell_3=m+2 \\ \ell_2,\ell_3 \ge 1}} \p_\theta \psi^{\ell_1} \, 
\p_{Y_3} \p_{y_3} \big( \xi_j \, u_j^{\ell_2} \, H_3^{\ell_3} -\xi_j \, H_j^{\ell_2} \, u_3^{\ell_3} \big) \\
& \, +\sum_{\substack{\ell_1+\ell_2+\ell_3=m+1 \\ \ell_2,\ell_3 \ge 1}} \p_\theta \psi^{\ell_1} \, 
\p_{y_3} \nabla \cdot \big( \xi_j \, H_j^{\ell_2} \, u^{\ell_3} -\xi_j \, u_j^{\ell_2} \, H^{\ell_3} \big) \, \Big\} \, ,
\end{align*}
\begin{align*}
\cT_3 \, := \, & \, {\bf c}_0 \, \Big\{ \, 
\sum_{\ell_1+\ell_2=m+1} \p_{y_j} \psi^{\ell_1} \, \p_{Y_3} F_{3+j}^{\ell_2} -\p_t \psi^{\ell_1} \, \p_{Y_3} F_8^{\ell_2} \\
& \, +\sum_{\ell_1+\ell_2=m+1} \p_{y_j} \psi^{\ell_1} \, \p_{Y_3} \big( (\p_t +u_{j'}^0 \,\p_{y_{j'}}) H_j^{\ell_2} 
-H_{j'}^0 \,\p_{y_{j'}} u_j^{\ell_2} \big) \\
& \, -\sum_{\ell_1+\ell_2=m+1} \big( \p_t +u_j^0 \, \p_{y_j} \big) \psi^{\ell_1} \, \p_{Y_3} \nabla \cdot H^{\ell_2} 
-H_j^0 \, \p_{y_j} \psi^{\ell_1} \, \p_{Y_3} \nabla \cdot u^{\ell_2} \\
& \, +\sum_{\substack{\ell_1+\ell_2+\ell_3=m+2 \\ \ell_2,\ell_3 \ge 1}}  \p_{y_j} \psi^{\ell_1} \, \p_{Y_3} 
\Big( \p_\theta \big( \xi_{j'} \, u_{j'}^{\ell_2} \, H_j^{\ell_3} -\xi_{j'} \, H_{j'}^{\ell_2} \, u_j^{\ell_3} \big) 
+\p_{Y_3} \big( H_j^{\ell_2} \, u_3^{\ell_3} -u_j^{\ell_2} \, H_3^{\ell_3} \big) \Big) \\
& \, +\sum_{\substack{\ell_1+\ell_2+\ell_3=m+1 \\ \ell_2,\ell_3 \ge 1}} \p_{y_j} \psi^{\ell_1} \, 
\p_{Y_3} \nabla \cdot \big( H_j^{\ell_2} \, u^{\ell_3} -u_j^{\ell_2} \, H^{\ell_3} \big) \, \Big\} \, ,
\end{align*}
and
\begin{align*}
\cT_4 \, := \, & \, {\bf c}_0 \, \Big\{ \, 
\sum_{\ell_1+\ell_2=m} \p_{y_j} \psi^{\ell_1} \, \p_{y_3} F_{3+j}^{\ell_2} -\p_t \psi^{\ell_1} \, \p_{y_3} F_8^{\ell_2} \\
& \, +\sum_{\ell_1+\ell_2=m} \p_{y_j} \psi^{\ell_1} \, \p_{y_3} \big( (\p_t +u_{j'}^0 \,\p_{y_{j'}}) H_j^{\ell_2} 
-H_{j'}^0 \,\p_{y_{j'}} u_j^{\ell_2} \big) \\
& \, -\sum_{\ell_1+\ell_2=m} \big( \p_t +u_j^0 \, \p_{y_j} \big) \psi^{\ell_1} \, \p_{y_3} \nabla \cdot H^{\ell_2} 
-H_j^0 \, \p_{y_j} \psi^{\ell_1} \, \p_{y_3} \nabla \cdot u^{\ell_2} \\
& \, +\sum_{\substack{\ell_1+\ell_2+\ell_3=m+1 \\ \ell_2,\ell_3 \ge 1}}  \p_{y_j} \psi^{\ell_1} \, \p_{y_3} 
\Big( \p_\theta \big( \xi_{j'} \, u_{j'}^{\ell_2} \, H_j^{\ell_3} -\xi_{j'} \, H_{j'}^{\ell_2} \, u_j^{\ell_3} \big) 
+\p_{Y_3} \big( H_j^{\ell_2} \, u_3^{\ell_3} -u_j^{\ell_2} \, H_3^{\ell_3} \big) \Big) \\
& \, +\sum_{\substack{\ell_1+\ell_2+\ell_3=m \\ \ell_2,\ell_3 \ge 1}} \p_{y_j} \psi^{\ell_1} \, 
\p_{y_3} \nabla \cdot \big( H_j^{\ell_2} \, u^{\ell_3} -u_j^{\ell_2} \, H^{\ell_3} \big) \, \Big\} \, .
\end{align*}
\bigskip

$\bullet$ \underline{Step 4}. Substituting in $\cT_1$ and $\cT_2$. In both expressions of $\cT_1$ and $\cT_2$, we use 
\eqref{expressionF8m} and \eqref{expressionF3+jm} and substitute accordingly the value of $\xi_j \, F_{3+j}^{\ell_2} -\tau 
\, F_8^{\ell_2}$. This yields:
\begin{align*}
\cT_1 \, = \, & \, {\bf c}_0 \, \Big\{ \, 
\sum_{\ell_1+\ell_2+\ell_3=m+3} \p_\theta \psi^{\ell_1} \, \big( (\p_t +u_j^0 \, \p_{y_j}) \psi^{\ell_2} \, \p_{Y_3}^2 \xi_{j'} \, H_{j'}^{\ell_3} 
-H_j^0 \, \p_{y_j} \psi^{\ell_2} \, \p_{Y_3}^2 \xi_{j'} \, u_{j'}^{\ell_3} \big) \\
& \, +\sum_{\ell_1+\ell_2+\ell_3=m+3} \p_{y_j} \psi^{\ell_1} \, \p_\theta \psi^{\ell_2} \, 
\big( b \, \p_{Y_3}^2 u_j^{\ell_3} -c \, \p_{Y_3}^2 H_j^{\ell_3} \big) \\
& \, +\sum_{\ell_1+\ell_2+\ell_3=m+2} \p_\theta \psi^{\ell_1} \, \big( (\p_t +u_j^0 \, \p_{y_j}) \psi^{\ell_2} \, 
\p_{Y_3} \p_{y_3} \xi_{j'} \, H_{j'}^{\ell_3} -H_j^0 \, \p_{y_j} \psi^{\ell_2} \, \p_{Y_3} \p_{y_3} \xi_{j'} \, u_{j'}^{\ell_3} \big) \\
& \, +\sum_{\ell_1+\ell_2+\ell_3=m+2} \p_{y_j} \psi^{\ell_1} \, \p_\theta \psi^{\ell_2} \, 
\big( b \, \p_{Y_3} \p_{y_3} u_j^{\ell_3} -c \, \p_{Y_3} \p_{y_3} H_j^{\ell_3} \big) \\
& \, +\sum_{\substack{\ell_1+\cdots+\ell_4=m+3 \\ \ell_3,\ell_4 \ge 1}} \p_{y_j} \psi^{\ell_1} \, \p_\theta \psi^{\ell_2} \, 
\p_{Y_3}^2 \big( \xi_{j'} \, H_{j'}^{\ell_3} \, u_j^{\ell_4} -\xi_{j'} \, u_{j'}^{\ell_3} \, H_j^{\ell_4} \big) \\
& \, +\sum_{\substack{\ell_1+\cdots+\ell_4=m+2 \\ \ell_3,\ell_4 \ge 1}} \p_{y_j} \psi^{\ell_1} \, \p_\theta \psi^{\ell_2} \, 
\p_{Y_3} \p_{y_3} \big( \xi_{j'} \, H_{j'}^{\ell_3} \, u_j^{\ell_4} -\xi_{j'} \, u_{j'}^{\ell_3} \, H_j^{\ell_4} \big) \, \Big\} \, ,
\end{align*}
and
\begin{align*}
\cT_2 \, = \, & \, {\bf c}_0 \, \Big\{ \, 
\sum_{\ell_1+\ell_2+\ell_3=m+2} \p_\theta \psi^{\ell_1} \, \big( (\p_t +u_j^0 \, \p_{y_j}) \psi^{\ell_2} \, \p_{Y_3} \p_{y_3} \xi_{j'} \, H_{j'}^{\ell_3} 
-H_j^0 \, \p_{y_j} \psi^{\ell_2} \, \p_{Y_3} \p_{y_3} \xi_{j'} \, u_{j'}^{\ell_3} \big) \\
& \, +\sum_{\ell_1+\ell_2+\ell_3=m+2} \p_{y_j} \psi^{\ell_1} \, \p_\theta \psi^{\ell_2} \, 
\big( b \, \p_{Y_3} \p_{y_3} u_j^{\ell_3} -c \, \p_{Y_3} \p_{y_3} H_j^{\ell_3} \big) \\
& \, +\sum_{\ell_1+\ell_2+\ell_3=m+1} \p_\theta \psi^{\ell_1} \, \big( (\p_t +u_j^0 \, \p_{y_j}) \psi^{\ell_2} \, 
\p_{y_3}^2 \xi_{j'} \, H_{j'}^{\ell_3} -H_j^0 \, \p_{y_j} \psi^{\ell_2} \, \p_{y_3}^2 \xi_{j'} \, u_{j'}^{\ell_3} \big) \\
& \, +\sum_{\ell_1+\ell_2+\ell_3=m+1} \p_{y_j} \psi^{\ell_1} \, \p_\theta \psi^{\ell_2} \, 
\big( b \, \p_{y_3}^2 u_j^{\ell_3} -c \, \p_{y_3}^2 H_j^{\ell_3} \big) \\
& \, +\sum_{\substack{\ell_1+\cdots+\ell_4=m+2 \\ \ell_3,\ell_4 \ge 1}} \p_{y_j} \psi^{\ell_1} \, \p_\theta \psi^{\ell_2} \, 
\p_{Y_3} \p_{y_3} \big( \xi_{j'} \, H_{j'}^{\ell_3} \, u_j^{\ell_4} -\xi_{j'} \, u_{j'}^{\ell_3} \, H_j^{\ell_4} \big) \\
& \, +\sum_{\substack{\ell_1+\cdots+\ell_4=m+1 \\ \ell_3,\ell_4 \ge 1}} \p_{y_j} \psi^{\ell_1} \, \p_\theta \psi^{\ell_2} \, 
\p_{y_3}^2 \big( \xi_{j'} \, H_{j'}^{\ell_3} \, u_j^{\ell_4} -\xi_{j'} \, u_{j'}^{\ell_3} \, H_j^{\ell_4} \big) \, \Big\} \, ,
\end{align*}
\bigskip

$\bullet$ \underline{Step 5}. Conclusion. At this point, it remains to substitute the expressions \eqref{expressionF8m}, 
\eqref{expressionF3+jm} of $F_8^{\ell_2}$ and $F_{3+j}^{\ell_2}$ in the definition of $\cT_3$, and the previous expression 
of $\cT_1$ gives $\cT_1+\cT_3=0$. Similarly, the substitution of $F_8^{\ell_2}$ and $F_{3+j}^{\ell_2}$ in the definition of 
$\cT_4$ yields $\cT_2+\cT_4=0$. We have thus proved the relation $\p_{Y_3} (\cA-\cB)=0$.

We now go back to the definitions \eqref{lemB3defA}, \eqref{lemB3defB}. Since $\cA-\cB$ do not depend on $Y_3$, we have:
\begin{align}
\cA-\cB \, = \, \lim_{Y_3 \to \infty} \cA-\cB \, = \, - {\bf c}_0 \, \Big\{ \, & \, \sum_{\substack{\ell_1+\ell_2=m+2 \\ \ell_2 \ge 1}} 
\big( \p_t +u_j^0 \, \p_{y_j} \big) \p_\theta \psi^{\ell_1} \, \xi_{j'} \, \uH_{j'}^{\ell_2} 
-H_j^0 \, \p_{y_j} \p_\theta \psi^{\ell_1} \, \xi_{j'} \, \uu_{j'}^{\ell_2} \notag \\
& \, +{\color{blue} \sum_{\ell_1+\ell_2=m+2} \p_\theta \psi^{\ell_1} \, \big( \tau \, \uF_8^{\ell_2} -\xi_j \, \uF_{3+j}^{\ell_2} \big)} 
\label{lemB3conclusion} \\
& \, +\sum_{\ell_1+\ell_2=m+1} 
\big( \p_t +u_j^0 \, \p_{y_j} \big) \psi^{\ell_1} \, \uF_8^{\ell_2} -H_j^0 \, \p_{y_j} \psi^{\ell_1} \, \uF_7^{\ell_2} \Big\} \, ,\notag
\end{align}
and we need to show that the right hand side of \eqref{lemB3conclusion} vanishes. Let us first consider the intermediate blue 
term. Since we have already solved the fast problems \eqref{inductionHm2}, we can apply Theorem \ref{theorem_fast_problem} 
and we have:
$$
\forall \, \mu \, = \, 1,\dots,m-1 \, ,\quad \p_{Y_3} F_6^\mu +\xi_j \, \p_\theta F_{3+j}^\mu \, = \, \tau \, \p_\theta F_8^\mu \, ,
$$
so we get (here we use again Theorem \ref{theorem_fast_problem} for concluding that the slow mean vanishes):
$$
\forall \, \mu \, = \, 1,\dots,m-1 \, ,\quad \tau \, \uF_8^\mu -\xi_j \, \uF_{3+j}^\mu 
\, = \, \tau \, \widehat{\uF}_8^\mu(0) -\xi_j \, \widehat{\uF}_{3+j}^\mu(0) \, = \, 0 \, .
$$
Using Lemma \ref{lem_compatibilite_div}, we have:
$$
\p_{Y_3} F_6^m +\xi_j \, \p_\theta F_{3+j}^m \, = \, \tau \, \p_\theta F_8^m \, ,
$$
and combining with \eqref{inductionHm5}, we also get $\tau \, \uF_8^m -\xi_j \, \uF_{3+j}^m=0$. This implies that the blue term in 
\eqref{lemB3conclusion} vanishes. We are then left with:
\begin{align*}
\cA-\cB \, = \, - {\bf c}_0 \, \Big\{ \, & \, \sum_{\substack{\ell_1+\ell_2=m+2 \\ \ell_2 \ge 1}} 
\big( \p_t +u_j^0 \, \p_{y_j} \big) \p_\theta \psi^{\ell_1} \, \xi_{j'} \, \uH_{j'}^{\ell_2} 
-H_j^0 \, \p_{y_j} \p_\theta \psi^{\ell_1} \, \xi_{j'} \, \uu_{j'}^{\ell_2} \\
& \, +\sum_{\ell_1+\ell_2=m+1} 
\big( \p_t +u_j^0 \, \p_{y_j} \big) \psi^{\ell_1} \, \uF_8^{\ell_2} -H_j^0 \, \p_{y_j} \psi^{\ell_1} \, \uF_7^{\ell_2} \Big\} \\
= \, - {\bf c}_0 \, \Big\{ \, & \, \sum_{\substack{\ell_1+\ell_2=m+2 \\ \ell_2 \ge 1}} 
\big( \p_t +u_j^0 \, \p_{y_j} \big) \p_\theta \psi^{\ell_1} \, \xi_{j'} \, \uH_{j'}^{\ell_2} 
-H_j^0 \, \p_{y_j} \p_\theta \psi^{\ell_1} \, \xi_{j'} \, \uu_{j'}^{\ell_2} \\
& \, +\sum_{\ell_1+\ell_2=m+1} \big( \p_t +u_j^0 \, \p_{y_j} \big) \psi^{\ell_1} \, \p_\theta \, \xi_{j'} \, \uH_{j'}^{\ell_2+1} 
-H_j^0 \, \p_{y_j} \psi^{\ell_1} \, \p_\theta \, \xi_{j'} \, \uu_{j'}^{\ell_2+1} \, \Big\} \, = \, 0 \, .
\end{align*}
We then take the double trace of $\cA-\cB$ on $y_3=Y_3=0$ and the result of Lemma \ref{lemB3} follows.
\end{proof}

\noindent We now turn to the proof of Lemma \ref{lemB2}, which is crucial in view of determining the slow mean of the corrector 
$U^{\, m+1,\pm}$.

\begin{proof}[Proof of Lemma \ref{lemB2}]
We keep dropping the superscript $\pm$. Let us first recall the expression of the source terms in the slow mean problem. The trace 
on $\Gamma_0$ of the interior source term $\bF_6^m$ is given by:
\begin{align}
\bF_6^m|_{\Gamma_0} \, = \, {\bf c}_0 \, \Big\{ \, & \, 
\sum_{\ell_1+\ell_2=m+1} \big( c \, \p_\theta \psi^{\ell_1+1} +(\p_t +u_j^0 \, \p_{y_j}) \psi^{\ell_1} \big) \, \p_{y_3} \uH_3^{\ell_2} 
-\big( b \, \p_\theta \psi^{\ell_1+1} +H_j^0 \, \p_{y_j} \psi^{\ell_1} \big) \p_{y_3} \uu_3^{\ell_2} \notag \\
& \, +\sum_{\substack{\ell_1+\ell_2=m+1 \\ \ell_1,\ell_2 \ge 1}} 
\p_{y_j} \big( \uH_j^{\ell_1} \, \uu_3^{\ell_2} -\uu_j^{\ell_1} \, \uH_3^{\ell_2} \big) 
+\sum_{\substack{\ell_1+\ell_2+\ell_3=m+2 \\ \ell_2,\ell_3 \ge 1}} \p_\theta \psi^{\ell_1} \, 
\p_{y_3} \big( \xi_j \, \uu_j^{\ell_2} \, \uH_3^{\ell_3} -\xi_j \, \uH_j^{\ell_2} \, \uu_3^{\ell_3} \big) \label{lemB2-eq01} \\
& \, +\sum_{\substack{\ell_1+\ell_2+\ell_3=m+1 \\ \ell_2,\ell_3 \ge 1}} \p_{y_j} \psi^{\ell_1} \, 
\p_{y_3} \big( \uu_j^{\ell_2} \, \uH_3^{\ell_3} -\uH_j^{\ell_2} \, \uu_3^{\ell_3} \big) \Big\} \, ,\notag
\end{align}
and the boundary source terms are given by:
$$
\bG_1^m \, = \, \widehat{G}_1^m(0) -\widehat{u}_{3,\star}^{\, m+1}(0)|_{y_3=Y_3=0} \, ,\quad 
\bG_2^m \, = \, \widehat{G}_2^m(0) -\widehat{H}_{3,\star}^{\, m+1}(0)|_{y_3=Y_3=0} \, ,
$$
where $G_1^m$, $G_2^m$ are given in \eqref{s3-def_G_1^m,pm}, \eqref{s3-def_G_2^m,pm}, and the fast means 
$\widehat{u}_{3,\star}^{\, m+1}(0),\widehat{H}_{3,\star}^{\, m+1}(0)$ are determined by solving:
$$
\p_{Y_3} \widehat{u}_{3,\star}^{\, m+1}(0) \, = \, \widehat{F}_{7,\star}^m(0) \, ,\quad 
\p_{Y_3} \widehat{H}_{3,\star}^{\, m+1}(0) \, = \, \widehat{F}_{8,\star}^m(0) \, ,
$$
with the condition of exponential decay at infinity. We thus need to show the relation
\begin{multline}
\label{lemB2-eq02}
\bF_6^m|_{\Gamma_0} +\Big( (\p_t +u_j^0 \, \p_{y_j}) \, \widehat{H}_{3,\star}^{\, m+1}(0) 
-H_j^0 \, \p_{y_j} \, \widehat{u}_{3,\star}^{\, m+1}(0) \Big) \big|_{y=3=Y_3=0} \\
= \, (\p_t +u_j^0 \, \p_{y_j}) \, \widehat{G}_2^m(0) -H_j^0 \, \p_{y_j} \, \widehat{G}_1^m(0) \, ,
\end{multline}
and in this relation, the mean $\widehat{\psi}^{\, m+1}(0)$ only enters through $\widehat{G}_1^m(0)$ and $\widehat{G}_2^m(0)$. 
The verification of \eqref{lemB2-eq02} is done in several steps.
\bigskip

$\bullet$ \underline{Step 1}. Using Lemma \ref{lemB3} and collecting terms. Let us recall indeed the result of Lemma 
\ref{lemB3} which decomposes part of the left hand side of \eqref{lemB2-eq02}:
\begin{align}
\Big( (\p_t +u_j^0 \, \p_{y_j}) \, \widehat{H}_{3,\star}^{\, m+1}(0) \, & \, -H_j^0 \, \p_{y_j} \, \widehat{u}_{3,\star}^{\, m+1}(0) 
\Big) \Big|_{y=3=Y_3=0} \notag \\
= \, {\bf c}_0 \, \Big\{ \, & \, \sum_{\substack{\ell_1+\ell_2=m+2 \\ \ell_2 \ge 1}} 
(\p_t +u_j^0 \, \p_{y_j}) \, \p_\theta \psi^{\ell_1} \, \xi_{j'} \, H_{j'}^{\ell_2} 
-H_j^0 \, \p_{y_j} \, \p_\theta \psi^{\ell_1} \, \xi_{j'} \, u_{j'}^{\ell_2} \notag \\
& \, +\sum_{\ell_1+\ell_2=m+2} \p_\theta \psi^{\ell_1} \, \big( \tau \, F_8^{\ell_2} -\xi_j \, F_{3+j}^{\ell_2} \big) \notag \\
& \, +\sum_{\ell_1+\ell_2=m+1} 
(\p_t +u_j^0 \, \p_{y_j}) \, \psi^{\ell_1} \, F_8^{\ell_2} -H_j^0 \, \p_{y_j} \psi^{\ell_1} \, F_7^{\ell_2} \notag \\
& \, +\sum_{\substack{\ell_1+\ell_2+\ell_3=m+3 \\ \ell_2,\ell_3 \ge 1}} \p_\theta \psi^{\ell_1} \, 
\p_{Y_3} \big( \xi_j \, u_j^{\ell_2} \, H_3^{\ell_3} -\xi_j \, H_j^{\ell_2} \, u_3^{\ell_3} \big) \notag \\
& \, +\sum_{\substack{\ell_1+\ell_2+\ell_3=m+2 \\ \ell_2,\ell_3 \ge 1}}  \p_{y_j} \psi^{\ell_1} \, 
\p_{Y_3} \big( u_j^{\ell_2} \, H_3^{\ell_3} -H_j^{\ell_2} \, u_3^{\ell_3} \big) \notag \\
& \, +{\color{blue} \sum_{\ell_1+\ell_2=m+1} 
\big( c \, \p_\theta \psi^{\ell_1+1} +(\p_t +u_j^0 \, \p_{y_j}) \, \psi^{\ell_1} \big) \, \p_{y_3} H_{3,\star}^{\ell_2}} \label{lemB2-eq03} \\
& \, -{\color{blue} \sum_{\ell_1+\ell_2=m+1} 
\big( b \, \p_\theta \psi^{\ell_1+1} +H_j^0 \, \p_{y_j} \, \psi^{\ell_1} \big) \, \p_{y_3} u_{3,\star}^{\ell_2}} \notag \\
& \, +{\color{blue} \sum_{\substack{\ell_1+\ell_2=m+1 \\ \ell_1,\ell_2 \ge 1}} 
\p_{y_j} \big( H_j^{\ell_1} \, u_3^{\ell_2} -u_j^{\ell_1} \, H_3^{\ell_2} \big)_\star} \notag \\
& \, +{\color{blue} \sum_{\substack{\ell_1+\ell_2+\ell_3=m+2 \\ \ell_2,\ell_3 \ge 1}} \p_\theta \psi^{\ell_1} \, 
\p_{y_3} \big( \xi_j \, u_j^{\ell_2} \, H_3^{\ell_3} -\xi_j \, H_j^{\ell_2} \, u_3^{\ell_3} \big)_\star} \notag \\
& \, +{\color{blue} \sum_{\substack{\ell_1+\ell_2+\ell_3=m+1 \\ \ell_2,\ell_3 \ge 1}}  \p_{y_j} \psi^{\ell_1} \, 
\p_{y_3} \big( u_j^{\ell_2} \, H_3^{\ell_3} -H_j^{\ell_2} \, u_3^{\ell_3} \big)_\star} \Big\} \, .\notag 
\end{align}
We now decompose the left hand side of \eqref{lemB2-eq02} as:
\begin{equation}
\label{lemB2-defHm}
\bH^m \, := \, \bF_6^m|_{\Gamma_0} +\Big( (\p_t +u_j^0 \, \p_{y_j}) \, \widehat{H}_{3,\star}^{\, m+1}(0) 
-H_j^0 \, \p_{y_j} \, \widehat{u}_{3,\star}^{\, m+1}(0) \Big) \big|_{y=3=Y_3=0} \, = \, \bH_1^m +\bH_2^m +\bH_3^m +\dot{\bH}_1^m \, ,
\end{equation}
where $\bH_1^m$ incorporates $\bF_6^m|_{\Gamma_0}$, whose expression is given in \eqref{lemB2-eq01}, and the terms 
highlighted in blue in \eqref{lemB2-eq03}, namely:
\begin{align}
\bH_1^m \, := \, {\bf c}_0 \, \Big\{ \, & \, 
\sum_{\ell_1+\ell_2=m+1} {\color{red} \big( c \, \p_\theta \psi^{\ell_1+1} +(\p_t +u_j^0 \, \p_{y_j}) \psi^{\ell_1} \big)} \, \p_{y_3} H_3^{\ell_2} 
-{\color{red} \big( b \, \p_\theta \psi^{\ell_1+1} +H_j^0 \, \p_{y_j} \psi^{\ell_1} \big)} \, \p_{y_3} u_3^{\ell_2} \notag \\
& \, +\sum_{\substack{\ell_1+\ell_2=m+1 \\ \ell_1,\ell_2 \ge 1}} 
\p_{y_j} \big( H_j^{\ell_1} \, u_3^{\ell_2} -u_j^{\ell_1} \, H_3^{\ell_2} \big) 
+\sum_{\substack{\ell_1+\ell_2+\ell_3=m+2 \\ \ell_2,\ell_3 \ge 1}} \p_\theta \psi^{\ell_1} \, 
\p_{y_3} \big( \xi_j \, u_j^{\ell_2} \, H_3^{\ell_3} -\xi_j \, H_j^{\ell_2} \, u_3^{\ell_3} \big) \label{lemB2-eq04} \\
& \, +\sum_{\substack{\ell_1+\ell_2+\ell_3=m+1 \\ \ell_2,\ell_3 \ge 1}} \p_{y_j} \psi^{\ell_1} \, 
\p_{y_3} \big( u_j^{\ell_2} \, H_3^{\ell_3} -H_j^{\ell_2} \, u_3^{\ell_3} \big) \Big\} \, ,\notag
\end{align}
and $\bH_2^m$, $\bH_3^m$, $\dot{\bH}_1^m$ incorporate the remaining (black) terms in \eqref{lemB2-eq03}:
\begin{equation}
\label{lemB2-eq05}
\bH_2^m \, := \, {\bf c}_0 \, \Big\{ \, 
\sum_{\ell_1+\ell_2=m+2} \p_\theta \psi^{\ell_1} \, \big( \tau \, F_8^{\ell_2} -\xi_j \, F_{3+j}^{\ell_2} \big) 
+\sum_{\substack{\ell_1+\ell_2+\ell_3=m+3 \\ \ell_2,\ell_3 \ge 1}} \p_\theta \psi^{\ell_1} \, 
\p_{Y_3} \big( \xi_j \, u_j^{\ell_2} \, H_3^{\ell_3} -\xi_j \, H_j^{\ell_2} \, u_3^{\ell_3} \big) \Big\} \, ,
\end{equation}
\begin{equation}
\label{lemB2-eq06}
\bH_3^m \, := \, {\bf c}_0 \, \Big\{ \, \sum_{\ell_1+\ell_2=m+1} 
\big( \p_t +u_j^0 \, \p_{y_j} \big) \psi^{\ell_1} \, F_8^{\ell_2} -H_j^0 \, \p_{y_j} \psi^{\ell_1} \, F_7^{\ell_2} 
+\sum_{\substack{\ell_1+\ell_2+\ell_3=m+2 \\ \ell_2,\ell_3 \ge 1}}  \p_{y_j} \psi^{\ell_1} \, 
\p_{Y_3} \big( u_j^{\ell_2} \, H_3^{\ell_3} -H_j^{\ell_2} \, u_3^{\ell_3} \big) \Big\} \, ,
\end{equation}
\begin{equation}
\label{lemB2-eq07}
\dot{\bH}_1^m \, := \, {\bf c}_0 \, \Big\{ \, \sum_{\substack{\ell_1+\ell_2=m+2 \\ \ell_2 \ge 1}} 
\big( \p_t +u_j^0 \, \p_{y_j} \big) \, \p_\theta \psi^{\ell_1} \, \xi_{j'} \, H_{j'}^{\ell_2} 
-H_j^0 \, \p_{y_j} \, \p_\theta \psi^{\ell_1} \, \xi_{j'} \, u_{j'}^{\ell_2} \Big\} \, .
\end{equation}
The reason why $\dot{\bH}_1^m$ is denoted differently from the other terms in the decomposition of $\bH^m$ is that it is 
easily recognizable as part of the right hand side of \eqref{lemB2-eq02}. We therefore keep it separate from the other terms.
\bigskip

$\bullet$ \underline{Step 2}. Two substitutions and several simplifications. We use the boundary conditions on $\Gamma_0$ of the 
previous steps in the induction and substitute accordingly in the first line of $\bH_1^m$ in \eqref{lemB2-eq04} (the red terms). After 
simplifying, this yields:
\begin{align*}
\bH_1^m \, = \, {\bf c}_0 \, \Big\{ \, & \, 
{\color{ForestGreen} \sum_{\substack{\ell_1+\ell_2=m+1 \\ \ell_1 \ge 1}} H_j^{\ell_1} \, \p_{y_j} u_3^{\ell_2} -u_j^{\ell_1} \, \p_{y_j} H_3^{\ell_2}} 
+\sum_{\ell_1+\ell_2=m+1} u_3^{\ell_1} \, \nabla \cdot H^{\ell_2} -H_3^{\ell_1} \, \nabla \cdot u^{\ell_2} \\
& \, +\sum_{\substack{\ell_1+\ell_2+\ell_3=m+2 \\ \ell_2,\ell_3 \ge 1}} \p_\theta \psi^{\ell_1} \, 
\big( H_3^{\ell_2} \, \xi_j \, \p_{y_3} u_j^{\ell_3} -u_3^{\ell_2} \, \xi_j \, \p_{y_3} H_j^{\ell_3} \big) \\
& \, +\sum_{\substack{\ell_1+\ell_2+\ell_3=m+1 \\ \ell_2,\ell_3 \ge 1}} \p_{y_j} \psi^{\ell_1} \, 
\big( H_3^{\ell_2} \, \p_{y_3} u_j^{\ell_3} -u_3^{\ell_2} \, \p_{y_3} H_j^{\ell_3} \big) \Big\} \, .
\end{align*}
Since only tangential differentiation is involved in the above green term, we can use again the boundary conditions on $\Gamma_0$ 
of the previous steps in the induction. Defining for future use:
\begin{equation}
\label{lemB2-eq08}
\dot{\bH}_2^m \, := \, {\bf c}_0 \, \Big\{ \, \sum_{\substack{\ell_1+\ell_2=m+1 \\ \ell_2 \ge 1}} 
\big( \p_t +u_j^0 \, \p_{y_j} \big) \,\p_{y_{j'}} \psi^{\ell_1} \, H_{j'}^{\ell_2} -H_j^0 \, \p_{y_j}\p_{y_{j'}} \psi^{\ell_1} \, u_{j'}^{\ell_2} \Big\} \, ,
\end{equation}
we get:
\begin{align}
\bH_1^m \, = \, \dot{\bH}_2^m +{\bf c}_0 \, \Big\{ \, & \, 
\sum_{\ell_1+\ell_2=m+1} u_3^{\ell_1} \, \nabla \cdot H^{\ell_2} -H_3^{\ell_1} \, \nabla \cdot u^{\ell_2} 
+{\color{Mahogany} \sum_{\substack{\ell_1+\ell_2=m+2 \\ \ell_1 \ge 1}} (c \, H_j^{\ell_1} -b \, u_j^{\ell_1}) \, \p_{y_j} \p_\theta \psi^{\ell_2}} \notag \\
& \, +{\color{Mahogany} \sum_{\substack{\ell_1+\ell_2+\ell_3=m+2 \\ \ell_2,\ell_3 \ge 1}} \p_{y_j} \p_\theta \psi^{\ell_1} \, 
\big( \xi_{j'} \, u_{j'}^{\ell_2} \, H_j^{\ell_3} -\xi_{j'} \, H_{j'}^{\ell_2} \, u_j^{\ell_3} \big)} \notag \\
& \, +\sum_{\substack{\ell_1+\ell_2+\ell_3=m+2 \\ \ell_2,\ell_3 \ge 1}} \p_\theta \psi^{\ell_1} \, 
\big( H^{\ell_2} \cdot \nabla (\xi_j \, u_j^{\ell_3}) -u^{\ell_2} \cdot \nabla (\xi_j \, H_j^{\ell_3}) \big) \label{lemB2-eq09} \\
& \, +\sum_{\substack{\ell_1+\ell_2+\ell_3=m+1 \\ \ell_2,\ell_3 \ge 1}} \p_{y_j} \psi^{\ell_1} \, 
\big( H^{\ell_2} \cdot \nabla u_j^{\ell_3} -u^{\ell_2} \cdot \nabla H_j^{\ell_3} \big) \Big\} \, .\notag
\end{align}
\bigskip

$\bullet$ \underline{Step 3}. Integrating by parts. We integrate by parts (with respect to $\theta$) the brown terms in the 
decomposition \eqref{lemB2-eq09}. Let us observe that there holds $c\, \p_\theta H_j^1 -b \, \p_\theta u_j^1 \equiv 0$, 
hence we get:
\begin{align*}
\bH_1^m \, = \, \dot{\bH}_2^m +{\bf c}_0 \, \Big\{ \, & \, 
\sum_{\ell_1+\ell_2=m+1} u_3^{\ell_1} \, \nabla \cdot H^{\ell_2} -H_3^{\ell_1} \, \nabla \cdot u^{\ell_2} 
+\sum_{\substack{\ell_1+\ell_2=m+2 \\ \ell_2 \ge 2}} \p_{y_j} \psi^{\ell_1} \, (b \, \p_\theta u_j^{\ell_2} -c \, \p_\theta H_j^{\ell_2}) \\
& \, +\sum_{\substack{\ell_1+\ell_2+\ell_3=m+2 \\ \ell_2,\ell_3 \ge 1}} \p_{y_j} \psi^{\ell_1} \, \p_\theta 
\big( \xi_{j'} \, H_{j'}^{\ell_2} \, u_j^{\ell_3} -\xi_{j'} \, u_{j'}^{\ell_2} \, H_j^{\ell_3} \big) \\
& \, +\sum_{\substack{\ell_1+\ell_2+\ell_3=m+2 \\ \ell_2,\ell_3 \ge 1}} \p_\theta \psi^{\ell_1} \, 
\big( H^{\ell_2} \cdot \nabla (\xi_j \, u_j^{\ell_3}) -u^{\ell_2} \cdot \nabla (\xi_j \, H_j^{\ell_3}) \big) \\
& \, +\sum_{\substack{\ell_1+\ell_2+\ell_3=m+1 \\ \ell_2,\ell_3 \ge 1}} \p_{y_j} \psi^{\ell_1} \, 
\big( H^{\ell_2} \cdot \nabla u_j^{\ell_3} -u^{\ell_2} \cdot \nabla H_j^{\ell_3} \big) \Big\} \, .
\end{align*}
Since we have already solved the fast problems \eqref{inductionHm2} up to the step $m$, we can use in the latter decomposition 
the relation:
$$
b \, \p_\theta u_j^{\ell_2} -c \, \p_\theta H_j^{\ell_2} \, = \, -F_{3+j}^{\ell_2-1} -u_j^0 \, F_8^{\ell_2-1} +H_j^0 \, F_7^{\ell_2-1} \, ,
$$
which yields:
\begin{align*}
\bH_1^m \, = \, \dot{\bH}_2^m +{\bf c}_0 \, \Big\{ \, & \, 
\sum_{\ell_1+\ell_2=m+1} u_3^{\ell_1} \, \nabla \cdot H^{\ell_2} -H_3^{\ell_1} \, \nabla \cdot u^{\ell_2} 
-\sum_{\ell_1+\ell_2=m+1} \p_{y_j} \psi^{\ell_1} \, F_{3+j}^{\ell_2} \\
& \, +{\color{orange} \sum_{\ell_1+\ell_2=m+1} H_j^0 \, \p_{y_j} \psi^{\ell_1} \, F_7^{\ell_2} -u_j^0 \, \p_{y_j} \psi^{\ell_1} \, F_8^{\ell_2}} \\
& \, +\sum_{\substack{\ell_1+\ell_2+\ell_3=m+2 \\ \ell_2,\ell_3 \ge 1}} \p_{y_j} \psi^{\ell_1} \, \p_\theta 
\big( \xi_{j'} \, H_{j'}^{\ell_2} \, u_j^{\ell_3} -\xi_{j'} \, u_{j'}^{\ell_2} \, H_j^{\ell_3} \big) \\
& \, +\sum_{\substack{\ell_1+\ell_2+\ell_3=m+2 \\ \ell_2,\ell_3 \ge 1}} \p_\theta \psi^{\ell_1} \, 
\big( H^{\ell_2} \cdot \nabla (\xi_j \, u_j^{\ell_3}) -u^{\ell_2} \cdot \nabla (\xi_j \, H_j^{\ell_3}) \big) \\
& \, +\sum_{\substack{\ell_1+\ell_2+\ell_3=m+1 \\ \ell_2,\ell_3 \ge 1}} \p_{y_j} \psi^{\ell_1} \, 
\big( H^{\ell_2} \cdot \nabla u_j^{\ell_3} -u^{\ell_2} \cdot \nabla H_j^{\ell_3} \big) \Big\} \, .
\end{align*}
It is now time to incorporate the quantity $\bH_3^m$, whose expression is given in \eqref{lemB2-eq06}. Adding $\bH_1^m$ 
with $\bH_3^m$ cancels the orange term in the latter decomposition and we get:
\begin{align}
\bH_1^m +\bH_3^m \, = \, \dot{\bH}_2^m +{\bf c}_0 \, \Big\{ \, & \, 
\sum_{\ell_1+\ell_2=m+1} u_3^{\ell_1} \, \nabla \cdot H^{\ell_2} -H_3^{\ell_1} \, \nabla \cdot u^{\ell_2} 
+\sum_{\ell_1+\ell_2=m+1} {\color{Magenta} \p_t \psi^{\ell_1} \, F_8^{\ell_2}} -{\color{blue} \p_{y_j} \psi^{\ell_1} \, F_{3+j}^{\ell_2}} \notag \\
& \, +{\color{blue} \sum_{\substack{\ell_1+\ell_2+\ell_3=m+2 \\ \ell_2,\ell_3 \ge 1}} \p_{y_j} \psi^{\ell_1} \, \Big( \p_\theta 
\big( \xi_{j'} \, H_{j'}^{\ell_2} \, u_j^{\ell_3} -\xi_{j'} \, u_{j'}^{\ell_2} \, H_j^{\ell_3} \big) 
+\p_{Y_3} \big( u_j^{\ell_2} \, H_3^{\ell_3} -H_j^{\ell_2} \, u_3^{\ell_3} \big) \Big)} \notag \\
& \, +{\color{ForestGreen} \sum_{\substack{\ell_1+\ell_2+\ell_3=m+2 \\ \ell_2,\ell_3 \ge 1}} \p_\theta \psi^{\ell_1} \, 
\big( H^{\ell_2} \cdot \nabla (\xi_j \, u_j^{\ell_3}) -u^{\ell_2} \cdot \nabla (\xi_j \, H_j^{\ell_3}) \big)} \label{lemB2-eq10} \\
& \, +{\color{blue} \sum_{\substack{\ell_1+\ell_2+\ell_3=m+1 \\ \ell_2,\ell_3 \ge 1}} \p_{y_j} \psi^{\ell_1} \, 
\big( H^{\ell_2} \cdot \nabla u_j^{\ell_3} -u^{\ell_2} \cdot \nabla H_j^{\ell_3} \big)} \Big\} \, .\notag
\end{align}
\bigskip

$\bullet$ \underline{Step 4}. Substituting in $\bH_2^m$ and recollecting the terms. We now substitute the quantity 
$\tau \, F_8^{\ell_2} -\xi_j \, F_{3+j}^{\ell_2}$ in $\bH_2^m$, whose expression is given in \eqref{lemB2-eq05}. For 
future use, we define:
\begin{equation}
\label{lemB2-eq11}
\dot{\bH}_3^m \, := \, {\bf c}_0 \, \Big\{ \, \sum_{\ell_1+\ell_2=m+2} \p_\theta \psi^{\ell_1} \, \Big( 
(\p_t +u_j^0 \, \p_{y_j}) \, (\xi_{j'} \, H_{j'}^{\ell_2}) -H_j^0 \, \p_{y_j} (\xi_{j'} \, u_{j'}^{\ell_2}) \Big) \Big\} \, .
\end{equation}
We then obtain:
\begin{align}
\bH_2^m \, = \, \dot{\bH}_3^m +{\bf c}_0 \, \Big\{ \, & \, \sum_{\ell_1+\ell_2=m+2} \p_\theta \psi^{\ell_1} \, 
\big( b \, \nabla \cdot u^{\ell_2} -c \, \nabla \cdot H^{\ell_2} \big) 
-{\color{Magenta} \sum_{\ell_1+\ell_2+\ell_3=m+3} \p_t \psi^{\ell_1} \, \p_\theta \psi^{\ell_2} \, \xi_j \, \p_{Y_3} H_j^{\ell_3}} \notag \\
& \, +{\color{blue} \sum_{\ell_1+\ell_2+\ell_3=m+3} \p_{y_j} \psi^{\ell_1} \, \p_\theta \psi^{\ell_2} \, \Big( 
H_j^0 \, \p_{Y_3} (\xi_{j'} \, u_{j'}^{\ell_3}) -u_j^0 \, \p_{Y_3} (\xi_{j'} \, H_{j'}^{\ell_3}) 
+c \, \p_{Y_3} H_j^{\ell_3} -b \, \p_{Y_3} u_j^{\ell_3} \Big)} \notag \\
& \, +{\color{blue} \sum_{\ell_1+\ell_2+\ell_3=m+2} \p_{y_j} \psi^{\ell_1} \, \p_\theta \psi^{\ell_2} \, \Big( 
H_j^0 \, \p_{y_3} (\xi_{j'} \, u_{j'}^{\ell_3}) -u_j^0 \, \p_{y_3} (\xi_{j'} \, H_{j'}^{\ell_3}) 
+c \, \p_{y_3} H_j^{\ell_3} -b \, \p_{y_3} u_j^{\ell_3} \Big)} \notag \\
& \, {\color{Magenta} -\sum_{\ell_1+\ell_2+\ell_3=m+2} \p_t \psi^{\ell_1} \, \p_\theta \psi^{\ell_2} \, \xi_j \, \p_{y_3} H_j^{\ell_3}} 
\label{lemB2-eq12} \\
& \, +{\color{ForestGreen} \sum_{\substack{\ell_1+\ell_2+\ell_3=m+2 \\ \ell_2,\ell_3 \ge 1}} \p_\theta \psi^{\ell_1} \, 
\nabla \cdot \big( \xi_j \, H_j^{\ell_2} \, u^{\ell_3} -\xi_j \, u_j^{\ell_2} \, H^{\ell_3} \big)} \notag \\
& \, +{\color{blue} \sum_{\substack{\ell_1+\cdots+\ell_4=m+3 \\ \ell_3,\ell_4 \ge 1}} \p_{y_j} \psi^{\ell_1} \, \p_\theta \psi^{\ell_2} \, 
\p_{Y_3} \big( \xi_{j'} \, H_{j'}^{\ell_3} \, u_j^{\ell_4} -\xi_{j'} \, H_{j'}^{\ell_3} \, u_j^{\ell_4} \big)} \notag \\
& \, +{\color{blue} \sum_{\substack{\ell_1+\cdots+\ell_4=m+2 \\ \ell_3,\ell_4 \ge 1}} \p_{y_j} \psi^{\ell_1} \, \p_\theta \psi^{\ell_2} \, 
\p_{y_3} \big( \xi_{j'} \, H_{j'}^{\ell_3} \, u_j^{\ell_4} -\xi_{j'} \, H_{j'}^{\ell_3} \, u_j^{\ell_4} \big)} \Big\} \, .\notag
\end{align}
Observe the partial cancelation between the green terms in \eqref{lemB2-eq10} and \eqref{lemB2-eq12}.

Going back to the definition \eqref{lemB2-defHm} and using \eqref{lemB2-eq10}, \eqref{lemB2-eq12}, we can decompose 
$\bH^m$ under the form:
$$
\bH^m \, = \, \dot{\bH}_1^m +\dot{\bH}_2^m +\dot{\bH}_3^m +\bH_4^m +\bH_5^m +\bH_6^m \, ,
$$
where:
\begin{itemize}
 \item $\dot{\bH}_1^m$, $\dot{\bH}_2^m$, $\dot{\bH}_3^m$ are defined respectively in \eqref{lemB2-eq07}, \eqref{lemB2-eq08}, 
 \eqref{lemB2-eq11},
 \item $\bH_4^m$ incorporates the pink terms in \eqref{lemB2-eq10} and \eqref{lemB2-eq12},
 \item $\bH_5^m$ incorporates the blue terms in \eqref{lemB2-eq10} and \eqref{lemB2-eq12},
 \item $\bH_6^m$ incorporates all other terms in \eqref{lemB2-eq10} and \eqref{lemB2-eq12},
\end{itemize}
which corresponds to:
\begin{align*}
\bH_4^m \, := \, {\bf c}_0 \, \Big\{ \, \sum_{\ell_1+\ell_2=m+1} \p_t \psi^{\ell_1} \, F_8^{\ell_2} \, 
& \, -\sum_{\ell_1+\ell_2+\ell_3=m+3} \p_t \psi^{\ell_1} \, \p_\theta \psi^{\ell_2} \, \xi_j \, \p_{Y_3} H_j^{\ell_3} \\
& \, -\sum_{\ell_1+\ell_2+\ell_3=m+2} \p_t \psi^{\ell_1} \, \p_\theta \psi^{\ell_2} \, \xi_j \, \p_{y_3} H_j^{\ell_3} \Big\} \, ,
\end{align*}
\begin{align*}
\bH_5^m \, := \, {\bf c}_0 \, \Big\{ \, & \, -\sum_{\ell_1+\ell_2=m+1} \p_{y_j} \psi^{\ell_1} \, F_{3+j}^{\ell_2} \\
& \, +\sum_{\ell_1+\ell_2+\ell_3=m+3} \p_{y_j} \psi^{\ell_1} \, \p_\theta \psi^{\ell_2} \, \Big( 
H_j^0 \, \p_{Y_3} (\xi_{j'} \, u_{j'}^{\ell_3}) -u_j^0 \, \p_{Y_3} (\xi_{j'} \, H_{j'}^{\ell_3}) 
+c \, \p_{Y_3} H_j^{\ell_3} -b \, \p_{Y_3} u_j^{\ell_3} \Big) \\
& \, +\sum_{\ell_1+\ell_2+\ell_3=m+2} \p_{y_j} \psi^{\ell_1} \, \p_\theta \psi^{\ell_2} \, \Big( 
H_j^0 \, \p_{y_3} (\xi_{j'} \, u_{j'}^{\ell_3}) -u_j^0 \, \p_{y_3} (\xi_{j'} \, H_{j'}^{\ell_3}) 
+c \, \p_{y_3} H_j^{\ell_3} -b \, \p_{y_3} u_j^{\ell_3} \Big) \\
& \, +\sum_{\substack{\ell_1+\ell_2+\ell_3=m+2 \\ \ell_2,\ell_3 \ge 1}} \p_{y_j} \psi^{\ell_1} \, \Big( 
\p_\theta \big( \xi_{j'} \, H_{j'}^{\ell_2} \, u_j^{\ell_3} -\xi_{j'} \, u_{j'}^{\ell_2} \, H_j^{\ell_3} \big) 
+\p_{Y_3} \big( u_j^{\ell_2} \, H_3^{\ell_3} -H_j^{\ell_2} \, u_3^{\ell_3} \big) \Big) \\
& \, +\sum_{\substack{\ell_1+\ell_2+\ell_3=m+1 \\ \ell_2,\ell_3 \ge 1}} \p_{y_j} \psi^{\ell_1} \, 
\big( H^{\ell_2} \cdot \nabla u_j^{\ell_3} -u^{\ell_2} \cdot \nabla H_j^{\ell_3} \big) \\
& \, +\sum_{\substack{\ell_1+\cdots+\ell_4=m+3 \\ \ell_3,\ell_4 \ge 1}} \p_{y_j} \psi^{\ell_1} \, \p_\theta \psi^{\ell_2} \, 
\p_{Y_3} \big( \xi_{j'} \, H_{j'}^{\ell_3} \, u_j^{\ell_4} -\xi_{j'} \, H_{j'}^{\ell_3} \, u_j^{\ell_4} \big) \\
& \, +\sum_{\substack{\ell_1+\cdots+\ell_4=m+2 \\ \ell_3,\ell_4 \ge 1}} \p_{y_j} \psi^{\ell_1} \, \p_\theta \psi^{\ell_2} \, 
\p_{y_3} \big( \xi_{j'} \, H_{j'}^{\ell_3} \, u_j^{\ell_4} -\xi_{j'} \, H_{j'}^{\ell_3} \, u_j^{\ell_4} \big) \Big\} \, ,
\end{align*}
\begin{align*}
\bH_6^m \, := \, {\bf c}_0 \, \Big\{ \, & \, 
\sum_{\ell_1+\ell_2=m+1} \big( u_3^{\ell_1} -c \, \p_\theta \psi^{\ell_1+1} \big) \, \nabla \cdot H^{\ell_2} 
-\big( H_3^{\ell_1} -b \, \p_\theta \psi^{\ell_1+1} \big) \, \nabla \cdot u^{\ell_2} \\
& \, +\sum_{\substack{\ell_1+\ell_2+\ell_3=m+2 \\ \ell_2,\ell_3 \ge 1}} \p_\theta \psi^{\ell_1} \, 
\big( \xi_j \, H_j^{\ell_2} \, \nabla \cdot u^{\ell_3} -\xi_j \, u_j^{\ell_2} \, \nabla \cdot H^{\ell_3} \big) \Big\} \, .
\end{align*}
\bigskip

$\bullet$ \underline{Step 5}. Substituting in $\bH_4^m$ and using boundary conditions. Substituting the value of 
$F_8^{\ell_2}$ in $\bH_4^m$, we obtain:
\begin{align*}
\bH_4^m \, = \, {\bf c}_0 \, \Big\{ \, -\sum_{\ell_1+\ell_2=m+1} \p_t \psi^{\ell_1} \, \nabla \cdot H^{\ell_2} \, 
& \, +\sum_{\ell_1+\ell_2+\ell_3=m+2} \p_t \psi^{\ell_1} \, \p_{y_j} \psi^{\ell_2} \, \p_{Y_3} H_j^{\ell_3} \\
& \, +\sum_{\ell_1+\ell_2+\ell_3=m+1} \p_t \psi^{\ell_1} \, \p_{y_j} \psi^{\ell_2} \, \p_{y_3} H_j^{\ell_3} \Big\} \, ,
\end{align*}
and we therefore get:
\begin{align}
\bH_4^m +\bH_6^m \, = \, {\bf c}_0 \, \Big\{ \, & \, 
\sum_{\ell_1+\ell_2=m+1} \big( u_3^{\ell_1} -c \, \p_\theta \psi^{\ell_1+1} -\p_t \psi^{\ell_1} \big) \, \nabla \cdot H^{\ell_2} 
-\big( H_3^{\ell_1} -b \, \p_\theta \psi^{\ell_1+1} \big) \, \nabla \cdot u^{\ell_2} \notag \\
& \, +\sum_{\substack{\ell_1+\ell_2+\ell_3=m+2 \\ \ell_2,\ell_3 \ge 1}} \p_\theta \psi^{\ell_1} \, 
\big( \xi_j \, H_j^{\ell_2} \, \nabla \cdot u^{\ell_3} -\xi_j \, u_j^{\ell_2} \, \nabla \cdot H^{\ell_3} \big) \Big\} \notag \\
+ \, {\bf c}_0 \, \Big\{ \, & \, \sum_{\ell_1+\ell_2+\ell_3=m+2} \p_t \psi^{\ell_1} \, \p_{y_j} \psi^{\ell_2} \, \p_{Y_3} H_j^{\ell_3} 
+\sum_{\ell_1+\ell_2+\ell_3=m+1} \p_t \psi^{\ell_1} \, \p_{y_j} \psi^{\ell_2} \, \p_{y_3} H_j^{\ell_3} \Big\} \notag \\
= \, {\bf c}_0 \, \Big\{ \, & \, {\color{ForestGreen} \sum_{\ell_1+\ell_2=m+1} \p_{y_j} \psi^{\ell_1} \, 
\big( u_j^0 \, \nabla \cdot H^{\ell_2} -H_j^0 \, \nabla \cdot u^{\ell_2} \big)} \notag \\
& \, {\color{orange} \sum_{\substack{\ell_1+\ell_2+\ell_3=m+1 \\ \ell_2,\ell_3  \ge 1}} \p_{y_j} \psi^{\ell_1} \, 
\big( u_j^{\ell_2} \, \nabla \cdot H^{\ell_3} -H_j^{\ell_2} \, \nabla \cdot u^{\ell_3} \big)} \label{lemB2-eq13} \\
& \, {\color{blue} \sum_{\ell_1+\ell_2+\ell_3=m+2} \p_{y_j} \psi^{\ell_1} \, \p_t \psi^{\ell_2} \, \p_{Y_3} H_j^{\ell_3}} 
+{\color{blue} \sum_{\ell_1+\ell_2+\ell_3=m+1} \p_{y_j} \psi^{\ell_1} \, \p_t \psi^{\ell_2} \, \p_{y_3} H_j^{\ell_3}} \Big\} \, .\notag
\end{align}
where we have used again the boundary conditions on $\Gamma_0$ of the previous steps in the induction.
\bigskip

$\bullet$ \underline{Step 6}. Conclusion. We now substitute $F_{3+j}^{\ell_2}$ in the expression of $\bH_5^m$ and get:
\begin{align*}
\bH_5^m \, = \, {\bf c}_0 \, \Big\{ \, & \, \sum_{\ell_1+\ell_2=m+1} \p_{y_j} \psi^{\ell_1} \, \Big( 
(\p_t +u_{j'}^0 \,\p_{y_{j'}}) H_j^{\ell_2} -H_{j'}^0 \,\p_{y_{j'}} u_j^{\ell_2} \, 
{\color{ForestGreen} -u_j^0 \, \nabla \cdot H^{\ell_2} +H_j^0 \, \nabla \cdot u^{\ell_2}} \Big) \\
& \, -{\color{blue} \sum_{\ell_1+\ell_2+\ell_3=m+2} \p_{y_j} \psi^{\ell_1} \, \p_t \psi^{\ell_2} \, \p_{Y_3} H_j^{\ell_3}} 
-{\color{blue} \sum_{\ell_1+\ell_2+\ell_3=m+1} \p_{y_j} \psi^{\ell_1} \, \p_t \psi^{\ell_2} \, \p_{y_3} H_j^{\ell_3}} \\
& \, +{\color{orange} \sum_{\substack{\ell_1+\ell_2+\ell_3=m+2 \\ \ell_2,\ell_3 \ge 1}} \p_{y_j} \psi^{\ell_1} \, \Big( 
H_j^{\ell_2} \, \nabla \cdot u^{\ell_3} -u_j^{\ell_2} \, \nabla \cdot H^{\ell_3} \big)} \Big\} \, .
\end{align*}
Combining with \eqref{lemB2-eq13} cancels the green, blue and orange terms, and we are eventually left with:
$$
\bH^m \, = \, \dot{\bH}_1^m +\dot{\bH}_2^m +\dot{\bH}_3^m +{\bf c}_0 \, \Big\{ \, \sum_{\ell_1+\ell_2=m+1} \p_{y_j} \psi^{\ell_1} \, 
\Big( (\p_t +u_{j'}^0 \,\p_{y_{j'}}) H_j^{\ell_2} -H_{j'}^0 \,\p_{y_{j'}} u_j^{\ell_2} \, \Big) \Big\} \, .
$$
Using the definitions \eqref{lemB2-eq07}, \eqref{lemB2-eq08}, \eqref{lemB2-eq11} of $\dot{\bH}_1^m$, $\dot{\bH}_2^m$, 
$\dot{\bH}_3^m$ as well as the expressions \eqref{s3-def_G_1^m,pm}, \eqref{s3-def_G_2^m,pm} of $G_1^m$, $G_2^m$, 
we obtain eventually:
$$
\bH^m \, = \, \big( \p_t +u_j^0 \, \p_{y_j} \big) \widehat{G}_2^m(0) -H_j^0 \, \p_{y_j} \widehat{G}_1^m(0) \, ,
$$
which completes the proof of Lemma \ref{lemB2}.
\end{proof}

\bibliographystyle{alpha}
\bibliography{Biblio_MHD}
\end{document}